\newcommand{\C}{\mathbb{C}}
\newcommand{\R}{\mathbb{R}}
\newcommand{\proj}[2]{\text{Proj}_{#1}(#2)}
\newcommand{\Norm}[1]{\left\Vert #1 \right\Vert}
\newcommand{\ip}[1]{\left\langle #1 \right\rangle}
\newcommand{\id}{\boldsymbol{1}}
 \newcommand{\norm}[1]{\left\vert #1 \right\vert}
  \newcommand{\Hom}[1]{\operatorname{Hom}(#1)}
  \newcommand{\tr}[1]{\operatorname{tr}(#1)}
  \newcommand{\lp}[1]{\mathbf{#1}}
   \newcommand{\pth}[1]{\mathbf{#1}}
  \newcommand{\JW}[1]{f^{(#1)}}
\newcommand{\upsidedown}[1]{\begin{scope}[y=-1cm] #1 \end{scope}}
\newcommand{\Mat}[1]{\operatorname{\mathbf{Mat}}\left(#1\right)}
\newcommand{\cC}{\mathcal{C}}
\newtheorem{theorem}{Theorem}[section]
\newtheorem{Defn}[theorem]{Definition}
\newtheorem{prop}[theorem]{Proposition}
\newtheorem{cor}[theorem]{Corollary}
\newtheorem{lem}[theorem]{Lemma}
\newtheorem{claim}[theorem]{Claim}
\newtheorem{example}[theorem]{Example}
\newtheorem{notation}[theorem]{Notation}
\newtheorem{remark}[theorem]{Remark}
\newtheorem{fact} [theorem] {Fact}
\def\dsp{\def\baselinestretch{2.0}\large\normalsize}
\begin{document}
\tikzstyle{shaded}=[fill=red!10!blue!20!gray!30!white]
\tikzstyle{shaded line}=[double=red!10!blue!20!gray!30!white, double distance=1.5mm, draw=black]
\tikzstyle{unshaded}=[fill=white]
\tikzstyle{unshaded line}=[double=white, double distance=1.5mm, draw=black]
\tikzstyle{Tbox}=[circle, draw, thick, fill=white, opaque,]
\tikzstyle{empty box}=[circle, draw, thick, fill=white, opaque, inner sep=2mm]
\tikzstyle{background rectangle}= [fill=red!10!blue!20!gray!40!white,rounded corners=2mm] 
\tikzstyle{on}=[very thick, red!50!blue!50!black]
\tikzstyle{off}=[gray]

\tikzstyle{traces}=[scale=.2, inner sep=1mm]
\tikzstyle{quadratic}=[scale=.4, inner sep=1mm, baseline]
\tikzstyle{annular}=[scale=.7, inner sep=1mm, baseline]
\tikzstyle{make triple edge size}= [scale=.4, inner sep=1mm,baseline] 
\tikzstyle{icosahedron network}=[scale=.3, inner sep=1mm, baseline]
\tikzstyle{ATLsix}=[scale=.25, baseline]
\tikzstyle{TL12}=[scale=.15,baseline]
\tikzstyle{PAdefn}=[scale=.7,baseline]
\tikzstyle{TLEG}=[scale=.5,baseline]


\title{A Construction of the ``2221'' Planar Algebra}
\author{Richard Han}
\degreemonth{December}
\degreeyear{2010}
\degree{Doctor of Philosophy}
\chair{Professor Feng Xu}
\othermembers{Professor Marta Asaeda\\
Professor David Rush}
\numberofmembers{3}
\field{Mathematics}
\campus{Riverside}

\maketitle
\copyrightpage{}
\approvalpage{}

\degreesemester{Fall}

\begin{frontmatter}

\begin{acknowledgements}
I would like to thank my committee.  I thank Dr. Xu for his guidance of my overall progress.  I would like to thank Dr. Asaeda for her support as well.  I am grateful to Emily Peters, Noah Snyder, and Stephen Bigelow for their generous help.  I am also grateful to Jason Wong and Jonathan Sarhad for their technical help on preparing the manuscript.
\end{acknowledgements}

\begin{dedication}
\null\vfil
{\large
\begin{center}
To my parents Peter and Kathy Han.
\end{center}}
\vfil\null
\end{dedication}

\begin{abstract}
In this paper, we construct the ``2221'' subfactor planar algebra  by finding it as a subalgebra of the graph planar algebra of its principal graph.  In particular, we give a presentation of the ``2221'' subfactor planar algebra consisting of generators and relations.  As a corollary, we have a planar algebra proof of the existence of a subfactor with principal graph ``2221''.  To show the subfactor property, we use the jellyfish algorithm for evaluating closed diagrams.  Lastly, we show uniqueness up to conjugation of ``2221''.
\end{abstract}

\tableofcontents
\listoffigures

\end{frontmatter}

\chapter{Introduction}
In this paper, we investigate the ``2221'' subfactor planar algebra which has Perron-Frobenius eigenvalue $\delta=\sqrt{\frac{5+\sqrt{21}}{2}}$.  In a preprint dated 2001 \cite{Xu} on conformal inclusions, Xu had constructed a subfactor which has ``2221'' as its principal graph. Around the same time, M. Izumi \cite{Iz} had also constructed it using different methods.  For further details, see the appendix (written by Ostrik) of \cite{Cal}.\\
\indent  In this paper, we give a planar algebra proof of its existence.  We construct the planar algebra as a subalgebra of the graph planar algebra of its principal graph 
$$H=\begin{tikzpicture}[baseline,scale=.7]
\node at (0,0) {$\bullet$};
\node at (0,0) [above] {$z_0$};
\node at (1,0) {$\bullet$};
\node at (1,0) [above] {$b_0$};
\node at (2,0) {$\bullet$};
\node at (2,0) [below] {$c$};
\node at (2,.5) {$\bullet$};
\node at (2,.5) [above] {$d$};
\node at (3,.5) {$\bullet$};
\node at (3,.5) [above] {$b_1$};
\draw (0,0)--(2,0);
\draw (2,0)--(2,.5);
\node at (4,.5) {$\bullet$};
\node at (4,.5) [above] {$z_1$};
\node at (3,-.5) {$\bullet$};
\node at (3,-.5) [below] {$b_2$};
\node at (4,-.5) {$\bullet$};
\node at (4,-.5) [below] {$z_2$};
\draw (2,0)--(3,.5)--(4,.5);
\draw (2,0)--(3,-.5)--(4,-.5);
\end{tikzpicture}$$  
We follow the method which Peters \cite{EP} carries out for the Haagerup planar algebra.\\  
\indent The main theorem of this paper is Theorem 4.0.7, which gives a presentation of the ``2221'' planar algebra consisting of the two generators and some quadratic relations.  As a corollary, this proves the existence of a subfactor which has principal graph ``2221''.  One interesting difference between the ``2221'' planar algebra and some of the planar algebras previously constructed, such as the Haagerup planar algebra, is that there are two generators instead of one.  An interesting feature about this paper is that we use the Jellyfish algorithm, first introduced in \cite{ExtH}, to show the subfactor property of the ``2221'' planar algebra.  It is further used to prove other facts about our planar algebra at higher-level n-box spaces.  \\
\indent In chapter 2, the requisite background on planar algebras is given, with many definitions adopted from \cite{EP} for the sake of convenience to the reader.  In chapter 3, we look at the decomposition of the ``2221'' planar algebra into irreducible modules, which will tell us what sorts of quadratic relations to expect.  In chapter 4, we look for and identify the two generators $T$ and $Q$ of our planar algebra.  In chapter 5, we prove several quadratic relations involving the two generators.  In chapter 6, we obtain some important one-strand braiding substitutes, and in chapter 7 we prove the subfactor property.  In chapter 8, we consider the tensor category associated with the ``2221'' planar algebra and show that the planar algebra has principal (and dual principal) graph $H$.  Lastly, in chapter 9, we prove uniqueness up to conjugation of ``2221''.

\chapter{Background on Planar Algebras}  All of the definitions in this chapter are adopted from \cite{EP} and are repeated here for the convenience of the reader.  However, the original sources of all the definitions are as follows:  the original source of the definitions in sections 2.0, 2.1, and 2.2 is \cite{VJ1}, and the original sources of the definitions in section 2.3 are \cite{VJ5} and \cite{GR}.\\
\indent A planar algebra is a collection of vector spaces which has an action by the shaded planar operad.  For more details, see \cite{JP}). We look at the objects which act on the vectors of our vector spaces.

\begin{Defn}[\cite{VJ1},\cite{EP}]
Elements of the shaded planar operad are {\em shaded planar tangles}, which consist of 
\begin{itemize} 
\item
one outer disk $D_o$
\item
$k$ disjoint inner disks $D_i$ 
\item 
A set $S$ of non-intersecting strings between the disks, such that $| S \cap D_o |, |S \cap D_i | \in 2 \mathbb{Z}$
\item
A checkerboard shading on the regions (i.e., connected components of $(D_o \setminus  \bigcup_{i} D_i ) \setminus S$).
\item
A star in a region near the boundary of each disk $D_o, D_i$
\end{itemize}
See Figure \ref{tangleEG} for an example.  We consider two shaded planar tangles equal if they are isotopic.  

\begin{figure}[!ht]
$$\begin{tikzpicture}[scale=.7]
	\clip (0,0) circle (3cm);
	
	\begin{scope}[shift=(10:1cm)]	
		\draw[shaded] (0,0)--(0:6cm)--(90:6cm)--(0,0);	
		\draw[shaded] (0,0) .. controls ++(180:2cm) and ++(-90:2cm) .. (0,0);
	\end{scope}
	
	\draw[shaded] (-150:1cm) -- (120:4cm) -- (180:4cm) -- (-150:1cm);
	\draw[shaded] (-150:1cm) -- (-120:4cm) -- (-60:4cm) -- (-150:1cm);
	
	\begin{scope}[shift=(10:1cm)]	
		\node at (0,0) [Tbox, inner sep=2mm] {};
		\node at (90:1.5cm) [Tbox, inner sep=2mm] {};
		\node at (-45:.7cm) {$\star$};
		\node at (120:1.6cm) {$\star$};
	\end{scope}
	\node at (-150:1cm) [Tbox, inner sep=2mm] {};
	\node at (-120:1.6cm) {$\star$};
	\node at (-30:2.7cm) {$\star$};
	
	\draw[very thick] (0,0) circle (3cm);
\end{tikzpicture}$$
\caption{A shaded planar tangle}\label{tangleEG}
\end{figure}
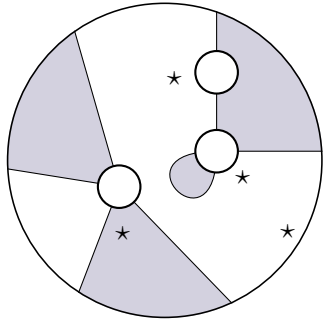

By the {\em type} of a disk in a tangle, we mean the pair $(k,\pm)$ determined by half the number of strings on the boundary of the disk ($k$) and whether the star is in an unshaded ($+$) or shaded ($-$) region.  For example, the inner disks in the above figure are of types $(1,+)$, $(2,+)$ and $(2,-)$. 

The operadic structure is given by composition.  Let $A$, $B$ be shaded planar tangles; $A \circ_i B$ exists if the $i^{th}$ inner disk of $A$ has the type as the outer disk of $B$.  In this case, $A \circ_i B$ is the shaded planar tangle built by inserting $B$ in the $i^{th}$ inner disk of $A$ (with the starred regions matching up) and connecting the strings.  For example, 
$$
\begin{tikzpicture}[PAdefn]
	\clip (0,0) circle (3cm);
	
	\begin{scope}[shift=(10:1cm)]	
		\draw[shaded] (0,0)--(0:6cm)--(90:6cm)--(0,0);	
		\draw[shaded] (0,0) .. controls ++(180:2cm) and ++(-90:2cm) .. (0,0);
	\end{scope}
	
	\draw[shaded] (-150:1cm) -- (120:4cm) -- (180:4cm) -- (-150:1cm);
	\draw[shaded] (-150:1cm) -- (-120:4cm) -- (-60:4cm) -- (-150:1cm);
	
	\begin{scope}[shift=(10:1cm)]	
		\node at (0,0) [Tbox, inner sep=1.4mm] {\tiny{\textcolor{gray}{2}}};
		\node at (90:1.5cm) [Tbox, inner sep=1.4mm] {\tiny{\textcolor{gray}{1}}};
		\node at (-45:.7cm) {$\star$};
		\node at (120:1.6cm) {$\star$};
	\end{scope}
	\node at (-150:1cm) [Tbox, inner sep=1.4mm] {\tiny{\textcolor{gray}{3}}};
	\node at (-120:1.6cm) {$\star$};
	\node at (-30:2.7cm) {$\star$};
	
	\draw[very thick] (0,0) circle (3cm);

\end{tikzpicture}
\,
\circ_{2}
\,
\begin{tikzpicture}[PAdefn]
	\clip (0,0) circle (2cm);

	\draw[shaded] (0:4cm)--(0,0)--(90:4cm);
	\draw[shaded] (180:4cm)--(180:2cm) .. controls ++(0:1cm) and ++(90:1cm) .. (270:2cm) -- (270:4cm);
	
		\node at (0,0) [Tbox, inner sep=2mm] {};

	\node at (-45:1.7cm) {$\star$};
	\node at (-45:.7cm) {$\star$};	

	\draw[very thick] (0,0) circle (2cm);

\end{tikzpicture}
\,
=
\,
\begin{tikzpicture}[PAdefn]
	\clip (0,0) circle (3cm);
	
	\begin{scope}[shift=(10:1cm)]	
		\draw[shaded] (0:6cm)--(90:6cm)--(90:2cm) .. controls ++(-90:1cm) and ++(180:.5cm) .. (0:1cm)--(0:6cm);	
		\draw[shaded] (-135:.6cm) circle (.5cm);
	\end{scope}
	
	\draw[shaded] (-150:1cm) -- (120:4cm) -- (180:4cm) -- (-150:1cm);
	\draw[shaded] (-150:1cm) -- (-120:4cm) -- (-60:4cm) -- (-150:1cm);
	
	\begin{scope}[shift=(10:1cm)]	
		\node at (0:.8cm) [Tbox, inner sep=2mm] {};
		\node at (90:1.5cm) [Tbox, inner sep=2mm] {};
		\node at (-45:.9cm) {$\star$};
		\node at (120:1.6cm) {$\star$};
	\end{scope}
	\node at (-150:1cm) [Tbox, inner sep=2mm] {};
	\node at (-120:1.6cm) {$\star$};
	\node at (-30:2.7cm) {$\star$};
	
	\draw[very thick] (0,0) circle (3cm);

\end{tikzpicture}
$$
\end{Defn}

\begin{Defn}[\cite{VJ1},\cite{EP}]
  A {\em planar algebra} is a collection of vector spaces $\{ V_{i,\pm} \} _{i=0,1,2,\ldots}$ which is acted on by the shaded planar operad; that is, shaded planar tangles act on tensor products of the $V_{i,\pm}$, in a way that is compatible with composition of tangles.  A shaded planar tangle whose $i^{th}$ inner disk has type $(k_i,\pm_i)$ and outer disk has type $(k_o, \pm_o)$ gives a map $\bigotimes_{i} V_{k_i,\pm_i} \rightarrow V_{k_o, \pm_o}$.  
For example, the first tangle above represents a map $V_{1,+} \otimes V_{2,+} \otimes V_{2,-} \rightarrow V_{3,+}$.
  
Compositional compatibility means composition of tangles and composition of multilinear maps should produce the same result, e.g.  $$(A \circ_1 B) (v_1, \ldots, v_i, w_2, \ldots, w_j) = A( B(v_1, \ldots v_i), w_2, \ldots, w_j).$$  
  
  \end{Defn}
  
  \begin{Defn}[\cite{VJ1},\cite{EP}]
Elements of $V_n$ are called {\em $n$-boxes} and we will sometimes refer to $V_{n}$ as the {\em $n$-box space} of $V$, or as the $n^{th}$ {\em level} of $V$.
\end{Defn}

\begin{remark}[\cite{VJ1},\cite{EP}]
For $n \geq 1$, $V_{n,+} \simeq V_{n,-}$ (as vector spaces only, not in any algebraic sense), via the shading-changing rotation map
$$\rho^{1/2} = \begin{tikzpicture}[annular]
	\clip (0,0) circle (2cm);

	\filldraw[shaded] (158:4cm)--(0,0)--(112:4cm);
	\filldraw[shaded] (-158:4cm)--(0,0)--(-112:4cm);
	
	\draw[shaded] (68:4cm)--(0,0)--(-68:4cm)--(0:10cm);
	
	\draw[ultra thick] (0,0) circle (2cm);
	
	\node at (0,0)  [empty box] (T) {};
	\node at (T.180) [left] {$\star$};
	\node at (-135:2cm) [above right] {$\star$};
	\node at (0:1cm) {$\cdot$};
	\node at (20:1cm) {$\cdot$};
	\node at (-20:1cm) {$\cdot$};
\end{tikzpicture}.$$
The isomorphism between $V_{n,+}$ and $V_{n,-}$ is the reason that previous definitions of planar algebras required stars to be in unshaded regions, and only had spaces $V_+$, $V_-$ and $V_i$, $i=1,2,3, \ldots$.  We will drop $+$ and $-$ signs from subscripts when it is either clear, or unimportant, whether we are working in $V_{n,+}$ or $V_{n,-}$.
\end{remark}
  
\begin{Defn}[The Temperley-Lieb algebra][\cite{VJ1},\cite{EP}]
The first example of a planar algebra is the Temperley-Lieb algebra with parameter $\delta$ (this algebra was introduced in \cite{TL} and formulated diagrammatically by Kauffman in \cite{KF}).  The vector spaces $TL_{i,\pm}$ have a basis (called $B(TL_{i,\pm})$) consisting of non-crossing pairings on $2i$ numbers; these can be drawn as planar tangles with no input disks, $2i$ points on the output disk, and no closed circles (all strings have endpoints on boundary disks).   The number of such pictures is the $i^{th}$ Catalan number $\frac{1}{i+1}\binom{2i}{i}$.

\begin{example}[\cite{VJ1},\cite{EP}]
$TL_{3,+}=\{
\begin{tikzpicture}[TLEG]
	\filldraw[shaded]  (30:1cm) arc (30:90:1cm) arc (30:-150:5mm) arc (150:210:1cm) -- cycle; 
	\filldraw[shaded]  (0,-1) arc (-90:-30:1cm) arc (30:210:5mm);
	\draw[thick] (0,0) circle (1cm);
	\node at (120:1.3cm) {$\star$};
\end{tikzpicture},
\begin{tikzpicture}[TLEG]
	\filldraw[shaded]  (0,1) arc (90:30:1cm) arc (90:270:5mm) arc (-30:-90:1cm) -- cycle; 
	\filldraw[shaded]  (-1,0) arc (180:210:1cm) arc (-90:90:5mm) arc (150:180:1cm);
	\draw[thick] (0,0) circle (1cm);
	\node at (120:1.3cm) {$\star$};
\end{tikzpicture},
\begin{tikzpicture}[TLEG, rotate=180]
	\filldraw[shaded]  (150:1cm) arc (150:90:1cm) arc (-210:-30:5mm) arc (30:-30:1cm)--cycle;
	\filldraw[shaded] (-90:1cm) arc (-90:-150:1cm) arc (150:-30:5mm);
	\draw[thick] (0,0) circle (1cm);
	\node at (-60:1.3cm) {$\star$};
\end{tikzpicture},
\begin{tikzpicture}[TLEG]
	\filldraw[shaded]  (0,-1) arc (-90:-30:1cm) arc (30:210:5mm);
	\filldraw[shaded]  (-1,0) arc (180:210:1cm) arc (-90:90:5mm) arc (150:180:1cm);
	\filldraw[shaded] (90:1cm) arc (90:30:1cm) arc (-30:-210:5mm);
	\draw[thick] (0,0) circle (1cm);
	\node at (120:1.3cm) {$\star$};
\end{tikzpicture},
\begin{tikzpicture}[TLEG]
	\filldraw[shaded]  (90:1cm) arc (30:-150:5mm) arc (150:210:1cm) arc (150:-30:5mm) arc (-90:-30:1cm) arc (-90:-270:5mm) arc (30:90:1cm);
	\draw[thick] (0,0) circle (1cm);
	\node at (120:1.3cm) {$\star$};
\end{tikzpicture}
\}$
\end{example}

The action of shaded planar tangles on this basis is straightforward: put the pictures inside the tangle, smooth all strings and throw out closed circles by multiplying the picture by $\delta$ (i.e., if $\tau=\tau' \sqcup \tikz \draw[thick] (0,0) circle (1mm);$ or $\tau=\tau' \sqcup \tikz \filldraw[thick, shaded] (0,0) circle (1mm);$, then $\tau = \delta \tau'$.)  For example,
$$
\begin{tikzpicture}[TLEG]
	\filldraw[shaded] (90:3cm) -- (-90:3cm) arc (-90:90:3cm);
	\filldraw[unshaded] (0:1cm) circle (.5cm);
	\filldraw[shaded] (180:1cm) circle (.5cm);
	\node[Tbox,inner sep=3.5mm] at (0,0) {};
	\draw[thick] (0,0) circle (3cm);
	\node at (120:1.3cm) {$\star$};
	\node at (120:3.3cm) {$\star$};
\end{tikzpicture}
\circ
\begin{tikzpicture}[TLEG]
	\filldraw[shaded]  (0,1) arc (90:30:1cm) arc (90:270:5mm) arc (-30:-90:1cm) -- cycle; 
	\filldraw[shaded]  (-1,0) arc (180:210:1cm) arc (-90:90:5mm) arc (150:180:1cm);
	\draw[thick] (0,0) circle (1cm);
	\node at (120:1.3cm) {$\star$};
\end{tikzpicture}
=
\begin{tikzpicture}[TLEG]
	\filldraw[shaded] (90:3cm) -- (-90:3cm) arc (-90:90:3cm);
	\filldraw[unshaded] (0:1cm) circle (.5cm);
	\filldraw[shaded] (180:1cm) circle (.5cm);
	\draw[thick] (0,0) circle (3cm);
	\node at (120:3.3cm) {$\star$};
\end{tikzpicture}
=\delta^2
\begin{tikzpicture}[TLEG]
	\draw[thick]  (-3,0)arc (180:-180: 3cm);
	\filldraw[shaded]  (0,3)--(0,-3) arc (-90:90:3cm);
	\node at (120:3.3cm) {$\star$};
\end{tikzpicture}
$$
\end{Defn}

We have the following common shaded planar tangles, which we will use frequently in this paper.
\begin{Defn}[\cite{VJ1},\cite{EP}] The tangles
$$  	 \begin{tikzpicture}[PAdefn]
	\clip [draw] (2,2) arc (0:180:2cm) -- (-2,-2) arc (-180:0:2cm) -- (2,2);
	
	\draw[shaded] (0,2) .. controls ++(-150:1.5cm) and ++(150:1.5cm) .. (0,-2) .. controls ++(110:1.5cm) and ++(-110:1.5cm) .. (0,2);

	\draw (0,2) .. controls ++(-30:1.5cm) and ++(30:1.5cm) .. (0,-2);
	
	\draw[shaded] (0,2) -- +(110:3cm) -- +(130:3cm) -- (0,2);
	\draw (0,2) -- ++(50:3cm);
		
	\draw (0,-2) -- ++(-50:3cm);
	\draw[shaded] (0,-2) -- +(-110:3cm) -- +(-130:3cm) -- (0,-2);
	
	\node at (.4,0) {$\ldots$};
	\node at (0,0) [below] {$\underbrace{\qquad \qquad}_{k}$};
	\node at (.2,3.2) {$\dots$};
	\node at (.2,-3.2) {$\dots$};

	\node at (0,2) [Tbox,inner sep=1.4mm] (A) {\tiny{\textcolor{gray}{1}}};
	\node at (0,-2) [Tbox,inner sep=1.4mm] (B) {\tiny{\textcolor{gray}{2}}};
	\node at (A.180) [left] {$\star$};
	\node at (B.180) [left] {$\star$};
	\node at (-1.5,2.5)  {$\star$};	
	
	\draw[ultra thick] (2,2) arc (0:180:2cm) -- (-2,-2) arc (-180:0:2cm) -- (2,2);
\end{tikzpicture} \; , \;
 	\begin{tikzpicture}[scale=.25,baseline]
	\clip (8,6) arc (0:180:6cm) -- (-4,-6) arc (-180:0:6cm) -- (8,6);

	\draw (0,0) .. controls ++(-67:6cm) and ++(-90:5cm) .. (3,0) .. controls ++(90:5cm) and ++(67:6cm) .. (0,0);
	\filldraw[shaded] (0,0) .. controls ++(130:4cm) and ++(90:15cm) .. (5,0) .. controls ++(-90:15cm) and ++(-130:4cm) .. (0,0) .. controls ++(-157:7cm) and ++(-90:20cm) .. (6,0) .. controls ++(90:20cm) and ++(157:7cm) .. (0,0);

	\node at (0,0) [Tbox,inner sep=2mm] (T1) {};
	\node at (T1.180) [left] {$\star$};
	\node at (4,0) {$\cdots$};
	\node at (4.5,0) [below] {$\underbrace{\qquad \quad}_{k}$};

	\draw[ultra thick] (8,6) arc (0:180:6cm) -- (-4,-6) arc (-180:0:6cm) -- (8,6);
\end{tikzpicture} \; , \text{and} \;
 	\begin{tikzpicture}[baseline,scale=.5]
	\clip (0,0) circle (3cm);
	
	\filldraw[shaded] (0,0)--(120:4cm)--(100:5cm)--(0,0);
	\filldraw[shaded] (0,0)--(-120:4cm)--(-100:5cm)--(0,0);
	
	\draw (0,0)--(60:4cm);
	\draw (0,0)--(-60:4cm);
	
	\draw (30:3cm) .. controls ++(-120:1.5cm) and ++(120:1.5cm) .. (-30:3cm);
	
	\node at (0,0) [Tbox, inner sep=2mm] {};
	\node at (80:2cm) {$\cdots$};
	\node at (-80:1.7cm) {$\cdots$};

	\node at (90:2.2cm) [below] {$\underbrace{\qquad \quad}_{k}$};
	\node at (-90:1.5cm) [below] {$\underbrace{\qquad \qquad}_{k}$};
	\node at (-180:1cm) [left] {$\star$};
	
	\draw[ultra thick] (0,0) circle (3cm);		
\end{tikzpicture} 
$$
are multiplication, trace and inclusion for $V_{i,+}$; the reverse-shaded tangles are multiplication, trace and inclusion for $V_{i,-}$.
\end{Defn}
\begin{fact}[\cite{VJ1},\cite{EP}]
\begin{itemize}
	\item
	The multiplication tangle 
 	gives an associative map $$m: V_{k,\pm} \otimes V_{k,\pm} \rightarrow V_{k,\pm}.$$  As usual, we will frequently denote multiplication by putting two elements next to each other, maybe with a dot between them.
 	\item 
	The trace tangle
 	gives a cyclically commutative map $$\operatorname{tr}: V_{k,\pm} \rightarrow V_{0,\pm}.$$  
 	\item 
	The inclusion tangle gives a map $$\iota: V_{k,\pm}  \rightarrow V_{k+1,\pm}$$ which is compatible with multiplication and trace.  
 \end{itemize}
 \end{fact}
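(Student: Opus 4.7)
The plan is to reduce each of the three bullets to an identity of shaded planar tangles up to isotopy, and then invoke the compositional compatibility axiom of the planar algebra: because the planar operad acts in a way compatible with composition of tangles, and tangles are considered equal up to isotopy, any tangle identity transports immediately to an identity of multilinear maps on the spaces $V_{k,\pm}$.

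For associativity of $m$, I would build the two tangles $m \circ_1 m$ and $m \circ_2 m$ by inserting one multiplication tangle into the left or right input of another. Each composition produces a three-input tangle whose inner disks are stacked along the vertical axis, connected top-to-bottom by $2k$ through-strands and matching shading, with a single pair of ``bridge'' strands on the outside. A planar isotopy that slides the middle disk up or down (carrying the outer bridge along with it) converts one stacked configuration into the other, so the two three-input tangles are equal. Hence $m(m(x,y),z)=m(x,m(y,z))$ on every $V_{k,\pm}$.

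For cyclic commutativity of $\operatorname{tr}$, I would consider $\operatorname{tr}\circ m$ as a two-input tangle in which both inner disks sit inside an outer disk of type $(0,\pm)$, joined to each other by $2k$ parallel arcs with no strands meeting the outer boundary. Because there are no external strings, the two inner disks may be rotated around each other within the outer disk; this is a planar isotopy exchanging the roles of the inputs, so the two tangles defining $\operatorname{tr}(xy)$ and $\operatorname{tr}(yx)$ are equal, and compositional compatibility gives the identity of scalars in $V_{0,\pm}$.

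For the compatibility of $\iota$ with $m$ and $\operatorname{tr}$, the same strategy applies: $\iota(xy)$ and $\iota(x)\iota(y)$ are realized by tangles that differ only in whether the additional ``capping'' through-strand added by $\iota$ runs past both multiplications on the right or is inserted into each factor separately, and this difference is a straightforward isotopy sliding the strand across the outer bridge. Similarly, $\operatorname{tr}(\iota(x))$ is isotopic to a tangle which, after smoothing, differs from $\operatorname{tr}(x)$ only by a nullhomotopic cap contributing the expected normalization. The main obstacle in all three items is purely bookkeeping: one must draw the tangles with stars, shadings, and strand counts consistently placed so that the isotopies are manifest rather than merely asserted; once this is done, each verification is a single planar isotopy and no calculation with elements of $V_{k,\pm}$ is needed.
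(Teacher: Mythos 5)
The paper states this result as a cited Fact without giving a proof, so there is no in-paper argument to compare against; your proposal supplies the standard proof-by-isotopy that the cited sources (Jones, Peters) use, and the overall strategy — identify each assertion with an equality of shaded planar tangles up to isotopy and then invoke compositional compatibility — is correct and exactly the right one.

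Two small points of wording are worth tightening. For cyclic commutativity, the relevant isotopy is not really a rotation of the two input disks ``around each other'': a $180^\circ$ rotation of the full tangle would also move the stars on the inner boxes to the wrong side. What you want is to slide the lower box around the outside arcs to a position above the upper box, carrying its star with it; this is a legitimate planar isotopy precisely because the outer boundary has no strings, and it converts the tangle for $\operatorname{tr}(xy)$ into that for $\operatorname{tr}(yx)$. For the trace compatibility of $\iota$, the extra strand introduced by the inclusion tangle does not smooth away as a cap; when you compose with the trace tangle its two endpoints are joined, producing a closed circle disjoint from the rest of the diagram, so the resulting identity is $\operatorname{tr}_{k+1}\circ\iota = \delta\,\operatorname{tr}_k$ rather than a bare equality of tangles. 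Your phrase ``contributing the expected normalization'' indicates you know this, but it should be stated as a factor of $\delta$ and the extra feature named as a closed loop rather than a ``nullhomotopic cap.'' With those adjustments the proof is complete.
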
 
\begin{example}[\cite{VJ1},\cite{EP}] The identity for multiplication on Temperley-Lieb is $n$ vertical strands, i.e.
$$\id = \begin{tikzpicture}[scale=.6, baseline]	
	\draw (140:2cm) -- (-140:2cm) arc (-140:-110:2cm) -- (110:2cm) arc (110:140:2cm);
	\draw (40:2cm)--(-40:2cm) arc (-40:-70:2cm)--(70:2cm) arc (70:40:2cm);
	
	\node at (0,0) {$\cdots$};
	\node at (180:2cm) [left] {$\star$};
	
	\draw[thick] (0,0) circle (2cm);
\end{tikzpicture}$$
Temperley-Lieb is multiplicatively generated by the {\em Jones projections} 
$$e_i = \frac{1}{\delta} \cdot 
\begin{tikzpicture}[scale=.6, baseline]
	\draw[thick] (0,0) circle (2cm);
	
	\draw (145:2cm) -- (-145:2cm) arc (-145:-110:2cm) -- (110:2cm) arc (110:145:2cm);
	\draw (35:2cm)--(-35:2cm) arc (-35:-70:2cm)--(70:2cm) arc (70:35:2cm);
	
	\draw (80:2cm) .. controls ++(-90:1cm) and ++(-90:1cm) .. (100:2cm);
	\draw (-80:2cm) .. controls ++(90:1cm) and ++(90:1cm) .. (-100:2cm);

	\node at (-1.1,0) {\tiny{$\cdots$}};
	\node at (1.2,0) {\tiny{$\cdots$}};	
	\node at (180:2cm) [left] {$\star$};
\end{tikzpicture}$$
\end{example}
\section{Subfactor Planar Algebras}
\begin{remark}[\cite{VJ1},\cite{EP}]Subfactor planar algebras are planar algebras with additional structure.    They are called `subfactor' because the planar algebra of a subfactor always has these properties.  Furthermore, a planar algebra with these properties is always the standard invariant of some subfactor.  Further details on this connection are found in chapter 4 of \cite{VJ1}.

The properties that define subfactor planar algebras make them easier to work with than general planar algebras.  In particular, the requirements that each space be finite dimensional and have an inner product means the tools of linear algebra are available to us when we work with subfactor planar algebras.
\end{remark}
\begin{Defn}[\cite{VJ1},\cite{EP}]
A {\em subfactor} planar algebra is a planar algebra (over $\mathbb{C}$) which has
\begin{enumerate}
\item Involution:
 a $*$ on each $V_{i,\pm}$ which is compatible with reflection of tangles (so that $\tau^* (v_1^*,v_2^*,\ldots v_n^*)=\tau (v_1,v_2,\ldots v_n)^*$), 
 \item Dimension restrictions:
  $\dim (V_{0,+})=\dim(V_{0,-})=1$ and $\dim{V_{k,\pm}}<\infty$ for all $k$,
   \item Sphericality:
The left trace $\operatorname{tr}_l:V_{1,\pm} \rightarrow V_{0,\mp}\simeq \mathbb{C}$ and the right trace $\operatorname{tr}_r:V_{1,\pm} \rightarrow V_{0,\pm }\simeq \mathbb{C}$ are equal (equivalently, the action of planar tangles is invariant under spherical isotopy),
 \item  Inner Product:
The bilinear form on $V_{n,\pm}$ given by  $\left< a,b \right>:=\tr{b^* a}$ is positive definite.
\end{enumerate}
\end{Defn}

One noteworthy property of subfactor planar algebras is that closed circles count as constant multiples of the empty diagram and that shaded closed circles have the same value as unshaded closed circles.

\begin{notation}[\cite{VJ1},\cite{EP}]
In a subfactor planar algebra, $\delta$ will denote the value of closed circles.  Sometimes we change variables so that 
$$ \delta = [2]_q= (q+q^{-1})$$
in order to use {\em quantum numbers:}
$$[n]_q :=\frac{q^n-q^{-n}}{q-q^{-1}} = q^{n-1} + q^{n-3} + \cdots + q^{-n+1} + q^{-n+3}.$$
We will often write $[n]$ instead of $[n]_q$ in situations where the value of $q$ is known.
\end{notation}

\begin{example}[\cite{VJ1},\cite{EP}]
The Temperley-Lieb planar algebra always meets conditions 1, 2, and 3:  The involution is defined by reflection;  $TL_{0,+}=\C \{ \tikz \draw[thick] (0,0) circle (1mm); \}$  and $TL_{0,-}=\C \{ \tikz \draw[thick, shaded] (0,0) circle (1mm); \}$ are both $1$-dimensional; and shaded and unshaded circles both count for $\delta$, hence the planar algebra is spherical.

If $\delta \geq 2$, the bilinear form defined by the trace tangle is positive definite, and so Temperley-Lieb is a subfactor planar algebra.  If $\delta=2 \cos{\frac{\pi}{n}}$ for some $n \geq 3$, the bilinear form is positive semidefinite, and we can form a subfactor planar algebra by quotienting Temperley-Lieb by all $x \in TL$ such that $\tr{x^* x}=0$.
\end{example}

\begin{fact}[\cite{VJ1},\cite{EP}]\label{TLinjects}
Let $V$ be a subfactor planar algebra with parameter $\delta$.  Then the map
$$TL(\delta) \hookrightarrow \Hom{\mathbb{C},V} \cong V $$
given by interpreting a Temperley-Lieb diagram as a shaded planar tangle with no inputs 
\begin{itemize}
\item is injective if $\delta \geq 2$;
\item has kernel $\operatorname{Rad}(\left<, \right>)$ if $\delta < 2$.
\end{itemize}
\end{fact}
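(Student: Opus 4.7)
The plan is to reduce both claims to the single statement $\ker \Phi = \operatorname{Rad}(\langle\,,\rangle)$, where $\Phi \colon TL(\delta) \to V$ denotes the map in question, and then to invoke the classical fact that the Temperley--Lieb form is nondegenerate precisely when $\delta \geq 2$.

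First I will verify that $\Phi$ intertwines the relevant algebraic structure. Since $\Phi$ sends each Temperley--Lieb diagram, viewed as a shaded planar tangle with no input disks, to the element of $V$ that tangle produces, and since multiplication, the $*$-involution, and the trace on both $TL(\delta)$ and $V$ are implemented by the \emph{same} family of shaded planar tangles, $\Phi$ is a unital $*$-algebra homomorphism with $\operatorname{tr}_V(\Phi(x)) = \operatorname{tr}_{TL}(x)$ after identifying $V_{0,\pm}$ and $TL_{0,\pm}$ through their canonical empty-diagram generators. In particular $\langle \Phi(x), \Phi(y)\rangle_V = \langle x,y\rangle_{TL}$ for all $x,y \in TL(\delta)$, so $\Phi$ is an isometry of sesquilinear forms.

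Next I will use positive definiteness of $\langle\,,\rangle_V$ (axiom~4 of a subfactor planar algebra) to conclude that $\Phi(x) = 0$ iff $\langle x, x\rangle_{TL} = 0$. Because a subfactor planar algebra with loop value $\delta$ exists only when $\delta \in \{2\cos(\pi/n) : n \geq 3\} \cup [2,\infty)$ (Jones' index restriction), the form on $TL(\delta)$ is positive semidefinite, so the null cone $\{x : \langle x,x\rangle_{TL} = 0\}$ coincides with $\operatorname{Rad}(\langle\,,\rangle)$ by Cauchy--Schwarz. This yields $\ker\Phi = \operatorname{Rad}(\langle\,,\rangle)$ in all allowed cases.

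Finally I will invoke the classical Gram-matrix computation on the diagrammatic Temperley--Lieb basis, equivalently the existence of all Jones--Wenzl idempotents $\JW{n}$: the Gram determinant is a product of Chebyshev factors that vanishes precisely when $\delta = 2\cos(\pi/n)$ for some $n \geq 3$. Hence for $\delta \geq 2$ the radical is trivial and $\Phi$ is injective, whereas for $\delta < 2$ (necessarily $\delta = 2\cos(\pi/n)$) the radical is exactly the claimed kernel. The main obstacle is this last input—the positive semidefiniteness together with an explicit description of $\operatorname{Rad}(\langle\,,\rangle)$ at $\delta = 2\cos(\pi/n)$—but I would simply cite \cite{VJ1} rather than reprove it.
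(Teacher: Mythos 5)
The paper states this as a cited Fact (from \cite{VJ1} and \cite{EP}) and gives no proof, so there is no argument of the paper's own to compare yours against; your job here was a genuine reconstruction, and it is correct. The core of your argument is sound: because $\Phi\colon TL(\delta)\to V$ is defined by interpreting diagrams as input-free tangles, and because multiplication, the involution, and the trace on both sides are themselves implemented by the same planar tangles, $\Phi$ preserves the sesquilinear form; positive definiteness of $\langle\,,\rangle_V$ then gives $\ker\Phi=\{x:\langle x,x\rangle_{TL}=0\}$, and positive semidefiniteness of the Temperley--Lieb form identifies that null cone with $\operatorname{Rad}(\langle\,,\rangle)$ via Cauchy--Schwarz, which the Jones--Wenzl/Gram-determinant computation shows vanishes exactly when $\delta\geq 2$. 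One point worth tidying: you do not need to appeal to Jones' index restriction to obtain positive semidefiniteness of the Temperley--Lieb form. That follows immediately from the identity $\langle x,x\rangle_{TL}=\langle\Phi(x),\Phi(x)\rangle_V\geq 0$, which you already established in the previous paragraph, and indeed the index restriction is itself customarily \emph{deduced} from exactly this observation, so as written the step reads as slightly circular even though the conclusion is right. Stating semidefiniteness as a direct corollary of your isometry would make the argument self-contained and logically cleaner.
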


In Chapter 3, we will look at the Bratelli diagram of the ``2221'' subfactor.  This diagram shows how minimal idempotents get included into the next level.  We will need the following definitions:

\begin{Defn}[\cite{VJ1},\cite{EP}]
\begin{itemize}
	\item	An {\em idempotent} is an element $p$ of some $V_{k,\pm}$ such that $p=p^*=p p$.  We often draw idempotents in rectangles instead of disks, with an implicit star on the left side.
	\item An idempotent $p \in V_{k,\pm}$ is {\em minimal} if $p \cdot V_{k,\pm} \cdot p$ is one-dimensional.  
	\item Two idempotents $p \in V_{i,\pm}$ and $q \in V_{j,\pm}$ are {\em isomorphic} if there is an element $f$ in $V_{i \rightarrow j,\pm}$ (this is the same space as $V_{(i+j)/2,\pm}$, but drawn with $i$ strings going up and $j$ strings going down) such that $f^*f=p$ and $f f^*=q$:
	$$\begin{tikzpicture}[scale=.7,baseline]
	\node at (0,1) [circle,draw,fill=white,inner sep=1mm] (Fstar) {$f^*$};
	\node at (0,-1) [circle, draw,fill=white] (F) {$f$};
	
	\node at (Fstar.180) [left] {$\star$};
	\node at (F.180) [left] {$\star$};	
	
	\draw (Fstar.-90)--(F.90);
	\draw (Fstar.90)-- ++(90:5mm);
	\draw (Fstar.60)-- ++(60:5mm);
	\draw (Fstar.120)-- ++(120:5mm);
	\draw (F.-120)--++(-120:5mm);
	\draw (F.-90)-- ++(-90:5mm);
	\draw (F.-60)--++(-60:5mm);
\end{tikzpicture}
=
\begin{tikzpicture}[scale=.7,baseline]
	\node at (0,0) [rectangle, draw] (p) {$\;\; p \;\;$};
	\draw (p.40)--++(90:5mm);
	\draw (p.90)--++(90:5mm);
	\draw (p.140)--++(90:5mm);
	\draw (p.-40)--++(-90:5mm);
	\draw (p.-90)--++(-90:5mm);
	\draw (p.-140)--++(-90:5mm);
\end{tikzpicture}
\quad , \quad
\begin{tikzpicture}[scale=.7,baseline]
	\node at (0,-1) [circle,draw,fill=white,inner sep=1mm] (Fstar) {$f^*$};
	\node at (0,1) [circle, draw,fill=white] (F) {$f$};
	
	\node at (Fstar.180) [left] {$\star$};
	\node at (F.180) [left] {$\star$};
	
	\draw (Fstar.90)--(F.-90);
	\draw (Fstar.45) .. controls (.6,0) .. (F.-45);
	\draw (Fstar.135) .. controls (-.6,0) .. (F.-135);
	\draw (F.90)-- ++(90:5mm);
	\draw (Fstar.-90)-- ++(-90:5mm);
\end{tikzpicture}
=
\begin{tikzpicture}[scale=.7,baseline]
	\node at (0,0) [rectangle, draw] (p) {$\;\; q \;\;$};
	\draw (p.90)--++(90:5mm);
	\draw (p.-90)--++(-90:5mm);
\end{tikzpicture}$$
\end{itemize}
\end{Defn}

\begin{example}[\cite{VJ1},\cite{EP}][The Jones-Wenzl idempotents  of Temperley-Lieb] A {\em Jones-Wenzl idempotent}, first defined in \cite{HW} and denoted $f^{(n)}$, is the unique idempotent in $TL_{n,\pm}$  which is orthogonal to all $TL_{n,\pm}$  basis elements except the identity.  In symbols this says $f^{(n)}\neq 0$, $f^{(n)}f^{(n)}=f^{(n)}$ and $e_i f^{(n)}=f^{(n)}e_i=0$ for all $i \in \{ 1, \ldots , n-1\}$ (since $e_i$ multiplicatively generate $TL$).  These have $\tr{f^{(n)}}=[n+1]$, are minimal, and satisfy {\em Wenzl's relation}:
$$[n+1] \cdot
\begin{tikzpicture}[baseline]
    \clip (-.9,-1.3) rectangle (1,1.3);
    \node at (0,0) (box) [rectangle,draw] {$\hspace{.1cm} f^{(n + 1)} \hspace{.1cm} $};

    \draw (box.-50)-- ++(-90:1cm);
    \draw (box.-90) -- ++(-90:1cm);
    \draw (box.-150)--++(-90:1cm);
    \node at (-.25,-.6) {...};
    \draw (box.-30) -- ++(-90:1cm);

    \draw (box.50)-- ++(90:1cm);
    \draw (box.90) -- ++(90:1cm);
    \draw (box.150)--++(90:1cm);
    \node at (-.25,.6) {...};
    \draw (box.30) -- ++(90:1cm);
\end{tikzpicture}
=
[n+1] \cdot
\begin{tikzpicture}[baseline]
    \clip (-.8,-1.3) rectangle (1,1.3);
    \node at (0,0) (box) [rectangle,draw] {\; $f^{(n)}$ \;};

    \draw (box.-145)--++(-90:1cm);
    \node at (-.1,-.6) {...};
    \draw (box.-35) -- ++(-90:1cm);
    \draw (box.-60) -- ++(-90:1cm);

    \draw (box.35) -- ++(90:1cm);
    \draw (box.60) -- ++(90:1cm);
    \node at (-.1,.6) {...};
    \draw (box.145) -- ++(90:1cm);

    \draw (.9,1.5)--(.9,-1.5);
\end{tikzpicture}
-
[n] \cdot
\begin{tikzpicture}[baseline]
    \clip (-.8,-1.3) rectangle (1,1.3);
    \node at (0,.7) (top) [rectangle,draw] {$\;\; f^{(n)} \; \;$};
    \node at (0,-.7) (bottom) [rectangle,draw]{$\; \;f^{(n)} \; \;$};

    \draw (top.-60)--(bottom.60);
    \draw (top.-145)--(bottom.145);
    \node at (-.1,0) {...};
    \draw (top.-35) arc (-180:0:.2cm) -- ++(90:1cm);
    \draw (bottom.35) arc (180:0:.2cm) -- ++(-90:1cm);

    \draw (top.35) -- ++(90:1cm);
    \draw (top.60) -- ++(90:1cm);
    \node at (-.1,1.2) {...};
    \draw (top.145) -- ++(90:1cm);

    \draw (bottom.-35) -- ++(-90:1cm);
    \draw (bottom.-60) -- ++(-90:1cm);
    \node at (-.1,-1.2) {...};
    \draw (bottom.-145) -- ++(-90:1cm);
\end{tikzpicture}$$
\end{example}
The Jones-Wenzl idempotents also satisfy the following properties, which we cite without proof:

\begin{lem}[\cite{ExtH}]
\label{lem:basicrecursion}
\begin{align*}
\begin{tikzpicture}[baseline=0,scale=0.3]
\foreach \x in {-3,...,3} { \draw (\x,-3.5) -- (\x,3.5); }
\filldraw[thick,fill=white] (-4,-1.5) rectangle (4,1.5);
\node at (0,0) {$\JW{k}$};
\end{tikzpicture}
= 
\begin{tikzpicture}[baseline=0,scale=0.3]
\foreach \x in {-3,...,2} { \draw (\x,-3.5) -- (\x,3.5); }
\draw (4,-3.5) -- (4,3.5);
\filldraw[thick,fill=white] (-4,-1.5) rectangle (3,1.5);
\node at (-0.5,0) {$\JW{k-1}$};
\end{tikzpicture}
+
\frac{1}{[k]}
\sum_{a = 1}^{k-1} (-1)^{a+k+1} [a] \;
\begin{tikzpicture}[baseline=0,scale=0.3]
\draw[decorate,decoration={brace,raise=2pt}] (-4,3.5) -- (-2,3.5);
\node at (-3,4.6) {$a$};
\foreach \x in {-4,-3} { \draw (\x+1,0) -- (\x,3.5); }
\draw (-1,3.5) arc (360:180:0.5);
\foreach \x in {0,1} { \draw (\x-1,0) -- (\x,3.5); }
\draw (1,0)--(1.5,1.75) .. controls (2,3.5) and (3,3.5) .. (3,1.5)--(3,-3.5);
\foreach \x in {-3,...,1} { \draw (\x,-3.5) -- (\x,0); }
\filldraw[thick,fill=white] (-4,-1.5) rectangle (2,1.5);
\node at (-1,0) {$\JW{k-1}$};
\end{tikzpicture}.
\end{align*}
\end{lem}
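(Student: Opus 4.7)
My approach is induction on $k$, using the standard two-term Wenzl recursion displayed earlier in the excerpt as the seed. That recursion is precisely the $a=k-1$ case of the claimed formula (and the base case of the induction), since the other summands are empty when $k=2$.

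For the inductive step I start from
\[
\JW{k} \;=\; (\JW{k-1}\otimes\id) \;-\; \tfrac{[k-1]}{[k]}\,(\JW{k-1}\otimes \id)\,e_{k-1}\,(\JW{k-1}\otimes \id),
\]
and expand the \emph{lower} copy of $\JW{k-1}$ inside the ``curl'' term using the inductive hypothesis. The leading ``$\JW{k-2}\otimes\id$'' piece of the expansion, sandwiched between $e_{k-1}$ and the outer $\JW{k-1}$, reproduces exactly the $a=k-1$ summand of the claim. Each of the other summands of the inductive hypothesis (indexed by a depth $a<k-1$) introduces a second cap lower in the diagram.

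The crucial simplification is the absorption identity $\JW{k-1}\,e_j=0$ for $j<k-1$: the outer $\JW{k-1}$ still sits above each of these configurations, and it annihilates everything except the unique strand arrangement in which the extra cap pushes up against the outer box cleanly. That surviving configuration is exactly the diagram $D_a$ pictured in the statement, so each depth $a$ contributes precisely one term of the correct shape.

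The main obstacle is coefficient bookkeeping. Each nested use of Wenzl contributes a factor $-\tfrac{[m-1]}{[m]}$, and each half-loop absorbed into a Jones-Wenzl contributes a partial-trace factor $\tfrac{[m+1]}{[m]}$ via $\operatorname{ptr}(\JW{m})=\tfrac{[m+1]}{[m]}\JW{m-1}$. Telescoping these factors along the chain of recursions from level $k$ down to level $a$ should collapse to $\tfrac{[a]}{[k]}$, and the accumulated signs from the repeated Wenzl applications should combine to $(-1)^{a+k+1}$. Verifying this telescoping cleanly, together with the (routine but careful) check that no other diagrammatic terms survive absorption by the outer idempotent, is the only nontrivial step; the rest is a formal induction.
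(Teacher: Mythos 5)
The paper does not actually prove this lemma — it is cited from \cite{ExtH} with the explicit remark ``we cite without proof,'' so there is no in-paper argument to compare against. That said, your inductive strategy is the natural one, and I believe it is essentially what the source does; but your account of the \emph{mechanism} is off in a way that would derail the argument if carried out literally.

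The operative identity is not $\JW{k-1}e_j = 0$ but rather \emph{absorption}: $(\JW{k-2}\otimes\id)\,\JW{k-1} = \JW{k-1}$. When you substitute the inductive hypothesis for one of the two copies of $\JW{k-1}$ in the curl term $(\JW{k-1}\otimes\id)\,E_{k-1}\,(\JW{k-1}\otimes\id)$, each summand of the hypothesis contains a $\JW{k-2}$ box, a free cup at the boundary, and a wrap-around strand. You must expand the copy situated so that the free cup of each $D_a'$ lands on the \emph{outer} boundary of the full picture; then the $\JW{k-2}$ of $D_a'$ meets the remaining $\JW{k-1}$ along its first $k-2$ strands and is absorbed, and a short isotopy turns the result into exactly $D_a$ at level $k$. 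Nothing is annihilated — every $D_a'$ with $1\le a\le k-2$ survives and produces the $D_a$ term, and the leading $\JW{k-2}\otimes\id$ piece produces $D_{k-1}$. If instead you expand the \emph{other} copy (the one your phrase ``the outer $\JW{k-1}$ still sits above each of these configurations'' suggests), the free cup of $D_a'$ does press into the surviving $\JW{k-1}$, and then $\JW{k-1}e_a = 0$ kills every summand with $a\le k-3$; you would lose almost all of the claimed terms rather than recover them. So the ``annihilation picture'' is not a feature of the proof, it is a sign that the wrong copy is being expanded.

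Two smaller points. First, no partial trace occurs anywhere in the computation: the closed half-loops you mention (the source of your $\tfrac{[m+1]}{[m]}$ factor) never form, so that factor does not belong in the bookkeeping. The coefficient is obtained in a single step: the $D_a$ term at level $k$ picks up $-\tfrac{[k-1]}{[k]}$ from Wenzl multiplied by $\tfrac{1}{[k-1]}\,(\text{sign})\,[a]$ from the inductive hypothesis, giving $\tfrac{1}{[k]}\,(\text{sign})\,[a]$ at once, with no telescoping. Second, the Wenzl two-term relation is \emph{not} ``precisely the $a=k-1$ case'': the curl diagram contains two copies of $\JW{k-1}$, while $D_{k-1}$ contains one. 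They agree only after absorption, and coincide literally only at $k=2$.

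Finally, a flag on the statement as printed in this paper: the base case $k=2$ gives $\JW{2}=\JW{1}\otimes\id + \tfrac{1}{[2]}(-1)^{1+2+1}[1]\,D_1 = \id + \tfrac{1}{[2]}D_1$, whereas $\JW{2}=\id - \tfrac{1}{[2]}D_1$; and taking traces of both sides gives $[k+1]$ versus $[2][k]+[k-1]=[k+1]+2[k-1]$. Both checks indicate the exponent should read $(-1)^{a+k}$ rather than $(-1)^{a+k+1}$. Your write-up inherits the printed sign, so when you do carry out the bookkeeping you should be prepared for this discrepancy.
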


\begin{lem}[\cite{ExtH}]
\label{lem:yetanother}
\begin{align*}
\begin{tikzpicture}[baseline=0,scale=0.3]
\foreach \x in {-3,...,3} { \draw (\x,-3.5) -- (\x,3.5); }
\filldraw[thick,fill=white] (-4,-1.5) rectangle (4,1.5);
\node at (0,0) {$\JW{k}$};
\end{tikzpicture}
= 
\begin{tikzpicture}[baseline=0,scale=0.3]
\foreach \x in {-3,...,2} { \draw (\x,-3.5) -- (\x,3.5); }
\draw (4,-3.5) -- (4,3.5);
\filldraw[thick,fill=white] (-4,-1.5) rectangle (3,1.5);
\node at (-0.5,0) {$\JW{k-1}$};
\end{tikzpicture}
+
\frac{1}{[k][k-1]}
\sum_{a,b = 1}^{k-1} (-1)^{a+b+1} [a][b] \;
\begin{tikzpicture}[baseline=0,scale=0.3]
\draw[decorate,decoration={brace,raise=2pt}] (-4,3.5) -- (-2,3.5);
\node at (-3,4.6) {$a$};
\draw[decorate,decoration={brace,mirror,raise=2pt}] (-4,-3.5) -- (-1,-3.5);
\node at (-2.5,-4.85) {$b$};
\foreach \x in {-4,-3} { \draw (\x,-3.5) -- (\x+1,0) -- (\x,3.5); }
\draw (-2,-3.5) -- (-1,0);
\draw (-1,-3.5) arc (180:0:0.5);
\draw (-1,3.5) arc (360:180:0.5);
\draw (0,3.5) -- (-1,0);
\foreach \x in {1,2} { \draw (\x,-3.5) -- (\x-1,0) -- (\x,3.5); }
\filldraw[thick,fill=white] (-4,-1.5) rectangle (2,1.5);
\node at (-1,0) {$\JW{k-2}$};
\end{tikzpicture}.
\end{align*}
\end{lem}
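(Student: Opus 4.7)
The plan is to obtain Lemma \ref{lem:yetanother} by applying Lemma \ref{lem:basicrecursion} a second time, this time to the $\JW{k-1}$ that appears inside every correction term of the first expansion. Concretely, I would first apply Lemma \ref{lem:basicrecursion} to $\JW{k}$, separating off the ``straight-through'' term ($\JW{k-1}$ with an extra parallel strand on the right) from the sum $\frac{1}{[k]}\sum_{a=1}^{k-1}(-1)^{a+k+1}[a]\,S_a$, where $S_a$ is the diagram in which $\JW{k-1}$ has a cap glued to its top from strand $a$ around to strand $a+1$ and the remaining strands are rerouted. Each $S_a$ still contains an internal $\JW{k-1}$, and the claim is essentially that the same recursion applied to this inner $\JW{k-1}$ turns the single sum into the asserted double sum.

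The second step is to apply Lemma \ref{lem:basicrecursion} (rotated to cap from below) to the inner $\JW{k-1}$ inside each $S_a$. This splits $\JW{k-1}$ into a straight-through $\JW{k-2}$ (with an extra strand) plus $\frac{1}{[k-1]}\sum_{b=1}^{k-1}(-1)^{b+k}[b]$ times a bottom-capped diagram. The straight-through contribution, once combined with the $a$-cap from above, reduces to a diagram containing a cap on top of $\JW{k-2}$; by the annihilation property $e_i \JW{k-2}=0$ (Wenzl's relation) this piece vanishes whenever the cap sits inside the Jones--Wenzl, and all that survives is the genuinely two-capped diagram with $\JW{k-2}$ sandwiched between an $a$-cap on top and a $b$-cap on bottom. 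Collecting the coefficients yields $\frac{1}{[k][k-1]}(-1)^{a+k+1}(-1)^{b+k}[a][b] = \frac{(-1)^{a+b+1}[a][b]}{[k][k-1]}$, exactly matching the stated formula after the re-indexing $a,b \in \{1,\ldots,k-1\}$.

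The main obstacle will be the combinatorial bookkeeping: keeping track of which strand-positions the top-cap and bottom-cap land on after both recursive expansions, verifying that the ``mixed'' straight-through contributions do indeed vanish via $e_i \JW{k-2}=0$, and checking that the few genuinely leftover straight-through terms combine cleanly to give the single $\JW{k-1}$-with-strand term on the right-hand side (with no stray multiples of it). A cleaner alternative, in case the direct bookkeeping becomes unwieldy, is to verify the claimed identity by checking that both sides are annihilated by $e_1,\dots,e_{k-1}$ on the left and agree in a partial trace, then invoke the uniqueness of $\JW{k}$; however I expect the iterated-recursion route to be sufficient and more transparent.
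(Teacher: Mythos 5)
The paper cites Lemma~\ref{lem:yetanother} from \cite{ExtH} without proof, so there is no in-paper argument to compare against. Your overall strategy of iterating Lemma~\ref{lem:basicrecursion} is the right one, but the central claim in your second step is incorrect and in fact masks a necessary piece of the bookkeeping.

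You assert that after expanding the inner $\JW{k-1}$ via (a rotated) Lemma~\ref{lem:basicrecursion}, the straight-through piece $\JW{k-2}\otimes\mathrm{id}$ dies because ``the cap sits inside the Jones--Wenzl.'' Look again at the correction term of Lemma~\ref{lem:basicrecursion}: the cap there joins two adjacent \emph{outer} boundary points directly and never attaches to the inner Jones--Wenzl box, so substituting $\JW{k-2}\otimes\mathrm{id}$ does not create a capped $\JW{k-2}$, and $e_i\JW{k-2}=0$ does not apply. What the substitution actually creates is a \emph{cup}: the identity strand of $\JW{k-2}\otimes\mathrm{id}$ has its bottom on the $(k-1)$st outer bottom point and its top rerouted, by the rainbow strand of the correction term, around to the $k$th outer bottom point, so those two adjacent bottom points become joined. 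That nonzero diagram is exactly the $b=k-1$ summand of the target double sum. This is also forced by counting: Lemma~\ref{lem:basicrecursion} applied to $\JW{k-1}$ yields a sum only over $b=1,\dots,k-2$ (not $1,\dots,k-1$ as in your sketch), while Lemma~\ref{lem:yetanother} needs $b$ up to $k-1$; the missing $b=k-1$ term must come from the inner straight-through, not vanish. With this fixed, for $b<k-1$ the coefficients multiply out to $\tfrac{(-1)^{a+b+1}[a][b]}{[k][k-1]}$, and you should verify that the surviving first-recursion coefficient of the straight-through piece matches the $b=k-1$ instance of this formula; I would strongly recommend sanity-checking the signs on $\JW{2}$ and $\JW{3}$, since sign conventions in these recursions are easy to garble (and worth confirming as stated in both lemmas). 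Your fallback, verifying that the right-hand side is uncappable with the correct identity coefficient and invoking uniqueness of $\JW{k}$, is sound and arguably less error-prone.
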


\begin{Defn}[\cite{VJ1},\cite{EP}]
The {\em principal graph} consists of vertices and edges.
The vertices of the principal graph are the isomorphism classes of idempotents in $V_{k,+}$ for any $k$.  
There are $n$ edges between $p$ and $q$ if $\iota(p) =
\begin{tikzpicture}[baseline]
	\foreach \x in {-3,-1,1,3,5} \draw (\x mm,-4mm)--(\x mm, 4mm);
	\node[rectangle,draw,fill=white] {$\; p \;$};
\end{tikzpicture}
$ contains $n$ copies of $q$, meaning that when $\iota(p)$ is decomposed into minimal idempotents, $n$ of these are isomorphic to $q$.  The dual principal graph is constructed in the same way, for the idempotents in $V_{k,-}$.
\end{Defn}

\begin{example}[\cite{VJ1},\cite{EP}][The principal graph of Temperley-Lieb is $A_n$ or $A_\infty$]
The vertices of the principal graph of Temperley-Lieb are the Jones-Wenzl idempotents.
Wenzl's relation says that if $[n+1]\neq 0$, the projection $\iota (f^{(n)})$ decomposes into minimal projections isomorphic to $f^{(n+1)}$ and $f^{(n-1)}$.  So there is an edge in the principal graph between $f^{(k)}$ and $f^{(k+1)}$ as long as $[k+1] \neq 0$.  

If $\delta>2$, we never have $[k] = 0$, so the principal graph is the Dynkin diagram $A_{\infty}$.
 If $\delta = 2 \cos{\frac{\pi}{n}}$ then $[n]=0$, so by the positive definiteness of the inner product $f^{(n-1)}=0$.  Thus the principal graph has $n-1$ vertices and is the Dynkin diagram $A_{n-1}$.
 \end{example}

In regards to the Jones-Wenzl idempotents, we will often need to know the coefficients of the $TL$ pictures in the Jones-Wenzl idempotents.  We will make frequent use of an explicit formula found in \cite{SM} to do this.

\section{The Planar Algebra of a Bipartite Graph}

\begin{remark}[\cite{VJ1},\cite{EP}]The planar algebra of a bipartite graph $G$, defined in \cite{VJ2}, is another example of a planar algebra.  It shares with Temperley-Lieb the property that closed circles count for a constant --- in this case, 
$\delta= \left\Vert G \right\Vert $ (where $\Norm{G}$ is the operator norm of the adjacency matrix of $G$).  It also has an involution, is spherically invariant and has a positive definite inner product.  Usually, however, it is not a subfactor planar algebra, because for most graphs, the zero-box spaces are too big.
\end{remark}
\begin{notation}[\cite{VJ1},\cite{EP}]
All graphs in this paper are simply laced.  Therefore paths or loops can, and will, be entirely described by the vertices they pass through. 
When we concatenate two paths written this way, we have to drop a vertex:
$$a b \ldots c d \sqcup d e \ldots f g = a b \ldots c d e \ldots  f g.$$
When a loop is described in this notation, we might forget to notate the last vertex.

Paths and loops are written in boldface, and indices indicate the position of a vertex in a path or loop: $\pth{p} = p_1 p_2 \ldots p_k$.
\end{notation}

\begin{Defn}[\cite{VJ1},\cite{EP}]
$PABG(G)_{0,+}$ has the even vertices $U_+$ of $G$ as a basis; $PABG(G)_{0,-}$ has the odd vertices $U_-$ as a basis.  The space $PABG(G)_{i,+}$ has a basis consisting of all based loops of length $2i$ on the graph $G$, based at any even vertex; similarly the space $PABG(G)_{i,-}$ has a basis consisting of all based loops of length $2i$ on the graph $G$, based at any odd vertex.
\end{Defn}

To see how the graph planar algebra is a planar algebra, we need to show how a shaded planar tangle acts on the basis elements.  We do this by using the idea of a state.  

\begin{Defn}[\cite{VJ1},\cite{EP}] Suppose $\tau$ is a tangle with $k$ input disks.  A {\em state} on $\tau$ is an assignment of even vertices to unshaded regions, odd vertices to shaded regions, and edges to strings in such a way that if an edge is assigned to a string, its endpoints are assigned to the two regions which touch that string.   Given a state $\sigma$ on $\tau$, we can read clockwise around any disk and get a loop on $G$ (the starred region tells us where to base the loop).  Let $\partial_i (\sigma)$ be the loop read from the $i^{\text{th}}$ inner disk, and let $\partial_o (\sigma)$ be the loop read from the outer disk.
\end{Defn}

\begin{Defn}[\cite{VJ1},\cite{EP}]
Define the action of $\tau$ on loops $\lp{p_i}$ by
$$\tau(\lp{p_1},\ldots,\lp{p_k}):= \sum_{\mbox{states } \sigma \mbox{ s.t. } \partial_i(\sigma)=\lp{p_i}} c(\tau, \sigma) \cdot \partial_o (\sigma) ,$$
where $c(\tau, \sigma)$ is a number defined below.  Extend this action from the basis of loops to all of $PABG(G)_{i,\pm}$ by linearity.
\end{Defn}
Our bipartite graph G will have Perron-Frobenius data.

\begin{Defn}[\cite{VJ1},\cite{EP}]
Let $\Lambda$ be the adjacency matrix of $G$.   Its  {\em Perron-Frobenius eigenvalue} is the maximal modulus eigenvalue (which is necessarily real); call this $\delta$.   Let $\lambda$ be the eigenvector with eigenvalue $\delta$, called the {\em Perron-Frobenius eigenvector}.  We normalize $\lambda$ to have norm $\sqrt{2}$, and let $\lambda(a)$ be the entry of $\lambda$ corresponding to vertex $a$.  
\end{Defn}

The normalization above is chosen so that $\sum_{a \in U_+} \lambda(a)^2=1$ and $\sum_{a \in U_-} \lambda(a)^2=1$.

\begin{Defn}[\cite{VJ1},\cite{EP}]
To define $c(\sigma, \tau)$, we first put $\tau$ in a `standard form':
isotope $\tau$ so that all strings are smooth, and all its boxes are rectangles, with the starred region on the left, half the strings coming out of the top and the other half coming out the bottom.
Let $E(\tau)$ be the set of maxima and minima on strings of the standard form of $\tau$.  If $t \in E(\tau)$ is a max or min on a string of $\tau$, let 
$\sigma(t_{\mbox{convex}})$ 
be the vertex assigned by $\sigma$ to the region touching $t$ where the string is convex, and 
${\sigma}(t_{\mbox{concave}})$
 be the vertex assigned by $\sigma$ to the region touching $t$ where the string is concave.  Then
$$c(\tau, \sigma):=\prod_{t \in E(\tau)} \sqrt{\frac{\lambda({\sigma}(t_{\mbox{convex}}))}{\lambda({\sigma}(t_{\mbox{concave}}))}}$$
\end{Defn}

Let us consider our specific bipartite graph for ``2221'':

\begin{example}
Take our bipartite graph to be 
$$H=\begin{tikzpicture}[baseline,scale=.7]
\node at (0,0) {$\bullet$};
\node at (0,0) [above] {$z_0$};
\node at (1,0) {$\bullet$};
\node at (1,0) [above] {$b_0$};
\node at (2,0) {$\bullet$};
\node at (2,0) [below] {$c$};
\node at (2,.5) {$\bullet$};
\node at (2,.5) [above] {$d$};
\node at (3,.5) {$\bullet$};
\node at (3,.5) [above] {$b_1$};
\draw (0,0)--(2,0);
\draw (2,0)--(2,.5);
\node at (4,.5) {$\bullet$};
\node at (4,.5) [above] {$z_1$};
\node at (3,-.5) {$\bullet$};
\node at (3,-.5) [below] {$b_2$};
\node at (4,-.5) {$\bullet$};
\node at (4,-.5) [below] {$z_2$};
\draw (2,0)--(3,.5)--(4,.5);
\draw (2,0)--(3,-.5)--(4,-.5);
\end{tikzpicture};$$ 
the Perron-Frobenius data for $H$ is $\delta=\sqrt{\frac{5+\sqrt{21}}{2}}$ and 
\begin{align*}
\lambda(z_i) & = \sqrt{\frac{7-\sqrt{21}}{42}}, & 
\lambda(b_i) & =\sqrt{\frac{7+\sqrt{21}}{42}}, \\
\lambda(d) & =\sqrt{\frac{21-3\sqrt{21}}{42}}, & 
 \lambda(c) & =\sqrt{\frac{21+3\sqrt{21}}{42}}. 
\end{align*}
Here is an example of how shaded planar tangles act on paths:
\begin{align*}\begin{tikzpicture}[scale=.7,baseline]
	\filldraw[shaded] (0,2)--(0,1) arc (-180:0:5mm) -- (1,2);
	\filldraw[shaded] (0,-1)--(0,-.7) arc (180:0:5mm) -- (1,-1);
	\node at (0,1.2) [rectangle, draw, thick, fill=white, inner sep=2mm] {};
	\draw[thick] (-1,-1) rectangle (2,2);
\end{tikzpicture}( c b_0)  = 
\begin{tikzpicture}[scale=.7,baseline]
	\filldraw[shaded] (0,2)--(0,1) arc (-180:0:5mm) -- (1,2);
	\filldraw[shaded] (0,-1)--(0,-.7) arc (180:0:5mm) -- (1,-1);
	\node at (0,1.2) [rectangle, draw,thick, fill=white, inner sep=2mm] {};
	\draw[thick] (-1,-1) rectangle (2,2);
	\node at (.6,1.2) {{\textcolor{white}{$b_0$}}};
	\node at (-.6,1.2) {{\textcolor{gray}{$c$}}};
	\node at (.5,-.6) {{\textcolor{white}{?}}};
\end{tikzpicture}
&=
\frac{\lambda(b_0)}{\lambda(c)} c b_0 c b_0 + \frac{\sqrt{\lambda(b_0) \lambda(b_1)}}{\lambda(c)} c b_0 c b_1  \\
& + \frac{\sqrt{\lambda(b_0) \lambda(b_2)}}{\lambda(c)} c b_0 c b_2 +\frac{\sqrt{\lambda(b_0) \lambda(d)}}{\lambda(c)} c b_0 c d
\end{align*}
\end{example}

\begin{remark}[\cite{VJ1},\cite{EP}] Note that the planar algebra of a bipartite graph as defined is not often a subfactor planar algebra; $\dim(PABG(G)_{0,+})$ equals the number of even vertices of $G$, $\dim(PABG(G)_{0,-})$ equals the number of odd vertices of $G$, and these are both $1$ only in rather dull cases.  However, this planar algebra does have an involution $*$ defined on loops by traversing them backwards.  What's more, the planar algebra of a bipartite graph has a genuine trace (that is, a cyclically commutative map $PABG(G)_{i,\pm} \rightarrow \C$):
\end{remark}
\begin{Defn}[\cite{VJ1},\cite{EP}] The trace $Z: PABG(G)_{i,\pm} \rightarrow \mathbb{C}$ is defined as the composition 
\begin{equation*}
	\xymatrix{
	PABG(G)_{i,\pm} \ar@{->}[r]^{\operatorname{tr}} & PABG(G)_{0,\pm} \ar@{->}[r]^{Z_0} & \C\qquad\qquad\quad }
\end{equation*}
where $\operatorname{tr}$ is the trace tangle, and $Z_0$ is the linear extension of the map $a \mapsto \lambda(a)^2$.
\end{Defn}

\begin{remark}[\cite{VJ1},\cite{EP}]This trace gives us a positive definite inner product, and by construction (and normalization of $\lambda$) agrees with the trace on $TL(\delta) \subset PABG(G)$.
\end{remark}
 
\section{Annular Temperley-Lieb Modules}
We will need the following definitions.

\begin{Defn}[\cite{VJ5},\cite{EP}]
An {\em annular Temperley-Lieb tangle} is any shaded planar tangle having exactly one input disk; for example
$$\,
\begin{tikzpicture}[annular]
	\clip (0,0) circle (2cm);

	\filldraw[shaded] (0:2cm) .. controls ++(180:7mm) and ++(-135:7mm) .. (45:2cm) -- (45:3cm)--(90:3cm) -- (90:2cm) .. controls ++(-90:7mm) and ++(135:7mm) .. (-45:2cm) -- (-45:3cm);		
	\filldraw[shaded] (-90:2cm) .. controls ++(90:7mm) and ++(45:7mm) .. (-135:2cm) -- (-135:3cm);
	\filldraw[shaded] (-180:2cm) .. controls ++(0:7mm) and ++(-45:7mm) .. (135:2cm) -- (135:3cm);
		
	\draw[ultra thick] (0,0) circle (2cm);
	
	\node at (0,0)  [empty box] (T) {};
	\node at (T.160) [above] {$\star$};
	\node at (-170:2cm) [right] {$\star$};
\end{tikzpicture} ,
\,
\begin{tikzpicture}[annular]
	\clip (0,0) circle (2cm);

	\filldraw[shaded] (0:2cm) .. controls ++(180:7mm) and ++(-135:7mm) .. (45:2cm) -- (45:3cm)--(90:3cm) -- (90:2cm) --(0,0) -- (-45:2cm) -- (-45:3cm);		
	\filldraw[shaded] (-90:2cm) .. controls ++(90:7mm) and ++(45:7mm) .. (-135:2cm) -- (-135:3cm);
	\filldraw[shaded] (-180:2cm) .. controls ++(0:7mm) and ++(-45:7mm) .. (135:2cm) -- (135:3cm);
	\filldraw[shaded] (T.180) .. controls ++(180:11mm) and ++(-120:11mm) .. (T.-120);
		
	\draw[ultra thick] (0,0) circle (2cm);
	
	\node at (0,0)  [empty box] (T) {};
	\node at (T.160) [above] {$\star$};
	\node at (-170:2cm) [right] {$\star$};
\end{tikzpicture} , \text{ and }
\,
\begin{tikzpicture}[annular]
	\clip (0,0) circle (2cm);

	\filldraw[shaded] (0,0) -- (90:3cm) -- (0:4cm) -- (-90:3cm) -- (0,0) .. controls ++(-45:2cm) and ++(45:2cm) .. (0,0);
	\filldraw[shaded] (0,0) .. controls ++(-130:30mm) and ++(130:30mm) .. (0,0) .. controls ++(160:15mm) and ++(-160:15mm) .. (0,0);	

	\draw[ultra thick] (0,0) circle (2cm);
	
	\node at (0,0)  [empty box] (T) {};
	\node at (T.0) [right] {$\star$};
	\node at (110:2cm) [below] {$\star$};
\end{tikzpicture}. $$
We write $ATL_{k,\pm \rightarrow m,\pm}$ for the set of all annular Temperley-Lieb tangles with type $(k,\pm)$ inner disk and type $(m,\pm)$ outer disk.  If $ATL$ is being applied to an element $R$ of type $(k,\pm)$, we may simply write $ATL_{m,\pm}(R)$ for $ATL_{k,\pm \rightarrow m,\pm}(R)$.
\end{Defn}
\begin{Defn}[\cite{VJ5},\cite{EP}] An {\em annular Temperley-Lieb module} is a family of vector spaces $V_{i,\pm}$ which has an action by annular Temperley-Lieb.  This action should be compatible with composition.
\end{Defn}
All planar algebras are annular Temperley-Lieb modules and we can break up a planar algebra into modules.

\begin{Defn}[\cite{VJ5},\cite{EP}]
An annular Temperley-Lieb module is {\em irreducible} if it has no proper submodules.  This is equivalent to being indecomposable.
\end{Defn}

If $\delta>2$, then irreducible annular Temperley-Lieb modules are classified by an eigenvector and its eigenvalue.

\begin{Defn}[\cite{VJ5},\cite{GR},\cite{EP}]
If $V$ is an irreducible annular Temperley-Lieb (for $\delta>2$) module, it has a {\em low weight space:}  some $k$ such that $V_{i,\pm} = 0$ if $i<k$ and $V_{k,+} \neq 0$ or $V_{k,-} \neq 0$.  
Then $\dim (V_{k,\pm})\leq 1$, and any non-zero element $T \in V_{k,\pm}$ generates all of $V$ as an annular Temperley-Lieb module. In particular, we may choose $T=T^*$.

If $V$ has low weight $(0,+)$,  $T$ is an eigenvector of the double-circle operator
$$\sigma
=
\begin{tikzpicture}[scale=.5,baseline]
	\draw[shaded] (0,0) circle (20mm);
	\draw[unshaded] (0,0) circle (10mm);
	\draw[thick] (0,0) circle (5mm);
	\draw[thick] (0,0) circle (25mm);
\end{tikzpicture},$$
and the eigenvalue $\mu^2$ of $T$ can be any number in $[0,\delta^2]$.  We write $V^{(0,+),\mu}$ for this module.

If $V$ has low weight $(0,-)$,  $T$ is an eigenvector of the double-circle operator, $\sigma$ from above with its shading reversed.
The eigenvalue $\mu^2$ of $T$ can be any number in $[0,\delta^2]$.  We write $V^{(0,-),\mu}$ for this module.

If $V$ has low weight $k>0$, $T$ is an eigenvector of the rotation operator 
$$\rho
=
\begin{tikzpicture}[annular]
	\clip (0,0) circle (2cm);

	\filldraw[shaded] (158:4cm)--(0,0)--(112:4cm);
	\filldraw[shaded] (-158:4cm)--(0,0)--(-112:4cm);
	
	\draw[shaded] (68:4cm)--(0,0)--(-68:4cm)--(0:10cm);
	
	\draw[ultra thick] (0,0) circle (2cm);
	
	\node at (0,0)  [empty box] (T) {};
	\node at (T.180) [left] {$\star$};
	\node at (-90:2cm) [above] {$\star$};
	\node at (0:1cm) {$\cdot$};
	\node at (20:1cm) {$\cdot$};
	\node at (-20:1cm) {$\cdot$};
\end{tikzpicture},$$ 
and the eigenvalue $\zeta$ of $T$ can be any $k^{th}$ root of unity.  We write $V^{k,\zeta}$ for this module.
\end{Defn}

\begin{remark}[\cite{VJ5},\cite{EP}]
For $V^{k,\zeta}$ the condition that the generator $T$ is a low weight element (i.e., if $m<k$ 
then $ATL_{m,\pm} (T)=0$) is equivalent to the easier-to-check condition that
all of the capping-off operators in $ATL_{k,\pm \rightarrow k-1,\pm}$ give $0$:  if we define
$$\epsilon_1=\begin{tikzpicture}[annular]
	\clip (0,0) circle (2cm);

	\filldraw[shaded] (0,0) .. controls ++(170:2cm) and ++(100:2cm) .. (0,0);
	\filldraw[shaded] (-158:4cm)--(0,0)--(-112:4cm);

	
	\draw[shaded] (68:4cm)--(0,0)--(-68:4cm)--(0:10cm);
	
	\draw[ultra thick] (0,0) circle (2cm);
	
	\node at (0,0)  [empty box] (T) {};
	\node at (T.180) [left] {$\star$};
	\node at (180:2cm) [right] {$\star$};
	\node at (0:1cm) {$\cdot$};
	\node at (20:1cm) {$\cdot$};
	\node at (-20:1cm) {$\cdot$};
	
\end{tikzpicture}\, , 
\epsilon_2=\begin{tikzpicture}[annular]
	\clip (0,0) circle (2cm);

	\filldraw[shaded] (158:4cm) -- (0,0) .. controls ++(130:2cm) and ++(50:2cm) .. (0,0)--(-68:4cm) arc (-68:158:4cm);
	\filldraw[shaded] (-158:4cm)--(0,0)--(-112:4cm);

	\draw[ultra thick] (0,0) circle (2cm);
	
	\node at (0,0)  [empty box] (T) {};
	\node at (T.180) [left] {$\star$};
	\node at (180:2cm) [right] {$\star$};
	\node at (0:1cm) {$\cdot$};
	\node at (20:1cm) {$\cdot$};
	\node at (-20:1cm) {$\cdot$};
	
\end{tikzpicture}\, , \ldots,
\epsilon_{2k}=\begin{tikzpicture}[annular]
	\clip (0,0) circle (2cm);

	\filldraw[shaded] (112:4cm) -- (0,0) .. controls ++(140:2cm) and ++(-140:2cm) .. (0,0)--(-112:4cm) arc (-112:-202:4cm);
	
	\draw[shaded] (68:4cm)--(0,0)--(-68:4cm)--(0:10cm);
	
	\draw[ultra thick] (0,0) circle (2cm);
	
	\node at (0,0)  [empty box] (T) {};
	\node at (T.180) [left] {$\star$};
	\node at (90:2cm) [below] {$\star$};
	\node at (0:1cm) {$\cdot$};
	\node at (20:1cm) {$\cdot$};
	\node at (-20:1cm) {$\cdot$};
	
\end{tikzpicture}$$
then  $\epsilon_i(T)=0$ for all $i$.  
\end{remark}

 To decompose a planar algebra into irreducible annular Temperley-Lieb modules, we count dimensions and consider the action of $\rho$, the rotation operator.  First, we need some information on the dimensions of the irreducible modules.

\begin{theorem}[\cite{VJ5},\cite{EP}] If $k>0$, for $m > k$
$$\dim (V_m^{k,\zeta}) = \binom{2m}{m-k};$$
If $k=0$, the dimensions depend on $\mu$:
\begin{align*}
\dim (V_{0,\pm}^{(0,\pm),0}) = 1, \; &
\dim (V_{0,\mp}^{(0,\pm),0}) = 0, &&
\dim (V_m^{(0,\pm),0}) = \frac{1}{2} \binom{2m}{m}, \\
\dim (V_{0,\pm}^{0,\delta})&=1  && \dim (V_m^{0,\delta})= \frac{1}{m+1} \binom{2m}{m}  , \\
\dim (V_{0,\pm}^{0,\mu})&=1  && \dim (V_m^{0,\mu})  = \binom{2m}{m} &
\text{ if } \mu \in (0,\delta)
\end{align*}
\end{theorem}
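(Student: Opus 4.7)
The plan is to exploit that each irreducible annular Temperley-Lieb module is cyclic, generated by its low weight vector $T$, so $V_m$ is spanned by $\{\tau(T) : \tau \in ATL_{k,\pm \to m,\pm}\}$. The entire calculation reduces to (i) identifying the $\tau$ that act trivially on $T$ and (ii) enumerating equivalence classes of the rest. I would follow the program of Graham--Lehrer and Jones for affine Temperley-Lieb cell modules, rendered in the annular planar-tangle language of the excerpt.

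\textbf{Case $k > 0$.} Here $T \in V_k$ satisfies $\epsilon_i(T) = 0$ for every cap-off $\epsilon_i$ and $\rho(T) = \zeta T$. The first set of relations forces any annular tangle with fewer than $2k$ \emph{through-strings} (strings connecting the inner and outer disks) to act as zero, because such a tangle factors through an inner cap. Therefore $V_m$ is spanned by those $\tau$ with all $2k$ inner endpoints running out to the boundary and the remaining $2m-2k$ outer endpoints paired off among themselves by non-crossing arcs. A standard bijection with lattice paths (or ballot sequences) of length $2m$ finishing at height $2k$ counts these diagrams as $\binom{2m}{m-k}$. For $m=k$ this returns $1$, matching the one-dimensional low weight space; for $m=k+1$ it returns $2(k+1)$ adjacent cap positions on the outer boundary. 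The rotation relation $\rho T = \zeta T$ merely distinguishes non-isomorphic modules by the scalar $\zeta$ and does not further collapse the count.

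\textbf{Case $k = 0$.} Now $T$ is a scalar multiple of an empty diagram on one shaded or unshaded side, so there are no inner cap-offs — the only relation is the double-circle eigenvalue $\sigma(T) = \mu^2 T$. For generic $\mu \in (0,\delta)$ this is the sole identification, and the natural spanning set of annular tangles acting on $T$, together with the replacement of each pair of concentric loops around the inner disk by the scalar $\mu^2$, gives $\binom{2m}{m}$ basis vectors. For $\mu = \delta$ one further identifies every closed internal circle with the scalar $\delta$ exactly as in Temperley-Lieb, collapsing the count to the Catalan number $\frac{1}{m+1}\binom{2m}{m}$. For $\mu = 0$ the only annular tangles that act nontrivially on $T$ are those in which no closed curve encircles the inner disk at all, which halves the count to $\frac{1}{2}\binom{2m}{m}$; the vanishing of the opposite-parity zero-level space simply reflects that $T$ sits on only one side of the checkerboard.

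\textbf{Main obstacle.} Enumerating candidate spanning sets is the easy half; the subtle part is proving linear independence, so that the counts above are exact dimensions and not merely upper bounds. I would realize each module as an explicit quotient of the free annular-tangle module by the ideal generated by the cap-off and eigenvalue relations, equip it with its natural sesquilinear form coming from the trace on annular tangles, and verify nondegeneracy. The $k=0$, $\mu = \delta$ case is the subtlest, because identifying the quotient with Temperley-Lieb requires the Jones--Wenzl projection machinery; the $\mu = 0$ sub-cases additionally demand careful bookkeeping of shading parity.
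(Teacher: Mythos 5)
The paper does not prove this theorem at all; it is imported with citations to Jones's annular-tangle paper and to Peters's thesis (both flagged in the theorem header), so there is no internal proof to compare your argument against. Taking your proposal on its own terms, the high-level strategy is the right one and matches what those sources do: view each irreducible module as cyclic over its low weight vector $T$, show the cap-off and eigenvalue relations kill most annular tangles, count equivalence classes of surviving tangles as an upper bound, and then show nondegeneracy of the induced sesquilinear form to promote the count to an exact dimension. Your account of the $k>0$ case is sound in outline (through-strings, ballot-sequence count, the sanity checks at $m=k$ and $m=k+1$), although you underplay the role of $\rho T = \zeta T$ there: through-strings on an annulus can wind, and it is precisely the rotation eigenvalue that lets you fix a winding representative and obtain a finite set, rather than merely serving as a module invariant.

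The $k=0$ explanations, however, have two genuine errors. For $\mu=\delta$ you attribute the collapse to the Catalan number to the rule ``identify every closed internal circle with the scalar $\delta$,'' but that identification is already in force for every $\mu$ (that is precisely what the double-circle eigenvalue encodes), so it cannot be what distinguishes $\mu=\delta$ from a generic $\mu\in(0,\delta)$. The actual mechanism is that at $\mu=\delta$ the natural bilinear form acquires a radical and the irreducible quotient is Temperley-Lieb itself; the drop from $\binom{2m}{m}$ to the Catalan number is the size of that radical, not the effect of absorbing loops. For $\mu=0$ you claim the count is halved because ``no closed curve encircles the inner disk,'' but if you throw out every diagram with an arc or loop around the hole you are left with plain disk matchings, whose count is again the Catalan number $\tfrac{1}{m+1}\binom{2m}{m}$, not $\tfrac12\binom{2m}{m}$. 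The halving is a shading-parity phenomenon: at $\mu=0$ the single-circle (shading-changing) operator annihilates $T$, so only one of the two parities of the zero-level space survives ($\dim V_{0,\mp}^{(0,\pm),0}=0$), and correspondingly only half of the $\binom{2m}{m}$ annular diagrams --- those compatible with the surviving shading --- appear. You also correctly flag that linear independence is the crux, but the proposal stops at naming the obstacle; to actually close the argument you would need Jones's computation of the annular trace form on these spanning sets (and, at $\mu=\delta$, the Jones-Wenzl calculus you mention) and that is the bulk of the work, not an afterthought.
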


We now want to use this information to break up the hypothetical ``2221'' planar algebra into irreducible modules.

\chapter{Decomposition of ``2221'' planar algebra into irreducibles}
Consider a hypothetical planar algebra $\mathcal{P}$ having principal graph $H$.  It will have $\delta=\sqrt{\frac{5+\sqrt{21}}{2}}$, and from the principal graph we can reconstruct the Bratelli diagram and sequence of dimensions: (See Figure 3.1)
\begin{figure}
$$\begin{tikzpicture}
	\node at (0,0) (zero) {$1$};
	\node at (1,-1) (one) {$1$};
	\draw (zero)--(one);
	\node at (0,-2) (two1) {$1$};
	\node at (2,-2) (two2) {$1$};
	\draw (one)--(two1);
	\draw (one)--(two2);	
	\node at (1,-3) (three1) {$2$};
	\node at (1.5,-3) (three2) {$1$};
	\node at (2.5,-3) (three3) {$1$};
	\node at (3,-3) (three4) {$1$};
	\draw (two1)--(three1);
	\draw (two2)--(three1);
	\draw (two2)--(three2);
	\draw (two2)--(three3);
	\draw (two2)--(three4);
	\node at (0,-4) (four1) {$2$};
	\node at (2,-4) (four2) {$5$};
	\node at (3,-4) (four3) {$1$};
	\node at (4,-4) (four4) {$1$};
	\draw (three1)--(four1);
	\draw (three1)--(four2);
	\draw (three2)--(four2);
	\draw (three3)--(four2);
	\draw (three3)--(four3);
	\draw (three4)--(four2);
	\draw (three4)--(four4);
	\node at (1,-5) (five1) {$7$};
	\node at (1.5,-5) (five2) {$5$};
	\node at (2.5,-5) (five3) {$6$};
	\node at (3,-5) (five4) {$6$};
	\draw (four1)--(five1);
	\draw (four2)--(five1);
	\draw (four2)--(five4);
	\draw (four2)--(five2);
	\draw (four2)--(five3);
	\draw (four3)--(five3);
	\draw (four4)--(five4);
	\node at (0,-6) (six1) {$7$};
	\node at (2,-6) (six2) {$24$};
	\node at (3,-6) (six3) {$6$};
	\node at (4,-6) (six4) {$6$};
	\draw (five1)--(six1);
	\draw (five1)--(six2);
	\draw (five2)--(six2);
	\draw (five3)--(six2);
	\draw (five3)--(six3);
	\draw (five4)--(six2);
	\draw (five4)--(six4);
	\node at (2,-6.5) {$\vdots$};
\end{tikzpicture}$$
\caption{The Bratelli diagram of the ``2221'' planar algebra}
\end{figure}
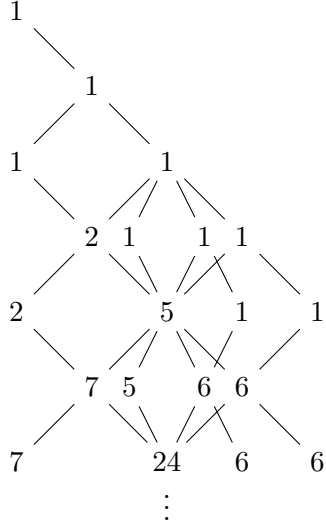

The sum of the squares of the numbers at the kth row gives us the dimension of $\mathcal{P}_k$.  We thus see that 
$\dim (\mathcal{P}_{0,\pm})=1 , \, \dim (\mathcal{P}_1)=1 , \, \dim (\mathcal{P}_2)= 2, \, \dim (\mathcal{P}_3)= 7, \\
 \dim (\mathcal{P}_4)= 31, \, \dim (\mathcal{P}_5)=146 , \, \ldots$

Fact 2.1.5 implies that $\mathcal{P}$ contains a copy of Temperley-Lieb, and Temperley-Lieb is an irreducible annular Temperley-Lieb module: $TL \simeq V^{0,\delta}$.
The sequence of dimensions of $TL$ is given by the $ith$ Catalan number $\frac{1}{i+1}{2i\choose i}$.
The sequence of dimensions of $TL$ is

$\dim (TL_{0,\pm})=1 , \, \dim (TL_1)=1 , \, \dim (TL_2)= 2, \, \dim (TL_3)= 5, \\
 \dim (TL_4)= 14, \, \dim (TL_5)=42 , \, \dim (TL_6)=132 , \ldots$

\noindent Compare the sequences:

$\mathcal{P}:1,1,2,7,31,146,\ldots, \\
\indent TL:1,1,2,5,14,42,\ldots$

\noindent These sequences differ for the first time at level 3.
$\mathcal{P}$ contains no other low weight 0 modules and no low weight 1 or 2 modules.  But $\mathcal{P}$ does contain one low weight 3 module:
$$\dim(V_3^{3,\zeta})=1 , \, \dim(V_4^{3,\zeta})=8 , \, \dim(V_5^{3,\zeta})=45 , \ldots$$
 By counting dimensions we see that $\mathcal{P}_3=TL_3 \oplus 2V_3^{3,\zeta}$.
  Then $\dim(\mathcal{P}_4)=31$ and $\dim(TL_4)=14$ and dim$(V_4^{3,\zeta})=8$.  We want $m$ and $n$ in $\mathbb{N}$ such that $\mathcal{P}_4=TL_4\oplus mV_4^{3,\zeta}\oplus nV_4^{4,\beta}$ where $8m+n=17$.  The only choices for $(m,n)$ are $(0,17),(1,9),\mbox{and}\  (2,1)$.  Thus, $\mathcal{P}$ does contain a low weight 4 module:
  $$\mathcal{P} = (TL\simeq V^{0,\delta}) \oplus mV^{3,\zeta} \oplus nV^{4, \beta} \oplus \cdots$$
  If we continue this process, we see that dim$(\mathcal{P}_5)=146 $ and 
\begin{eqnarray*}
\dim(TL_5\ \oplus mV_5^{3,\zeta} \oplus nV_5^{4,\beta})\\
 = 42+45m+10n\\
 = 212,\mbox{or},177,\mbox{or} 142
\end{eqnarray*}

\noindent This implies that $(m,n)=(2,1)$.\\
So $\mathcal{P}_4=TL_4\oplus 2V_4^{3,\zeta}\oplus V_4^{4,\beta}$ and\\ $\mathcal{P}_5=TL_5\oplus 2V_5^{3,\zeta}\oplus V_5^{4,\beta}\oplus 4V_5^{5,\gamma}$\\
Thus, $$\mathcal{P} = (TL\simeq V^{0,\delta}) \oplus 2V^{3,\zeta} \oplus V^{4, \beta} \oplus 4V^{5, \gamma} \oplus \cdots$$
Let $T_1,T_2$ be the generators of each copy of $V_3^{3,\zeta}$.  Any diagram with some $T_i$'s inside it, and only 6 strands on the outer boundary, is contained in $ATL_3(T_1)\oplus ATL_3(T_2)\oplus TL_3$.\\For instance, if R is a diagram with two $T_i$'s connected by 3 strands, then R $\in$ $ATL_3(T_1)\oplus ATL_3(T_2)\oplus TL_3$.  Let M be the generator of $V_4^{4,\beta}$.  Then any diagram with some $T_i$'s and $M$'s inside it, and only 8 strands on the outer boundary is contained in $ATL_4(M)\oplus ATL_4(T_1)\oplus ATL_4(T_2)\oplus TL_4$.  Let $N_i (i=1,2,3,4)$ be the generators of $V_5^{5,\gamma}$.  Any diagram with some $T_i$'s, $M$'s, and $N_i$'s inside it and only 10 strands on the outer boundary is contained in $\bigoplus_i{ATL_5(T_i)}\oplus ATL_5(M)\bigoplus_i{ATL_5(N_i)}\oplus TL_5$.

\chapter{Potential Generators T and Q for the ``2221'' Planar Algebra}
Since $\mathcal{P} = (TL\simeq V^{0,\delta}) \oplus 2V^{3,\zeta} \oplus V^{4, \beta} \oplus 4V^{5, \gamma} \oplus \cdots$, we will look for two generators T and Q, which suffice to generate the planar algebra.  We want to provide a presentation for the ``2221'' planar algebra.  
\begin{theorem}  There are two elements T,Q $\in PABG(H)_{3,+}$ satisfying the following relations, and the planar algebra generated by $T,Q$ is a subfactor planar algebra with principal graph $H$.
\begin{enumerate}
\item [(1)]$T=T^*, Q=Q^*$
\item [(2)]$\epsilon_i(T)=0 \mbox{ for }i=1,\ldots,6$ and $\epsilon_i(Q)=0 \mbox{ for }i=1,\ldots,6$
\item [(3)]$\rho(T)= T$ and $\rho(Q)=\omega Q$, where $\omega=e^{\frac{2\pi i}{3} }$
\item [(4)]The following 3-box relations hold:

\begin{itemize}
\item (R3(T)), (R3(Q))\hspace{1cm}
$\begin{tikzpicture}[scale=.6, baseline]
	\clip [draw] (2,2) arc (0:180:2cm) -- (-2,-2) arc (-180:0:2cm) -- (2,2);
	
	\draw[shaded] (0,2) .. controls ++(-115:1.5cm) and ++(115:1.5cm) .. (0,-2);
	\draw (0,2) .. controls ++(-90:1.5cm) and ++(90:1.5cm) .. (0,-2);
	\draw (0,2) .. controls ++(-65:1.5cm) and ++(65:1.5cm) .. (0,-2);
	\draw[shaded] (0,2) -- +(90:3cm) -- +(120:3cm) -- (0,2);
	\filldraw[shaded] (0,2) ++(65:5cm) -- (0,2) .. controls ++(-65:1.5cm) and ++(65:1.5cm) .. (0,-2) -- ++(-65:5cm);
	
	\draw[shaded] (0,-2) -- +(-90:3cm) -- +(-120:3cm) -- (0,-2);
	\node at (0,2) [Tbox] (A) {$T$};
	\node at (0,-2) [Tbox] (B) {$T$};
	\node at (A.180) [left] {$\star$};
	\node at (B.180) [left] {$\star$};
	\node at (-1.5,0)  {$\star$};	
	
	\draw[ultra thick] (2,2) arc (0:180:2cm) -- (-2,-2) arc (-180:0:2cm) -- (2,2);
\end{tikzpicture}$, \quad
	$\begin{tikzpicture}[scale=.6, baseline]
	\clip [draw] (2,2) arc (0:180:2cm) -- (-2,-2) arc (-180:0:2cm) -- (2,2);
	
	\draw[shaded] (0,2) .. controls ++(-115:1.5cm) and ++(115:1.5cm) .. (0,-2);
	\draw (0,2) .. controls ++(-90:1.5cm) and ++(90:1.5cm) .. (0,-2);
	\draw (0,2) .. controls ++(-65:1.5cm) and ++(65:1.5cm) .. (0,-2);
	\draw[shaded] (0,2) -- +(90:3cm) -- +(120:3cm) -- (0,2);
	\filldraw[shaded] (0,2) ++(65:5cm) -- (0,2) .. controls ++(-65:1.5cm) and ++(65:1.5cm) .. (0,-2) -- ++(-65:5cm);
	
	\draw[shaded] (0,-2) -- +(-90:3cm) -- +(-120:3cm) -- (0,-2);
	\node at (0,2) [Tbox] (A) {$Q$};
	\node at (0,-2) [Tbox] (B) {$Q$};
	\node at (A.180) [left] {$\star$};
	\node at (B.180) [left] {$\star$};
	\node at (-1.5,0)  {$\star$};	
	
	\draw[ultra thick] (2,2) arc (0:180:2cm) -- (-2,-2) arc (-180:0:2cm) -- (2,2);
\end{tikzpicture}$
$\in$
span \{ $TL_3, T , Q$ \},
\item (R3'(T)),(R3'(Q))\hspace{1cm}
$\begin{tikzpicture}[scale=.6, baseline]
	\clip [draw] (2,2) arc (0:180:2cm) -- (-2,-2) arc (-180:0:2cm) -- (2,2);
	
	\draw (0,2) .. controls ++(-115:1.5cm) and ++(115:1.5cm) .. (0,-2);
	\draw[shaded] (0,2) .. controls ++(-90:1.5cm) and ++(90:1.5cm) .. (0,-2);
	\draw[shaded] (0,2) .. controls ++(-65:1.5cm) and ++(65:1.5cm) .. (0,-2);
	\draw[shaded] (0,-2) -- +(-90:3cm) -- +(-65:3cm) -- (0,-2);
	\draw (0,2) -- +(90:3cm) -- +(120:3cm) -- (0,2);
	\draw[shaded] (0,2) -- +(65:3cm)--+(90:3cm) -- (0,2);
	\filldraw[shaded] (0,-2) ++(-120:4cm) -- (0,-2) .. controls ++(115:1.5cm) and ++(-115:1.5cm) .. (0,2) -- ++(120:4cm);
	\draw (0,-2) -- +(-90:3cm) -- +(-120:3cm) -- (0,-2);
	\node at (0,2) [Tbox] (A) {$T$};
	\node at (0,-2) [Tbox] (B) {$T$};
	\node at (A.75) [above left] {$\star$};
	\node at (B.70) [above left] {$\star$};
	\draw (0,2) ++(115:2cm) node [below right=-.7mm] {$\star$};
	
	\draw[ultra thick] (2,2) arc (0:180:2cm) -- (-2,-2) arc (-180:0:2cm) -- (2,2);
\end{tikzpicture}$, \quad
$\begin{tikzpicture}[scale=.6, baseline]
	\clip [draw] (2,2) arc (0:180:2cm) -- (-2,-2) arc (-180:0:2cm) -- (2,2);
	
	\draw (0,2) .. controls ++(-115:1.5cm) and ++(115:1.5cm) .. (0,-2);
	\draw[shaded] (0,2) .. controls ++(-90:1.5cm) and ++(90:1.5cm) .. (0,-2);
	\draw[shaded] (0,2) .. controls ++(-65:1.5cm) and ++(65:1.5cm) .. (0,-2);
	\draw[shaded] (0,-2) -- +(-90:3cm) -- +(-65:3cm) -- (0,-2);
	\draw (0,2) -- +(90:3cm) -- +(120:3cm) -- (0,2);
	\draw[shaded] (0,2) -- +(65:3cm)--+(90:3cm) -- (0,2);
	\filldraw[shaded] (0,-2) ++(-120:4cm) -- (0,-2) .. controls ++(115:1.5cm) and ++(-115:1.5cm) .. (0,2) -- ++(120:4cm);
	\draw (0,-2) -- +(-90:3cm) -- +(-120:3cm) -- (0,-2);
	\node at (0,2) [Tbox] (A) {$Q$};
	\node at (0,-2) [Tbox] (B) {$Q$};
	\node at (A.75) [above left] {$\star$};
	\node at (B.70) [above left] {$\star$};
	\draw (0,2) ++(115:2cm) node [below right=-.7mm] {$\star$};
	
	\draw[ultra thick] (2,2) arc (0:180:2cm) -- (-2,-2) arc (-180:0:2cm) -- (2,2);
\end{tikzpicture}$
$\in$
span \{ $TL_3, T , Q$ \},
\item (R3(TQ)),(R3(QT))\hspace{.6cm}
$\begin{tikzpicture}[scale=.6, baseline]
	\clip [draw] (2,2) arc (0:180:2cm) -- (-2,-2) arc (-180:0:2cm) -- (2,2);
	
	\draw[shaded] (0,2) .. controls ++(-115:1.5cm) and ++(115:1.5cm) .. (0,-2);
	\draw (0,2) .. controls ++(-90:1.5cm) and ++(90:1.5cm) .. (0,-2);
	\draw (0,2) .. controls ++(-65:1.5cm) and ++(65:1.5cm) .. (0,-2);
	\draw[shaded] (0,2) -- +(90:3cm) -- +(120:3cm) -- (0,2);
	\filldraw[shaded] (0,2) ++(65:5cm) -- (0,2) .. controls ++(-65:1.5cm) and ++(65:1.5cm) .. (0,-2) -- ++(-65:5cm);
	
	\draw[shaded] (0,-2) -- +(-90:3cm) -- +(-120:3cm) -- (0,-2);
	\node at (0,2) [Tbox] (A) {$T$};
	\node at (0,-2) [Tbox] (B) {$Q$};
	\node at (A.180) [left] {$\star$};
	\node at (B.180) [left] {$\star$};
	\node at (-1.5,0)  {$\star$};	
	
	\draw[ultra thick] (2,2) arc (0:180:2cm) -- (-2,-2) arc (-180:0:2cm) -- (2,2);
\end{tikzpicture}$, \quad
	$\begin{tikzpicture}[scale=.6, baseline]
	\clip [draw] (2,2) arc (0:180:2cm) -- (-2,-2) arc (-180:0:2cm) -- (2,2);
	
	\draw[shaded] (0,2) .. controls ++(-115:1.5cm) and ++(115:1.5cm) .. (0,-2);
	\draw (0,2) .. controls ++(-90:1.5cm) and ++(90:1.5cm) .. (0,-2);
	\draw (0,2) .. controls ++(-65:1.5cm) and ++(65:1.5cm) .. (0,-2);
	\draw[shaded] (0,2) -- +(90:3cm) -- +(120:3cm) -- (0,2);
	\filldraw[shaded] (0,2) ++(65:5cm) -- (0,2) .. controls ++(-65:1.5cm) and ++(65:1.5cm) .. (0,-2) -- ++(-65:5cm);
	
	\draw[shaded] (0,-2) -- +(-90:3cm) -- +(-120:3cm) -- (0,-2);
	\node at (0,2) [Tbox] (A) {$Q$};
	\node at (0,-2) [Tbox] (B) {$T$};
	\node at (A.180) [left] {$\star$};
	\node at (B.180) [left] {$\star$};
	\node at (-1.5,0)  {$\star$};	
	
	\draw[ultra thick] (2,2) arc (0:180:2cm) -- (-2,-2) arc (-180:0:2cm) -- (2,2);
\end{tikzpicture}$
$\in$
span \{ $TL_3, T , Q$ \},
\item (R3'(TQ)),(R3'(QT))\hspace{.5cm}
$\begin{tikzpicture}[scale=.6, baseline]
	\clip [draw] (2,2) arc (0:180:2cm) -- (-2,-2) arc (-180:0:2cm) -- (2,2);
	
	\draw (0,2) .. controls ++(-115:1.5cm) and ++(115:1.5cm) .. (0,-2);
	\draw[shaded] (0,2) .. controls ++(-90:1.5cm) and ++(90:1.5cm) .. (0,-2);
	\draw[shaded] (0,2) .. controls ++(-65:1.5cm) and ++(65:1.5cm) .. (0,-2);
	\draw[shaded] (0,-2) -- +(-90:3cm) -- +(-65:3cm) -- (0,-2);
	\draw (0,2) -- +(90:3cm) -- +(120:3cm) -- (0,2);
	\draw[shaded] (0,2) -- +(65:3cm)--+(90:3cm) -- (0,2);
	\filldraw[shaded] (0,-2) ++(-120:4cm) -- (0,-2) .. controls ++(115:1.5cm) and ++(-115:1.5cm) .. (0,2) -- ++(120:4cm);
	\draw (0,-2) -- +(-90:3cm) -- +(-120:3cm) -- (0,-2);
	\node at (0,2) [Tbox] (A) {$T$};
	\node at (0,-2) [Tbox] (B) {$Q$};
	\node at (A.75) [above left] {$\star$};
	\node at (B.70) [above left] {$\star$};
	\draw (0,2) ++(115:2cm) node [below right=-.7mm] {$\star$};
	
	\draw[ultra thick] (2,2) arc (0:180:2cm) -- (-2,-2) arc (-180:0:2cm) -- (2,2);
\end{tikzpicture}$, \quad
$\begin{tikzpicture}[scale=.6, baseline]
	\clip [draw] (2,2) arc (0:180:2cm) -- (-2,-2) arc (-180:0:2cm) -- (2,2);
	
	\draw (0,2) .. controls ++(-115:1.5cm) and ++(115:1.5cm) .. (0,-2);
	\draw[shaded] (0,2) .. controls ++(-90:1.5cm) and ++(90:1.5cm) .. (0,-2);
	\draw[shaded] (0,2) .. controls ++(-65:1.5cm) and ++(65:1.5cm) .. (0,-2);
	\draw[shaded] (0,-2) -- +(-90:3cm) -- +(-65:3cm) -- (0,-2);
	\draw (0,2) -- +(90:3cm) -- +(120:3cm) -- (0,2);
	\draw[shaded] (0,2) -- +(65:3cm)--+(90:3cm) -- (0,2);
	\filldraw[shaded] (0,-2) ++(-120:4cm) -- (0,-2) .. controls ++(115:1.5cm) and ++(-115:1.5cm) .. (0,2) -- ++(120:4cm);
	\draw (0,-2) -- +(-90:3cm) -- +(-120:3cm) -- (0,-2);
	\node at (0,2) [Tbox] (A) {$Q$};
	\node at (0,-2) [Tbox] (B) {$T$};
	\node at (A.75) [above left] {$\star$};
	\node at (B.70) [above left] {$\star$};
	\draw (0,2) ++(115:2cm) node [below right=-.7mm] {$\star$};
	
	\draw[ultra thick] (2,2) arc (0:180:2cm) -- (-2,-2) arc (-180:0:2cm) -- (2,2);
\end{tikzpicture}$
$\in$
span \{ $TL_3, T , Q$ \}
\end{itemize}
\item [(5)]Each of 
$$\begin{tikzpicture}[baseline=0,scale=1.5]
	{%
	\draw (-0.5,1) -- (-0.5,0);
	\node[anchor=west] at (-0.5,0.5) {\footnotesize$4$};
	\draw (0.5,1) -- (0.5,0);
	\node[anchor=west] at (0.5,0.5) {\footnotesize$4$};
}
 {%
	\node[anchor=south] at (0,1) {\footnotesize$2$};
	\draw (-0.5,1) -- (0.5, 1);
	\foreach \x in {-0.5,0.5} {
		{%
	\filldraw[fill=white,thick] (\x,1) ellipse (3mm and 3mm);
	\node at (-.5,1) {\Large $T$};
	\node at (.5,1) {\Large $Q$};
	\path(\x,1) ++(90:0.37) node {$\star$};
}
	}
}
        \draw (0,0)--(0,-0.5);
        \node[anchor=west] at (0,-0.35) {\footnotesize$8$};
	{%
	\filldraw[fill=white,thick] (-1,-0.2) rectangle (1,0.2);
	\node at (0,0) {\Large$\JW{8}$};
}
\end{tikzpicture}, \begin{tikzpicture}[baseline=0,scale=1.5]
	{%
	\draw (-0.5,1) -- (-0.5,0);
	\node[anchor=west] at (-0.5,0.5) {\footnotesize$4$};
	\draw (0.5,1) -- (0.5,0);
	\node[anchor=west] at (0.5,0.5) {\footnotesize$4$};
}
 {%
	\node[anchor=south] at (0,1) {\footnotesize$2$};
	\draw (-0.5,1) -- (0.5, 1);
	\foreach \x in {-0.5,0.5} {
		{%
	\filldraw[fill=white,thick] (\x,1) ellipse (3mm and 3mm);
	\node at (-.5,1) {\Large $Q$};
	\node at (.5,1) {\Large $T$};
	\path(\x,1) ++(90:0.37) node {$\star$};
}
	}
}
        \draw (0,0)--(0,-0.5);
        \node[anchor=west] at (0,-0.35) {\footnotesize$8$};
	{%
	\filldraw[fill=white,thick] (-1,-0.2) rectangle (1,0.2);
	\node at (0,0) {\Large$\JW{8}$};
}
\end{tikzpicture}, \begin{tikzpicture}[baseline=0,scale=1.5]
	{%
	\draw (-0.5,1) -- (-0.5,0);
	\node[anchor=west] at (-0.5,0.5) {\footnotesize$4$};
	\draw (0.5,1) -- (0.5,0);
	\node[anchor=west] at (0.5,0.5) {\footnotesize$4$};
}
 {%
	\node[anchor=south] at (0,1) {\footnotesize$2$};
	\draw (-0.5,1) -- (0.5, 1);
	\foreach \x in {-0.5,0.5} {
		{%
	\filldraw[fill=white,thick] (\x,1) ellipse (3mm and 3mm);
	\node at (-.5,1) {\Large $T$};
	\node at (.5,1) {\Large $T$};
	\path(\x,1) ++(90:0.37) node {$\star$};
}
	}
}
        \draw (0,0)--(0,-0.5);
        \node[anchor=west] at (0,-0.35) {\footnotesize$8$};
	{%
	\filldraw[fill=white,thick] (-1,-0.2) rectangle (1,0.2);
	\node at (0,0) {\Large$\JW{8}$};
}
\end{tikzpicture}, \begin{tikzpicture}[baseline=0,scale=1.5]
	{%
	\draw (-0.5,1) -- (-0.5,0);
	\node[anchor=west] at (-0.5,0.5) {\footnotesize$4$};
	\draw (0.5,1) -- (0.5,0);
	\node[anchor=west] at (0.5,0.5) {\footnotesize$4$};
}
 {%
	\node[anchor=south] at (0,1) {\footnotesize$2$};
	\draw (-0.5,1) -- (0.5, 1);
	\foreach \x in {-0.5,0.5} {
		{%
	\filldraw[fill=white,thick] (\x,1) ellipse (3mm and 3mm);
	\node at (-.5,1) {\Large $Q$};
	\node at (.5,1) {\Large $Q$};
	\path(\x,1) ++(90:0.37) node {$\star$};
}
	}
}
        \draw (0,0)--(0,-0.5);
        \node[anchor=west] at (0,-0.35) {\footnotesize$8$};
	{%
	\filldraw[fill=white,thick] (-1,-0.2) rectangle (1,0.2);
	\node at (0,0) {\Large$\JW{8}$};
}
\end{tikzpicture}$$ lies in the span of 

$$\Big( \hspace{3mm}\begin{tikzpicture}[baseline=0,scale=1.5]
	{
	\fill[shaded] (-0.8,0) -- (-0.8,0.6) arc (180:0:0.8) -- (0.8,0) -- (0.2,0) -- (0.2,1) -- (-0.2,1) -- (-0.2,0);
	\draw (-0.8,0) -- (-0.8,0.6) arc (180:0:0.8) -- (0.8,0);
	\draw (-0.2,0) -- (-0.2,1);
	\draw (0.2,0) -- (0.2,1);
	\node at (0.45,0.5) {\footnotesize$5$};
	{%
	\filldraw[fill=white,thick] (0,1) ellipse (3mm and 3mm);
	\node at (0,1) {\Large $T$};
	\path(0,1) ++(-90:0.37) node {$\star$};
}
}
;
        \draw (0,0)--(0,-0.5);
        \node[anchor=west] at (0,-0.35) {\footnotesize$8$};
	{%
	\filldraw[fill=white,thick] (-1,-0.2) rectangle (1,0.2);
	\node at (0,0) {\Large$\JW{8}$};
}\end{tikzpicture},
\quad \begin{tikzpicture}[baseline=0,scale=1.5]
	{
	\fill[shaded] (-0.8,0) -- (-0.8,0.6) arc (180:0:0.8) -- (0.8,0) -- (0.2,0) -- (0.2,1) -- (-0.2,1) -- (-0.2,0);
	\draw (-0.8,0) -- (-0.8,0.6) arc (180:0:0.8) -- (0.8,0);
	\draw (-0.2,0) -- (-0.2,1);
	\draw (0.2,0) -- (0.2,1);
	\node at (0.45,0.5) {\footnotesize$5$};
	{%
	\filldraw[fill=white,thick] (0,1) ellipse (3mm and 3mm);
	\node at (0,1) {\Large $Q$};
	\path(0,1) ++(-90:0.37) node {$\star$};
}
}
;
        \draw (0,0)--(0,-0.5);
        \node[anchor=west] at (0,-0.35) {\footnotesize$8$};
	{%
	\filldraw[fill=white,thick] (-1,-0.2) rectangle (1,0.2);
	\node at (0,0) {\Large$\JW{8}$};
}
\end{tikzpicture} ,\quad \begin{tikzpicture}[baseline=0,scale=1.5]
	{%
	\draw (-0.5,1) -- (-0.5,0);
	\node[anchor=west] at (-0.5,0.5) {\footnotesize$4$};
	\draw (0.5,1) -- (0.5,0);
	\node[anchor=west] at (0.5,0.5) {\footnotesize$4$};
}
 {%
	\node[anchor=south] at (0,1) {\footnotesize$2$};
	\draw (-0.5,1) -- (0.5, 1);
	\foreach \x in {-0.5,0.5} {
		{%
	\filldraw[fill=white,thick] (\x,1) ellipse (3mm and 3mm);
	\node at (-.5,1) {\Large $T$};
	\node at (.5,1) {\Large $Q$};
	\path(\x,1) ++(90:0.37) node {$\star$};
}
	}
}
        \draw (0,0)--(0,-0.5);
        \node[anchor=west] at (0,-0.35) {\footnotesize$8$};
	{%
	\filldraw[fill=white,thick] (-1,-0.2) rectangle (1,0.2);
	\node at (0,0) {\Large$\JW{8}$};
}
\end{tikzpicture} \hspace{3mm}\Big)$$
\item [(6)]Each of 
$$\begin{tikzpicture}[baseline=0,scale=1.5]
	{%
	\fill[shaded] (-0.5,0) rectangle (0.5,1);
	\draw (-0.5,1) -- (-0.5,0);
	\node[anchor=west] at (-0.5,0.5) {\footnotesize$3$};
	\draw (-0.6,1) -- (-0.6,0);
	\draw (0.5,1) -- (0.5,0);
	\node[anchor=west] at (0.3,0.5) {\footnotesize$3$};
	\draw (0.6,1) -- (0.6,0);
}
 {%
	
	\draw (-0.5,1) -- (0.5, 1);
	\draw (-0.5,1.1) -- (0.5,1.1);
	\foreach \x in {-0.5,0.5} {
		{%
	\filldraw[fill=white,thick] (\x,1) ellipse (3mm and 3mm);
	\node at (-.5,1) {\Large $T$};
	\node at (.5,1) {\Large $Q$};
	\path(-.55,1) ++(-90:0.37) node {$\star$};
	\path(0.55,1) ++(-90:0.37) node {$\star$};
}
	}
}
        \draw (0,0)--(0,-0.5);
        \node[anchor=west] at (0,-0.35) {\footnotesize$8$};
	{%
	\filldraw[fill=white,thick] (-1,-0.2) rectangle (1,0.2);
	\node at (0,0) {\Large$\JW{8}$};
}
\end{tikzpicture}, \begin{tikzpicture}[baseline=0,scale=1.5]
	{%
	\fill[shaded] (-0.5,0) rectangle (0.5,1);
	\draw (-0.5,1) -- (-0.5,0);
	\node[anchor=west] at (-0.5,0.5) {\footnotesize$3$};
	\draw (-0.6,1) -- (-0.6,0);
	\draw (0.5,1) -- (0.5,0);
	\node[anchor=west] at (0.3,0.5) {\footnotesize$3$};
	\draw (0.6,1) -- (0.6,0);
}
 {%
	
	\draw (-0.5,1) -- (0.5, 1);
	\draw (-0.5,1.1) -- (0.5,1.1);
	\foreach \x in {-0.5,0.5} {
		{%
	\filldraw[fill=white,thick] (\x,1) ellipse (3mm and 3mm);
	\node at (-.5,1) {\Large $Q$};
	\node at (.5,1) {\Large $T$};
	\path(-.55,1) ++(-90:0.37) node {$\star$};
	\path(0.55,1) ++(-90:0.37) node {$\star$};
}
	}
}
        \draw (0,0)--(0,-0.5);
        \node[anchor=west] at (0,-0.35) {\footnotesize$8$};
	{%
	\filldraw[fill=white,thick] (-1,-0.2) rectangle (1,0.2);
	\node at (0,0) {\Large$\JW{8}$};
}
\end{tikzpicture}, \begin{tikzpicture}[baseline=0,scale=1.5]
	{%
	\fill[shaded] (-0.5,0) rectangle (0.5,1);
	\draw (-0.5,1) -- (-0.5,0);
	\node[anchor=west] at (-0.5,0.5) {\footnotesize$3$};
	\draw (-0.6,1) -- (-0.6,0);
	\draw (0.5,1) -- (0.5,0);
	\node[anchor=west] at (0.3,0.5) {\footnotesize$3$};
	\draw (0.6,1) -- (0.6,0);
}
 {%
	
	\draw (-0.5,1) -- (0.5, 1);
	\draw (-0.5,1.1) -- (0.5,1.1);
	\foreach \x in {-0.5,0.5} {
		{%
	\filldraw[fill=white,thick] (\x,1) ellipse (3mm and 3mm);
	\node at (-.5,1) {\Large $T$};
	\node at (.5,1) {\Large $T$};
	\path(-.55,1) ++(-90:0.37) node {$\star$};
	\path(0.55,1) ++(-90:0.37) node {$\star$};
}
	}
}
        \draw (0,0)--(0,-0.5);
        \node[anchor=west] at (0,-0.35) {\footnotesize$8$};
	{%
	\filldraw[fill=white,thick] (-1,-0.2) rectangle (1,0.2);
	\node at (0,0) {\Large$\JW{8}$};
}
\end{tikzpicture}, \begin{tikzpicture}[baseline=0,scale=1.5]
	{%
	\fill[shaded] (-0.5,0) rectangle (0.5,1);
	\draw (-0.5,1) -- (-0.5,0);
	\node[anchor=west] at (-0.5,0.5) {\footnotesize$3$};
	\draw (-0.6,1) -- (-0.6,0);
	\draw (0.5,1) -- (0.5,0);
	\node[anchor=west] at (0.3,0.5) {\footnotesize$3$};
	\draw (0.6,1) -- (0.6,0);
}
 {%
	
	\draw (-0.5,1) -- (0.5, 1);
	\draw (-0.5,1.1) -- (0.5,1.1);
	\foreach \x in {-0.5,0.5} {
		{%
	\filldraw[fill=white,thick] (\x,1) ellipse (3mm and 3mm);
	\node at (-.5,1) {\Large $Q$};
	\node at (.5,1) {\Large $Q$};
	\path(-.55,1) ++(-90:0.37) node {$\star$};
	\path(0.55,1) ++(-90:0.37) node {$\star$};
}
	}
}
        \draw (0,0)--(0,-0.5);
        \node[anchor=west] at (0,-0.35) {\footnotesize$8$};
	{%
	\filldraw[fill=white,thick] (-1,-0.2) rectangle (1,0.2);
	\node at (0,0) {\Large$\JW{8}$};
}
\end{tikzpicture}$$ lies in the span of 

$$\Big( \hspace{3mm}\begin{tikzpicture}[baseline=0,scale=1.5]
	{\draw[shaded] (0.2,0) -- (0.2,1) -- (-0.2,1) -- (-0.2,0);
	\draw (-0.8,0) -- (-0.8,0.6) arc (180:0:0.8) -- (0.8,0);
	\draw(-0.2,0) -- (-0.2,1);
	\draw (0.2,0) -- (0.2,1);
	\node at (0.45,0.5) {\footnotesize$5$};
	{%
	\filldraw[fill=white,thick] (0,1) ellipse (3mm and 3mm);
	\node at (0,1) {\Large $T$};
	\path(0,1) ++(-180:0.37) node {$\star$};
}
}
;
        \draw (0,0)--(0,-0.5);
        \node[anchor=west] at (0,-0.35) {\footnotesize$8$};
	{%
	\filldraw[fill=white,thick] (-1,-0.2) rectangle (1,0.2);
	\node at (0,0) {\Large$\JW{8}$};
}\end{tikzpicture},
\quad \begin{tikzpicture}[baseline=0,scale=1.5]
	{\draw[shaded] (0.2,0) -- (0.2,1) -- (-0.2,1) -- (-0.2,0);
	\draw (-0.8,0) -- (-0.8,0.6) arc (180:0:0.8) -- (0.8,0);
	\draw(-0.2,0) -- (-0.2,1);
	\draw (0.2,0) -- (0.2,1);
	\node at (0.45,0.5) {\footnotesize$5$};
	{%
	\filldraw[fill=white,thick] (0,1) ellipse (3mm and 3mm);
	\node at (0,1) {\Large $Q$};
	\path(0,1) ++(-180:0.37) node {$\star$};
}
}
;
        \draw (0,0)--(0,-0.5);
        \node[anchor=west] at (0,-0.35) {\footnotesize$8$};
	{%
	\filldraw[fill=white,thick] (-1,-0.2) rectangle (1,0.2);
	\node at (0,0) {\Large$\JW{8}$};
}\end{tikzpicture} ,\quad \begin{tikzpicture}[baseline=0,scale=1.5]
	{%
	\fill[shaded] (-0.5,0) rectangle (0.5,1);
	\draw (-0.5,1) -- (-0.5,0);
	\node[anchor=west] at (-0.5,0.5) {\footnotesize$3$};
	\draw (-0.6,1) -- (-0.6,0);
	\draw (0.5,1) -- (0.5,0);
	\node[anchor=west] at (0.3,0.5) {\footnotesize$3$};
	\draw (0.6,1) -- (0.6,0);
}
 {%
	
	\draw (-0.5,1) -- (0.5, 1);
	\draw (-0.5,1.1) -- (0.5,1.1);
	\foreach \x in {-0.5,0.5} {
		{%
	\filldraw[fill=white,thick] (\x,1) ellipse (3mm and 3mm);
	\node at (-.5,1) {\Large $T$};
	\node at (.5,1) {\Large $Q$};
	\path(-.55,1) ++(-90:0.37) node {$\star$};
	\path(0.55,1) ++(-90:0.37) node {$\star$};
}
	}
}
        \draw (0,0)--(0,-0.5);
        \node[anchor=west] at (0,-0.35) {\footnotesize$8$};
	{%
	\filldraw[fill=white,thick] (-1,-0.2) rectangle (1,0.2);
	\node at (0,0) {\Large$\JW{8}$};
}
\end{tikzpicture} \hspace{3mm}\Big)$$
\end{enumerate}
\end{theorem}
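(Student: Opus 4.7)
I follow the strategy Peters uses for the Haagerup planar algebra in \cite{EP}. The theorem has two halves: first, produce explicit $T, Q \in PABG(H)_{3,+}$ satisfying (1)--(6); second, show that the planar subalgebra $\mathcal{P} \subset PABG(H)$ they generate is a subfactor planar algebra with principal graph $H$.

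\emph{Construction of $T$ and $Q$.} A basis of $PABG(H)_{3,+}$ is given by loops of length $6$ based at the even vertices of $H$. I enumerate these loops, then compute the common kernel of the six capping operators $\epsilon_1, \ldots, \epsilon_6$. By the module decomposition in Chapter 3, this low-weight subspace has complex dimension $2$ and carries an action of the rotation $\rho$, whose cube is the identity. Diagonalising $\rho$ on this $2$-dimensional space produces a $1$-eigenline and an $\omega$-eigenline; choose self-adjoint representatives $T$ and $Q$. Then (1), (2), (3) hold by construction.

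\emph{Verification of (4).} Each LHS in (4) is a $3$-box in $\mathcal{P}_3 \cong TL_3 \oplus 2 V_3^{3,\zeta}$, and the low-weight part of the RHS $\mathrm{span}\{TL_3, T, Q\}$ is exactly $\mathrm{span}\{T,Q\}$. Hence it suffices to confirm that, after subtracting the projection onto $TL_3$, the residue is a linear combination of $T$ and $Q$. In practice I compute $\epsilon_i$ applied to the LHS inside $PABG(H)$: if each $\epsilon_i$ output is Temperley-Lieb, the residue is low-weight and must therefore lie in $\mathrm{span}\{T,Q\}$. The coefficients are recovered by pairing with $T$ and $Q$ using the inner product on $PABG(H)$. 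Each check is a finite computation on loops in $H$.

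\emph{Relations (5) and (6).} Expand each $f^{(8)}$ using Lemmas \ref{lem:basicrecursion} and \ref{lem:yetanother}, or the explicit formula in \cite{SM}, and apply the quadratic relations from (4) together with the single-strand braiding substitutes developed in Chapter 6 to collapse the resulting sums. The goal is to show that after all Temperley-Lieb reductions the result is supported on the three listed RHS diagrams. This is the most computation-heavy step and is done in the graph planar algebra where all inputs are concrete.

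\emph{Subfactor property and principal graph.} The subalgebra $\mathcal{P}$ inherits involution, sphericality, and positive definiteness from $PABG(H)$, so the only outstanding subfactor axiom is $\dim \mathcal{P}_{0,\pm} = 1$. This is precisely what the jellyfish algorithm of \cite{ExtH} delivers: relations (5) and (6) let one push any internal $T$- or $Q$-generator out to the boundary strand, and once two generators sit adjacent on the boundary the relations (4) collapse them into a Temperley-Lieb element plus a single generator. Iterating, every closed diagram reduces to a scalar multiple of the empty diagram. Once $\mathcal{P}$ is known to be a subfactor planar algebra, Chapter 8 identifies the principal graph by passing to the associated tensor category, computing fusion of small idempotents, and matching the Bratelli diagram against that of $H$.

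\emph{Main obstacle.} The decisive difficulty is the jellyfish step: proving that relations (5) and (6) really do suffice, at every level, to pull any internal generator to the outer boundary, so that no new low-weight module (and hence no new generator) can appear beyond the two already found. This is the content of Chapter 7 and is where the bulk of the work lies; the relations themselves are designed so that the jellyfish induction closes, but verifying that closure requires careful bookkeeping of which pictures can appear on intermediate boundaries.
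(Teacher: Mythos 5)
Your overall architecture (explicit generators in $PABG(H)$, quadratic relations, one-strand braiding substitutes, jellyfish, tensor category) matches the paper's, but the two places where you actually explain how the hard work gets done contain a confusion that would prevent the argument from going through.

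\textbf{The low-weight space of $PABG(H)_{3,+}$ is not $2$-dimensional.} You write that the common kernel of $\epsilon_1,\ldots,\epsilon_6$ in $PABG(H)_{3,+}$ has complex dimension $2$ ``by the module decomposition in Chapter 3,'' and you plan to obtain $T$ and $Q$ by diagonalizing $\rho$ on that $2$-plane. That decomposition is the decomposition of the \emph{hypothetical subfactor planar algebra} $\mathcal{P}$, not of the graph planar algebra. The paper computes (Theorem 4.0.8) that in $PABG(H)_3$ the full low-weight $3$ subspace is $17$-dimensional: $X^{3,1}$ has dimension $5$, and $X^{3,\omega}$, $X^{3,\omega^2}$ each have dimension $6$. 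So the rotation eigenspaces from which you must extract $T$ and $Q$ are $5$- and $6$-dimensional, not lines. Picking $T$ and $Q$ out of them is the genuinely hard part of the construction, and the paper does it (Theorems 4.0.12--4.0.13) by further requiring that $\operatorname{tr}(T^n)$ and $\operatorname{tr}(Q^n)$ land in the $1$-dimensional $TL_{0,+}\subset PABG(H)_{0,+}$ for every $n$, together with normalizing $\langle T,T\rangle = \langle Q,Q\rangle = [4]$. Your proposal skips this entirely.

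\textbf{The same confusion sinks the proposed proof of (4).} You argue that if each $\epsilon_i$ of the LHS produces something Temperley--Lieb, then the residue (LHS minus its $TL$ projection) is low-weight and ``must therefore lie in $\operatorname{span}\{T,Q\}$.'' But the low-weight complement of $TL_3$ in $PABG(H)_3$ is $17$-dimensional, and $T,Q$ span only a $2$-dimensional slice of it. Knowing the residue is uncappable does not place it in $\operatorname{span}\{T,Q\}$; it could just as well have components along the other low-weight vectors of $PABG(H)_3$ that lie outside the subalgebra $\mathcal{P}$. The paper's actual mechanism is Bessel's inequality: it computes $\lVert \mathrm{LHS}\rVert^2$ and $\lVert \operatorname{Proj}_{\operatorname{span}\{TL_3,T,Q\}}(\mathrm{LHS})\rVert^2$ directly (from the trace values in Lemma 5.0.14) and shows they coincide, which forces membership. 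The same Bessel-inequality argument is then repeated, with a one-dimensional box added to the target span, to prove the $4$-box relations (5) and (6); these are \emph{not} obtained from (4) by expanding $f^{(8)}$, because the quadratic relations among $3$-boxes do not control the genuinely new level-$4$ module $V^{4,\beta}$ that appears there. Indeed the logical order in the paper is: prove (5),(6) by Bessel's inequality, \emph{then} extract the one-strand braiding substitutes from them in Chapter 6 --- so you cannot invoke the braiding substitutes while proving (5),(6).

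The jellyfish and principal-graph portions of your sketch are consistent with what the paper does, but without a correct method for singling out $T$, $Q$ and for establishing (4)--(6), the construction does not get off the ground.
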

We will prove this theorem in the remainder of this paper.  In this chapter, we identify the two generators $T$ and $Q$ satisfying (1), (2), and (3) above.  In chapter 5, we prove the quadratic relations in (4), (5), and (6).  In chapter 6, we obtain some important one-strand braiding substitutes.  In chapter 7, we show that the planar algebra has the subfactor property.  Finally, in chapter 8 we show that our planar algebra has principal graph $H$.

\indent $PABG(H)_{3,+}$ has basis consisting of all even-based loops of length 6 on the graph H=``2221''.  Let $X^{3,\zeta}$ be the subspace of $PABG(H)_{3,+}$ consisting of elements satisfying the relations
$$\epsilon_i(T)=0 \mbox{ for }i=1,\ldots,6, \quad \rho(T)=\zeta T.$$

\begin{theorem}
$X^{3,1}$ is five-dimensional and $X^{3,\omega}$ and $X^{3,\omega^2}$ are six-dimensional, where $\omega$ is the $3$-th root of unity $e^{\frac{2\pi i}{3}}$.
\end{theorem}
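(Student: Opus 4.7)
The plan is to work entirely within the concrete vector space $PABG(H)_{3,+}$, whose basis consists of length-$6$ loops on $H$ based at the four even vertices $z_0, c, z_1, z_2$. First I would enumerate these loops and tabulate them by basepoint, obtaining $\dim PABG(H)_{3,+} = \sum_{v\text{ even}} (A^6)_{vv}$, where $A$ is the adjacency matrix of $H$. Since $H$ is a tree of small radius with a distinguished central vertex $c$ of degree four, the enumeration is essentially by inspection, and it also organizes the basis in a form suitable for the next step.

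Next I would describe the rotation $\rho$ on this basis: acting on a based loop $v_0 v_1 \cdots v_5 v_0$, it shifts the basepoint cyclically, decorated by Perron--Frobenius scalar factors coming from the state-sum formula. Because $\rho^3 = \id$ on $V_{3,+}$, its eigenvalues lie in $\{1,\omega,\omega^2\}$, and the eigenspaces can be extracted orbit-by-orbit: a $\rho$-orbit of period $1$ contributes one dimension to $\ker(\rho - 1)$ only, while an orbit of period $3$ contributes one dimension to each of $\ker(\rho - 1)$, $\ker(\rho - \omega)$, $\ker(\rho - \omega^2)$. Summing these contributions gives explicit dimensions of the three eigenspaces.

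With $\rho$ diagonalized, I would use the standard fact that the caps $\epsilon_1, \ldots, \epsilon_6$ are cyclically intertwined by $\rho$, so that on a $\rho$-eigenvector the system $\epsilon_i(T) = 0$ for every $i$ collapses to the single equation $\epsilon_1(T) = 0$. The map $\epsilon_1$ is explicit on loops: it produces a Perron-Frobenius-weighted multiple of $v_0 v_3 v_4 v_5 v_0$ out of $v_0 v_1 v_2 v_3 v_4 v_5 v_0$ when $v_1 = v_5$, and $0$ otherwise. Imposing $\epsilon_1 = 0$ on each $\rho$-eigenspace then becomes a finite homogeneous linear system whose kernel dimensions are $5$, $6$, and $6$ for $\zeta = 1, \omega, \omega^2$ respectively.

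The main obstacle will be the bookkeeping in this last step: the Perron--Frobenius weights must be tracked simultaneously through the rotation and through the cap, and the homogeneous linear system within each eigenspace must be presented explicitly enough to read off its kernel dimension. The enumeration of loops itself is tractable thanks to the small number of even basepoints and the symmetric three-arm structure at $c$, but matching the cap relations cleanly against the $\rho$-eigenspace decomposition -- and in particular seeing why the $\zeta = 1$ eigenspace loses exactly one dimension relative to the other two -- is where the real work lies.
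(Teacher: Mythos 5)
Your proposal is a genuinely different route from the paper's. The paper's proof does \emph{not} work directly in $PABG(H)_{3,+}$: instead it builds the full annular Temperley--Lieb decomposition of $PABG(H)$ from level $0$ upward, using (a) the eigenvalues of $\Lambda\Lambda^t$ to identify the low-weight-$0$ modules, (b) the dimension formulas of Theorem 2.3.6 to detect new irreducible summands at each level, and (c) careful bookkeeping of rotation eigenvalues (fixed points vs.\ orbits of $\rho$ on a suitable basis) to split the multiplicity of $V^{k,\zeta}$ among the different $k$-th roots of unity $\zeta$. The desired dimensions $5,6,6$ then appear as the multiplicities of $V^{3,1}, V^{3,\omega}, V^{3,\omega^2}$, because the low-weight-$3$ space of each new summand is one-dimensional while the lower-weight summands contribute nothing uncappable at level $3$. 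Your approach bypasses this machinery entirely: you propose to enumerate the length-$6$ loops, diagonalize $\rho$ directly on that basis, impose the cap conditions as a concrete homogeneous linear system, and read off the kernel. This is exactly the computation that the paper's \emph{next} theorem (Theorem 4.0.9 and its $\omega$-analogue) performs in order to produce explicit generators --- but there it is allowed to invoke the dimensions already proved, whereas your plan would extract the dimensions directly from the rank of the linear system. What the paper's route buys is a structural picture (the full ATL decomposition of $PABG(H)_3$, which is also used elsewhere), at the cost of needing the irreducible-module dimension formulas; your route buys a self-contained linear-algebra proof at the cost of heavier hand computation with Perron--Frobenius weights.

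One point needs care: you assert that on a $\rho$-eigenvector the six cap conditions collapse to the single equation $\epsilon_1(T)=0$. Since $\rho$ rotates by one full strand pair, conjugation by $\rho$ relates $\epsilon_i$ to $\epsilon_{i+2}$, not $\epsilon_{i+1}$; so on a $\rho$-eigenspace the system collapses to \emph{two} independent conditions, say $\epsilon_1(T)=0$ and $\epsilon_2(T)=0$, which moreover land in the two differently-shaded spaces $V_{2,\mp}$. Passing from $\epsilon_1$ to $\epsilon_2$ would require the shading-changing half-rotation $\rho^{1/2}$, which does not stabilize $V_{3,+}$. This does not break your plan --- imposing both conditions still yields a finite linear system whose kernel dimensions one computes --- but the reduction to a single cap as written is not correct, and you should expect the rank computation to involve both an ``odd'' and an ``even'' family of constraints. (In the paper's parallel computation in Theorem 4.0.9, both $\epsilon_1=0$ and $\epsilon_3=0$ are imposed explicitly, with $\epsilon_5=0$ then checked to give no new information; the even caps are handled implicitly through the choice of flip conventions.)
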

\begin{proof}We decompose $PABG(H)$ into irreducible annular Temperley-Lieb modules.  Let $\Lambda$ be the even-odd adjacency matrix of H.  
\newline The eigenvalues of $\Lambda\Lambda^t$ and $\Lambda^t\Lambda$ are $\{1,1,\frac{5+\sqrt{21}}{2},\frac{5-\sqrt{21}}{2}\}$ and $ \{1,1,\frac{5+\sqrt{21}}{2},\frac{5-\sqrt{21}}{2}\}$.  The eigenspace of $\Lambda\Lambda^t$ corresponding to eigenvalue $1$ is two-dimensional because the multiplicity is $2$.  There is a one-dimensional eigenspace of $\Lambda\Lambda^t$ corresponding to eigenvalue $\frac{5-\sqrt{21}}{2}$.  So far, we have a two-dimensional eigenspace and a one-dimensional eigenspace corresponding to eigenvalues $\mu^2$ ,where $\mu^2\in(0,\delta^2)$.  Finally, there is a one-dimensional eigenspace corresponding to eigenvalue $\frac{5+\sqrt{21}}{2}=\delta^2$.  So $$PABG(H)_{0,+}\supseteq V_{+}^{0,\delta}\oplus 2V_{+}^{0,1}\oplus V_{+}^{0,\frac{5-\sqrt{21}}{2}}$$
$$=V_{+}^{0,\delta}\oplus 3V_{+}^{0,\mu}$$
Since $dim PABG(H)_{0,+}=4$ and $dim V_{+}^{0,\delta}=1$ and $dim V_{+}^{0,\mu}=1$, we have that $dim (V_{+}^{0,\delta}\oplus 3V_{+}^{0,\mu})=4$ and hence $$PABG(H)_{0,+}=V_{+}^{0,\delta}\oplus 3V_{+}^{0,\mu}.$$
Similarly, $PABG(H)_{0,-}\supseteq V_{-}^{0,\delta}\oplus 3V_{-}^{0,\mu}$.  We know that $dim V_{-}^{0,\delta}=1$ and $dim V_{-}^{0,\mu}=1$, and hence $dim (V_{-}^{0,\delta}\oplus 3V_{-}^{0,\mu})=4$.  Since $dim PABG(H)_{0,-}=4$, we must have that $$PABG(H)_{0,-}=V_{-}^{0,\delta}\oplus 3V_{-}^{0,\mu}.$$ 
We then deduce that $$PABG(H)_{1}\supset V_{1}^{0,\delta}\oplus 3V_{1}^{0,\mu}.$$
Counting length-2 loops based at even vertices, we have 7 of them.\\So $dim (PABG(H)_1)=7.$ Theorem 2.3.6 implies that $dim(V_{1}^{0,\delta})=1$ and $dim(V_{1}^{0,\mu})=2$.  So $dim(V_{1}^{0,\delta}\oplus 3V_{1}^{0,\mu})=7$.  We deduce that $$PABG(H)_{1}= V_{1}^{0,\delta}\oplus 3V_{1}^{0,\mu}.$$
Next, we get that $$PABG(H)_{2}\supset V_{2}^{0,\delta}\oplus 3V_{2}^{0,\mu}.$$
Counting length-4 loops based at even vertices, we have 25 of them.  Hence, $dim PABG(H)_{2}=25$.  Theorem 2.3.6 tells us that $dim(V_{2}^{0,\delta}\oplus 3V_{2}^{0,\mu})=20$.  Thus, $PABG(H)$ must contain a copy of $V_{2}^{2,\beta}$.  In particular, there must be 5 modules of the form $V^{2,\beta}$.  
To consider how many copies of each $V_{2}^{2,\beta}$ there are, we consider the action of $\rho$ on $PABG(H)_{2}$ and on the irreducible modules:  Let $\mathbf{l}$=$l_1l_2l_3l_4$  Then $\rho(l_1l_2l_3l_4)=l_3l_4l_1l_2$.  Hence $l_1l_2l_3l_4$ is a fixed point iff $l_1=l_3$, $l_2=l_4$; i.e., iff $l_1l_2l_3l_4=l_1l_2l_1l_2$.  On the basis of $PABG(H)_{2}$, consisting of even-based loops, we have that $z_ib_iz_ib_i, cb_icb_i (i=0,1,2)$, and $cdcd$ are fixed points.  Furthermore, the pair $\{z_ib_icb_i,cb_iz_ib_i\}$ forms an orbit-pair (i=0,1,2).  
Also, $$\{cb_1cb_0,cb_0cb_1c\},\{cb_1cb_2,cb_2cb_1c\},\{cb_1cd,cdcb_1\}
,\{cb_0cd,cdcb_0\},\{cb_0cb_2,cb_2cb_0\},\{cdcb_2,cb_2cd\}$$  form orbit-pairs.  Thus, we have 7 fixed points and 9 orbit-pairs.  So $\rho$ has eigenvalue 1 for 16 elements and eigenvalue -1 for 9 elements. (Note: $PABG(H)_2$ is 25-dimensional.)
Now, consider the action of $\rho$ on $V_{2}^{0,\delta}\oplus 3V_{2}^{0,\mu}$.  For $V^{0,\delta}$, the basis consists of Temperley-Lieb pictures with four boundary points.  For $V^{0,\mu}$, the basis consists of $TL_2$ pictures with the generator T of $V^{0,\mu}$ in one of the four regions.  On the basis of $V^{0,\delta}$, $\rho$ has 2 fixed points.  On the basis of $V^{0,\mu}$, $\rho$ has 2 fixed points and 2 orbits.  Thus, there are 8 fixed points and 6 orbits total on $V_{2}^{0,\delta}\oplus 3V_{2}^{0,\mu}$, and therefore $\rho$ has eigenvalue 1 with multiplicity 14 and eigenvalue -1 with  multiplicity 6.  Since $\rho$ has eigenvalue 1 with multiplicity 16 and eigenvalue -1 with multiplicity 9, on $PABG(H)_2$, we must have 2 copies of $V_{2}^{2,1}$ and 3 copies of $V_{2}^{2,-1}$.  
Therefore, $$PABG(H)_2=V_{2}^{0,\delta}\oplus 3V_{2}^{0,\mu}\oplus 2V_{2}^{2,1}\oplus 3V_{2}^{2,-1}$$
We deduce that $$PABG(H)_3\supset V_{3}^{0,\delta}\oplus 3V_{3}^{0,\mu}\oplus 2V_{3}^{2,1}\oplus 3V_{3}^{2,-1}$$
Count the  number of length-6 loops based at even vertices.  We find that there are 112 loops of length 6.  Hence, $dim PABG(H)_3= 112$.  But $dim (V_{3}^{0,\delta}\oplus 3V_{3}^{0,\mu}\oplus 2V_{3}^{2,1}\oplus 3V_{3}^{2,-1})=95$.  This implies that there must be 17 modules of the form $V^{3,\zeta}$ in $PABG(H)_3$.  Let 1, $\omega$, $\omega^2$ be the three 3th roots of unity.  To find the  multiplicites of $V^{3,1},V^{3,\omega},V^{3,\omega^2}$ in $PABG(H)_3$, we compute the action of $\rho$ on $PABG(H)_3$ and on its known submodules.
If $\mathbf{l}$=$l_1l_2l_3l_4l_5l_6$, then $l_1l_2l_3l_4l_5l_6$ is fixed iff $l_1=l_3=l_5$ and $l_2=l_4=l_6$.  Further, $$\{l_1l_2l_3l_4l_5l_6,\sqrt{\frac{\lambda(l_1)\lambda(l_4)}{\lambda(l_5)\lambda(l_2)}}l_5l_6l_1l_2l_3l_4, \sqrt{\frac{\lambda(l_1)\lambda(l_4)}{\lambda(l_3)\lambda(l_6)}}l_3l_4l_5l_6l_1l_2\}$$ forms a 3-orbit.  We have 7 fixed points and 35 3-orbits.  If $\{\mathbf{x},\mathbf{y},\mathbf{z}\}$ is an orbit, then $\mathbf{x}+\mathbf{y}+\mathbf{z}$ is a fixed point.  Also, $\omega\mathbf{x}+\mathbf{y}+\omega^2\mathbf{z}$ has eigenvalue $\omega$ and $\omega^2\mathbf{x}+\mathbf{y}+\omega\mathbf{z}$ has eigenvalue $\omega^2$.  
Thus, $\rho$ has eigenvalue 1 with multiplicity 42 and eigenvalues $\omega$ and $\omega^2$ with multiplicity 35 each.  On $V_{3}^{0,\delta}\oplus 3V_{3}^{0,\mu}\oplus 2V_{3}^{2,1}\oplus 3V_{3}^{2,-1}$, we pick a basis on which $\rho$ acts by permutation.  $V_{3}^{0,\delta}$ has $TL_3$ pictures as basis  ($dim V_{3}^{0,\delta}=5$).  $V_{3}^{0,\mu}$ has, as basis, $TL_3$ pictures with a generator inserted in all regions ($dim V_{3}^{0,\mu}=20$).  $V_{3}^{2,-1}$ has, as basis, annular tangles with 4 strands connecting the generator to the outer boundary, and two points on the outer boundary connected to each other  ($dim V_{3}^{2,-1}=6$).  $V_{3}^{2,1}$ is similar except the generator $T_0$ has eigenvalue +1.  For $V_{3}^{0,\delta}$, we have 1 orbit and 2 fixed points.  For $V_{3}^{0,\mu}$, we have 6 orbits and 2 fixed points.  For $V_{3}^{2,-1}$, we have 2 orbits.  Similarly, $V_{3}^{2,1}$ has 2 orbits.  In total, we have for $V_{3}^{0,\delta}\oplus 3V_{3}^{0,\mu}\oplus 2V_{3}^{2,1}\oplus 3V_{3}^{2,-1}$ 29 orbits and 8 fixed points.  Thus, $\rho$ has eigenvalues $\{1,\omega,\omega^2\}$ with multiplicities \{37,29,29\}.
Recall that, on $PABG(H)_3$, $\rho$ has eigenvalues $\{1,\omega,\omega^2\}$ with multiplicities \{42,35,35\}.  Comparing the two sequences, we see that the multiplicities of $V^{3,1},V^{3,\omega},V^{3,\omega^2}$ in $PABG(H)$ are 5,6,6.  
Hence, $$PABG(H)_3=V_{3}^{0,\delta}\oplus 3V_{3}^{0,\mu}\oplus 2V_{3}^{2,1}\oplus 3V_{3}^{2,-1}\oplus5V_{3}^{3,1}\oplus6V_{3}^{3,\omega}\oplus6V_{3}^{3,\omega^2}$$
The subspace of $PABG(H)_3$ of low weight 3 elements with $\rho$-eigenvalue 1 is 5-dimensional, and the subspaces of low weight 3 elements with $\rho$-eigenvalue $\omega$ and $\omega^2$ are 6-dimensional each.
\end{proof}

We would like to have  explicit expressions for $X^{3,1}$, $X^{3,\omega}$, and $X^{3,\omega^2}$.  The symmetries of $H$ and the relations imposed on these spaces will allow us to give explicit expressions for them.  We will need the following notation.

\begin{notation}[\cite{VJ5},\cite{EP}]

\begin{itemize}
\item The rotation operator $\rho$ has already been introduced:
$$\rho=
	\begin{tikzpicture}[annular]
	\clip (0,0) circle (2cm);
	\filldraw[shaded] (158:4cm)--(0,0)--(112:4cm);
	\filldraw[shaded] (-158:4cm)--(0,0)--(-112:4cm);
	\draw[shaded] (-68:4cm)--(0,0)--(-22:4cm);
	\draw[ultra thick] (0,0) circle (2cm);
	\node at (0,0)  [empty box] (T) {};
	\node at (T.180) [left] {$\star$};
	\node at (-90:2cm) [above] {$\star$};
\end{tikzpicture}$$
\item
The operator $\alpha$ is a symmetry of $H$ which permutes legs; Specifically, it sends $z_j \mapsto z_{j+1 \mod 3}$  and  $b_j \mapsto b_{j+1 \mod 3}$.  
\item
The operator $f_i$ on paths ``flips''  vertex $(i+1)$; if $v_{i}=v_{i+2}$, and $v_{i}$ is adjacent to only two vertices ($v_{i+1}$ and another, call it $w$), then 
	$$f_i (v_1 \ldots v_{i} v_{i+1} v_{i+2} \ldots v_8) = 
	- \sqrt{\frac{\lambda(v_{i+1})}{\lambda(w)}} v_1 \ldots v_{i} w v_{i+2} \ldots v_8.$$  
\end{itemize}
\end{notation}

When two paths differ at only one position and there are only two possible vertices that can go there, the capping-off relation $\epsilon_i$ forces the coefficient of one of the paths to be a certain scalar multiple of the other.  In the next theorem, we utilize the above notation to find an explicit expression for $X^{3,1}$.
\begin{theorem} Let $t_0$, $t_1$, $t_2$, $t_0'$, and $s$ be free variables.  Let R=$\sqrt{\frac{\lambda(d)}{\lambda(b_i)}}$.\\
If $X \subset PABG(H)_{3,+}$ is defined by

\noindent X= $\big(1 + \rho + \rho^2\big) \cdot \\
\Big(\big( t_0  + t_1 \cdot \alpha + t_2 \cdot \alpha^2 \big)\big(1+f_2\big) \big(cb_0cb_0cb_1 \big)\\
+  \big(t_0' + (t_0'+ t_2-t_1) \cdot \alpha + (t_0'+t_0-t_1) \cdot \alpha^2 \big) \big( 1+f_2\big) \big(cb_0cb_0cb_2 \big)\\
+ s  
	\big(cb_0cb_1cb_2\big)  \\
	-\big(2t_0+3t_0'+2t_2-t_1+s\big) \big(cb_0cb_2cb_1\big)  \\
	+\left(-\frac{t_0+t_0'}{R}-\frac{t_2+t_0'}{R}\cdot \alpha -\frac{t_0-t_1+t_2+t_0'}{R}\cdot \alpha^2\right)\big(1+f_4\big) \big(cdcb_0cb_0\big)  \\
	+\Big(-\frac{t_0-t_1+t_2+t_0'+s}{R}-\frac{t_0+t_0'+s}{R}\cdot \alpha-\frac{t_2+t_0'+s}{R}\cdot \alpha^2\Big) \big(cdcb_0cb_1\big)  \\
	+\Big(-\frac{-2(t_0+t_0')+t_1-t_2-s}{R}+\frac{2t_0'+t_2+t_0+s}{R}\cdot \alpha-\frac{-2(t_2+t_0')+t_1-t_0-s}{R}\cdot \alpha^2\Big) \big(cdcb_0cb_2\big)\Big),$\\
then $X=X^{3,1}$.
\end{theorem}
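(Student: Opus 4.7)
The plan is to show $X \subseteq X^{3,1}$ and that $\dim X = 5$; since Theorem 4.0.2 gives $\dim X^{3,1} = 5$, equality then follows. Since $X$ is defined by five free parameters, $\dim X \le 5$ automatically, so the substantive tasks are verifying the defining relations of $X^{3,1}$ on every element of $X$ and checking that the five parameters are linearly independent.

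The $\rho$-invariance is immediate: on length-$6$ loops, $\rho$ cyclically shifts by two positions (with the appropriate Perron-Frobenius weight factors), so $\rho^3 = \id$ and $\rho(1+\rho+\rho^2) = 1+\rho+\rho^2$. Hence every element of the form $(1+\rho+\rho^2)\cdot Y$ is automatically fixed by $\rho$. For the capping-off conditions I would exploit two reductions. First, $\alpha$ is a graph automorphism of $H$ and commutes with every $\epsilon_i$, so only one $\alpha$-orbit representative needs to be checked per summand. Second, the caps $\epsilon_1,\ldots,\epsilon_6$ are permuted by $\rho$, so for a $\rho$-invariant element it suffices to verify vanishing on a single cap in a shaded region and a single cap in an unshaded region. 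The role of each $1+f_i$ factor is then transparent: when $v_i = v_{i+2}$ and $v_i$ has degree $2$ with neighbors $v_{i+1}$ and $w$, capping at position $i+1$ yields a Perron-Frobenius weighted sum over the two possible middle vertices, and $1+f_i$ is engineered precisely so that this weighted sum vanishes. The remaining caps on each summand — those not killed by a $1+f_i$ factor — must be cancelled by contributions from other summands, and this is the content of the specific coefficient choices, notably $-(2t_0+3t_0'+2t_2-t_1+s)$ in front of $cb_0 cb_2 cb_1$ and the $1/R$ factors in the $cdcb_0 cb_j$ terms, where $R = \sqrt{\lambda(d)/\lambda(b_i)}$ precisely rescales the contributions that land on $d$-type vertices. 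Concretely, one enumerates the possible output length-$4$ loops; for each one, requiring the net coefficient produced by the cap operator to vanish becomes a linear equation in $t_0, t_1, t_2, t_0', s$; the coefficients in the theorem are the unique general solution of the resulting linear system.

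For linear independence of the parameters, one reads them off the expansion of $X$ on the loop basis: the coefficient of $cb_0 cb_1 cb_2$ equals $s$, the coefficient of $cb_0 cb_0 cb_1$ equals $t_0$, the coefficient of $cb_0 cb_0 cb_2$ equals $t_0'$, and $t_1, t_2$ appear as coefficients of $\alpha$-rotates such as $cb_1 cb_1 cb_2$ and $cb_2 cb_2 cb_0$. Since these five basis loops lie in distinct $\rho$- and $\alpha$-orbits, the parameters are recovered as independent linear functionals of the output, so the assignment $\C^5 \to X$ is injective and $\dim X \ge 5$.

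The main obstacle is the capping-off bookkeeping. Each $\epsilon_i$ produces length-$4$ loops weighted by square roots of ratios of Perron-Frobenius entries $\lambda(z_i), \lambda(b_i), \lambda(c), \lambda(d)$ (whose explicit values are given in the previous chapter), and a single output loop generally receives contributions from several distinct input summands — in particular the various $\alpha$-rotated $cb_0 cb_0 cb_j$ and $cdcb_0 cb_j$ terms interact nontrivially. Organizing the computation by the support of the output loop — collecting all contributions to $cb_0 cb_0$, then to $cb_0 cb_1$, and so on — reduces the verification to a finite list of scalar identities among the $\lambda$-values, and these are exactly the identities built into the stated coefficient formulas.
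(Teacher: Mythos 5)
Your overall plan --- show $X\subseteq X^{3,1}$ and that $\dim X=5$, then conclude by the dimension count from the preceding theorem --- is exactly the closing step of the paper's own proof. The $\rho$-invariance is indeed automatic from the $(1+\rho+\rho^2)$ factor, and the linear-independence argument via reading off the coefficients of $cb_0cb_1cb_2$, $cb_0cb_0cb_1$, $cb_0cb_0cb_2$, $cb_1cb_1cb_2$, $cb_2cb_2cb_0$ is sound, since these loops lie in distinct $\rho$- and $\alpha$-orbits among the summands.

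Where you genuinely diverge from the paper is the direction of the $\epsilon_i$-verification. You propose verifying $\epsilon_i(X)=0$ on the given expression directly; the paper explicitly deflects this, remarking that ``it is easier to see this by imposing the conditions $\epsilon_i(T)=0$ on a general element $\ldots$ and deriving the desired expression,'' and then carries out exactly that reverse-engineering: start with an arbitrary $\rho$-invariant combination of loops, classify the loops by type, and impose $\epsilon_1$, $\epsilon_3$, $\epsilon_5$ to eliminate unknowns until only the stated five parameters remain. Your verification route and the paper's derivation route both work; the derivation is arguably less error-prone because each cap constraint is solved for immediately rather than checked against an already-committed formula.

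Two cautions on your shortcuts. First, your $\rho$-orbit reduction (``it suffices to verify $\epsilon_1$ and $\epsilon_2$'') is correct in principle --- for $\rho$-invariant $X$, the caps within each orbit $\{\epsilon_1,\epsilon_3,\epsilon_5\}$ and $\{\epsilon_2,\epsilon_4,\epsilon_6\}$ give equivalent constraints, since $\epsilon_{i+2}$ differs from $\epsilon_i\circ\rho^{-1}$ only by an output rotation --- but note that the paper nevertheless extracts additional linear relations at the $\epsilon_3$ stage before finding $\epsilon_5$ redundant. This is a warning that to use the shortcut you must solve $\epsilon_1(X)=0$ as a vanishing condition in all of $V_2$ (a $25$-dimensional space), not just read off the equations coming from the most visible output loops; partial bookkeeping at $\epsilon_1$ leaves constraints that happen to be more legible at $\epsilon_3$. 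Second, your description of the role of $1+f_i$ only covers the degree-$2$ case ($v_i=v_{i+2}=b_j$, two choices for the middle vertex). The case $v_i=v_{i+2}=c$ is the genuinely hard one: $c$ has four neighbors $b_0,b_1,b_2,d$, so the cap sees four terms whose coefficients must satisfy $x+y+z+uR=0$; this is not a flip but a genuine cross-summand cancellation, and it is where the $1/R$ rescalings and the constraint $r=-(2t_0+3t_0'+2t_2-t_1+s)$ come from. Your sketch gestures at this but does not work it out; this bookkeeping is the entire substance of the proof, and a complete argument has to carry it through.
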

\begin{proof}  We must check that for all  $T \in X$,
$$\epsilon_i(T)=0 \mbox{ for }i=1,\ldots,6, \mbox{ and } \rho(T)=T.$$
Since the expression for $T \in X$ begins with $\big(1 + \rho + \rho^2\big)$, T has rotational eigenvalue 1.

To prove that $\epsilon_i(X)=0$ for all $i$, note that the only possible states on the diagram 
$$\epsilon_i = \begin{tikzpicture}[annular]
	\clip (0,0) circle (2cm);

	\filldraw[shaded] (0,0) .. controls ++(170:2cm) and ++(100:2cm) .. (0,0);
	\filldraw[shaded] (-158:4cm)--(0,0)--(-112:4cm);
	\draw[shaded] (68:4cm)--(0,0)--(22:4cm);
		
	\draw[ultra thick] (0,0) circle (2cm);
	
	\node at (0,0)  [empty box] (T) {};
\end{tikzpicture} $$ 
must have, along their inner boundary, loops $\bf l$ such that $l_i = l_{i+2}$.  
(By $l_k$ we mean the $k^{th}$ vertex of $\bf l$.)
Then $\epsilon_i(\bf l)$ is (some multiple of) the length-four loop which comes from deleting vertices $l_{i+1}$ and $l_{i+2}$:  $l_1 \ldots l_i l_{i+3} \ldots l_6$.  

The loops in $X$ that could contribute a non-zero term to  $\epsilon_i(X)$ 
are terms where the $i$ and $i+2$ vertices are the same.  
We need the length-four loops that come from applying $\epsilon_i$ to cancel each other out.
Suppose $\bf l$ is a 4-loop and consider the four possible cases.  

Case 1: $l_i$ = $z_j$ for some j.  Let $\bf l'$ be a 6-loop.  We want $\epsilon_i(\bf l')$ = (a scalar multiple of $\bf l$).  In that case, $l_i'=z_j$ and $l_{i+2}'=z_j$.  There is only one such $\bf l'$ which gives $\bf l$, under $\epsilon_i$.  If $\mathbf {l} = l_1\cdots l_il_{i+3}\cdots l_6,\mbox{ then }\mathbf{l'}=l_1\cdots l_ib_jl_il_{i+3}\cdots l_6$.  Thus, the coefficient of $\mathbf{l'}$ in $X$ is 0.

Case 2: $l_i$=$b_j$ for some j.  
If $\mathbf {l}=l_1\cdots l_il_{i+3}\cdots l_6,\mbox{then either }\\ \mathbf {l'}= l_1\cdots l_iz_jl_il_{i+3}\cdots l_6 ,\mbox{or } \mathbf{l'}=l_1\cdots l_icl_il_{i+3}\cdots l_6$.  So there are two 6-loops $\mathbf{l'}$ and $\mathbf{l''}$ such that $\epsilon_i(\mathbf{l'})$ and $\epsilon_i(\mathbf{l''})$ are scalar multiples of $\mathbf{l}$.
If we let $x=Coeff_{X}(\mathbf{l'})$ and $y=Coeff_{Y}(\mathbf{l''})$, then by imposing the conditions $$\epsilon_i(x\mathbf{l'}+y\mathbf{l''})=0,$$ we get a relation between $x$ and $y$.  \\In particular, $y=-x\sqrt{\frac{\lambda(l_{i+1}')}{\lambda(l_{i+1}'')}}$ for $i=1,2,3,4,5$ and $y=-x\frac{\lambda(l_{i+1}')}{\lambda(l_{i+1}'')}$ for $i=6$.

Case 3: $l_i$ = $c$.  There are four 6-loops $\mathbf{l'},\mathbf{l''},\mathbf{l'''},\mathbf{l^{(4)}}$ such that $\epsilon_i (\mathbf{l'}), \epsilon_i (\mathbf{l''}),\\ \epsilon_i (\mathbf{l'''}), \epsilon_i (\mathbf{l^{(4)}})$ are scalar multiples of $\mathbf{l}$.  Let $x,y,z,u$ be the coefficients, respectively, of $\mathbf {l'},\mathbf {l''},\mathbf {l'''},\mathbf {l^{(4)}}$ in $X$.  Then we want $\epsilon_i(x \cdot \mathbf{l'}+y \cdot \mathbf{l''}+z\cdot \mathbf{l'''}+u\cdot \mathbf{l^{(4)}})= 0$.  This condition implies that $x+y+z+u\cdot R=0$ for $i=1,5$ and $x+y+z+u\cdot R^2=0$ for $i=3$.  

Case 4: $l_i$=$d$.  The coefficient of $\mathbf{l'} = l_1\cdots l_icl_i\cdots l_6$ is 0 since there is only one loop $\mathbf{l'}$ such that $\epsilon_i(\mathbf{l'})$= (a scalar)$\cdot \mathbf{l}$.

One can check to see that $\epsilon_i(T) = 0$ for $i=1,\ldots,6$.  However, it is easier to see this by imposing the conditions $\epsilon_i(T) = 0$, for $i=1,\ldots,6$, on a general element of $V_3^{3,1}$ and deriving the desired expression for $T$.  Then we know that the element $T$ satisfies the desired conditions because it was by imposing those conditions that we derived $T$.  A general element of $V_3^{3,1}$ will be a linear combination of 6-loops based at even vertices $z_0,z_1,z_2,c$.  So let X be a linear combination of loops based at $z_0,z_1,z_2, c$.  We can replace a loop based at $z_1$ with $\alpha$ of some loop based at $z_0$.  Similarly, we can think of loops based at $z_2$ as loops based at $z_0$ with $\alpha^2$ applied to them.  So far $$X=\sum_k(\beta_{k,0}+\beta_{k,1}\alpha+\beta_{k,2}\alpha^2)(\mathbf{p}_k)  +\sum_j \eta_j (\mathbf{w}_j),$$ where $\mathbf{p}_k$ is a loop based at $z_0$ and $\mathbf{w}_j$  is a loop based at $c$.

To find a more precise expression for X, we impose the conditions $\epsilon_i(X)=0$ $\forall i$ and $\rho (X) = X$.  Suppose $\mathbf{p}$ is a path in the expression for $X$.  Then the coefficient of $\mathbf{p}$ in $X$ determines the coefficients of rotations of , permutations $\alpha$  and $\alpha^2$ of,  flips (through $b_i$) at one vertex (e.g., $z_0b_0cb_0\cdots$ and $z_0b_0z_0b_0\cdots$) of,  and combinations of all these of $\mathbf{p}$.  $\epsilon_i$ commutes with $\alpha$ and so $$\epsilon_i(X) = \epsilon_i ( \sum_k (\beta_{k,0}+\beta_{k,1}\alpha+\beta_{k,2}\alpha^2)(\mathbf{p}_k)  +\sum_j \eta_j (\mathbf{w}_j))$$ 
 $$= \sum_k (\beta_{k,0}+\beta_{k,1}\alpha+\beta_{k,2}\alpha^2)\epsilon_i(\mathbf{p}_k)  +\sum_j \eta_j \epsilon_i(\mathbf{w}_j)$$

Now, if $\mathbf{p}_k$ is such that $p_i \neq p_{i+2}$, then $\epsilon_i(\mathbf{p}_k)=0$.  If $p_i =p_{i+2}$, then there are the following cases for such a path $\mathbf{p}$:

Case (i): $p_i=p_{i+2}=b_j$ for some $j$.  Then, in order that $\epsilon_i(\mathbf{p})=0$ we must have the coefficient of the flip (through $b_j$) at vertex $p_{i+1}$ to be $-x\sqrt{\frac{\lambda(p_{i+1})}{\lambda(r_{i+1})}}$ for $i=1,2,3,4,5$ and $-x\frac{\lambda(p_{i+1})}{\lambda(r_{i+1})}$ for $i=6$, where $x$ is the coefficient of $\mathbf{p}$ and $r_{i+1}$ is the other vertex adjacent to $b_j$.  By convention, if $p_i=p_{i+2}=b_j$, let $p_{i+1}=c$ and relegate the path with $p_{i+1}=z_j$ as the flip of $\mathbf{p}$.  

Case (ii): $p_i=p_{i+2}=c$.  By convention, let $p_{i+1}=b_0$.  There are three flips of $\mathbf{p}$ (through $c$) at $p_{i+1}$ --- one with $b_1$, one with $b_2$, and one with $d$.  The coefficients $x,y,z,u$ of these four paths must satisfy $x+y+z+u\cdot R=0$ for $i=1,5$ and $x+y+z+u\cdot R^2=0$ for $i=3$ where $R= \sqrt{\frac{\lambda(d)}{\lambda(b_i)}}$.

Case (iii): $p_i=p_{i+2}=z_j$ for some $j$.  If $\epsilon_i(\mathbf{p})=0$, then $Coeff_X(\mathbf{p})=0$ because there are no other 6-loops such that $\epsilon_i$ of it gives us a multiple of $\epsilon_i(\mathbf{p})$ and hence no other 6-loop such that $\epsilon_i$ of it cancels out with $\epsilon_i(\mathbf{p})$.  

Case (iv):  $p_i=p_{i+2}=d$.  Then $Coeff_X(\mathbf{p})=0$ for the same reason as in Case (iii).

Any loop which contains the vertex $v_{i+1}=d$ must have $v_i=c=v_{i+2}$ and hence fall under Case (ii).  In this case, we will have a segment of the form $\ldots\left[\  \right]cdc\left[\  \right]\ldots$, where the first empty box admits $b_k$ for some k or $d$.  We can rotate the path to have it start at c so that we have: $cdc\left[\  \right]\left[\  \right]\left[\  \right]$, where the first and last empty boxes admit $b_k$ or $d$.  

Any loop which contains the vertex $v_{i+1}=z_j$ must have $v_i=b_j=v_{i+2}$ and hence fall under Case(i).  In this case, we will have a segment of the form $\ldots cb_jz_jb_jc\ldots$, since we can't have the case where any two of the same $z_j$'s are separated by one vertex according to Case (iii).  We can rotate the path to have it start at $c$ to get: $cb_jz_jb_jc\left[\  \right]$.  Furthermore, we can apply $\alpha$ to it such that $j=0$: $cb_0z_0b_0c\left[\  \right]$.  

Thus, we have loops such that $p_i=p_{i+2}=b_j$ for some $j$ and $i$ or $p_i=p_{i+2}=c$ for some $i$, where these don't contain any $z_j$ nor $d$, or loops of the form $cb_0z_0b_0c\left[\  \right]$, where these don't contain any $d$, or loops of the form $cdc\left[\  \right]\left[\  \right]\left[\  \right]$, where the first and last empty boxes admit $b_k$ or $d$.

Thus we have paths which contain either a $z_j$ or a $d$ and those that don't.  Let us classify three main types of loops.

\noindent Type 1: $\mathbf{p}$ contains a $z_j$ but no $d$.  Then $\mathbf{p}$ is $cb_0z_0b_0c\left[\  \right]$ up to a combination of rotations and $\alpha$'s.  

\noindent Type 2: $\mathbf{p}$ does not contain any $z_j$ nor $d$.\\
\indent Type 2a: $\mathbf{p}$ does not satisfy $p_i=p_{i+2}= b_j$ or $c$ for any $i,j$.\\
\indent Type 2b: $\mathbf{p}$ does satisfy $p_i=p_{i+2}=b_j$ or $c$ for some $i,j$.

\noindent Type 3: $\mathbf{p}$ contains a $d$\\
\indent Type 3a: $\mathbf{p}$ is of the form $cdcb_0\left[c\right]b_k$ for some $k$.\\
\indent Type 3b: $\mathbf{p}$ is of the form $cdcb_0\left[z_0\right]b_0$.

\noindent Consider loops of type 2a.  There are none of this type.  

\noindent Consider loops of type 2b.  We can reduce all loops of type 2b to the following five loops: $$cb_0cb_0cb_0,cb_0cb_0cb_1,cb_0cb_0cb_2,cb_0cb_1cb_2,cb_0cb_2cb_1$$

\noindent Consider loops of type 1: 
$$cb_0z_0b_0cb_0\cong f_2(cb_0cb_0cb_0)$$
$$cb_0z_0b_0cb_1\cong f_2(cb_0cb_0cb_1)$$
$$cb_0z_0b_0cb_2\cong f_2(cb_0cb_0cb_2),$$
where $\mathbf{p}\cong \mathbf{q}$ means that $\mathbf{p}$ is equal to $\mathbf{q}$ up to a scalar multiple.
The possible flips (except those which give rise to loops of type 3) are $$f_4f_2(cb_0cb_0cb_0),$$ $$f_6f_4f_2(cb_0cb_0cb_0),$$ $$f_2(cb_0cb_0cb_1),$$ $$f_2(cb_0cb_0cb_2)$$

\noindent Consider loops of type 3:  \\
\indent Type 3a loops consist of : $$cdcb_0cb_0,$$ $$cdcb_0cb_1,$$ $$cdcb_0cb_2$$
\indent The possible flips are : $$f_4(cdcb_0cb_0),$$ $$cdcb_0cb_1,$$ $$cdcb_0cb_2$$
\indent Type 3b loops consist of: $cdcb_0z_0b_0\cong f_4(cdcb_0cb_0)$

In total then, our basic loops will be the following fourteen loops: $$cb_0cb_0cb_0,cb_0cb_0cb_1,cb_0cb_0cb_2,cb_0cb_1cb_2,cb_0cb_2cb_1,$$ $$f_2(cb_0cb_0cb_0), f_2(cb_0cb_0cb_1),f_2(cb_0cb_0cb_2),$$ $$f_4f_2(cb_0cb_0cb_0), f_6f_4f_2(cb_0cb_0cb_0),$$ $$ cdcb_0cb_0,cdcb_0cb_1,cdcb_0cb_2,$$ $$f_4(cdcb_0cb_0)$$

The coefficient of the flip of $cb_0cb_0cb_1$, for instance, in $X$ must be that given by $f_2$ times the coefficient of $cb_0cb_0cb_1$.

\noindent Let $X= \big(1+\rho + \rho^2\big)\cdot \\
\Big( \big(u_0+u_1\cdot\alpha+u_2\cdot\alpha^2\big)\big(1+f_2+f_4f_2+f_6f_4f_2\big)\big(cb_0cb_0cb_0\big)\\
+\big(t_0+t_1\cdot\alpha+t_2\cdot\alpha^2\big)\big(1+f_2\big)\big(cb_0cb_0cb_1\big)\\
+\big(t_0'+t_1'\cdot\alpha+t_2'\cdot\alpha^2\big)\big(1+f_2\big)\big(cb_0cb_0cb_2\big)\\
+\big(s_0+s_1\cdot\alpha+s_2\cdot\alpha^2\big)\big(cb_0cb_1cb_2\big)\\
+\big(r_0+r_1\cdot\alpha+r_2\cdot\alpha^2\big)\big(cb_0cb_2cb_1\big)\\
+\big(q_0+q_1\cdot\alpha+q_2\cdot\alpha^2\big)\big(1+f_4\big)\big(cdcb_0cb_0\big)\\
+\big(m_0+m_1\cdot\alpha+m_2\cdot\alpha^2\big)\big(cdcb_0cb_1\big)\\
+\big(n_0+n_1\cdot\alpha+n_2\cdot\alpha^2\big)\big(cdcb_0cb_2\big)\Big)$.

\noindent Notice that $f_4f_2(cb_0cb_0cb_0)\cong f_4(cb_0z_0b_0cb_0)\cong cb_0z_0b_0z_0b_0$.\\  
But $Coeff_X(cb_0z_0b_0z_0b_0)=0$, and hence $u_0=u_1=u_2=0$.  \\
Thus, \\
$X= \big(1+\rho + \rho^2\big)\cdot\\
\Big(\big(t_0+t_1\cdot\alpha+t_2\cdot\alpha^2\big)\big(1+f_2\big)\big(cb_0cb_0cb_1\big)\\
+\big(t_0'+t_1'\cdot\alpha+t_2'\cdot\alpha^2\big)\big(1+f_2\big)\big(cb_0cb_0cb_2\big)\\
+\big(s_0+s_1\cdot\alpha+s_2\cdot\alpha^2\big)\big(cb_0cb_1cb_2\big)\\
+\big(r_0+r_1\cdot\alpha+r_2\cdot\alpha^2\big)\big(cb_0cb_2cb_1\big)\\
+\big(q_0+q_1\cdot\alpha+q_2\cdot\alpha^2\big)\big(1+f_4\big)\big(cdcb_0cb_0\big)\\
+\big(m_0+m_1\cdot\alpha+m_2\cdot\alpha^2\big)\big(cdcb_0cb_1\big)\\
+\big(n_0+n_1\cdot\alpha+n_2\cdot\alpha^2\big)\big(cdcb_0cb_2\big)\Big)$.

Since applying $\alpha$ to $cb_0cb_1cb_2$ and $cb_0cb_2cb_1$ gives loops which can be got by applying a rotation, we have that\\
$X= \big(1+\rho + \rho^2\big)\cdot\\
\Big(\big(t_0+t_1\cdot\alpha+t_2\cdot\alpha^2\big)\big(1+f_2\big)\big(cb_0cb_0cb_1\big)\\
+\big(t_0'+t_1'\cdot\alpha+t_2'\cdot\alpha^2\big)\big(1+f_2\big)\big(cb_0cb_0cb_2\big)\\
+s_0\big(cb_0cb_1cb_2\big)\\
+r_0\big(cb_0cb_2cb_1\big)\\
+\big(q_0+q_1\cdot\alpha+q_2\cdot\alpha^2\big)\big(1+f_4\big)\big(cdcb_0cb_0\big)\\
+\big(m_0+m_1\cdot\alpha+m_2\cdot\alpha^2\big)\big(cdcb_0cb_1\big)\\
+\big(n_0+n_1\cdot\alpha+n_2\cdot\alpha^2\big)\big(cdcb_0cb_2\big)\Big)$.

Imposing the condition $\epsilon_1(X)=0$ gives us the following equations:
$$m_i=-\frac{t_i+t_{i+1 mod 3}'+s_0}{R},$$ 
$$n_i=-\frac{t_i'+r_0+t_{i+2 mod 3}}{R},$$ 
$$q_i=-\frac{t_i+t_i'}{R}$$ for $i=0,1,2$

\noindent So \\
$X= \big(1+\rho + \rho^2\big)\cdot\\
\Big(\big(t_0+t_1\cdot\alpha+t_2\cdot\alpha^2\big)\big(1+f_2\big)\big(cb_0cb_0cb_1\big)\\
+\big(t_0'+t_1'\cdot\alpha+t_2'\cdot\alpha^2\big)\big(1+f_2\big)\big(cb_0cb_0cb_2\big)\\
+s\big(cb_0cb_1cb_2\big)\\
+r\big(cb_0cb_2cb_1\big)\\
+\big(-\frac{t_0+t_0'}{R}-\frac{t_1+t_1'}{R}\cdot\alpha-\frac{t_2+t_2'}{R}\cdot\alpha^2\big)\big(1+f_4\big)\big(cdcb_0cb_0\big)\\
+\big(-\frac{t_0+t_1'+s}{R}-\frac{t_1+t_2'+s}{R}\cdot\alpha-\frac{t_2+t_0'+s}{R}\cdot\alpha^2\big)\big(cdcb_0cb_1\big)\\
+\big(-\frac{t_0'+r+t_2}{R}-\frac{t_1'+r+t_0}{R}\cdot\alpha-\frac{t_2'+r+t_1}{R}\cdot\alpha^2\big)\big(cdcb_0cb_2\big)\Big)$,
where $s_0=s \mbox{ and } r_0=r$.

By imposing the condition $\epsilon_3(x)=0$, we get the equation:
$$2(t_i+t_i')+t_{i+2 mod 3}+ t_{i+1 mod 3}'+r+s=0$$ for $i=0,1,2$.  From these three equations, we get that $t_2'+t_1-t_0'-t_0=0$.  Similarly, $t_0'+t_2-t_1'-t_1=0$ and $t_1'+t_0-t_2'-t_2=0$.  So $t_2'=t_0'+t_0-t_1$ and $t_1'=t_0'+t_2-t_1$.  We now have $t_1'$ and $t_2'$ in terms of $t_0'$.  We can easily solve for $r$ to get $r=-(2t_0+3t_0'+2t_2-t_1+s)$.  Imposing the condition $\epsilon_5(X)=0$ holds and does not give us any new information. 

So far, \\
$X= \big(1+\rho + \rho^2\big)\cdot\\
\Big(\big(t_0+t_1\cdot\alpha+t_2\cdot\alpha^2\big)\big(1+f_2\big)\big(cb_0cb_0cb_1\big)\\
+\big(t_0'+t_1'\cdot\alpha+t_2'\cdot\alpha^2\big)\big(1+f_2\big)\big(cb_0cb_0cb_2\big)\\
+s\big(cb_0cb_1cb_2\big)\\
-(2t_0+3t_0'+2t_2-t_1+s)\big(cb_0cb_2cb_1\big)\\
+\big(-\frac{t_0+t_0'}{R}-\frac{t_2+t_0'}{R}\cdot\alpha-\frac{t_0-t_1+t_2+t_0'}{R}\cdot\alpha^2\big)\big(1+f_4\big)\big(cdcb_0cb_0\big)\\
+\big(-\frac{t_0-t_1+t_2+t_0'+s}{R}-\frac{t_0+t_0'+s}{R}\cdot\alpha-\frac{t_2+t_0'+s}{R}\cdot\alpha^2\big)\big(cdcb_0cb_1\big)\\
+\big(-\frac{-2(t_0+t_0')+(t_1-t_2)-s}{R}+\frac{2t_0'+t_2+t_0+s}{R}\cdot\alpha-\frac{-2(t_2+t_0')+(t_1-t_0)-s}{R}\cdot\alpha^2\big)\big(cdcb_0cb_2\big)\Big)$\\
Thus, we have that $X \subseteq X^{3,1}$.  Since $X$ is 5-dimensional, and $X^{3,1}$ is also, we have $X=X^{3,1}$.

\end{proof}

If we want $T^*=T$ for $T \in X$, then we must have $\bar{t_0}=t_0$.  Similarly, $\bar{t_1}=t_1$ and $\bar{t_2}=t_2$.  This implies that $t_0,t_1,t_2 \in \mathbb{R}$.  Also, $\bar{t_0'}=t_0'$ implies $t_0' \in \mathbb{R}$.  Furthermore, $\bar{s}=-(2t_0+3t_0'+2t_2-t_1+s)$

We have a similar explicit expression for $X^{3,\omega}$.  
\begin{theorem}  Let $t_0$, $t_1$, $t_2$, $t_0'$, and $s$ be free variables.  Let R=$\sqrt{\frac{\lambda(d)}{\lambda(b_i)}}$ and let $\eta = 1+i\sqrt{3}$.\\
If $X \subset PABG(H)_{3,+}$ is defined by

\noindent X= $\big(1 + \omega^2 \cdot \rho + \omega \cdot \rho^2 \big) \cdot \\
\Big(\eta\big( t_0  + t_1 \cdot \alpha + t_2 \cdot \alpha^2 \big)\big(1+f_2\big) \big(cb_0cb_0cb_1 \big)\\
+  \Big(\eta t_0' + \big(\eta (t_0+ t_0'-t_2)-\bar{s}-\omega s \big)\cdot \alpha + \big(\eta(t_1+t_0'-t_2)+ \omega\bar{s}+s\big) \cdot \alpha^2 \Big) \big( 1+f_2\big) \big(cb_0cb_0cb_2 \big)\\
+ s  
	\big(cb_0cb_1cb_2\big)  \\
	+\omega^2\bar{s}\big(cb_0cb_2cb_1\big)  \\
	-\frac{\omega^2}{R}\Big(\eta(t_0+t_0')+\big(\eta(t_1+t_0+t_0'-t_2)-\bar{s}-\omega s\big)\cdot \alpha +\big(\eta(t_1+t_0')+\omega\bar{s}+s\big)\cdot \alpha^2\Big)\big(1+f_4\big)\big(cdcb_0cb_0\big)  \\
	-\frac{1}{R}\Big(\big(\eta(-\omega^2t_0+\omega t_0'-\omega t_2)-\omega\bar{s}\big)+\big(\eta(-\omega^2 t_1+\omega t_0'-\omega t_2)+\omega^2\bar{s}-\omega^2 s\big)\cdot \alpha + \big(\eta(t_2+\omega t_0')+\omega s\big)\cdot \alpha^2\Big)\big(cdcb_0cb_1\big)  \\
	-\frac{1}{R}\Big( \big(\eta(t_0'+\omega t_2)+\omega \bar{s}\big)+\big(\eta(-\omega^2t_0+t_0'-t_2)-\omega s\big)\cdot \alpha +\big( \eta(t_0'-\omega^2 t_1-t_2)-\bar{s}+s\big)\cdot \alpha^2\Big)\big(cdcb_0cb_2\big)\Big)$,

\noindent then $X=X^{3,\omega}$.
  Furthermore, $t_0,t_1,t_2,t_0' \in \mathbb{R}$.
\end{theorem}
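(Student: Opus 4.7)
The plan is to mirror the strategy of Theorem 4.0.8, with the one important modification being the replacement of the symmetrizing prefactor $(1 + \rho + \rho^2)$ by the projector onto the $\omega$-eigenspace of $\rho$. Up to a factor of $3$, the operator $(1 + \omega^2 \rho + \omega \rho^2)$ is precisely this projection, so any element of the stated form automatically satisfies $\rho(T) = \omega T$. It therefore remains to impose $\epsilon_i(T) = 0$ for $i = 1, \ldots, 6$ and to verify that the resulting linear system has exactly the six free parameters $t_0, t_1, t_2, t_0', s$ (with $\bar s$ playing the role of the sixth real parameter, as explained below).

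The first step is to import the reduction carried out in the proof of Theorem 4.0.8. The analysis by cases of when $p_i = p_{i+2}$ can occur (the four cases with $p_{i+1} \in \{b_j, z_j, c, d\}$) depends only on the combinatorics of $H$ and is unaffected by the choice of $\rho$-eigenvalue; in particular, the same list of fourteen basic loops (the three $cb_0 cb_0 cb_k$ together with their $f_2$-flips, $cb_0 cb_1 cb_2$, $cb_0 cb_2 cb_1$, the three $cdcb_0 cb_k$, $f_4(cdcb_0 cb_0)$, and $f_4 f_2, f_6 f_4 f_2$ applied to $cb_0 cb_0 cb_0$) spans the subspace cut out by the capping conditions once the $\rho$-eigenvector projector is applied. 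The same reasoning as before forces the coefficient of the $cb_0 cb_0 cb_0$ family to vanish.

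The second step is to write a general element of the $\omega$-eigenspace as a sum over these fourteen loops with unknown coefficients and apply $\epsilon_1 = 0$. As in Theorem 4.0.8, this should solve for the coefficients of the $cdc\cdots$ loops (the $q_i, m_i, n_i$) in terms of the remaining parameters. Imposing $\epsilon_3 = 0$ then produces a second batch of linear relations among the $t_i, t_i'$; these relations now carry phase factors of $\omega$ and $\omega^2$ coming from the projector, which is why the coefficients appearing in the stated formula involve $\eta = 1 + i\sqrt{3}$ and mixtures such as $\eta(t_1 + t_0' - t_2) + \omega \bar s + s$. The condition $\epsilon_5 = 0$ should, as before, be redundant once $\epsilon_1$ and $\epsilon_3$ are imposed.

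The main obstacle is the phase bookkeeping: in the $\rho$-eigenvalue $1$ setting, applying $\alpha$ to a loop like $cb_0 cb_1 cb_2$ produces something that also arises as a rotation, so the three candidate parameters $s_0, s_1, s_2$ collapse to one and similarly for $r_0, r_1, r_2$. In the $\omega$ setting the relevant rotation contributes a factor of $\omega$ rather than $1$, so this collapse fails and the survivors provide the extra complex degree of freedom that distinguishes $\dim X^{3,\omega} = 6$ from $\dim X^{3,1} = 5$; the parameter appearing as $\bar s$ in the presentation encodes precisely this extra freedom (it becomes the complex conjugate of $s$ only once we additionally demand $T = T^*$). For the reality statement $t_0, t_1, t_2, t_0' \in \mathbb{R}$, I would compare the parametrizations of $X^{3,\omega}$ and $X^{3,\omega^2}$: since the involution $*$ intertwines $\rho$ with $\rho^{-1}$, it sends the $\omega$-eigenspace to the $\omega^2$-eigenspace, and matching the two parametrizations forces $\bar{t_i} = t_i$. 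I expect this last verification to be mechanical but lengthy.
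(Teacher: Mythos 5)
The paper's own proof of this theorem is a single sentence: \emph{``The proof is very similar to the proof of the above theorem.''} Your proposal is therefore, in effect, an unpacking of that appeal, and it follows the right recipe: project onto the $\omega$-eigenspace of $\rho$ rather than the $1$-eigenspace, re-use the combinatorial case analysis (on $p_i = p_{i+2}$) to get the same list of basic loops, impose $\epsilon_1 = \epsilon_3 = 0$ with $\epsilon_5$ redundant, and then match against $\dim X^{3,\omega} = 6$. This is consistent with what the paper intends, so in that sense your outline is correct.

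One step in your narrative is not quite right, and it concerns precisely the point that distinguishes this theorem from Theorem 4.0.8. You claim that in the $\omega$-eigenvalue setting ``the collapse fails'' when $\alpha$ is applied to $cb_0 cb_1 cb_2$, and that the surviving $s_1, s_2$ supply the sixth degree of freedom. But $\alpha(cb_0 c b_1 c b_2) = c b_1 c b_2 c b_0 = \rho(cb_0 c b_1 c b_2)$ as a loop (with trivial normalization, since all $b_i$ share the same Perron--Frobenius weight), and similarly for $cb_0 c b_2 c b_1$. An $\omega$-eigenvector still has its coefficient of $\rho(\mathbf{p})$ fixed by the coefficient of $\mathbf{p}$ up to the unit scalar $\omega^{\pm 1}$, so the $\alpha$-collapse of $s_0,s_1,s_2$ to a single parameter (and of $r_0,r_1,r_2$ to a single parameter) goes through exactly as before. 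The genuine source of the extra dimension is instead that the capping constraint $\epsilon_3 = 0$, once decorated with the phases $\omega,\omega^2$ coming from the projector $(1 + \omega^2\rho + \omega\rho^2)$, imposes one fewer independent linear relation among $\{t_1', t_2', r\}$ than in the $\rho$-eigenvalue-$1$ case; this is why $r$ (appearing as $\omega^2\bar{s}$ in the stated formula) remains free rather than being expressed in terms of $s$ and the $t_i$'s. Your final paragraph, comparing the $X^{3,\omega}$ and $X^{3,\omega^2}$ parametrizations under the $*$-involution to obtain reality of $t_0,\ldots,t_0'$, is the right idea and matches the spirit of the remark the paper makes after Theorem 4.0.8; note though that the reality conclusion implicitly presupposes $Q^* = Q$, which the theorem statement does not make explicit.
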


\begin{proof}  The proof is very similar to the proof of the above theorem.  
\end{proof}
 
We can certainly find an explicit expression for $X^{3,\omega^2}$, but we need not consider this space (for now) because our generators $T$ and $Q$ will come from $X^{3,1}$ and $X^{3,\omega}$.  Note for now that there is a generator coming from the space $X^{3,\omega^2}$ which, along with the conjugate of $T$, generates a ``2221'' subfactor planar algebra.  In fact, the generator with rotational eigenvalue $\omega^2$ is just $Q$, with every $\eta$ and $\omega$ conjugated.  We will explore this second pair of generators in the chapter on uniqueness.

\begin{theorem}  The only elements of $X^{3,1} \subset PABGH_3(H)$ which could generate a ``2221'' planar algebra are multiples (or multiples of conjugates) of 

\noindent $T= \big(1+\rho + \rho^2\big)\cdot\\
\Big(t\big(1+\alpha+\alpha^2\big)\big(1+f_2\big)\big(cb_0cb_0cb_1\big)\\
+t_0'\big(1+\alpha+\alpha^2\big)\big(1+f_2\big)\big(cb_0cb_0cb_2\big)\\
+s\big(cb_0cb_1cb_2\big)\\
+\bar{s}\big(cb_0cb_2cb_1\big)\\
-\big(\frac{t_0+t_0'}{R}\big)\big(1+\alpha+\alpha^2\big)\big(1+f_4\big)\big(cdcb_0cb_0\big)\\
-\big(\frac{t+t_0'+s}{R}\big)\big(1+\alpha+\alpha^2\big)\big(cdcb_0cb_1\big)\\
+\big(\frac{2(t+t_0')+s}{R}\big)\big(1+\alpha+\alpha^2\big)\big(cdcb_0cb_2\big)\Big)$

\noindent where $t=\frac{1}{12}\big(-3+\sqrt{21}+\sqrt{-114+26\sqrt{21}}\big)$, $t_0'=\frac{1}{4}\big(-7-\sqrt{21}+\sqrt{54+14\sqrt{21}}\big)t$, and $s=\frac{1}{4}\big(3-\sqrt{21}\big)+\frac{1}{2}\sqrt{\frac{-57+13\sqrt{21}}{6}}\cdot i$, \\
or of $P$, which is the same expression as $T$ except with $t=\frac{1}{12}\big(-3+\sqrt{21}-\sqrt{-114+26\sqrt{21}}\big)$, $t_0'=\frac{1}{4}\big(-7-\sqrt{21}-\sqrt{54+14\sqrt{21}}\big)t$, and $s=\frac{1}{4}\big(3-\sqrt{21}\big)-\frac{1}{2}\sqrt{\frac{-57+13\sqrt{21}}{6}}\cdot i$ .
\end{theorem}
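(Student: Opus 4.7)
The plan is to narrow down the five-dimensional space $X^{3,1}$ (parameterized in Theorem 4.0.3 by $t_0, t_1, t_2, t_0', s$) by imposing a sequence of natural requirements that any element capable of generating a ``2221'' planar subalgebra must satisfy.

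First I would impose self-adjointness $T = T^*$. Since $*$ acts on the loop basis by reversal and is anti-linear on coefficients, this forces $t_0, t_1, t_2, t_0'$ to be real and yields the relation $s + \bar s = -(2t_0 + 3t_0' + 2t_2 - t_1)$ obtained by conjugating the coefficient of $cb_0 c b_2 c b_1$ derived in Theorem 4.0.3. Second, I would impose invariance under the graph symmetry $\alpha$ that cycles the three arms of $H$: any ``2221'' planar algebra inside $PABG(H)$ must be preserved by $\alpha$, and since we already know $\dim X^{3,1} = 5$ and the $\alpha$-orbit of a non-fixed candidate would span more than one copy of the same low-weight module, $T$ itself must be $\alpha$-fixed. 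This collapses the five parameters to $t := t_0 = t_1 = t_2$, $t_0'$, and $s$, with $\text{Re}(s) = -\tfrac{3}{2}(t + t_0')$, leaving three real degrees of freedom.

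Next I would impose the key quadratic relation (R3(T)) from Theorem 4.0.7: $T \cdot T$ must lie in $\text{span}\{TL_3, T, Q\}$, where $Q \in X^{3,\omega}$. Using the state-sum formula in the graph planar algebra, I would expand $T \cdot T$ as a linear combination of 6-loops, then project onto the annular Temperley--Lieb decomposition
\[
PABG(H)_3 = V_3^{0,\delta} \oplus 3V_3^{0,\mu} \oplus 2V_3^{2,1} \oplus 3V_3^{2,-1} \oplus 5V_3^{3,1} \oplus 6V_3^{3,\omega} \oplus 6V_3^{3,\omega^2}
\]
obtained in the proof of Theorem 4.0.2. The requirement forces the components of $T \cdot T$ in $V^{0,\mu}$ and $V^{2,\pm 1}$ to vanish, and its $V^{3,1}$-part to be proportional to $T$; the $V^{3,\omega}$-part, being free, becomes the defining multiple of $Q$. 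Each vanishing condition is a polynomial equation in $t$, $t_0'$, and $\text{Im}(s)$, and the system is small enough (after using the $\alpha$-symmetry to identify terms) to solve in closed form. The resulting polynomials produce the stated nested radicals, and the two consistent sign choices across the three square roots furnish precisely the pair $T, P$; all other solutions in $X^{3,1}$ are scalar multiples or complex conjugates of these.

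The main obstacle is the third step: expanding $T \cdot T$ on the 6-loop basis and carrying out the projections onto each irreducible annular summand. Each product of loops contributes a sum of 6-loops weighted by products of Perron--Frobenius entries $\lambda(z_i), \lambda(b_i), \lambda(c), \lambda(d)$, and the projection onto each $V^{k,\zeta}$-summand requires explicit orthogonal projectors which must be derived from the generators and the capping relations $\epsilon_i$. The bookkeeping is the genuine difficulty; conceptually the argument is linear-algebraic, and computer assistance is essentially indispensable to isolate the clean polynomial system that yields the closed-form answer.
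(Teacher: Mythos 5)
Your plan diverges from the paper's route in two ways, and one of the divergences is a genuine gap. The paper derives its constraints entirely from the single fact that $\dim \mathcal{P}_{0,+}=1$: this forces $\operatorname{tr}(T^n)$ to be a scalar multiple of $z_0+z_1+z_2+c$ for all $n$, and equating the coefficients of $z_0,z_1,z_2,c$ for $n=2,3,4$ (plus the normalization $Z(T^2)=[4]$) is exactly enough to pin down $t,t_0',s$. In particular, the equality $t_0=t_1=t_2$ is not assumed; it is proved (Claim A.1.1 in the appendix) by a delicate case analysis using the conditions $\operatorname{Coeff}_{\operatorname{tr}(T^n)}(z_0)=\operatorname{Coeff}_{\operatorname{tr}(T^n)}(z_1)=\operatorname{Coeff}_{\operatorname{tr}(T^n)}(z_2)$ for $n=2,3,4,5$.

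Your second step asserts that the embedded planar algebra must be preserved by $\alpha$, and then argues dimension-counting to conclude $T$ is $\alpha$-fixed. This is not justified. The embedding of the (hypothetical) subfactor planar algebra into $PABG(H)$ is not canonical, and $\alpha(\mathcal{P})$ is another valid planar subalgebra with no a priori reason to coincide with $\mathcal{P}$. The dimension-counting refinement --- that the $\alpha$-orbit of a non-fixed $T$ would span too large a low-weight space --- only bites if you already know $\alpha(T)\in\mathcal{P}$, which is the very claim at issue. So you are implicitly assuming the $\alpha$-equivariance of the embedding, a fact that is only available once the generator has been computed and shown to have $t_0=t_1=t_2$. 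The paper's trace argument replaces your heuristic with an actual proof.

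Your third step --- requiring the components of $T\cdot T$ in $V^{0,\mu}$ and $V^{2,\pm 1}$ to vanish and its $V^{3,1}$ component to be a multiple of $T$ --- is a valid collection of necessary conditions, and could in principle serve as a substitute for the paper's trace equations for $n=2,3,4$. But it is considerably heavier to execute: the paper only needs the closed diagrams $\operatorname{tr}(T^n)$, computed from the matrix representation of $T$, whereas you would need to compute the full 3-box $T^2$ and then construct and apply explicit orthogonal projectors onto several annular summands. It is also not obvious without doing the computation that your system pins the parameters as tightly as the paper's $n=2,3,4$ plus normalization. In short: repair the $\alpha$-invariance step by replacing it with a derivation (the paper's Claim A.1.1, or the trace conditions that imply it), and be prepared to verify that the (R3(T)) conditions actually isolate the claimed closed form.
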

\begin{proof}Since $\dim (P_{0,+})=1 $ and $\dim (TL_{0,+})=1$ for a (still hypothetical) ``2221'' planar algebra $P$,  we look for $T \in X$ such that $tr(T^2) \in TL_{0,+} \subset PABG(H)_{0,+}$.  The single basis element of $TL_+$, when interpreted as an element of $PABG(H)_+$, is $z_0+z_1+z_2+c$.  Thus,
 $tr(T^2)$ will  be a scalar times $z_0+z_1+z_2+c$.  This implies that the coefficients in $tr(T^2)$ of $z_0$, $z_1$,  $z_2$, and $c$ must all be equal.  In fact, $tr(T^n)$ will be a scalar multiple of $z_0+z_1+z_2+c$ for all $n \geq 1$.  By setting $Coeff_{tr(T^n)}(z_0)=Coeff_{tr(T^n)}(z_1)= Coeff_{tr(T^n)}(z_2)$ for all  $n \geq 1$, we see that it must be the case that $t_0=t_1=t_2$.(see the section on Calculating Traces for $T$ and $Q$)  So let $t_0=t_1=t_2=t$.  By setting $Coeff_{tr(T^2)}(c)=Coeff_{tr(T^2)}(z_0)$, we get that $\vert s\vert ^2=\frac{-3+\sqrt{21}}{2}(t^2+(t_0')^2)+(-2+\sqrt{21})(t+t_0')^2$.  It follows that $$Im(s) = \pm \sqrt{\Big(\frac{-3+\sqrt{21}}{2}\Big)(t^2+(t_0')^2)+\Big(\frac{-17}{4}+\sqrt{21}\Big)(t+t_0')^2}.$$  We already have $Re(s)=-\frac{3}{2}(t+t_0')$.  Take the positive square root for $Im(s)$ and let $s=Re(s) + i\cdot Im(s)$ where $Re(s)$ and $Im(s)$ are given as above.  
 
By setting $Coeff_{tr(T^3)}(c)=Coeff_{tr(T^3)}(z_0)$, we get the following quadratic equation in the variables $t$ and $t_0'$:
$$-\frac{3}{2}(t+t_0')\big((21\sqrt{3}+13\sqrt{7})t^2+7(17\sqrt{3}+11\sqrt{7})t\cdot t_0'+(21\sqrt{3}+13\sqrt{7})(t_0')^2\big)=0,$$
which implies that either \[t_0'=-t \mbox{ or } t_0'=\frac{1}{4}\big(-7-\sqrt{21}\pm\sqrt{54+14\sqrt{21}}\big)\cdot t.\]\\
It can't be the case that $t_0'=-t_0$ since setting  $Coeff_{tr(T^4)}(c)=Coeff_{tr(T^4)}(z_0)$ implies $t_0=0$ and hence gives the null solution.  Therefore, $t_0'=\frac{1}{4}\big(-7-\sqrt{21}\pm\sqrt{54+14\sqrt{21}}\big)\cdot t$. Let $t_0'=\frac{1}{4}\big(-7-\sqrt{21}+\sqrt{54+14\sqrt{21}}\big)t$ and denote this generator by $T$.  Denote by $P$ the generator with $t_0'=\frac{1}{4}\big(-7-\sqrt{21}-\sqrt{54+14\sqrt{21}}\big)\cdot t$.  
We also want $\left\langle T,T\right\rangle=[4]$, i.e., $Z(T^2)=[4]$.

\[ Z(T^2)=3\cdot Coeff_{tr(T^2)}(z_0)\cdot (\lambda(z))^2+Coeff_{tr(T^2)}(c)\cdot(\lambda(c))^2 \]
 \[=\big((t^2+(t_0')^2)\frac{\theta^2}{\sqrt{3}}+\frac{(t+t_0')^2}{R^4}\sqrt{3}\big)\theta^4\cdot3(\lambda(z))^2\]
 \[+\big((9+6\theta^2)(t^2+(t_0')^2)+\theta^2(3+2\theta^2)(t+t_0')^2+6\vert s\vert^2+6\theta^2\vert s\vert^2 \]  \[-12\theta^2(t+t_0')^2\big)\frac{1}{\sqrt{3}}(\lambda(c))^2  \]
 \[= \big((t^2+(t_0')^2)\frac{\theta^2}{\sqrt{3}}+(t+t_0')^2\cdot \frac{9\sqrt{3}}{\theta^4}\big)\theta^4\cdot 3(\lambda(z))^2 \]
 \[+ \big( (9+6\theta^2)(t^2+(t_0')^2)+(-9\theta^2+2\theta^4)(t+t_0')^2\]
 \[+(6+6\theta^2)\cdot\vert s\vert^2\big)\cdot\frac{1}{\sqrt{3}}(\lambda(c))^2 .\]
Setting $Z(T^2)=[4]$, we get
 \[\frac{3}{4}t\big(\sqrt{42(19673+4293\sqrt{21})}\cdot t-\sqrt{727722+158802\sqrt{21}}\cdot t\big)\] 
 \[=\sqrt{7}+2\sqrt{3},\] which implies 
  \[ \frac{3}{4}\big(\sqrt{42(19673+4293\sqrt{21}}-\sqrt{727722+158802\sqrt{21}}\big)\cdot t^2=\sqrt{7}+2\sqrt{3}.\]
This implies \[t^2= \frac{\frac{4}{3}(\sqrt{7}+2\sqrt{3})}{\sqrt{42(19673+4293\sqrt{21})}-\sqrt{727722+158802\sqrt{21}}}\]
Hence, $t=\frac{1}{12}\big(-3+\sqrt{21}+\sqrt{-114+26\sqrt{21}}\big)$.
With the given value for $t$, we have that $s=\frac{1}{4}(3-\sqrt{21})+\frac{1}{2}\sqrt{\frac{-57+13\sqrt{21}}{6}}\cdot i$.  Thus, we have our desired expression for $T$.  $P$ is derived similarly, except with $t_0'=\frac{1}{4}\big(-7-\sqrt{21}-\sqrt{54+14\sqrt{21}}\big)t$.
\end{proof}

\begin{theorem}The only elements of $X^{3,\omega} \subset PABGH_3(H)$ which could generate a ``2221'' planar algebra are multiples of \\
Q= $\eta \big(1 + \omega^2 \cdot \rho + \omega \cdot \rho^2 \big) \cdot \\
\Big(t\big(1+\alpha +  \alpha^2 \big)\big(1+f_2\big) \big(cb_0cb_0cb_1 \big)\\
+  \Big(t_0'\big(1 + \alpha +\alpha^2 \Big) \big( 1+f_2\big) \big(cb_0cb_0cb_2 \big)\\
	-\frac{\omega^2}{R}\Big((t+t_0')\big(1+\alpha +\alpha^2\big)\Big)\big(1+f_4\big)\big(cdcb_0cb_0\big)  \\
	-\frac{1}{R}\Big(\big(t+ \omega t_0'\big)\big(1+\alpha +\alpha^2\big)\Big)\big(cdcb_0cb_1\big)  \\
	-\frac{1}{R}\Big( \big(t_0'+ \omega t\big)\big(1+\alpha +\alpha^2\big)\Big)\big(cdcb_0cb_2\big)\Big)$,\\
	where $t=\sqrt{\frac{[4]}{1+\Psi^2}\frac{5}{(246+54\sqrt{21})\sqrt{3}}}$, $\Psi=\frac{1-\theta^2-\theta}{2}$, $t_0'=\Psi t$,\\
or of $S$, where $S$ is just $Q$ except with  $t=\sqrt{\frac{[4]}{1+\Omega^2}\frac{5}{(246+54\sqrt{21})\sqrt{3}}}$, $\Omega=\frac{1-\theta^2+\theta}{2}$, $t_0'=\Omega t.$

\end{theorem}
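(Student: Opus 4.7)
The plan is to follow, step for step, the strategy of the preceding theorem (which handled $X^{3,1}$), now applied to the explicit five-parameter description of $X^{3,\omega}$ established above. In a hypothetical ``2221'' planar algebra $\mathcal{P}$ one has $\dim(\mathcal{P}_{0,+})=1=\dim(TL_{0,+})$, and the single basis element of $TL_{0,+}$, read inside $PABG(H)_{0,+}$, is $z_0+z_1+z_2+c$. Consequently, any candidate $Q$ must be chosen so that every trace of a polynomial in $Q$ and $Q^{*}$ whose image lies in $PABG(H)_{0,+}$ is a scalar multiple of $z_0+z_1+z_2+c$, that is, has equal coefficients at $z_0,z_1,z_2,c$.

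The first step is to impose equality of the three $z_j$-coefficients. Using $\alpha$-covariance of $X^{3,\omega}$, this is equivalent to asking that $Q$ itself be $\alpha$-invariant. Direct inspection of the formula for $X^{3,\omega}$ shows that $\alpha$-invariance forces $t_0=t_1=t_2=:t$ and, in addition, forces the complex parameter $s$ to be completely determined by $t$ and $t_0'$, so that the $s$ and $\bar s$ terms assemble into $\alpha$-symmetric combinations. This reduces the five-parameter family to the two-parameter family in $(t,t_0')$ displayed in the statement, with the factor $\eta=1+i\sqrt{3}$ pulled out in front.

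Next, I would impose the remaining equality $Coeff_{Z(Q^{*}Q)}(c)=Coeff_{Z(Q^{*}Q)}(z_0)$. Since $Q$ has rotational eigenvalue $\omega$, cyclicity of $Z$ forces $Z(Q)=\omega Z(Q)$ and hence $Z(Q)=0$, so the first nontrivial trace constraint comes from $Z(Q^{*}Q)$, a positive real number. Its $c$- and $z_0$-coefficients are each a homogeneous quadratic in $(t,t_0')$, so their equality is a quadratic equation in the ratio $t_0'/t$. I expect this equation to factor as $(t_0'-\Psi\, t)(t_0'-\Omega\, t)=0$, with $\Psi,\Omega$ as in the statement, producing exactly the two families $Q$ and $S$; a short additional check should confirm that higher-order trace conditions, for instance the analogous equality for $Z((Q^{*}Q)^{2})$, are automatic for each of these two ratios, in direct parallel with the $\operatorname{tr}(T^{4})$ step in the $T$ case.

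Finally, I would fix the overall scale by enforcing $\langle Q,Q\rangle=Z(Q^{*}Q)=[4]$. With $t_0'/t\in\{\Psi,\Omega\}$ now determined, $Z(Q^{*}Q)$ collapses to a single positive scalar multiple of $t^{2}$ built from the Perron-Frobenius data of $H$, and solving for $t$ produces the values displayed in the statement. The main obstacle is purely computational: calculating $Z(Q^{*}Q)$ requires summing, with Perron-Frobenius weights, over the many closed six-loops in $H$ produced by concatenating the expressions for $Q^{*}$ and $Q$ and then tracing, and the phases $\omega$ and $\eta$ must conspire to give a real positive answer, which they do, but not transparently. The bookkeeping is essentially that used in the companion calculations for $T$, adapted to the complex rotational eigenvalue $\omega$.
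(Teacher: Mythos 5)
Your high-level outline tracks the paper's strategy (equal coefficients at the four even vertices, a quadratic in $t_0'/t$ giving $\Psi$ and $\Omega$, normalization by $\langle Q,Q\rangle=[4]$), but the pivotal first step contains a genuine gap. You assert that equality of the $z_0,z_1,z_2$-coefficients of traces of powers of $Q$ ``is equivalent to asking that $Q$ itself be $\alpha$-invariant,'' and deduce $t_0=t_1=t_2$ and the vanishing (or determination) of $s$ from that. But that equivalence is not established, and it is the hard part: $\alpha$-invariance of $Q$ implies the coefficient equalities, but there is no a priori reason the converse holds, since the trace coefficients are polynomial invariants of $Q$ and could in principle be balanced without $Q$ being $\alpha$-symmetric. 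The paper does not prove such an equivalence at all; instead it proves directly, in the appendix (Claim A.1.1 and the lengthy case analysis that follows), that the system $Coeff_{tr(Q^n)}(z_0)=Coeff_{tr(Q^n)}(z_1)=Coeff_{tr(Q^n)}(z_2)=Coeff_{tr(Q^n)}(c)$ for all $n\ge 1$ forces $t_0=t_1=t_2$ and $s=0$, repeatedly invoking the $c$-coefficient as well to eliminate degenerate branches (e.g., cases that collapse to the null solution only after setting $Coeff_{tr(Q^2)}(c)=Coeff_{tr(Q^2)}(z_0)$). Your ``$\alpha$-invariance'' shortcut replaces that entire computation with an unproved claim, so the argument as written does not close.

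Two smaller points. First, for $Q$ the conclusion is specifically $s=0$, not merely ``$s$ determined by $t$ and $t_0'$''; your phrasing leaves this ambiguous, and the distinction matters because the $T$-case genuinely has $s\ne 0$. Second, your remark that higher-order trace conditions ``should be automatic, in parallel with the $\operatorname{tr}(T^4)$ step'' misreads the structure: in the $T$-case the condition $Coeff_{tr(T^2)}$ fixes $|s|^2$ and $Coeff_{tr(T^3)}$ then gives the quadratic in $t,t_0'$, with $Coeff_{tr(T^4)}$ only used to eliminate the spurious branch $t_0'=-t$; whereas for $Q$ the quadratic already falls out of $Coeff_{tr(Q^2)}(c)=Coeff_{tr(Q^2)}(z_0)$, and no analogue of the $\operatorname{tr}(T^4)$ elimination is needed. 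These are minor relative to the main gap, but they indicate you are pattern-matching the $T$ argument rather than re-deriving the $Q$ version.
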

\begin{proof} The proof is similar to that of the case for $X^{3,1}$.  
By setting \[Coeff_{tr(Q^n)}(z_0)= Coeff_{tr(Q^n)}(z_1)= Coeff_{tr(Q^n)}(z_2)= Coeff_{tr(Q^n)}(c),\] for all  $n \geq 1$, we see that it must be the case that $t_0=t_1=t_2$ and $s=0$. (see the appendix on Calculating Traces for $T$ and $Q$.)  So let $t_0=t_1=t_2=t$.
By setting \[Coeff_{tr(Q^2)}(c)=Coeff_{tr(Q^2)}(z_0),\] we get the following quadratic equation in the variables $t$ and $t_0'$.
\[t^2+\Big(\frac{1+\sqrt{21}}{2}\Big)(t\cdot t_0')+(t_0')^2 =0.\]
It follows that $t_0'=\frac{1-\theta^2\pm \theta}{2}\cdot t$.  Let $t_0'=\Omega \cdot t$, where $\Omega=\frac{1-\theta^2+\theta}{2}$ and denote the generator by $S$.  Denote by $Q$ the generator with $t_0' =\Psi \cdot t$, where $\Psi=\frac{1-\theta^2-\theta}{2}$.

We want $\left\langle S,S\right\rangle=[4]$ and $\left\langle Q,Q\right\rangle=[4]$, i.e., $Z(S^2)=[4]$ and $Z(Q^2)=[4]$.  \[Z(Q^2)=3Coeff_{tr(Q^2)}(z_0)\cdot (\lambda(z))^2+Coeff_{tr(Q^2)}(c)\cdot (\lambda(c))^2\]
\[=\Big(\frac{4}{\sqrt{3}}\theta^6+\frac{4}{9}\theta^8\sqrt{3}\Big(\frac{-6+\sqrt{21}}{-5}\Big)\Big)(t^2+(t_0')^2)\]\[=\Big(\frac{(246+54\sqrt{21})\sqrt{3}}{5}\Big)(1+\Psi^2)t^2,\]
where $t_0'=\Psi t \mbox{ and } \Psi=\frac{1-\theta^2 - \theta}{2}$.  
Setting the above expression equal to $[4]$, we get
\[t^2=\frac{[4]}{1+\Psi^2}\Big(\frac{5}{(246+54\sqrt{21})\sqrt{3}}\Big),\mbox{for Q}.\] Using the same equation, except with $t_0'=\Omega t$, where $\Omega = \frac{1-\theta^2+\theta}{2}$, we get
 \[t^2=\frac{[4]}{1+\Omega^2}\Big(\frac{5}{(246+54\sqrt{21})\sqrt{3}}\Big),\mbox{for S}\] 
Solving for $t$, we get \[t=\sqrt{\frac{[4]}{1+\Omega^2}\Big(\frac{5}{(246+54\sqrt{21})\sqrt{3}}\Big)}, \mbox{for S}\] and \[t=\sqrt{\frac{[4]}{1+\Psi^2}\Big(\frac{5}{(246+54\sqrt{21})\sqrt{3}}\Big)}, \mbox{for Q}.\] 
Thus, we have our desired expression for $Q$.
\end{proof}
\indent Similarly, the only elements of $X^{3,\omega^2} \subset PABGH_3(H)$ which could generate a ``2221'' planar algebra are multiples of the conjugate of $Q$ or of the conjugate of $S$.  Denote the conjugate of $Q$ by $V$ and that of $S$ by $U$.  All of the equations used to find the potential generators in $X^{3,\omega^2}$ are the same as those used to find $Q$ and $S$, except with every $\eta$ and $\omega$ conjugated.  Hence, the only elements of $X^{3,\omega^2} \subset PABGH_3(H)$ which could generate a ``2221'' planar algebra are going to have the same matrices as those for the generators in $X^{3,\omega}$, except with every $\eta$ and $\omega$ conjugated.  Then, the coefficients of $z_0,z_1,z_2,c$ in $tr(V^n)$, say, are the same as the coefficients of $z_0,z_1,z_2,c$ in $tr(Q^n)$, respectively, except with $s$ and $\bar{s}$ switched.  The proof in the appendix that $t_0=t_1=t_2$ and $s=0$ for the potential generators in $X^{3,\omega}$ works in analogous fashion with $s$ and $\bar{s}$ switched.  With $s=0$, the matrices of the potential generators in $X^{3,\omega^2}$ reduce to being just the transposes of the matrices of the potential generators in $X^{3,\omega}$.  Setting $Coeff_{tr(V^2)}(c)=Coeff_{tr(V^2)}(z_0)$ gives $t_0'=\frac{1-\theta^2\pm \theta}{2}t$ and we can choose our potential generators in $X^{3,\omega^2}$ to be the conjugates of $Q$ and $S$.  
\chapter{Quadratic Relations on T and Q}
In this section, we prove some quadratic relations involving $T$ and $Q$.  In order to do this, we need the following trace values.
\begin{lem}
Recall that $\rho^{1/2}$ is the shading-changing rotation morphism 
$$PABG(H)_{3,+} \rightarrow PABG(H)_{3,-}.$$
We have the following trace values for $T$:
\begin{itemize}
\item $Z(T)=0$
\item $Z(T^2)=[4]$
\item $Z(T^3)=\frac{-3\sqrt{3}+\sqrt{7}}{6}$
\item $Z(T^4)=\frac{5}{3}\sqrt{\frac{2}{3}(23+5\sqrt{21})},$
\end{itemize}
and the following trace values for $Q$:
\begin{itemize}
\item $Z(Q)=0$
\item $Z(Q^2)=[4]$
\item $Z(Q^3)=-\frac{2}{3}\sqrt{8+3\sqrt{21}}$
\item $Z(Q^4)=\frac{2}{3}\sqrt{\frac{253}{3}+16\sqrt{21}},$
\end{itemize}
and the following trace values for mixtures of $T$ and $Q$:
\begin{itemize}
\item $Z(TQ)=0$
\item $Z(T^2Q)=\sqrt{\frac{151}{18}+\frac{41}{6}\sqrt{\frac{7}{3}}}$
\item $Z(TQ^2)=\frac{1}{3}(2\sqrt{3}+\sqrt{7})$
\item $Z(T^2Q^2)=\frac{7}{3}\sqrt{\frac{1}{6}(11+\sqrt{21})}.$
\end{itemize}
We also have the following trace values for $\rho ^{\frac{1}{2}}(T)$:
\begin{itemize}
\item $Z(\rho ^{\frac{1}{2}}(T))=0$
\item $Z((\rho ^{\frac{1}{2}}(T))^2)=[4]$
\item $Z((\rho ^{\frac{1}{2}}(T))^3)=\frac{1}{6}(-3\sqrt{3}+\sqrt{7})=Z(T^3)$
\item $Z((\rho ^{\frac{1}{2}}(T))^4)=\frac{5}{3}\sqrt{\frac{2}{3}(23+5\sqrt{21})}=Z(T^4),$
\end{itemize}
and the following trace values for $\rho ^{\frac{1}{2}}(Q)$:
\begin{itemize}
\item $Z(\rho ^{\frac{1}{2}}(Q))=0$
\item $Z((\rho ^{\frac{1}{2}}(Q))^2)=\omega [4]$
\item $Z((\rho ^{\frac{1}{2}}(Q))^3)=-\frac{2}{3}\sqrt{8+3\sqrt{21}}=Z(Q^3)$
\item $Z((\rho ^{\frac{1}{2}}(Q))^4)=-\frac{i(-i+\sqrt{3})(8+3\sqrt{21})}{3\sqrt{3}}=\omega^2 Z(Q^4),$
\end{itemize}
and the following trace values for mixtures of $\rho ^{\frac{1}{2}}(T)$ and $\rho^{\frac{1}{2}}(Q)$:
\begin{itemize}
\item $Z(\rho ^{\frac{1}{2}}(T)\rho^{\frac{1}{2}}(Q))=0$
\item $Z((\rho ^{\frac{1}{2}}(T)(\rho^{\frac{1}{2}}(Q))^2)=\frac{i(i+\sqrt{3})(697+152\sqrt{21})^{\frac{1}{4}}}{6}=\omega Z(TQ^2)$
\item $Z((\rho ^{\frac{1}{2}}(Q)(\rho^{\frac{1}{2}}(T))^2)=-\frac{1}{12}\sqrt{302+82\sqrt{21}}-\frac{1}{12}\sqrt{906+246\sqrt{21}}\cdot i=\omega^2 Z(QT^2)$
\item $Z((\rho ^{\frac{1}{2}}(T))^2(\rho^{\frac{1}{2}}(Q))^2)=\frac{7i(i+\sqrt{3})(1+\sqrt{21})}{12\sqrt{3}}=\omega Z(T^2Q^2).$
\end{itemize}
\end{lem}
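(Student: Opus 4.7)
The plan is to exploit the fact that $T$ and $Q$ have been exhibited explicitly as elements of the graph planar algebra $PABG(H)_{3,+}$, i.e., as concrete linear combinations of length-$6$ loops on $H$, with all the coefficients $t, t_0', s, \Psi$ pinned down in Theorems 4.0.5 and 4.0.6. Because the planar-operad action on $PABG(H)$ is computed by summing over states, and the trace $Z=Z_0\circ \operatorname{tr}$ is likewise prescribed in terms of Perron--Frobenius data, the lemma reduces in principle to a (lengthy) finite calculation. There is nothing in this list that cannot be obtained mechanically once one has set up the bookkeeping.

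The first step is to encode $T$ and $Q$ as matrices. Viewing a length-$6$ loop $l_1l_2l_3l_4l_5l_6$ based at an even vertex $l_1$ as the matrix entry with row index $(l_1l_2l_3l_4)$ and column index $(l_1l_6l_5l_4)$ turns the multiplication tangle into ordinary matrix multiplication, once the Perron--Frobenius weights from Definition 2.2.6 are inserted at the relevant maxima and minima. These matrices split as blocks indexed by the base vertices $z_0,z_1,z_2,c$. For each product $T^2,T^3,T^4,Q^2,Q^3,Q^4,TQ,T^2Q,TQ^2,T^2Q^2$ I would then carry out the matrix multiplication, apply the trace tangle (a weighted sum over the diagonal length-$6$ loops) to land in $PABG(H)_{0,+}=\mathrm{span}\{z_0,z_1,z_2,c\}$, and finally apply $Z_0$ which sends a vertex $a$ to $\lambda(a)^2$ using the explicit values of $\lambda$ recorded just before the lemma.

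A crucial short-circuit is to use the rotational data $\rho(T)=T$ and $\rho(Q)=\omega Q$ from Theorem 4.0.1(3) together with the $\alpha$-symmetry of $H$. The $\alpha$-symmetry forces the coefficients of $z_0$, $z_1$, $z_2$ in any trace of a polynomial in $T$ and $Q$ to coincide (this is already the observation used in the proofs of Theorems 4.0.5 and 4.0.6), so each trace computation reduces to finding only two numbers: the common $z_i$-coefficient and the $c$-coefficient. The rotational eigenvalues likewise force traces of mismatched $\rho$-degrees to vanish, which already accounts for entries like $Z(T)=Z(Q)=Z(TQ)=0$.

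For the shading-changing half of the lemma, I would apply the definition of $\rho^{1/2}\colon PABG(H)_{3,+}\to PABG(H)_{3,-}$ directly to the loop expansions of $T$ and $Q$, picking up the single extra Perron--Frobenius factor from the critical point introduced by the half-twist, and then repeat the matrix-trace procedure inside $PABG(H)_{3,-}$ (whose basis consists of odd-based length-$6$ loops). The identities of the form $Z((\rho^{1/2}(T))^n)=Z(T^n)$ then fall out of $\rho(T)=T$, while the extra factor $\omega^{n}$ or $\omega^{n-1}$ appearing in the $Q$-rows (for example $Z((\rho^{1/2}(Q))^2)=\omega[4]$ and $Z((\rho^{1/2}(Q))^4)=\omega^2 Z(Q^4)$) records exactly the action of $(\rho^{1/2})^2=\rho$ on each copy of $Q$. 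The only real obstacle is the sheer bulk of the computation; the substantive content is organizational, and in the final paper the numeric details would be deferred to the appendix already cited in the text (``Calculating Traces for $T$ and $Q$'').
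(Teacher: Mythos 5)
Your proposal is essentially the paper's own argument: the proof in the body of the paper just defers to an appendix (``Calculating Traces for $T$ and $Q$'' and its sequel for the half-rotations), which does exactly what you describe — realize $PABG(H)_{3,\pm}$ as a direct sum of matrix algebras via the Artin--Wedderburn decomposition, write $\widetilde T$ and $\widetilde Q$ as explicit block matrices, and extract $Z$ via the formula $Z(m)=\sum_{(v,w)}\lambda(v)\lambda(w)\operatorname{Tr}(\tilde m_{v,w})$. Your observation that $\alpha$-invariance of $T$ and $Q$ makes the $z_0,z_1,z_2$ contributions coincide is indeed what makes the bookkeeping manageable, and is used implicitly in the appendix.

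Two small corrections to the peripheral reasoning, which do not affect the overall soundness. First, the blocks in the paper's matrix decomposition are indexed by \emph{pairs} $(v,w)$ — base vertex together with midpoint — not just by the base vertex; this is why one sees a sum of several $M_i$'s for each base vertex rather than a single matrix. Second, your claim that $Z(T)=Z(Q)=Z(TQ)=0$ all ``fall out of rotational eigenvalue mismatch'' is not quite right for the first two. Since $T$ has $\rho$-eigenvalue $1$, which is also the eigenvalue of $\operatorname{id}\in TL_3$, there is no mismatch; $Z(T)=\langle T,\operatorname{id}\rangle=0$ instead follows from the uncappability conditions $\epsilon_i(T)=0$, which force $T$ to be orthogonal to all of Temperley--Lieb. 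The same holds for $Z(Q)=0$. The vanishing $Z(TQ)=\langle T,Q\rangle=0$, by contrast, \emph{does} follow from $\omega_T\neq\omega_Q$ together with the unitarity of $\rho$. Finally, your ``shortcut'' that the relations $Z((\rho^{1/2}T)^n)=Z(T^n)$ and $Z((\rho^{1/2}Q)^n)=\omega^{k}Z(Q^n)$ drop out formally from the rotational data is suggestive but not a complete argument: the phase constraints on $Z((\rho^{1/2}Q)^n)$ do follow from $(\rho^{1/2}Q)^*=\omega^2\rho^{1/2}Q$, but recovering the magnitude still requires the explicit matrix computation, which is exactly what the paper's appendix supplies.
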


\begin{proof}
These trace values are calculated in the Appendix using matrix representations.
\end{proof}

\begin{theorem}The following quadratic skein relations hold in $PABG(H)_{3}$:
\begin{itemize}
\item (R3(T)), (R3(Q))\hspace{.5cm}
$\begin{tikzpicture}[scale=.6, baseline]
	\clip [draw] (2,2) arc (0:180:2cm) -- (-2,-2) arc (-180:0:2cm) -- (2,2);
	
	\draw[shaded] (0,2) .. controls ++(-115:1.5cm) and ++(115:1.5cm) .. (0,-2);
	\draw (0,2) .. controls ++(-90:1.5cm) and ++(90:1.5cm) .. (0,-2);
	\draw (0,2) .. controls ++(-65:1.5cm) and ++(65:1.5cm) .. (0,-2);
	\draw[shaded] (0,2) -- +(90:3cm) -- +(120:3cm) -- (0,2);
	\filldraw[shaded] (0,2) ++(65:5cm) -- (0,2) .. controls ++(-65:1.5cm) and ++(65:1.5cm) .. (0,-2) -- ++(-65:5cm);
	
	\draw[shaded] (0,-2) -- +(-90:3cm) -- +(-120:3cm) -- (0,-2);
	\node at (0,2) [Tbox] (A) {$T$};
	\node at (0,-2) [Tbox] (B) {$T$};
	\node at (A.180) [left] {$\star$};
	\node at (B.180) [left] {$\star$};
	\node at (-1.5,0)  {$\star$};	
	
	\draw[ultra thick] (2,2) arc (0:180:2cm) -- (-2,-2) arc (-180:0:2cm) -- (2,2);
\end{tikzpicture}$, \quad
	$\begin{tikzpicture}[scale=.6, baseline]
	\clip [draw] (2,2) arc (0:180:2cm) -- (-2,-2) arc (-180:0:2cm) -- (2,2);
	
	\draw[shaded] (0,2) .. controls ++(-115:1.5cm) and ++(115:1.5cm) .. (0,-2);
	\draw (0,2) .. controls ++(-90:1.5cm) and ++(90:1.5cm) .. (0,-2);
	\draw (0,2) .. controls ++(-65:1.5cm) and ++(65:1.5cm) .. (0,-2);
	\draw[shaded] (0,2) -- +(90:3cm) -- +(120:3cm) -- (0,2);
	\filldraw[shaded] (0,2) ++(65:5cm) -- (0,2) .. controls ++(-65:1.5cm) and ++(65:1.5cm) .. (0,-2) -- ++(-65:5cm);
	
	\draw[shaded] (0,-2) -- +(-90:3cm) -- +(-120:3cm) -- (0,-2);
	\node at (0,2) [Tbox] (A) {$Q$};
	\node at (0,-2) [Tbox] (B) {$Q$};
	\node at (A.180) [left] {$\star$};
	\node at (B.180) [left] {$\star$};
	\node at (-1.5,0)  {$\star$};	
	
	\draw[ultra thick] (2,2) arc (0:180:2cm) -- (-2,-2) arc (-180:0:2cm) -- (2,2);
\end{tikzpicture}$
$\in$
span \{ $TL_3, T , Q$ \},
\item  (R3'(T)),(R3'(Q))\hspace{.5cm} 
$\begin{tikzpicture}[scale=.6, baseline]
	\clip [draw] (2,2) arc (0:180:2cm) -- (-2,-2) arc (-180:0:2cm) -- (2,2);
	
	\draw (0,2) .. controls ++(-115:1.5cm) and ++(115:1.5cm) .. (0,-2);
	\draw[shaded] (0,2) .. controls ++(-90:1.5cm) and ++(90:1.5cm) .. (0,-2);
	\draw[shaded] (0,2) .. controls ++(-65:1.5cm) and ++(65:1.5cm) .. (0,-2);
	\draw[shaded] (0,-2) -- +(-90:3cm) -- +(-65:3cm) -- (0,-2);
	\draw (0,2) -- +(90:3cm) -- +(120:3cm) -- (0,2);
	\draw[shaded] (0,2) -- +(65:3cm)--+(90:3cm) -- (0,2);
	\filldraw[shaded] (0,-2) ++(-120:4cm) -- (0,-2) .. controls ++(115:1.5cm) and ++(-115:1.5cm) .. (0,2) -- ++(120:4cm);
	\draw (0,-2) -- +(-90:3cm) -- +(-120:3cm) -- (0,-2);
	\node at (0,2) [Tbox] (A) {$T$};
	\node at (0,-2) [Tbox] (B) {$T$};
	\node at (A.75) [above left] {$\star$};
	\node at (B.70) [above left] {$\star$};
	\draw (0,2) ++(115:2cm) node [below right=-.7mm] {$\star$};
	
	\draw[ultra thick] (2,2) arc (0:180:2cm) -- (-2,-2) arc (-180:0:2cm) -- (2,2);
\end{tikzpicture}$, \quad
$\begin{tikzpicture}[scale=.6, baseline]
	\clip [draw] (2,2) arc (0:180:2cm) -- (-2,-2) arc (-180:0:2cm) -- (2,2);
	
	\draw (0,2) .. controls ++(-115:1.5cm) and ++(115:1.5cm) .. (0,-2);
	\draw[shaded] (0,2) .. controls ++(-90:1.5cm) and ++(90:1.5cm) .. (0,-2);
	\draw[shaded] (0,2) .. controls ++(-65:1.5cm) and ++(65:1.5cm) .. (0,-2);
	\draw[shaded] (0,-2) -- +(-90:3cm) -- +(-65:3cm) -- (0,-2);
	\draw (0,2) -- +(90:3cm) -- +(120:3cm) -- (0,2);
	\draw[shaded] (0,2) -- +(65:3cm)--+(90:3cm) -- (0,2);
	\filldraw[shaded] (0,-2) ++(-120:4cm) -- (0,-2) .. controls ++(115:1.5cm) and ++(-115:1.5cm) .. (0,2) -- ++(120:4cm);
	\draw (0,-2) -- +(-90:3cm) -- +(-120:3cm) -- (0,-2);
	\node at (0,2) [Tbox] (A) {$Q$};
	\node at (0,-2) [Tbox] (B) {$Q$};
	\node at (A.75) [above left] {$\star$};
	\node at (B.70) [above left] {$\star$};
	\draw (0,2) ++(115:2cm) node [below right=-.7mm] {$\star$};
	
	\draw[ultra thick] (2,2) arc (0:180:2cm) -- (-2,-2) arc (-180:0:2cm) -- (2,2);
\end{tikzpicture}$
$\in$
span \{ $TL_3, T , Q$ \},
\item  (R3(TQ)), (R3(QT))\hspace{.5cm} 
$\begin{tikzpicture}[scale=.6, baseline]
	\clip [draw] (2,2) arc (0:180:2cm) -- (-2,-2) arc (-180:0:2cm) -- (2,2);
	
	\draw[shaded] (0,2) .. controls ++(-115:1.5cm) and ++(115:1.5cm) .. (0,-2);
	\draw (0,2) .. controls ++(-90:1.5cm) and ++(90:1.5cm) .. (0,-2);
	\draw (0,2) .. controls ++(-65:1.5cm) and ++(65:1.5cm) .. (0,-2);
	\draw[shaded] (0,2) -- +(90:3cm) -- +(120:3cm) -- (0,2);
	\filldraw[shaded] (0,2) ++(65:5cm) -- (0,2) .. controls ++(-65:1.5cm) and ++(65:1.5cm) .. (0,-2) -- ++(-65:5cm);
	
	\draw[shaded] (0,-2) -- +(-90:3cm) -- +(-120:3cm) -- (0,-2);
	\node at (0,2) [Tbox] (A) {$T$};
	\node at (0,-2) [Tbox] (B) {$Q$};
	\node at (A.180) [left] {$\star$};
	\node at (B.180) [left] {$\star$};
	\node at (-1.5,0)  {$\star$};	
	
	\draw[ultra thick] (2,2) arc (0:180:2cm) -- (-2,-2) arc (-180:0:2cm) -- (2,2);
\end{tikzpicture}$, \quad
	$\begin{tikzpicture}[scale=.6, baseline]
	\clip [draw] (2,2) arc (0:180:2cm) -- (-2,-2) arc (-180:0:2cm) -- (2,2);
	
	\draw[shaded] (0,2) .. controls ++(-115:1.5cm) and ++(115:1.5cm) .. (0,-2);
	\draw (0,2) .. controls ++(-90:1.5cm) and ++(90:1.5cm) .. (0,-2);
	\draw (0,2) .. controls ++(-65:1.5cm) and ++(65:1.5cm) .. (0,-2);
	\draw[shaded] (0,2) -- +(90:3cm) -- +(120:3cm) -- (0,2);
	\filldraw[shaded] (0,2) ++(65:5cm) -- (0,2) .. controls ++(-65:1.5cm) and ++(65:1.5cm) .. (0,-2) -- ++(-65:5cm);
	
	\draw[shaded] (0,-2) -- +(-90:3cm) -- +(-120:3cm) -- (0,-2);
	\node at (0,2) [Tbox] (A) {$Q$};
	\node at (0,-2) [Tbox] (B) {$T$};
	\node at (A.180) [left] {$\star$};
	\node at (B.180) [left] {$\star$};
	\node at (-1.5,0)  {$\star$};	
	
	\draw[ultra thick] (2,2) arc (0:180:2cm) -- (-2,-2) arc (-180:0:2cm) -- (2,2);
\end{tikzpicture}$
$\in$
span \{ $TL_3, T , Q$ \},
\item  (R3'(TQ)),(R3'(QT))\hspace{.5cm} 
$\begin{tikzpicture}[scale=.6, baseline]
	\clip [draw] (2,2) arc (0:180:2cm) -- (-2,-2) arc (-180:0:2cm) -- (2,2);
	
	\draw (0,2) .. controls ++(-115:1.5cm) and ++(115:1.5cm) .. (0,-2);
	\draw[shaded] (0,2) .. controls ++(-90:1.5cm) and ++(90:1.5cm) .. (0,-2);
	\draw[shaded] (0,2) .. controls ++(-65:1.5cm) and ++(65:1.5cm) .. (0,-2);
	\draw[shaded] (0,-2) -- +(-90:3cm) -- +(-65:3cm) -- (0,-2);
	\draw (0,2) -- +(90:3cm) -- +(120:3cm) -- (0,2);
	\draw[shaded] (0,2) -- +(65:3cm)--+(90:3cm) -- (0,2);
	\filldraw[shaded] (0,-2) ++(-120:4cm) -- (0,-2) .. controls ++(115:1.5cm) and ++(-115:1.5cm) .. (0,2) -- ++(120:4cm);
	\draw (0,-2) -- +(-90:3cm) -- +(-120:3cm) -- (0,-2);
	\node at (0,2) [Tbox] (A) {$T$};
	\node at (0,-2) [Tbox] (B) {$Q$};
	\node at (A.75) [above left] {$\star$};
	\node at (B.70) [above left] {$\star$};
	\draw (0,2) ++(115:2cm) node [below right=-.7mm] {$\star$};
	
	\draw[ultra thick] (2,2) arc (0:180:2cm) -- (-2,-2) arc (-180:0:2cm) -- (2,2);
\end{tikzpicture}$, \quad
$\begin{tikzpicture}[scale=.6, baseline]
	\clip [draw] (2,2) arc (0:180:2cm) -- (-2,-2) arc (-180:0:2cm) -- (2,2);
	
	\draw (0,2) .. controls ++(-115:1.5cm) and ++(115:1.5cm) .. (0,-2);
	\draw[shaded] (0,2) .. controls ++(-90:1.5cm) and ++(90:1.5cm) .. (0,-2);
	\draw[shaded] (0,2) .. controls ++(-65:1.5cm) and ++(65:1.5cm) .. (0,-2);
	\draw[shaded] (0,-2) -- +(-90:3cm) -- +(-65:3cm) -- (0,-2);
	\draw (0,2) -- +(90:3cm) -- +(120:3cm) -- (0,2);
	\draw[shaded] (0,2) -- +(65:3cm)--+(90:3cm) -- (0,2);
	\filldraw[shaded] (0,-2) ++(-120:4cm) -- (0,-2) .. controls ++(115:1.5cm) and ++(-115:1.5cm) .. (0,2) -- ++(120:4cm);
	\draw (0,-2) -- +(-90:3cm) -- +(-120:3cm) -- (0,-2);
	\node at (0,2) [Tbox] (A) {$Q$};
	\node at (0,-2) [Tbox] (B) {$T$};
	\node at (A.75) [above left] {$\star$};
	\node at (B.70) [above left] {$\star$};
	\draw (0,2) ++(115:2cm) node [below right=-.7mm] {$\star$};
	
	\draw[ultra thick] (2,2) arc (0:180:2cm) -- (-2,-2) arc (-180:0:2cm) -- (2,2);
\end{tikzpicture}$
$\in$
span \{ $TL_3, T , Q$ \}
\end{itemize}
\end{theorem}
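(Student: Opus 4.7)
The plan is to exploit the orthogonal decomposition
$$PABG(H)_{3,+} = V_3^{0,\delta} \oplus 3V_3^{0,\mu} \oplus 2V_3^{2,1} \oplus 3V_3^{2,-1} \oplus 5V_3^{3,1} \oplus 6V_3^{3,\omega} \oplus 6V_3^{3,\omega^2}$$
established in Chapter~3. Under the trace inner product $\langle x,y\rangle := Z(xy^{*})$, these annular Temperley--Lieb summands are mutually orthogonal, and the three subspaces $TL_3 = V_3^{0,\delta}$, $\mathbb{C}T \subset V_3^{3,1}$, and $\mathbb{C}Q \subset V_3^{3,\omega}$ are pairwise orthogonal, since for instance $\langle T,Q\rangle = Z(TQ) = 0$ by Lemma~5.0.1. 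Together they span a seven-dimensional subspace of $PABG(H)_{3,+}$ containing each claimed right-hand side.

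For each LHS element $X$, obtained by applying the R3 or R3' tangle to a pair drawn from $\{T,Q\}$, I would first produce a candidate image $Y \in \mathrm{span}\{TL_3,T,Q\}$ by orthogonal projection. The $T$- and $Q$-coefficients of $Y$ are $\langle X,T\rangle/[4]$ and $\langle X,Q\rangle/[4]$; for example, R3(T) has $T$-coefficient $Z(T^{3})/[4]$ and $Q$-coefficient $Z(T^{2}Q)/[4]$. The projection onto $TL_3$ is $\sum_i \langle X,e_i\rangle\, e_i/\langle e_i,e_i\rangle$ for an orthogonal basis $\{e_i\}$ of $TL_3$; each of these inner products reduces, via the capping relations $\epsilon_i(T) = \epsilon_i(Q) = 0$, to a linear combination of trace values catalogued in Lemma~5.0.1.

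To conclude $X = Y$ I would expand
$$\|X - Y\|^{2} = Z(XX^{*}) - 2\,\mathrm{Re}\,Z(XY^{*}) + Z(YY^{*})$$
and verify it vanishes. Each of the three pieces breaks into a finite sum of traces of words of length at most four in $T$ and $Q$, possibly threaded through Temperley--Lieb pictures; by Lemma~5.0.1 every such trace is an explicit element of $\mathbb{Q}(\sqrt{21}, i)$, so the required cancellation becomes a finite symbolic identity. Positive definiteness of the trace inner product on $PABG(H)_{3,+}$ then forces $X = Y \in \mathrm{span}\{TL_3, T, Q\}$, which is exactly the claimed inclusion.

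The principal obstacle is not conceptual but the algebraic bookkeeping: tracking nested radicals in $\mathbb{Q}(\sqrt{21}, i)$ across eight separate computations. Fortunately, substantial savings come from symmetry. Starring exchanges the shading and relates the R3 and R3' relations in each pair; the involution $Q \leftrightarrow Q^{*}$ together with the complex-conjugate symmetry relating $X^{3,\omega}$ and $X^{3,\omega^{2}}$ pairs the $QQ$ relation with $TT$, and the $TQ$ relation with $QT$. In practice only the two identities R3(T) and R3(TQ) must be verified directly, with the remaining six following from these by the aforementioned symmetries.
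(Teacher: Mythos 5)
Your core strategy is the same as the paper's: for each candidate $X$ compute $\|X\|^2$ and $\|\mathrm{Proj}_{\mathrm{span}(TL_3,T,Q)}(X)\|^2$ from trace data and deduce $X$ lies in the span. Your formulation via $\|X-Y\|^2 = 0$ is literally equivalent to the paper's Bessel-equality criterion, since for $Y = \mathrm{Proj}_W(X)$ one has $\|X-Y\|^2 = \|X\|^2 - \|Y\|^2$. The only wrinkle in the mechanics is that for the primed relations the Temperley--Lieb component of $\mathrm{Proj}$ is not against the Jones--Wenzl $f^{(3)}$ but against a rotated cup picture, and the relevant traces are of words in $\rho^{1/2}T, \rho^{1/2}Q$ rather than in $T,Q$; the paper computes these separately (Lemma~5.0.14), and your reference to "words of length at most four in $T$ and $Q$" glosses over this, though not fatally.

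The genuine gap is the claimed symmetry reduction to "only R3(T) and R3(TQ)." This would not go through. Since $T^* = T$ and $Q^* = Q$, starring fixes $T^2$, $Q^2$, $TQ \leftrightarrow QT$, and does not exchange R3 with R3' --- the twisted relations are obtained by the half-rotation $\rho^{1/2}$, not by reflection, and $\rho^{1/2}$ is not a conjugation symmetry of $\mathrm{span}(TL_3,T,Q)$. Complex conjugation sends $Q$ to an element of $X^{3,\omega^2}$, which lies outside $\mathrm{span}(TL_3,T,Q)$, so it cannot transport the R3(Q) statement to the R3(T) statement; indeed the numerical values entering the two checks ($Z(Q^3), Z(Q^4), Z(TQ^2)$ versus $Z(T^3), Z(T^4), Z(QT^2)$) are independent and must each be verified. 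The paper does exploit a small shortcut --- R3(QT) follows immediately from the R3(TQ) equality because the same scalars appear after swapping $T$ and $Q$ --- but essentially all eight cases are checked numerically, and any symmetry claim beyond that would need a concrete intertwiner you have not exhibited. Replace the symmetry argument with the explicit (if repetitive) trace verifications and the proposal is sound.
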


We recast the above theorem in more explicit form, as follows.  
\begin{theorem}	We have the following 3-box relations:
	\begin{enumerate}
	\item(R3(T)) $T^2= f^{(3)}+\frac{Z(T^3)}{[4]}T+\frac{Z(QT^2)}{[4]}Q$
	\item(R3(Q)) $Q^2= f^{(3)}+\frac{Z(Q^3)}{[4]}Q+\frac{Z(TQ^2)}{[4]}T$
	\item(R3'(T)) 		$Join_3'(T,T)=\rho^{-1/2}(f^{(3)})+\frac{Z((\rho^{1/2}(T))^3)}{[4]}T+\omega^2\frac{Z(\rho^{1/2}(Q)(\rho^{1/2}(T))^2)}{[4]}Q$
	\item(R3'(Q)) $Join_3'(Q,Q)=\omega\rho^{-1/2}(f^{(3)})+\omega^2\frac{Z((\rho^{1/2}(Q))^3)}{[4]}Q+\frac{Z(\rho^{1/2}(T)(\rho^{1/2}(Q))^2)}{[4]}T$
	\item (R3(TQ)) $TQ=\frac{Z(T^2Q)}{[4]}T+\frac{Z(TQ^2)}{[4]}Q$
	\item (R3(QT)) $QT=\frac{Z(Q^2T)}{[4]}Q+\frac{Z(QT^2)}{[4]}T$
	\item (R3'(TQ)) $Join_3'(T,Q)=\frac{Z((\rho^{1/2}(T))^2\rho^{1/2}(Q))}{[4]}T+\omega^2\frac{Z(\rho^{1/2}(T)(\rho^{1/2}(Q))^2)}{[4]}Q$
	\item (R3'(QT)) $Join_3'(Q,T)=\frac{Z((\rho^{1/2}(T))^2\rho^{1/2}(Q))}{[4]}T+\omega^2\frac{Z(\rho^{1/2}(T)(\rho^{1/2}(Q))^2)}{[4]}Q$
	\end{enumerate}
\end{theorem}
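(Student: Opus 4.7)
The plan is to invoke the immediately preceding theorem (which established that each of the eight left-hand sides lies in $\mathrm{span}\{TL_3, T, Q\}$) and then recover the coefficients by orthogonal projection onto $f^{(3)}$, $T$, and $Q$ using the trace values from the lemma above.

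First I would reduce the $TL_3$-component to a multiple of $f^{(3)}$. For $X, Y \in \{T, Q\}$, property (2) of Theorem 4.0.7 gives $f^{(3)} \cdot X = X \cdot f^{(3)} = X$, because $f^{(3)}$ is the projection onto the low-weight-$3$ (uncappable) subspace. Hence $XY = X \cdot f^{(3)} \cdot Y$ is itself low-weight-$3$, and similarly for the shading-reversed joins after the appropriate half-rotation. Since the low-weight-$3$ subspace of $TL_3$ is the one-dimensional span of $f^{(3)}$, the $TL_3$-component of $XY$ is forced to be a scalar multiple of $f^{(3)}$, so $XY \in \mathrm{span}\{f^{(3)}, T, Q\}$.

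Next I would exploit orthogonality. The three elements $f^{(3)}$, $T$, $Q$ are pairwise orthogonal under $\langle a, b\rangle = Z(b^* a)$: using $T^* = T$, $Q^* = Q$, and $f^{(3)} X = X$, the trace lemma yields $\langle T, f^{(3)}\rangle = Z(T) = 0$, $\langle Q, f^{(3)}\rangle = Z(Q) = 0$, and $\langle T, Q\rangle = Z(TQ) = 0$. All three have common squared norm $[4]$, since $\|f^{(3)}\|^2 = Z(f^{(3)}) = [4]$ and $\|T\|^2 = Z(T^2) = [4] = Z(Q^2) = \|Q\|^2$. Writing $T^2 = c_1 f^{(3)} + c_2 T + c_3 Q$ and taking inner products against each in turn gives $c_1 = Z(T^2 f^{(3)})/[4] = Z(T^2)/[4] = 1$, $c_2 = Z(T^3)/[4]$, and $c_3 = Z(T^2 Q)/[4]$, proving (R3(T)). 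Relations (R3(Q)), (R3(TQ)), (R3(QT)) follow by the identical calculation with the analogous trace values; in the mixed cases the $f^{(3)}$-coefficient vanishes because $Z(TQ) = 0$.

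The relations (R3'(T))--(R3'(QT)) are obtained by repeating the same three-step argument in the shading-reversed setting: replace $f^{(3)}, T, Q$ by $\rho^{-1/2}(f^{(3)}), \rho^{1/2}(T), \rho^{1/2}(Q)$ and use the corresponding half-rotated trace values from the lemma. The phase factors $\omega$ and $\omega^2$ in these formulas arise from the rotational eigenvalue $\rho(Q) = \omega Q$, which forces any inner product pairing $Q$ against $\rho^{1/2}(Q)$ to pick up a square root of $\omega$; squaring and rearranging produces the stated phases. The main obstacle is tracking these phases correctly and verifying that the inner product on $V_{3,-}$ pulls back along $\rho^{1/2}$ so that the normalizations of $\rho^{1/2}(T)$ and $\rho^{1/2}(Q)$ match those of $T$ and $Q$. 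Once that bookkeeping is in place, every identity reduces mechanically to the orthogonality-plus-trace-values argument above.
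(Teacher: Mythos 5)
Your proposal has a genuine gap: it outsources the hard part to the preceding theorem and keeps only the easy part. In the paper, Theorem~5.0.15 (the spanning statements) has no proof independent of Theorem~5.0.16; the proofs in Section~5.1 establish both simultaneously by Bessel's inequality, i.e., by checking $\|T^2\|^2 = \|\proj{\mathrm{span}(TL_3,T,Q)}{T^2}\|^2$ numerically using the trace values of Lemma~5.0.14. Once that norm equality is verified, membership in the span \emph{and} the explicit coefficients both follow. Your plan assumes membership and then runs the (routine) orthogonal-projection bookkeeping, which is exactly the part that would come for free after the Bessel computation. So as written you have reorganized the logic, not supplied the missing content: the Bessel verification, which is what actually needs to be proved, never appears.

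There is also a localized error in your step ``reduce the $TL_3$-component to $f^{(3)}$.'' You claim that $XY = X\,f^{(3)}\,Y$ is itself low-weight $3$ (i.e., $\epsilon_i(XY)=0$ for all $i$). That is false: the annular caps include partial traces, and the partial trace of $T^2$ cannot vanish because its full trace is $Z(T^2)=[4]\neq 0$. The correct reason the $TL_3$-component is proportional to $f^{(3)}$ is orthogonality, not uncappability of $T^2$: for any non-identity $\beta\in TL_3$, $\beta$ has a turnback, so $Z(\beta^* T^2)=Z(T\beta^*T)=0$ because the turnback on $\beta^*$ caps off the uncappable $T$; hence $\proj{TL_3}{T^2}=\ip{T^2,\id}\,\widehat{\id}=Z(T^2)\cdot\frac{f^{(3)}}{[4]}=f^{(3)}$. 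The remainder of your coefficient calculation is correct once this is repaired and the Bessel norm equality is actually established, and your remarks on the half-rotated cases and the phases are in the right spirit but would likewise need the corresponding Bessel computations (as the paper carries out with the $\rho^{1/2}$-trace values), not just the heuristic about ``square roots of $\omega$.''
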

Here is the relevant notation.
\begin{notation}\cite{EP}
Let $Join_{n}(A,B)$ be the tangle with an $A$ box joined to a $B$ box by the $n$ strings counterclockwise of $A$'s star and the $n$ strings clockwise of $B$'s star.  There is also a `twisted' version of this, $Join_{n}'(A,B)=\rho^{-1/2}Join_{n}(\rho^{1/2}A,\rho^{1/2}B)$:

$$ Join_{n}(A,B) = 
\begin{tikzpicture}[scale=.6, baseline]
	\clip [draw] (2,2) arc (0:180:2cm) -- (-2,-2) arc (-180:0:2cm) -- (2,2);
	
	\draw (0,2) .. controls ++(-115:1.5cm) and ++(115:1.5cm) .. (0,-2);
	\draw (0,2) .. controls ++(-65:1.5cm) and ++(65:1.5cm) .. (0,-2);
	
	\draw[shaded] (0,2) -- +(90:3cm) -- +(120:3cm) -- (0,2);
	\draw (0,2) -- ++(30:3cm);
		
	\draw (0,-2) -- ++(-30:3cm);
	\draw[shaded] (0,-2) -- +(-90:3cm) -- +(-120:3cm) -- (0,-2);
	
	\node at (0,0) {$\ldots$};
	\node at (0,0) [below] {$\underbrace{\qquad}_{n}$};
	\node at (.7,3) [rotate=-30] {$\dots$};
	\node at (.7,-3) [rotate=30] {$\dots$};

	\node at (0,2) [Tbox] (A) {$A$};
	\node at (0,-2) [Tbox] (B) {$B$};
	\node at (A.180) [left] {$\star$};
	\node at (B.180) [left] {$\star$};
	\node at (-1.5,2.5)  {$\star$};	
	
	\draw[ultra thick] (2,2) arc (0:180:2cm) -- (-2,-2) arc (-180:0:2cm) -- (2,2);
\end{tikzpicture},  \quad
Join_{n}'(A,B) =
\begin{tikzpicture}[scale=.6, baseline]
	\clip [draw] (2,2) arc (0:180:2cm) -- (-2,-2) arc (-180:0:2cm) -- (2,2);
	
	\draw (0,2) .. controls ++(-100:1.5cm) and ++(100:1.5cm) .. (0,-2);
	\draw (0,2) .. controls ++(-45:1.5cm) and ++(45:1.5cm) .. (0,-2);
	
	\draw[shaded] (0,2)--++(120:3cm) arc (120:180:3cm) -- ++(0,-4) arc (-180:-120:3cm) -- (0,-2) .. controls ++(135:1.5cm) and ++(-135:1.5cm) .. (0,2);
	
	\draw (0,4) -- (0,2);
	\draw (0,2) -- ++(30:3cm);
		
	\draw (0,-2) -- ++(-30:3cm);
	\draw (0,-2) -- (0,-4);
	
	\node at (.25,.5) {$\ldots$};
	\node at (0,.5) [below] {$\underbrace{\qquad \; \; \; }_{n}$};
	\node at (.7,3) [rotate=-30] {$\dots$};
	\node at (.7,-3) [rotate=30] {$\dots$};

	\node at (0,2) [Tbox] (A) {$A$};
	\node at (0,-2) [Tbox] (B) {$B$};
	\node at (A.90) [above left=-.5mm] {$\star$};
	\node at (B.122) [above] {$\star$};
	\node at (-.4,3.6)  {$\star$};	
	
	\draw[ultra thick] (2,2) arc (0:180:2cm) -- (-2,-2) arc (-180:0:2cm) -- (2,2);
\end{tikzpicture}
$$
\end{notation}
\section{Relations among 3-boxes}
To prove all these relations we use Bessel's inequality which states: For any $v\in V$, $W\subset V$, we have $\left\Vert v \right\Vert^2 \geq \left\Vert \proj{W}{v} \right\Vert^2$, and  $\left\Vert v \right\Vert^2 = \left\Vert \proj{W}{v} \right\Vert^2$ if and only if $v \in W$.
\begin{proof}[Proof of (R3(T))] .  We will show $T^2= f^{(3)}+\frac{Z(T^3)}{[4]}T+\frac{Z(QT^2)}{[4]}Q$ by showing that $$\Norm{T^2}^2=\Norm{\proj{span(TL,T,Q)}{T^2}}^2.$$
On one hand, 
\begin{align*}
\Norm{T^2}^2 & =\ip{T^2,T^2} \\
& = Z(T^4)\\
& = \frac{5}{3}\sqrt{\frac{2}{3}(23+5\sqrt{21})},    \mbox{\indent by Lemma 5.0.14}
\end{align*}  

On the other hand, 
\begin{align*}
\Norm{\proj{span(TL_3,T,Q)}{T^2}}^2	& = \Norm{\proj{TL_3}{T^2}+\proj{T}{T^2}+\proj{Q}{T^2}}^2\\
	& = \Norm{\sum_{\beta \in B(TL_3)}\ip{T^2,\beta}\hat{\beta}+\ip{T^2,T}\hat{T}+\ip{T^2,Q}\hat{Q}}^2\\
	& = \Norm{\ip{T^2,\id}\frac{f^{(3)}}{[4]}+Z(T^3)\frac{T}{Z(T^2)}+Z(QT^2)\frac{Q}{Z(Q^2)}}^2\\
	& = \Norm{(Z(T^2)\frac{f^{(3)}}{[4]}}^2+\Norm{\frac{Z(T^3)}{Z(T^2)}T+\frac{Z(QT^2)}{Z(Q^2)}Q}^2\\
	&   \mbox{\indent because $\ip{f^{(3)},T}=0, \ip{f^{(3)},Q}=0$ }\\
	& = \Norm{f^{(3)}}^2+\Norm{\frac{Z(T^3)}{Z(T^2)}T}^2+\Norm{\frac{Z(QT^2)}{Z(Q^2)}Q}^2, \\
	&   \mbox{\indent because $\ip{Q,T}=0$}\\
	& = [4]+\frac{\norm{Z(T^3)}^2}{[4]}+\frac{\norm{Z(QT^2)}^2}{[4]}\\
	& = \frac{5}{3}\sqrt{\frac{2}{3}(23+5\sqrt{21})},\mbox{\indent by Lemma 5.0.14}
\end{align*}
So $\Norm{T^2}^2=\Norm{\proj{span(TL_3,T,Q}{T^2}}^2,$ which implies that $T^2 \in span(TL_3,T,Q)$.  Hence, $T^2=f^{(3)}+\frac{Z(T^3)}{[4]}T+\frac{Z(QT^2)}{[4]}Q$.

\end{proof}

The proof of (R3(Q)) is analogous.  
\begin{proof}[Proof of (R3(Q))]   
\begin{align*}
\Norm{\proj{span(TL_3,T,Q)}{Q^2}}^2	& = \Norm{\proj{TL_3}{Q^2}+\proj{T}{Q^2}+\proj{Q}{Q^2}}^2\\
	& = \Norm{\sum_{\beta \in B(TL_3)}\ip{Q^2,\beta}\hat{\beta}+\ip{Q^2,T}\hat{T}+\ip{Q^2,Q}\hat{Q}}^2\\
	& = \Norm{\ip{Q^2,\id}\frac{f^{(3)}}{[4]}+Z(TQ^2)\frac{T}{Z(T^2)}+Z(Q^3)\frac{Q}{Z(Q^2)}}^2\\
	& = \Norm{(Z(Q^2)\frac{f^{(3)}}{[4]}}^2+\Norm{\frac{Z(TQ^2)}{Z(T^2)}T+\frac{Z(Q^3)}{Z(Q^2)}Q}^2\\
	&   \mbox{\indent because $\ip{f^{(3)},T}=0, \ip{f^{(3)},Q}=0$ }\\
	& = \Norm{f^{(3)}}^2+\Norm{\frac{Z(TQ^2)}{Z(T^2)}T}^2+\Norm{\frac{Z(Q^3)}{Z(Q^2)}Q}^2, \\
	&   \mbox{\indent because $\ip{Q,T}=0$}\\
	& = [4]+\frac{\norm{Z(TQ^2)}^2}{[4]}+\frac{\norm{Z(Q^3)}^2}{[4]}\\
	& = Z(Q^4),\mbox{\indent by Lemma 5.0.14}
\end{align*}
So $\Norm{Q^2}^2=\Norm{\proj{span(TL_3,T,Q}{Q^2}}^2,$ which implies that $Q^2 \in span(TL_3,T,Q)$.  Hence, $Q^2=f^{(3)}+\frac{Z(TQ^2)}{[4]}T+\frac{Z(Q^3)}{[4]}Q$.

\end{proof}

We now prove the twisted versions of (R3(T)) and (R3(Q)).
\begin{proof}[Proof of (R3'(T))] 
Claim:		$$Join_3'(T,T)=\rho^{-1/2}(f^{(3)})+\frac{Z((\rho^{1/2}(T))^3)}{[4]}T+\omega^2\frac{Z(\rho^{1/2}(Q)(\rho^{1/2}(T))^2)}{[4]}Q$$\\
On the one hand, we have that 
\begin{align*}
\Norm{Join_3'(T,T)}^2 
	& = \ip{Join_3'(T,T),Join_3'(T,T)}\\
	& = Z((\rho^{1/2}(T))^4)\\
	& = \frac{5}{3}\sqrt{\frac{2}{3}(23+5\sqrt{21})}
\end{align*}

On the other hand, 
\begin{align*}
&\Norm{\proj{span(TL_3,T,Q)}{Join_3'(T,T)}}^2 \\
& = \Norm{\proj{TL_3}{Join_3'(T,T)}+\proj{T}{Join_3'(T,T)}+\proj{Q}{Join_3'(T,T)}}^2\\
& = \Norm{\sum_{\beta \in B(TL_3)}\ip{Join_3'(T,T),\beta}\hat{\beta}+\ip{Join_3'(T,T),T}\hat{T}+\ip{Join_3'(T,T),Q}\hat{Q}}^2\\
& = \Big\Vert\ip{Join_3'(T,T),\id}\hat{\id}+\ip{Join_3'(T,T),\begin{tikzpicture}[scale=.2]
 	\draw (1,1)--(1,-1);
	\draw (2,1)arc (-180:0:.5cm);
	\draw (2,-1) arc (180:0:.5cm);
 \end{tikzpicture}}\widehat{\begin{tikzpicture}[scale=.2]
 	\draw (1,1)--(1,-1);
	\draw (2,1)arc (-180:0:.5cm);
	\draw (2,-1) arc (180:0:.5cm);
 \end{tikzpicture}}+\ip{Join_3'(T,T),\big(\begin{tikzpicture}[scale=.2]
 	\draw (3,1)--(1,-1);
	\draw (1,1)arc (-180:0:.5cm);
	\draw (2,-1) arc (180:0:.5cm);
 \end{tikzpicture}\big)^{*}}\widehat{\big(\begin{tikzpicture}[scale=.2]
 	\draw (3,1)--(1,-1);
	\draw (1,1)arc (-180:0:.5cm);
	\draw (2,-1) arc (180:0:.5cm);
 \end{tikzpicture}\big)^{*}} \\
& + \ip{Join_3'(T,T),T}\frac{T}{Z(T^2)}+\ip{Join_3'(T,T),Q}\frac{Q}{Z(Q^2)}\Big\Vert^2\\
& = \Norm{[4]\widehat{\big(\begin{tikzpicture}[scale=.2]
 	\draw (3,1)--(1,-1);
	\draw (1,1)arc (-180:0:.5cm);
	\draw (2,-1) arc (180:0:.5cm);
 \end{tikzpicture}\big)^{*}}+\ip{Join_3'(T,T),T}\frac{T}{Z(T^2)}+\ip{Join_3'(T,T),Q}\frac{Q}{Z(Q^2)}}^2\\
& =  \Norm{\rho^{-1/2}f^{(3)}+\ip{Join_3'(T,T),T}\frac{T}{Z(T^2)}+\ip{Join_3'(T,T),Q}\frac{Q}{Z(Q^2)}}^2\\
&   \mbox{\indent because $\widehat{\big(\begin{tikzpicture}[scale=.2]
 	\draw (3,1)--(1,-1);
	\draw (1,1)arc (-180:0:.5cm);
	\draw (2,-1) arc (180:0:.5cm);
 \end{tikzpicture}\big)^{*}}= \frac{\rho^{-1/2}f^{(3)}}{[4]}$}\\
& = \Norm{\rho^{-1/2}f^{(3)}+Z((\rho^{1/2}(T))^3)\frac{T}{Z(T^2)}+\omega^2Z(\rho^{1/2}(Q)(\rho^{1/2}(T))^2)\frac{Q}{Z(Q^2)}}^2\\
& = \Norm{\rho^{-1/2}f^{(3)}}^2+\Norm{Z((\rho^{1/2}(T))^3)\frac{T}{Z(T^2)}}^2+\Norm{\omega^2Z(\rho^{1/2}(Q)(\rho^{1/2}(T))^2)\frac{Q}{Z(Q^2)}}^2\\
& \mbox{\noindent because $\ip{\rho^{-1/2}f^{(3)},T}=0,\ip{\rho^{-1/2}f^{(3)},Q}=0,\ip{T,Q}=0$}\\
& = \frac{5}{3}\sqrt{\frac{2}{3}(23+5\sqrt{21})}\mbox{\indent by Lemma 5.0.14}
\end{align*}
\end{proof}
We have an analogous situation by switching $T$ and $Q$. 
\begin{proof}[Proof of (R3'(Q))] 
Claim:	$$Join_3'(Q,Q)=\omega\rho^{-1/2}(f^{(3)})+\omega^2\frac{Z((\rho^{1/2}(Q))^3)}{[4]}Q+\frac{Z(\rho^{1/2}(T)(\rho^{1/2}(Q))^2)}{[4]}T$$
On the one hand, we have that 
\begin{align*}
\Norm{Join_3'(Q,Q}^2 &= \ip{Join_3'(Q,Q),Join_3'(Q,Q}\\
& = \omega Z((\rho^{1/2}(Q))^4)\\
& = \frac{2}{9}\sqrt{8\sqrt{3}+9\sqrt{7}}
\end{align*}
On the other hand, 
\begin{align*}
&\Norm{\proj{span(TL_3,T,Q)}{Join_3'(Q,Q)}}^2 \\
& = \Norm{\proj{TL_3}{Join_3'(Q,Q)}+\proj{T}{Join_3'(Q,Q)}+\proj{Q}{Join_3'(Q,Q)}}^2\\
& = \Norm{\sum_{\beta \in B(TL_3)}\ip{Join_3'(Q,Q),\beta}\hat{\beta}+\ip{Join_3'(Q,Q),T}\hat{T}+\ip{Join_3'(Q,Q),Q}\hat{Q}}^2\\
& = \Big\Vert\ip{Join_3'(Q,Q),\id}\hat{\id}+\ip{Join_3'(Q,Q),\begin{tikzpicture}[scale=.2]
 	\draw (1,1)--(1,-1);
	\draw (2,1)arc (-180:0:.5cm);
	\draw (2,-1) arc (180:0:.5cm);
 \end{tikzpicture}}\widehat{\begin{tikzpicture}[scale=.2]
 	\draw (1,1)--(1,-1);
	\draw (2,1)arc (-180:0:.5cm);
	\draw (2,-1) arc (180:0:.5cm);
 \end{tikzpicture}}+\ip{Join_3'(Q,Q),\big(\begin{tikzpicture}[scale=.2]
 	\draw (3,1)--(1,-1);
	\draw (1,1)arc (-180:0:.5cm);
	\draw (2,-1) arc (180:0:.5cm);
 \end{tikzpicture}\big)^{*}}\widehat{\big(\begin{tikzpicture}[scale=.2]
 	\draw (3,1)--(1,-1);
	\draw (1,1)arc (-180:0:.5cm);
	\draw (2,-1) arc (180:0:.5cm);
 \end{tikzpicture}\big)^{*}} \\
& + \ip{Join_3'(Q,Q),T}\frac{T}{Z(T^2)}+\ip{Join_3'(Q,Q),Q}\frac{Q}{Z(Q^2)}\Big\Vert^2\\
& = \Norm{\omega[4]\widehat{\big(\begin{tikzpicture}[scale=.2]
 	\draw (3,1)--(1,-1);
	\draw (1,1)arc (-180:0:.5cm);
	\draw (2,-1) arc (180:0:.5cm);
 \end{tikzpicture}\big)^{*}}+\ip{Join_3'(Q,Q),T}\frac{T}{Z(T^2)}+\ip{Join_3'(Q,Q),Q}\frac{Q}{Z(Q^2)}}^2\\
& =  \Norm{\omega\rho^{-1/2}f^{(3)}+\ip{Join_3'(Q,Q),T}\frac{T}{Z(T^2)}+\ip{Join_3'(Q,Q),Q}\frac{Q}{Z(Q^2)}}^2\\
&   \mbox{\indent because $\widehat{\big(\begin{tikzpicture}[scale=.2]
 	\draw (3,1)--(1,-1);
	\draw (1,1)arc (-180:0:.5cm);
	\draw (2,-1) arc (180:0:.5cm);
 \end{tikzpicture}\big)^{*}}= \frac{\rho^{-1/2}f^{(3)}}{[4]}$}\\
& = \Norm{\omega\rho^{-1/2}f^{(3)}+Z((\rho^{1/2}(T)(\rho^{1/2}(Q))^2))\frac{T}{Z(T^2)}+\omega^2Z((\rho^{1/2}(Q))^3)\frac{Q}{Z(Q^2)}}^2\\
& = \Norm{\rho^{-1/2}f^{(3)}}^2+\Norm{Z((\rho^{1/2}(T)(\rho^{1/2}(Q))^2))\frac{T}{Z(T^2)}}^2+\Norm{\omega^2Z((\rho^{1/2}(Q))^3)\frac{Q}{Z(Q^2)}}^2\\
& \mbox{\noindent because $\ip{\rho^{-1/2}f^{(3)},T}=0,\ip{\rho^{-1/2}f^{(3)},Q}=0,\ip{T,Q}=0$}\\
& = \frac{2}{9}\sqrt{8\sqrt{3}+9\sqrt{7}}\mbox{\indent by Lemma 5.0.14}
\end{align*}
\end{proof}

Next, we want to see how $T$ and $Q$ interact.  
\begin{proof}[Proof of (R3(TQ))] 
Claim:  $$TQ=\frac{Z(T^2Q)}{[4]}T+\frac{Z(TQ^2)}{[4]}Q$$
On the one hand, 
\begin{align*}
\Norm{TQ}^2 & =\ip{TQ, TQ}\\
& = Z((TQ)^{*}TQ)=Z(Q^{*}T^{*}TQ)\\
& = Z(QT^2Q)\\
& =Z(Q^2T^2)\\
& = \frac{7}{3}\sqrt{\frac{11+\sqrt{21}}{6}}\mbox{\indent by Lemma 5.0.14}
\end{align*}
On the other hand, 
\begin{align*}
\Norm{\proj{span(TL_3,T,Q)}{TQ}}^2 & = \Norm{\sum_{\beta \in B(TL_3)}\ip{TQ,\beta}\hat{\beta}+\ip{TQ,T}\hat{T}+\ip{TQ,Q}\hat{Q}}^2\\
& = \Norm{\ip{TQ,\id}\hat{\id}+\ip{TQ,T}\hat{T}+\ip{TQ,Q}\hat{Q}}^2\\
& = \Norm{Z(TQ)\hat{\id}+Z(T^2Q)\frac{T}{[4]}+Z(Q^2T)\frac{Q}{[4]}}^2\\
& = \Norm{\frac{Z(T^2Q)}{[4]}T}^2+\Norm{\frac{Z(Q^2T)}{[4]}Q}^2\mbox{\indent because $\ip{T,Q}=0$}\\
& = \frac{7}{3}\sqrt{\frac{11+\sqrt{21}}{6}}\mbox{\indent by Lemma 5.0.14}
\end{align*}
\end{proof}
\begin{proof}[Proof of (R3(QT))] 
Claim: $$QT=\frac{Z(Q^2T)}{[4]}Q+\frac{Z(QT^2)}{[4]}T$$
The proof of (R3(QT)) follows by switching $T$ and $Q$ in the requisite equality in the proof of (R3(TQ)), that is, by checking that the following equality holds:
$$\Norm{\frac{Z(Q^2T)}{[4]}Q}^2+\Norm{\frac{Z(QT^2)}{[4]}T}^2=Z(T^2Q^2)$$
This equality holds because we already saw from the proof of (R3(TQ)) that the following holds: $$\frac{\norm{Z(T^2Q)}^2}{[4]}+\frac{\norm{Z(Q^2T)}^2}{[4]}=Z(Q^2T^2)$$
\end{proof}

We now prove the twisted versions of (R3(TQ)) and (R3(QT)).
\begin{proof}[Proof of (R3'(TQ))]
Claim:  $$Join_3'(T,Q)=\frac{Z((\rho^{1/2}(T))^2\rho^{1/2}(Q))}{[4]}T+\omega^2\frac{Z(\rho^{1/2}(T)(\rho^{1/2}(Q))^2)}{[4]}Q$$
On the one hand, 
\begin{align*}
\Norm{Join_3'(T,Q)}^2& = \ip{Join_3'(T,Q),Join_3'(T,Q)}\\
& = \omega^2Z((\rho^{1/2}(Q))^2(\rho^{1/2}(T))^2)\\
& = \frac{7(1+\sqrt{21}}{6\sqrt{3}}
\end{align*}
On the other hand, 
\begin{align*}
& \Norm{\proj{span(TL_3,T,Q)}{Join_3'(T,Q)}}^2\\ 
& =\Norm{\proj{TL_3}{Join_3'(T,Q)}+\proj{T}{Join_3'(T,Q)}+\proj{Q}{Join_3'(T,Q)}}^2\\
& = \Norm{\sum_{\beta \in B(TL_3)}\ip {Join_3'(T,Q),\beta}\hat{\beta}+\ip{Join_3'(T,Q),T}\hat{T}+\ip{Join_3'(T,Q),Q}\hat{Q}}^2\\
& = \Norm{ \ip{Join_3'(T,Q),\big(\begin{tikzpicture}[scale=.2]
 	\draw (3,1)--(1,-1);
	\draw (1,1)arc (-180:0:.5cm);
	\draw (2,-1) arc (180:0:.5cm);
 \end{tikzpicture}\big)^{*}}\widehat{\big(\begin{tikzpicture}[scale=.2]
 	\draw (3,1)--(1,-1);
	\draw (1,1)arc (-180:0:.5cm);
	\draw (2,-1) arc (180:0:.5cm);
 \end{tikzpicture}\big)^{*}} +\ip{Join_3'(T,Q),T}\frac{T}{[4]}+\ip{Join_3'(T,Q),Q}\frac{Q}{[4]}}^2\\
 & = \Norm{\omega Z(TQ)\widehat{\big(\begin{tikzpicture}[scale=.2]
 	\draw (3,1)--(1,-1);
	\draw (1,1)arc (-180:0:.5cm);
	\draw (2,-1) arc (180:0:.5cm);
 \end{tikzpicture}\big)^{*}}+ \ip{Join_3'(T,Q),T}\frac{T}{[4]}+\ip{Join_3'(T,Q),Q}\frac{Q}{[4]}}^2\\
 & = \Norm{\ip{Join_3'(T,Q),T}\frac{T}{[4]}+\ip{Join_3'(T,Q),Q}\frac{Q}{[4]}}^2\\
 & = \Norm{Z((\rho^{1/2}(T))^2\rho^{1/2}(Q))\frac{T}{[4]}+\omega^2Z((\rho^{1/2}(Q))^2\rho^{1/2}(T))\frac{Q}{[4]}}^2\\
 & =\Norm{Z((\rho^{1/2}(T))^2\rho^{1/2}(Q))\frac{T}{[4]}}^2+\Norm{Z((\rho^{1/2}(Q))^2\rho^{1/2}(T))\frac{Q}{[4]}}^2\\
 & = \frac{\norm{Z((\rho^{1/2}(T))^2\rho^{1/2}(Q))}^2}{[4]}+\frac{\norm{Z((\rho^{1/2}(Q))^2\rho^{1/2}(T))}^2}{[4]}\\
 & = \frac{7(1+\sqrt{21}}{6\sqrt{3}}
\end{align*}
\end{proof}

The proof of the twisted version of (R3(QT)) is very similar.
\begin{proof}[Proof of (R3'(QT))]
Claim:  $$Join_3'(Q,T)=\frac{Z((\rho^{1/2}(T))^2\rho^{1/2}(Q))}{[4]}T+\omega^2\frac{Z(\rho^{1/2}(T)(\rho^{1/2}(Q))^2)}{[4]}Q$$
On the one hand, 
\begin{align*}
\Norm{Join_3'(Q,T)}^2& = \ip{Join_3'(Q,T),Join_3'(Q,T)}\\
& = \omega^2Z((\rho^{1/2}(Q))^2(\rho^{1/2}(T))^2)\\
& = \frac{7(1+\sqrt{21}}{6\sqrt{3}}
\end{align*}
On the other hand, 
\begin{align*}
& \Norm{\proj{span(TL_3,T,Q)}{Join_3'(Q,T)}}^2\\ 
& =\Norm{\proj{TL_3}{Join_3'(Q,T)}+\proj{T}{Join_3'(Q,T)}+\proj{Q}{Join_3'(Q,T)}}^2\\
& = \Norm{\sum_{\beta \in B(TL_3)}\ip {Join_3'(Q,T),\beta}\hat{\beta}+\ip{Join_3'(Q,T),T}\hat{T}+\ip{Join_3'(Q,T),Q}\hat{Q}}^2\\
& = \Norm{ \ip{Join_3'(Q,T),\big(\begin{tikzpicture}[scale=.2]
 	\draw (3,1)--(1,-1);
	\draw (1,1)arc (-180:0:.5cm);
	\draw (2,-1) arc (180:0:.5cm);
 \end{tikzpicture}\big)^{*}}\widehat{\big(\begin{tikzpicture}[scale=.2]
 	\draw (3,1)--(1,-1);
	\draw (1,1)arc (-180:0:.5cm);
	\draw (2,-1) arc (180:0:.5cm);
 \end{tikzpicture}\big)^{*}} +\ip{Join_3'(Q,T),T}\frac{T}{[4]}+\ip{Join_3'(Q,T),Q}\frac{Q}{[4]}}^2\\
 & = \Norm{Z(TQ)\widehat{\big(\begin{tikzpicture}[scale=.2]
 	\draw (3,1)--(1,-1);
	\draw (1,1)arc (-180:0:.5cm);
	\draw (2,-1) arc (180:0:.5cm);
 \end{tikzpicture}\big)^{*}}+ \ip{Join_3'(Q,T),T}\frac{T}{[4]}+\ip{Join_3'(Q,T),Q}\frac{Q}{[4]}}^2\\
 & = \Norm{\ip{Join_3'(Q,T),T}\frac{T}{[4]}+\ip{Join_3'(Q,T),Q}\frac{Q}{[4]}}^2\\
 & = \Norm{Z((\rho^{1/2}(T))^2\rho^{1/2}(Q))\frac{T}{[4]}+\omega^2Z((\rho^{1/2}(Q))^2\rho^{1/2}(T))\frac{Q}{[4]}}^2\\
 & =\Norm{Z((\rho^{1/2}(T))^2\rho^{1/2}(Q))\frac{T}{[4]}}^2+\Norm{Z((\rho^{1/2}(Q))^2\rho^{1/2}(T))\frac{Q}{[4]}}^2\\
 & = \frac{\norm{Z((\rho^{1/2}(T))^2\rho^{1/2}(Q))}^2}{[4]}+\frac{\norm{Z((\rho^{1/2}(Q))^2\rho^{1/2}(T))}^2}{[4]}\\
 & = \frac{7(1+\sqrt{21}}{6\sqrt{3}}
\end{align*}
\end{proof}
\section{Relations among 4-boxes}
In this section, we want to see what happens when we take two generators and connect them up by two strands only.  Recall that we have the following decomposition:$$\mathcal{P}_4=TL_4\oplus 2V_4^{3,\zeta}\oplus V_4^{4,\beta}$$  Since there is only one new element $M$ at level 4, we expect our quadratic relations at level 4 to involve the new element $M$.  We will prove such quadratic relations in this section.  First, we need some definitions and notation.
\begin{Defn}[\cite{ExtH}]
For $m \ge 0$, let
$$W_m = q^m+q^{-m}-\omega-\omega^{-1},$$ 
as on page 30 of \cite{VJ3}.
\end{Defn}
We will let $\omega$ in the above notation depend on the generator in question.  So $T$ will have $\omega=1$ and $Q$ will have $\omega=e^{2\pi i/3}$. 
We begin by looking for some unshaded 4-box relations.
\newpage
\subsection{Unshaded 4-box Relations}
Let
\begin{figure}[!htb]
$$u=\begin{tikzpicture}[baseline=0,scale=2]
	{
	\fill[shaded] (-0.8,0) -- (-0.8,0.6) arc (180:0:0.8) -- (0.8,0) -- (0.2,0) -- (0.2,1) -- (-0.2,1) -- (-0.2,0);
	\draw (-0.8,0) -- (-0.8,0.6) arc (180:0:0.8) -- (0.8,0);
	\draw (-0.2,0) -- (-0.2,1);
	\draw (0.2,0) -- (0.2,1);
	\node at (0.45,0.5) {\footnotesize$5$};
	{%
	\filldraw[fill=white,thick] (0,1) ellipse (3mm and 3mm);
	\node at (0,1) {\Large $T$};
	\path(0,1) ++(-90:0.37) node {$\star$};
}
}
;
        \draw (0,0)--(0,-0.5);
        \node[anchor=west] at (0,-0.35) {\footnotesize$8$};
	{%
	\filldraw[fill=white,thick] (-1,-0.2) rectangle (1,0.2);
	\node at (0,0) {\Large$\JW{8}$};
}\end{tikzpicture},
\quad v=\begin{tikzpicture}[baseline=0,scale=2]
	{
	\fill[shaded] (-0.8,0) -- (-0.8,0.6) arc (180:0:0.8) -- (0.8,0) -- (0.2,0) -- (0.2,1) -- (-0.2,1) -- (-0.2,0);
	\draw (-0.8,0) -- (-0.8,0.6) arc (180:0:0.8) -- (0.8,0);
	\draw (-0.2,0) -- (-0.2,1);
	\draw (0.2,0) -- (0.2,1);
	\node at (0.45,0.5) {\footnotesize$5$};
	{%
	\filldraw[fill=white,thick] (0,1) ellipse (3mm and 3mm);
	\node at (0,1) {\Large $Q$};
	\path(0,1) ++(-90:0.37) node {$\star$};
}
}
;
        \draw (0,0)--(0,-0.5);
        \node[anchor=west] at (0,-0.35) {\footnotesize$8$};
	{%
	\filldraw[fill=white,thick] (-1,-0.2) rectangle (1,0.2);
	\node at (0,0) {\Large$\JW{8}$};
}
\end{tikzpicture} 
.$$
\caption{$u$ and $v$}
\label{fig:AB}
\end{figure}

\begin{lem}
\begin{enumerate}
\item [(1)] $\ip{u,u}=\frac{1}{[8]}W_8\cdot Z(T^2)$, where $\omega$ is taken to be $1$ in the expression for $W_8$. More specifically,$\ip{u,u}= \frac{3}{2}+\frac{11\sqrt{\frac{3}{7}}}{2}$
\item [(2)] $\ip{v,v}=\frac{1}{[8]}W_8\cdot Z(Q^2)$, where $\omega$ is taken to be $e^{2\pi i/3}$ in the expression for $W_8$. More specifically, $\ip{v,v}=3+4\sqrt{\frac{3}{7}}$
\end{enumerate}
The same holds with the reverse shading.
\end{lem}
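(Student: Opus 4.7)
The plan is to show that $\langle u,u\rangle$ equals the partition function of a closed diagram built by stacking $u$ on its reflection, then exploit three simultaneous ingredients: the idempotent property $f^{(8)}f^{(8)}=f^{(8)}$ (collapsing the two Jones--Wenzl boxes into one); the low-weight hypothesis $\epsilon_i(T)=\epsilon_i(Q)=0$ from (2) of Theorem~4.0.7 (killing most expansion terms); and the rotational eigenvalues $\rho(T)=T$, $\rho(Q)=\omega Q$ from (3) (which convert the surviving wrap-around term into a scalar factor).

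Concretely, after the collapse, I am left with a single $f^{(8)}$ joining two copies of the generator $X\in\{T,Q\}$ through the attaching pattern of Figure~5.1. I then expand $f^{(8)}$ using Lemma~2.0.19,
\[
f^{(8)} \;=\; f^{(7)}\otimes \mathbf{1} + \frac{1}{[8]}\sum_{a=1}^{7}(-1)^{a+1}[a]\,(\text{strand $a$ bent to the bottom}),
\]
and iterate as needed. For each $a$, the bent strand either passes through a region adjacent to one of the two $X$'s (activating some $\epsilon_i$, giving $0$) or threads all the way across. Only two configurations survive: the straight-through identity term, contributing $Z(X^2)$, and a single wrap-around term in which the bent strand encircles both generators. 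In the wrap-around term the rotation $\rho$ is applied, which by (3) multiplies one copy of $X$ by $\omega$ and the other by $\omega^{-1}$, inserting an overall scalar $\omega+\omega^{-1}$. Using the quantum-integer identity $q^m+q^{-m}=[m+1]-[m-1]$ together with the $[a]/[8]$ weights from the recursion, everything assembles into
\[
\ip{u,u} \;=\; \frac{Z(X^2)}{[8]}\bigl(q^{8}+q^{-8} - \omega - \omega^{-1}\bigr) \;=\; \frac{W_8}{[8]}\,Z(X^2).
\]

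For the explicit numbers, substitute $Z(T^2)=Z(Q^2)=[4]$ from the trace computations of Lemma~5.0.14 together with $\delta=\sqrt{(5+\sqrt{21})/2}$, and use the Chebyshev-style recursion $[k{+}1]=\delta[k]-[k{-}1]$ to evaluate $[8]$ and $q^8+q^{-8}=[9]-[7]$. With $\omega=1$ (so $\omega+\omega^{-1}=2$) this yields $\tfrac{3}{2}+\tfrac{11}{2}\sqrt{3/7}$; with $\omega=e^{2\pi i/3}$ (so $\omega+\omega^{-1}=-1$) it yields $3+4\sqrt{3/7}$. The reverse-shaded statement follows immediately because nothing in the argument used the shading convention, and in any case $\rho^{1/2}$ is an isometry intertwining the two shading sectors, taking the diagrams $u,v$ to their shading-reversed counterparts with the same inner products.

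I expect the main obstacle to be the bookkeeping in the Wenzl-recursion expansion: one must carefully identify which bent-strand terms truly route through a cap that $T$ or $Q$ annihilates, and track the signs $(-1)^{a+1}$ and weights $[a]$ through the iterated applications. This is precisely the style of computation carried out by Peters in \cite{ExtH} for the Haagerup generator, and I anticipate transferring that template to the two-generator setting here, with the only substantive change being that $\omega$ now depends on which of $T$ or $Q$ is present.
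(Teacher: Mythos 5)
Your overall strategy—expand the Jones--Wenzl idempotent via a Wenzl-type recursion, use uncappability of $T,Q$ to kill most terms, and use the rotational eigenvalue to account for wrap-around terms—is the right one, and is indeed what the paper does. But the combinatorial accounting you describe cannot produce the formula $\tfrac{W_8}{[8]}Z(X^2)$, and this is a real gap.

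You claim that ``only two configurations survive: the straight-through identity term, contributing $Z(X^2)$, and a single wrap-around term,'' and that the wrap-around inserts a scalar $\omega+\omega^{-1}$. If that were the whole story the answer would have the form $\bigl(1+c(\omega+\omega^{-1})\bigr)Z(X^2)$ for some scalar $c$, which cannot equal $\tfrac{q^{8}+q^{-8}-\omega-\omega^{-1}}{[8]}Z(X^2)$: the identity-like piece should carry coefficient $\tfrac{q^8+q^{-8}}{[8]}$, not $1$, and this factor has to be assembled from several surviving terms, not one. The paper instead applies Lemma~2.1.9 (the double-sum recursion, not the single-sum Lemma~2.1.8 you quote), and finds \emph{five} surviving contributions: the $f^{(7)}\otimes\mathbf 1$ term, which after a partial trace gives $\tfrac{[8]}{[7]}Z(T^2)$, and the four $(a,b)\in\{1,7\}^2$ terms, contributing $Z(T^2)$, $\omega Z(T^2)$, $Z(T^2)$, $\omega^{2}Z(T^2)$ with coefficients $-\tfrac{[a][b]}{[8][7]}$. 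Only after summing these and invoking the quantum-integer identity $\tfrac{[8]^2-1-[7]^2}{[7]}=q^8+q^{-8}$ does the combination collapse to $\tfrac{W_8}{[8]}Z(T^2)$. Your iterated single-sum approach could in principle be made to reproduce this, but the ``two configurations'' bookkeeping as stated is simply wrong: it misses the $a=b\in\{1,7\}$ terms entirely and misattributes a coefficient~$1$ to the straight-through part. The numerical substitution at the end of your argument is fine (it really is $\tfrac{W_8}{[8]}\cdot[4]$ that gives $\tfrac32+\tfrac{11}{2}\sqrt{3/7}$ and $3+4\sqrt{3/7}$), but that only checks the final formula, not the derivation that's supposed to justify it.
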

\begin{proof}[Proof of (1)]
$$\ip{u,u}=\begin{tikzpicture}[baseline=0,scale=2]
	{
	\fill[shaded] (-0.8,0) -- (-0.8,0.6) arc (180:0:0.8) -- (0.8,0) -- (0.2,0) -- (0.2,1) -- (-0.2,1) -- (-0.2,0);
	\draw (-0.8,0) -- (-0.8,0.6) arc (180:0:0.8) -- (0.8,0);
	\draw (-0.2,0) -- (-0.2,1);
	\draw (0.2,0) -- (0.2,1);
	\node at (0.45,0.5) {\footnotesize$5$};
	{%
	\filldraw[fill=white,thick] (0,1) ellipse (3mm and 3mm);
	\node at (0,1) {\Large $T$};
	\path(0,1) ++(-90:0.37) node {$\star$};
}
}
	{\begin{scope}[y=-1cm] {{
	\fill[shaded] (-0.8,0) -- (-0.8,0.6) arc (180:0:0.8) -- (0.8,0) -- (0.2,0) -- (0.2,1) -- (-0.2,1) -- (-0.2,0);
	\draw (-0.8,0) -- (-0.8,0.6) arc (180:0:0.8) -- (0.8,0);
	\draw (-0.2,0) -- (-0.2,1);
	\draw (0.2,0) -- (0.2,1);
	\node at (0.45,0.5) {\footnotesize$5$};
	{%
	\filldraw[fill=white,thick] (0,1) ellipse (3mm and 3mm);
	\node at (0,1) {\Large $T$};
	\path(0,1) ++(-90:0.37) node {$\star$};
}
}}\end{scope}}
	{%
	\filldraw[fill=white,thick] (-1,-0.2) rectangle (1,0.2);
	\node at (0,0) {\Large$\JW{8}$};
}
\end{tikzpicture}$$
Apply Lemma 2.1.9 to $f^{(8)}$ in the above diagram.  Consider the first term in the equation for Lemma 2.1.9.  Using sphericality, we get a partial trace of $f^{(7)}$, which we can replace with $\frac{[8]}{[7]}f^{(6)}$.  Thus, we have $\frac{[8]}{[7]}Z(T^2)$.  Now, consider the remaining terms in the sum:
$$\frac{1}{[8][7]}
\sum_{a,b = 1}^{7} (-1)^{a+b+1} [a][b] \;
\begin{tikzpicture}[baseline=0,scale=0.3]
\draw[decorate,decoration={brace,raise=2pt}] (-4,3.5) -- (-2,3.5);
\node at (-3,4.6) {$a$};
\draw[decorate,decoration={brace,mirror,raise=2pt}] (-4,-3.5) -- (-1,-3.5);
\node at (-2.5,-4.85) {$b$};
\foreach \x in {-4,-3} { \draw (\x,-3.5) -- (\x+1,0) -- (\x,3.5); }
\draw (-2,-3.5) -- (-1,0);
\draw (-1,-3.5) arc (180:0:0.5);
\draw (-1,3.5) arc (360:180:0.5);
\draw (0,3.5) -- (-1,0);
\foreach \x in {1,2} { \draw (\x,-3.5) -- (\x-1,0) -- (\x,3.5); }
\filldraw[thick,fill=white] (-4,-1.5) rectangle (2,1.5);
\node at (-1,0) {$\JW{6}$};
\end{tikzpicture}$$
Every term in the sum gives $0$ except when $a,b \in \left\{1,7\right\}$.  
\begin{table}[ht]
\center
\begin{tabular}{ c | c  }
$a,b$  & value of diagram \\
\hline
      $a=b=1$
     & $Z(T^2)$ \\
     $a=1, b=7$
     & $\omega Z(T^2)$ \\
     $a=7,b=7$ 
     & $Z(T^2)$\\
     $a=7,b=1$
     & $\omega^2 Z(T^2)$\\
\end{tabular}
\vspace{6pt}
\caption{The remaining terms that make a contribution. }
\end{table}

 So 
\begin{align*} \ip{u,u} & =\frac{[8]}{[7]}Z(T^2)-\frac{1}{[8][7]}(1+\omega^2[7])+[7]^2+\omega[7])Z(T^2)\\
& = \frac{[8]^2-(1+[7]^2+[7](\omega+\omega^{-1})}{[8][7]}Z(T^2)\\
& = \frac{W_8}{[8]}Z(T^2)\\
& = \frac{3}{2}+\frac{11\sqrt{\frac{3}{7}}}{2}.
\end{align*}
\end{proof}

\begin{proof}[Proof of (2)]  The proof of (2) is exactly the same as in (1), except $\omega$ is replaced with $e^{2\pi i/3}$.
For the reverse shading, one can easily check to see that 
$$\begin{tikzpicture}[baseline=0,scale=2]
	{\draw[shaded] (0.2,0) -- (0.2,1) -- (-0.2,1) -- (-0.2,0);
	\draw (-0.8,0) -- (-0.8,0.6) arc (180:0:0.8) -- (0.8,0);
	\draw(-0.2,0) -- (-0.2,1);
	\draw (0.2,0) -- (0.2,1);
	\node at (0.45,0.5) {\footnotesize$5$};
	}	{%
	\filldraw[fill=white,thick] (0,1) ellipse (3mm and 3mm);
	\node at (0,1) {\Large $T$};
	\path(0,1) ++(-180:0.37) node {$\star$};
}
	\upsidedown{	{\draw[shaded] (0.2,0) -- (0.2,1) -- (-0.2,1) -- (-0.2,0);
	\draw (-0.8,0) -- (-0.8,0.6) arc (180:0:0.8) -- (0.8,0);
	\draw(-0.2,0) -- (-0.2,1);
	\draw (0.2,0) -- (0.2,1);
	\node at (0.45,0.5) {\footnotesize$5$};
	}	{%
	\filldraw[fill=white,thick] (0,1) ellipse (3mm and 3mm);
	\node at (0,1) {\Large $T$};
	\path(0,1) ++(-180:0.37) node {$\star$};
}}
	{%
	\filldraw[fill=white,thick] (-1,-0.2) rectangle (1,0.2);
	\node at (0,0) {\Large$\JW{8}$};
}
\end{tikzpicture}$$ is equal to $\frac{W_8}{[8]}Z(T^2)$.  Similarly, with $T$ replaced by $Q$.
\end{proof}

\begin{notation} If $E$ $\in \left\{T,Q\right\}$, then we will denote by $(E)$ the diagram which has the generator $E$ standing on a Jones-Wenzl idempotent $f^{(8)}$ as follows:  
$$(E) = \begin{tikzpicture}[baseline=0,scale=2]
	{
	\fill[shaded] (-0.8,0) -- (-0.8,0.6) arc (180:0:0.8) -- (0.8,0) -- (0.2,0) -- (0.2,1) -- (-0.2,1) -- (-0.2,0);
	\draw (-0.8,0) -- (-0.8,0.6) arc (180:0:0.8) -- (0.8,0);
	\draw (-0.2,0) -- (-0.2,1);
	\draw (0.2,0) -- (0.2,1);
	\node at (0.45,0.5) {\footnotesize$5$};
	{%
	\filldraw[fill=white,thick] (0,1) ellipse (3mm and 3mm);
	\node at (0,1) {\Large $E$};
	\path(0,1) ++(-90:0.37) node {$\star$};
}
}
;
        \draw (0,0)--(0,-0.5);
        \node[anchor=west] at (0,-0.35) {\footnotesize$8$};
	{%
	\filldraw[fill=white,thick] (-1,-0.2) rectangle (1,0.2);
	\node at (0,0) {\Large$\JW{8}$};
}\end{tikzpicture}$$

  If $F$,$G$ $\in \left\{T,Q\right\}$, then we will denote by $(F,G)$ the diagram which has the generators $F$ and $G$ standing on the Jones-Wenzl idempotent $f^{(8)}$ and with two strings connecting them as follows:
$$(F,G)= \begin{tikzpicture}[baseline=0,scale=2]
	{%
	\draw (-0.5,1) -- (-0.5,0);
	\node[anchor=west] at (-0.5,0.5) {\footnotesize$4$};
	\draw (0.5,1) -- (0.5,0);
	\node[anchor=west] at (0.5,0.5) {\footnotesize$4$};
}
 {%
	\node[anchor=south] at (0,1) {\footnotesize$2$};
	\draw (-0.5,1) -- (0.5, 1);
	\foreach \x in {-0.5,0.5} {
		{%
	\filldraw[fill=white,thick] (\x,1) ellipse (3mm and 3mm);
	\node at (-.5,1) {\Large $F$};
	\node at (.5,1) {\Large $G$};
	\path(\x,1) ++(90:0.37) node {$\star$};
}
	}
}
        \draw (0,0)--(0,-0.5);
        \node[anchor=west] at (0,-0.35) {\footnotesize$8$};
	{%
	\filldraw[fill=white,thick] (-1,-0.2) rectangle (1,0.2);
	\node at (0,0) {\Large$\JW{8}$};
}
\end{tikzpicture}$$
\end{notation}
\begin{lem}Let $A=u$ and $B=(T,T)$.  Then $$\ip{A,B}=\omega^2Z((\rho^{1/2}(T))^3)+\frac{[4]}{[8]}(1+\omega)Z(T^3)$$, where   $\omega=1$.  More specifically, $\ip{A,B}=-\frac{1}{3}\sqrt{\frac{779}{14}-\frac{23}{2}\sqrt{21}}.$
\end{lem}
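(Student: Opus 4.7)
The inner product $\langle A, B\rangle = Z(B^*A)$ is computed by stacking the reflection of $B=(T,T)$ on top of $A=u=(T)$. Both pictures sit on a copy of $f^{(8)}$, and since $f^{(8)}$ is idempotent the two copies collapse into one. The resulting closed diagram contains a single $f^{(8)}$ with three $T$'s attached: the one coming from $u$, whose 6 boundary points are joined to $f^{(8)}$ with a half-rotation (five strands on one side of the projection and one on the other, after the wrapping arc), and the two coming from $(T,T)^*$, which are joined to each other by two strings and to $f^{(8)}$ by four strings apiece. My strategy is to expand $f^{(8)}$ using Wenzl's recursion (Lemma \ref{lem:basicrecursion}) and to kill most terms via the low-weight condition $\epsilon_i(T)=0$ of Theorem 4.0.7(2), exactly as was done in Lemma 5.0.15 for $\langle u,u\rangle$.

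\textbf{Leading term.} Writing $f^{(8)} = f^{(7)}\otimes 1 + \frac{1}{[8]}\sum_{a=1}^{7}(-1)^{a+1}[a]D_a$, the first piece $f^{(7)}\otimes 1$ combines with the arc in $u$ to give a configuration in which the single ``through-strand'' from the $1$ factor, together with the arc of $u$ wrapping around $T$, lets me close up using sphericality and the partial-trace identity $\operatorname{ptr}(f^{(7)}) = \frac{[8]}{[7]} f^{(6)}$. What remains is a closed diagram in which the three $T$'s are composed cyclically with each other, but with one of them rotated by a half-strand because of the wrap; this evaluates to $\omega^{2}\,Z\bigl((\rho^{1/2}(T))^{3}\bigr)$, matching exactly the first summand of the claim (the phase $\omega^{2}$, with $\omega=1$ for $T$, is determined by how the rotation interacts with the generator, analogously to the computation of $Z((\rho^{1/2}(T))^{3})$ in Lemma 5.0.14).

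\textbf{Correction terms.} In the sum $\sum_a(-1)^{a+1}[a]D_a$, each $D_a$ inserts a cap between positions $a$ and $a+1$ on the top edge of $f^{(8)}$. Because three $T$'s are attached to $f^{(8)}$, most such caps land on two adjacent strands entering a single $T$ and produce $\epsilon_i(T)=0$, killing the term. The only surviving $a$'s are those for which the cap either sits ``outside'' the $T$'s (in the region created by the arc of $u$, or in the two-strand bridge between the two $T$'s of $B$), or else bridges two different $T$'s in a way that does \emph{not} reduce to a single capping-off. A careful case analysis analogous to Table 1 in the proof of Lemma 5.0.15 will show that only the values $a\in\{1,7\}$ contribute non-trivially, each giving the picture $T\cdot T\cdot T$ closed up in a plain Temperley-Lieb way, i.e.\ the number $Z(T^{3})$; together with the coefficient $[7]/([8][7]) = 1/[8]$ times the $[a]=[1]=1$ or $[a]=[7]=[1]=1\cdot[4]/[4]$ \dots when properly collected with the phase $\omega$ arising from the twist, they give $\tfrac{[4]}{[8]}(1+\omega)\,Z(T^{3})$.

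\textbf{Assembly and main obstacle.} Summing the leading and correction contributions yields the stated formula, and substituting the explicit values $Z(T^{3}) = \tfrac{-3\sqrt{3}+\sqrt{7}}{6}$ and $Z((\rho^{1/2}(T))^{3}) = \tfrac{-3\sqrt{3}+\sqrt{7}}{6}$ from Lemma 5.0.14 (with $\omega = 1$, so $\omega^{2} = 1$ and $1+\omega = 2$), and simplifying, reproduces the explicit number $-\tfrac{1}{3}\sqrt{\tfrac{779}{14}-\tfrac{23}{2}\sqrt{21}}$. The main difficulty is Step~3: the planar bookkeeping to identify precisely which $a$'s survive and what phase each surviving term carries, since the wrapping arc in $u$ effectively rotates one of the $T$'s by a half-strand, and one must correctly match this twist against the rotational eigenvalues of the generators. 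This is the same kind of analysis that produced the diagonal coefficient $\omega^{2}[7]$ in Lemma 5.0.15 and is where the $1+\omega$ factor arises.
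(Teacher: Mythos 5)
Your proposal uses the wrong method for this lemma, and the method you chose breaks down in a way that is not fixable by more careful bookkeeping.

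The paper does not apply Wenzl's recursion here. It expands $f^{(8)}$ directly into Temperley--Lieb basis diagrams and observes that, because $\epsilon_i(T)=0$ forbids any cup/cap joining two adjacent strands of the same $T$, a surviving diagram can have at most one cup at the top (which must connect a wrap strand to a $T$ strand, so it sits at positions $\{1,2\}$ or $\{7,8\}$) and at most one cap at the bottom (which must connect the two lower $T$'s, so it sits at positions $\{4,5\}$). Non-crossing then forces exactly three surviving diagrams: the identity and two one-cup/one-cap diagrams. Their coefficients $1$, $\tfrac{[4]}{[8]}$, $\tfrac{[4]}{[8]}$ are read off from the explicit Jones--Wenzl coefficient formula (the paper's reference [SM]), and the three closed diagrams are recognized as $Z((\rho^{1/2}T)^3)$ and $Z(T^3)$ up to powers of $\omega$.

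The Wenzl-recursion plan you borrowed from the $\langle u,u\rangle$ computation does not transfer. In $\langle u,u\rangle$, \emph{both} the top and the bottom of $f^{(8)}$ carry the wrapping arc of $u$, so after writing $f^{(8)} = f^{(7)}\otimes 1 + \cdots$ the two wraps and the extra strand of $\otimes 1$ close into a loop around $f^{(7)}$, and spherical isotopy turns that loop into a partial trace, giving $\frac{[8]}{[7]}f^{(6)}$. Here the bottom is $(T,T)$: position $8$ of the bottom of $f^{(8)}$ connects to the second copy of $T$, not back to a wrap. The through-strand of $\otimes 1$ therefore runs from the top wrap to a generator and never closes, so there is no partial trace of $f^{(7)}$. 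Your ``Leading term'' step has no valid derivation.

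Your ``Correction terms'' step also cannot produce the coefficient $\tfrac{[4]}{[8]}$ from a single application of Lemma 2.1.9: that recursion contributes $\tfrac{[a]}{[8]}$ for the $a$-th term, which is $\tfrac{1}{[8]}$ when $a=1$ and $\tfrac{[7]}{[8]}$ when $a=7$; neither is $\tfrac{[4]}{[8]}$, and the chain $[a]=[7]=[1]=1\cdot[4]/[4]$ you wrote is false ($[7]\neq[1]$ for this value of $q$). Moreover each correction term still contains a nested $f^{(7)}$ whose further expansion you do not address. If you wanted to keep the recursive framework you would have to iterate it to completion, at which point you are effectively reproducing the full TL expansion anyway. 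The cleaner route is the paper's: enumerate the (three) surviving TL diagrams using $\epsilon_i(T)=0$, pull their coefficients from the closed-form JW coefficient formula, and evaluate each resulting closed diagram.
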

\begin{proof}We need to evaluate the diagram 
$$<A,B> =
\begin{tikzpicture}[baseline=0,scale=2]
       {
	\fill[shaded] (-0.8,0) -- (-0.8,0.6) arc (180:0:0.8) -- (0.8,0) -- (0.2,0) -- (0.2,1) -- (-0.2,1) -- (-0.2,0);
	\draw (-0.8,0) -- (-0.8,0.6) arc (180:0:0.8) -- (0.8,0);
	\draw (-0.2,0) -- (-0.2,1);
	\draw (0.2,0) -- (0.2,1);
	\node at (0.45,0.5) {\footnotesize$5$};
	{%
	\filldraw[fill=white,thick] (0,1) ellipse (3mm and 3mm);
	\node at (0,1) {\Large $T$};
	\path(0,1) ++(-90:0.37) node {$\star$};
}
}
       \upsidedown{{%
	\draw (-0.5,1) -- (-0.5,0);
	\node[anchor=west] at (-0.5,0.5) {\footnotesize$4$};
	\draw (0.5,1) -- (0.5,0);
	\node[anchor=west] at (0.5,0.5) {\footnotesize$4$};
}{%
	\node[anchor=south] at (0,1) {\footnotesize$2$};
	\draw (-0.5,1) -- (0.5, 1);
	\foreach \x in {-0.5,0.5} {
		{%
	\filldraw[fill=white,thick] (\x,1) ellipse (3mm and 3mm);
	\node at (\x,1) {\Large $T$};
	\path(\x,1) ++(90:0.37) node {$\star$};
}
	}
}}
       {%
	\filldraw[fill=white,thick] (-1,-0.2) rectangle (1,0.2);
	\node at (0,0) {\Large$\JW{8}$};
}

\end{tikzpicture}$$
Consider the expansion of $f^{(8)}$ into TL elements.  There are only three diagrams that make a contribution.

\begin{table}[ht]
\center
\begin{tabular}{ c | c | c }
$\beta$ & $\mathrm{Coeff}_{f^{(8)}}(\beta)$ & value of diagram \\
\hline
\begin{tikzpicture}[scale=.15,baseline=-0.5ex]
    \foreach \x in {1,2,3,4,5,6,7,8} \draw (\x cm , -1.5cm)--(\x cm, 1.5cm);
    
\end{tikzpicture} 
     & $1$
     & $\omega Z(\rho^{1/2}(T)^3)$ \\
\begin{tikzpicture}[scale=.15,baseline=-0.5ex]
        \draw (1,1.5) arc (-180:0:0.5);
        \draw (1,-1.5) -- (3,1.5);
        \draw (2,-1.5) -- (4,1.5);
        \draw (3,-1.5) -- (5,1.5);
        \draw (4,-1.5) arc (180:0:0.5);
        \draw (6,-1.5) -- (6,1.5);
        \draw (7,-1.5) -- (7,1.5);
        \draw (8,-1.5) -- (8,1.5);
\end{tikzpicture}
     & $\frac{[4]}{[8]}$
     & $Z(T^3)$ \\
\begin{tikzpicture}[scale=.15,baseline=-0.5ex]
        \draw (1,-1.5) -- (1,1.5);
        \draw (2,-1.5) -- (2,1.5);
        \draw (3,-1.5) -- (3,1.5);
        \draw (4,-1.5) arc (180:0:0.5);
        \draw (6,-1.5) -- (4,1.5);
        \draw (7,-1.5) -- (5,1.5);
        \draw (8,-1.5) -- (6,1.5);
        \draw (7,1.5) arc (-180:0:0.5);
\end{tikzpicture}
     & $\frac{[4]}{[8]}$ 
     & $\omega^2 Z(T^3)$
\end{tabular}
\vspace{6pt}
\caption{The terms of $f^{(8)}$ that contribute to $\langle A,B \rangle$. }
\label{table:AB}
\end{table}
So,
\begin{align*}
\ip{A,B} & =\omega^2 Z((\rho^{1/2}(T))^3)+\frac{[4]}{[8]}Z(T^3)+\frac{[4]}{[8]}\omega Z(T^3)\\
& = \omega^2 Z((\rho^{1/2}(T))^3)+\frac{[4]}{[8]}(1+\omega)Z(T^3)\\
& = -\frac{1}{3}\sqrt{\frac{779}{14}-\frac{23}{2}\sqrt{21}}, \mbox{ \indent because $\omega=1$}
\end{align*}
\end{proof}

\begin{notation} We will denote by $(E,F,G)$ the inner product $\ip{(E),(F,G)}$.
\end{notation}

\begin{lem}In this lemma we calculate several other inner products of the form $(E,F,G)$, where $E,F,G$ $\in \left\{T,Q\right\}$.  We also list the tables for them.  In this lemma, we assume that $\omega$ is the rotational eigenvalue for $Q$, namely, $e^{2\pi i/3}$.  
\begin{itemize}
\item $(Q,T,T) = -\frac{1}{6}\sqrt{\frac{404}{7}+89\sqrt{\frac{3}{7}}}+\sqrt{\frac{101}{21}+\frac{89}{4\sqrt{21}}}\cdot i$
\item $(T,Q,Q) = \sqrt{\frac{253}{63}+\frac{16}{\sqrt{21}}}$
\item $(T,T,Q) = -\frac{5}{6}\sqrt{\frac{1}{14}(-23+7\sqrt{21})}-\frac{1}{2}\sqrt{\frac{1}{6}(151+41\sqrt{21})}\cdot i$
\item $(T,Q,T) = -\frac{5}{6}\sqrt{\frac{1}{14}(-23+7\sqrt{21})}+\frac{1}{2}\sqrt{\frac{1}{6}(151+41\sqrt{21})}\cdot i$
\item $(Q,Q,Q) = \sqrt{\frac{103}{126}+\frac{23}{6\sqrt{21}}}-\sqrt{\frac{1}{42}(103+23\sqrt{21})}\cdot i$
\item $(Q,T,Q) = \sqrt{\frac{1271}{504}+\frac{29}{8}\sqrt{\frac{3}{7}}}-(\frac{1}{4}+\frac{1}{4\sqrt{21}})i$
\item $(Q,Q,T) = -\frac{1}{6}\sqrt{\frac{1}{14}(179+39\sqrt{21})}-\frac{5}{28}(7+\sqrt{21})i$
\end{itemize}
\end{lem}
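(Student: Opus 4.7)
The plan is to imitate Lemma 5.1.6 (the $(T,T,T)$ case) for each of the seven inner products. Every inner product in the list has the form $\ip{(E), (F,G)}$, and when one expands the central $f^{(8)}$ into the Temperley-Lieb basis, only three summands survive: the identity, plus the two $TL$ pictures that cap off a pair of adjacent strings on one side and compensate with a corresponding reroute on the other side. Every remaining basis element sends one of $E$, $F$, $G$ through a cap-off $\epsilon_i$, which kills it by the low-weight property (relation (2) of Theorem 4.0.7). By Lemma 2.1.9 these three surviving basis elements carry $f^{(8)}$-coefficients $1$, $\tfrac{[4]}{[8]}$, $\tfrac{[4]}{[8]}$, exactly as in Table 5.5.

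For each surviving term, the remaining closed diagram traces $E$, $F$, $G$ around a single loop. Depending on which of the three strand-reconnections occurs, the trace is either a ``straight'' trace $Z(EFG)$ or a ``twisted'' trace $Z(\rho^{1/2}(E)\rho^{1/2}(F)\rho^{1/2}(G))$, and cyclic rerouting of the loop through a single generator picks up a power of that generator's rotational eigenvalue ($\omega_T = 1$, $\omega_Q = e^{2\pi i/3}$). In the $(T,T,T)$ case this produces the formula
\[
(T,T,T) = \omega^2\, Z((\rho^{1/2}(T))^3) + \tfrac{[4]}{[8]}(1+\omega)\, Z(T^3),
\]
and the general mixed case will have the same shape, with the two $\omega$ factors replaced by the appropriate products of $\omega_E$, $\omega_F$, $\omega_G$ determined by which generator is being rotated in each term. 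Once the coefficients are fixed, each of the seven claimed numerical values follows by substituting the trace identities of Lemma 5.0.14, including the twisted-trace formulas $Z(\rho^{1/2}(T)(\rho^{1/2}(Q))^2) = \omega Z(TQ^2)$ and $Z(\rho^{1/2}(Q)(\rho^{1/2}(T))^2) = \omega^2 Z(QT^2)$, and simplifying.

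The main obstacle will be the phase bookkeeping for the mixed inner products. When $E=F=G$ the eigenvalue $\omega$ is uniform along the loop, but in cases like $(Q,T,T)$, $(T,Q,T)$, $(T,T,Q)$ the three generators in the cyclic trace are distinguishable, and for each of the three contributing Temperley-Lieb diagrams one must carefully identify precisely which of the three generators ends up rotated by $\rho^{1/2}$ before the phase is extracted. A useful internal consistency check is that $(T,Q,T)$ and $(T,T,Q)$ should come out as complex conjugates of each other, reflecting the mirror symmetry that swaps the two positions in the $(F,G)$-pair, and indeed the stated values exhibit this. Once all phases are correctly assigned, the remaining verification of the explicit numerical forms is routine arithmetic with the closed-form trace values from Lemma 5.0.14.
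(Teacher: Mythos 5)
Your proposal is correct and follows essentially the same route as the paper: expand the central $\JW{8}$ into Temperley-Lieb elements via Lemma 2.1.9, observe that uncappability kills all but the identity and two cap-and-reroute diagrams, read off the surviving contributions as (possibly $\rho^{1/2}$-twisted) traces weighted by appropriate powers of the rotational eigenvalues, and substitute the values from Lemma 5.0.14. The paper does exactly this (its proof is a one-line reference back to the $(T,T,T)$ computation plus the tables of contributing diagrams), and your remark about conjugate-pair consistency between $(T,Q,T)$ and $(T,T,Q)$ is a sound extra check, since these diagrams are horizontal reflections of one another, which by sphericality and the $*$-structure conjugates the scalar.
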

\begin{proof}The proofs are analogous to Lemma 5.2.4., paying attention to the rotational eigenvalue $\omega$ of $Q$.
First, we have the table for $(Q,T,T)$:
\begin{table}[ht]
\center
\begin{tabular}{ c | c | c }
$\beta$ & $\mathrm{Coeff}_{f^{(8)}}(\beta)$ & (Q,T,T) \\
\hline
\begin{tikzpicture}[scale=.15,baseline=-0.5ex]
    \foreach \x in {1,2,3,4,5,6,7,8} \draw (\x cm , -1.5cm)--(\x cm, 1.5cm);
    
\end{tikzpicture} 
     & $1$
     & $\omega^2 Z(\rho^{1/2}(Q)(\rho^{1/2}(T))^2)$ \\
\begin{tikzpicture}[scale=.15,baseline=-0.5ex]
        \draw (1,1.5) arc (-180:0:0.5);
        \draw (1,-1.5) -- (3,1.5);
        \draw (2,-1.5) -- (4,1.5);
        \draw (3,-1.5) -- (5,1.5);
        \draw (4,-1.5) arc (180:0:0.5);
        \draw (6,-1.5) -- (6,1.5);
        \draw (7,-1.5) -- (7,1.5);
        \draw (8,-1.5) -- (8,1.5);
\end{tikzpicture}
     & $\frac{[4]}{[8]}$
     & $Z(QT^2)$ \\
\begin{tikzpicture}[scale=.15,baseline=-0.5ex]
        \draw (1,-1.5) -- (1,1.5);
        \draw (2,-1.5) -- (2,1.5);
        \draw (3,-1.5) -- (3,1.5);
        \draw (4,-1.5) arc (180:0:0.5);
        \draw (6,-1.5) -- (4,1.5);
        \draw (7,-1.5) -- (5,1.5);
        \draw (8,-1.5) -- (6,1.5);
        \draw (7,1.5) arc (-180:0:0.5);
\end{tikzpicture}
     & $\frac{[4]}{[8]}$ 
     & $\omega^2 Z(T^2Q)$
\end{tabular}
\vspace{6pt}
\caption{The terms of $f^{(8)}$ that contribute to $(E,F,G)$. }
\end{table}
\newpage
We now have the tables (with the $\beta$'s and their coefficients suppressed) for \\
$(T,Q,Q), (T,T,Q),\mbox { and }  (T,Q,T)$:
\begin{table}[ht]
\center
\begin{tabular}{ c | c | c }
$(T,Q,Q)$ & $(T,T,Q)$ & $(T,Q,T)$ \\
\hline
$\omega^2Z(\rho^{1/2}(T)(\rho^{1/2}(Q))^2)$     & $Z((\rho^{1/2}(T))^2\rho^{1/2}(Q))$
     & $\omega^2 Z(\rho^{1/2}(Q)(\rho^{1/2}(T))^2)$ \\
			$Z(TQ^2)$     
			& $Z(T^2Q)$
     & $Z(T^2Q)$ \\
		$Z(TQ^2)$     
		& $Z(T^2Q)$ 
     & $Z(T^2Q)$
\end{tabular}
\vspace{6pt}
\caption{The terms of $f^{(8)}$ that contribute to $(E,F,G)$. }
\end{table}

Finally, we have the tables for $(Q,Q,Q),(Q,T,Q)$, and $(Q,Q,T)$:
\begin{table}[ht]
\center
\begin{tabular}{ c | c | c }
$(Q,Q,Q)$ & $(Q,T,Q)$ & $(Q,Q,T)$ \\
\hline
$\omega Z((\rho^{1/2}(Q))^3)$     & $\omega^2Z((\rho^{1/2}(Q))^2\rho^{1/2}(T))$
     & $\omega Z(\rho^{1/2}(T)(\rho^{1/2}(Q))^2)$ \\
			$Z(Q^3)$     
			& $Z(Q^2T)$
     & $Z(Q^2T)$ \\
		$\omega^2Z(Q^3)$     
		& $\omega^2Z(Q^2T)$ 
     & $\omega^2Z(Q^2T)$
\end{tabular}
\vspace{6pt}
\caption{The terms of $f^{(8)}$ that contribute to $(E,F,G)$. }
\end{table}

Using these tables, we can easily calculate the inner products.
\end{proof}

\begin{lem} Let $B=(T,T)$.  Then $\ip{B,B} =\frac{9+19\sqrt{21}}{36}$.
\end{lem}
\begin{proof}  We evaluate the diagram 
$<B,B> =
\begin{tikzpicture}[baseline=0,scale=2]
       {%
	\draw (-0.5,1) -- (-0.5,0);
	\node[anchor=west] at (-0.5,0.5) {\footnotesize$4$};
	\draw (0.5,1) -- (0.5,0);
	\node[anchor=west] at (0.5,0.5) {\footnotesize$4$};
}
 {%
	\node[anchor=south] at (0,1) {\footnotesize$2$};
	\draw (-0.5,1) -- (0.5, 1);
	\foreach \x in {-0.5,0.5} {
		{%
	\filldraw[fill=white,thick] (\x,1) ellipse (3mm and 3mm);
	\node at (\x,1) {\Large $T$};
	\path(\x,1) ++(90:0.37) node {$\star$};
}
	}
}
       \upsidedown{{%
	\draw (-0.5,1) -- (-0.5,0);
	\node[anchor=west] at (-0.5,0.5) {\footnotesize$4$};
	\draw (0.5,1) -- (0.5,0);
	\node[anchor=west] at (0.5,0.5) {\footnotesize$4$};
} {%
	\node[anchor=south] at (0,1) {\footnotesize$2$};
	\draw (-0.5,1) -- (0.5, 1);
	\foreach \x in {-0.5,0.5} {
		{%
	\filldraw[fill=white,thick] (\x,1) ellipse (3mm and 3mm);
	\node at (\x,1) {\Large $T$};
	\path(\x,1) ++(90:0.37) node {$\star$};
}
	}
}}
      {%
	\filldraw[fill=white,thick] (-1,-0.2) rectangle (1,0.2);
	\node at (0,0) {\Large$\JW{8}$};
}

\end{tikzpicture}$

Expand $f^{(8)}$ in $\ip{B,B}$ into TL diagrams.  If $\beta$ is a basis element with a cup at position $i\neq4$, then the diagram is 0.  Otherwise, $\beta$ has a cup at position $i=4$.  There are four such diagrams which give non-zero values:
\begin{table}[ht]
\center
\begin{tabular}{ c | c | c }
$\beta$ & $\mathrm{Coeff}_{f^{(8)}}(\beta)$ & value of diagram \\
\hline
\begin{tikzpicture}[scale=.15,baseline=-0.5ex]
    \foreach \x in {1,2,3,4,5,6,7,8} \draw (\x cm , -1.5cm)--(\x cm, 1.5cm);
    
\end{tikzpicture} 
     & $1$
     & $\frac{[4]}{[3]}Z(T^2)$ \\
\begin{tikzpicture}[scale=.15,baseline=-0.5ex]
        \draw (4,1.5) arc (-180:0:0.5);
        \draw (1,-1.5) -- (1,1.5);
        \draw (2,-1.5) -- (2,1.5);
        \draw (3,-1.5) -- (3,1.5);
        \draw (4,-1.5) arc (180:0:0.5);
        \draw (6,-1.5) -- (6,1.5);
        \draw (7,-1.5) -- (7,1.5);
        \draw (8,-1.5) -- (8,1.5);
\end{tikzpicture}
     & $-[4]^2(\frac{1}{[8][7]}+\frac{1}{[7][6]}+\frac{1}{[6][5]}+\frac{1}{[5][4]})$
     & $Z(T^4)$ \\
\begin{tikzpicture}[scale=.15,baseline=-0.5ex]
        \draw (1,-1.5) -- (1,1.5);
        \draw (2,-1.5) -- (2,1.5);
        \draw (3,-1.5) arc (180:0:1.5 and 1.1);
        \draw (4,-1.5) arc (180:0:0.5);
        \draw (7,-1.5) -- (7,1.5);
        \draw (8,-1.5) -- (8,1.5);
        \draw (3,1.5) arc (-180:0:1.5 and 1.1);
        \draw (4,1.5) arc (-180:0:0.5);
\end{tikzpicture}
     & $\frac{[4]^2[3]^2}{[8][7][6][5][4][3]}([8][7]+[8][5]+[8][3]+[6][5]+[6][3]+[4][3])$ 
     & $\frac{[4]}{[3]}Z(T^2)$\\
\begin{tikzpicture}[scale=.15,baseline=-0.5ex]
        \draw (1,-1.5) -- (1,1.5);
        \draw (2,-1.5) arc (180:0:2.5 and 1.5);
        \draw (3,-1.5) arc (180:0:1.5 and 1.0);
        \draw (4,-1.5) arc (180:0:0.5 and 0.5);
        \draw (8,-1.5) -- (8,1.5);
        \draw (2,1.5) arc (-180:0:2.5 and 1.5);
        \draw (3,1.5) arc (-180:0:1.5 and 1.0);
        \draw (4,1.5) arc (-180:0:0.5 and 0.5);
\end{tikzpicture}
     & $\frac{-[4][3][2]}{[8][7][6][5]}([8]+[6]+[4]+[2])$ 
     & $\frac{[4]}{[2]}Z(T^2)$  \\
 \begin{tikzpicture}[scale=.15,baseline=-0.5ex]
        \draw (1,-1.5) arc (180:0:3.5 and 1.5);
        \draw (2,-1.5) arc (180:0:2.5 and 1.1);
        \draw (3,-1.5) arc (180:0:1.5 and 0.7);
        \draw (4,-1.5) arc (180:0:0.5 and 0.3);
        \draw (1,1.5) arc (-180:0:3.5 and 1.5);
        \draw (2,1.5) arc (-180:0:2.5 and 1.1);
        \draw (3,1.5) arc (-180:0:1.5 and 0.7);
        \draw (4,1.5) arc (-180:0:0.5 and 0.3);
\end{tikzpicture}
     & $\frac{[4][3][2]}{[8][7][6][5]}$ 
     & $(Z(T^2))^2$            
\end{tabular}
\vspace{6pt}
\caption{The terms of $f^{(8)}$ that contribute to $\ip{B,B}$. }
\end{table}

Adding up the contributions, we get $\ip{B,B}=\frac{9+19\sqrt{21}}{36}$.
\end{proof}

\begin{lem}Here we calculate some further inner products of the form $\ip{(E,F),(G,H)}$, where $E,F,G,H$ $\in \left\{T,Q \right\}$. We list the relevant tables as well.  
\begin{itemize}
\item $\ip{(Q,Q),(Q,Q)}= \frac{1}{252}(399+109\sqrt{21})$
\item $\ip{(T,T),(Q,Q)}=-\frac{5}{252}(21+11\sqrt{21})$
\item $\ip{(T,Q),(T,Q)}=\frac{5}{9}(6+\sqrt{21})$
\item $\ip{(Q,T),(Q,T)}=\frac{5}{9}(6+\sqrt{21})$
\item $\ip{(T,Q),(Q,T)}=-\frac{7}{6}-\frac{11}{6}\sqrt{\frac{7}{3}}$
\item $\ip{(T,Q),(T,T)}=-\frac{1}{3}\sqrt{-\frac{4}{21}+\frac{61}{\sqrt{21}}}$
\end{itemize}
\end{lem}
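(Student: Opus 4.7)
The plan is to compute each of the six inner products $\langle (E,F),(G,H)\rangle$ by the same expansion-of-$f^{(8)}$ technique used in Lemma 5.2.6, only now applied to a pair of ``4-box'' elements $(E,F)$ and $(G,H)$ with $E,F,G,H \in \{T,Q\}$. For each product, I would stack $(G,H)^{*}$ below $(E,F)$ and close up with the Jones-Wenzl idempotent $f^{(8)}$ at the bottom, then expand $f^{(8)}$ into its Temperley-Lieb basis. Capping-off $\epsilon_i(T)=\epsilon_i(Q)=0$ forces most TL basis elements of $f^{(8)}$ to contribute zero: the generators sit over strands $1,2,3,4$ and $5,6,7,8$ with a pair of joining strands, so any cup meeting strand positions other than the one separating the two groups (position $4$), or any cup that caps a single generator, kills the diagram. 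The surviving basis elements are exactly the five pictured in the table of Lemma 5.2.6 (identity; single cup at position $4$; and the three cascading cup configurations). Their coefficients in $f^{(8)}$ are read off from the explicit formula of Morrison cited in the paper.

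For each surviving $\beta$, I would evaluate the resulting closed diagram: after smoothing, the identity term produces $\frac{[4]}{[3]}\langle E,G\rangle\cdot\langle F,H\rangle$ contributions (up to shading), the single-cup term produces a closed trace such as $Z(E^*F G^*H)$, and the cascading terms produce either a factor of $\langle E,G\rangle\langle F,H\rangle$ times a Temperley-Lieb count, or a factor of $Z(E^2)$ (respectively $Z(Q^2)$) times another, exactly as in the table in Lemma 5.2.6. For the mixed cases such as $\langle (T,Q),(Q,T)\rangle$, the single-cup term produces $Z(TQTQ)$ or its twisted variant, which can be reduced via the 3-box relations (R3(TQ)), (R3(QT)) already established in Theorem~5.0.15 to a linear combination of $Z(T^2)$, $Z(Q^2)$, $Z(TQ)$, $Z(T^2Q)$, $Z(TQ^2)$, $Z(T^2Q^2)$; all of these values are supplied by Lemma 5.0.14. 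The same strategy, together with (R3(T)), (R3(Q)), handles $\langle (T,T),(Q,Q)\rangle$ and the pure products $\langle (Q,Q),(Q,Q)\rangle$, $\langle (T,Q),(T,Q)\rangle$, $\langle (Q,T),(Q,T)\rangle$, $\langle (T,Q),(T,T)\rangle$.

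The main obstacle will be careful bookkeeping of the rotational phases. Whereas $T$ has $\rho$-eigenvalue $1$ and behaves ``classically'' as in Lemma 5.2.6, $Q$ has $\rho$-eigenvalue $\omega=e^{2\pi i/3}$, so each TL basis element of $f^{(8)}$ that rotates the legs of $Q$ (in particular the third and fifth basis elements in the Lemma 5.2.6 table, where one generator is cyclically shifted before re-attaching) contributes an additional factor of $\omega$ or $\omega^2$ per $Q$ that is rotated. Parallel phase factors appear on $(E,F)$ and on $(G,H)$ and must be combined consistently, keeping in mind that $(\,\cdot\,)^*$ conjugates the rotational eigenvalue; for example in $\langle (Q,Q),(Q,Q)\rangle$ the starred copy contributes $\bar\omega$ factors while in $\langle (T,Q),(Q,T)\rangle$ the two $Q$'s sit in diagonally opposite corners and their $\omega$-phases only partially cancel. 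I expect the hardest entry to be $\langle (T,Q),(Q,T)\rangle$, where the twisted traces $Z(\rho^{1/2}(T)(\rho^{1/2}(Q))^2)$ and $Z(\rho^{1/2}(Q)(\rho^{1/2}(T))^2)$ from Lemma 5.0.14 both appear with phase weights that must conspire to yield the claimed real negative value $-\tfrac{7}{6}-\tfrac{11}{6}\sqrt{\tfrac{7}{3}}$.

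Once the tables analogous to Table 5.2 are filled in for each of the six inner products, the numerical values reduce to elementary (if tedious) algebraic simplification using the quantum number $\delta=\sqrt{(5+\sqrt{21})/2}$ and the explicit formulas for $t,t_0',s$ in the definitions of $T$ and $Q$. No new planar-algebraic input is required; the lemma is a direct bookkeeping consequence of Lemma 5.0.14, the 3-box relations, and Morrison's coefficient formula for $f^{(8)}$.
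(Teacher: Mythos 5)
Your overall plan matches the paper's: stack $(G,H)^{*}$ beneath $(E,F)$, expand the shared $f^{(8)}$ in its Temperley--Lieb basis, discard every basis element whose short cup caps a generator (by $\epsilon_i(T)=\epsilon_i(Q)=0$), and evaluate the five surviving diagrams using Morrison's coefficient formula, the trace values in Lemma~5.0.14, and the 3-box relations to reduce the closed traces that arise from the single-cup term. That is exactly what the paper's tables record, and your recognition that the $\omega$-phases coming from $Q$'s rotational eigenvalue must be tracked when a TL basis element cyclically shifts a $Q$ is the right thing to worry about.

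Two slips, one of which would throw off your final numbers. First, the five-row template you want to reuse lives in Lemma~5.2.7 (the $\langle (T,T),(T,T)\rangle$ computation), not Lemma~5.2.6, whose tables have only three rows (those are the $\langle (E),(F,G)\rangle$ inner products with one generator against two). Second, and more substantively, your asserted formula $\frac{[4]}{[3]}\langle E,G\rangle\langle F,H\rangle$ for the identity contribution is off by a factor of $[4]$. The identity piece of $f^{(8)}$ yields one connected closed diagram --- a necklace $E$--$G$--$H$--$F$ joined alternately by four and two strands --- not a disjoint union of two loops, so the contribution is a single overall trace. Comparing against Lemma~5.2.7 and the tables here, the identity term equals $\frac{[4]}{[3]}Z(F^2)$ when $E=G$ and $F=H$, and vanishes otherwise; in your inner-product notation this is $\frac{1}{[3]}\langle E,G\rangle\langle F,H\rangle$, since $\langle E,G\rangle=[4]\delta_{E,G}$. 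Taking your stated $\frac{[4]}{[3]}\langle E,G\rangle\langle F,H\rangle$ literally gives $\frac{[4]^3}{[3]}$ rather than $\frac{[4]^2}{[3]}$ for the leading term of $\langle (Q,Q),(Q,Q)\rangle$, and the reported numerical values would then fail to match. The rest of your plan --- the $Z(E^{*}FG^{*}H)$ single-cup trace followed by 3-box reduction, the Temperley--Lieb factors for the cascading cups, and the final algebraic simplification --- is sound and is what the paper does.
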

\newpage
\begin{proof}
To calculate $\ip{(Q,Q),(Q,Q)}$, we need the following table:
\begin{table}[htb]
\center
\begin{tabular}{ c | c | c }
$\beta$ & $\mathrm{Coeff}_{f^{(8)}}(\beta)$ & value of diagram \\
\hline
\begin{tikzpicture}[scale=.15,baseline=-0.5ex]
    \foreach \x in {1,2,3,4,5,6,7,8} \draw (\x cm , -1.5cm)--(\x cm, 1.5cm);
    
\end{tikzpicture} 
     & $1$
     & $\frac{[4]}{[3]}Z(Q^2)$ \\
\begin{tikzpicture}[scale=.15,baseline=-0.5ex]
        \draw (4,1.5) arc (-180:0:0.5);
        \draw (1,-1.5) -- (1,1.5);
        \draw (2,-1.5) -- (2,1.5);
        \draw (3,-1.5) -- (3,1.5);
        \draw (4,-1.5) arc (180:0:0.5);
        \draw (6,-1.5) -- (6,1.5);
        \draw (7,-1.5) -- (7,1.5);
        \draw (8,-1.5) -- (8,1.5);
\end{tikzpicture}
     & $-[4]^2(\frac{1}{[8][7]}+\frac{1}{[7][6]}+\frac{1}{[6][5]}+\frac{1}{[5][4]})$
     & $Z(Q^4)$ \\
\begin{tikzpicture}[scale=.15,baseline=-0.5ex]
        \draw (1,-1.5) -- (1,1.5);
        \draw (2,-1.5) -- (2,1.5);
        \draw (3,-1.5) arc (180:0:1.5 and 1.1);
        \draw (4,-1.5) arc (180:0:0.5);
        \draw (7,-1.5) -- (7,1.5);
        \draw (8,-1.5) -- (8,1.5);
        \draw (3,1.5) arc (-180:0:1.5 and 1.1);
        \draw (4,1.5) arc (-180:0:0.5);
\end{tikzpicture}
     & $\frac{[4]^2[3]^2}{[8][7][6][5][4][3]}([8][7]+[8][5]+[8][3]+[6][5]+[6][3]+[4][3])$ 
     & $\frac{[4]}{[3]}Z(Q^2)$\\
\begin{tikzpicture}[scale=.15,baseline=-0.5ex]
        \draw (1,-1.5) -- (1,1.5);
        \draw (2,-1.5) arc (180:0:2.5 and 1.5);
        \draw (3,-1.5) arc (180:0:1.5 and 1.0);
        \draw (4,-1.5) arc (180:0:0.5 and 0.5);
        \draw (8,-1.5) -- (8,1.5);
        \draw (2,1.5) arc (-180:0:2.5 and 1.5);
        \draw (3,1.5) arc (-180:0:1.5 and 1.0);
        \draw (4,1.5) arc (-180:0:0.5 and 0.5);
\end{tikzpicture}
     & $\frac{-[4][3][2]}{[8][7][6][5]}([8]+[6]+[4]+[2])$ 
     & $\frac{[4]}{[2]}Z(Q^2)$  \\
 \begin{tikzpicture}[scale=.15,baseline=-0.5ex]
        \draw (1,-1.5) arc (180:0:3.5 and 1.5);
        \draw (2,-1.5) arc (180:0:2.5 and 1.1);
        \draw (3,-1.5) arc (180:0:1.5 and 0.7);
        \draw (4,-1.5) arc (180:0:0.5 and 0.3);
        \draw (1,1.5) arc (-180:0:3.5 and 1.5);
        \draw (2,1.5) arc (-180:0:2.5 and 1.1);
        \draw (3,1.5) arc (-180:0:1.5 and 0.7);
        \draw (4,1.5) arc (-180:0:0.5 and 0.3);
\end{tikzpicture}
     & $\frac{[4][3][2]}{[8][7][6][5]}$ 
     & $(Z(Q^2))^2$            
\end{tabular}
\vspace{6pt}
\caption{The terms of $f^{(8)}$ that contribute to $\ip{(Q,Q),(Q,Q)}$. }
\end{table}

We have the following table for the inner product $\ip{(T,T),(Q,Q)}$:
\begin{table}[htb]
\center
\begin{tabular}{ c | c | c }
$\beta$ & $\mathrm{Coeff}_{f^{(8)}}(\beta)$ & value of diagram \\
\hline
\begin{tikzpicture}[scale=.15,baseline=-0.5ex]
    \foreach \x in {1,2,3,4,5,6,7,8} \draw (\x cm , -1.5cm)--(\x cm, 1.5cm);
    
\end{tikzpicture} 
     & $1$
     & $0$ \\
\begin{tikzpicture}[scale=.15,baseline=-0.5ex]
        \draw (4,1.5) arc (-180:0:0.5);
        \draw (1,-1.5) -- (1,1.5);
        \draw (2,-1.5) -- (2,1.5);
        \draw (3,-1.5) -- (3,1.5);
        \draw (4,-1.5) arc (180:0:0.5);
        \draw (6,-1.5) -- (6,1.5);
        \draw (7,-1.5) -- (7,1.5);
        \draw (8,-1.5) -- (8,1.5);
\end{tikzpicture}
     & $-[4]^2(\frac{1}{[8][7]}+\frac{1}{[7][6]}+\frac{1}{[6][5]}+\frac{1}{[5][4]})$
     & $Z(T^2Q^2)$ \\
\begin{tikzpicture}[scale=.15,baseline=-0.5ex]
        \draw (1,-1.5) -- (1,1.5);
        \draw (2,-1.5) -- (2,1.5);
        \draw (3,-1.5) arc (180:0:1.5 and 1.1);
        \draw (4,-1.5) arc (180:0:0.5);
        \draw (7,-1.5) -- (7,1.5);
        \draw (8,-1.5) -- (8,1.5);
        \draw (3,1.5) arc (-180:0:1.5 and 1.1);
        \draw (4,1.5) arc (-180:0:0.5);
\end{tikzpicture}
     & $\frac{[4]^2[3]^2}{[8][7][6][5][4][3]}([8][7]+[8][5]+[8][3]+[6][5]+[6][3]+[4][3])$ 
     & $\frac{[4]}{[3]}Z(Q^2)$\\
\begin{tikzpicture}[scale=.15,baseline=-0.5ex]
        \draw (1,-1.5) -- (1,1.5);
        \draw (2,-1.5) arc (180:0:2.5 and 1.5);
        \draw (3,-1.5) arc (180:0:1.5 and 1.0);
        \draw (4,-1.5) arc (180:0:0.5 and 0.5);
        \draw (8,-1.5) -- (8,1.5);
        \draw (2,1.5) arc (-180:0:2.5 and 1.5);
        \draw (3,1.5) arc (-180:0:1.5 and 1.0);
        \draw (4,1.5) arc (-180:0:0.5 and 0.5);
\end{tikzpicture}
     & $\frac{-[4][3][2]}{[8][7][6][5]}([8]+[6]+[4]+[2])$ 
     & $\frac{[4]}{[2]}Z(Q^2)$  \\
 \begin{tikzpicture}[scale=.15,baseline=-0.5ex]
        \draw (1,-1.5) arc (180:0:3.5 and 1.5);
        \draw (2,-1.5) arc (180:0:2.5 and 1.1);
        \draw (3,-1.5) arc (180:0:1.5 and 0.7);
        \draw (4,-1.5) arc (180:0:0.5 and 0.3);
        \draw (1,1.5) arc (-180:0:3.5 and 1.5);
        \draw (2,1.5) arc (-180:0:2.5 and 1.1);
        \draw (3,1.5) arc (-180:0:1.5 and 0.7);
        \draw (4,1.5) arc (-180:0:0.5 and 0.3);
\end{tikzpicture}
     & $\frac{[4][3][2]}{[8][7][6][5]}$ 
     & $Z(T^2)Z(Q^2)$            
\end{tabular}
\vspace{6pt}
\caption{The terms of $f^{(8)}$ that contribute to $\ip{(T,T),(Q,Q)}$. }
\end{table}
\newpage
The remaining tables are for the inner products\\
$\ip{(T,Q),(T,Q)},\ip{(Q,T),(Q,T)},\ip{(T,Q),(Q,T)},\ip{(T,Q),(T,T)}$:
\begin{table}[htb]
\center
\begin{tabular}{ c | c | c }
$\beta$ & $\mathrm{Coeff}_{f^{(8)}}(\beta)$ & value of diagram \\
\hline
\begin{tikzpicture}[scale=.15,baseline=-0.5ex]
    \foreach \x in {1,2,3,4,5,6,7,8} \draw (\x cm , -1.5cm)--(\x cm, 1.5cm);
    
\end{tikzpicture} 
     & $1$
     & $\frac{[4]}{[3]}Z(Q^2)$ \\
\begin{tikzpicture}[scale=.15,baseline=-0.5ex]
        \draw (4,1.5) arc (-180:0:0.5);
        \draw (1,-1.5) -- (1,1.5);
        \draw (2,-1.5) -- (2,1.5);
        \draw (3,-1.5) -- (3,1.5);
        \draw (4,-1.5) arc (180:0:0.5);
        \draw (6,-1.5) -- (6,1.5);
        \draw (7,-1.5) -- (7,1.5);
        \draw (8,-1.5) -- (8,1.5);
\end{tikzpicture}
     & $-[4]^2(\frac{1}{[8][7]}+\frac{1}{[7][6]}+\frac{1}{[6][5]}+\frac{1}{[5][4]})$
     & $Z(T^2Q^2)$ \\
\begin{tikzpicture}[scale=.15,baseline=-0.5ex]
        \draw (1,-1.5) -- (1,1.5);
        \draw (2,-1.5) -- (2,1.5);
        \draw (3,-1.5) arc (180:0:1.5 and 1.1);
        \draw (4,-1.5) arc (180:0:0.5);
        \draw (7,-1.5) -- (7,1.5);
        \draw (8,-1.5) -- (8,1.5);
        \draw (3,1.5) arc (-180:0:1.5 and 1.1);
        \draw (4,1.5) arc (-180:0:0.5);
\end{tikzpicture}
     & $\frac{[4]^2[3]^2}{[8][7][6][5][4][3]}([8][7]+[8][5]+[8][3]+[6][5]+[6][3]+[4][3])$ 
     & $0$\\
\begin{tikzpicture}[scale=.15,baseline=-0.5ex]
        \draw (1,-1.5) -- (1,1.5);
        \draw (2,-1.5) arc (180:0:2.5 and 1.5);
        \draw (3,-1.5) arc (180:0:1.5 and 1.0);
        \draw (4,-1.5) arc (180:0:0.5 and 0.5);
        \draw (8,-1.5) -- (8,1.5);
        \draw (2,1.5) arc (-180:0:2.5 and 1.5);
        \draw (3,1.5) arc (-180:0:1.5 and 1.0);
        \draw (4,1.5) arc (-180:0:0.5 and 0.5);
\end{tikzpicture}
     & $\frac{-[4][3][2]}{[8][7][6][5]}([8]+[6]+[4]+[2])$ 
     & $0$  \\
 \begin{tikzpicture}[scale=.15,baseline=-0.5ex]
        \draw (1,-1.5) arc (180:0:3.5 and 1.5);
        \draw (2,-1.5) arc (180:0:2.5 and 1.1);
        \draw (3,-1.5) arc (180:0:1.5 and 0.7);
        \draw (4,-1.5) arc (180:0:0.5 and 0.3);
        \draw (1,1.5) arc (-180:0:3.5 and 1.5);
        \draw (2,1.5) arc (-180:0:2.5 and 1.1);
        \draw (3,1.5) arc (-180:0:1.5 and 0.7);
        \draw (4,1.5) arc (-180:0:0.5 and 0.3);
\end{tikzpicture}
     & $\frac{[4][3][2]}{[8][7][6][5]}$ 
     & $0$            
\end{tabular}
\vspace{6pt}
\caption{The terms of $f^{(8)}$ that contribute to $\ip{(T,Q),(T,Q)}$. }
\end{table}

\begin{table}[!htb]
\center
\begin{tabular}{ c | c | c }
$\beta$ & $\mathrm{Coeff}_{f^{(8)}}(\beta)$ & value of diagram \\
\hline
\begin{tikzpicture}[scale=.15,baseline=-0.5ex]
    \foreach \x in {1,2,3,4,5,6,7,8} \draw (\x cm , -1.5cm)--(\x cm, 1.5cm);
    
\end{tikzpicture} 
     & $1$
     & $\frac{[4]}{[3]}Z(T^2)$ \\
\begin{tikzpicture}[scale=.15,baseline=-0.5ex]
        \draw (4,1.5) arc (-180:0:0.5);
        \draw (1,-1.5) -- (1,1.5);
        \draw (2,-1.5) -- (2,1.5);
        \draw (3,-1.5) -- (3,1.5);
        \draw (4,-1.5) arc (180:0:0.5);
        \draw (6,-1.5) -- (6,1.5);
        \draw (7,-1.5) -- (7,1.5);
        \draw (8,-1.5) -- (8,1.5);
\end{tikzpicture}
     & $-[4]^2(\frac{1}{[8][7]}+\frac{1}{[7][6]}+\frac{1}{[6][5]}+\frac{1}{[5][4]})$
     & $Z(T^2Q^2)$ \\
\begin{tikzpicture}[scale=.15,baseline=-0.5ex]
        \draw (1,-1.5) -- (1,1.5);
        \draw (2,-1.5) -- (2,1.5);
        \draw (3,-1.5) arc (180:0:1.5 and 1.1);
        \draw (4,-1.5) arc (180:0:0.5);
        \draw (7,-1.5) -- (7,1.5);
        \draw (8,-1.5) -- (8,1.5);
        \draw (3,1.5) arc (-180:0:1.5 and 1.1);
        \draw (4,1.5) arc (-180:0:0.5);
\end{tikzpicture}
     & $\frac{[4]^2[3]^2}{[8][7][6][5][4][3]}([8][7]+[8][5]+[8][3]+[6][5]+[6][3]+[4][3])$ 
     & $0$\\
\begin{tikzpicture}[scale=.15,baseline=-0.5ex]
        \draw (1,-1.5) -- (1,1.5);
        \draw (2,-1.5) arc (180:0:2.5 and 1.5);
        \draw (3,-1.5) arc (180:0:1.5 and 1.0);
        \draw (4,-1.5) arc (180:0:0.5 and 0.5);
        \draw (8,-1.5) -- (8,1.5);
        \draw (2,1.5) arc (-180:0:2.5 and 1.5);
        \draw (3,1.5) arc (-180:0:1.5 and 1.0);
        \draw (4,1.5) arc (-180:0:0.5 and 0.5);
\end{tikzpicture}
     & $\frac{-[4][3][2]}{[8][7][6][5]}([8]+[6]+[4]+[2])$ 
     & $0$  \\
 \begin{tikzpicture}[scale=.15,baseline=-0.5ex]
        \draw (1,-1.5) arc (180:0:3.5 and 1.5);
        \draw (2,-1.5) arc (180:0:2.5 and 1.1);
        \draw (3,-1.5) arc (180:0:1.5 and 0.7);
        \draw (4,-1.5) arc (180:0:0.5 and 0.3);
        \draw (1,1.5) arc (-180:0:3.5 and 1.5);
        \draw (2,1.5) arc (-180:0:2.5 and 1.1);
        \draw (3,1.5) arc (-180:0:1.5 and 0.7);
        \draw (4,1.5) arc (-180:0:0.5 and 0.3);
\end{tikzpicture}
     & $\frac{[4][3][2]}{[8][7][6][5]}$ 
     & $0$            
\end{tabular}
\vspace{6pt}
\caption{The terms of $f^{(8)}$ that contribute to $\ip{(Q,T),(Q,T)}$. }
\end{table}

\begin{table}[!htb]
\center
\begin{tabular}{ c | c | c }
$\beta$ & $\mathrm{Coeff}_{f^{(8)}}(\beta)$ & value of diagram \\
\hline
\begin{tikzpicture}[scale=.15,baseline=-0.5ex]
    \foreach \x in {1,2,3,4,5,6,7,8} \draw (\x cm , -1.5cm)--(\x cm, 1.5cm);
    
\end{tikzpicture} 
     & $1$
     & $0$ \\
\begin{tikzpicture}[scale=.15,baseline=-0.5ex]
        \draw (4,1.5) arc (-180:0:0.5);
        \draw (1,-1.5) -- (1,1.5);
        \draw (2,-1.5) -- (2,1.5);
        \draw (3,-1.5) -- (3,1.5);
        \draw (4,-1.5) arc (180:0:0.5);
        \draw (6,-1.5) -- (6,1.5);
        \draw (7,-1.5) -- (7,1.5);
        \draw (8,-1.5) -- (8,1.5);
\end{tikzpicture}
     & $-[4]^2(\frac{1}{[8][7]}+\frac{1}{[7][6]}+\frac{1}{[6][5]}+\frac{1}{[5][4]})$
     & $Z(T^2Q^2)$ \\
\begin{tikzpicture}[scale=.15,baseline=-0.5ex]
        \draw (1,-1.5) -- (1,1.5);
        \draw (2,-1.5) -- (2,1.5);
        \draw (3,-1.5) arc (180:0:1.5 and 1.1);
        \draw (4,-1.5) arc (180:0:0.5);
        \draw (7,-1.5) -- (7,1.5);
        \draw (8,-1.5) -- (8,1.5);
        \draw (3,1.5) arc (-180:0:1.5 and 1.1);
        \draw (4,1.5) arc (-180:0:0.5);
\end{tikzpicture}
     & $\frac{[4]^2[3]^2}{[8][7][6][5][4][3]}([8][7]+[8][5]+[8][3]+[6][5]+[6][3]+[4][3])$ 
     & $0$\\
\begin{tikzpicture}[scale=.15,baseline=-0.5ex]
        \draw (1,-1.5) -- (1,1.5);
        \draw (2,-1.5) arc (180:0:2.5 and 1.5);
        \draw (3,-1.5) arc (180:0:1.5 and 1.0);
        \draw (4,-1.5) arc (180:0:0.5 and 0.5);
        \draw (8,-1.5) -- (8,1.5);
        \draw (2,1.5) arc (-180:0:2.5 and 1.5);
        \draw (3,1.5) arc (-180:0:1.5 and 1.0);
        \draw (4,1.5) arc (-180:0:0.5 and 0.5);
\end{tikzpicture}
     & $\frac{-[4][3][2]}{[8][7][6][5]}([8]+[6]+[4]+[2])$ 
     & $0$  \\
 \begin{tikzpicture}[scale=.15,baseline=-0.5ex]
        \draw (1,-1.5) arc (180:0:3.5 and 1.5);
        \draw (2,-1.5) arc (180:0:2.5 and 1.1);
        \draw (3,-1.5) arc (180:0:1.5 and 0.7);
        \draw (4,-1.5) arc (180:0:0.5 and 0.3);
        \draw (1,1.5) arc (-180:0:3.5 and 1.5);
        \draw (2,1.5) arc (-180:0:2.5 and 1.1);
        \draw (3,1.5) arc (-180:0:1.5 and 0.7);
        \draw (4,1.5) arc (-180:0:0.5 and 0.3);
\end{tikzpicture}
     & $\frac{[4][3][2]}{[8][7][6][5]}$ 
     & $0$            
\end{tabular}
\vspace{6pt}
\caption{The terms of $f^{(8)}$ that contribute to $\ip{(T,Q),(Q,T)}$. }
\end{table}
\newpage
\begin{table}[!htb]
\center
\begin{tabular}{ c | c | c }
$\beta$ & $\mathrm{Coeff}_{f^{(8)}}(\beta)$ & value of diagram \\
\hline
\begin{tikzpicture}[scale=.15,baseline=-0.5ex]
    \foreach \x in {1,2,3,4,5,6,7,8} \draw (\x cm , -1.5cm)--(\x cm, 1.5cm);
    
\end{tikzpicture} 
     & $1$
     & $0$ \\
\begin{tikzpicture}[scale=.15,baseline=-0.5ex]
        \draw (4,1.5) arc (-180:0:0.5);
        \draw (1,-1.5) -- (1,1.5);
        \draw (2,-1.5) -- (2,1.5);
        \draw (3,-1.5) -- (3,1.5);
        \draw (4,-1.5) arc (180:0:0.5);
        \draw (6,-1.5) -- (6,1.5);
        \draw (7,-1.5) -- (7,1.5);
        \draw (8,-1.5) -- (8,1.5);
\end{tikzpicture}
     & \scriptsize$-[4]^2(\frac{1}{[8][7]}+\frac{1}{[7][6]}+\frac{1}{[6][5]}+\frac{1}{[5][4]})$
     & \scriptsize$\frac{Z(T^2Q)}{[4]}Z(T^3)+\frac{Z(TQ^2)}{[4]}Z(QT^2)$\\
\begin{tikzpicture}[scale=.15,baseline=-0.5ex]
        \draw (1,-1.5) -- (1,1.5);
        \draw (2,-1.5) -- (2,1.5);
        \draw (3,-1.5) arc (180:0:1.5 and 1.1);
        \draw (4,-1.5) arc (180:0:0.5);
        \draw (7,-1.5) -- (7,1.5);
        \draw (8,-1.5) -- (8,1.5);
        \draw (3,1.5) arc (-180:0:1.5 and 1.1);
        \draw (4,1.5) arc (-180:0:0.5);
\end{tikzpicture}
     & \scriptsize$\frac{[4]^2[3]^2}{[8][7][6][5][4][3]}([8][7]+[8][5]+[8][3]+[6][5]+[6][3]+[4][3])$ 
     & $0$\\
\begin{tikzpicture}[scale=.15,baseline=-0.5ex]
        \draw (1,-1.5) -- (1,1.5);
        \draw (2,-1.5) arc (180:0:2.5 and 1.5);
        \draw (3,-1.5) arc (180:0:1.5 and 1.0);
        \draw (4,-1.5) arc (180:0:0.5 and 0.5);
        \draw (8,-1.5) -- (8,1.5);
        \draw (2,1.5) arc (-180:0:2.5 and 1.5);
        \draw (3,1.5) arc (-180:0:1.5 and 1.0);
        \draw (4,1.5) arc (-180:0:0.5 and 0.5);
\end{tikzpicture}
     & \scriptsize$\frac{-[4][3][2]}{[8][7][6][5]}([8]+[6]+[4]+[2])$ 
     & $0$  \\
 \begin{tikzpicture}[scale=.15,baseline=-0.5ex]
        \draw (1,-1.5) arc (180:0:3.5 and 1.5);
        \draw (2,-1.5) arc (180:0:2.5 and 1.1);
        \draw (3,-1.5) arc (180:0:1.5 and 0.7);
        \draw (4,-1.5) arc (180:0:0.5 and 0.3);
        \draw (1,1.5) arc (-180:0:3.5 and 1.5);
        \draw (2,1.5) arc (-180:0:2.5 and 1.1);
        \draw (3,1.5) arc (-180:0:1.5 and 0.7);
        \draw (4,1.5) arc (-180:0:0.5 and 0.3);
\end{tikzpicture}
     & \scriptsize$\frac{[4][3][2]}{[8][7][6][5]}$ 
     & $0$            
\end{tabular}
\vspace{6pt}
\caption{The terms of $f^{(8)}$ that contribute to $\ip{(T,Q),(T,T)}$. }
\end{table}
\end{proof}

We are now ready to prove the unshaded 4-box relations.
Since there is one new element $M$ at level 4, we expect to see that $(T,Q)$ can be written as a linear combination of elements in $ATL(T)$, $ATL(Q)$, and the new element $M$.  More precisely, we expect to see $(T,Q)$ as a linear combination of $u$, $v$, and $M$.
Let $$M=(T,Q)-\proj{u}{(T,Q)}-\proj{v}{(T,Q)}$$
We want to show that $(Q,T)\in span\left\{u,v,M\right\}$.  Similarly, for $(T,T)$ and $(Q,Q)$.  We use Bessel's inequality to prove these relations.
\begin{theorem}We have the following 4-box relations in the precision sufficient for our needs.

\begin{itemize}
\item (R4(T,Q)) \hspace{2cm}$(T,Q) \in  span\left\{u,v,M\right\}$
\item (R4(Q,T)) \hspace{2cm}$(Q,T) \in  span\left\{u,v,M\right\}$
\item (R4(T,T)) \hspace{2cm}$(T,T) \in  span\left\{u,v,M\right\}$
\item (R4(Q,Q)) \hspace{2cm}$(Q,Q) \in  span\left\{u,v,M\right\}$
\end{itemize}
In terms of diagrams, this means:
Each of 
$$\begin{tikzpicture}[baseline=0,scale=1.5]
	{%
	\draw (-0.5,1) -- (-0.5,0);
	\node[anchor=west] at (-0.5,0.5) {\footnotesize$4$};
	\draw (0.5,1) -- (0.5,0);
	\node[anchor=west] at (0.5,0.5) {\footnotesize$4$};
}
 {%
	\node[anchor=south] at (0,1) {\footnotesize$2$};
	\draw (-0.5,1) -- (0.5, 1);
	\foreach \x in {-0.5,0.5} {
		{%
	\filldraw[fill=white,thick] (\x,1) ellipse (3mm and 3mm);
	\node at (-.5,1) {\Large $T$};
	\node at (.5,1) {\Large $Q$};
	\path(\x,1) ++(90:0.37) node {$\star$};
}
	}
}
        \draw (0,0)--(0,-0.5);
        \node[anchor=west] at (0,-0.35) {\footnotesize$8$};
	{%
	\filldraw[fill=white,thick] (-1,-0.2) rectangle (1,0.2);
	\node at (0,0) {\Large$\JW{8}$};
}
\end{tikzpicture}, \begin{tikzpicture}[baseline=0,scale=1.5]
	{%
	\draw (-0.5,1) -- (-0.5,0);
	\node[anchor=west] at (-0.5,0.5) {\footnotesize$4$};
	\draw (0.5,1) -- (0.5,0);
	\node[anchor=west] at (0.5,0.5) {\footnotesize$4$};
}
 {%
	\node[anchor=south] at (0,1) {\footnotesize$2$};
	\draw (-0.5,1) -- (0.5, 1);
	\foreach \x in {-0.5,0.5} {
		{%
	\filldraw[fill=white,thick] (\x,1) ellipse (3mm and 3mm);
	\node at (-.5,1) {\Large $Q$};
	\node at (.5,1) {\Large $T$};
	\path(\x,1) ++(90:0.37) node {$\star$};
}
	}
}
        \draw (0,0)--(0,-0.5);
        \node[anchor=west] at (0,-0.35) {\footnotesize$8$};
	{%
	\filldraw[fill=white,thick] (-1,-0.2) rectangle (1,0.2);
	\node at (0,0) {\Large$\JW{8}$};
}
\end{tikzpicture}, \begin{tikzpicture}[baseline=0,scale=1.5]
	{%
	\draw (-0.5,1) -- (-0.5,0);
	\node[anchor=west] at (-0.5,0.5) {\footnotesize$4$};
	\draw (0.5,1) -- (0.5,0);
	\node[anchor=west] at (0.5,0.5) {\footnotesize$4$};
}
 {%
	\node[anchor=south] at (0,1) {\footnotesize$2$};
	\draw (-0.5,1) -- (0.5, 1);
	\foreach \x in {-0.5,0.5} {
		{%
	\filldraw[fill=white,thick] (\x,1) ellipse (3mm and 3mm);
	\node at (-.5,1) {\Large $T$};
	\node at (.5,1) {\Large $T$};
	\path(\x,1) ++(90:0.37) node {$\star$};
}
	}
}
        \draw (0,0)--(0,-0.5);
        \node[anchor=west] at (0,-0.35) {\footnotesize$8$};
	{%
	\filldraw[fill=white,thick] (-1,-0.2) rectangle (1,0.2);
	\node at (0,0) {\Large$\JW{8}$};
}
\end{tikzpicture}, \begin{tikzpicture}[baseline=0,scale=1.5]
	{%
	\draw (-0.5,1) -- (-0.5,0);
	\node[anchor=west] at (-0.5,0.5) {\footnotesize$4$};
	\draw (0.5,1) -- (0.5,0);
	\node[anchor=west] at (0.5,0.5) {\footnotesize$4$};
}
 {%
	\node[anchor=south] at (0,1) {\footnotesize$2$};
	\draw (-0.5,1) -- (0.5, 1);
	\foreach \x in {-0.5,0.5} {
		{%
	\filldraw[fill=white,thick] (\x,1) ellipse (3mm and 3mm);
	\node at (-.5,1) {\Large $Q$};
	\node at (.5,1) {\Large $Q$};
	\path(\x,1) ++(90:0.37) node {$\star$};
}
	}
}
        \draw (0,0)--(0,-0.5);
        \node[anchor=west] at (0,-0.35) {\footnotesize$8$};
	{%
	\filldraw[fill=white,thick] (-1,-0.2) rectangle (1,0.2);
	\node at (0,0) {\Large$\JW{8}$};
}
\end{tikzpicture}$$ lies in the span of 

$$\Big( \hspace{3mm}\begin{tikzpicture}[baseline=0,scale=1.5]
	{
	\fill[shaded] (-0.8,0) -- (-0.8,0.6) arc (180:0:0.8) -- (0.8,0) -- (0.2,0) -- (0.2,1) -- (-0.2,1) -- (-0.2,0);
	\draw (-0.8,0) -- (-0.8,0.6) arc (180:0:0.8) -- (0.8,0);
	\draw (-0.2,0) -- (-0.2,1);
	\draw (0.2,0) -- (0.2,1);
	\node at (0.45,0.5) {\footnotesize$5$};
	{%
	\filldraw[fill=white,thick] (0,1) ellipse (3mm and 3mm);
	\node at (0,1) {\Large $T$};
	\path(0,1) ++(-90:0.37) node {$\star$};
}
}
;
        \draw (0,0)--(0,-0.5);
        \node[anchor=west] at (0,-0.35) {\footnotesize$8$};
	{%
	\filldraw[fill=white,thick] (-1,-0.2) rectangle (1,0.2);
	\node at (0,0) {\Large$\JW{8}$};
}\end{tikzpicture},
\quad \begin{tikzpicture}[baseline=0,scale=1.5]
	{
	\fill[shaded] (-0.8,0) -- (-0.8,0.6) arc (180:0:0.8) -- (0.8,0) -- (0.2,0) -- (0.2,1) -- (-0.2,1) -- (-0.2,0);
	\draw (-0.8,0) -- (-0.8,0.6) arc (180:0:0.8) -- (0.8,0);
	\draw (-0.2,0) -- (-0.2,1);
	\draw (0.2,0) -- (0.2,1);
	\node at (0.45,0.5) {\footnotesize$5$};
	{%
	\filldraw[fill=white,thick] (0,1) ellipse (3mm and 3mm);
	\node at (0,1) {\Large $Q$};
	\path(0,1) ++(-90:0.37) node {$\star$};
}
}
;
        \draw (0,0)--(0,-0.5);
        \node[anchor=west] at (0,-0.35) {\footnotesize$8$};
	{%
	\filldraw[fill=white,thick] (-1,-0.2) rectangle (1,0.2);
	\node at (0,0) {\Large$\JW{8}$};
}
\end{tikzpicture} ,\quad \begin{tikzpicture}[baseline=0,scale=1.5]
	{\draw[shaded] (0.2,0) -- (0.2,1) -- (-0.2,1) -- (-0.2,0);
	
	\draw(-0.2,0) -- (-0.2,1);
	\draw (0.2,0) -- (0.2,1);
	\node at (0.45,0.5) {\footnotesize$7$};
	{%
	\filldraw[fill=white,thick] (0,1) ellipse (3mm and 3mm);
	\node at (0,1) {\Large $M$};
	\path(0,1) ++(-180:0.37) node {$\star$};
}
}
;
        \draw (0,0)--(0,-0.5);
        \node[anchor=west] at (0,-0.35) {\footnotesize$8$};
	{%
	\filldraw[fill=white,thick] (-1,-0.2) rectangle (1,0.2);
	\node at (0,0) {\Large$\JW{8}$};
}\end{tikzpicture} \hspace{3mm}\Big)$$

\end{theorem}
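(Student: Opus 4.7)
The plan is to mirror the Bessel-inequality strategy used in Section~5.1 for the $3$-box relations, applied now at the $4$-box level with the three-element subspace $W = \operatorname{span}\{u,v,M\}$. The relation (R4(T,Q)) is immediate: since $M$ is defined as $M = (T,Q) - \operatorname{Proj}_{u}(T,Q) - \operatorname{Proj}_{v}(T,Q)$, we get $(T,Q) = \operatorname{Proj}_{u}(T,Q) + \operatorname{Proj}_{v}(T,Q) + M \in W$ by construction. The content of the theorem therefore lies in the remaining three relations (R4(Q,T)), (R4(T,T)), and (R4(Q,Q)), which I would prove by showing, for each $w \in \{(Q,T),(T,T),(Q,Q)\}$, the equality
$$\|w\|^2 \;=\; \|\operatorname{Proj}_{u} w\|^2 + \|\operatorname{Proj}_{v} w\|^2 + \|\operatorname{Proj}_{M} w\|^2,$$
forcing $w \in W$ by Bessel.

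To make this clean I would first verify mutual orthogonality of the spanning vectors. By construction $M \perp u$ and $M \perp v$. For $u \perp v$, I would rerun the $f^{(8)}$-expansion argument of Lemma~5.2.1 with mixed generators: the same five basis diagrams $\beta$ contribute, but each value of the diagram is now a multiple of $Z(TQ)$, which vanishes by Lemma~5.0.14. Hence $\langle u,v\rangle = \frac{W_8}{[8]}\,Z(TQ) = 0$, and $\{u,v,M\}$ is an orthogonal triple. The individual projection coefficients are then the standard
$$\operatorname{Proj}_{u}w = \frac{\langle w,u\rangle}{\langle u,u\rangle}\,u, \qquad \operatorname{Proj}_{v}w = \frac{\langle w,v\rangle}{\langle v,v\rangle}\,v, \qquad \operatorname{Proj}_{M}w = \frac{\langle w,M\rangle}{\langle M,M\rangle}\,M,$$
where the inner products $\langle w,u\rangle$, $\langle w,v\rangle$ are the quantities $(E,F,G)$ tabulated in Lemmas~5.2.4 and~5.2.5; the squared norms $\langle u,u\rangle$, $\langle v,v\rangle$ come from Lemma~5.2.1; and $\langle w,(T,Q)\rangle$ and $\langle (T,Q),(T,Q)\rangle$ come from Lemmas~5.2.6 and~5.2.7. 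One then expands
$$\langle M,M\rangle = \langle (T,Q),(T,Q)\rangle - \frac{|\langle (T,Q),u\rangle|^2}{\langle u,u\rangle} - \frac{|\langle (T,Q),v\rangle|^2}{\langle v,v\rangle},$$
$$\langle w,M\rangle = \langle w,(T,Q)\rangle - \frac{\langle (T,Q),u\rangle}{\langle u,u\rangle}\langle w,u\rangle - \frac{\langle (T,Q),v\rangle}{\langle v,v\rangle}\langle w,v\rangle,$$
so that the entire projected norm on the right-hand side is a closed-form expression in the quantities already computed.

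The hard part will not be conceptual but arithmetic: each of the relevant inner products is a nested surd such as $\sqrt{-23+7\sqrt{21}}$ or $\sqrt{151+41\sqrt{21}}$, and verifying the three equalities
$$\|(Q,T)\|^2 \;=\; \|\operatorname{Proj}_{u}(Q,T)\|^2 + \|\operatorname{Proj}_{v}(Q,T)\|^2 + \|\operatorname{Proj}_{M}(Q,T)\|^2,$$
and similarly for $(T,T)$ and $(Q,Q)$, requires careful simplification in $\mathbb{Q}(\sqrt{21},\sqrt{3},\sqrt{7},i)$. For (R4(T,T)) and (R4(Q,Q)) there is a simplification: both sides are real and the $v$-projection (respectively $u$-projection) is small or vanishes because of rotational eigenvalue compatibility, so the check is nearly a restatement of Lemma~5.2.6. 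For (R4(Q,T)) the full three-term sum is needed. Once these scalar identities are confirmed, Bessel's inequality converts each into the corresponding $W$-membership statement, completing the proof.
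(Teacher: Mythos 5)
Your overall plan is exactly the paper's: (R4(T,Q)) holds by definition of $M$, and for the other three you establish the Bessel equality $\|w\|^2 = \|\operatorname{Proj}_u w\|^2 + \|\operatorname{Proj}_v w\|^2 + \|\operatorname{Proj}_M w\|^2$ using the inner products from Lemmas 5.2.2, 5.2.4--5.2.8, together with the observation that $\{u,v,M\}$ is an orthogonal triple (with $\langle u,v\rangle = \frac{W_8}{[8]}Z(TQ) = 0$, which the paper uses implicitly). One side remark is wrong, however: for (R4(T,T)) the $v$-projection does \emph{not} vanish or become negligible -- by Lemma 5.2.5 the inner product $(Q,T,T) = \langle v,(T,T)\rangle$ is a nonzero complex number, and correspondingly the paper's left-hand side $\langle(T,T),(T,T)\rangle - \frac{|\langle(T,T),u\rangle|^2}{\langle u,u\rangle} - \frac{|\langle(T,T),v\rangle|^2}{\langle v,v\rangle} = \frac{363-67\sqrt{21}}{180}$ already has a genuine $v$-contribution subtracted off, and the $M$-projection term $\frac{-3911+1049\sqrt{21}}{1350}/\|M\|^2$ is also nontrivial. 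The same goes for (R4(Q,Q)) with the roles of $u$ and $v$ switched. So there is no shortcut reducing the check to a restatement of Lemma 5.2.6; the full three-term equality must be verified for all three of (R4(Q,T)), (R4(T,T)), and (R4(Q,Q)). Once that erroneous remark is dropped, your proposal is the paper's proof.
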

\begin{proof}[Proof of (R4(T,Q))] This is true by definition of $M$.
\end{proof}
We use Lemmas 5.2.2, 5.2.4, 5.2.6, 5.2.7, and 5.2.8 for the values of the relevant inner products to prove the remaining relations.
\begin{proof}[Proof of (R4(Q,T))] Let $w =(T,Q)$ and $w_0=(Q,T)$.  We need to show that $$\ip{w_0,w_0} -\frac{\norm{\ip{w_0,u}}^2}{\ip{u,u}}-\frac{\norm{\ip{w_0,v}}^2}{\ip{v,v}}=\frac{\norm{\ip{w_0,w}-\frac{\ip{w_0,u}\overline{\ip{w,u}}}{\ip{u,u}}-\frac{\ip{w_0,v}\overline{\ip{w,v}}}{\ip{v,v}}}^2}{\Norm{M}^2}$$
On the left hand side, we get $\frac{9+19\sqrt{21}}{45}$.  On the right hand side, we get $\frac{\frac{2}{675}(1277+57\sqrt{21}}{\Norm{M}^2}$.
Since $\Norm{M}^2=\frac{9+19\sqrt{21}}{45}$, we have equality.  Thus, $(Q,T) \in  span\left\{u,v,M\right\}$
\end{proof}
\begin{proof}[Proof of (R4(T,T))] We need to show that \footnotesize$$\ip{(T,T),(T,T)} =\frac{\norm{\ip{(T,T),u}}^2}{\ip{u,u}}+\frac{\norm{\ip{(T,T),v}}^2}{\ip{v,v}}+\frac{\norm{\ip{(T,T),(T,Q)}-\frac{\ip{(T,T),u}\overline{\ip{(T,Q),u}}}{\ip{u,u}}-\frac{\ip{(T,T),v}\overline{\ip{(T,Q),v}}}{\ip{v,v}}}^2}{\Norm{M}^2}$$
\normalsize
That is, we need to show \footnotesize$$\ip{(T,T),(T,T)} -\frac{\norm{\ip{(T,T),u}}^2}{\ip{u,u}}-\frac{\norm{\ip{(T,T),v}}^2}{\ip{v,v}}=\frac{\norm{\ip{(T,T),(T,Q)}-\frac{\ip{(T,T),u}\overline{\ip{(T,Q),u}}}{\ip{u,u}}-\frac{\ip{(T,T),v}\overline{\ip{(T,Q),v}}}{\ip{v,v}}}^2}{\Norm{M}^2}$$
\normalsize
On the left hand side, we get $\frac{363-67\sqrt{21}}{180}$.  On the right hand side, we get $\frac{\frac{-3911+1049\sqrt{21}}{1350}}{\Norm{M}^2}$.
Since $\Norm{M}^2=\frac{9+19\sqrt{21}}{45}$, we have equality.  Thus, $(T,T) \in  span\left\{u,v,M\right\}$
\end{proof}
\begin{proof}[Proof of (R4(Q,Q))] We need to show that \footnotesize$$\ip{(Q,Q),(Q,Q)} =\frac{\norm{\ip{(Q,Q),u}}^2}{\ip{u,u}}+\frac{\norm{\ip{(Q,Q),v}}^2}{\ip{v,v}}+\frac{\norm{\ip{(Q,Q),(T,Q)}-\frac{\ip{(Q,Q),u}\overline{\ip{(T,Q),u}}}{\ip{u,u}}-\frac{\ip{(Q,Q),v}\overline{\ip{(T,Q),v}}}{\ip{v,v}}}^2}{\Norm{M}^2}$$
\normalsize
That is, we need to show \footnotesize$$\ip{(Q,Q),(Q,Q)} -\frac{\norm{\ip{(Q,Q),u}}^2}{\ip{u,u}}-\frac{\norm{\ip{(Q,Q),v}}^2}{\ip{v,v}}=\frac{\norm{\ip{(Q,Q),(T,Q)}-\frac{\ip{(Q,Q),u}\overline{\ip{(T,Q),u}}}{\ip{u,u}}-\frac{\ip{(Q,Q),v}\overline{\ip{(T,Q),v}}}{\ip{v,v}}}^2}{\Norm{M}^2}$$
\normalsize
On the left hand side, we get $\frac{87+17\sqrt{21}}{180}$.  On the right hand side, we get $\frac{\frac{1261+301\sqrt{21}}{1350}}{\Norm{M}^2}$.
Since $\Norm{M}^2=\frac{9+19\sqrt{21}}{45}$, we have equality.  Thus, $(Q,Q) \in  span\left\{u,v,M\right\}$
\end{proof}
\subsection{Shaded 4-box Relations}
In this subsection we prove shaded versions of the 4-box relations we've already seen.  The overall procedure and proofs will be completely analogous.  First, we give the requisite definitions and notations.
Let $$u_0=\begin{tikzpicture}[baseline=0,scale=2]
	{\draw[shaded] (0.2,0) -- (0.2,1) -- (-0.2,1) -- (-0.2,0);
	\draw (-0.8,0) -- (-0.8,0.6) arc (180:0:0.8) -- (0.8,0);
	\draw(-0.2,0) -- (-0.2,1);
	\draw (0.2,0) -- (0.2,1);
	\node at (0.45,0.5) {\footnotesize$5$};
	{%
	\filldraw[fill=white,thick] (0,1) ellipse (3mm and 3mm);
	\node at (0,1) {\Large $T$};
	\path(0,1) ++(-180:0.37) node {$\star$};
}
}
;
        \draw (0,0)--(0,-0.5);
        \node[anchor=west] at (0,-0.35) {\footnotesize$8$};
	{%
	\filldraw[fill=white,thick] (-1,-0.2) rectangle (1,0.2);
	\node at (0,0) {\Large$\JW{8}$};
}\end{tikzpicture}, \quad v_0=\begin{tikzpicture}[baseline=0,scale=2]
	{\draw[shaded] (0.2,0) -- (0.2,1) -- (-0.2,1) -- (-0.2,0);
	\draw (-0.8,0) -- (-0.8,0.6) arc (180:0:0.8) -- (0.8,0);
	\draw(-0.2,0) -- (-0.2,1);
	\draw (0.2,0) -- (0.2,1);
	\node at (0.45,0.5) {\footnotesize$5$};
	{%
	\filldraw[fill=white,thick] (0,1) ellipse (3mm and 3mm);
	\node at (0,1) {\Large $Q$};
	\path(0,1) ++(-180:0.37) node {$\star$};
}
}
;
        \draw (0,0)--(0,-0.5);
        \node[anchor=west] at (0,-0.35) {\footnotesize$8$};
	{%
	\filldraw[fill=white,thick] (-1,-0.2) rectangle (1,0.2);
	\node at (0,0) {\Large$\JW{8}$};
}\end{tikzpicture}.
$$
\begin{notation} If $E$ $\in \left\{T,Q\right\}$, then we will denote by $(E)$ the diagram which has the generator $E$ standing on a Jones-Wenzl idempotent $f^{(8)}$ as follows:  
$$(E) = \begin{tikzpicture}[baseline=0,scale=2]
	{\draw[shaded] (0.2,0) -- (0.2,1) -- (-0.2,1) -- (-0.2,0);
	\draw (-0.8,0) -- (-0.8,0.6) arc (180:0:0.8) -- (0.8,0);
	\draw(-0.2,0) -- (-0.2,1);
	\draw (0.2,0) -- (0.2,1);
	\node at (0.45,0.5) {\footnotesize$5$};
	{%
	\filldraw[fill=white,thick] (0,1) ellipse (3mm and 3mm);
	\node at (0,1) {\Large $E$};
	\path(0,1) ++(-180:0.37) node {$\star$};
}
}
;
        \draw (0,0)--(0,-0.5);
        \node[anchor=west] at (0,-0.35) {\footnotesize$8$};
	{%
	\filldraw[fill=white,thick] (-1,-0.2) rectangle (1,0.2);
	\node at (0,0) {\Large$\JW{8}$};
}\end{tikzpicture}$$

If $F$,$G$ $\in \left\{T,Q\right\}$, then we will denote by $(F,G)$ the diagram which has the generators $F$ and $G$ standing on the Jones-Wenzl idempotent $f^{(8)}$ and with two strings connecting them as follows:
$$(F,G)= \begin{tikzpicture}[baseline=0,scale=2]
	{%
	\fill[shaded] (-0.5,0) rectangle (0.5,1);
	\draw (-0.5,1) -- (-0.5,0);
	\node[anchor=west] at (-0.5,0.5) {\footnotesize$3$};
	\draw (-0.6,1) -- (-0.6,0);
	\draw (0.5,1) -- (0.5,0);
	\node[anchor=west] at (0.3,0.5) {\footnotesize$3$};
	\draw (0.6,1) -- (0.6,0);
}
 {%
	
	\draw (-0.5,1) -- (0.5, 1);
	\draw (-0.5,1.1) -- (0.5,1.1);
	\foreach \x in {-0.5,0.5} {
		{%
	\filldraw[fill=white,thick] (\x,1) ellipse (3mm and 3mm);
	\node at (-.5,1) {\Large $F$};
	\node at (.5,1) {\Large $G$};
	\path(-.55,1) ++(-90:0.37) node {$\star$};
	\path(0.55,1) ++(-90:0.37) node {$\star$};
}
	}
}
        \draw (0,0)--(0,-0.5);
        \node[anchor=west] at (0,-0.35) {\footnotesize$8$};
	{%
	\filldraw[fill=white,thick] (-1,-0.2) rectangle (1,0.2);
	\node at (0,0) {\Large$\JW{8}$};
}
\end{tikzpicture}$$
\end{notation}

\begin{lem}Let $A=u_0$ and $B=(T,T)$.  Then $$\ip{A,B}=\big(1+\frac{[4]}{[8]}(\omega+\omega^2)\big)Z(T^3)$$, where   $\omega=1$.  More specifically, $\ip{A,B}=-\frac{1}{3}\sqrt{\frac{779}{14}-\frac{23}{2}\sqrt{21}}.$
\end{lem}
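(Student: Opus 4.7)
The plan is to mirror the unshaded computation of Lemma 5.2.4, but to keep careful track of how reversing the shading (and equivalently, relocating the star on the outer disk from the bottom to the left) changes the trace that each surviving term produces. Since $\epsilon_i(T) = 0$ for $i = 1, \ldots, 6$, any Temperley-Lieb basis element of $f^{(8)}$ whose cups land against a $T$-box will annihilate the diagram. As in Table 2 of Lemma 5.2.4, exactly three basis elements of $TL_8$ survive: the identity, the diagram with a cup at positions $1,2$ on top joined to a cap at positions $4,5$ below, and its horizontal mirror. All other terms in the expansion vanish.

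First, I would write $u_0$ and $(T,T)$ stacked on $f^{(8)}$ and apply the Wenzl recursion as in Lemma 2.1.9. Second, for each of the three surviving diagrams, I would trace through which boundary strings of the two $T$-boxes become identified and in what cyclic order. The essential change from Lemma 5.2.4 is that the reverse shading moves the base point of the closed loop formed by the three $T$'s by a half-rotation relative to the unshaded setup; combined with the new position of the outer star, this replaces each occurrence of $Z((\rho^{1/2}(T))^3)$ with $Z(T^3)$, and the $\omega^2$ phase that accompanied the identity term in the unshaded case is absorbed into a factor of $1$, while the two cup-cap terms contribute with phases $\omega$ and $\omega^2$ rather than $1$ and $\omega$.

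Third, collecting these three contributions with their $f^{(8)}$-coefficients $1$, $[4]/[8]$, $[4]/[8]$ yields
\[
\langle A, B\rangle \;=\; Z(T^3) \;+\; \tfrac{[4]}{[8]}\,\omega\, Z(T^3) \;+\; \tfrac{[4]}{[8]}\,\omega^{2}\, Z(T^3) \;=\; \Bigl(1 + \tfrac{[4]}{[8]}(\omega + \omega^{2})\Bigr)\, Z(T^3).
\]
Specializing $\omega = 1$ (the rotational eigenvalue of $T$) and substituting the value $Z(T^3) = \tfrac{-3\sqrt{3} + \sqrt{7}}{6}$ from Lemma 5.0.14, together with the known values $[4] = \delta^3 - 2\delta$ and $[8]$ for our $\delta = \sqrt{(5 + \sqrt{21})/2}$, gives the claimed closed-form value $-\tfrac{1}{3}\sqrt{\tfrac{779}{14} - \tfrac{23}{2}\sqrt{21}}$.

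The main obstacle is bookkeeping: correctly reading off, for each of the three surviving cup configurations, the cyclic rotation that the reversed shading induces on each $T$-box and hence whether the resulting closed loop represents $Z(T^3)$ with trivial phase or with a factor of $\omega$ or $\omega^2$. Once the phases are settled, the rest is a pure substitution of numerical values from Lemma 5.0.14 and the quantum integer identities, with no further conceptual input required.
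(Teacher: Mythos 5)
Your proposal is correct and follows essentially the same route as the paper: expand $f^{(8)}$, observe that only the identity and the two cup-cap diagrams (at positions $1$–$2$/$4$–$5$ and its mirror) survive the uncappability of $T$, and read off the resulting traces with their phases. The paper records the two cup-cap contributions as $\omega^2 Z((\rho^{1/2}(T))^3)$ and $\omega Z((\rho^{1/2}(T))^3)$ and then invokes $Z(T^3)=Z((\rho^{1/2}(T))^3)$ at the end, whereas you absorb that identification into your phase bookkeeping up front (and swap which mirror diagram gets $\omega$ versus $\omega^2$); both choices are immaterial since the phases are only ever summed, and the conclusion $\bigl(1+\tfrac{[4]}{[8]}(\omega+\omega^2)\bigr)Z(T^3)$ is the same.
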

\begin{proof}We need to evaluate the diagram 
$$<A,B> =
\begin{tikzpicture}[baseline=0,scale=2]
	{\draw[shaded] (0.2,0) -- (0.2,1) -- (-0.2,1) -- (-0.2,0);
	\draw (-0.8,0) -- (-0.8,0.6) arc (180:0:0.8) -- (0.8,0);
	\draw(-0.2,0) -- (-0.2,1);
	\draw (0.2,0) -- (0.2,1);
	\node at (0.45,0.5) {\footnotesize$5$};
		}	{%
	\filldraw[fill=white,thick] (0,1) ellipse (3mm and 3mm);
	\node at (0,1) {\Large $E$};
	\path(0,1) ++(-180:0.37) node {$\star$};
}
	\upsidedown{{%
	\fill[shaded] (-0.5,0) rectangle (0.5,1);
	\draw (-0.5,1) -- (-0.5,0);
	\node[anchor=west] at (-0.5,0.5) {\footnotesize$3$};
	\draw (-0.6,1) -- (-0.6,0);
	\draw (0.5,1) -- (0.5,0);
	\node[anchor=west] at (0.3,0.5) {\footnotesize$3$};
	\draw (0.6,1) -- (0.6,0);
} {%
	
	\draw (-0.5,1) -- (0.5, 1);
	\draw (-0.5,1.1) -- (0.5,1.1);
	\foreach \x in {-0.5,0.5} {
		{%
	\filldraw[fill=white,thick] (\x,1) ellipse (3mm and 3mm);
	\node at (-.5,1) {\Large $F$};
	\node at (.5,1) {\Large $G$};
	\path(-.55,1) ++(-90:0.37) node {$\star$};
	\path(0.55,1) ++(-90:0.37) node {$\star$};
}}
}
        
	{%
	\filldraw[fill=white,thick] (-1,-0.2) rectangle (1,0.2);
	\node at (0,0) {\Large$\JW{8}$};
}}
\end{tikzpicture}$$ 
Consider the expansion of $f^{(8)}$ into TL elements.  There are only three diagrams that make a contribution.

\begin{table}[ht]
\center
\begin{tabular}{ c | c | c }
$\beta$ & $\mathrm{Coeff}_{f^{(8)}}(\beta)$ & value of diagram \\
\hline
\begin{tikzpicture}[scale=.15,baseline=-0.5ex]
    \foreach \x in {1,2,3,4,5,6,7,8} \draw (\x cm , -1.5cm)--(\x cm, 1.5cm);
    
\end{tikzpicture} 
     & $1$
     & $Z(T^3)$ \\
\begin{tikzpicture}[scale=.15,baseline=-0.5ex]
        \draw (1,1.5) arc (-180:0:0.5);
        \draw (1,-1.5) -- (3,1.5);
        \draw (2,-1.5) -- (4,1.5);
        \draw (3,-1.5) -- (5,1.5);
        \draw (4,-1.5) arc (180:0:0.5);
        \draw (6,-1.5) -- (6,1.5);
        \draw (7,-1.5) -- (7,1.5);
        \draw (8,-1.5) -- (8,1.5);
\end{tikzpicture}
     & $\frac{[4]}{[8]}$
     & $\omega^2Z((\rho^{1/2}(T))^3)$ \\
\begin{tikzpicture}[scale=.15,baseline=-0.5ex]
        \draw (1,-1.5) -- (1,1.5);
        \draw (2,-1.5) -- (2,1.5);
        \draw (3,-1.5) -- (3,1.5);
        \draw (4,-1.5) arc (180:0:0.5);
        \draw (6,-1.5) -- (4,1.5);
        \draw (7,-1.5) -- (5,1.5);
        \draw (8,-1.5) -- (6,1.5);
        \draw (7,1.5) arc (-180:0:0.5);
\end{tikzpicture}
     & $\frac{[4]}{[8]}$ 
     & $\omega Z((\rho^{1/2}(T))^3)$
\end{tabular}
\vspace{6pt}
\caption{The terms of $f^{(8)}$ that contribute to $\langle A,B \rangle$. }
\label{table:AB}
\end{table}

So,
\begin{align*}
\ip{A,B} & = Z(T^3)+\frac{[4]}{[8]}\omega^2Z((\rho^{1/2}(T))^3) +\frac{[4]}{[8]}\omega Z((\rho^{1/2}(T))^3)\\
& =(1+\frac{[4]}{[8]}(\omega+\omega^2))Z(T^3), \mbox{ because $Z(T^3)= Z((\rho^{1/2}(T))^3)$ (also for $Q$ as well)}
\end{align*}
\end{proof}

\begin{notation} We will denote by $(E,F,G)$ the inner product $\ip{(E),(F,G)}$.
\end{notation}

\begin{lem}In this lemma we calculate several other inner products of the form $(E,F,G)$, where $E,F,G$ $\in \left\{T,Q\right\}$.  We also list the tables for them.  In this lemma, we assume that $\omega$ is the rotational eigenvalue for $Q$, namely, $e^{2\pi i/3}$.  
\begin{itemize}
\item $(Q,T,T) = \sqrt{\frac{404}{63}+\frac{89}{3\sqrt{21}}}$
\item $(T,Q,Q) = \sqrt{\frac{253}{63}+\frac{16}{\sqrt{21}}}$
\item $(T,T,Q) = \sqrt{\frac{404}{63}+\frac{89}{3\sqrt{21}}}+\sqrt{-\frac{53}{42}+\frac{9}{2}\sqrt{\frac{3}{7}}}\cdot i$
\item $(T,Q,T) = \sqrt{\frac{404}{63}+\frac{89}{3\sqrt{21}}}-\sqrt{-\frac{53}{42}+\frac{9}{2}\sqrt{\frac{3}{7}}}\cdot i$
\item $(Q,Q,Q) = -\frac{1}{3}\sqrt{\frac{206}{7}+46\sqrt{\frac{3}{7}}}$
\item $(Q,T,Q) = \sqrt{\frac{1271}{504}+\frac{29}{8}\sqrt{\frac{3}{7}}}-(\frac{1}{4}+\frac{1}{4\sqrt{21}})i$
\item $(Q,Q,T) = \sqrt{\frac{1271}{504}+\frac{29}{8}\sqrt{\frac{3}{7}}}+(\frac{1}{4}+\frac{1}{4\sqrt{21}})i$
\end{itemize}
\end{lem}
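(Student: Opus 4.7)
The plan is to prove each of the seven identities in direct parallel with the table-based computation carried out for Lemma 5.2.11 (the shaded $(T,T,T)$ case), which sits immediately above this statement. For each choice $E,F,G\in\{T,Q\}$, I would set up the diagram $(E,F,G)=\langle (E),(F,G)\rangle$ in the reverse-shaded convention and expand the central Jones--Wenzl idempotent $f^{(8)}$ into its Temperley--Lieb basis. Because $T$ and $Q$ both satisfy the capping conditions $\epsilon_i(T)=\epsilon_i(Q)=0$ from Theorem 4.0.7(2), almost every basis element $\beta$ either closes a cap onto one of the three generator boxes or splits the boxes into disconnected traces, killing the term. Only three basis elements survive: the identity (whose coefficient in $f^{(8)}$ is $1$) and two specific cup-cap diagrams (each with coefficient $\tfrac{[4]}{[8]}$) that reroute one of the three generator boxes through a rotation.

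Next I would fill out the analogue of Table 5.4 (from the proof of Lemma 5.2.11) for each of the seven cases. The identity contributes $Z(EFG)$; the two cup diagrams contribute phases $\omega^{a}\,Z(\rho^{1/2}(E)\rho^{1/2}(F)\rho^{1/2}(G))$ and $\omega^{b}\,Z(\rho^{1/2}(E)\rho^{1/2}(F)\rho^{1/2}(G))$, where the exponents $a,b$ are determined entirely by which legs carry the rotational eigenvalue $1$ (from a $T$-box) and which carry $e^{2\pi i/3}$ (from a $Q$-box). This is precisely the data that changed between Lemma 5.2.6 and Lemma 5.2.11 when the shading was reversed, so the same prescription applies here with the obvious generalization to mixed products $EFG$.

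Then I would substitute the trace values from Lemma 5.0.14. In particular, the identities
\[
Z(T^{3})=Z((\rho^{1/2}(T))^{3}),\quad Z(\rho^{1/2}(T)(\rho^{1/2}(Q))^{2})=\omega\, Z(TQ^{2}),\quad Z(\rho^{1/2}(Q)(\rho^{1/2}(T))^{2})=\omega^{2}Z(QT^{2}),
\]
together with their analogues for $(T,Q)$-reversed monomials, reduce every surviving $Z$-term to an explicit algebraic number. Summing the three contributions in each table with the Jones--Wenzl coefficients $1,\tfrac{[4]}{[8]},\tfrac{[4]}{[8]}$ then yields the seven stated closed-form values.

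The main obstacle is bookkeeping: seven nearly-identical tables, each requiring careful tracking of the phase $\omega^{a}$ on each cup contribution, with both the position of the rotated box (left versus right cup) and its identity ($T$ or $Q$) affecting the exponent. A sign or $\omega$-versus-$\omega^{2}$ error in one table entry propagates into an incorrect imaginary part. The useful consistency checks are (i) that $\overline{(E,F,G)}$ should match the inner product obtained by reversing the roles of $F$ and $G$ in the conjugate diagram, which confronts $(T,T,Q)$ against $(T,Q,T)$ and $(Q,Q,T)$ against $(Q,T,Q)$, and (ii) that the diagonal entries $(T,T,T)$, $(Q,Q,Q)$ computed here agree with the values obtainable directly from Lemma 5.0.14 through the reverse-shading invariance of the closed $Z$.
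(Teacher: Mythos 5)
Your proposal is correct and is essentially the same approach the paper takes: the paper's proof simply states the computation is ``analogous to Lemma~5.2.11, paying attention to the rotational eigenvalue $\omega$ of $Q$,'' and then records precisely the tables you describe, with the identity term plus the two one-cup Temperley--Lieb diagrams of $f^{(8)}$ (each carrying coefficient $\frac{[4]}{[8]}$) as the only surviving contributions, the rotational phases $\omega^a$ tracking which generator box is pulled around, and the trace values of Lemma~5.0.14 finishing the evaluation. Your consistency check that $(T,Q,T)=\overline{(T,T,Q)}$ and $(Q,T,Q)=\overline{(Q,Q,T)}$ is confirmed by the stated values and is a useful sanity check on the $\omega$-bookkeeping.
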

\begin{proof}The proofs are analogous to Lemma 5.2.11., paying attention to the rotational eigenvalue $\omega$ of $Q$.
First, we have the table for $(Q,T,T)$:
\begin{table}[ht]
\center
\begin{tabular}{ c | c | c }
$\beta$ & $\mathrm{Coeff}_{f^{(8)}}(\beta)$ & (Q,T,T) \\
\hline
\begin{tikzpicture}[scale=.15,baseline=-0.5ex]
    \foreach \x in {1,2,3,4,5,6,7,8} \draw (\x cm , -1.5cm)--(\x cm, 1.5cm);
    
\end{tikzpicture} 
     & $1$
     & $Z(T^2Q)$ \\
\begin{tikzpicture}[scale=.15,baseline=-0.5ex]
        \draw (1,1.5) arc (-180:0:0.5);
        \draw (1,-1.5) -- (3,1.5);
        \draw (2,-1.5) -- (4,1.5);
        \draw (3,-1.5) -- (5,1.5);
        \draw (4,-1.5) arc (180:0:0.5);
        \draw (6,-1.5) -- (6,1.5);
        \draw (7,-1.5) -- (7,1.5);
        \draw (8,-1.5) -- (8,1.5);
\end{tikzpicture}
     & $\frac{[4]}{[8]}$
     & $Z((\rho^{1/2}(T))^2\rho^{1/2}(Q))$ \\
\begin{tikzpicture}[scale=.15,baseline=-0.5ex]
        \draw (1,-1.5) -- (1,1.5);
        \draw (2,-1.5) -- (2,1.5);
        \draw (3,-1.5) -- (3,1.5);
        \draw (4,-1.5) arc (180:0:0.5);
        \draw (6,-1.5) -- (4,1.5);
        \draw (7,-1.5) -- (5,1.5);
        \draw (8,-1.5) -- (6,1.5);
        \draw (7,1.5) arc (-180:0:0.5);
\end{tikzpicture}
     & $\frac{[4]}{[8]}$ 
     & $\omega^2Z((\rho^{1/2}(T))^2\rho^{1/2}(Q))$
\end{tabular}
\vspace{6pt}
\caption{The terms of $f^{(8)}$ that contribute to $(E,F,G)$. }
\end{table}

We now have the tables (with the $\beta$'s and their coefficients suppressed) for \\
$(T,Q,Q), (T,T,Q),\mbox { and }  (T,Q,T)$:
\begin{table}[ht]
\center
\begin{tabular}{ c | c | c }
$(T,Q,Q)$ & $(T,T,Q)$ & $(T,Q,T)$ \\
\hline
$Z(TQ^2)$     & $Z(T^2Q)$
     & $Z(T^2Q)$ \\
			$\omega^2Z(\rho^{1/2}(T)(\rho^{1/2}(Q))^2)$     
			& $\omega^2Z((\rho^{1/2}(T))^2\rho^{1/2}(Q))$
     & $Z((\rho^{1/2}(T))^2\rho^{1/2}(Q))$ \\
		$\omega^2Z(\rho^{1/2}(T)(\rho^{1/2}(Q))^2)$     
		& $\omega^2Z((\rho^{1/2}(T))^2\rho^{1/2}(Q))$ 
     & $Z((\rho^{1/2}(T))^2\rho^{1/2}(Q))$
\end{tabular}
\vspace{6pt}
\caption{The terms of $f^{(8)}$ that contribute to $(E,F,G)$. }
\end{table}
\newpage
Finally, we have the tables for $(Q,Q,Q),(Q,T,Q)$, and $(Q,Q,T)$:
\begin{table}[ht]
\center
\begin{tabular}{ c | c | c }
$(Q,Q,Q)$ & $(Q,T,Q)$ & $(Q,Q,T)$ \\
\hline
$Z(Q^3)$     & $Z(TQ^2)$
     & $Z(TQ^2)$ \\
			$\omega^2 Z((\rho^{1/2}(Q))^3)$     
			& $\omega^2Z((\rho^{1/2}(Q))^2\rho^{1/2}(T))$
     & $Z((\rho^{1/2}(Q))^2\rho^{1/2}(T))$ \\
		$\omega Z((\rho^{1/2}(Q))^3)$     
		& $\omega Z((\rho^{1/2}(Q))^2\rho^{1/2}(T))$ 
     & $\omega^2Z((\rho^{1/2}(Q))^2\rho^{1/2}(T))$
\end{tabular}
\vspace{6pt}
\caption{The terms of $f^{(8)}$ that contribute to $(E,F,G)$. }
\end{table}

Using these tables, we can easily calculate the inner products.
\end{proof}

\begin{lem} Let $B=(T,T)$.  Then $\ip{B,B} =\frac{9+19\sqrt{21}}{36}$.
\end{lem}
\begin{proof}  We evaluate the diagram 
$<B,B> =
\begin{tikzpicture}[baseline=0,scale=2]
	{%
	\fill[shaded] (-0.5,0) rectangle (0.5,1);
	\draw (-0.5,1) -- (-0.5,0);
	\node[anchor=west] at (-0.5,0.5) {\footnotesize$3$};
	\draw (-0.6,1) -- (-0.6,0);
	\draw (0.5,1) -- (0.5,0);
	\node[anchor=west] at (0.3,0.5) {\footnotesize$3$};
	\draw (0.6,1) -- (0.6,0);
}
 {%
	
	\draw (-0.5,1) -- (0.5, 1);
	\draw (-0.5,1.1) -- (0.5,1.1);
	\foreach \x in {-0.5,0.5} {
		{%
	\filldraw[fill=white,thick] (\x,1) ellipse (3mm and 3mm);
	\node at (-.5,1) {\Large $T$};
	\node at (.5,1) {\Large $T$};
	\path(-.55,1) ++(-90:0.37) node {$\star$};
	\path(0.55,1) ++(-90:0.37) node {$\star$};
}
	}
}       \upsidedown{{%
	\fill[shaded] (-0.5,0) rectangle (0.5,1);
	\draw (-0.5,1) -- (-0.5,0);
	\node[anchor=west] at (-0.5,0.5) {\footnotesize$3$};
	\draw (-0.6,1) -- (-0.6,0);
	\draw (0.5,1) -- (0.5,0);
	\node[anchor=west] at (0.3,0.5) {\footnotesize$3$};
	\draw (0.6,1) -- (0.6,0);
}
 {%
	
	\draw (-0.5,1) -- (0.5, 1);
	\draw (-0.5,1.1) -- (0.5,1.1);
	\foreach \x in {-0.5,0.5} {
		{%
	\filldraw[fill=white,thick] (\x,1) ellipse (3mm and 3mm);
	\node at (-.5,1) {\Large $T$};
	\node at (.5,1) {\Large $T$};
	\path(-.55,1) ++(-90:0.37) node {$\star$};
	\path(0.55,1) ++(-90:0.37) node {$\star$};
}
	}
}    }   
     {%
	\filldraw[fill=white,thick] (-1,-0.2) rectangle (1,0.2);
	\node at (0,0) {\Large$\JW{8}$};
}

\end{tikzpicture}$

Expand $f^{(8)}$ in $\ip{B,B}$ into TL diagrams.  If $\beta$ is a basis element with a cup at position $i\neq4$, then the diagram is 0.  Otherwise, $\beta$ has a cup at position $i=4$.  There are four such diagrams which give non-zero values:
\begin{table}[ht]
\center
\begin{tabular}{ c | c | c }
$\beta$ & $\mathrm{Coeff}_{f^{(8)}}(\beta)$ & value of diagram \\
\hline
\begin{tikzpicture}[scale=.15,baseline=-0.5ex]
    \foreach \x in {1,2,3,4,5,6,7,8} \draw (\x cm , -1.5cm)--(\x cm, 1.5cm);
    
\end{tikzpicture} 
     & $1$
     & $\frac{[4]}{[3]}Z(T^2)$ \\
\begin{tikzpicture}[scale=.15,baseline=-0.5ex]
        \draw (4,1.5) arc (-180:0:0.5);
        \draw (1,-1.5) -- (1,1.5);
        \draw (2,-1.5) -- (2,1.5);
        \draw (3,-1.5) -- (3,1.5);
        \draw (4,-1.5) arc (180:0:0.5);
        \draw (6,-1.5) -- (6,1.5);
        \draw (7,-1.5) -- (7,1.5);
        \draw (8,-1.5) -- (8,1.5);
\end{tikzpicture}
     & $-[4]^2(\frac{1}{[8][7]}+\frac{1}{[7][6]}+\frac{1}{[6][5]}+\frac{1}{[5][4]})$
     & $Z((\rho^{1/2}(T))^4)$ \\
\begin{tikzpicture}[scale=.15,baseline=-0.5ex]
        \draw (1,-1.5) -- (1,1.5);
        \draw (2,-1.5) -- (2,1.5);
        \draw (3,-1.5) arc (180:0:1.5 and 1.1);
        \draw (4,-1.5) arc (180:0:0.5);
        \draw (7,-1.5) -- (7,1.5);
        \draw (8,-1.5) -- (8,1.5);
        \draw (3,1.5) arc (-180:0:1.5 and 1.1);
        \draw (4,1.5) arc (-180:0:0.5);
\end{tikzpicture}
     & $\frac{[4]^2[3]^2}{[8][7][6][5][4][3]}([8][7]+[8][5]+[8][3]+[6][5]+[6][3]+[4][3])$ 
     & $\frac{[4]}{[3]}Z(T^2)$\\
\begin{tikzpicture}[scale=.15,baseline=-0.5ex]
        \draw (1,-1.5) -- (1,1.5);
        \draw (2,-1.5) arc (180:0:2.5 and 1.5);
        \draw (3,-1.5) arc (180:0:1.5 and 1.0);
        \draw (4,-1.5) arc (180:0:0.5 and 0.5);
        \draw (8,-1.5) -- (8,1.5);
        \draw (2,1.5) arc (-180:0:2.5 and 1.5);
        \draw (3,1.5) arc (-180:0:1.5 and 1.0);
        \draw (4,1.5) arc (-180:0:0.5 and 0.5);
\end{tikzpicture}
     & $\frac{-[4][3][2]}{[8][7][6][5]}([8]+[6]+[4]+[2])$ 
     & $\frac{[4]}{[2]}Z(T^2)$  \\
 \begin{tikzpicture}[scale=.15,baseline=-0.5ex]
        \draw (1,-1.5) arc (180:0:3.5 and 1.5);
        \draw (2,-1.5) arc (180:0:2.5 and 1.1);
        \draw (3,-1.5) arc (180:0:1.5 and 0.7);
        \draw (4,-1.5) arc (180:0:0.5 and 0.3);
        \draw (1,1.5) arc (-180:0:3.5 and 1.5);
        \draw (2,1.5) arc (-180:0:2.5 and 1.1);
        \draw (3,1.5) arc (-180:0:1.5 and 0.7);
        \draw (4,1.5) arc (-180:0:0.5 and 0.3);
\end{tikzpicture}
     & $\frac{[4][3][2]}{[8][7][6][5]}$ 
     & $(Z(T^2))^2$            
\end{tabular}
\vspace{6pt}
\caption{The terms of $f^{(8)}$ that contribute to $\ip{B,B}$. }
\end{table}

Adding up the contributions, we get $\ip{B,B}=\frac{9+19\sqrt{21}}{36}$.
\end{proof}

\begin{lem}Here we calculate some further inner products of the form $\ip{(E,F),(G,H)}$, where $E,F,G,H$ $\in \left\{T,Q \right\}$. We list the relevant tables as well.  
\begin{itemize}
\item $\ip{(Q,Q),(Q,Q)}= \frac{1}{252}(399+109\sqrt{21})$
\item $\ip{(T,T),(Q,Q)}=-\frac{5}{252}(21+11\sqrt{21})$
\item $\ip{(T,Q),(T,Q)}=\frac{5}{9}(6+\sqrt{21})$
\item $\ip{(Q,T),(Q,T)}=\frac{5}{9}(6+\sqrt{21})$
\item $\ip{(T,Q),(Q,T)}=\frac{1}{36}(21+11\sqrt{21})-\frac{1}{6}\sqrt{\frac{7}{2}(71+11\sqrt{21}}\cdot i$
\item $\ip{(T,Q),(T,T)}=\sqrt{-\frac{1}{189}+\frac{61}{36\sqrt{21}}}+\sqrt{-\frac{1}{63}+\frac{61}{12\sqrt{21}}}\cdot i$
\end{itemize}
\end{lem}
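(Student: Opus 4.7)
The plan is to mirror the unshaded calculation in Lemmas 5.2.7 and 5.2.8 diagram by diagram, with the one wrinkle being that the shaded $(E,F)$ diagram routes the two connecting strings through a shaded region, so the expansion of $f^{(8)}$ picks up contributions from the same five TL basis elements but with different relative orientations of the generators. Concretely, for each pair $(G,H)$ and $(E,F)$ I would form the sandwich
\[
\ip{(E,F),(G,H)} =
\begin{tikzpicture}[baseline=0,scale=0.9]
  \draw (-0.6,0.5) rectangle (0.6,-0.5);
  \node at (0,0) {$f^{(8)}$};
\end{tikzpicture}
\]
(schematically) with the two shaded generator-pair boxes attached on the top and bottom, then expand $f^{(8)}$ via the usual Jones–Wenzl basis. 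By planar isotopy only five basis elements survive: the identity, the single cup at position $4$, the two ``nested'' double-cup diagrams, and the all-caps diagram; everything else is killed by the annular capping relations $\epsilon_i(T)=\epsilon_i(Q)=0$.

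Next, I would identify what each surviving basis element evaluates to. Because the top and bottom boxes carry three strands on each side of the joined strands (reflecting the shaded attachment of $T$ and $Q$ to $f^{(8)}$), closing up through a TL basis element gives a product of the form $Z(X_1X_2X_3X_4)$ with $X_i\in\{T,Q,\rho^{1/2}T,\rho^{1/2}Q\}$; which half-rotations appear is dictated by where the cups are. The identity element and the all-caps element produce ``unrotated'' traces of the form $Z(E^*G\cdot F^*H)$ (or $Z(E^*G)Z(F^*H)$), while the single-cup at position $4$ produces the ``fully rotated'' trace $Z((\rho^{1/2}E)^*(\rho^{1/2}G)(\rho^{1/2}F)^*(\rho^{1/2}H))$, and the two double-cup diagrams produce partial-rotation traces weighted by the corresponding rotational eigenvalues (factors of $1$ for $T$ and $\omega,\omega^2$ for $Q$). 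I would record each contribution and its coefficient in $f^{(8)}$ in a table, exactly in the style of Tables 5.8--5.14 in the excerpt, then sum.

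Plugging in the numerical trace values from Lemma 5.0.14 (both the unrotated and the $\rho^{1/2}$-rotated versions) then yields each of the six claimed closed-form inner products. In particular, the identities $Z((\rho^{1/2}T)^n)=Z(T^n)$, $Z((\rho^{1/2}Q)^2)=\omega Z(Q^2)$, $Z((\rho^{1/2}T)^2(\rho^{1/2}Q)^2)=\omega Z(T^2Q^2)$, etc., will make the simplification immediate: the three diagonal inner products $\ip{(T,Q),(T,Q)}$, $\ip{(Q,T),(Q,T)}$, and $\ip{(Q,Q),(Q,Q)}$ collapse to real multiples of $[4]$ and $Z(E^2Q^2)$, while the three mixed inner products $\ip{(T,T),(Q,Q)}$, $\ip{(T,Q),(Q,T)}$, and $\ip{(T,Q),(T,T)}$ pick up the $\omega,\omega^2$ factors that produce the stated imaginary parts.

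The step I expect to be the main obstacle is purely bookkeeping: correctly tracking which of the two strands in each shaded $(E,F)$ box passes ``above'' after the half-rotation induced by the cups, since a wrong assignment switches $Z((\rho^{1/2}T)^2\rho^{1/2}Q)$ with $Z(\rho^{1/2}T(\rho^{1/2}Q)^2)$ and flips the sign of the imaginary parts in $\ip{(T,Q),(Q,T)}$ and $\ip{(T,Q),(T,T)}$. To avoid this I would fix once and for all a convention for the star placement on each shaded $(E,F)$ box (reading off the Notation 5.2.9 picture), verify this convention against the already-established unshaded inner product $\ip{(T,T),(T,T)}=\frac{9+19\sqrt{21}}{36}$ of Lemma 5.2.7 (which should agree with the shaded $\ip{B,B}$ of Lemma 5.2.13, as both equal the same real number), and only then evaluate the remaining six cases.
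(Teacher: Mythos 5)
Your proposal is essentially the same approach as the paper's: expand $f^{(8)}$ into Temperley--Lieb basis elements, observe that the capping-off relations $\epsilon_i(T)=\epsilon_i(Q)=0$ kill all but five diagrams, tabulate the value and $f^{(8)}$-coefficient of each survivor, and sum using the trace values of Lemma 5.0.14. One small imprecision: the five surviving diagrams are the $k$-fold nested ``rainbows'' centered between positions $4$ and $5$ for $k=0,1,2,3,4$ (identity, one cup, two nested cups, three nested cups, four nested cups) rather than ``the identity, the single cup, the two nested double-cup diagrams, and the all-caps diagram'' as you describe, but this is only a labeling slip and does not alter the method. Your plan to sanity-check the orientation conventions against the already-established $\ip{(T,T),(T,T)}=\frac{9+19\sqrt{21}}{36}$ is exactly the kind of consistency check the paper implicitly relies on (Lemma 5.2.14 recomputes this value in the shaded setting and gets the same number).
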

\newpage
\begin{proof}
To calculate $\ip{(Q,Q),(Q,Q)}$, we need the following table:
\begin{table}[htb]
\center
\begin{tabular}{ c | c | c }
$\beta$ & $\mathrm{Coeff}_{f^{(8)}}(\beta)$ & value of diagram \\
\hline
\begin{tikzpicture}[scale=.15,baseline=-0.5ex]
    \foreach \x in {1,2,3,4,5,6,7,8} \draw (\x cm , -1.5cm)--(\x cm, 1.5cm);
    
\end{tikzpicture} 
     & $1$
     & $\frac{[4]}{[3]}Z(Q^2)$ \\
\begin{tikzpicture}[scale=.15,baseline=-0.5ex]
        \draw (4,1.5) arc (-180:0:0.5);
        \draw (1,-1.5) -- (1,1.5);
        \draw (2,-1.5) -- (2,1.5);
        \draw (3,-1.5) -- (3,1.5);
        \draw (4,-1.5) arc (180:0:0.5);
        \draw (6,-1.5) -- (6,1.5);
        \draw (7,-1.5) -- (7,1.5);
        \draw (8,-1.5) -- (8,1.5);
\end{tikzpicture}
     & $-[4]^2(\frac{1}{[8][7]}+\frac{1}{[7][6]}+\frac{1}{[6][5]}+\frac{1}{[5][4]})$
     & $\omega Z((\rho^{1/2}(Q))^4)$ \\
\begin{tikzpicture}[scale=.15,baseline=-0.5ex]
        \draw (1,-1.5) -- (1,1.5);
        \draw (2,-1.5) -- (2,1.5);
        \draw (3,-1.5) arc (180:0:1.5 and 1.1);
        \draw (4,-1.5) arc (180:0:0.5);
        \draw (7,-1.5) -- (7,1.5);
        \draw (8,-1.5) -- (8,1.5);
        \draw (3,1.5) arc (-180:0:1.5 and 1.1);
        \draw (4,1.5) arc (-180:0:0.5);
\end{tikzpicture}
     & $\frac{[4]^2[3]^2}{[8][7][6][5][4][3]}([8][7]+[8][5]+[8][3]+[6][5]+[6][3]+[4][3])$ 
     & $\frac{[4]}{[3]}Z(Q^2)$\\
\begin{tikzpicture}[scale=.15,baseline=-0.5ex]
        \draw (1,-1.5) -- (1,1.5);
        \draw (2,-1.5) arc (180:0:2.5 and 1.5);
        \draw (3,-1.5) arc (180:0:1.5 and 1.0);
        \draw (4,-1.5) arc (180:0:0.5 and 0.5);
        \draw (8,-1.5) -- (8,1.5);
        \draw (2,1.5) arc (-180:0:2.5 and 1.5);
        \draw (3,1.5) arc (-180:0:1.5 and 1.0);
        \draw (4,1.5) arc (-180:0:0.5 and 0.5);
\end{tikzpicture}
     & $\frac{-[4][3][2]}{[8][7][6][5]}([8]+[6]+[4]+[2])$ 
     & $\frac{[4]}{[2]}Z(Q^2)$  \\
 \begin{tikzpicture}[scale=.15,baseline=-0.5ex]
        \draw (1,-1.5) arc (180:0:3.5 and 1.5);
        \draw (2,-1.5) arc (180:0:2.5 and 1.1);
        \draw (3,-1.5) arc (180:0:1.5 and 0.7);
        \draw (4,-1.5) arc (180:0:0.5 and 0.3);
        \draw (1,1.5) arc (-180:0:3.5 and 1.5);
        \draw (2,1.5) arc (-180:0:2.5 and 1.1);
        \draw (3,1.5) arc (-180:0:1.5 and 0.7);
        \draw (4,1.5) arc (-180:0:0.5 and 0.3);
\end{tikzpicture}
     & $\frac{[4][3][2]}{[8][7][6][5]}$ 
     & $(Z(Q^2))^2$            
\end{tabular}
\vspace{6pt}
\caption{The terms of $f^{(8)}$ that contribute to $\ip{(Q,Q),(Q,Q)}$. }
\end{table}

We have the following table for the inner product $\ip{(T,T),(Q,Q)}$:
\begin{table}[htb]
\center
\begin{tabular}{ c | c | c }
$\beta$ & $\mathrm{Coeff}_{f^{(8)}}(\beta)$ & value of diagram \\
\hline
\begin{tikzpicture}[scale=.15,baseline=-0.5ex]
    \foreach \x in {1,2,3,4,5,6,7,8} \draw (\x cm , -1.5cm)--(\x cm, 1.5cm);
    
\end{tikzpicture} 
     & $1$
     & $0$ \\
\begin{tikzpicture}[scale=.15,baseline=-0.5ex]
        \draw (4,1.5) arc (-180:0:0.5);
        \draw (1,-1.5) -- (1,1.5);
        \draw (2,-1.5) -- (2,1.5);
        \draw (3,-1.5) -- (3,1.5);
        \draw (4,-1.5) arc (180:0:0.5);
        \draw (6,-1.5) -- (6,1.5);
        \draw (7,-1.5) -- (7,1.5);
        \draw (8,-1.5) -- (8,1.5);
\end{tikzpicture}
     & \scriptsize$-[4]^2(\frac{1}{[8][7]}+\frac{1}{[7][6]}+\frac{1}{[6][5]}+\frac{1}{[5][4]})$
     & \scriptsize$\omega^2 Z((\rho^{1/2}(T))^2(\rho^{1/2}(Q))^2)$ \\
\begin{tikzpicture}[scale=.15,baseline=-0.5ex]
        \draw (1,-1.5) -- (1,1.5);
        \draw (2,-1.5) -- (2,1.5);
        \draw (3,-1.5) arc (180:0:1.5 and 1.1);
        \draw (4,-1.5) arc (180:0:0.5);
        \draw (7,-1.5) -- (7,1.5);
        \draw (8,-1.5) -- (8,1.5);
        \draw (3,1.5) arc (-180:0:1.5 and 1.1);
        \draw (4,1.5) arc (-180:0:0.5);
\end{tikzpicture}
     & \scriptsize$\frac{[4]^2[3]^2}{[8][7][6][5][4][3]}([8][7]+[8][5]+[8][3]+[6][5]+[6][3]+[4][3])$ 
     & $\frac{[4]}{[3]}Z(Q^2)$\\
\begin{tikzpicture}[scale=.15,baseline=-0.5ex]
        \draw (1,-1.5) -- (1,1.5);
        \draw (2,-1.5) arc (180:0:2.5 and 1.5);
        \draw (3,-1.5) arc (180:0:1.5 and 1.0);
        \draw (4,-1.5) arc (180:0:0.5 and 0.5);
        \draw (8,-1.5) -- (8,1.5);
        \draw (2,1.5) arc (-180:0:2.5 and 1.5);
        \draw (3,1.5) arc (-180:0:1.5 and 1.0);
        \draw (4,1.5) arc (-180:0:0.5 and 0.5);
\end{tikzpicture}
     & \scriptsize$\frac{-[4][3][2]}{[8][7][6][5]}([8]+[6]+[4]+[2])$ 
     & $\frac{[4]}{[2]}Z(Q^2)$  \\
 \begin{tikzpicture}[scale=.15,baseline=-0.5ex]
        \draw (1,-1.5) arc (180:0:3.5 and 1.5);
        \draw (2,-1.5) arc (180:0:2.5 and 1.1);
        \draw (3,-1.5) arc (180:0:1.5 and 0.7);
        \draw (4,-1.5) arc (180:0:0.5 and 0.3);
        \draw (1,1.5) arc (-180:0:3.5 and 1.5);
        \draw (2,1.5) arc (-180:0:2.5 and 1.1);
        \draw (3,1.5) arc (-180:0:1.5 and 0.7);
        \draw (4,1.5) arc (-180:0:0.5 and 0.3);
\end{tikzpicture}
     & \scriptsize$\frac{[4][3][2]}{[8][7][6][5]}$ 
     & $(Z(T^2))^2$            
\end{tabular}
\vspace{6pt}
\caption{The terms of $f^{(8)}$ that contribute to $\ip{(T,T),(Q,Q)}$. }
\end{table}
\newpage
The remaining tables are for the inner products\\
$\ip{(T,Q),(T,Q)},\ip{(Q,T),(Q,T)},\ip{(T,Q),(Q,T)},\ip{(T,Q),(T,T)}$:
\begin{table}[htb]
\center
\begin{tabular}{ c | c | c }
$\beta$ & $\mathrm{Coeff}_{f^{(8)}}(\beta)$ & value of diagram \\
\hline
\begin{tikzpicture}[scale=.15,baseline=-0.5ex]
    \foreach \x in {1,2,3,4,5,6,7,8} \draw (\x cm , -1.5cm)--(\x cm, 1.5cm);
    
\end{tikzpicture} 
     & $1$
     & $\frac{[4]}{[3]}Z(Q^2)$ \\
\begin{tikzpicture}[scale=.15,baseline=-0.5ex]
        \draw (4,1.5) arc (-180:0:0.5);
        \draw (1,-1.5) -- (1,1.5);
        \draw (2,-1.5) -- (2,1.5);
        \draw (3,-1.5) -- (3,1.5);
        \draw (4,-1.5) arc (180:0:0.5);
        \draw (6,-1.5) -- (6,1.5);
        \draw (7,-1.5) -- (7,1.5);
        \draw (8,-1.5) -- (8,1.5);
\end{tikzpicture}
     & \scriptsize$-[4]^2(\frac{1}{[8][7]}+\frac{1}{[7][6]}+\frac{1}{[6][5]}+\frac{1}{[5][4]})$
     & \scriptsize$\omega^2Z((\rho^{1/2}(T))^2(\rho^{1/2}(Q))^2)$ \\
\begin{tikzpicture}[scale=.15,baseline=-0.5ex]
        \draw (1,-1.5) -- (1,1.5);
        \draw (2,-1.5) -- (2,1.5);
        \draw (3,-1.5) arc (180:0:1.5 and 1.1);
        \draw (4,-1.5) arc (180:0:0.5);
        \draw (7,-1.5) -- (7,1.5);
        \draw (8,-1.5) -- (8,1.5);
        \draw (3,1.5) arc (-180:0:1.5 and 1.1);
        \draw (4,1.5) arc (-180:0:0.5);
\end{tikzpicture}
     & \scriptsize$\frac{[4]^2[3]^2}{[8][7][6][5][4][3]}([8][7]+[8][5]+[8][3]+[6][5]+[6][3]+[4][3])$ 
     & $0$\\
\begin{tikzpicture}[scale=.15,baseline=-0.5ex]
        \draw (1,-1.5) -- (1,1.5);
        \draw (2,-1.5) arc (180:0:2.5 and 1.5);
        \draw (3,-1.5) arc (180:0:1.5 and 1.0);
        \draw (4,-1.5) arc (180:0:0.5 and 0.5);
        \draw (8,-1.5) -- (8,1.5);
        \draw (2,1.5) arc (-180:0:2.5 and 1.5);
        \draw (3,1.5) arc (-180:0:1.5 and 1.0);
        \draw (4,1.5) arc (-180:0:0.5 and 0.5);
\end{tikzpicture}
     & \scriptsize$\frac{-[4][3][2]}{[8][7][6][5]}([8]+[6]+[4]+[2])$ 
     & $0$  \\
 \begin{tikzpicture}[scale=.15,baseline=-0.5ex]
        \draw (1,-1.5) arc (180:0:3.5 and 1.5);
        \draw (2,-1.5) arc (180:0:2.5 and 1.1);
        \draw (3,-1.5) arc (180:0:1.5 and 0.7);
        \draw (4,-1.5) arc (180:0:0.5 and 0.3);
        \draw (1,1.5) arc (-180:0:3.5 and 1.5);
        \draw (2,1.5) arc (-180:0:2.5 and 1.1);
        \draw (3,1.5) arc (-180:0:1.5 and 0.7);
        \draw (4,1.5) arc (-180:0:0.5 and 0.3);
\end{tikzpicture}
     & \scriptsize$\frac{[4][3][2]}{[8][7][6][5]}$ 
     & $0$            
\end{tabular}
\vspace{6pt}
\caption{The terms of $f^{(8)}$ that contribute to $\ip{(T,Q),(T,Q)}$. }
\end{table}

\begin{table}[!htb]
\center
\begin{tabular}{ c | c | c }
$\beta$ & $\mathrm{Coeff}_{f^{(8)}}(\beta)$ & value of diagram \\
\hline
\begin{tikzpicture}[scale=.15,baseline=-0.5ex]
    \foreach \x in {1,2,3,4,5,6,7,8} \draw (\x cm , -1.5cm)--(\x cm, 1.5cm);
    
\end{tikzpicture} 
     & $1$
     & $\frac{[4]}{[3]}Z(T^2)$ \\
\begin{tikzpicture}[scale=.15,baseline=-0.5ex]
        \draw (4,1.5) arc (-180:0:0.5);
        \draw (1,-1.5) -- (1,1.5);
        \draw (2,-1.5) -- (2,1.5);
        \draw (3,-1.5) -- (3,1.5);
        \draw (4,-1.5) arc (180:0:0.5);
        \draw (6,-1.5) -- (6,1.5);
        \draw (7,-1.5) -- (7,1.5);
        \draw (8,-1.5) -- (8,1.5);
\end{tikzpicture}
     & \scriptsize$-[4]^2(\frac{1}{[8][7]}+\frac{1}{[7][6]}+\frac{1}{[6][5]}+\frac{1}{[5][4]})$
     & \scriptsize$\omega^2Z((\rho^{1/2}(T))^2(\rho^{1/2}(Q))^2)$ \\
\begin{tikzpicture}[scale=.15,baseline=-0.5ex]
        \draw (1,-1.5) -- (1,1.5);
        \draw (2,-1.5) -- (2,1.5);
        \draw (3,-1.5) arc (180:0:1.5 and 1.1);
        \draw (4,-1.5) arc (180:0:0.5);
        \draw (7,-1.5) -- (7,1.5);
        \draw (8,-1.5) -- (8,1.5);
        \draw (3,1.5) arc (-180:0:1.5 and 1.1);
        \draw (4,1.5) arc (-180:0:0.5);
\end{tikzpicture}
     & \scriptsize$\frac{[4]^2[3]^2}{[8][7][6][5][4][3]}([8][7]+[8][5]+[8][3]+[6][5]+[6][3]+[4][3])$ 
     & $0$\\
\begin{tikzpicture}[scale=.15,baseline=-0.5ex]
        \draw (1,-1.5) -- (1,1.5);
        \draw (2,-1.5) arc (180:0:2.5 and 1.5);
        \draw (3,-1.5) arc (180:0:1.5 and 1.0);
        \draw (4,-1.5) arc (180:0:0.5 and 0.5);
        \draw (8,-1.5) -- (8,1.5);
        \draw (2,1.5) arc (-180:0:2.5 and 1.5);
        \draw (3,1.5) arc (-180:0:1.5 and 1.0);
        \draw (4,1.5) arc (-180:0:0.5 and 0.5);
\end{tikzpicture}
     & \scriptsize$\frac{-[4][3][2]}{[8][7][6][5]}([8]+[6]+[4]+[2])$ 
     & $0$  \\
 \begin{tikzpicture}[scale=.15,baseline=-0.5ex]
        \draw (1,-1.5) arc (180:0:3.5 and 1.5);
        \draw (2,-1.5) arc (180:0:2.5 and 1.1);
        \draw (3,-1.5) arc (180:0:1.5 and 0.7);
        \draw (4,-1.5) arc (180:0:0.5 and 0.3);
        \draw (1,1.5) arc (-180:0:3.5 and 1.5);
        \draw (2,1.5) arc (-180:0:2.5 and 1.1);
        \draw (3,1.5) arc (-180:0:1.5 and 0.7);
        \draw (4,1.5) arc (-180:0:0.5 and 0.3);
\end{tikzpicture}
     & \scriptsize$\frac{[4][3][2]}{[8][7][6][5]}$ 
     & $0$            
\end{tabular}
\vspace{6pt}
\caption{The terms of $f^{(8)}$ that contribute to $\ip{(Q,T),(Q,T)}$. }
\end{table}

\begin{table}[!htb]
\center
\begin{tabular}{ c | c | c }
$\beta$ & $\mathrm{Coeff}_{f^{(8)}}(\beta)$ & value of diagram \\
\hline
\begin{tikzpicture}[scale=.15,baseline=-0.5ex]
    \foreach \x in {1,2,3,4,5,6,7,8} \draw (\x cm , -1.5cm)--(\x cm, 1.5cm);
    
\end{tikzpicture} 
     & $1$
     & $0$ \\
\begin{tikzpicture}[scale=.15,baseline=-0.5ex]
        \draw (4,1.5) arc (-180:0:0.5);
        \draw (1,-1.5) -- (1,1.5);
        \draw (2,-1.5) -- (2,1.5);
        \draw (3,-1.5) -- (3,1.5);
        \draw (4,-1.5) arc (180:0:0.5);
        \draw (6,-1.5) -- (6,1.5);
        \draw (7,-1.5) -- (7,1.5);
        \draw (8,-1.5) -- (8,1.5);
\end{tikzpicture}
     & \scriptsize$-[4]^2(\frac{1}{[8][7]}+\frac{1}{[7][6]}+\frac{1}{[6][5]}+\frac{1}{[5][4]})$
     & \scriptsize$Z((\rho^{1/2}(T))^2(\rho^{1/2}(Q))^2)$ \\
\begin{tikzpicture}[scale=.15,baseline=-0.5ex]
        \draw (1,-1.5) -- (1,1.5);
        \draw (2,-1.5) -- (2,1.5);
        \draw (3,-1.5) arc (180:0:1.5 and 1.1);
        \draw (4,-1.5) arc (180:0:0.5);
        \draw (7,-1.5) -- (7,1.5);
        \draw (8,-1.5) -- (8,1.5);
        \draw (3,1.5) arc (-180:0:1.5 and 1.1);
        \draw (4,1.5) arc (-180:0:0.5);
\end{tikzpicture}
     & \scriptsize$\frac{[4]^2[3]^2}{[8][7][6][5][4][3]}([8][7]+[8][5]+[8][3]+[6][5]+[6][3]+[4][3])$ 
     & $0$\\
\begin{tikzpicture}[scale=.15,baseline=-0.5ex]
        \draw (1,-1.5) -- (1,1.5);
        \draw (2,-1.5) arc (180:0:2.5 and 1.5);
        \draw (3,-1.5) arc (180:0:1.5 and 1.0);
        \draw (4,-1.5) arc (180:0:0.5 and 0.5);
        \draw (8,-1.5) -- (8,1.5);
        \draw (2,1.5) arc (-180:0:2.5 and 1.5);
        \draw (3,1.5) arc (-180:0:1.5 and 1.0);
        \draw (4,1.5) arc (-180:0:0.5 and 0.5);
\end{tikzpicture}
     & \scriptsize$\frac{-[4][3][2]}{[8][7][6][5]}([8]+[6]+[4]+[2])$ 
     & $0$  \\
 \begin{tikzpicture}[scale=.15,baseline=-0.5ex]
        \draw (1,-1.5) arc (180:0:3.5 and 1.5);
        \draw (2,-1.5) arc (180:0:2.5 and 1.1);
        \draw (3,-1.5) arc (180:0:1.5 and 0.7);
        \draw (4,-1.5) arc (180:0:0.5 and 0.3);
        \draw (1,1.5) arc (-180:0:3.5 and 1.5);
        \draw (2,1.5) arc (-180:0:2.5 and 1.1);
        \draw (3,1.5) arc (-180:0:1.5 and 0.7);
        \draw (4,1.5) arc (-180:0:0.5 and 0.3);
\end{tikzpicture}
     & \scriptsize$\frac{[4][3][2]}{[8][7][6][5]}$ 
     & $0$            
\end{tabular}
\vspace{6pt}
\caption{The terms of $f^{(8)}$ that contribute to $\ip{(T,Q),(Q,T)}$. }
\end{table}
\newpage
\begin{table}[!htb]
\center
\begin{tabular}{ c | c | c }
$\beta$ & $\mathrm{Coeff}_{f^{(8)}}(\beta)$ & value of diagram \\
\hline
\begin{tikzpicture}[scale=.15,baseline=-0.5ex]
    \foreach \x in {1,2,3,4,5,6,7,8} \draw (\x cm , -1.5cm)--(\x cm, 1.5cm);
    
\end{tikzpicture} 
     & $1$
     & $0$ \\
\begin{tikzpicture}[scale=.15,baseline=-0.5ex]
        \draw (4,1.5) arc (-180:0:0.5);
        \draw (1,-1.5) -- (1,1.5);
        \draw (2,-1.5) -- (2,1.5);
        \draw (3,-1.5) -- (3,1.5);
        \draw (4,-1.5) arc (180:0:0.5);
        \draw (6,-1.5) -- (6,1.5);
        \draw (7,-1.5) -- (7,1.5);
        \draw (8,-1.5) -- (8,1.5);
\end{tikzpicture}
     & \scriptsize$-[4]^2(\frac{1}{[8][7]}+\frac{1}{[7][6]}+\frac{1}{[6][5]}+\frac{1}{[5][4]})$
     & \scriptsize$\omega^2\big( \frac{Z(T^2Q)}{[4]}Z(T^3)+\frac{Z(TQ^2)}{[4]}Z(T^2Q)\big)$ \\
\begin{tikzpicture}[scale=.15,baseline=-0.5ex]
        \draw (1,-1.5) -- (1,1.5);
        \draw (2,-1.5) -- (2,1.5);
        \draw (3,-1.5) arc (180:0:1.5 and 1.1);
        \draw (4,-1.5) arc (180:0:0.5);
        \draw (7,-1.5) -- (7,1.5);
        \draw (8,-1.5) -- (8,1.5);
        \draw (3,1.5) arc (-180:0:1.5 and 1.1);
        \draw (4,1.5) arc (-180:0:0.5);
\end{tikzpicture}
     & \scriptsize$\frac{[4]^2[3]^2}{[8][7][6][5][4][3]}([8][7]+[8][5]+[8][3]+[6][5]+[6][3]+[4][3])$ 
     & $0$\\
\begin{tikzpicture}[scale=.15,baseline=-0.5ex]
        \draw (1,-1.5) -- (1,1.5);
        \draw (2,-1.5) arc (180:0:2.5 and 1.5);
        \draw (3,-1.5) arc (180:0:1.5 and 1.0);
        \draw (4,-1.5) arc (180:0:0.5 and 0.5);
        \draw (8,-1.5) -- (8,1.5);
        \draw (2,1.5) arc (-180:0:2.5 and 1.5);
        \draw (3,1.5) arc (-180:0:1.5 and 1.0);
        \draw (4,1.5) arc (-180:0:0.5 and 0.5);
\end{tikzpicture}
     & \scriptsize$\frac{-[4][3][2]}{[8][7][6][5]}([8]+[6]+[4]+[2])$ 
     & $0$  \\
 \begin{tikzpicture}[scale=.15,baseline=-0.5ex]
        \draw (1,-1.5) arc (180:0:3.5 and 1.5);
        \draw (2,-1.5) arc (180:0:2.5 and 1.1);
        \draw (3,-1.5) arc (180:0:1.5 and 0.7);
        \draw (4,-1.5) arc (180:0:0.5 and 0.3);
        \draw (1,1.5) arc (-180:0:3.5 and 1.5);
        \draw (2,1.5) arc (-180:0:2.5 and 1.1);
        \draw (3,1.5) arc (-180:0:1.5 and 0.7);
        \draw (4,1.5) arc (-180:0:0.5 and 0.3);
\end{tikzpicture}
     & \scriptsize$\frac{[4][3][2]}{[8][7][6][5]}$ 
     & $0$            
\end{tabular}
\vspace{6pt}
\caption{The terms of $f^{(8)}$ that contribute to $\ip{(T,Q),(T,T)}$. }
\end{table}
\end{proof}

We are now ready to prove the shaded 4-box relations.
Since there is one new element $M'$ at level 4, we expect to see that $(T,Q)$ can be written as a linear combination of elements in $ATL(T)$, $ATL(Q)$, and the new element $M'$.  More precisely, we expect to see $(T,Q)$ as a linear combination of $u_0$, $v_0$, and $M'$.
Let $$M'=(T,Q)-\proj{u_0}{(T,Q)}-\proj{v_0}{(T,Q)}$$
We want to show that $(Q,T)\in span\left\{u_0,v_0,M'\right\}$.  Similarly, for $(T,T)$ and $(Q,Q)$.  We use Bessel's inequality to prove these relations.

\begin{theorem}We have the following 4-box relations in the precision sufficient for our needs.

\begin{itemize}
\item $(R4(T,Q))_0$ \hspace{2cm}$(T,Q) \in  span\left\{u_0,v_0,M'\right\}$
\item $(R4(Q,T))_0$ \hspace{2cm}$(Q,T) \in  span\left\{u_0,v_0,M'\right\}$
\item $(R4(T,T))_0$ \hspace{2cm}$(T,T) \in  span\left\{u_0,v_0,M'\right\}$
\item $(R4(Q,Q))_0$ \hspace{2cm}$(Q,Q) \in  span\left\{u_0,v_0,M'\right\}$
\end{itemize}
In terms of diagrams, this means:
Each of 
$$\begin{tikzpicture}[baseline=0,scale=1.7]
	{%
	\fill[shaded] (-0.5,0) rectangle (0.5,1);
	\draw (-0.5,1) -- (-0.5,0);
	\node[anchor=west] at (-0.5,0.5) {\footnotesize$3$};
	\draw (-0.6,1) -- (-0.6,0);
	\draw (0.5,1) -- (0.5,0);
	\node[anchor=west] at (0.3,0.5) {\footnotesize$3$};
	\draw (0.6,1) -- (0.6,0);
}
 {%
	
	\draw (-0.5,1) -- (0.5, 1);
	\draw (-0.5,1.1) -- (0.5,1.1);
	\foreach \x in {-0.5,0.5} {
		{%
	\filldraw[fill=white,thick] (\x,1) ellipse (3mm and 3mm);
	\node at (-.5,1) {\Large $T$};
	\node at (.5,1) {\Large $Q$};
	\path(-.55,1) ++(-90:0.37) node {$\star$};
	\path(0.55,1) ++(-90:0.37) node {$\star$};
}
	}
}
        \draw (0,0)--(0,-0.5);
        \node[anchor=west] at (0,-0.35) {\footnotesize$8$};
	{%
	\filldraw[fill=white,thick] (-1,-0.2) rectangle (1,0.2);
	\node at (0,0) {\Large$\JW{8}$};
}
\end{tikzpicture}, \begin{tikzpicture}[baseline=0,scale=1.7]
	{%
	\fill[shaded] (-0.5,0) rectangle (0.5,1);
	\draw (-0.5,1) -- (-0.5,0);
	\node[anchor=west] at (-0.5,0.5) {\footnotesize$3$};
	\draw (-0.6,1) -- (-0.6,0);
	\draw (0.5,1) -- (0.5,0);
	\node[anchor=west] at (0.3,0.5) {\footnotesize$3$};
	\draw (0.6,1) -- (0.6,0);
}
 {%
	
	\draw (-0.5,1) -- (0.5, 1);
	\draw (-0.5,1.1) -- (0.5,1.1);
	\foreach \x in {-0.5,0.5} {
		{%
	\filldraw[fill=white,thick] (\x,1) ellipse (3mm and 3mm);
	\node at (-.5,1) {\Large $Q$};
	\node at (.5,1) {\Large $T$};
	\path(-.55,1) ++(-90:0.37) node {$\star$};
	\path(0.55,1) ++(-90:0.37) node {$\star$};
}
	}
}
        \draw (0,0)--(0,-0.5);
        \node[anchor=west] at (0,-0.35) {\footnotesize$8$};
	{%
	\filldraw[fill=white,thick] (-1,-0.2) rectangle (1,0.2);
	\node at (0,0) {\Large$\JW{8}$};
}
\end{tikzpicture}, \begin{tikzpicture}[baseline=0,scale=1.7]
	{%
	\fill[shaded] (-0.5,0) rectangle (0.5,1);
	\draw (-0.5,1) -- (-0.5,0);
	\node[anchor=west] at (-0.5,0.5) {\footnotesize$3$};
	\draw (-0.6,1) -- (-0.6,0);
	\draw (0.5,1) -- (0.5,0);
	\node[anchor=west] at (0.3,0.5) {\footnotesize$3$};
	\draw (0.6,1) -- (0.6,0);
}
 {%
	
	\draw (-0.5,1) -- (0.5, 1);
	\draw (-0.5,1.1) -- (0.5,1.1);
	\foreach \x in {-0.5,0.5} {
		{%
	\filldraw[fill=white,thick] (\x,1) ellipse (3mm and 3mm);
	\node at (-.5,1) {\Large $T$};
	\node at (.5,1) {\Large $T$};
	\path(-.55,1) ++(-90:0.37) node {$\star$};
	\path(0.55,1) ++(-90:0.37) node {$\star$};
}
	}
}
        \draw (0,0)--(0,-0.5);
        \node[anchor=west] at (0,-0.35) {\footnotesize$8$};
	{%
	\filldraw[fill=white,thick] (-1,-0.2) rectangle (1,0.2);
	\node at (0,0) {\Large$\JW{8}$};
}
\end{tikzpicture}, \begin{tikzpicture}[baseline=0,scale=1.7]
	{%
	\fill[shaded] (-0.5,0) rectangle (0.5,1);
	\draw (-0.5,1) -- (-0.5,0);
	\node[anchor=west] at (-0.5,0.5) {\footnotesize$3$};
	\draw (-0.6,1) -- (-0.6,0);
	\draw (0.5,1) -- (0.5,0);
	\node[anchor=west] at (0.3,0.5) {\footnotesize$3$};
	\draw (0.6,1) -- (0.6,0);
}
 {%
	
	\draw (-0.5,1) -- (0.5, 1);
	\draw (-0.5,1.1) -- (0.5,1.1);
	\foreach \x in {-0.5,0.5} {
		{%
	\filldraw[fill=white,thick] (\x,1) ellipse (3mm and 3mm);
	\node at (-.5,1) {\Large $Q$};
	\node at (.5,1) {\Large $Q$};
	\path(-.55,1) ++(-90:0.37) node {$\star$};
	\path(0.55,1) ++(-90:0.37) node {$\star$};
}
	}
}
        \draw (0,0)--(0,-0.5);
        \node[anchor=west] at (0,-0.35) {\footnotesize$8$};
	{%
	\filldraw[fill=white,thick] (-1,-0.2) rectangle (1,0.2);
	\node at (0,0) {\Large$\JW{8}$};
}
\end{tikzpicture}$$ lies in the span of 

$$\Big( \hspace{3mm}\begin{tikzpicture}[baseline=0,scale=1.5]
	{\draw[shaded] (0.2,0) -- (0.2,1) -- (-0.2,1) -- (-0.2,0);
	\draw (-0.8,0) -- (-0.8,0.6) arc (180:0:0.8) -- (0.8,0);
	\draw(-0.2,0) -- (-0.2,1);
	\draw (0.2,0) -- (0.2,1);
	\node at (0.45,0.5) {\footnotesize$5$};
	{%
	\filldraw[fill=white,thick] (0,1) ellipse (3mm and 3mm);
	\node at (0,1) {\Large $T$};
	\path(0,1) ++(-180:0.37) node {$\star$};
}
}
;
        \draw (0,0)--(0,-0.5);
        \node[anchor=west] at (0,-0.35) {\footnotesize$8$};
	{%
	\filldraw[fill=white,thick] (-1,-0.2) rectangle (1,0.2);
	\node at (0,0) {\Large$\JW{8}$};
}\end{tikzpicture},
\quad \begin{tikzpicture}[baseline=0,scale=1.5]
	{\draw[shaded] (0.2,0) -- (0.2,1) -- (-0.2,1) -- (-0.2,0);
	\draw (-0.8,0) -- (-0.8,0.6) arc (180:0:0.8) -- (0.8,0);
	\draw(-0.2,0) -- (-0.2,1);
	\draw (0.2,0) -- (0.2,1);
	\node at (0.45,0.5) {\footnotesize$5$};
	{%
	\filldraw[fill=white,thick] (0,1) ellipse (3mm and 3mm);
	\node at (0,1) {\Large $Q$};
	\path(0,1) ++(-180:0.37) node {$\star$};
}
}
;
        \draw (0,0)--(0,-0.5);
        \node[anchor=west] at (0,-0.35) {\footnotesize$8$};
	{%
	\filldraw[fill=white,thick] (-1,-0.2) rectangle (1,0.2);
	\node at (0,0) {\Large$\JW{8}$};
}\end{tikzpicture} ,\quad \begin{tikzpicture}[baseline=0,scale=1.5]
	{

	\draw (-0.2,0) -- (-0.2,1);
	\draw (0.2,0) -- (0.2,1);
	\node at (0.45,0.5) {\footnotesize$7$};
	{%
	\filldraw[fill=white,thick] (0,1) ellipse (3mm and 3mm);
	\node at (0,1) {\Large $M'$};
	\path(0,1) ++(-180:0.37) node {$\star$};
}
}
;
        \draw (0,0)--(0,-0.5);
        \node[anchor=west] at (0,-0.35) {\footnotesize$8$};
	{%
	\filldraw[fill=white,thick] (-1,-0.2) rectangle (1,0.2);
	\node at (0,0) {\Large$\JW{8}$};
}\end{tikzpicture} \hspace{3mm}\Big)$$
\end{theorem}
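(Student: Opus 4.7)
The plan is to mirror the proof of the unshaded version (Theorem 5.2.9) almost verbatim, since the structural argument is identical. By construction, $M'$ is the component of $(T,Q)$ orthogonal to $\mathrm{span}\{u_0, v_0\}$, so $(T,Q) \in \mathrm{span}\{u_0, v_0, M'\}$ is automatic, giving $(R4(T,Q))_0$. For each of the other three diagrams $X \in \{(Q,T), (T,T), (Q,Q)\}$, the strategy is to apply Bessel's inequality with equality: we must verify
$$\langle X, X \rangle \;=\; \frac{|\langle X, u_0 \rangle|^2}{\langle u_0, u_0 \rangle} + \frac{|\langle X, v_0 \rangle|^2}{\langle v_0, v_0 \rangle} + \frac{\bigl|\langle X, (T,Q) \rangle - \tfrac{\langle X, u_0 \rangle \overline{\langle (T,Q), u_0 \rangle}}{\langle u_0, u_0 \rangle} - \tfrac{\langle X, v_0 \rangle \overline{\langle (T,Q), v_0 \rangle}}{\langle v_0, v_0 \rangle}\bigr|^2}{\|M'\|^2}.$$
Equality here forces $X$ to lie in $\mathrm{span}\{u_0, v_0, M'\}$, which is what is to be shown.

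All of the ingredients for these verifications are already assembled. The norms $\langle u_0, u_0 \rangle$ and $\langle v_0, v_0 \rangle$ are given by Lemma 5.0.15 (the reverse-shaded case stated explicitly there). The mixed three-generator inner products $(E,F,G) = \langle (E), (F,G) \rangle$ are catalogued in Lemma 5.2.15, and the four-box pair inner products $\langle (E,F), (G,H) \rangle$ are given in Lemma 5.2.17. From these, $\|M'\|^2 = \langle (T,Q),(T,Q)\rangle - |\langle(T,Q),u_0\rangle|^2/\langle u_0,u_0\rangle - |\langle(T,Q),v_0\rangle|^2/\langle v_0,v_0\rangle$ is computable, and should come out to the same value $\tfrac{9+19\sqrt{21}}{45}$ obtained in the unshaded case (a consistency check one should perform first).

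The main obstacle is not the logical structure, which is trivial once the Bessel computation is carried out, but the bookkeeping: the shaded inner products in Lemma 5.2.15 and Lemma 5.2.17 involve different TL-expansion contributions from the unshaded case, because the rotation $\rho^{1/2}$ introduces different factors of $\omega$ and $\omega^2$ in each entry of the relevant tables. Care is especially needed for $(T,Q)$ versus $(Q,T)$, since these are \emph{not} related by simple conjugation in the shaded setting. Once the three numerical identities are checked term by term, the conclusion follows immediately from the equality case of Bessel's inequality, exactly as in the proofs of (R4(Q,T)), (R4(T,T)), and (R4(Q,Q)). No new ideas beyond those already used in the unshaded argument are required.
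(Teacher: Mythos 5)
Your proposal matches the paper's proof exactly: $(R4(T,Q))_0$ holds by definition of $M'$, and the remaining three are established by checking equality in Bessel's inequality against $\mathrm{span}\{u_0,v_0,M'\}$, using the shaded inner products computed in the preceding lemmas and the fact that $\|M'\|^2=\tfrac{9+19\sqrt{21}}{45}$. Your lemma citations are slightly misnumbered relative to the paper (the norms of $u_0,v_0$ come from the reverse-shaded part of Lemma 5.2.2, the triple inner products from Lemma 5.2.13, and the pair inner products from Lemma 5.2.15), but the substance and structure of the argument are the same.
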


\begin{proof}[Proof of $(R4(T,Q))_0$] This is true by definition of $M'$.
\end{proof}
We use Lemmas 5.2.11, 5.2.13, 5.2.14, 5.2.15, and the reverse shading results of Lemma 5.2.2 for the values of the relevant inner products to prove the remaining relations.
\begin{proof}[Proof of $(R4(Q,T))_0$] Let $w =(T,Q)$ and $w_0=(Q,T)$.  We need to show that $$\ip{w_0,w_0} -\frac{\norm{\ip{w_0,u_0}}^2}{\ip{u_0,u_0}}-\frac{\norm{\ip{w_0,v_0}}^2}{\ip{v_0,v_0}}=\frac{\norm{\ip{w_0,w}-\frac{\ip{w_0,u_0}\overline{\ip{w,u_0}}}{\ip{u_0,u_0}}-\frac{\ip{w_0,v_0}\overline{\ip{w,v_0}}}{\ip{v_0,v_0}}}^2}{\Norm{M'}^2}$$
On the left hand side, we get $\frac{9+19\sqrt{21}}{45}$.  On the right hand side, we get $\frac{\frac{2}{675}(1277+57\sqrt{21}}{\Norm{M'}^2}$.
Since $\Norm{M'}^2=\frac{9+19\sqrt{21}}{45}$, we have equality.  Thus, $(Q,T) \in  span\left\{u_0,v_0,M'\right\}$
\end{proof}
\begin{proof}[Proof of $(R4(T,T))_0$]
We need to show that $\ip{(T,T),(T,T)}-\frac{\norm{\ip{(T,T),u_0}}^2}{\ip{u_0,u_0}}-\frac{\norm{\ip{(T,T),v_0}}^2}{\ip{v_0,v_0}}$
=$$\frac{\norm{\ip{(T,T),(T,Q)}-\frac{\ip{(T,T),u_0}\overline{\ip{(T,Q),u_0}}}{\ip{u_0,u_0}}-\frac{\ip{(T,T),v_0}\overline{\ip{(T,Q),v_0}}}{\ip{v_0,v_0}}}^2}{\Norm{M'}^2}$$

On the left hand side, we get $\frac{363-67\sqrt{21}}{180}$.  On the right hand side, we get $\frac{\frac{-3911+1049\sqrt{21}}{1350}}{\Norm{M'}^2}$.
Since $\Norm{M'}^2=\frac{9+19\sqrt{21}}{45}$, we have equality.  Thus, $(T,T) \in  span\left\{u_0,v_0,M'\right\}$
\end{proof}
\begin{proof}[Proof of $(R4(Q,Q))_0$] 
We need to show that
$ \ip{(Q,Q),(Q,Q)}-\frac{\norm{\ip{(Q,Q),u_0}}^2}{\ip{u_0,u_0}}-\frac{\norm{\ip{(Q,Q),v_0}}^2}{\ip{v_0,v_0}}$= $$\frac{\norm{\ip{(Q,Q),(T,Q)}-\frac{\ip{(Q,Q),u_0}\overline{\ip{(T,Q),u_0}}}{\ip{u_0,u_0}}-\frac{\ip{(Q,Q),v_0}\overline{\ip{(T,Q),v_0}}}{\ip{v_0,v_0}}}^2}{\Norm{M'}^2}$$

On the left hand side, we get $\frac{87+17\sqrt{21}}{180}$.  On the right hand side, we get $\frac{\frac{1261+301\sqrt{21}}{1350}}{\Norm{M'}^2}$.
Since $\Norm{M'}^2=\frac{9+19\sqrt{21}}{45}$, we have equality.  Thus, $(Q,Q) \in  span\left\{u_0,v_0,M'\right\}$
\end{proof}

\chapter{One-strand Braiding Substitutes}
Ultimately, we want to show the subfactor property of our planar algebra and to do this we will use the Jellyfish algorithm, which is introduced in \cite {ExtH}.  In order to do the Jellyfish algorithm, the authors of that paper use one-strand and two-strand braiding substitutes which allow you to pluck out a generator from underneath a single strand and two strands, respectively.  We can find one-strand braiding substitutes using our 4-box relations.  However, since there are four new elements $N_1,N_2,N_3,N_4$ at level $5$, this makes it very difficult to find two-strand braiding substitutes as the number of unknown variables in our technique for finding substitutes is too large.  Fortunately, we do not need the two-strand braiding substitute to do the Jellyfish algorithm, since a one-strand braiding substitute with the shading switched suffices.  In this chapter, we look for unshaded and shaded one-strand braiding substitutes.

To prove the one-strand braiding substitutes, we use Bessel's inequality.

\section{Unshaded One-strand Braiding substitutes}
Recall from Theorem 5.2.9 the unshaded 4-box relations:
$$\begin{tikzpicture}[baseline=0,scale=1.5]
	{%
	\draw (-0.5,1) -- (-0.5,0);
	\node[anchor=west] at (-0.5,0.5) {\footnotesize$4$};
	\draw (0.5,1) -- (0.5,0);
	\node[anchor=west] at (0.5,0.5) {\footnotesize$4$};
}
 {%
	\node[anchor=south] at (0,1) {\footnotesize$2$};
	\draw (-0.5,1) -- (0.5, 1);
	\foreach \x in {-0.5,0.5} {
		{%
	\filldraw[fill=white,thick] (\x,1) ellipse (3mm and 3mm);
	\node at (-.5,1) {\Large $T$};
	\node at (.5,1) {\Large $Q$};
	\path(\x,1) ++(90:0.37) node {$\star$};
}
	}
}
        \draw (0,0)--(0,-0.5);
        \node[anchor=west] at (0,-0.35) {\footnotesize$8$};
	{%
	\filldraw[fill=white,thick] (-1,-0.2) rectangle (1,0.2);
	\node at (0,0) {\Large$\JW{8}$};
}
\end{tikzpicture} = \alpha \cdot \begin{tikzpicture}[baseline=0,scale=1.5]
	{
	\fill[shaded] (-0.8,0) -- (-0.8,0.6) arc (180:0:0.8) -- (0.8,0) -- (0.2,0) -- (0.2,1) -- (-0.2,1) -- (-0.2,0);
	\draw (-0.8,0) -- (-0.8,0.6) arc (180:0:0.8) -- (0.8,0);
	\draw (-0.2,0) -- (-0.2,1);
	\draw (0.2,0) -- (0.2,1);
	\node at (0.45,0.5) {\footnotesize$5$};
	{%
	\filldraw[fill=white,thick] (0,1) ellipse (3mm and 3mm);
	\node at (0,1) {\Large $T$};
	\path(0,1) ++(-90:0.37) node {$\star$};
}
}
;
        \draw (0,0)--(0,-0.5);
        \node[anchor=west] at (0,-0.35) {\footnotesize$8$};
	{%
	\filldraw[fill=white,thick] (-1,-0.2) rectangle (1,0.2);
	\node at (0,0) {\Large$\JW{8}$};
}\end{tikzpicture} +\beta \cdot \begin{tikzpicture}[baseline=0,scale=1.5]
	{
	\fill[shaded] (-0.8,0) -- (-0.8,0.6) arc (180:0:0.8) -- (0.8,0) -- (0.2,0) -- (0.2,1) -- (-0.2,1) -- (-0.2,0);
	\draw (-0.8,0) -- (-0.8,0.6) arc (180:0:0.8) -- (0.8,0);
	\draw (-0.2,0) -- (-0.2,1);
	\draw (0.2,0) -- (0.2,1);
	\node at (0.45,0.5) {\footnotesize$5$};
	{%
	\filldraw[fill=white,thick] (0,1) ellipse (3mm and 3mm);
	\node at (0,1) {\Large $Q$};
	\path(0,1) ++(-90:0.37) node {$\star$};
}
}
;
        \draw (0,0)--(0,-0.5);
        \node[anchor=west] at (0,-0.35) {\footnotesize$8$};
	{%
	\filldraw[fill=white,thick] (-1,-0.2) rectangle (1,0.2);
	\node at (0,0) {\Large$\JW{8}$};
}\end{tikzpicture}+ \begin{tikzpicture}[baseline=0,scale=1.5]
	{\draw[shaded] (0.2,0) -- (0.2,1) -- (-0.2,1) -- (-0.2,0);
	
	\draw(-0.2,0) -- (-0.2,1);
	\draw (0.2,0) -- (0.2,1);
	\node at (0.45,0.5) {\footnotesize$7$};
	{%
	\filldraw[fill=white,thick] (0,1) ellipse (3mm and 3mm);
	\node at (0,1) {\Large $M$};
	\path(0,1) ++(-180:0.37) node {$\star$};
}
}
;
        \draw (0,0)--(0,-0.5);
        \node[anchor=west] at (0,-0.35) {\footnotesize$8$};
	{%
	\filldraw[fill=white,thick] (-1,-0.2) rectangle (1,0.2);
	\node at (0,0) {\Large$\JW{8}$};
}\end{tikzpicture} $$, 
and 
$$\begin{tikzpicture}[baseline=0,scale=1.5]
	{%
	\draw (-0.5,1) -- (-0.5,0);
	\node[anchor=west] at (-0.5,0.5) {\footnotesize$4$};
	\draw (0.5,1) -- (0.5,0);
	\node[anchor=west] at (0.5,0.5) {\footnotesize$4$};
}
 {%
	\node[anchor=south] at (0,1) {\footnotesize$2$};
	\draw (-0.5,1) -- (0.5, 1);
	\foreach \x in {-0.5,0.5} {
		{%
	\filldraw[fill=white,thick] (\x,1) ellipse (3mm and 3mm);
	\node at (-.5,1) {\Large $Q$};
	\node at (.5,1) {\Large $T$};
	\path(\x,1) ++(90:0.37) node {$\star$};
}
	}
}
        \draw (0,0)--(0,-0.5);
        \node[anchor=west] at (0,-0.35) {\footnotesize$8$};
	{%
	\filldraw[fill=white,thick] (-1,-0.2) rectangle (1,0.2);
	\node at (0,0) {\Large$\JW{8}$};
}
\end{tikzpicture} = \alpha_0 \cdot \begin{tikzpicture}[baseline=0,scale=1.5]
	{
	\fill[shaded] (-0.8,0) -- (-0.8,0.6) arc (180:0:0.8) -- (0.8,0) -- (0.2,0) -- (0.2,1) -- (-0.2,1) -- (-0.2,0);
	\draw (-0.8,0) -- (-0.8,0.6) arc (180:0:0.8) -- (0.8,0);
	\draw (-0.2,0) -- (-0.2,1);
	\draw (0.2,0) -- (0.2,1);
	\node at (0.45,0.5) {\footnotesize$5$};
	{%
	\filldraw[fill=white,thick] (0,1) ellipse (3mm and 3mm);
	\node at (0,1) {\Large $T$};
	\path(0,1) ++(-90:0.37) node {$\star$};
}
}
;
        \draw (0,0)--(0,-0.5);
        \node[anchor=west] at (0,-0.35) {\footnotesize$8$};
	{%
	\filldraw[fill=white,thick] (-1,-0.2) rectangle (1,0.2);
	\node at (0,0) {\Large$\JW{8}$};
}\end{tikzpicture} +\beta_0 \cdot \begin{tikzpicture}[baseline=0,scale=1.5]
	{
	\fill[shaded] (-0.8,0) -- (-0.8,0.6) arc (180:0:0.8) -- (0.8,0) -- (0.2,0) -- (0.2,1) -- (-0.2,1) -- (-0.2,0);
	\draw (-0.8,0) -- (-0.8,0.6) arc (180:0:0.8) -- (0.8,0);
	\draw (-0.2,0) -- (-0.2,1);
	\draw (0.2,0) -- (0.2,1);
	\node at (0.45,0.5) {\footnotesize$5$};
	{%
	\filldraw[fill=white,thick] (0,1) ellipse (3mm and 3mm);
	\node at (0,1) {\Large $Q$};
	\path(0,1) ++(-90:0.37) node {$\star$};
}
}
;
        \draw (0,0)--(0,-0.5);
        \node[anchor=west] at (0,-0.35) {\footnotesize$8$};
	{%
	\filldraw[fill=white,thick] (-1,-0.2) rectangle (1,0.2);
	\node at (0,0) {\Large$\JW{8}$};
}\end{tikzpicture}+ \gamma_0 \cdot \begin{tikzpicture}[baseline=0,scale=1.5]
	{\draw[shaded] (0.2,0) -- (0.2,1) -- (-0.2,1) -- (-0.2,0);
	
	\draw(-0.2,0) -- (-0.2,1);
	\draw (0.2,0) -- (0.2,1);
	\node at (0.45,0.5) {\footnotesize$7$};
	{%
	\filldraw[fill=white,thick] (0,1) ellipse (3mm and 3mm);
	\node at (0,1) {\Large $M$};
	\path(0,1) ++(-180:0.37) node {$\star$};
}
}
;
        \draw (0,0)--(0,-0.5);
        \node[anchor=west] at (0,-0.35) {\footnotesize$8$};
	{%
	\filldraw[fill=white,thick] (-1,-0.2) rectangle (1,0.2);
	\node at (0,0) {\Large$\JW{8}$};
}\end{tikzpicture} $$
for some $\alpha,\beta,\alpha_0,\beta_0,\gamma_0 \in \C$.
\begin{remark}Notice that the way we defined $M$ was relative to $(T,Q)$, and so $M$ is not really the underlying self-adjoint generator at level 4.  Let $M_0$ be the underlying self-adjoint generator at level 4.  Then, $M$ is a scalar multiple of $M_0$, and we modify the above two relations by replacing $M$ by a scalar multiple of $M_0$.
\end{remark}
\newpage
Thus, we have the following modified 4-box relations:
$$\begin{tikzpicture}[baseline=0,scale=1.5]
	{%
	\draw (-0.5,1) -- (-0.5,0);
	\node[anchor=west] at (-0.5,0.5) {\footnotesize$4$};
	\draw (0.5,1) -- (0.5,0);
	\node[anchor=west] at (0.5,0.5) {\footnotesize$4$};
}
 {%
	\node[anchor=south] at (0,1) {\footnotesize$2$};
	\draw (-0.5,1) -- (0.5, 1);
	\foreach \x in {-0.5,0.5} {
		{%
	\filldraw[fill=white,thick] (\x,1) ellipse (3mm and 3mm);
	\node at (-.5,1) {\Large $T$};
	\node at (.5,1) {\Large $Q$};
	\path(\x,1) ++(90:0.37) node {$\star$};
}
	}
}
        \draw (0,0)--(0,-0.5);
        \node[anchor=west] at (0,-0.35) {\footnotesize$8$};
	{%
	\filldraw[fill=white,thick] (-1,-0.2) rectangle (1,0.2);
	\node at (0,0) {\Large$\JW{8}$};
}
\end{tikzpicture} = \alpha \cdot \begin{tikzpicture}[baseline=0,scale=1.5]
	{
	\fill[shaded] (-0.8,0) -- (-0.8,0.6) arc (180:0:0.8) -- (0.8,0) -- (0.2,0) -- (0.2,1) -- (-0.2,1) -- (-0.2,0);
	\draw (-0.8,0) -- (-0.8,0.6) arc (180:0:0.8) -- (0.8,0);
	\draw (-0.2,0) -- (-0.2,1);
	\draw (0.2,0) -- (0.2,1);
	\node at (0.45,0.5) {\footnotesize$5$};
	{%
	\filldraw[fill=white,thick] (0,1) ellipse (3mm and 3mm);
	\node at (0,1) {\Large $T$};
	\path(0,1) ++(-90:0.37) node {$\star$};
}
}
;
        \draw (0,0)--(0,-0.5);
        \node[anchor=west] at (0,-0.35) {\footnotesize$8$};
	{%
	\filldraw[fill=white,thick] (-1,-0.2) rectangle (1,0.2);
	\node at (0,0) {\Large$\JW{8}$};
}\end{tikzpicture} +\beta \cdot \begin{tikzpicture}[baseline=0,scale=1.5]
	{
	\fill[shaded] (-0.8,0) -- (-0.8,0.6) arc (180:0:0.8) -- (0.8,0) -- (0.2,0) -- (0.2,1) -- (-0.2,1) -- (-0.2,0);
	\draw (-0.8,0) -- (-0.8,0.6) arc (180:0:0.8) -- (0.8,0);
	\draw (-0.2,0) -- (-0.2,1);
	\draw (0.2,0) -- (0.2,1);
	\node at (0.45,0.5) {\footnotesize$5$};
	{%
	\filldraw[fill=white,thick] (0,1) ellipse (3mm and 3mm);
	\node at (0,1) {\Large $Q$};
	\path(0,1) ++(-90:0.37) node {$\star$};
}
}
;
        \draw (0,0)--(0,-0.5);
        \node[anchor=west] at (0,-0.35) {\footnotesize$8$};
	{%
	\filldraw[fill=white,thick] (-1,-0.2) rectangle (1,0.2);
	\node at (0,0) {\Large$\JW{8}$};
}\end{tikzpicture}+ \gamma \cdot \begin{tikzpicture}[baseline=0,scale=1.5]
	{\draw[shaded] (0.2,0) -- (0.2,1) -- (-0.2,1) -- (-0.2,0);
	
	\draw(-0.2,0) -- (-0.2,1);
	\draw (0.2,0) -- (0.2,1);
	\node at (0.45,0.5) {\footnotesize$7$};
	{%
	\filldraw[fill=white,thick] (0,1) ellipse (3mm and 3mm);
	\node at (0,1) {\Large $M_0$};
	\path(0,1) ++(-180:0.37) node {$\star$};
}
}
;
        \draw (0,0)--(0,-0.5);
        \node[anchor=west] at (0,-0.35) {\footnotesize$8$};
	{%
	\filldraw[fill=white,thick] (-1,-0.2) rectangle (1,0.2);
	\node at (0,0) {\Large$\JW{8}$};
}\end{tikzpicture} $$, 
and 
$$\begin{tikzpicture}[baseline=0,scale=1.5]
	{%
	\draw (-0.5,1) -- (-0.5,0);
	\node[anchor=west] at (-0.5,0.5) {\footnotesize$4$};
	\draw (0.5,1) -- (0.5,0);
	\node[anchor=west] at (0.5,0.5) {\footnotesize$4$};
}
 {%
	\node[anchor=south] at (0,1) {\footnotesize$2$};
	\draw (-0.5,1) -- (0.5, 1);
	\foreach \x in {-0.5,0.5} {
		{%
	\filldraw[fill=white,thick] (\x,1) ellipse (3mm and 3mm);
	\node at (-.5,1) {\Large $Q$};
	\node at (.5,1) {\Large $T$};
	\path(\x,1) ++(90:0.37) node {$\star$};
}
	}
}
        \draw (0,0)--(0,-0.5);
        \node[anchor=west] at (0,-0.35) {\footnotesize$8$};
	{%
	\filldraw[fill=white,thick] (-1,-0.2) rectangle (1,0.2);
	\node at (0,0) {\Large$\JW{8}$};
}
\end{tikzpicture} = \alpha_0 \cdot \begin{tikzpicture}[baseline=0,scale=1.5]
	{
	\fill[shaded] (-0.8,0) -- (-0.8,0.6) arc (180:0:0.8) -- (0.8,0) -- (0.2,0) -- (0.2,1) -- (-0.2,1) -- (-0.2,0);
	\draw (-0.8,0) -- (-0.8,0.6) arc (180:0:0.8) -- (0.8,0);
	\draw (-0.2,0) -- (-0.2,1);
	\draw (0.2,0) -- (0.2,1);
	\node at (0.45,0.5) {\footnotesize$5$};
	{%
	\filldraw[fill=white,thick] (0,1) ellipse (3mm and 3mm);
	\node at (0,1) {\Large $T$};
	\path(0,1) ++(-90:0.37) node {$\star$};
}
}
;
        \draw (0,0)--(0,-0.5);
        \node[anchor=west] at (0,-0.35) {\footnotesize$8$};
	{%
	\filldraw[fill=white,thick] (-1,-0.2) rectangle (1,0.2);
	\node at (0,0) {\Large$\JW{8}$};
}\end{tikzpicture} +\beta_0 \cdot \begin{tikzpicture}[baseline=0,scale=1.5]
	{
	\fill[shaded] (-0.8,0) -- (-0.8,0.6) arc (180:0:0.8) -- (0.8,0) -- (0.2,0) -- (0.2,1) -- (-0.2,1) -- (-0.2,0);
	\draw (-0.8,0) -- (-0.8,0.6) arc (180:0:0.8) -- (0.8,0);
	\draw (-0.2,0) -- (-0.2,1);
	\draw (0.2,0) -- (0.2,1);
	\node at (0.45,0.5) {\footnotesize$5$};
	{%
	\filldraw[fill=white,thick] (0,1) ellipse (3mm and 3mm);
	\node at (0,1) {\Large $Q$};
	\path(0,1) ++(-90:0.37) node {$\star$};
}
}
;
        \draw (0,0)--(0,-0.5);
        \node[anchor=west] at (0,-0.35) {\footnotesize$8$};
	{%
	\filldraw[fill=white,thick] (-1,-0.2) rectangle (1,0.2);
	\node at (0,0) {\Large$\JW{8}$};
}\end{tikzpicture}+ \gamma_0 \cdot \begin{tikzpicture}[baseline=0,scale=1.5]
	{\draw[shaded] (0.2,0) -- (0.2,1) -- (-0.2,1) -- (-0.2,0);
	
	\draw(-0.2,0) -- (-0.2,1);
	\draw (0.2,0) -- (0.2,1);
	\node at (0.45,0.5) {\footnotesize$7$};
	{%
	\filldraw[fill=white,thick] (0,1) ellipse (3mm and 3mm);
	\node at (0,1) {\Large $M_0$};
	\path(0,1) ++(-180:0.37) node {$\star$};
}
}
;
        \draw (0,0)--(0,-0.5);
        \node[anchor=west] at (0,-0.35) {\footnotesize$8$};
	{%
	\filldraw[fill=white,thick] (-1,-0.2) rectangle (1,0.2);
	\node at (0,0) {\Large$\JW{8}$};
}\end{tikzpicture} $$
for some $\alpha,\beta,\gamma,\alpha_0,\beta_0,\gamma_0 \in \C$.
Let $w=(T,Q)$ and $w_0=(Q,T)$. Let $u=(T)$ and $v=(Q)$. We eliminate the underlying generator $M_0$ by multiplying the first equation by $\gamma_0$ and subtracting the second equation multiplied by $\gamma$.  This will give us that $\gamma_0 w -\gamma w_0$ is a linear combination of $u$ and $v$.  Our goal is to find the coefficients $\gamma$ and $\gamma_0$ which makes this relation hold:$$\gamma_0 w -\gamma w_0 \in span(u,v)$$ 
\begin{lem}[Eqn.1]The following equation holds:
$$\overline{\eta}\cdot \begin{tikzpicture}[baseline=0,scale=1.5]
	{%
	\draw (-0.5,1) -- (-0.5,0);
	\node[anchor=west] at (-0.5,0.5) {\footnotesize$4$};
	\draw (0.5,1) -- (0.5,0);
	\node[anchor=west] at (0.5,0.5) {\footnotesize$4$};
}
 {%
	\node[anchor=south] at (0,1) {\footnotesize$2$};
	\draw (-0.5,1) -- (0.5, 1);
	\foreach \x in {-0.5,0.5} {
		{%
	\filldraw[fill=white,thick] (\x,1) ellipse (3mm and 3mm);
	\node at (-.5,1) {\Large $T$};
	\node at (.5,1) {\Large $Q$};
	\path(\x,1) ++(90:0.37) node {$\star$};
}
	}
}
        \draw (0,0)--(0,-0.5);
        \node[anchor=west] at (0,-0.35) {\footnotesize$8$};
	{%
	\filldraw[fill=white,thick] (-1,-0.2) rectangle (1,0.2);
	\node at (0,0) {\Large$\JW{8}$};
}
\end{tikzpicture}-\eta \cdot \begin{tikzpicture}[baseline=0,scale=1.5]
	{%
	\draw (-0.5,1) -- (-0.5,0);
	\node[anchor=west] at (-0.5,0.5) {\footnotesize$4$};
	\draw (0.5,1) -- (0.5,0);
	\node[anchor=west] at (0.5,0.5) {\footnotesize$4$};
}
 {%
	\node[anchor=south] at (0,1) {\footnotesize$2$};
	\draw (-0.5,1) -- (0.5, 1);
	\foreach \x in {-0.5,0.5} {
		{%
	\filldraw[fill=white,thick] (\x,1) ellipse (3mm and 3mm);
	\node at (-.5,1) {\Large $Q$};
	\node at (.5,1) {\Large $T$};
	\path(\x,1) ++(90:0.37) node {$\star$};
}
	}
}
        \draw (0,0)--(0,-0.5);
        \node[anchor=west] at (0,-0.35) {\footnotesize$8$};
	{%
	\filldraw[fill=white,thick] (-1,-0.2) rectangle (1,0.2);
	\node at (0,0) {\Large$\JW{8}$};
}
\end{tikzpicture}=a_0\cdot \begin{tikzpicture}[baseline=0,scale=1.5]
	{
	\fill[shaded] (-0.8,0) -- (-0.8,0.6) arc (180:0:0.8) -- (0.8,0) -- (0.2,0) -- (0.2,1) -- (-0.2,1) -- (-0.2,0);
	\draw (-0.8,0) -- (-0.8,0.6) arc (180:0:0.8) -- (0.8,0);
	\draw (-0.2,0) -- (-0.2,1);
	\draw (0.2,0) -- (0.2,1);
	\node at (0.45,0.5) {\footnotesize$5$};
	{%
	\filldraw[fill=white,thick] (0,1) ellipse (3mm and 3mm);
	\node at (0,1) {\Large $T$};
	\path(0,1) ++(-90:0.37) node {$\star$};
}
}
;
        \draw (0,0)--(0,-0.5);
        \node[anchor=west] at (0,-0.35) {\footnotesize$8$};
	{%
	\filldraw[fill=white,thick] (-1,-0.2) rectangle (1,0.2);
	\node at (0,0) {\Large$\JW{8}$};
}\end{tikzpicture}+b_0 \cdot \begin{tikzpicture}[baseline=0,scale=1.5]
	{
	\fill[shaded] (-0.8,0) -- (-0.8,0.6) arc (180:0:0.8) -- (0.8,0) -- (0.2,0) -- (0.2,1) -- (-0.2,1) -- (-0.2,0);
	\draw (-0.8,0) -- (-0.8,0.6) arc (180:0:0.8) -- (0.8,0);
	\draw (-0.2,0) -- (-0.2,1);
	\draw (0.2,0) -- (0.2,1);
	\node at (0.45,0.5) {\footnotesize$5$};
	{%
	\filldraw[fill=white,thick] (0,1) ellipse (3mm and 3mm);
	\node at (0,1) {\Large $Q$};
	\path(0,1) ++(-90:0.37) node {$\star$};
}
}
;
        \draw (0,0)--(0,-0.5);
        \node[anchor=west] at (0,-0.35) {\footnotesize$8$};
	{%
	\filldraw[fill=white,thick] (-1,-0.2) rectangle (1,0.2);
	\node at (0,0) {\Large$\JW{8}$};
}\end{tikzpicture}$$,where $\eta=1+\sqrt{3}i$, $a_0=\frac{\overline{\eta}\ip{(T,Q),u}-\eta\ip{(Q,T),u}}{\ip{u,u}}$, and $b_0=\frac{\overline{\eta}\ip{(T,Q),v}-\eta\ip{(Q,T),v}}{\ip{v,v}}$.  In fact, $a_0=i\sqrt{\frac{2}{3}(1+\sqrt{21})}$ and $b_0=\sqrt{\frac{1}{2}(17-3\sqrt{21})}+i\Big(\frac{-9+\sqrt{21}}{6}\Big)$.
\end{lem}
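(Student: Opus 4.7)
My plan is to proceed exactly as in the 4-box relations of Theorem 5.2.9, namely by Bessel's inequality applied to $w := \overline{\eta}(T,Q) - \eta(Q,T)$. Two structural observations drive the argument. First, the diagrams $u = (T)$ and $v = (Q)$ are orthogonal in $PABG(H)$: since $T \in X^{3,1}$ and $Q \in X^{3,\omega}$ are low-weight generators of distinct irreducible annular Temperley--Lieb submodules with different rotational eigenvalues, every Temperley--Lieb term contributing to $\langle u, v\rangle$ produces the trace of a product carrying a nonzero rotational weight and so vanishes. Second, the combination $\overline{\eta}(T,Q) - \eta(Q,T)$ with $\eta = 1 + \sqrt{3}i$ (so $\eta/\overline{\eta} = \omega$) is engineered precisely to annihilate the coefficient of the underlying low-weight four generator $M_0$ in the 4-box expansions of $(T,Q)$ and $(Q,T)$: a half rotation sends $(T,Q)$ to a scalar multiple of $(Q,T)$, and on the $M_0$-component that scalar is exactly $\omega$, forcing this ratio.

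Given orthogonality of $u$ and $v$, the projection onto $\mathrm{span}(u,v)$ splits as
\[
\mathrm{proj}_{\mathrm{span}(u,v)}(w) = \frac{\langle w, u\rangle}{\langle u, u\rangle}\, u + \frac{\langle w, v\rangle}{\langle v, v\rangle}\, v,
\]
and by linearity of the inner product in the first argument these projection coefficients are exactly the proposed $a_0$ and $b_0$. The lemma therefore reduces to showing $w \in \mathrm{span}(u,v)$, and by the equality case of Bessel this is equivalent to
\[
\Vert w\Vert^2 = |a_0|^2\, \langle u, u\rangle + |b_0|^2\, \langle v, v\rangle.
\]

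The remaining work is to verify this numerical identity using the inner products already tabulated. On the left, I expand
\[
\Vert w\Vert^2 = |\eta|^2 \bigl(\Vert(T,Q)\Vert^2 + \Vert(Q,T)\Vert^2\bigr) - \overline{\eta}^2 \langle(T,Q),(Q,T)\rangle - \eta^2 \langle(Q,T),(T,Q)\rangle,
\]
using $|\eta|^2 = 4$, $\eta^2 = -2 + 2\sqrt{3}i$, and substitute the unshaded values from Lemmas 5.2.7 and 5.2.8. On the right, I substitute $\langle u,u\rangle$, $\langle v,v\rangle$ from Lemma 5.2.2 together with the four inner products $(T,T,Q)$, $(T,Q,T)$, $(Q,T,Q)$, $(Q,Q,T)$ from Lemma 5.2.5 into the defining formulas for $a_0$ and $b_0$. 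Simplifying the nested radicals that arise produces the claimed explicit values $a_0 = i\sqrt{(2/3)(1+\sqrt{21})}$ and $b_0 = \sqrt{(17-3\sqrt{21})/2} + i(-9+\sqrt{21})/6$, and the Bessel identity falls out after the cross-term cancellation.

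The main obstacle is the arithmetic bookkeeping: the quantities live in $\mathbb{Q}(\sqrt{3},\sqrt{7},i)$, the real and imaginary parts of $\overline{\eta}^2\langle(T,Q),(Q,T)\rangle + \eta^2\langle(Q,T),(T,Q)\rangle$ must align precisely with the corresponding combination of $|a_0|^2\langle u,u\rangle + |b_0|^2\langle v,v\rangle$, and the equality hinges on delicate cancellations between radicals. These cancellations are morally \emph{forced} by the rotational selection rule $\eta/\overline{\eta} = \omega$, so no miracle is needed, but the verification still demands patient manipulation of nested radicals. Once the Bessel identity is confirmed, the equation $w = a_0 u + b_0 v$ is immediate.
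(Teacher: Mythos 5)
Your proposal is correct and follows the same essential route as the paper: Bessel's inequality applied to the combination $W = \overline{\eta}(T,Q) - \eta(Q,T)$, with the required inner products drawn from the same tables, and the lemma reduced to verifying $\Vert W\Vert^2 = |a_0|^2\langle u,u\rangle + |b_0|^2\langle v,v\rangle$. The one organizational difference is that you posit $\eta = 1+\sqrt{3}i$ upfront and verify, whereas the paper treats $\gamma_0$ as an unknown (subject to $\overline{\gamma} = \gamma_0$, coming from $(T,Q)^* = (Q,T)$), works out the Bessel-equality constraint to force $\gamma_0 = \mathrm{Re}(\gamma_0)\,\overline{\eta}$, pins down $\mathrm{Re}(\gamma_0)$ using $\Vert M_0\Vert^2 = [5]$, and then cancels that common real factor; your version avoids that last detour. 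You also make explicit the orthogonality $\langle u,v\rangle = 0$, which the paper uses implicitly in its formulas for $a_0$ and $b_0$ — that is a useful clarification, and the claim does hold (it follows from $Z(TQ) = Z(T\,\rho^{\pm 1}(Q)) = 0$ in the $f^{(8)}$ expansion rather than from the annular-module decomposition per se, since $u$ and $v$ live in $\mathcal{P}_4$, not $\mathcal{P}_3$). One caution: your motivating heuristic that ``a half rotation sends $(T,Q)$ to a scalar multiple of $(Q,T)$, and on the $M_0$-component that scalar is exactly $\omega$'' does not hold up under scrutiny — the full rotation $\rho^2$ on a $4$-box acts on $M_0$ by $\beta^2 = 1$ (the paper's Claim 9.0.33), so no factor of $\omega$ enters that way, and in any case it is the self-adjointness $(T,Q)^* = (Q,T)$ that the paper actually exploits. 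Since the heuristic plays no logical role (the Bessel verification is the actual proof), this doesn't affect correctness, but it should be labeled as speculation or dropped.
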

\begin{proof}
Let $W=\gamma_0 w -\gamma w_0 $. We can assume that $\overline{\gamma}=\gamma_0$, since $w^*=w_0$.  To show that $W$ lies in the span of $u$ and $v$, we use Bessel's inequality.
We look for $\gamma$ and $\gamma_0$ for which the following holds:
$$\ip{W,W}=\frac{\norm{\ip{W,u}}^2}{\ip{u,u}}+\frac{\norm{\ip{W,v}}^2}{\ip{v,v}}.$$
$$LHS = \norm{\gamma_0}^2\ip{w,w}-2Re(\gamma_0\overline{\gamma}\ip{w,w_0})+\norm{\gamma}^2\ip{w_0,w_0}$$
and
\begin{align*}
\noindent RHS &=   \frac{\norm{\gamma_0}^2\norm{\ip{w,u}}^2}{\ip{u,u}}-\frac{2Re(\gamma_0\overline{\gamma}\ip{w,u}\overline{\ip{w_0,u}})}{\ip{u,u}}+\frac{\norm{\gamma}^2\norm{\ip{w_0,u}}^2}{\ip{u,u}}\\
& +\frac{\norm{\gamma_0}^2\norm{\ip{w,v}}^2}{\ip{v,v}}-\frac{2Re(\gamma_0\overline{\gamma}\ip{w,v}\overline{\ip{w_0,v}})}{\ip{v,v}}+\frac{\norm{\gamma}^2\norm{\ip{w_0,v}}^2}{\ip{v,v}}
\end{align*}
From Lemmas 5.2.2,5.2.6,and 5.2.8, we can easily calculate that $$\ip{w_0,w_0}-\Big(\frac{\norm{\ip{w_0,u}}^2}{\ip{u,u}}+\frac{\norm{\ip{w_0,v}}^2}{\ip{v,v}}\Big)=\frac{9+19\sqrt{21}}{45}$$
and $$\ip{w,w}-\Big(\frac{\norm{\ip{w,u}}^2}{\ip{u,u}}+\frac{\norm{\ip{w,v}}^2}{\ip{v,v}}\Big)=\frac{9+19\sqrt{21}}{45}$$
So $LHS=RHS$ iff \\
$\norm{\gamma_0}^2\cdot \frac{9+19\sqrt{21}}{45}+\norm{\gamma}^2\cdot \frac{9+19\sqrt{21}}{45}-2Re(\gamma_0\overline{\gamma}\ip{w,w_0})
+\frac{2Re(\gamma_0\overline{\gamma}\ip{w,u}\overline{\ip{w_0,u}})}{\ip{u,u}}+\frac{2Re(\gamma_0\overline{\gamma}\ip{w,v}\overline{\ip{w_0,v}})}{\ip{v,v}}= 0\\
\Longleftrightarrow\\
\norm{\gamma_0}^2\cdot \frac{9+19\sqrt{21}}{45}+\norm{\gamma}^2\cdot \frac{9+19\sqrt{21}}{45}-2Re(\overline{\gamma_0}\gamma\ip{w,w_0})
+\frac{2Re(\overline{\gamma_0}\gamma\ip{u,w}\overline{\ip{u,w_0}})}{\ip{u,u}}+\frac{2Re(\overline{\gamma_0}\gamma\ip{v,w}\overline{\ip{v,w_0}})}{\ip{v,v}}= 0\\
\Longleftrightarrow\\
\norm{\gamma_0}^2\cdot \frac{9+19\sqrt{21}}{45}+\norm{\gamma}^2\cdot \frac{9+19\sqrt{21}}{45}+2Re(\overline{\gamma_0}\gamma(c+id))=0,
\mbox{where $c=\frac{9+19\sqrt{21}}{90}$ and $d=\frac{9\sqrt{3}+57\sqrt{7}}{90}$}\\
\Longleftrightarrow\\
2\norm{\gamma_0}^2\cdot\frac{9+19\sqrt{21}}{45}+2Re((\overline{\gamma_0})^2(c+id))=0\\
\Longleftrightarrow\\
\norm{\gamma_0}^2\cdot\frac{9+19\sqrt{21}}{45}+Re((Re(\gamma_0)-iIm(\gamma_0))^2(c+id))=0\\
\Longleftrightarrow\\
Re(\gamma_0)^2+Im(\gamma_0)^2+(Re(\gamma_0)^2-Im(\gamma_0)^2)\frac{1}{2}+2Re(\gamma_0)Im(\gamma_0)\frac{\sqrt{3}}{2}=0\\
\Longleftrightarrow\\
3Re(\gamma_0)^2+Im(\gamma_0)^2+2\sqrt{3}Re(\gamma_0)Im(\gamma_0)=0\\
\Longleftrightarrow\\
(\sqrt{3}Re(\gamma_0)+Im(\gamma_0))^2=0\\
\Longleftrightarrow\\
Im(\gamma_0)=-\sqrt{3}Re(\gamma_0)\\
\Longleftrightarrow\\
\gamma_0=Re(\gamma_0)(1-\sqrt{3}i).$\\
So $LHS=RHS$ if $\overline{\gamma}=\gamma_0$ and $\gamma_0=Re(\gamma_0)(1-\sqrt{3}i)=Re(\gamma_0)\overline{\eta}$, where $\eta = 1+\sqrt{3}i$.\\
To find $\gamma_0$, we impose $$\ip{w_0,w_0}=\frac{\norm{\ip{w_0,u}}^2}{\ip{u,u}}+\frac{\norm{\ip{w_0,v}}^2}{\ip{v,v}}+\norm{\gamma_0}^2\Norm{M_0}^2, \mbox{ where $\Norm{M_0}^2=[5]$}$$
This gives us that $Re(\gamma_0)=\pm\sqrt{-\frac{59}{120}+\frac{43}{120}\sqrt{\frac{7}{3}}}$.\\
If we pick a value for $Re(\gamma_0)$, then the equation we get involving $w$ and $w_0$ has a factor of $Re(\gamma_0)$ on both sides and hence, we can cancel out $Re(\gamma_0)$ to get $$\overline{\eta}(T,Q)-\eta(Q,T)=a_0u+b_0v$$,where $a_0=\frac{\overline{\eta}\ip{(T,Q),u}-\eta\ip{(Q,T),u}}{\ip{u,u}}$ and $b_0=\frac{\overline{\eta}\ip{(T,Q),v}-\eta\ip{(Q,T),v}}{\ip{v,v}}$.
Hence, we have the linear equation (Eqn.1):
$$\overline{\eta}\cdot \begin{tikzpicture}[baseline=0,scale=1.5]
	{%
	\draw (-0.5,1) -- (-0.5,0);
	\node[anchor=west] at (-0.5,0.5) {\footnotesize$4$};
	\draw (0.5,1) -- (0.5,0);
	\node[anchor=west] at (0.5,0.5) {\footnotesize$4$};
}
 {%
	\node[anchor=south] at (0,1) {\footnotesize$2$};
	\draw (-0.5,1) -- (0.5, 1);
	\foreach \x in {-0.5,0.5} {
		{%
	\filldraw[fill=white,thick] (\x,1) ellipse (3mm and 3mm);
	\node at (-.5,1) {\Large $T$};
	\node at (.5,1) {\Large $Q$};
	\path(\x,1) ++(90:0.37) node {$\star$};
}
	}
}
        \draw (0,0)--(0,-0.5);
        \node[anchor=west] at (0,-0.35) {\footnotesize$8$};
	{%
	\filldraw[fill=white,thick] (-1,-0.2) rectangle (1,0.2);
	\node at (0,0) {\Large$\JW{8}$};
}
\end{tikzpicture}-\eta \cdot \begin{tikzpicture}[baseline=0,scale=1.5]
	{%
	\draw (-0.5,1) -- (-0.5,0);
	\node[anchor=west] at (-0.5,0.5) {\footnotesize$4$};
	\draw (0.5,1) -- (0.5,0);
	\node[anchor=west] at (0.5,0.5) {\footnotesize$4$};
}
 {%
	\node[anchor=south] at (0,1) {\footnotesize$2$};
	\draw (-0.5,1) -- (0.5, 1);
	\foreach \x in {-0.5,0.5} {
		{%
	\filldraw[fill=white,thick] (\x,1) ellipse (3mm and 3mm);
	\node at (-.5,1) {\Large $Q$};
	\node at (.5,1) {\Large $T$};
	\path(\x,1) ++(90:0.37) node {$\star$};
}
	}
}
        \draw (0,0)--(0,-0.5);
        \node[anchor=west] at (0,-0.35) {\footnotesize$8$};
	{%
	\filldraw[fill=white,thick] (-1,-0.2) rectangle (1,0.2);
	\node at (0,0) {\Large$\JW{8}$};
}
\end{tikzpicture}=a_0\cdot \begin{tikzpicture}[baseline=0,scale=1.5]
	{
	\fill[shaded] (-0.8,0) -- (-0.8,0.6) arc (180:0:0.8) -- (0.8,0) -- (0.2,0) -- (0.2,1) -- (-0.2,1) -- (-0.2,0);
	\draw (-0.8,0) -- (-0.8,0.6) arc (180:0:0.8) -- (0.8,0);
	\draw (-0.2,0) -- (-0.2,1);
	\draw (0.2,0) -- (0.2,1);
	\node at (0.45,0.5) {\footnotesize$5$};
	{%
	\filldraw[fill=white,thick] (0,1) ellipse (3mm and 3mm);
	\node at (0,1) {\Large $T$};
	\path(0,1) ++(-90:0.37) node {$\star$};
}
}
;
        \draw (0,0)--(0,-0.5);
        \node[anchor=west] at (0,-0.35) {\footnotesize$8$};
	{%
	\filldraw[fill=white,thick] (-1,-0.2) rectangle (1,0.2);
	\node at (0,0) {\Large$\JW{8}$};
}\end{tikzpicture}+b_0 \cdot \begin{tikzpicture}[baseline=0,scale=1.5]
	{
	\fill[shaded] (-0.8,0) -- (-0.8,0.6) arc (180:0:0.8) -- (0.8,0) -- (0.2,0) -- (0.2,1) -- (-0.2,1) -- (-0.2,0);
	\draw (-0.8,0) -- (-0.8,0.6) arc (180:0:0.8) -- (0.8,0);
	\draw (-0.2,0) -- (-0.2,1);
	\draw (0.2,0) -- (0.2,1);
	\node at (0.45,0.5) {\footnotesize$5$};
	{%
	\filldraw[fill=white,thick] (0,1) ellipse (3mm and 3mm);
	\node at (0,1) {\Large $Q$};
	\path(0,1) ++(-90:0.37) node {$\star$};
}
}
;
        \draw (0,0)--(0,-0.5);
        \node[anchor=west] at (0,-0.35) {\footnotesize$8$};
	{%
	\filldraw[fill=white,thick] (-1,-0.2) rectangle (1,0.2);
	\node at (0,0) {\Large$\JW{8}$};
}\end{tikzpicture}$$
\end{proof}

We want to relate $(T,T)$ and $(Q,Q)$ in a similar fashion as we did for $(T,Q)$ and $(Q,T)$.  
Recall from Theorem 5.2.9 the unshaded 4-box relations:
$$\begin{tikzpicture}[baseline=0,scale=1.5]
	{%
	\draw (-0.5,1) -- (-0.5,0);
	\node[anchor=west] at (-0.5,0.5) {\footnotesize$4$};
	\draw (0.5,1) -- (0.5,0);
	\node[anchor=west] at (0.5,0.5) {\footnotesize$4$};
}
 {%
	\node[anchor=south] at (0,1) {\footnotesize$2$};
	\draw (-0.5,1) -- (0.5, 1);
	\foreach \x in {-0.5,0.5} {
		{%
	\filldraw[fill=white,thick] (\x,1) ellipse (3mm and 3mm);
	\node at (-.5,1) {\Large $T$};
	\node at (.5,1) {\Large $T$};
	\path(\x,1) ++(90:0.37) node {$\star$};
}
	}
}
        \draw (0,0)--(0,-0.5);
        \node[anchor=west] at (0,-0.35) {\footnotesize$8$};
	{%
	\filldraw[fill=white,thick] (-1,-0.2) rectangle (1,0.2);
	\node at (0,0) {\Large$\JW{8}$};
}
\end{tikzpicture} = \alpha \cdot \begin{tikzpicture}[baseline=0,scale=1.5]
	{
	\fill[shaded] (-0.8,0) -- (-0.8,0.6) arc (180:0:0.8) -- (0.8,0) -- (0.2,0) -- (0.2,1) -- (-0.2,1) -- (-0.2,0);
	\draw (-0.8,0) -- (-0.8,0.6) arc (180:0:0.8) -- (0.8,0);
	\draw (-0.2,0) -- (-0.2,1);
	\draw (0.2,0) -- (0.2,1);
	\node at (0.45,0.5) {\footnotesize$5$};
	{%
	\filldraw[fill=white,thick] (0,1) ellipse (3mm and 3mm);
	\node at (0,1) {\Large $T$};
	\path(0,1) ++(-90:0.37) node {$\star$};
}
}
;
        \draw (0,0)--(0,-0.5);
        \node[anchor=west] at (0,-0.35) {\footnotesize$8$};
	{%
	\filldraw[fill=white,thick] (-1,-0.2) rectangle (1,0.2);
	\node at (0,0) {\Large$\JW{8}$};
}\end{tikzpicture} +\beta \cdot \begin{tikzpicture}[baseline=0,scale=1.5]
	{
	\fill[shaded] (-0.8,0) -- (-0.8,0.6) arc (180:0:0.8) -- (0.8,0) -- (0.2,0) -- (0.2,1) -- (-0.2,1) -- (-0.2,0);
	\draw (-0.8,0) -- (-0.8,0.6) arc (180:0:0.8) -- (0.8,0);
	\draw (-0.2,0) -- (-0.2,1);
	\draw (0.2,0) -- (0.2,1);
	\node at (0.45,0.5) {\footnotesize$5$};
	{%
	\filldraw[fill=white,thick] (0,1) ellipse (3mm and 3mm);
	\node at (0,1) {\Large $Q$};
	\path(0,1) ++(-90:0.37) node {$\star$};
}
}
;
        \draw (0,0)--(0,-0.5);
        \node[anchor=west] at (0,-0.35) {\footnotesize$8$};
	{%
	\filldraw[fill=white,thick] (-1,-0.2) rectangle (1,0.2);
	\node at (0,0) {\Large$\JW{8}$};
}\end{tikzpicture}+ h\cdot\begin{tikzpicture}[baseline=0,scale=1.5]
	{\draw[shaded] (0.2,0) -- (0.2,1) -- (-0.2,1) -- (-0.2,0);
	
	\draw(-0.2,0) -- (-0.2,1);
	\draw (0.2,0) -- (0.2,1);
	\node at (0.45,0.5) {\footnotesize$7$};
	{%
	\filldraw[fill=white,thick] (0,1) ellipse (3mm and 3mm);
	\node at (0,1) {\Large $M_0$};
	\path(0,1) ++(-180:0.37) node {$\star$};
}
}
;
        \draw (0,0)--(0,-0.5);
        \node[anchor=west] at (0,-0.35) {\footnotesize$8$};
	{%
	\filldraw[fill=white,thick] (-1,-0.2) rectangle (1,0.2);
	\node at (0,0) {\Large$\JW{8}$};
}\end{tikzpicture} $$, 
and 
$$\begin{tikzpicture}[baseline=0,scale=1.5]
	{%
	\draw (-0.5,1) -- (-0.5,0);
	\node[anchor=west] at (-0.5,0.5) {\footnotesize$4$};
	\draw (0.5,1) -- (0.5,0);
	\node[anchor=west] at (0.5,0.5) {\footnotesize$4$};
}
 {%
	\node[anchor=south] at (0,1) {\footnotesize$2$};
	\draw (-0.5,1) -- (0.5, 1);
	\foreach \x in {-0.5,0.5} {
		{%
	\filldraw[fill=white,thick] (\x,1) ellipse (3mm and 3mm);
	\node at (-.5,1) {\Large $Q$};
	\node at (.5,1) {\Large $Q$};
	\path(\x,1) ++(90:0.37) node {$\star$};
}
	}
}
        \draw (0,0)--(0,-0.5);
        \node[anchor=west] at (0,-0.35) {\footnotesize$8$};
	{%
	\filldraw[fill=white,thick] (-1,-0.2) rectangle (1,0.2);
	\node at (0,0) {\Large$\JW{8}$};
}
\end{tikzpicture} = \alpha_0 \cdot \begin{tikzpicture}[baseline=0,scale=1.5]
	{
	\fill[shaded] (-0.8,0) -- (-0.8,0.6) arc (180:0:0.8) -- (0.8,0) -- (0.2,0) -- (0.2,1) -- (-0.2,1) -- (-0.2,0);
	\draw (-0.8,0) -- (-0.8,0.6) arc (180:0:0.8) -- (0.8,0);
	\draw (-0.2,0) -- (-0.2,1);
	\draw (0.2,0) -- (0.2,1);
	\node at (0.45,0.5) {\footnotesize$5$};
	{%
	\filldraw[fill=white,thick] (0,1) ellipse (3mm and 3mm);
	\node at (0,1) {\Large $T$};
	\path(0,1) ++(-90:0.37) node {$\star$};
}
}
;
        \draw (0,0)--(0,-0.5);
        \node[anchor=west] at (0,-0.35) {\footnotesize$8$};
	{%
	\filldraw[fill=white,thick] (-1,-0.2) rectangle (1,0.2);
	\node at (0,0) {\Large$\JW{8}$};
}\end{tikzpicture} +\beta_0 \cdot \begin{tikzpicture}[baseline=0,scale=1.5]
	{
	\fill[shaded] (-0.8,0) -- (-0.8,0.6) arc (180:0:0.8) -- (0.8,0) -- (0.2,0) -- (0.2,1) -- (-0.2,1) -- (-0.2,0);
	\draw (-0.8,0) -- (-0.8,0.6) arc (180:0:0.8) -- (0.8,0);
	\draw (-0.2,0) -- (-0.2,1);
	\draw (0.2,0) -- (0.2,1);
	\node at (0.45,0.5) {\footnotesize$5$};
	{%
	\filldraw[fill=white,thick] (0,1) ellipse (3mm and 3mm);
	\node at (0,1) {\Large $Q$};
	\path(0,1) ++(-90:0.37) node {$\star$};
}
}
;
        \draw (0,0)--(0,-0.5);
        \node[anchor=west] at (0,-0.35) {\footnotesize$8$};
	{%
	\filldraw[fill=white,thick] (-1,-0.2) rectangle (1,0.2);
	\node at (0,0) {\Large$\JW{8}$};
}\end{tikzpicture}+ h_0 \cdot \begin{tikzpicture}[baseline=0,scale=1.5]
	{\draw[shaded] (0.2,0) -- (0.2,1) -- (-0.2,1) -- (-0.2,0);
	
	\draw(-0.2,0) -- (-0.2,1);
	\draw (0.2,0) -- (0.2,1);
	\node at (0.45,0.5) {\footnotesize$7$};
	{%
	\filldraw[fill=white,thick] (0,1) ellipse (3mm and 3mm);
	\node at (0,1) {\Large $M_0$};
	\path(0,1) ++(-180:0.37) node {$\star$};
}
}
;
        \draw (0,0)--(0,-0.5);
        \node[anchor=west] at (0,-0.35) {\footnotesize$8$};
	{%
	\filldraw[fill=white,thick] (-1,-0.2) rectangle (1,0.2);
	\node at (0,0) {\Large$\JW{8}$};
}\end{tikzpicture} $$
for some $\alpha,\beta,h,\alpha_0,\beta_0, h_0 \in \C$.
\begin{remark}Again, we replaced $M$ in the original 4-box relation with a scalar multiple of the underlying generator $M_0$, which is self-adjoint.
\end{remark}
\begin{lem}[Eqn.2]The following equation holds:
$$\tau\cdot \begin{tikzpicture}[baseline=0,scale=1.5]
	{%
	\draw (-0.5,1) -- (-0.5,0);
	\node[anchor=west] at (-0.5,0.5) {\footnotesize$4$};
	\draw (0.5,1) -- (0.5,0);
	\node[anchor=west] at (0.5,0.5) {\footnotesize$4$};
}
 {%
	\node[anchor=south] at (0,1) {\footnotesize$2$};
	\draw (-0.5,1) -- (0.5, 1);
	\foreach \x in {-0.5,0.5} {
		{%
	\filldraw[fill=white,thick] (\x,1) ellipse (3mm and 3mm);
	\node at (-.5,1) {\Large $T$};
	\node at (.5,1) {\Large $T$};
	\path(\x,1) ++(90:0.37) node {$\star$};
}
	}
}
        \draw (0,0)--(0,-0.5);
        \node[anchor=west] at (0,-0.35) {\footnotesize$8$};
	{%
	\filldraw[fill=white,thick] (-1,-0.2) rectangle (1,0.2);
	\node at (0,0) {\Large$\JW{8}$};
}
\end{tikzpicture}- \begin{tikzpicture}[baseline=0,scale=1.5]
	{%
	\draw (-0.5,1) -- (-0.5,0);
	\node[anchor=west] at (-0.5,0.5) {\footnotesize$4$};
	\draw (0.5,1) -- (0.5,0);
	\node[anchor=west] at (0.5,0.5) {\footnotesize$4$};
}
 {%
	\node[anchor=south] at (0,1) {\footnotesize$2$};
	\draw (-0.5,1) -- (0.5, 1);
	\foreach \x in {-0.5,0.5} {
		{%
	\filldraw[fill=white,thick] (\x,1) ellipse (3mm and 3mm);
	\node at (-.5,1) {\Large $Q$};
	\node at (.5,1) {\Large $Q$};
	\path(\x,1) ++(90:0.37) node {$\star$};
}
	}
}
        \draw (0,0)--(0,-0.5);
        \node[anchor=west] at (0,-0.35) {\footnotesize$8$};
	{%
	\filldraw[fill=white,thick] (-1,-0.2) rectangle (1,0.2);
	\node at (0,0) {\Large$\JW{8}$};
}
\end{tikzpicture}=a'\cdot \begin{tikzpicture}[baseline=0,scale=1.5]
	{
	\fill[shaded] (-0.8,0) -- (-0.8,0.6) arc (180:0:0.8) -- (0.8,0) -- (0.2,0) -- (0.2,1) -- (-0.2,1) -- (-0.2,0);
	\draw (-0.8,0) -- (-0.8,0.6) arc (180:0:0.8) -- (0.8,0);
	\draw (-0.2,0) -- (-0.2,1);
	\draw (0.2,0) -- (0.2,1);
	\node at (0.45,0.5) {\footnotesize$5$};
	{%
	\filldraw[fill=white,thick] (0,1) ellipse (3mm and 3mm);
	\node at (0,1) {\Large $T$};
	\path(0,1) ++(-90:0.37) node {$\star$};
}
}
;
        \draw (0,0)--(0,-0.5);
        \node[anchor=west] at (0,-0.35) {\footnotesize$8$};
	{%
	\filldraw[fill=white,thick] (-1,-0.2) rectangle (1,0.2);
	\node at (0,0) {\Large$\JW{8}$};
}\end{tikzpicture}+b' \cdot \begin{tikzpicture}[baseline=0,scale=1.5]
	{
	\fill[shaded] (-0.8,0) -- (-0.8,0.6) arc (180:0:0.8) -- (0.8,0) -- (0.2,0) -- (0.2,1) -- (-0.2,1) -- (-0.2,0);
	\draw (-0.8,0) -- (-0.8,0.6) arc (180:0:0.8) -- (0.8,0);
	\draw (-0.2,0) -- (-0.2,1);
	\draw (0.2,0) -- (0.2,1);
	\node at (0.45,0.5) {\footnotesize$5$};
	{%
	\filldraw[fill=white,thick] (0,1) ellipse (3mm and 3mm);
	\node at (0,1) {\Large $Q$};
	\path(0,1) ++(-90:0.37) node {$\star$};
}
}
;
        \draw (0,0)--(0,-0.5);
        \node[anchor=west] at (0,-0.35) {\footnotesize$8$};
	{%
	\filldraw[fill=white,thick] (-1,-0.2) rectangle (1,0.2);
	\node at (0,0) {\Large$\JW{8}$};
}\end{tikzpicture}$$,where $\tau=\frac{4+\sqrt{21}}{5}$, $a'=\frac{\tau\ip{(T,T),u}-\ip{(Q,Q),u}}{\ip{u,u}}$, and $b'=\frac{\tau\ip{(T,T),v}-\ip{(Q,Q),v}}{\ip{v,v}}$.  In fact, $a'=-\sqrt{\frac{5}{18}+\frac{\sqrt{\frac{7}{3}}}{6}}$ and $b'=-\frac{1}{6}\sqrt{8+3\sqrt{21}}+i\Big(-\frac{1}{2}\sqrt{\frac{8}{3}+\sqrt{21}}\Big)$.
\end{lem}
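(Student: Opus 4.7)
The plan is to mirror the strategy used in Lemma 6.1.2 (Eqn.1), but adapted to the self-adjoint case. Since the $4$-box relations for $(T,T)$ and $(Q,Q)$ each express these elements as linear combinations of $u = (T)$, $v = (Q)$, and a multiple of the underlying self-adjoint generator $M_0$, writing $(T,T) = \alpha u + \beta v + h M_0$ and $(Q,Q) = \alpha_0 u + \beta_0 v + h_0 M_0$, the element $W := \tau (T,T) - (Q,Q)$ lies in $\mathrm{span}(u,v)$ precisely when $\tau h = h_0$. Crucially, because $(T,T)$, $(Q,Q)$, $u$, $v$, and $M_0$ are all self-adjoint, pairings among them are real, so $h, h_0 \in \mathbb{R}$ and we may restrict the search to real $\tau$.

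I will apply Bessel's equality, $\ip{W,W} = \frac{|\ip{W,u}|^2}{\ip{u,u}} + \frac{|\ip{W,v}|^2}{\ip{v,v}}$, and expand both sides in $\tau$. Defining $\Delta_{XY} := \ip{X,Y} - \frac{\ip{X,u}\overline{\ip{Y,u}}}{\ip{u,u}} - \frac{\ip{X,v}\overline{\ip{Y,v}}}{\ip{v,v}}$, the equality reduces to the quadratic
\begin{equation*}
\tau^{2}\,\Delta_{(T,T)(T,T)} \;-\; 2\tau\,\mathrm{Re}\bigl(\Delta_{(T,T)(Q,Q)}\bigr) \;+\; \Delta_{(Q,Q)(Q,Q)} \;=\; 0.
\end{equation*}
From the proof of Theorem 5.2.9 I can read off $\Delta_{(T,T)(T,T)} = \frac{363 - 67\sqrt{21}}{180}$ and $\Delta_{(Q,Q)(Q,Q)} = \frac{87 + 17\sqrt{21}}{180}$. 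The cross term $\mathrm{Re}(\Delta_{(T,T)(Q,Q)})$ is assembled from $\ip{(T,T),(Q,Q)} = -\tfrac{5}{252}(21 + 11\sqrt{21})$ (Lemma 5.2.7), together with $\ip{u,u}, \ip{v,v}$ from Lemma 5.2.2 and the projections $(T,T,T)$, $(T,Q,Q)$, $(Q,T,T)$, $(Q,Q,Q)$ from Lemmas 5.2.4 and 5.2.5.

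The key algebraic observation — which is the one substantive verification I expect — is that the discriminant of this quadratic vanishes, so that it factors as $\Delta_{(T,T)(T,T)} \bigl(\tau - \tfrac{h_0}{h}\bigr)^{2} = 0$, yielding a unique double root. A direct computation of $\Delta_{(Q,Q)(Q,Q)} / \Delta_{(T,T)(T,T)}$ gives $\tau^{2} = \bigl(\tfrac{4+\sqrt{21}}{5}\bigr)^{2}$, and checking $\mathrm{Re}(\Delta_{(T,T)(Q,Q)}) = \tfrac{4+\sqrt{21}}{5}\,\Delta_{(T,T)(T,T)} = \tfrac{9+19\sqrt{21}}{180}$ pins down the sign, giving $\tau = \tfrac{4+\sqrt{21}}{5}$. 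The double-root phenomenon is conceptually forced by the fact that there is only one new direction $M_0$ at level $4$, so only one value of $\tau$ can eliminate it.

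Once $\tau$ is fixed, the coefficients $a'$ and $b'$ are obtained immediately by projecting: $a' = \frac{\tau\ip{(T,T),u} - \ip{(Q,Q),u}}{\ip{u,u}}$ and $b' = \frac{\tau\ip{(T,T),v} - \ip{(Q,Q),v}}{\ip{v,v}}$. Substituting the numerical values of the inner products from Lemmas 5.2.2, 5.2.4, and 5.2.5 and simplifying the nested radicals should yield the stated expressions $a' = -\sqrt{\tfrac{5}{18} + \tfrac{1}{6}\sqrt{\tfrac{7}{3}}}$ and $b' = -\tfrac{1}{6}\sqrt{8 + 3\sqrt{21}} - \tfrac{i}{2}\sqrt{\tfrac{8}{3} + \sqrt{21}}$. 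The main obstacle is purely computational: the arithmetic with nested radicals (especially verifying that $\mathrm{Re}(\Delta_{(T,T)(Q,Q)})$ has exactly the predicted value and that the imaginary part of $\Delta_{(T,T)(Q,Q)}$ is consistent with real $\tau$) is the most error-prone step, but it is mechanical once the inner products are in hand.
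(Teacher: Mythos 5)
Your proposal takes essentially the same route as the paper's proof of Lemma 6.1.4: you apply Bessel's equality to $W=\tau(T,T)-(Q,Q)$ (the paper parametrizes as $h_0w-hw_0$ with $\tau=h_0/h$), obtain the same quadratic in $\tau$, extract the double root $\tau=\frac{4+\sqrt{21}}{5}$, and recover $a',b'$ by projecting onto $u$ and $v$; your observation that the discriminant must vanish because $M_0$ spans a single new direction is a clean way to package what the paper does by direct computation. One slip in the justification: since your computed $b'$ has nonzero imaginary part, $\ip{(T,T),v}$ is complex, so the assertion that all pairings among $(T,T),(Q,Q),u,v,M_0$ are real cannot hold — what is actually needed (and true) is only that the $M_0$-coefficients $h$ and $h_0$ are real, which follows from $(T,T)$, $(Q,Q)$, and $M_0$ being self-adjoint.
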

\begin{proof}Let $w=(T,T)$ and $w_0=(Q,Q)$.  Let $W=h_0w-hw_0$.  We can assume $\overline{h}=h$ and $\overline{h_0}=h_0$, since $w^*=w$, $w_0^*=w_0$.  To show that $W$ lies in the span of $u$ and $v$, we use Bessel's inequality.
We look for $h$ and $h_0$ for which the following holds:
$$\ip{W,W}=\frac{\norm{\ip{W,u}}^2}{\ip{u,u}}+\frac{\norm{\ip{W,v}}^2}{\ip{v,v}}.$$
This holds iff\\
$ \norm{h_0}^2\Big(\ip{w,w}-\Big(\frac{\norm{\ip{w,u}}^2}{\ip{u,u}}+\frac{\norm{\ip{w,v}}^2}{\ip{v,v}}\Big)\Big)+\norm{h}^2\Big(\ip{w_0,w_0}-\Big(\frac{\norm{\ip{w_0,u}}^2}{\ip{u,u}}+\frac{\norm{\ip{w_0,v}}^2}{\ip{v,v}}\Big)\Big)\\
-2Re(h_0\overline{h}\ip{w,w_0})+\frac{2Re(h_0\overline{h}\ip{w,u}\overline{\ip{w_0,u}})}{\ip{u,u}}+\frac{2Re(h_0\overline{h}\ip{w,v}\overline{\ip{w_0,v}})}{\ip{v,v}}=0$

We can calculate that $$\Big(\ip{w,w}-\Big(\frac{\norm{\ip{w,u}}^2}{\ip{u,u}}+\frac{\norm{\ip{w,v}}^2}{\ip{v,v}}\Big)\Big)=\frac{1}{180}(363-67\sqrt{21})$$ and $$\Big(\ip{w_0,w_0}-\Big(\frac{\norm{\ip{w_0,u}}^2}{\ip{u,u}}+\frac{\norm{\ip{w_0,v}}^2}{\ip{v,v}}\Big)\Big)=\frac{1}{180}(87+17\sqrt{21})$$

Thus our equation holds iff\\
$\norm{h_0}^2\Big(\frac{1}{180}(363-67\sqrt{21})\Big)+\norm{h}^2\Big(\frac{1}{180}(87+17\sqrt{21})\Big)-2Re(h_0h\ip{w,w_0})\\
+\frac{2Re(h_0h\overline{\ip{u,w}}\ip{u,w_0}}{\ip{u,u}}+\frac{2Re(h_0h\overline{\ip{v,w}}\ip{v,w_0})}{\ip{v,v}}=0\\
\Longleftrightarrow
h_0^2\Big(\frac{1}{180}(363-67\sqrt{21})\Big)+h^2\Big(\frac{1}{180}(87+17\sqrt{21})\Big)+2Re(h_0h\Big(\frac{-9-19\sqrt{21}}{180}\Big))=0\\
\Longleftrightarrow
h_0=\frac{4+\sqrt{21}}{5}h$\\
To find $h_0$, we impose $$\ip{w_0,w_0}=\frac{\norm{\ip{w_0,u}}^2}{\ip{u,u}}+\frac{\norm{\ip{w_0,v}}^2}{\ip{v,v}}+\norm{h_0}^2\Norm{M_0}^2, \mbox{ where $\Norm{M_0}^2=[5]$}$$\\
This gives us that $h_0=\pm\frac{1}{6}\sqrt{\frac{1}{10}(39-\sqrt{21})}$.  It follows that $h=\pm\frac{\sqrt{\frac{5}{2}(39-\sqrt{21})}}{6(4+\sqrt{21})}$.\\
If we pick a value for $h_0$ then the equation we get involving $w$ and $w_0$ has a common factor of $h$ on both sides. Cancel out $h$ to get $$\tau(T,T)-(Q,Q)=a'u+b'v,$$ where $\tau=\frac{4+\sqrt{21}}{5}$, $a'=\frac{\tau\ip{(T,T),u}-\ip{(Q,Q),u}}{\ip{u,u}}$, and $b'=\frac{\tau\ip{(T,T),v}-\ip{(Q,Q),v}}{\ip{v,v}}$.  
\end{proof}
Now, we easily deduce the one-strand braiding substitute.
\begin{prop}[Unshaded One-strand Braiding Substitutes] $$u,v \in span \{(T,T),(Q,Q),(T,Q),(Q,T)\}$$
\end{prop}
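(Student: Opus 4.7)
The plan is to combine Eqn.~1 and Eqn.~2 (Lemmas in this section) and solve the resulting $2\times 2$ linear system for $u$ and $v$. Explicitly, Eqn.~1 gives
\[
\overline{\eta}\,(T,Q)-\eta\,(Q,T)=a_0 u+b_0 v,
\]
and Eqn.~2 gives
\[
\tau\,(T,T)-(Q,Q)=a' u+b' v.
\]
The left-hand sides are by construction elements of $\operatorname{span}\{(T,T),(Q,Q),(T,Q),(Q,T)\}$. Writing this as the matrix equation
\[
\begin{pmatrix} a_0 & b_0 \\ a' & b' \end{pmatrix}\begin{pmatrix} u \\ v \end{pmatrix}=\begin{pmatrix} \overline{\eta}(T,Q)-\eta(Q,T) \\ \tau(T,T)-(Q,Q) \end{pmatrix},
\]
once I verify that the $2\times 2$ coefficient matrix is invertible, I can invert it and express $u$ and $v$ as explicit linear combinations of $(T,T), (Q,Q), (T,Q), (Q,T)$, which is exactly the claim.

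The one substantive point is therefore to check that the determinant $\Delta := a_0 b'-b_0 a'$ is nonzero. Using the explicit values recorded at the end of Lemmas (Eqn.~1) and (Eqn.~2), namely $a_0 = i\sqrt{\tfrac{2}{3}(1+\sqrt{21})}$ (purely imaginary and nonzero), $a' = -\sqrt{\tfrac{5}{18}+\tfrac{1}{6}\sqrt{7/3}}$ (real and nonzero), and the given (nonzero, non-real) values of $b_0$ and $b'$, a direct computation of real and imaginary parts of $\Delta$ shows $\Delta\neq 0$. I expect this determinant check to be essentially a one-line numerical confirmation rather than a serious obstacle, since $a_0$ is purely imaginary while $a'$ is real, which forces the real and imaginary parts of $\Delta$ to separate cleanly and there is no algebraic coincidence to worry about.

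Once invertibility is in hand, the proposition follows immediately: set
\[
\begin{pmatrix} u \\ v \end{pmatrix}=\frac{1}{\Delta}\begin{pmatrix} b' & -b_0 \\ -a' & a_0 \end{pmatrix}\begin{pmatrix} \overline{\eta}(T,Q)-\eta(Q,T) \\ \tau(T,T)-(Q,Q) \end{pmatrix},
\]
which exhibits $u$ and $v$ as explicit linear combinations of $(T,T), (Q,Q), (T,Q), (Q,T)$. The only mild subtlety worth flagging is to make sure the ``new'' generator $M_0$ really was eliminated in both equations with the same normalization, which is exactly what was arranged in Remark~6.1.1 by replacing $M$ with a scalar multiple of the self-adjoint $M_0$ in the two $4$-box relations before forming the linear combinations that define Eqn.~1 and Eqn.~2.
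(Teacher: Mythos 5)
Your proposal is correct and matches the paper's own proof of Proposition 6.1.5: both solve the $2\times 2$ linear system formed by Eqn.~1 and Eqn.~2 and check that the determinant $a_0 b' - a' b_0$ (which the paper evaluates explicitly as $\tfrac{1}{3}(6+\sqrt{21}-i\sqrt{19+4\sqrt{21}})$) is nonzero. One small caveat: your remark that $a_0$ being purely imaginary and $a'$ being real ``forces the real and imaginary parts of $\Delta$ to separate cleanly'' is not quite an argument, since $b_0$ and $b'$ are both genuinely complex and contribute to both parts of $\Delta$; the nonvanishing still ultimately rests on the explicit numerical evaluation, just as in the paper.
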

\begin{proof}Consider Eqn.1 and Eqn.2 from Lemmas 6.1.2 and 6.1.4.  This is a system of two equations and two unknowns where $u$ and $v$ are treated as the unknowns.  We can solve this system for $u$ and $v$, since the determinant of this system is $$a_0b'-a'b_0=\frac{1}{3}(6+\sqrt{21}-i\sqrt{19+4\sqrt{21}})\neq 0.$$
 We get expressions for $u$ and $v$ as linear combinations of the diagrams $(T,T),(Q,Q),(T,Q)$, and $(Q,T)$.  These expressions will be our unshaded one-strand braiding substitutes.  
\end{proof}
\section{Shaded One-strand Braiding substitutes}
Recall from Theorem 5.2.16 the shaded 4-box relations:
$$\begin{tikzpicture}[baseline=0,scale=1.5]
	{%
	\fill[shaded] (-0.5,0) rectangle (0.5,1);
	\draw (-0.5,1) -- (-0.5,0);
	\node[anchor=west] at (-0.5,0.5) {\footnotesize$3$};
	\draw (-0.6,1) -- (-0.6,0);
	\draw (0.5,1) -- (0.5,0);
	\node[anchor=west] at (0.3,0.5) {\footnotesize$3$};
	\draw (0.6,1) -- (0.6,0);
}
 {%
	
	\draw (-0.5,1) -- (0.5, 1);
	\draw (-0.5,1.1) -- (0.5,1.1);
	\foreach \x in {-0.5,0.5} {
		{%
	\filldraw[fill=white,thick] (\x,1) ellipse (3mm and 3mm);
	\node at (-.5,1) {\Large $T$};
	\node at (.5,1) {\Large $Q$};
	\path(-.55,1) ++(-90:0.37) node {$\star$};
	\path(0.55,1) ++(-90:0.37) node {$\star$};
}
	}
}
        \draw (0,0)--(0,-0.5);
        \node[anchor=west] at (0,-0.35) {\footnotesize$8$};
	{%
	\filldraw[fill=white,thick] (-1,-0.2) rectangle (1,0.2);
	\node at (0,0) {\Large$\JW{8}$};
}
\end{tikzpicture} = \alpha \cdot \begin{tikzpicture}[baseline=0,scale=1.5]
	{\draw[shaded] (0.2,0) -- (0.2,1) -- (-0.2,1) -- (-0.2,0);
	\draw (-0.8,0) -- (-0.8,0.6) arc (180:0:0.8) -- (0.8,0);
	\draw(-0.2,0) -- (-0.2,1);
	\draw (0.2,0) -- (0.2,1);
	\node at (0.45,0.5) {\footnotesize$5$};
	{%
	\filldraw[fill=white,thick] (0,1) ellipse (3mm and 3mm);
	\node at (0,1) {\Large $T$};
	\path(0,1) ++(-180:0.37) node {$\star$};
}
}
;
        \draw (0,0)--(0,-0.5);
        \node[anchor=west] at (0,-0.35) {\footnotesize$8$};
	{%
	\filldraw[fill=white,thick] (-1,-0.2) rectangle (1,0.2);
	\node at (0,0) {\Large$\JW{8}$};
}\end{tikzpicture} +\beta \cdot \begin{tikzpicture}[baseline=0,scale=1.5]
	{\draw[shaded] (0.2,0) -- (0.2,1) -- (-0.2,1) -- (-0.2,0);
	\draw (-0.8,0) -- (-0.8,0.6) arc (180:0:0.8) -- (0.8,0);
	\draw(-0.2,0) -- (-0.2,1);
	\draw (0.2,0) -- (0.2,1);
	\node at (0.45,0.5) {\footnotesize$5$};
	{%
	\filldraw[fill=white,thick] (0,1) ellipse (3mm and 3mm);
	\node at (0,1) {\Large $Q$};
	\path(0,1) ++(-180:0.37) node {$\star$};
}
}
;
        \draw (0,0)--(0,-0.5);
        \node[anchor=west] at (0,-0.35) {\footnotesize$8$};
	{%
	\filldraw[fill=white,thick] (-1,-0.2) rectangle (1,0.2);
	\node at (0,0) {\Large$\JW{8}$};
}\end{tikzpicture}+ \gamma \begin{tikzpicture}[baseline=0,scale=1.5]
	{

	\draw (-0.2,0) -- (-0.2,1);
	\draw (0.2,0) -- (0.2,1);
	\node at (0.45,0.5) {\footnotesize$7$};
	{%
	\filldraw[fill=white,thick] (0,1) ellipse (3mm and 3mm);
	\node at (0,1) {\Large $M'_0$};
	\path(0,1) ++(-180:0.37) node {$\star$};
}
}
;
        \draw (0,0)--(0,-0.5);
        \node[anchor=west] at (0,-0.35) {\footnotesize$8$};
	{%
	\filldraw[fill=white,thick] (-1,-0.2) rectangle (1,0.2);
	\node at (0,0) {\Large$\JW{8}$};
}\end{tikzpicture} $$, 
and 
$$\begin{tikzpicture}[baseline=0,scale=1.5]
	{%
	\fill[shaded] (-0.5,0) rectangle (0.5,1);
	\draw (-0.5,1) -- (-0.5,0);
	\node[anchor=west] at (-0.5,0.5) {\footnotesize$3$};
	\draw (-0.6,1) -- (-0.6,0);
	\draw (0.5,1) -- (0.5,0);
	\node[anchor=west] at (0.3,0.5) {\footnotesize$3$};
	\draw (0.6,1) -- (0.6,0);
}
 {%
	
	\draw (-0.5,1) -- (0.5, 1);
	\draw (-0.5,1.1) -- (0.5,1.1);
	\foreach \x in {-0.5,0.5} {
		{%
	\filldraw[fill=white,thick] (\x,1) ellipse (3mm and 3mm);
	\node at (-.5,1) {\Large $Q$};
	\node at (.5,1) {\Large $T$};
	\path(-.55,1) ++(-90:0.37) node {$\star$};
	\path(0.55,1) ++(-90:0.37) node {$\star$};
}
	}
}
        \draw (0,0)--(0,-0.5);
        \node[anchor=west] at (0,-0.35) {\footnotesize$8$};
	{%
	\filldraw[fill=white,thick] (-1,-0.2) rectangle (1,0.2);
	\node at (0,0) {\Large$\JW{8}$};
}
\end{tikzpicture} = \alpha_0 \cdot \begin{tikzpicture}[baseline=0,scale=1.5]
	{\draw[shaded] (0.2,0) -- (0.2,1) -- (-0.2,1) -- (-0.2,0);
	\draw (-0.8,0) -- (-0.8,0.6) arc (180:0:0.8) -- (0.8,0);
	\draw(-0.2,0) -- (-0.2,1);
	\draw (0.2,0) -- (0.2,1);
	\node at (0.45,0.5) {\footnotesize$5$};
	{%
	\filldraw[fill=white,thick] (0,1) ellipse (3mm and 3mm);
	\node at (0,1) {\Large $T$};
	\path(0,1) ++(-180:0.37) node {$\star$};
}
}
;
        \draw (0,0)--(0,-0.5);
        \node[anchor=west] at (0,-0.35) {\footnotesize$8$};
	{%
	\filldraw[fill=white,thick] (-1,-0.2) rectangle (1,0.2);
	\node at (0,0) {\Large$\JW{8}$};
}\end{tikzpicture} +\beta_0 \cdot \begin{tikzpicture}[baseline=0,scale=1.5]
	{\draw[shaded] (0.2,0) -- (0.2,1) -- (-0.2,1) -- (-0.2,0);
	\draw (-0.8,0) -- (-0.8,0.6) arc (180:0:0.8) -- (0.8,0);
	\draw(-0.2,0) -- (-0.2,1);
	\draw (0.2,0) -- (0.2,1);
	\node at (0.45,0.5) {\footnotesize$5$};
	{%
	\filldraw[fill=white,thick] (0,1) ellipse (3mm and 3mm);
	\node at (0,1) {\Large $Q$};
	\path(0,1) ++(-180:0.37) node {$\star$};
}
}
;
        \draw (0,0)--(0,-0.5);
        \node[anchor=west] at (0,-0.35) {\footnotesize$8$};
	{%
	\filldraw[fill=white,thick] (-1,-0.2) rectangle (1,0.2);
	\node at (0,0) {\Large$\JW{8}$};
}\end{tikzpicture}+ \gamma_0 \begin{tikzpicture}[baseline=0,scale=1.5]
	{

	\draw (-0.2,0) -- (-0.2,1);
	\draw (0.2,0) -- (0.2,1);
	\node at (0.45,0.5) {\footnotesize$7$};
	{%
	\filldraw[fill=white,thick] (0,1) ellipse (3mm and 3mm);
	\node at (0,1) {\Large $M'_0$};
	\path(0,1) ++(-180:0.37) node {$\star$};
}
}
;
        \draw (0,0)--(0,-0.5);
        \node[anchor=west] at (0,-0.35) {\footnotesize$8$};
	{%
	\filldraw[fill=white,thick] (-1,-0.2) rectangle (1,0.2);
	\node at (0,0) {\Large$\JW{8}$};
}\end{tikzpicture} $$
for some $\alpha,\beta,\gamma,\alpha_0,\beta_0,\gamma_0 \in \C$.  Notice we replaced $M'$ by the underlying self-adjoint generator $M'_0$.\\
\indent Let $w=(T,Q)$ and $w_0=(Q,T)$.  Let $W=\gamma_0w-\gamma w_0$.
\begin{lem}[Eqn.A]The following equation holds:
$$\eta \cdot \begin{tikzpicture}[baseline=0,scale=1.5]
	{%
	\fill[shaded] (-0.5,0) rectangle (0.5,1);
	\draw (-0.5,1) -- (-0.5,0);
	\node[anchor=west] at (-0.5,0.5) {\footnotesize$3$};
	\draw (-0.6,1) -- (-0.6,0);
	\draw (0.5,1) -- (0.5,0);
	\node[anchor=west] at (0.3,0.5) {\footnotesize$3$};
	\draw (0.6,1) -- (0.6,0);
}
 {%
	
	\draw (-0.5,1) -- (0.5, 1);
	\draw (-0.5,1.1) -- (0.5,1.1);
	\foreach \x in {-0.5,0.5} {
		{%
	\filldraw[fill=white,thick] (\x,1) ellipse (3mm and 3mm);
	\node at (-.5,1) {\Large $T$};
	\node at (.5,1) {\Large $Q$};
	\path(-.55,1) ++(-90:0.37) node {$\star$};
	\path(0.55,1) ++(-90:0.37) node {$\star$};
}
	}
}
        \draw (0,0)--(0,-0.5);
        \node[anchor=west] at (0,-0.35) {\footnotesize$8$};
	{%
	\filldraw[fill=white,thick] (-1,-0.2) rectangle (1,0.2);
	\node at (0,0) {\Large$\JW{8}$};
}
\end{tikzpicture}-\overline{\eta} \cdot \begin{tikzpicture}[baseline=0,scale=1.5]
	{%
	\fill[shaded] (-0.5,0) rectangle (0.5,1);
	\draw (-0.5,1) -- (-0.5,0);
	\node[anchor=west] at (-0.5,0.5) {\footnotesize$3$};
	\draw (-0.6,1) -- (-0.6,0);
	\draw (0.5,1) -- (0.5,0);
	\node[anchor=west] at (0.3,0.5) {\footnotesize$3$};
	\draw (0.6,1) -- (0.6,0);
}
 {%
	
	\draw (-0.5,1) -- (0.5, 1);
	\draw (-0.5,1.1) -- (0.5,1.1);
	\foreach \x in {-0.5,0.5} {
		{%
	\filldraw[fill=white,thick] (\x,1) ellipse (3mm and 3mm);
	\node at (-.5,1) {\Large $Q$};
	\node at (.5,1) {\Large $T$};
	\path(-.55,1) ++(-90:0.37) node {$\star$};
	\path(0.55,1) ++(-90:0.37) node {$\star$};
}
	}
}
        \draw (0,0)--(0,-0.5);
        \node[anchor=west] at (0,-0.35) {\footnotesize$8$};
	{%
	\filldraw[fill=white,thick] (-1,-0.2) rectangle (1,0.2);
	\node at (0,0) {\Large$\JW{8}$};
}
\end{tikzpicture}=a_0\cdot \begin{tikzpicture}[baseline=0,scale=1.5]
	{\draw[shaded] (0.2,0) -- (0.2,1) -- (-0.2,1) -- (-0.2,0);
	\draw (-0.8,0) -- (-0.8,0.6) arc (180:0:0.8) -- (0.8,0);
	\draw(-0.2,0) -- (-0.2,1);
	\draw (0.2,0) -- (0.2,1);
	\node at (0.45,0.5) {\footnotesize$5$};
	{%
	\filldraw[fill=white,thick] (0,1) ellipse (3mm and 3mm);
	\node at (0,1) {\Large $T$};
	\path(0,1) ++(-180:0.37) node {$\star$};
}
}
;
        \draw (0,0)--(0,-0.5);
        \node[anchor=west] at (0,-0.35) {\footnotesize$8$};
	{%
	\filldraw[fill=white,thick] (-1,-0.2) rectangle (1,0.2);
	\node at (0,0) {\Large$\JW{8}$};
}\end{tikzpicture}+b_0 \cdot \begin{tikzpicture}[baseline=0,scale=1.5]
	{\draw[shaded] (0.2,0) -- (0.2,1) -- (-0.2,1) -- (-0.2,0);
	\draw (-0.8,0) -- (-0.8,0.6) arc (180:0:0.8) -- (0.8,0);
	\draw(-0.2,0) -- (-0.2,1);
	\draw (0.2,0) -- (0.2,1);
	\node at (0.45,0.5) {\footnotesize$5$};
	{%
	\filldraw[fill=white,thick] (0,1) ellipse (3mm and 3mm);
	\node at (0,1) {\Large $Q$};
	\path(0,1) ++(-180:0.37) node {$\star$};
}
}
;
        \draw (0,0)--(0,-0.5);
        \node[anchor=west] at (0,-0.35) {\footnotesize$8$};
	{%
	\filldraw[fill=white,thick] (-1,-0.2) rectangle (1,0.2);
	\node at (0,0) {\Large$\JW{8}$};
}\end{tikzpicture}$$,where $\eta=1+\sqrt{3}i$, $a_0=\frac{\eta \ip{(T,Q),u_0}-\overline{\eta}\ip{(Q,T),u_0}}{\ip{u_0,u_0}}$, and $b_0=\frac{\eta \ip{(T,Q),v_0}-\overline{\eta}\ip{(Q,T),v_0}}{\ip{v_0,v_0}}$.  In fact, $a_0=i\sqrt{\frac{2}{3}(1+\sqrt{21})}$ and $b_0=i\sqrt{\frac{2}{3}(17-3\sqrt{21})}$.
\end{lem}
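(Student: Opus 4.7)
The plan is to mirror almost verbatim the Bessel‑inequality argument used in the proof of Eqn.1 (Lemma 6.1.2), but with the shaded 4‑box machinery from Section 5.2.2 in place of the unshaded machinery from Section 5.2.1. Specifically, set $w=(T,Q)_{\text{sh}}$, $w_0=(Q,T)_{\text{sh}}$, $W=\gamma_0 w-\gamma w_0$, and look for scalars $\gamma,\gamma_0\in\C$ (which must satisfy $\overline{\gamma}=\gamma_0$ because $w^*=w_0$) so that $W\in\mathrm{span}(u_0,v_0)$. By Bessel's inequality, this is equivalent to
\begin{equation*}
\ip{W,W}=\frac{|\ip{W,u_0}|^2}{\ip{u_0,u_0}}+\frac{|\ip{W,v_0}|^2}{\ip{v_0,v_0}}.
\end{equation*}

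First I would expand both sides using the shaded inner products already catalogued in Lemmas 5.2.13, 5.2.14, 5.2.15 and the reverse‑shading case of 5.2.2. The diagonal defects
\begin{equation*}
\ip{w,w}-\Bigl(\tfrac{|\ip{w,u_0}|^2}{\ip{u_0,u_0}}+\tfrac{|\ip{w,v_0}|^2}{\ip{v_0,v_0}}\Bigr),\qquad
\ip{w_0,w_0}-\Bigl(\tfrac{|\ip{w_0,u_0}|^2}{\ip{u_0,u_0}}+\tfrac{|\ip{w_0,v_0}|^2}{\ip{v_0,v_0}}\Bigr)
\end{equation*}
both equal $\tfrac{9+19\sqrt{21}}{45}$, exactly as in the unshaded setting (this follows from $(R4(T,Q))_0$ and $(R4(Q,T))_0$ together with $\|M'_0\|^2=[5]$). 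After cancelling these equal defects, the Bessel condition collapses, just as in Lemma 6.1.2, to a single real quadratic form in $\mathrm{Re}(\gamma_0),\mathrm{Im}(\gamma_0)$.

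The key bookkeeping difference from the unshaded case is that the shaded inner products pick up the opposite rotational phases: every $\omega$ in the tables of 5.2.4 is replaced by its conjugate in the tables of 5.2.11, and similarly for $\omega^2$. Consequently the cross term $2\mathrm{Re}(\overline{\gamma_0}\gamma\,(c+id))$ appears with $d$ of the opposite sign, and the resulting quadratic factors as $(\sqrt{3}\,\mathrm{Re}(\gamma_0)-\mathrm{Im}(\gamma_0))^2=0$, forcing $\gamma_0=\mathrm{Re}(\gamma_0)\,\eta$ with $\eta=1+\sqrt{3}i$ (rather than $\overline{\eta}$, as in the unshaded case). This swap is the whole reason the coefficients in Eqn.A are $\eta$ and $\overline{\eta}$ instead of $\overline{\eta}$ and $\eta$.

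To pin down the overall scale $\mathrm{Re}(\gamma_0)$ I would impose the second Bessel identity coming from the shaded 4‑box relation $(R4(Q,T))_0$, namely
\begin{equation*}
\ip{w_0,w_0}=\tfrac{|\ip{w_0,u_0}|^2}{\ip{u_0,u_0}}+\tfrac{|\ip{w_0,v_0}|^2}{\ip{v_0,v_0}}+|\gamma_0|^2\,\|M'_0\|^2,
\end{equation*}
with $\|M'_0\|^2=[5]$. This determines $\mathrm{Re}(\gamma_0)$ up to sign, and after cancelling the common factor $\mathrm{Re}(\gamma_0)$ from both sides of $\eta w-\overline{\eta}w_0=(\cdot)u_0+(\cdot)v_0$, the coefficients read off as $a_0=\bigl(\eta\ip{(T,Q),u_0}-\overline{\eta}\ip{(Q,T),u_0}\bigr)/\ip{u_0,u_0}$ and $b_0$ analogously. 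Finally, plugging in the explicit shaded inner products from Lemmas 5.2.11 and 5.2.2 (reverse shading) and simplifying the surds yields $a_0=i\sqrt{\tfrac{2}{3}(1+\sqrt{21})}$ and $b_0=i\sqrt{\tfrac{2}{3}(17-3\sqrt{21})}$. The main technical obstacle is purely arithmetic: the shaded inner‑product tables introduce $\omega^2$ in places where the unshaded ones have $1$, so verifying that the cross‑term cancellation actually produces $\eta$ rather than $\overline{\eta}$ (and that $b_0$ comes out purely imaginary, unlike the unshaded $b_0$) requires careful tracking of the complex phases through each of the six tables in Section 5.2.2.
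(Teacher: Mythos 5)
Your proposal is correct and follows the paper's proof of Eqn.A essentially step for step: the same Bessel-inequality setup with $W=\gamma_0 w-\gamma w_0$, the same diagonal defect $\frac{9+19\sqrt{21}}{45}$, the same sign flip in the cross term producing $(\sqrt{3}\,\mathrm{Re}(\gamma_0)-\mathrm{Im}(\gamma_0))^2=0$ and hence $\gamma_0=\mathrm{Re}(\gamma_0)\eta$, and the same normalization via $\|M_0'\|^2=[5]$. You also correctly identify the two places where the shaded calculation genuinely diverges from the unshaded one (the $\eta$ versus $\overline{\eta}$ swap and the fact that $b_0$ becomes purely imaginary), which is exactly where the paper's own computation differs from Lemma 6.1.2.
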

\begin{proof}We can assume $\overline{\gamma}=\gamma_0$.  To show that $W$ lies in the span of $u_0$ and $v_0$, we use Bessel's inequality.
We look for $\gamma$ and $\gamma_0$ for which the following holds:
$$\ip{W,W}=\frac{\norm{\ip{W,u_0}}^2}{\ip{u_0,u_0}}+\frac{\norm{\ip{W,v_0}}^2}{\ip{v_0,v_0}}.$$

We can calculate that $$\Big(\ip{w,w}-\Big(\frac{\norm{\ip{w,u_0}}^2}{\ip{u_0,u_0}}+\frac{\norm{\ip{w,v_0}}^2}{\ip{v_0,v_0}}\Big)\Big)=\frac{9+19\sqrt{21}}{45}$$ and $$\Big(\ip{w_0,w_0}-\Big(\frac{\norm{\ip{w_0,u_0}}^2}{\ip{u_0,u_0}}+\frac{\norm{\ip{w_0,v_0}}^2}{\ip{v_0,v_0}}\Big)\Big)=\frac{9+19\sqrt{21}}{45}$$

Thus our equation holds iff \\
$\norm{\gamma_0}^2\cdot \frac{9+19\sqrt{21}}{45}+\norm{\gamma}^2\cdot \frac{9+19\sqrt{21}}{45}-2Re(\gamma_0\overline{\gamma}\ip{w,w_0})
+\frac{2Re(\gamma_0\overline{\gamma}\ip{w,u_0}\overline{\ip{w_0,u_0}})}{\ip{u_0,u_0}}+\frac{2Re(\gamma_0\overline{\gamma}\ip{w,v_0}\overline{\ip{w_0,v_0}})}{\ip{v_0,v_0}}= 0\\
\Longleftrightarrow\\
\norm{\gamma_0}^2\cdot \frac{9+19\sqrt{21}}{45}+\norm{\gamma}^2\cdot \frac{9+19\sqrt{21}}{45}-2Re(\overline{\gamma_0}\gamma\ip{w_0,w})
+\frac{2Re(\overline{\gamma_0}\gamma\ip{u_0,w}\overline{\ip{u_0,w_0}})}{\ip{u_0,u_0}}+\frac{2Re(\overline{\gamma_0}\gamma\ip{v_0,w}\overline{\ip{v_0,w_0}})}{\ip{v_0,v_0}}= 0\\
\Longleftrightarrow\\
\norm{\gamma_0}^2\cdot \frac{9+19\sqrt{21}}{45}+\norm{\gamma}^2\cdot \frac{9+19\sqrt{21}}{45}+2Re(\overline{\gamma_0}\gamma(c-id))=0,
\mbox{where $c=\frac{9+19\sqrt{21}}{90}$ and $d=\frac{9\sqrt{3}+57\sqrt{7}}{90}$}\\
\Longleftrightarrow\\
2\norm{\gamma_0}^2\cdot\frac{9+19\sqrt{21}}{45}+2Re((\overline{\gamma_0})^2(c-id))=0\\
\Longleftrightarrow\\
\norm{\gamma_0}^2\cdot\frac{9+19\sqrt{21}}{45}+Re((Re(\gamma_0)-iIm(\gamma_0))^2(c-id))=0\\
\Longleftrightarrow\\
Re(\gamma_0)^2+Im(\gamma_0)^2+(Re(\gamma_0)^2-Im(\gamma_0)^2)\frac{1}{2}-2Re(\gamma_0)Im(\gamma_0)\frac{\sqrt{3}}{2}=0\\
\Longleftrightarrow\\
3Re(\gamma_0)^2+Im(\gamma_0)^2-2\sqrt{3}Re(\gamma_0)Im(\gamma_0)=0\\
\Longleftrightarrow\\
(\sqrt{3}Re(\gamma_0)-Im(\gamma_0))^2=0\\
\Longleftrightarrow\\
Im(\gamma_0)=\sqrt{3}Re(\gamma_0)\\
\Longleftrightarrow\\
\gamma_0=Re(\gamma_0)(1+\sqrt{3}i).$\\
So we have equality if $\overline{\gamma}=\gamma_0$ and $\gamma_0=Re(\gamma_0)(1+\sqrt{3}i)=Re(\gamma_0)\eta$, where $\eta = 1+\sqrt{3}i$.\\
To find $\gamma_0$, we impose $$\ip{w_0,w_0}=\frac{\norm{\ip{w_0,u_0}}^2}{\ip{u_0,u_0}}+\frac{\norm{\ip{w_0,v_0}}^2}{\ip{v_0,v_0}}+\norm{\gamma_0}^2\Norm{M'_0}^2, \mbox{ where $\Norm{M'_0}^2=[5]$}$$
This gives us that $Re(\gamma_0)=\pm\sqrt{-\frac{59}{120}+\frac{43}{120}\sqrt{\frac{7}{3}}}$.\\
If we pick a value for $Re(\gamma_0)$, then the equation we get involving $w$ and $w_0$ has a factor of $Re(\gamma_0)$ on both sides and hence, we can cancel out $Re(\gamma_0)$ to get $$\eta(T,Q)-\overline{\eta}(Q,T)=a_0u_0+b_0v_0$$,where $a_0=\frac{\eta\ip{(T,Q),u_0}-\overline{\eta}\ip{(Q,T),u_0}}{\ip{u_0,u_0}}$ and $b_0=\frac{\eta\ip{(T,Q),v_0}-\overline{\eta}\ip{(Q,T),v_0}}{\ip{v_0,v_0}}$.  Hence, we have the desired equation.
\end{proof}
Recall from Lemma 5.2.16 the shaded 4-box relations:
$$\begin{tikzpicture}[baseline=0,scale=1.5]
	{%
	\fill[shaded] (-0.5,0) rectangle (0.5,1);
	\draw (-0.5,1) -- (-0.5,0);
	\node[anchor=west] at (-0.5,0.5) {\footnotesize$3$};
	\draw (-0.6,1) -- (-0.6,0);
	\draw (0.5,1) -- (0.5,0);
	\node[anchor=west] at (0.3,0.5) {\footnotesize$3$};
	\draw (0.6,1) -- (0.6,0);
}
 {%
	
	\draw (-0.5,1) -- (0.5, 1);
	\draw (-0.5,1.1) -- (0.5,1.1);
	\foreach \x in {-0.5,0.5} {
		{%
	\filldraw[fill=white,thick] (\x,1) ellipse (3mm and 3mm);
	\node at (-.5,1) {\Large $T$};
	\node at (.5,1) {\Large $T$};
	\path(-.55,1) ++(-90:0.37) node {$\star$};
	\path(0.55,1) ++(-90:0.37) node {$\star$};
}
	}
}
        \draw (0,0)--(0,-0.5);
        \node[anchor=west] at (0,-0.35) {\footnotesize$8$};
	{%
	\filldraw[fill=white,thick] (-1,-0.2) rectangle (1,0.2);
	\node at (0,0) {\Large$\JW{8}$};
}
\end{tikzpicture} = \alpha \cdot \begin{tikzpicture}[baseline=0,scale=1.5]
	{\draw[shaded] (0.2,0) -- (0.2,1) -- (-0.2,1) -- (-0.2,0);
	\draw (-0.8,0) -- (-0.8,0.6) arc (180:0:0.8) -- (0.8,0);
	\draw(-0.2,0) -- (-0.2,1);
	\draw (0.2,0) -- (0.2,1);
	\node at (0.45,0.5) {\footnotesize$5$};
	{%
	\filldraw[fill=white,thick] (0,1) ellipse (3mm and 3mm);
	\node at (0,1) {\Large $T$};
	\path(0,1) ++(-180:0.37) node {$\star$};
}
}
;
        \draw (0,0)--(0,-0.5);
        \node[anchor=west] at (0,-0.35) {\footnotesize$8$};
	{%
	\filldraw[fill=white,thick] (-1,-0.2) rectangle (1,0.2);
	\node at (0,0) {\Large$\JW{8}$};
}\end{tikzpicture} +\beta \cdot \begin{tikzpicture}[baseline=0,scale=1.5]
	{\draw[shaded] (0.2,0) -- (0.2,1) -- (-0.2,1) -- (-0.2,0);
	\draw (-0.8,0) -- (-0.8,0.6) arc (180:0:0.8) -- (0.8,0);
	\draw(-0.2,0) -- (-0.2,1);
	\draw (0.2,0) -- (0.2,1);
	\node at (0.45,0.5) {\footnotesize$5$};
	{%
	\filldraw[fill=white,thick] (0,1) ellipse (3mm and 3mm);
	\node at (0,1) {\Large $Q$};
	\path(0,1) ++(-180:0.37) node {$\star$};
}
}
;
        \draw (0,0)--(0,-0.5);
        \node[anchor=west] at (0,-0.35) {\footnotesize$8$};
	{%
	\filldraw[fill=white,thick] (-1,-0.2) rectangle (1,0.2);
	\node at (0,0) {\Large$\JW{8}$};
}\end{tikzpicture}+ h \cdot \begin{tikzpicture}[baseline=0,scale=1.5]
	{

	\draw (-0.2,0) -- (-0.2,1);
	\draw (0.2,0) -- (0.2,1);
	\node at (0.45,0.5) {\footnotesize$7$};
	{%
	\filldraw[fill=white,thick] (0,1) ellipse (3mm and 3mm);
	\node at (0,1) {\Large $M'_0$};
	\path(0,1) ++(-180:0.37) node {$\star$};
}
}
;
        \draw (0,0)--(0,-0.5);
        \node[anchor=west] at (0,-0.35) {\footnotesize$8$};
	{%
	\filldraw[fill=white,thick] (-1,-0.2) rectangle (1,0.2);
	\node at (0,0) {\Large$\JW{8}$};
}\end{tikzpicture} $$, 
and 
$$\begin{tikzpicture}[baseline=0,scale=1.5]
	{%
	\fill[shaded] (-0.5,0) rectangle (0.5,1);
	\draw (-0.5,1) -- (-0.5,0);
	\node[anchor=west] at (-0.5,0.5) {\footnotesize$3$};
	\draw (-0.6,1) -- (-0.6,0);
	\draw (0.5,1) -- (0.5,0);
	\node[anchor=west] at (0.3,0.5) {\footnotesize$3$};
	\draw (0.6,1) -- (0.6,0);
}
 {%
	
	\draw (-0.5,1) -- (0.5, 1);
	\draw (-0.5,1.1) -- (0.5,1.1);
	\foreach \x in {-0.5,0.5} {
		{%
	\filldraw[fill=white,thick] (\x,1) ellipse (3mm and 3mm);
	\node at (-.5,1) {\Large $Q$};
	\node at (.5,1) {\Large $Q$};
	\path(-.55,1) ++(-90:0.37) node {$\star$};
	\path(0.55,1) ++(-90:0.37) node {$\star$};
}
	}
}
        \draw (0,0)--(0,-0.5);
        \node[anchor=west] at (0,-0.35) {\footnotesize$8$};
	{%
	\filldraw[fill=white,thick] (-1,-0.2) rectangle (1,0.2);
	\node at (0,0) {\Large$\JW{8}$};
}
\end{tikzpicture} = \alpha_0 \cdot \begin{tikzpicture}[baseline=0,scale=1.5]
	{\draw[shaded] (0.2,0) -- (0.2,1) -- (-0.2,1) -- (-0.2,0);
	\draw (-0.8,0) -- (-0.8,0.6) arc (180:0:0.8) -- (0.8,0);
	\draw(-0.2,0) -- (-0.2,1);
	\draw (0.2,0) -- (0.2,1);
	\node at (0.45,0.5) {\footnotesize$5$};
	{%
	\filldraw[fill=white,thick] (0,1) ellipse (3mm and 3mm);
	\node at (0,1) {\Large $T$};
	\path(0,1) ++(-180:0.37) node {$\star$};
}
}
;
        \draw (0,0)--(0,-0.5);
        \node[anchor=west] at (0,-0.35) {\footnotesize$8$};
	{%
	\filldraw[fill=white,thick] (-1,-0.2) rectangle (1,0.2);
	\node at (0,0) {\Large$\JW{8}$};
}\end{tikzpicture} +\beta_0 \cdot \begin{tikzpicture}[baseline=0,scale=1.5]
	{\draw[shaded] (0.2,0) -- (0.2,1) -- (-0.2,1) -- (-0.2,0);
	\draw (-0.8,0) -- (-0.8,0.6) arc (180:0:0.8) -- (0.8,0);
	\draw(-0.2,0) -- (-0.2,1);
	\draw (0.2,0) -- (0.2,1);
	\node at (0.45,0.5) {\footnotesize$5$};
	{%
	\filldraw[fill=white,thick] (0,1) ellipse (3mm and 3mm);
	\node at (0,1) {\Large $Q$};
	\path(0,1) ++(-180:0.37) node {$\star$};
}
}
;
        \draw (0,0)--(0,-0.5);
        \node[anchor=west] at (0,-0.35) {\footnotesize$8$};
	{%
	\filldraw[fill=white,thick] (-1,-0.2) rectangle (1,0.2);
	\node at (0,0) {\Large$\JW{8}$};
}\end{tikzpicture}+ h_0 \begin{tikzpicture}[baseline=0,scale=1.5]
	{

	\draw (-0.2,0) -- (-0.2,1);
	\draw (0.2,0) -- (0.2,1);
	\node at (0.45,0.5) {\footnotesize$7$};
	{%
	\filldraw[fill=white,thick] (0,1) ellipse (3mm and 3mm);
	\node at (0,1) {\Large $M'_0$};
	\path(0,1) ++(-180:0.37) node {$\star$};
}
}
;
        \draw (0,0)--(0,-0.5);
        \node[anchor=west] at (0,-0.35) {\footnotesize$8$};
	{%
	\filldraw[fill=white,thick] (-1,-0.2) rectangle (1,0.2);
	\node at (0,0) {\Large$\JW{8}$};
}\end{tikzpicture} $$
for some $\alpha,\beta,h,\alpha_0,\beta_0,h_0 \in \C$.
\begin{lem}[Eqn. B] The following equation holds:
$$\tau\cdot \begin{tikzpicture}[baseline=0,scale=1.5]
	{%
	\fill[shaded] (-0.5,0) rectangle (0.5,1);
	\draw (-0.5,1) -- (-0.5,0);
	\node[anchor=west] at (-0.5,0.5) {\footnotesize$3$};
	\draw (-0.6,1) -- (-0.6,0);
	\draw (0.5,1) -- (0.5,0);
	\node[anchor=west] at (0.3,0.5) {\footnotesize$3$};
	\draw (0.6,1) -- (0.6,0);
}
 {%
	
	\draw (-0.5,1) -- (0.5, 1);
	\draw (-0.5,1.1) -- (0.5,1.1);
	\foreach \x in {-0.5,0.5} {
		{%
	\filldraw[fill=white,thick] (\x,1) ellipse (3mm and 3mm);
	\node at (-.5,1) {\Large $T$};
	\node at (.5,1) {\Large $T$};
	\path(-.55,1) ++(-90:0.37) node {$\star$};
	\path(0.55,1) ++(-90:0.37) node {$\star$};
}
	}
}
        \draw (0,0)--(0,-0.5);
        \node[anchor=west] at (0,-0.35) {\footnotesize$8$};
	{%
	\filldraw[fill=white,thick] (-1,-0.2) rectangle (1,0.2);
	\node at (0,0) {\Large$\JW{8}$};
}
\end{tikzpicture}-  \begin{tikzpicture}[baseline=0,scale=1.5]
	{%
	\fill[shaded] (-0.5,0) rectangle (0.5,1);
	\draw (-0.5,1) -- (-0.5,0);
	\node[anchor=west] at (-0.5,0.5) {\footnotesize$3$};
	\draw (-0.6,1) -- (-0.6,0);
	\draw (0.5,1) -- (0.5,0);
	\node[anchor=west] at (0.3,0.5) {\footnotesize$3$};
	\draw (0.6,1) -- (0.6,0);
}
 {%
	
	\draw (-0.5,1) -- (0.5, 1);
	\draw (-0.5,1.1) -- (0.5,1.1);
	\foreach \x in {-0.5,0.5} {
		{%
	\filldraw[fill=white,thick] (\x,1) ellipse (3mm and 3mm);
	\node at (-.5,1) {\Large $Q$};
	\node at (.5,1) {\Large $Q$};
	\path(-.55,1) ++(-90:0.37) node {$\star$};
	\path(0.55,1) ++(-90:0.37) node {$\star$};
}
	}
}
        \draw (0,0)--(0,-0.5);
        \node[anchor=west] at (0,-0.35) {\footnotesize$8$};
	{%
	\filldraw[fill=white,thick] (-1,-0.2) rectangle (1,0.2);
	\node at (0,0) {\Large$\JW{8}$};
}
\end{tikzpicture}=a_0'\cdot \begin{tikzpicture}[baseline=0,scale=1.5]
	{\draw[shaded] (0.2,0) -- (0.2,1) -- (-0.2,1) -- (-0.2,0);
	\draw (-0.8,0) -- (-0.8,0.6) arc (180:0:0.8) -- (0.8,0);
	\draw(-0.2,0) -- (-0.2,1);
	\draw (0.2,0) -- (0.2,1);
	\node at (0.45,0.5) {\footnotesize$5$};
	{%
	\filldraw[fill=white,thick] (0,1) ellipse (3mm and 3mm);
	\node at (0,1) {\Large $T$};
	\path(0,1) ++(-180:0.37) node {$\star$};
}
}
;
        \draw (0,0)--(0,-0.5);
        \node[anchor=west] at (0,-0.35) {\footnotesize$8$};
	{%
	\filldraw[fill=white,thick] (-1,-0.2) rectangle (1,0.2);
	\node at (0,0) {\Large$\JW{8}$};
}\end{tikzpicture}+b_0' \cdot \begin{tikzpicture}[baseline=0,scale=1.5]
	{\draw[shaded] (0.2,0) -- (0.2,1) -- (-0.2,1) -- (-0.2,0);
	\draw (-0.8,0) -- (-0.8,0.6) arc (180:0:0.8) -- (0.8,0);
	\draw(-0.2,0) -- (-0.2,1);
	\draw (0.2,0) -- (0.2,1);
	\node at (0.45,0.5) {\footnotesize$5$};
	{%
	\filldraw[fill=white,thick] (0,1) ellipse (3mm and 3mm);
	\node at (0,1) {\Large $Q$};
	\path(0,1) ++(-180:0.37) node {$\star$};
}
}
;
        \draw (0,0)--(0,-0.5);
        \node[anchor=west] at (0,-0.35) {\footnotesize$8$};
	{%
	\filldraw[fill=white,thick] (-1,-0.2) rectangle (1,0.2);
	\node at (0,0) {\Large$\JW{8}$};
}\end{tikzpicture}$$,where $\tau=\frac{4+\sqrt{21}}{5}$, $a_0'=\frac{\tau \ip{(T,T),u_0}-\ip{(Q,Q),u_0}}{\ip{u_0,u_0}}$, and $b_0'=\frac{\tau \ip{(T,T),v_0}-\ip{(Q,Q),v_0}}{\ip{v_0,v_0}}$.  In fact, $a_0'=-\sqrt{\frac{5}{18}+\frac{1}{6}\sqrt{\frac{7}{3}}}$ and $b_0'=\sqrt{\frac{8}{9}+\sqrt{\frac{7}{3}}}$.
\end{lem}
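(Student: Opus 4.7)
The plan is to follow the argument for the unshaded analogue (Lemma [Eqn. 2]) verbatim, but using the shaded inner products from Lemma 5.2.15 (together with the reverse-shading values from Lemma 5.2.2). Set $w=(T,T)$ and $w_0=(Q,Q)$ in the shaded 4-box sense, and let $W=h_0 w - h w_0$. Because both $w$ and $w_0$ are self-adjoint we may assume $h,h_0\in\mathbb{R}$. The goal is to pin down the unique ratio $h_0/h$ for which $W\in\mathrm{span}\{u_0,v_0\}$, and Bessel's inequality tells us this is equivalent to the equality
\begin{equation*}
\langle W,W\rangle=\frac{|\langle W,u_0\rangle|^2}{\langle u_0,u_0\rangle}+\frac{|\langle W,v_0\rangle|^2}{\langle v_0,v_0\rangle}.
\end{equation*}

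Expanding both sides and subtracting the diagonal $\|w\|^2$ and $\|w_0\|^2$ terms using Lemma 5.2.15, I will obtain the same numerical ``defects''
\begin{equation*}
\langle w,w\rangle-\frac{|\langle w,u_0\rangle|^2}{\langle u_0,u_0\rangle}-\frac{|\langle w,v_0\rangle|^2}{\langle v_0,v_0\rangle}=\tfrac{363-67\sqrt{21}}{180},\qquad
\langle w_0,w_0\rangle-\cdots=\tfrac{87+17\sqrt{21}}{180}
\end{equation*}
as in the unshaded case (cf.\ the proof of $(R4(T,T))_0$ and $(R4(Q,Q))_0$ in Theorem 5.2.16). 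The remaining cross term reduces, after plugging in the shaded values of $\langle w,w_0\rangle$, $\langle w,u_0\rangle\overline{\langle w_0,u_0\rangle}$, and $\langle w,v_0\rangle\overline{\langle w_0,v_0\rangle}$, to a real quadratic equation in $h$ and $h_0$ whose unique nontrivial solution satisfies $h_0/h=(4+\sqrt{21})/5=\tau$.

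Having determined the ratio, I fix the normalization by imposing
\begin{equation*}
\langle w_0,w_0\rangle=\frac{|\langle w_0,u_0\rangle|^2}{\langle u_0,u_0\rangle}+\frac{|\langle w_0,v_0\rangle|^2}{\langle v_0,v_0\rangle}+|h_0|^2\|M'_0\|^2,
\end{equation*}
where $\|M'_0\|^2=[5]$; this gives a specific (real) value of $h_0$, and hence of $h$. Dividing the resulting identity $h_0 w - h w_0 = a u_0 + b v_0$ through by $h$, the coefficients of $u_0$ and $v_0$ are forced to be
\begin{equation*}
a_0'=\frac{\tau\langle(T,T),u_0\rangle-\langle(Q,Q),u_0\rangle}{\langle u_0,u_0\rangle},\qquad
b_0'=\frac{\tau\langle(T,T),v_0\rangle-\langle(Q,Q),v_0\rangle}{\langle v_0,v_0\rangle},
\end{equation*}
exactly as claimed. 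A final substitution of the shaded inner-product values yields the closed forms $a_0'=-\sqrt{\tfrac{5}{18}+\tfrac{1}{6}\sqrt{7/3}}$ and $b_0'=\sqrt{\tfrac{8}{9}+\sqrt{7/3}}$.

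The only step that requires care is the arithmetic in the cross-term reduction: in the unshaded case some of the relevant inner products were complex and the real part was killed only after the identification $h_0=\tau h$, whereas here the shaded inner products $\langle(T,T),v_0\rangle$ and $\langle(Q,Q),v_0\rangle$ (see Lemma 5.2.15) are again complex. The main obstacle is therefore just verifying that after taking real parts the quadratic form in $(h,h_0)$ degenerates to a rank-one form whose null line is precisely $h_0=\tau h$; this is a direct calculation parallel to the one in Lemma [Eqn. 2] and presents no conceptual difficulty.
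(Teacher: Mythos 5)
Your proposal follows exactly the same path as the paper's proof: set $W=h_0(T,T)-h(Q,Q)$, exploit self-adjointness to take $h,h_0\in\mathbb R$, apply Bessel's inequality, reduce to the rank-one quadratic form whose null line forces $h_0=\tau h$, and finally normalize via $\|M_0'\|^2=[5]$ and divide out $h$. The method, the intermediate defect values $\tfrac{363-67\sqrt{21}}{180}$ and $\tfrac{87+17\sqrt{21}}{180}$, and the final formulas for $a_0',b_0'$ all agree with the paper.

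One small misstatement in your last paragraph: you assert that the shaded inner products $\langle(T,T),v_0\rangle$ and $\langle(Q,Q),v_0\rangle$ are ``again complex'' as in the unshaded case. In fact they are real in the shaded setting, since $\langle(T,T),v_0\rangle=\overline{(Q,T,T)}$ and $\langle(Q,Q),v_0\rangle=\overline{(Q,Q,Q)}$, and the shaded Lemma~5.2.13 gives $(Q,T,T)=\sqrt{\tfrac{404}{63}+\tfrac{89}{3\sqrt{21}}}$ and $(Q,Q,Q)=-\tfrac13\sqrt{\tfrac{206}{7}+46\sqrt{3/7}}$, both real. This is also why the shaded coefficient $b_0'$ in the lemma is real, while the unshaded $b'$ in Eqn.~2 is genuinely complex. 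The mistaken parallel does not affect the structure or correctness of your argument, but you should be careful: the cross term you need for $b_0'$ actually simplifies more cleanly in the shaded case precisely because these quantities are real, and that is worth verifying rather than assuming it mirrors the unshaded computation verbatim.
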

\begin{proof}Let $w=(T,T)$ and $w_0=(Q,Q)$.  Let $W=h_0w-hw_0$.  We can assume $\overline{h}=h$ and $\overline{h_0}=h_0$, since $w^*=w$, $w_0^*=w_0$.  To show that $W$ lies in the span of $u_0$ and $v_0$, we use Bessel's inequality.
We look for $h$ and $h_0$ for which the following holds:
$$\ip{W,W}=\frac{\norm{\ip{W,u_0}}^2}{\ip{u_0,u_0}}+\frac{\norm{\ip{W,v_0}}^2}{\ip{v_0,v_0}}.$$
This holds iff\\
$ \norm{h_0}^2\Big(\ip{w,w}-\Big(\frac{\norm{\ip{w,u_0}}^2}{\ip{u_0,u_0}}+\frac{\norm{\ip{w,v_0}}^2}{\ip{v_0,v_0}}\Big)\Big)+\norm{h}^2\Big(\ip{w_0,w_0}-\Big(\frac{\norm{\ip{w_0,u_0}}^2}{\ip{u_0,u_0}}+\frac{\norm{\ip{w_0,v_0}}^2}{\ip{v_0,v_0}}\Big)\Big)\\
-2Re(h_0\overline{h}\ip{w,w_0})+\frac{2Re(h_0\overline{h}\ip{w,u_0}\overline{\ip{w_0,u_0}})}{\ip{u_0,u_0}}+\frac{2Re(h_0\overline{h}\ip{w,v_0}\overline{\ip{w_0,v_0}})}{\ip{v_0,v_0}}=0$

We can calculate that $$\Big(\ip{w,w}-\Big(\frac{\norm{\ip{w,u_0}}^2}{\ip{u_0,u_0}}+\frac{\norm{\ip{w,v_0}}^2}{\ip{v_0,v_0}}\Big)\Big)=\frac{1}{180}(363-67\sqrt{21})$$ and $$\Big(\ip{w_0,w_0}-\Big(\frac{\norm{\ip{w_0,u_0}}^2}{\ip{u_0,u_0}}+\frac{\norm{\ip{w_0,v_0}}^2}{\ip{v_0,v_0}}\Big)\Big)=\frac{1}{180}(87+17\sqrt{21})$$

Thus our equation holds iff\\
$\norm{h_0}^2\Big(\frac{1}{180}(363-67\sqrt{21})\Big)+\norm{h}^2\Big(\frac{1}{180}(87+17\sqrt{21})\Big)-2Re(h_0h\ip{w,w_0})\\
+\frac{2Re(h_0h\overline{\ip{u_0,w}}\ip{u_0,w_0}}{\ip{u_0,u_0}}+\frac{2Re(h_0h\overline{\ip{v_0,w}}\ip{v_0,w_0})}{\ip{v_0,v_0}}=0\\
\Longleftrightarrow
h_0^2\Big(\frac{1}{180}(363-67\sqrt{21})\Big)+h^2\Big(\frac{1}{180}(87+17\sqrt{21})\Big)+2Re(h_0h\Big(\frac{-9-19\sqrt{21}}{180}\Big))=0\\
\Longleftrightarrow
h_0=\frac{4+\sqrt{21}}{5}h$\\
To find $h_0$, we impose $$\ip{w_0,w_0}=\frac{\norm{\ip{w_0,u_0}}^2}{\ip{u_0,u_0}}+\frac{\norm{\ip{w_0,v_0}}^2}{\ip{v_0,v_0}}+\norm{h_0}^2\Norm{M'_0}^2, \mbox{ where $\Norm{M'_0}^2=[5]$}$$\\
This gives us that $h_0=\pm\frac{1}{6}\sqrt{\frac{1}{10}(39-\sqrt{21})}$.  It follows that $h=\pm\frac{\sqrt{\frac{5}{2}(39-\sqrt{21})}}{6(4+\sqrt{21})}$.\\
If we pick a value for $h_0$ then the equation we get involving $w$ and $w_0$ has a common factor of $h$ on both sides. Cancel out $h$ to get $$\tau(T,T)-(Q,Q)=a_0'u+b_0'v,$$ where $\tau=\frac{4+\sqrt{21}}{5}$, $a_0'=\frac{\tau\ip{(T,T),u_0}-\ip{(Q,Q),u_0}}{\ip{u_0,u_0}}$, and $b_0'=\frac{\tau\ip{(T,T),v_0}-\ip{(Q,Q),v_0}}{\ip{v_0,v_0}}$. 
\end{proof}
We can now easily deduce the shaded one-strand braiding substitutes.
\begin{prop}[Shaded One-strand Braiding Substitutes] $$u_0,v_0 \in span \{(T,T),(Q,Q),(T,Q),(Q,T)\}$$
\end{prop}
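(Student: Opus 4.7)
The plan is to mirror, essentially verbatim, the proof of Proposition 6.1.5 for the unshaded case. Eqn.A (Lemma 6.2.1) and Eqn.B (Lemma 6.2.2) together constitute a $2 \times 2$ linear system
\[
\begin{pmatrix} a_0 & b_0 \\ a_0' & b_0' \end{pmatrix}
\begin{pmatrix} u_0 \\ v_0 \end{pmatrix}
=
\begin{pmatrix} \eta\,(T,Q) - \overline{\eta}\,(Q,T) \\ \tau\,(T,T) - (Q,Q) \end{pmatrix}
\]
in the unknowns $u_0$ and $v_0$, whose right-hand sides manifestly lie in $\mathrm{span}\{(T,T),(Q,Q),(T,Q),(Q,T)\}$. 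Once I show the coefficient matrix is invertible over $\mathbb{C}$, inverting it expresses $u_0$ and $v_0$ as $\mathbb{C}$-linear combinations of the right-hand sides, which is exactly the desired containment.

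The only substantive step is therefore to verify that the determinant $a_0 b_0' - a_0' b_0$ is nonzero. Inspecting the explicit values furnished by Lemmas 6.2.1 and 6.2.2, I note that $a_0$ and $b_0$ are both purely imaginary while $a_0'$ and $b_0'$ are both real, so the determinant is automatically purely imaginary of the form
\[
i\Bigl(\sqrt{\tfrac{2}{3}(1+\sqrt{21})}\cdot\sqrt{\tfrac{8}{9}+\sqrt{\tfrac{7}{3}}} + \sqrt{\tfrac{5}{18}+\tfrac{1}{6}\sqrt{\tfrac{7}{3}}}\cdot\sqrt{\tfrac{2}{3}(17-3\sqrt{21})}\Bigr),
\]
where the two terms combine with a plus sign after accounting for the minus sign on $a_0'$. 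Since both summands are strictly positive real algebraic numbers in $\mathbb{Q}(\sqrt{21},\sqrt{7/3})$, the determinant is a nonzero purely imaginary number, and invertibility follows for free.

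The main obstacle — really the only one — is bookkeeping: keeping track of which coefficient lives in which of Eqn.A and Eqn.B, and making sure the conjugation $\eta \leftrightarrow \overline{\eta}$ in the shaded setting matches the signs built into the values of $a_0,b_0,a_0',b_0'$ from the previous two lemmas. Once the system is correctly written, the linear algebra reduces to a two-line application of Cramer's rule, directly parallel to the unshaded proof, and no new analytic input is needed.
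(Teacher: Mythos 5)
Your argument is correct and is essentially identical to the paper's proof: both treat Eqn.~A and Eqn.~B as a $2\times 2$ linear system in $u_0, v_0$ and verify the coefficient determinant $a_0 b_0' - a_0' b_0$ is nonzero (the paper simplifies it to $\tfrac{2}{3}i(2\sqrt{3}+\sqrt{7})$, numerically equal to your sum-of-products expression). Nothing is missing.
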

\begin{proof}Consider Eqn.A and Eqn.B from Lemmas 6.2.1 and 6.2.2.  This is a system of two equations and two unknowns where $u_0$ and $v_0$ are treated as the unknowns.  We can solve this system for $u_0$ and $v_0$, since the determinant of this system is $$a_0b_0'-a_0'b_0=\frac{2}{3}i(2\sqrt{3}+\sqrt{7})\neq 0.$$  We get expressions for $u_0$ and $v_0$ as linear combinations of the diagrams $$(T,T),(Q,Q),(T,Q),(Q,T).$$  These expressions will be our shaded one-strand braiding substitutes.  
\end{proof}
\chapter{Jellyfish Algorithm}
In this chapter, we prove that our planar algebra is subfactor.  To do this, we use the Jellyfish algorithm. Many of the proofs in this chapter are slight modifications of the proofs found in \cite{ExtH}.  We adopt the following definitions of arc, distance, and geodesic from \cite{ExtH}.
\begin{Defn}[\cite{ExtH}]  Suppose $D$ is a diagram in the planar algebra $\mathcal{P}$.  Let $S_0$ be a fixed copy of a generator inside $D$.  Suppose $\gamma$ is an embedded arc in $D$ from a point on the boundary of $S_0$ to a point on the top edge of $D$.  Suppose $\gamma$ is in general position, meaning that it intersects the strands of $D$ transversely, and does not touch any generator except at its initial point on $S_0$.  Let $m$ be the number of points of intersection between $\gamma$ and the strands of $D$.  If $m$ is minimal over all such arcs $\gamma$ then we say $\gamma$ is a {\em geodesic} and $m$ is the {\em distance} from $S_0$ to the top of $D$.
\end{Defn}
\begin{lem}Suppose $X$ is a diagram consisting of one copy of $T$(or $Q$) with all strands pointing down, and $d$ parallel edges forming a ``rainbow'', where $d\geq1$.  Then $X$ is a linear combination of diagrams that contain at most $2$ generators, each having distance less than $d$ from the top of the diagram.  
\end{lem}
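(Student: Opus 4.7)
\medskip
\noindent\textbf{Proof plan.} Let $S\in\{T,Q\}$ denote the unique generator in $X$. The plan is to apply the one-strand braiding substitute to the innermost arc of the rainbow, reducing the distance from the generator to the top by exactly one in each of the resulting diagrams.

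First, identify the local configuration consisting of the generator $S$ together with the innermost rainbow strand; this is (up to shading) the subdiagram appearing on the left-hand side of the one-strand braiding substitutes of Propositions 6.1.5 and 6.2.3, namely $u$, $v$, $u_0$, or $v_0$. Whether to use the unshaded or shaded substitute is dictated by the shading of the region directly between $S$ and the innermost cup; both substitutes are available in the planar algebra, as emphasized at the start of Chapter 6.

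Next, apply the chosen substitute locally. Each such substitute expresses a single generator under one cup (over $f^{(8)}$) as a linear combination of the four diagrams $(T,T),(Q,Q),(T,Q),(Q,T)$ (again over $f^{(8)}$), each of which places two generators $E,F\in\{T,Q\}$ at the top of the region, connected by two strands. Applied locally inside $X$, the innermost cup disappears and the one generator $S$ at distance $d$ from the top is replaced by two generators at the top of what was the innermost region. Since only the outer $d-1$ rainbow strands now separate the new generators from the top of $X$, each of them has distance exactly $d-1<d$ from the top, as required. The result is an expression of $X$ as a linear combination of diagrams, each with at most two generators, each at distance strictly less than $d$.

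The main technical point --- and the expected main obstacle --- is justifying the \emph{local} application of the substitute, because Propositions 6.1.5 and 6.2.3 are stated as identities of $4$-boxes with an explicit $f^{(8)}$ at the bottom, whereas $X$ contains no such projector. The plan for handling this is to exploit the defining capping-off relations of the generators: $\epsilon_i(T)=\epsilon_i(Q)=0$ for $i=1,\dots,6$ (relation (2) of Theorem 4.0.7), together with the shading-compatible rotational eigenvalue conditions. These relations are precisely the statement that $T$ and $Q$ are low-weight elements, so attaching the Jones--Wenzl projector $f^{(6)}$ to the strands of either generator acts as the identity. Expanding $f^{(8)}$ via Lemmas 2.1.9 and 2.1.10, every lower-order Temperley--Lieb correction term carries at least one cap, and each such cap either (a) closes onto the generator and vanishes by $\epsilon_i=0$, or (b) closes onto one of the rainbow strands, shortening the rainbow and producing a diagram whose generator lies at distance strictly less than $d$ from the top. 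In all non-vanishing cases, the resulting diagrams satisfy the conclusion of the lemma, completing the proof.
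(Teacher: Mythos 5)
Your proof takes essentially the same approach as the paper: peel off the innermost rainbow strand using the one-strand braiding substitute, handle the presence of $f^{(8)}$ in the substitute by expanding the Jones--Wenzl projector into Temperley--Lieb elements and observing that the identity term yields $X$ (or the analogous $f^{(8)}$-free diagram on the right) while every non-identity term either vanishes by $\epsilon_i(T)=\epsilon_i(Q)=0$ or produces generators at smaller distance, and then induct on $d$ with the shading alternating. The paper makes the final ``solve for $X$'' step slightly more explicit, and your case (b) should also allow a cap joining the two new generators on the right-hand side, but these are presentational details and the argument is sound.
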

\begin{proof}Suppose our diagram consists of one copy of $T$.  The proof for a diagram consisting of one copy of $Q$ is analogous. \\ 
\noindent Case $d=1$. Up to applying the rotation relation $\rho(T)=\omega_{T}(T)$, $X$ is as shown below:
$$\begin{tikzpicture}[baseline=0,scale=2]
	{
	\fill[shaded] (-0.8,0) -- (-0.8,0.6) arc (180:0:0.8) -- (0.8,0) -- (0.2,0) -- (0.2,1) -- (-0.2,1) -- (-0.2,0);
	\draw (-0.8,0) -- (-0.8,0.6) arc (180:0:0.8) -- (0.8,0);
	\draw (-0.2,0) -- (-0.2,1);
	\draw (0.2,0) -- (0.2,1);
	\node at (0.45,0.5) {\footnotesize$5$};
	{%
	\filldraw[fill=white,thick] (0,1) ellipse (3mm and 3mm);
	\node at (0,1) {\Large $T$};
	\path(0,1) ++(-90:0.37) node {$\star$};
}
}
;
\end{tikzpicture} $$
Recall that we have the following unshaded one-strand braiding substitute:
$$\begin{tikzpicture}[baseline=0,scale=1.2]
	{
	\fill[shaded] (-0.8,0) -- (-0.8,0.6) arc (180:0:0.8) -- (0.8,0) -- (0.2,0) -- (0.2,1) -- (-0.2,1) -- (-0.2,0);
	\draw (-0.8,0) -- (-0.8,0.6) arc (180:0:0.8) -- (0.8,0);
	\draw (-0.2,0) -- (-0.2,1);
	\draw (0.2,0) -- (0.2,1);
	\node at (0.45,0.5) {\footnotesize$5$};
	{%
	\filldraw[fill=white,thick] (0,1) ellipse (3mm and 3mm);
	\node at (0,1) {\Large $T$};
	\path(0,1) ++(-90:0.37) node {$\star$};
}
}
;
        \draw (0,0)--(0,-0.5);
        \node[anchor=west] at (0,-0.35) {\footnotesize$8$};
	{%
	\filldraw[fill=white,thick] (-1,-0.2) rectangle (1,0.2);
	\node at (0,0) {$\JW{8}$};
}\end{tikzpicture}
=x_1 \cdot \begin{tikzpicture}[baseline=0,scale=1.2]
	{%
	\draw (-0.5,1) -- (-0.5,0);
	\node[anchor=west] at (-0.5,0.5) {\footnotesize$4$};
	\draw (0.5,1) -- (0.5,0);
	\node[anchor=west] at (0.5,0.5) {\footnotesize$4$};
}
 {%
	\node[anchor=south] at (0,1) {\footnotesize$2$};
	\draw (-0.5,1) -- (0.5, 1);
	\foreach \x in {-0.5,0.5} {
		{%
	\filldraw[fill=white,thick] (\x,1) ellipse (3mm and 3mm);
	\node at (-.5,1) {\Large $T$};
	\node at (.5,1) {\Large $T$};
	\path(\x,1) ++(90:0.37) node {$\star$};
}
	}
}
        \draw (0,0)--(0,-0.5);
        \node[anchor=west] at (0,-0.35) {\footnotesize$8$};
	{%
	\filldraw[fill=white,thick] (-1,-0.2) rectangle (1,0.2);
	\node at (0,0) {$\JW{8}$};
}
\end{tikzpicture} +x_2 \cdot  \begin{tikzpicture}[baseline=0,scale=1.2]
	{%
	\draw (-0.5,1) -- (-0.5,0);
	\node[anchor=west] at (-0.5,0.5) {\footnotesize$4$};
	\draw (0.5,1) -- (0.5,0);
	\node[anchor=west] at (0.5,0.5) {\footnotesize$4$};
}
 {%
	\node[anchor=south] at (0,1) {\footnotesize$2$};
	\draw (-0.5,1) -- (0.5, 1);
	\foreach \x in {-0.5,0.5} {
		{%
	\filldraw[fill=white,thick] (\x,1) ellipse (3mm and 3mm);
	\node at (-.5,1) {\Large $T$};
	\node at (.5,1) {\Large $Q$};
	\path(\x,1) ++(90:0.37) node {$\star$};
}
	}
}
        \draw (0,0)--(0,-0.5);
        \node[anchor=west] at (0,-0.35) {\footnotesize$8$};
	{%
	\filldraw[fill=white,thick] (-1,-0.2) rectangle (1,0.2);
	\node at (0,0) {$\JW{8}$};
}
\end{tikzpicture} +x_3\cdot  \begin{tikzpicture}[baseline=0,scale=1.2]
	{%
	\draw (-0.5,1) -- (-0.5,0);
	\node[anchor=west] at (-0.5,0.5) {\footnotesize$4$};
	\draw (0.5,1) -- (0.5,0);
	\node[anchor=west] at (0.5,0.5) {\footnotesize$4$};
}
 {%
	\node[anchor=south] at (0,1) {\footnotesize$2$};
	\draw (-0.5,1) -- (0.5, 1);
	\foreach \x in {-0.5,0.5} {
		{%
	\filldraw[fill=white,thick] (\x,1) ellipse (3mm and 3mm);
	\node at (-.5,1) {\Large $Q$};
	\node at (.5,1) {\Large $T$};
	\path(\x,1) ++(90:0.37) node {$\star$};
}
	}
}
        \draw (0,0)--(0,-0.5);
        \node[anchor=west] at (0,-0.35) {\footnotesize$8$};
	{%
	\filldraw[fill=white,thick] (-1,-0.2) rectangle (1,0.2);
	\node at (0,0) {$\JW{8}$};
}
\end{tikzpicture} +x_4\cdot \begin{tikzpicture}[baseline=0,scale=1.2]
	{%
	\draw (-0.5,1) -- (-0.5,0);
	\node[anchor=west] at (-0.5,0.5) {\footnotesize$4$};
	\draw (0.5,1) -- (0.5,0);
	\node[anchor=west] at (0.5,0.5) {\footnotesize$4$};
}
 {%
	\node[anchor=south] at (0,1) {\footnotesize$2$};
	\draw (-0.5,1) -- (0.5, 1);
	\foreach \x in {-0.5,0.5} {
		{%
	\filldraw[fill=white,thick] (\x,1) ellipse (3mm and 3mm);
	\node at (-.5,1) {\Large $Q$};
	\node at (.5,1) {\Large $Q$};
	\path(\x,1) ++(90:0.37) node {$\star$};
}
	}
}
        \draw (0,0)--(0,-0.5);
        \node[anchor=west] at (0,-0.35) {\footnotesize$8$};
	{%
	\filldraw[fill=white,thick] (-1,-0.2) rectangle (1,0.2);
	\node at (0,0) {$\JW{8}$};
}
\end{tikzpicture},$$ for some $x_1,x_2,x_3,x_4\in \C$.
Consider what happens as we expand $f^{(8)}$on the LHS into Temperley-Lieb elements.  The identity element in $f^{(8)}$ gives our diagram $X$.  Any other diagram in the expansion of $\JW{8}$ has a cup.  If the cup lands on $T$, we get 0.  Otherwise, the cup is at the very left or at the very right and hence gives a diagram with the generator $T$ ``exposed''(i.e., a diagram consisting of a single copy of $T$ at distance 0 from the top and a cup on the bottom boundary of the diagram.).  Now, consider what happens as we expand the $\JW{8}$'s on the RHS into TL elements.  Consider just one of the diagrams on the RHS, say (T,T), since they are analogous.  This diagram breaks up into diagrams with two generators, each of which has distance 0 from the top.  Similarly, for the other three diagrams on the RHS.  By solving for $X$ in the resulting equation, we can write $X$ as a linear combination of diagrams that contain one or two generators, each of which has distance 0 from the top.  If the shading is reversed for $X$, then we can apply the shaded one-strand braiding substitute in analogous fashion. Thus, case $d=1$ is done.\\
\indent Now, suppose $d \geq 2$.  Suppose $X$ has unshaded outer boundary.  If $d$ is odd, then $\gamma$ begins in a  shaded region of $X$.  $X$ contains a copy of the diagram with unshaded outer boundary as in the case $d=1$, up to a rotation of the generator.  Apply the result of case $d=1$ to derive $X$ as a linear combination of diagrams that contain one or two generators, each of which has distance $d-1$ from the top.  If $d$ is even, then $\gamma$ begins in an unshaded region of $X$.  $X$ contains a copy of the diagram with shaded outer boundary as in case $d=1$, up to a rotation of the generator.  Apply the result of case $d=1$ to derive $X$ as a linear combination of diagrams that contain one or two generators, each of which has distance $d-1$ from the top.  The case where $X$ has shaded outer boundary is analogous.
\end{proof}
\begin{Defn}[\cite{ExtH}slightly modified]We say a diagram $D$ in $\mathcal{P}$ is in {\em jellyfish form} if all occurrences of each generator lie in a row at the top of $D$, and all strands of $D$ lie entirely below the height of the tops of the copies of each generator.
\end{Defn}
\begin{lem}Every diagram in $\mathcal{P}$ is a linear combination of diagrams in jellyfish form.
\end{lem}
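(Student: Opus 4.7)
The plan is to induct on the total distance
\[ c(D) = \sum_{S} d(S), \]
where the sum runs over all occurrences of generators inside $D$ and $d(S)$ denotes the distance from $S$ to the top boundary of $D$ in the sense of Definition 7.0.1. In the base case $c(D)=0$, every generator $S$ has $d(S)=0$, so each can be reached from the top boundary by an arc crossing no strand. Consequently all generators lie in a row along the top and every strand lies below them, which is exactly the jellyfish form.

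For the inductive step, suppose $c(D)>0$ and the conclusion holds for every diagram of strictly smaller complexity. Pick a generator $S_0$ with $d(S_0)=d>0$ and fix a geodesic $\gamma$ from $S_0$ to the top boundary of $D$. Choose a tubular neighborhood $N$ of $\gamma$ narrow enough to miss every generator of $D$ other than $S_0$; this is possible because, by definition, $\gamma$ touches generators only at its initial endpoint on $S_0$. Since $\gamma$ has length $d$, exactly $d$ strands of $D$ cross $\gamma$, each appearing inside $N$ as a single arc going across the strip. By isotopies supported inside $N$, and at most one application of the rotation relation $\rho(T)=T$ or $\rho(Q)=\omega Q$ (which at worst introduces a scalar), we rearrange the picture inside $N$ so that all strands incident to $S_0$ point downward and the $d$ crossing strands form $d$ parallel arcs above $S_0$, i.e.\ a rainbow. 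This is precisely the local configuration to which Lemma 7.0.2 applies.

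Applying Lemma 7.0.2 rewrites the contents of $N$ as a linear combination of diagrams, each containing at most two generators at distance strictly less than $d$ from the top of $N$, and hence from the top of $D$. Substituting back into $D\setminus N$, we obtain $D$ as a linear combination of diagrams $D_i'$ with $c(D_i')<c(D)$; by the inductive hypothesis each $D_i'$, and therefore $D$, is a linear combination of jellyfish-form diagrams. The main obstacle is the isotopy step: although $\gamma$ avoids all generators except $S_0$, the $d$ strands crossing $\gamma$ may attach to other generators of $D$ or wind around $S_0$, so one must verify they can be untangled inside $N$ into a clean rainbow. This relies crucially on $\gamma$ being a \emph{geodesic} — since it already realizes the minimum number of crossings between $S_0$ and the top, the crossing strands admit no forced entanglement inside $N$ and can be isotoped into the required nested form.
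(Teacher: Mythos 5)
Your overall architecture — pick a generator at positive distance, isolate a rainbow neighborhood, apply the one‑strand pull‑through lemma, and invoke an inductive hypothesis on a complexity measure — is the same as the paper's. But your termination measure does not actually decrease, which is a genuine gap.

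You set $c(D)=\sum_S d(S)$ and claim $c(D_i')<c(D)$ after replacing the rainbow neighborhood. However, the pull‑through lemma replaces a single generator $S_0$ at distance $d$ by \emph{up to two} generators, each at distance $d-1$ (this is exactly what the proof of that lemma produces in the case $d\ge 2$: it peels off one strand and leaves the remaining $d-1$). The net change to your sum from these generators is at least $2(d-1)-d = d-2$, which is zero for $d=2$ and strictly positive for $d\ge 3$. So $c$ need not decrease, and your strong induction on $c$ breaks down. The paper instead argues (informally) via a lexicographic‑type measure: it chooses $S_0$ at maximal distance and notes that after replacement the new generators are strictly closer while no other generator gets farther away, so the pair (maximal distance, number of generators achieving it) strictly decreases. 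That second ingredient — that distances of the generators \emph{outside} the neighborhood do not increase — is also something you need and do not verify; the paper's proof handles it by observing that a generator not in the replaced region admits a geodesic to the top that avoids $\gamma$, and that geodesic crosses the same strands before and after the replacement. If you replace your sum by the lexicographic measure and add the argument that untouched generators do not move farther from the top, your proof goes through.
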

\begin{proof}Let $D$ be a diagram in $\mathcal{P}$ with all endpoints at the bottom edge of $D$. If every copy of each generator is at distance 0 from the top of $D$, then $D$ is already in jellyfish form.  If not, we can use Lemma 7.0.5 to pull each copy of a generator to the top of $D$ as follows.\\
Suppose $S_0$ is a copy of a generator at distance $d\geq 1$ from the top of $D$.  Let $\gamma$ be a geodesic from $S_0$ to the top of $D$.  Let $X$ be a small neighborhood of $S_0\cup \gamma$.  By isotopy, we can assume $X$ is a rectangular neighborhood of $S_0\cup \gamma$ which consists of a single copy of $S_0$ with all strands pointing down and a ``rainbow'' of $d$ strands over it.  By Lemma 7.0.5, $X$ is a linear combination of diagrams that contain at most 2 copies of generators, each having distance less than $d$ from the top of the diagram.  Let $X'$ be one of the terms in this expression for $X$.  Let $D'$ be the result of replacing $X$ by $X'$ in $D$.  Suppose $S_1$ is a copy of the generator in $D'$.  If $S_1$ lies in $X'$, then $S_1$ is at most distance $d-1$.  Now, if $S_1$ does not lie in $X'$, then there is a geodesic in $D$ from $S_1$ to the top of $D$ that does not intersect $\gamma$.  This geodesic intersects strands in the same number of points as before the replacement of $X$ by $X'$.  Thus, the distance from $S_1$ to the top of $D$ does not increase when we perform the replacement of $X$ by $X'$.  Thus far, we know that if we replace $X$ by $X'$ then $S_0$ will be replaced by one or two copies of generators that are closer to the top of $D$; and any other copy of a generator will not be farther from the top of $D$ than before the replacement.  This process will end with a linear combination of diagrams that have all copies of generators at distance 0 from the top.
\end{proof}

\begin{theorem}[Subfactor property]$\mathcal{P}_{0,+}$ is one-dimensional.
\end{theorem}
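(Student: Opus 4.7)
The plan is to apply the jellyfish algorithm already set up in this chapter. By Lemma 7.0.6, every element of $\mathcal{P}_{0,+}$ is a linear combination of closed diagrams in jellyfish form, meaning that all copies of $T$ and $Q$ sit in a single row at the top with every strand running strictly below them. It therefore suffices to evaluate any closed jellyfish diagram and show it equals a scalar multiple of the empty diagram.

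I would argue by induction on the number $n$ of generators appearing in the diagram. The case $n=0$ is immediate: it is a closed Temperley-Lieb diagram, and $TL_{0,+}$ is one-dimensional. For $n=1$, the six boundary points of the single generator are paired by strands lying below it in a non-crossing manner. Any such non-crossing matching must contain a cap joining two adjacent boundary points of that generator, and a partial trace argument reduces the diagram to $\epsilon_i(T)$ or $\epsilon_i(Q)$ for some $i$, which vanishes by relation (2) of Theorem 4.0.7. More generally, the same nesting observation shows that the innermost cap in any pairing with both endpoints on a single generator is adjacent, so no non-zero closed jellyfish diagram can have a strand both of whose endpoints lie on a single generator.

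For $n \ge 2$, let $a_{i,j}$ denote the number of strands between generators $E_i$ and $E_j$; the previous observation forces $\sum_{j\ne i} a_{i,j}=6$ for each $i$, and non-crossing further constrains the pattern. A short combinatorial argument (by induction on $n$, or by inspection of an innermost nested pair of generators) shows that some adjacent pair $(E_i,E_{i+1})$ shares exactly three consecutive strands in a ``Join'' configuration, so that one of the relations $(R3(T)),(R3(Q)),(R3(TQ)),(R3(QT))$ or one of their twisted variants $(R3'(\cdot))$ from Theorem 5.0.15 applies locally. Such a relation rewrites the pair $E_iE_{i+1}$ as a linear combination lying in $\mathrm{span}\{TL_3,T,Q\}$: the $TL_3$ summand removes both generators, while the $T$ and $Q$ summands coalesce them into a single generator. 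Either way the generator count strictly decreases, and the induction closes.

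The main obstacle is the combinatorial step: verifying that in any non-crossing pairing of $6n$ endpoints with no internal self-pair, some pair of neighbouring groups is actually connected by three consecutive strands in the correct untwisted or twisted ``Join'' pattern, and matching up the shading conventions so that exactly one of the eight three-box relations applies at the chosen local configuration. Once this bookkeeping is done, the conclusion $\dim\mathcal{P}_{0,+}=1$ is automatic, since the recursion terminates at $n=0$ (giving a scalar) or produces zero at $n=1$.
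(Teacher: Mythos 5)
Your proposal follows the same route as the paper: reduce to jellyfish form, locate an adjacent pair of generators joined by enough strands, apply a quadratic (R3) relation to drop the generator count, and induct. The one substantive imprecision is the claim that some adjacent pair shares \emph{exactly} three consecutive strands; this can fail (already for $n=2$ with the two generators joined by all six strands), and what you need — and what the paper uses — is \emph{at least} three. With ``at least three'' the reduction still works: applying the R3 relation to three of the strands and capping off any extra strands kills the $T$ and $Q$ terms (since $\epsilon_i(T)=\epsilon_i(Q)=0$) and reduces the rest to Temperley-Lieb, so the generator count still strictly drops. The combinatorial step you flag as ``the main obstacle'' is handled cleanly in the paper by a short argument: among all strands joining non-adjacent generators, pick one with the fewest generators between its endpoints; any generator trapped inside can only connect to its two neighbors, and pigeonhole then gives at least three parallel strands to one neighbor. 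Spelling that out, and replacing ``exactly three'' by ``at least three,'' would close the gap; the remaining shading/orientation bookkeeping is exactly what the eight relations together with the rotational eigenvalue property of $T$ and $Q$ take care of.
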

\begin{proof}Suppose $D$ is a closed diagram with unshaded exterior.  We assume $D$ is in jellyfish form, by Lemma 7.0.7.  Assume that no cups are attached to any copy of a generator and that there are no closed loops.  
\begin{claim}There must be a copy of a generator connected to only its adjacent neighbors.
\end{claim}
\begin{proof}{(of claim)}
Consider all strands that do not connect adjacent vertices and, amongst these, find one that has the smallest number of vertices between its endpoints.  Any vertex between the endpoints of this strand can connect only to its adjacent neighbors.
\end{proof} 
Let $S_0$ be such a copy of a generator.  Then $S_0$ is connected to one of its neighbors, say $S_1$, by at least 3 parallel strands.  Recall that $T^2=\JW{3}+aT+bQ,$ for some $a,b\in\C,$  $Q^2=\JW{3}+cT+dQ,$ for some $c,d\in\C,$  $TQ=a_0T+b_0Q,$ for some $a_0,b_0\in\C,$ and $QT=c_0T+d_0Q,$ for some $c_0,d_0\in\C$.  Thus, we can replace $S_0$ and $S_1$ with the relevant substitution, giving a linear combination of diagrams that are still in jellyfish form but having fewer copies of generators.  By induction on the number of copies of generators, $D$ is a scalar multiple of the empty diagram.
\end{proof}
\begin{theorem}[$\mathcal{P}$ is irreducible]$dim(\mathcal{P}_{1,+})=1$.
\end{theorem}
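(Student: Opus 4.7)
The plan is to adapt the jellyfish argument from Theorem 7.0.8 to diagrams with two outer boundary points. Start by showing that $\mathbf{1} \in \mathcal{P}_{1,+}$ is nonzero (it pairs nontrivially with itself under the trace since $\tr{\mathbf{1}}=\delta\neq 0$ by positive definiteness), so $\dim(\mathcal{P}_{1,+})\ge 1$. Then prove the reverse inequality by showing $\mathcal{P}_{1,+}\subseteq TL_{1,+}=\mathbb{C}\cdot\mathbf{1}$.

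To prove the inclusion, take any $x\in\mathcal{P}_{1,+}$ and expand it as a linear combination of diagrams. First, use Lemma 7.0.7 to put each diagram in jellyfish form: the generators are arranged in a row at the top and all strands lie below them, with two endpoints on the bottom boundary. Then induct on the number $k$ of generators appearing in a jellyfish-form diagram $D$. The base case $k=0$ is immediate since $D$ is a Temperley-Lieb diagram with two boundary points, hence equals a scalar times $\mathbf{1}$.

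For the inductive step, I would argue as in the proof of Theorem 7.0.8: any cup attached to a single generator (i.e., any $\epsilon_i$-capping) yields zero by condition (2), so assume $D$ has no such caps. The key claim to establish is that some generator $S_0$ is connected to a neighbor (either an adjacent generator in the jellyfish row, or, when relevant, treating the two bottom boundary points as additional ``features,'' one of these features) by at least three parallel strands. The topological argument from the Claim in the proof of Theorem 7.0.8 carries over verbatim once the two bottom boundary points are treated as degree-one features: among all strands not connecting adjacent neighbors, pick one with the fewest vertices between its endpoints, and any intermediate vertex is then forced to connect only to its neighbors, giving a block of at least three parallel strands between two consecutive features. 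If those three strands connect two generators, apply the relations (R3(T)), (R3(Q)), (R3(TQ)), or (R3(QT)) to replace the pair by a linear combination of diagrams with fewer generators. If the three parallel strands connect a generator to a boundary point, then the generator has at most three strands going elsewhere, forcing a cap on the generator (by counting its six endpoints against its at most two remaining connections plus planarity), which is zero by condition (2).

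The main obstacle will be making the topological ``smallest span'' argument rigorous when one or both endpoints of the minimal-span strand lie on the bottom boundary; one must check that even in those corner cases either a reducing configuration appears or a forbidden cap on a generator is forced. Once this claim is established, the induction closes and gives $x\in TL_{1,+}$, so $\dim(\mathcal{P}_{1,+})\le 1$, completing the proof.
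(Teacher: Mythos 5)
Your high-level strategy — jellyfish form, induction on the number of generators, reduce a pair of generators joined by at least three parallel strands via the quadratic relations — is the same as the paper's. The gap is precisely the one you flag as "the main obstacle," and it is genuine; moreover the fallback you sketch for it does not work.

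First, the minimality argument does not carry over ``verbatim'' once the two bottom boundary points are counted as degree-one features. The argument in the Claim inside Theorem 7.0.8 produces three parallel strands because the intermediate vertex is a \emph{generator}: it has six strand-ends, is forced by minimality to connect only to its two cyclic neighbors, and pigeonhole gives $\geq 3$ strands to one neighbor. If the intermediate feature of your minimal-span strand is instead a bottom boundary point, it has only \emph{one} strand-end, and no block of three parallel strands is produced. You therefore have not established the claim in the cases where the minimal-span strand spans only boundary points, or has a boundary point as an endpoint. Second, the fallback case you propose — ``if the three parallel strands connect a generator to a boundary point, then $\ldots$ a cap is forced'' — describes an impossible configuration: each of the two outer boundary points of $\mathcal{P}_{1,+}$ is the endpoint of exactly one strand, so there can never be three parallel strands from a generator to a boundary point. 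That branch of your argument is vacuous, not a real backstop.

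The paper's proof resolves exactly these corner cases by an explicit split rather than by folding the boundary points into the span argument. It first disposes of the case where some strand connects two non-adjacent generators (there the Claim from 7.0.8 applies verbatim, with no boundary points involved). In the remaining case, where every strand from a generator goes either to the bottom boundary or to an adjacent generator, it separately rules out the configuration where both bottom endpoints attach to the same generator $S'$ (the leftmost — or rightmost — generator would then join its unique neighbor by all six strands, giving a forbidden closed circle), and then, when the two endpoints attach to distinct generators $S_1$ and $S_2$, uses the same ``leftmost generator forms a closed circle'' observation to force all five remaining strands of $S_1$ to run to its adjacent neighbor, after which a quadratic relation and induction finish. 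Your plan omits the ``both endpoints to the same generator'' subcase entirely, and it is the closed-circle observation — not a unified minimal-span argument over all features — that ultimately produces the $\geq 3$ (in fact $5$) parallel strands in the hard case. To make your proposal into a proof, you would need to add this case analysis; as written, the inductive step is not closed.
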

\begin{proof}Suppose $D$ is a diagram in $\mathcal{P}_{1,+}$.  Then, by Lemma 7.0.7, we can assume $D$ is in jellyfish form.  Assume that no cups are attached to any copy of a generator and that there are no closed diagrams.  Now, there are two endpoints on the bottom of $D$.  We can assume each endpoint connects to a copy of a generator since otherwise $D$ would be in $TL$, and we're done.  \\
\indent If there is a strand that connects two distinct non-adjacent copies of generators, then amongst such strands find one that has the smallest number of vertices between its endpoints.  Any vertex between the endpoints of this strand can connect only to its adjacent neighbors.  Let $S_0$ be such a copy of a generator.  Then $S_0$ is connected to one of its neighbors by at least 3 parallel strands.  (Note that $S_0$ must have two adjacent neighbors.)  Apply the relevant  quadratic relation to get a linear combination of diagrams that have fewer copies of generators.  By induction, each of these is a scalar multiple of the single strand, and hence D is a scalar multiple of the single strand.\\
\indent If there is no strand that connects two distinct non-adjacent copies of generators, then every strand attached to a copy of a generator connects to the bottom of $D$ or connects two adjacent copies of generators.\\
\indent Suppose both endpoints at the bottom of $D$ connect to the same copy $S'$ of a generator.  Since $S'$ has 6 strands and only 2 of them connect to the bottom of $D$, there must be a strand that connects $S'$ to an adjacent neighbor $S''$.  $S''$ is either to the left or to the right of $S'$.  Suppose $S''$ is to the left of $S'$.  Among those copies of generators to the left of $S'$, consider the leftmost copy.  This copy attaches to its adjacent neighbor by 6 strands and hence forms a closed diagram with its neighbor.  This contradicts our assumption that there are no closed diagrams.  Similarly, if $S''$ is to the right of $S'$, we get a contradiction.\\
\indent So each endpoint at the bottom of $D$ connects to a distinct copy of a generator.  Say the two endpoints connect to $S_1$ and $S_2$, where $S_1$ is to the left of $S_2$.  There cannot be a strand from $S_1$ connecting to any copies of generators to the left of $S_1$ since then there would be a leftmost vertex which forms a closed diagram with its adjacent neighbor.  Similarly, there cannot be any strand from $S_2$ conecting to any copies of generators to the right of $S_2$.  Thus, there are five strands connecting $S_1$ to an adjacent neighbor.  (Similarly, for $S_2$.)  Apply a quadratic relation to $S_1$ and its neighbor (or $S_2$ and its neighbor) to get a linear combination of diagrams that have fewer copies of generators. By induction, each of these is a multiple of the single strand, and hence $D$ is as well.
\end{proof}
\begin{theorem}$\mathcal{P}_2=TL_2$.
\end{theorem}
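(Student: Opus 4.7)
The plan is to mimic the jellyfish arguments of Theorems 7.0.8 and 7.0.9, applied now to diagrams with four boundary endpoints on the bottom. Let $D \in \mathcal{P}_{2,+}$. By Lemma 7.0.7 I may assume $D$ is in jellyfish form, and using the capping-off relations (relation (2) of Theorem 4.0.7) together with sphericality I may further assume that no cup is attached to any generator in $D$ and that $D$ has no closed loops. I proceed by induction on the number $k$ of generators appearing in $D$. The base case $k=0$ is immediate: $D$ is a Temperley--Lieb tangle on $4$ points, so $D\in TL_2$.

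For the inductive step the task is to locate two adjacent generators joined by at least three parallel strands, so that the quadratic $3$-box skein relations of Theorem 5.0.15 (namely (R3(T)), (R3(Q)), (R3(TQ)), (R3(QT)), or one of their twisted counterparts) can be applied to rewrite this pair as a linear combination of diagrams with strictly fewer generators. Locating the pair follows exactly the dichotomy used in the proof of Theorem 7.0.9: either some strand joins two non-adjacent generators, in which case the ``innermost'' such strand pins every intermediate generator to its neighbors and produces the required $3$-strand adjacency; or every strand from a generator either ends at the bottom or joins two adjacent generators, in which case a pigeonhole count on the $6k$ strand endpoints — at most $4$ of which land on the bottom, leaving at least $3k-2$ internal strands distributed over $k-1$ adjacent gaps — forces some adjacent gap to contain at least three strands. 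I also rule out $k=1$ separately: a single generator has $6$ strand ends, none can form a cup on itself by assumption, and only $4$ can reach the bottom, so $k=1$ is impossible.

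When the relation is applied, any parallel strands in excess of the three in the chosen bridge become cups on the remaining generator in each surviving term; these vanish by the capping-off relations or else close off to contribute constants. The net effect is that the pair $S_iS_{i+1}$ is replaced by a Temperley--Lieb element (the $f^{(3)}$ term) or by a single generator, so the generator count strictly decreases. After re-applying Lemma 7.0.7 to restore jellyfish form if needed, the induction hypothesis finishes the argument: $D$ is a linear combination of elements of $TL_2$. Since $\delta=\sqrt{(5+\sqrt{21})/2} > 2$, Fact 2.1.5 gives the injection $TL_2 \hookrightarrow \mathcal{P}_2$, and therefore $\mathcal{P}_2 = TL_2$.

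The main obstacle I anticipate is the bookkeeping that already complicated Theorem 7.0.9: ensuring that the reduction step does not introduce fresh cups on generators or hidden closed loops that must be peeled off before the induction hypothesis applies, and handling the asymmetric configurations in which the four bottom endpoints concentrate at the leftmost or rightmost generator. As in Theorem 7.0.9, those boundary configurations must be excluded by observing that an extreme generator with almost all six of its strands running to a single neighbor would close up with that neighbor into a sub-diagram with no free boundary, contradicting the assumption that $D$ has no closed loops.
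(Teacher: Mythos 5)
Your argument is correct and follows the same overall skeleton as the paper's proof of this theorem: reduce to jellyfish form (Lemma 7.0.7), split on whether any strand joins two non-adjacent generators, locate two adjacent generators joined by at least three parallel strands, apply the relevant quadratic $3$-box relation to drop the generator count, and induct. Where you diverge is in the second branch, ``every strand from a generator ends at the bottom or joins adjacent generators.'' The paper handles this by explicit casework on how the four bottom endpoints are occupied: if all four close off in bottom caps then $D$ is Temperley--Lieb outright; if exactly one cap remains, the uncapped residue lives in $\mathcal{P}_1 = TL_1$ (using the earlier theorem); and if all four endpoints run to generators, it looks at the generator receiving the leftmost endpoint, argues it must have exactly $4$ strands to the bottom and $2$ to its right neighbor, and then derives a contradiction by observing that the rightmost generator would close up with its neighbor into a forbidden closed sub-diagram. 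Your pigeonhole count replaces all of that: with $k \ge 2$ generators there are $6k$ strand-ends, at most $4$ reach the bottom, so at least $3k-2$ internal strands sit in at most $k-1$ adjacent gaps, and since $3k-2 > 3(k-1)$ some gap carries at least $4$ strands. This is cleaner and strictly more uniform than the paper's casework (and it implicitly disposes of the $b=2$ and $b=0$ sub-cases the paper treats separately), so it is a genuine streamlining; the trade-off is that the paper's version leans on the already-proved $\mathcal{P}_1 = TL_1$, making the dependency structure among Theorems 7.0.8--7.0.12 more visible. One small imprecision in your write-up: the ``re-applying Lemma 7.0.7 if needed'' step is actually never needed, since applying a $3$-box relation to two generators sitting in the top row leaves the replacement in the top row as well, so jellyfish form is preserved; that is fortunate, because Lemma 7.0.5 can split one generator into two, so an unconstrained re-jellyfishing step would not obviously respect your induction on generator count.
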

\begin{proof}  Suppose $D$ is a diagram in $\mathcal{P}_{2,+}$. Then, by Lemma 7.0.7, we can assume $D$ is in jellyfish form.  Assume no caps attached to a copy of a generator and no closed diagrams.  There are four endpoints on the bottom of $D$.  \\
\indent Suppose there is a strand that connects two distinct non-adjacent copies of generators.  Then we can find a vertex $S_0$ which is connected solely to its two adjacent neighbors.  $S_0$ is connected to one of its neighbors by at least 3 parallel strands. Apply the relevant quadratic relation to get a linear combination of diagrams that have fewer copies of generators.  By induction, each of these is in $TL_2$, and hence $D$ is in $TL_2$.  \\
\indent Now, suppose there is no strand that connects two distinct non-adjacent copies of generators.  Then every strand attached to a copy of a generator connects to the bottom of $D$ or connects two adjacent copies of generators. \\
\indent If all four endpoints at the bottom of $D$ are connected solely to the bottom of $D$, then $D$ is in $TL_2$.\\
\indent If there is a single cap on the bottom of $D$, then the remaining part of $D$ lies in $\mathcal{P}_1$ and hence is a scalar multiple of a single strand.  It follows that $D$ lies in $TL_2$.  \\
\indent If all endpoints at the bottom of $D$ connect to a copy of a generator, then consider the first endpoint at the bottom of $D$.  Let $S_1$ be the copy of a generator to which it is connected.  There cannot be a strand from $S_1$ conecting to any copies of generators to the left of $S_1$ since then there would be a leftmost vertex which forms a closed diagram with its adjacent neighbor.  If $S_1$ connects to an adjacent neighbor by at least 3 parallel strands, then we can apply a quadratic relation and apply induction to get $D$ in $TL_2$.  If not, then $S_1$ connects to an adjacent neighbor $S_2$ to the right of $S_1$ by 2 strands and has 4 strands connecting to the bottom.  However, there cannot be a strand from $S_1$ connecting to any copies of generators to the right of $S_1$ since there would be a rightmost vertex which forms a closed diagram with its adjacent neighbor.  Contradiction.
\end{proof}

\begin{theorem} $\mathcal{P}_3=TL_3\oplus ATL_3(T)\oplus ATL_3(Q)$.
\end{theorem}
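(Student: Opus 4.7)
The plan is to verify the decomposition by combining an inner-product orthogonality argument with the jellyfish algorithm already established in Theorems 7.0.8--7.0.10. Containment $TL_3 + ATL_3(T) + ATL_3(Q) \subseteq \mathcal{P}_3$ is immediate from Fact 2.1.5 and the fact that $T, Q$ generate $\mathcal{P}$. Since $T$ and $Q$ are low-weight-$3$ elements, any annular tangle in $ATL_{3 \to 3}$ applied to them either contains a cap on the input (killing the result by $\epsilon_i = 0$) or is a pure rotation; combined with $\rho(T) = T$ and $\rho(Q) = \omega Q$, this identifies $ATL_3(T) = \mathbb{C} T$ and $ATL_3(Q) = \mathbb{C} Q$.

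To establish directness I would check pairwise orthogonality in the inner product on $PABG(H)_3$. For any basis element $\tau \in TL_3$, the pairing $\langle T, \tau \rangle = Z(\tau^* T)$ vanishes: if $\tau \neq \id$, then $\tau$ has a top--top pair in its underlying non-crossing pairing, so $\tau^*$ carries a bottom--bottom pair that produces a cap closing onto $T$ when we form the product $\tau^* T$, giving zero by low weight; while if $\tau = \id$ we get $Z(T) = 0$ from Lemma 5.0.1. The same reasoning yields $Q \perp TL_3$, and $\langle T, Q \rangle = Z(Q^* T) = Z(QT) = 0$ by Lemma 5.0.14. Hence $TL_3$, $\mathbb{C} T$, $\mathbb{C} Q$ are pairwise orthogonal and the sum inside $\mathcal{P}_3$ is direct of dimension $5 + 1 + 1 = 7$.

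For the reverse inclusion $\mathcal{P}_3 \subseteq TL_3 + \mathbb{C} T + \mathbb{C} Q$, I would invoke Lemma 7.0.7 to rewrite any element of $\mathcal{P}_3$ as a linear combination of diagrams in jellyfish form, then induct on the number $k$ of generators. The case $k = 0$ is trivially in $TL_3$; for $k = 1$, any cap on the lone generator is killed by low weight, so all six of its strands must reach the six boundary points, and (up to a rotation which fixes $T$ and phases $Q$) the diagram is a scalar multiple of $T$ or $Q$. For $k \geq 2$, I reproduce the strand-counting and minimum-non-adjacent-strand analysis of Theorems 7.0.8--7.0.10 to locate an adjacent pair of generators joined by at least three parallel strands, and I apply the appropriate 3-box relation (R3(T)), (R3(Q)), (R3(TQ)), or (R3(QT)) to rewrite their product in $\operatorname{span}\{\JW{3}, T, Q\}$; this produces a linear combination of diagrams with strictly fewer generators, to which Lemma 7.0.7 is reapplied, closing the induction.

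The delicate point in the inductive step is that, unlike in the closed-diagram setting of Theorem 7.0.8, a generator in jellyfish form can send several of its six strands to boundary points rather than to neighboring generators, so pigeonhole alone is insufficient to force three parallel connections between adjacent generators. The fix is the same minimum-non-adjacent-strand argument used in Theorem 7.0.8, which restricts intervening generators to only connect with their immediate neighbors and thereby pins down the over-connected pair. In the edge cases where the merged adjacent pair carries four, five, or six strands between them rather than exactly three, the combined block lies in $\mathcal{P}_2$, $\mathcal{P}_1$, or $\mathcal{P}_0$ respectively, which by Theorems 7.0.8--7.0.10 are Temperley--Lieb, so the reduction proceeds without requiring any higher-box relations.
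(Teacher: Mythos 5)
Your proposal is essentially the paper's approach: reduce to jellyfish form via Lemma 7.0.7, then induct on the number of generators by locating an adjacent pair joined by at least three parallel strands and collapsing it via a 3-box relation. The additions you make are genuine but minor improvements rather than a different route. The orthogonality check ($T, Q \perp TL_3$ and $T \perp Q$) is a nice explicit verification of directness that the paper leaves implicit; it is correct, since for $\tau \in B(TL_3)\setminus\{\id\}$ the stacking $\tau^* T$ already forces a cap on $T$ (note that the cap on $T$ actually comes from a top--top pair of $\tau^*$, equivalently a bottom--bottom pair of $\tau$, but since any non-identity element has both, your conclusion stands), and $\langle T, Q\rangle = Z(QT) = 0$ by Lemma 5.0.14. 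Your identification $ATL_3(T)=\mathbb{C}T$, $ATL_3(Q)=\mathbb{C}Q$ is right because $\dim V_3^{3,\zeta}=1$. Where the paper is more explicit than you are is the case in which no strand joins two non-adjacent generators: the paper runs a leftmost/rightmost-generator analysis with subcases on boundary caps and on whether $S_1$ meets $S_2$ by $0,1,2$ strands, whereas you invoke "strand-counting" generically. That gap is easily filled by a global count: with $k\ge 2$ generators, $c$ boundary caps, and no non-adjacent strands, the $k-1$ adjacent pairs carry $\sum_i m_i = 3(k-1)+c$ strands in total, so some pair carries at least $3$. Your observation that a pair joined by $4,5,6$ strands already lives in $\mathcal{P}_2, \mathcal{P}_1, \mathcal{P}_0$ (hence Temperley--Lieb by the earlier theorems) is also a correct and slightly cleaner way to handle those subcases than re-deriving them from partial traces of the 3-box relations.
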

\begin{proof}  Suppose $D$ is a diagram in $\mathcal{P}_{3,+}$.  Then, by Lemma 7.0.7, we can assume $D$ is in jellyfish form.  Assume no caps attached to a copy of a generator and no closed diagrams.  There are 6 endpoints on the bottom of $D$.\\
\indent Suppose there is a strand that connects two distinct non-adjacent copies of generators.  Then we can find a vertex $S_0$ which is connected solely to its two adjacent neighbors.  $S_0$ is connected to one of its neighbors by at least 3 parallel strands.  Apply the relevant quadratic relation to get a linear combination of diagrams that have fewer copies of generators.  By induction, each of these is in  $TL_3\oplus ATL_3(T)\oplus ATL_3(Q)$, and hence $D$ is also.\\
\indent Now, suppose there is no strand that connects two distinct non-adjacent copies of generators.  Then every strand attached to a copy of a generator connects to the bottom of $D$ or connects two adjacent copies of generators.\\
\indent  If all 6 endpoints at the bottom of $D$ are connected solely to the bottom of $D$, then $D$ is in $TL_3$.\\
\indent  If there is a single cap on the bottom of $D$, then the remaining part of $D$ lies in $\mathcal{P}_2=TL_2$ and hence $D$ is in $TL_3$.  \\
\indent If there are two caps on the bottom of $D$, then the remaining part of $D$ lies in $\mathcal{P}_1$ and hence $D$ is in $TL_3$.  \\
\indent  If there are no caps on the bottom of $D$, then each endpoint at the bottom of $D$ connects to a copy of a generator.  Consider the first endpoint at the bottom of $D$.  Let $S_1$ be the copy of a generator to which it is connected.  There cannot be a strand from $S_1$ connecting to any copies of generators to the left of $S_1$ since then there would be a leftmost vertex which forms a closed diagram with its adjacent neighbor.  If $S_1$ connects to an adjacent neighbor on its right by at least 3 parallel strands, then we can apply a quadratic relation and apply induction to get $D$ in $TL_3\oplus ATL_3(T)\oplus ATL_3(Q)$.  If not, then either all strands of $S_1$ connect to the bottom of $D$, in which case $D$ is in $ATL(T)$ or $ATL(Q)$, or $S_1$ connects to an adjacent neighbor $S_2$ to the right of $S_1$ by 1 or 2 strands.\\
\indent  If $S_1$ connects to $S_2$ by 1 strand, then 5 strands connect $S_1$ to the bottom of $D$ and there is only one remaining endpoint on the bottom of $D$.  Suppose this endpoint connects to $S_3$.  Then there are no strands from $S_3$ connecting to any copies of generators to the right of $S_3$.  $S_3$ must have an adjacent neighbor on its left to which it is connected by 5 strands.  Apply a quadratic relation and induction.\\
\indent  If $S_1$ conects to $S_2$ by 2 strands, then 4 strands connect $S_1$ to the bottom of $D$ and there are 2 remaining endpoints on the bottom of $D$.  Consider the very last endpoint and suppose it connects to $S_3$.  There are no strands from $S_3$ connecting to any copies of generators to the right of $S_3$.  $S_3$ must have an adjacent neighbor on its left to which it is connected by at least 4 strands.  Apply a quadratic relation and induction.
\end{proof}
\chapter{The planar algebra $\mathcal{PA}(T,Q)$ is ``2221''}
In this chapter, we show that our planar algebra $\mathcal{PA}(T,Q)$ has the graph ``2221'' as its principal graph.  We consider the tensor category of projections of a planar algebra, as laid out in \cite{D2n}.
\begin{Defn}[\cite{D2n}]
Given a planar algebra $P$ we construct a tensor category $\cC_P$ as follows.
\begin{itemize}
\item An object of $\cC_P$ is a projection in one of the $2n$-box algebras $P_{2n}$.
\item Given two projections $\pi_1 \in P_{2n}$ and $\pi_2 \in P_{2m}$ we define $\Hom {\pi_1,\pi_2}$ to be the space $\pi_2 P_{n \rightarrow m} \pi_1$  ($P_{n \rightarrow m}$ is a convenient way of denoting $P_{(n+m)}$, drawn with $n$ strands going down and $m$ going up.)
\item The tensor product $\pi_1 \otimes \pi_2$ is the disjoint union of the two projections in $P_{2n+2m}$.
\item The trivial object $\id$ is the empty picture (which is a projection in $P_0$).
\item The dual $\overline{\pi}$ of a projection $\pi$ is given by rotating it 180 degrees.
\end{itemize}
\end{Defn}

\begin{remark}[\cite{D2n}]
This category comes with a special object $X \in P_2$ which is the single strand.  Note that $X = \overline{X}$. 
We would like to be able to take direct sums in this category.  If $\pi_1$ and $\pi_2$ are orthogonal projections in the same box space $P_n$ (i.e. if $\pi_1 \pi_2 = 0 = \pi_2 \pi_1$), then their direct sum is just $\pi_1 + \pi_2$.  However, if they are not orthogonal the situation is a bit more difficult.  One solution to this problem is to replace the projections with isomorphic projections which are orthogonal.  However, this construction only makes sense on equivalence classes, so we use another construction.
\end{remark}
\begin{Defn}[\cite{D2n}]
Given a category $\cC$ define its matrix category $\Mat{\cC}$ as follows.
\begin{enumerate}
\item The objects of $\Mat{\cC}$ are formal direct sums of objects of $\cC$
\item A morphism of $\Mat{\cC}$ from $A_1 \oplus \ldots \oplus A_n \rightarrow B_1 \oplus \ldots \oplus B_m$ is an $m$-by-$n$ matrix whose $(i,j)$th entry is in $Hom_{\cC}\left(A_j,B_i\right)$.
\end{enumerate}
\end{Defn}
\begin{remark}[\cite{D2n}]
If $\cC$ is a tensor category then $\Mat{\cC}$ has an obvious tensor
product (on objects, formally distribute, and on morphisms, use the
usual tensor product of matrices and the tensor product for $\cC$ on
matrix entries).  If $\cC$ is spherical then so is $\Mat{\cC}$ (where the
dual on objects is just the dual of each summand and on morphisms the dual transposes
the matrix and dualizes each matrix entry).
\end{remark}
\begin{lem}[\cite{D2n}]
If $\pi_1$ and $\pi_2$ are orthogonal projections in $P_{2n}$, then $\pi_1 \oplus \pi_2 \cong \pi_1 + \pi_2$ in $\Mat{\cC_P}$.
\end{lem}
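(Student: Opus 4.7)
The plan is to produce an explicit pair of mutually inverse morphisms in $\Mat{\cC_P}$ between the formal direct sum $\pi_1 \oplus \pi_2$ and the genuine sum $\pi_1 + \pi_2$, using the orthogonality hypothesis to verify the identities. First I would observe that the orthogonality relations $\pi_1\pi_2 = \pi_2\pi_1 = 0$ together with $\pi_i^2 = \pi_i = \pi_i^*$ imply that $\pi_1+\pi_2$ is itself a projection in $P_{2n}$, hence a bona fide object of $\cC_P$ (and of $\Mat{\cC_P}$ via the singleton formal sum). This legitimizes the right-hand side as a target of comparison.

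Next I would verify that $\pi_i$, viewed as an element of $P_{n\to n}$, actually represents a morphism $\pi_i \to \pi_1+\pi_2$ in $\cC_P$. Recall $\Hom{\pi_i, \pi_1+\pi_2} = (\pi_1+\pi_2)\,P_{n\to n}\,\pi_i$; the computation $(\pi_1+\pi_2)\,\pi_i\,\pi_i = \pi_i + \pi_j\pi_i = \pi_i$ (for $\{i,j\}=\{1,2\}$) shows $\pi_i$ lies in this space. Symmetrically $\pi_i \in \Hom{\pi_1+\pi_2, \pi_i}$.

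With this, define the $1\times 2$ morphism
\[
f := \bigl(\,\pi_1 \;\; \pi_2\,\bigr)\colon \pi_1 \oplus \pi_2 \longrightarrow \pi_1 + \pi_2
\]
and the $2\times 1$ morphism
\[
g := \begin{pmatrix} \pi_1 \\ \pi_2 \end{pmatrix}\colon \pi_1 + \pi_2 \longrightarrow \pi_1 \oplus \pi_2
\]
in $\Mat{\cC_P}$. The main (and only) computation is then the two matrix products:
\[
f\circ g = \pi_1\cdot\pi_1 + \pi_2\cdot\pi_2 = \pi_1 + \pi_2 = \mathrm{id}_{\pi_1+\pi_2},
\]
\[
g\circ f = \begin{pmatrix}\pi_1\pi_1 & \pi_1\pi_2\\ \pi_2\pi_1 & \pi_2\pi_2\end{pmatrix} = \begin{pmatrix}\pi_1 & 0\\ 0 & \pi_2\end{pmatrix} = \mathrm{id}_{\pi_1\oplus \pi_2},
\]
where the off-diagonal entries vanish precisely by the orthogonality assumption, and the diagonal entries are the identity morphisms of $\pi_i$ in $\cC_P$ (since the identity of a projection $\pi$ in this category is $\pi$ itself, as the single strand composed with $\pi$ on both sides).

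There is no real obstacle here; the argument is essentially bookkeeping once the categorical conventions of Definitions $8.0.1$ and $8.0.3$ are unwound. The only subtlety worth emphasizing in the write-up is that the identity morphism on $\pi$ in $\cC_P$ is $\pi$ itself (not the unshaded identity tangle $\id$), which is why the off-diagonal vanishing $\pi_1\pi_2 = 0$ precisely yields the zero morphism and the diagonal $\pi_i\pi_i = \pi_i$ yields the identity.
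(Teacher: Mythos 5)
Your proof is correct and is essentially identical to the paper's: the paper also defines $f = \begin{pmatrix} \pi_1 & \pi_2 \end{pmatrix}$ and $g = \begin{pmatrix} \pi_1 \\ \pi_2 \end{pmatrix}$ and checks $fg = \id_{\pi_1+\pi_2}$, $gf = \id_{\pi_1\oplus\pi_2}$. You simply spell out a few of the routine verifications (that $\pi_1+\pi_2$ is a projection, that each $\pi_i$ lies in the appropriate Hom space, that the identity of $\pi$ in $\cC_P$ is $\pi$ itself) that the paper leaves implicit.
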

\begin{proof}
Define $f: \pi_1 \oplus \pi_2 \rightarrow \pi_1 + \pi_2$ and $g: \pi_1 + \pi_2 \rightarrow \pi_1 \oplus \pi_2$ by $f = \begin{pmatrix} \pi_1 & \pi_2 \end{pmatrix}$ and $g=\begin{pmatrix} \pi_1 \\ \pi_2 \end{pmatrix}$.  Then $f g = \id_{\pi_1 + \pi_2}$ and $g f= \id_{\pi_1 \oplus \pi_2}$.
\end{proof}

\begin{Defn}[\cite{D2n}]
An object $\pi \in \Mat{\cC_P}$ is called minimal if $\Hom{\pi,\pi}$ is $1$-dimensional.
\end{Defn}
\begin{Defn}[\cite{D2n}]
A planar algebra is called semisimple if every projection is a direct sum of minimal projections and for any pair of non-isomorphic minimal projections $\pi_1$ and $\pi_2$, we have that $\Hom {\pi_1, \pi_2} = 0$.
\end{Defn}

\begin{Defn}[\cite{D2n}]
The principal (multi-)graph of a semisimple planar algebra has as vertices the isomorphism classes of minimal projections, and there are $$\dim \Hom {\pi_1 \otimes X,\pi_2} (= \dim \Hom {\pi_1,\pi_2 \otimes X})$$ edges between the vertices $\pi_1 \in P_n$ and $\pi_2 \in P_m$.
\end{Defn}
Recall the principal graph ``2221'':
$$H=\begin{tikzpicture}[baseline,scale=.7]
\node at (0,0) {$\bullet$};
\node at (0,0) [above] {$\JW{0}$};
\node at (1,0) {$\bullet$};
\node at (1,0) [above] {$\JW{1}$};
\node at (2,0) {$\bullet$};
\node at (2,0) [below] {$\JW{2}$};
\node at (2,.5) {$\bullet$};
\node at (2,.5) [above] {$S$};
\node at (3,.5) {$\bullet$};
\node at (3,.5) [above] {$P$};
\draw (0,0)--(2,0);
\draw (2,0)--(2,.5);
\node at (4,.5) {$\bullet$};
\node at (4,.5) [above] {$M_1$};
\node at (3,-.5) {$\bullet$};
\node at (3,-.5) [below] {$R$};
\node at (4,-.5) {$\bullet$};
\node at (4,-.5) [below] {$M_2$};
\draw (2,0)--(3,.5)--(4,.5);
\draw (2,0)--(3,-.5)--(4,-.5);
\end{tikzpicture}$$ 

We look for the projections $S$, $P$, and $R$ as linear combinations of $\JW{3}$, $T$, and $Q$.  If we suppose that our projection $P$ is equal to $\alpha \JW{3}+\beta T + \gamma Q$ and impose the condition that $P$ is a projection, then we get the following three equations:
\begin{itemize}
\item $\alpha^2+\beta^2+\gamma^2=\alpha$
\item $2\alpha\beta+\frac{\beta^2Z(T^3)}{[4]}+\frac{\gamma^2Z(TQ^2)}{[4]}+2\beta\gamma\frac{Z(T^2Q)}{[4]}=\beta$
\item $2\alpha\gamma+\frac{\beta^2Z(QT^2)}{[4]}+\frac{\gamma^2Z(Q^3)}{[4]}+2\beta\gamma\frac{Z(Q^2T)}{[4]}=\gamma$
\end{itemize}
Solving this system of equations, we get 6 possible values for $\alpha, \beta, \gamma$ (see below).  Because of the symmetry between the last four values, we predict the first two possible values to be for $S$.\\ 
For $S$, we have the following possible values for $\alpha, \beta, \gamma$ :
\begin{itemize}
\item $\alpha=\frac{6-\sqrt{21}}{5}, \beta= \frac{-1+\sqrt{21}}{10},\gamma= \sqrt{\frac{2}{13+3\sqrt{21}}}$
\item $\alpha=\frac{-1+\sqrt{21}}{5}, \beta= \frac{1-\sqrt{21}}{10}, \gamma= -\sqrt{\frac{2}{13+3\sqrt{21}}}$
\end{itemize}
For $P$, we get the following possible values for $\alpha, \beta, \gamma$ :
\begin{itemize}
\item $\alpha=\frac{-1+\sqrt{21}}{10}, \beta= -\frac{1}{60}(-3+3\sqrt{21}+5\sqrt{6(-3+\sqrt{21})}),\gamma=\frac{2}{\sqrt{27+7\sqrt{21}+\sqrt{6(141+31\sqrt{21})}}}$
\item $\alpha=\frac{11-\sqrt{21}}{10}, \beta= \frac{1}{60}(-3+3\sqrt{21}+5\sqrt{6(-3+\sqrt{21})}),\gamma=-\frac{2}{\sqrt{27+7\sqrt{21}+\sqrt{6(141+31\sqrt{21})}}}$
\end{itemize}
For $R$, we get the following possible values for $\alpha, \beta, \gamma$ :
\begin{itemize}
\item $\alpha=\frac{11-\sqrt{21}}{10}, \beta=\frac{1}{60}(-3+3\sqrt{21}-5\sqrt{6(-3+\sqrt{21})}),\gamma=\frac{2}{\sqrt{27+7\sqrt{21}-\sqrt{6(141+31\sqrt{21})}}}$
\item $\alpha=\frac{-1+\sqrt{21}}{10}, \beta=-\frac{1}{60}(-3+3\sqrt{21}-5\sqrt{6(-3+\sqrt{21})}),\gamma=-\frac{2}{\sqrt{27+7\sqrt{21}-\sqrt{6(141+31\sqrt{21})}}}$
\end{itemize}
Let $$S=\alpha'\JW{3}+\beta'T+\gamma'Q,$$ where $\alpha'=\frac{6-\sqrt{21}}{5}, \beta'=\frac{-1+\sqrt{21}}{10}, \gamma'=\sqrt{\frac{2}{13+3\sqrt{21}}}$.\\
Let $$P = \alpha\JW{3}+\beta T +\gamma Q,$$ where $\alpha=\frac{-1+\sqrt{21}}{10}, 
\beta=-\frac{1}{60}(-3+3\sqrt{21}+5\sqrt{6(-3+\sqrt{21})}),\gamma=\frac{2}{\sqrt{27+7\sqrt{21}+\sqrt{6(141+31\sqrt{21})}}}$.\\
Let $$R = \alpha_0\JW{3}+\beta_0 T +\gamma_0 Q,$$ where $\alpha_0=\frac{-1+\sqrt{21}}{10}, 
\beta_0=-\frac{1}{60}(-3+3\sqrt{21}-5\sqrt{6(-3+\sqrt{21})}),\gamma_0=-\frac{2}{\sqrt{27+7\sqrt{21}-\sqrt{6(141+31\sqrt{21})}}}.$
Let $$M_1=P\otimes id_1 - \frac{1}{\alpha\frac{[4]}{[3]}} \cdot \begin{tikzpicture}[baseline]
   \clip (-.6,-1.3) rectangle (.8,1.3);
   \node at (0,.7) (top) [rectangle,draw] {$ \hspace{.3 cm} P \hspace{.3cm} $};
   \node at (0,-.7) (bottom) [rectangle,draw]{$ \hspace{.3cm} P
\hspace{.3cm} $};

   \draw (top.-60)--(bottom.60);
   \draw (top.-145)--(bottom.145);
   \draw (top.-35) arc (-180:0:.2cm) -- ++(90:1cm);
   \draw (bottom.35) arc (180:0:.2cm) -- ++(-90:1cm);

   \draw (top.35) -- ++(90:1cm);
   \draw (top.60) -- ++(90:1cm);
   \draw (top.145) -- ++(90:1cm);

   \draw (bottom.-35) -- ++(-90:1cm);
   \draw (bottom.-60) -- ++(-90:1cm);
   \draw (bottom.-145) -- ++(-90:1cm);
\end{tikzpicture}$$

\noindent Let $$M_2=R\otimes id_1 - \frac{1}{\alpha_0\frac{[4]}{[3]}} \cdot \begin{tikzpicture}[baseline]
   \clip (-.6,-1.3) rectangle (.8,1.3);
   \node at (0,.7) (top) [rectangle,draw] {$ \hspace{.3 cm} R \hspace{.3cm} $};
   \node at (0,-.7) (bottom) [rectangle,draw]{$ \hspace{.3cm} R
\hspace{.3cm} $};

   \draw (top.-60)--(bottom.60);
   \draw (top.-145)--(bottom.145);
   \draw (top.-35) arc (-180:0:.2cm) -- ++(90:1cm);
   \draw (bottom.35) arc (180:0:.2cm) -- ++(-90:1cm);

   \draw (top.35) -- ++(90:1cm);
   \draw (top.60) -- ++(90:1cm);
   \draw (top.145) -- ++(90:1cm);

   \draw (bottom.-35) -- ++(-90:1cm);
   \draw (bottom.-60) -- ++(-90:1cm);
   \draw (bottom.-145) -- ++(-90:1cm);
\end{tikzpicture}$$
The main theorem of this chapter is the following.
\begin{theorem}The planar algebra ``2221'' is semi-simple, with minimal projections $\JW{0},\JW{1},\JW{2},S,P,R,M_1,M_2$, where $S,P,R,M_1,M_2$ are as defined above.  The principal graph is the graph 
$$H=\begin{tikzpicture}[baseline,scale=.7]
\node at (0,0) {$\bullet$};
\node at (0,0) [above] {$\JW{0}$};
\node at (1,0) {$\bullet$};
\node at (1,0) [above] {$\JW{1}$};
\node at (2,0) {$\bullet$};
\node at (2,0) [below] {$\JW{2}$};
\node at (2,.5) {$\bullet$};
\node at (2,.5) [above] {$S$};
\node at (3,.5) {$\bullet$};
\node at (3,.5) [above] {$P$};
\draw (0,0)--(2,0);
\draw (2,0)--(2,.5);
\node at (4,.5) {$\bullet$};
\node at (4,.5) [above] {$M_1$};
\node at (3,-.5) {$\bullet$};
\node at (3,-.5) [below] {$R$};
\node at (4,-.5) {$\bullet$};
\node at (4,-.5) [below] {$M_2$};
\draw (2,0)--(3,.5)--(4,.5);
\draw (2,0)--(3,-.5)--(4,-.5);
\end{tikzpicture}$$ \\
Furthermore, the dual principal graph is also $H$, with the appropriate minimal projections.
\end{theorem}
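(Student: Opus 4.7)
The plan is to verify, level by level, that the listed projections give a complete set of minimal projections and that their inclusions reproduce the edges of $H$. The Bratteli diagram of Chapter 3 dictates what must appear at each level, and the jellyfish machinery of Chapter 7 (combined with the quadratic relations of Chapter 5) forces $\dim \mathcal{P}_{n,\pm}$ to agree with the prediction from $H$, so that once the listed projections are accounted for, no additional ones can arise.

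First I would check that $S$, $P$, $R$ are pairwise orthogonal self-adjoint projections with $S + P + R = \JW{3}$. The values of $\alpha, \beta, \gamma$ are precisely the solutions of the three scalar equations obtained by expanding $p = p^2$ via (R3(T)), (R3(Q)), (R3(TQ)), (R3(QT)), so each is a projection by construction. The orthogonality identities $SP = SR = PR = 0$ and the sum relation $S + P + R = \JW{3}$ then reduce to numerical checks on the coefficients of $\JW{3}$, $T$ and $Q$, using the trace values of Lemma 5.0.14. Since $\mathcal{P}_3 = TL_3 \oplus ATL_3(T) \oplus ATL_3(Q)$ has dimension seven and decomposes as $M_2(\mathbb{C}) \oplus \mathbb{C}^3$, with the $M_2(\mathbb{C})$-summand accounted for by the embedded copy of $\JW{1}$, each of $S$, $P$, $R$ is minimal.

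The edges out of $\JW{2}$ come from Wenzl's relation, which gives
\begin{equation*}
\iota(\JW{2}) \;=\; \JW{3} + \tfrac{[2]}{[3]}\, \JW{2}\, e_2\, \JW{2},
\end{equation*}
the second summand being a projection isomorphic to an embedded $\JW{1}$. Together with $\JW{3} = S + P + R$, this expresses $\iota(\JW{2})$ as an orthogonal sum of four minimal projections, yielding the four edges at the trivalent vertex $\JW{2}$. The level-$4$ projections $M_1$ and $M_2$ are defined as Wenzl-type complements of $\iota(\JW{2})$ inside $\iota(P)$ and $\iota(R)$ respectively; the normalizations $1/(\alpha [4]/[3])$ and $1/(\alpha_0 [4]/[3])$ are exactly those making $M_i = M_i^2 = M_i^*$. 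Using the unshaded $4$-box relations of Theorem 5.2.9 and the one-strand braiding substitutes of Chapter 6, I would then establish
\begin{equation*}
\iota(S) \cong \JW{2}, \qquad \iota(P) \cong \JW{2} \oplus M_1, \qquad \iota(R) \cong \JW{2} \oplus M_2,
\end{equation*}
together with the minimality of $M_1$ and $M_2$.

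Termination at $M_1$, $M_2$ is forced by dimension counting. By the jellyfish algorithm of Chapter 7, every element of $\mathcal{P}_n$ can be reduced using only the $3$-box relations, so the dimensions $\dim \mathcal{P}_{n,\pm}$ are bounded above by those of the hypothetical Bratteli diagram for $H$ in Chapter 3; since the projections we have exhibited already realize these dimensions, the list is complete, and in particular $\iota(M_1) \cong P$ and $\iota(M_2) \cong R$. The same argument applied to $\mathcal{P}_{k,-}$, using the shaded $4$-box relations of Theorem 5.2.16 and the shaded one-strand braiding substitutes of Chapter 6, yields the dual principal graph; the reflective self-duality of $H$ swapping the two free legs makes the identification automatic. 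The main obstacle is the level-$4$ analysis: proving that $M_1$ and $M_2$ are minimal and that $\iota(P)$, $\iota(R)$ contain no further minimal summands requires the shaded and unshaded $4$-box relations to be used simultaneously, so that the partial traces against both $\{u, v, M\}$ and $\{u_0, v_0, M'\}$ match consistently.
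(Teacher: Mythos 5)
Your overall plan is the same as the paper's — build $S,P,R$ by solving the projection equations at level $3$, build $M_1,M_2$ as Wenzl-type complements inside $\iota(P)$ and $\iota(R)$, prove the tensor-product decompositions $\iota(\JW{2})\cong\JW{1}\oplus S\oplus P\oplus R$, $\iota(S)\cong\JW{2}$, $\iota(P)\cong\JW{2}\oplus M_1$, $\iota(R)\cong\JW{2}\oplus M_2$ — but the way you close the argument has two genuine gaps.

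First, the termination step. You assert that the jellyfish algorithm of Chapter 7 bounds $\dim\mathcal{P}_{n,\pm}$ above by the Bratteli prediction for every $n$, and then derive $\iota(M_1)\cong P$, $\iota(M_2)\cong R$ from this dimension count. But Chapter 7 only establishes the dimensions of $\mathcal{P}_{n}$ for $n\le 3$; the jellyfish reduction to diagrams with generators at distance $0$ does not, by itself, give a dimension bound at higher levels, because the one-strand substitution of Lemma 7.0.5 replaces a single generator by up to two, so iterating it can proliferate generators, and the later reductions via quadratic relations are only carried out in detail for small box spaces. The paper avoids this by doing the opposite: it proves $M_1\otimes id_1\cong P$ and $M_2\otimes id_1\cong R$ \emph{directly} (Lemma 8.0.32), via explicit morphisms whose composites are identities, and only then concludes that $id_n=(\JW{1})^{\otimes n}$ decomposes into the listed projections; from that every projection is a direct summand of a direct sum of elements of $\mathcal{M}$ and hence isomorphic to one of them. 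You would need to supply those morphisms (or an equivalent explicit argument) rather than appeal to an unproved dimension bound.

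Second, you do not address the vanishing of $\Hom{A,B}$ for distinct $A,B$ among $\{\JW{0},\JW{1},\JW{2},S,P,R,M_1,M_2\}$, which is Lemma 8.0.27 in the paper. Without it you cannot conclude semisimplicity in the sense of Definition 8.0.19 nor read off edge multiplicities from the inclusion decompositions; the $\Hom$ computations at level $4$ (e.g.\ $\Hom{M_1,M_2}=0$) require a jellyfish-form reduction analogous to the minimality arguments, not just orthogonality at level $3$. Relatedly, your claim that the dual principal graph ``is automatic'' from reflective self-duality of $H$ is not an argument: the paper sets $T_0=\rho^{1/2}(T)$, $Q_0=\omega\,\rho^{1/2}(Q)$ and must re-verify that all the trace values, $3$-box relations, $4$-box relations, and one-strand substitutes hold for $T_0,Q_0$ in the shaded box spaces before it can re-run the whole chain of lemmas with $S_0,P_0,R_0,M_{1_0},M_{2_0}$. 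That verification is where the real content of the dual statement lies and cannot be skipped.

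On the positive side, your observation that the seven-dimensional algebra $\mathcal{P}_3=TL_3\oplus ATL_3(T)\oplus ATL_3(Q)$ must be $M_2(\mathbb{C})\oplus\mathbb{C}^3$, so that $S,P,R$ are automatically minimal once their orthogonality and the sum $S+P+R=\JW{3}$ are checked, is a clean alternative to the paper's direct $\Hom{P,P}$ computation using $PT=(\beta/\alpha)P$ and $PQ=(\gamma/\alpha)P$; it buys a shorter minimality proof at level $3$, though the identities $PT=(\beta/\alpha)P$ etc.\ are still needed elsewhere (in Lemmas 8.0.26, 8.0.27, and 8.0.32), so they are not avoided entirely.
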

\begin{proof}The proof depends on lemmas that are proven in the remainder of this chapter.  $S,P$, and $R$ are projections by our choice of coefficients.  Also, it is not difficult to check that $M_1$ and $M_2$ are projections.  Let $\mathcal{M}=\{\JW{0},\JW{1},\JW{2},S,P,R,M_1,M_2\}$.  Lemmas 8.0.22, 8.0.24, and 8.0.26 imply that every projection in $\mathcal{M}$ is minimal.  Lemma 8.0.27 implies that $\Hom{A,B}=0$ for any distinct $A,B\in \mathcal{M}$.  Lemmas 8.0.28, 8.0.29, 8.0.30, 8.0.31, and 8.0.32 imply that for each $Y\in\mathcal{M}$, the projection $Y\otimes\JW{1}$ is isomorphic to a direct sum of projections in $\mathcal{M}$.  Hence, $id_n$ for any $n$, is isomorphic to a direct sum of elements in $\mathcal{M}$.  If $p\in\mathcal{P}_n$ is a projection, then $p+(id_n-p)=id_n$ and $p(id_n-p)=0$.  Hence, 
\begin{align*}
&p+(id_n-p)\cong p\oplus (id_n-p)\mbox{\indent  by Lemma 8.0.17}\\
&\Longrightarrow id_n\cong p\oplus (id_n-p)
\end{align*}
But 
\begin{align*}
&id_n=\left(\JW{1}\right)^{\otimes n}\\
&\Longrightarrow \left(\JW{1}\right)^{\otimes n}\cong p\oplus (id_n-p)\\
&\Longrightarrow p\oplus(id_n-p)\cong \mbox{ a direct sum of elements in } \mathcal{M}
\end{align*}
That is, every projection is a direct summand of $id_n$ for some $n$, and hence is a direct summand of a direct sum of elements in $\mathcal{M}$.  In particular, every minimal projection is a direct summand of a direct sum of elements in $\mathcal{M}$ and hence is isomorphic to an element in $\mathcal{M}$.  That is, all the minimal projections are in $\mathcal{M}$, up to isomorphism.  The fact that the planar algebra ``2221'' has principal graph $H$ follows from Lemmas 8.0.28, 8.0.29, 8.0.30, 8.0.31, and 8.0.32.\\
\indent To show that the dual principal graph is $H$, we make an appropriate choice of two generators $T_0 , Q_0\in \mathcal{P}_{3,-}$ and minimal projections $S_0,P_0,R_0, M_{1_0}, M_{2_0}$.\\
\indent Let $T_0=\rho^{1/2}(T)$ and $Q_0=\omega\cdot\rho^{1/2}(Q)$.  Then $T_0,Q_0$ are self-adjoint, uncappable, rotational eigenvectors with eigenvalues $1$ and $\omega$, respectively.  All the trace values of Lemma 5.0.14 hold for $T_0,Q_0$.  It follows that the 3-box relations of Theorem 5.0.16 hold for $T_0,Q_0$ (ignoring the twisted versions).  Lemmas 5.2.2,5.2.4,5.2.6,5.2.7,5.2.8 all hold for $T_0,Q_0$.  Hence, the 4-box relations hold for $T_0,Q_0$.  It follows that the one-strand braiding substitutes hold as well.  The jellyfish algorithm applies.  That is, Lemmas 7.0.5 and 7.0.7 hold for $T_0,Q_0$.  Consequently, Theorem 7.0.8 holds for $\mathcal{P}_{0,-}$ , Theorem 7.0.10 holds for $\mathcal{P}_{1,-}$ , Theorem 7.0.11 holds for $\mathcal{P}_{2,-}$ , and Theorem 7.0.12 holds for $\mathcal{P}_{3,-}$ with $T,Q$ replaced by $T_0,Q_0$.\\
\indent Let $S_0,P_0,R_0,M_{1_0},M_{2_0}$ be $S,P,R,M_1,M_2$ ,respectively, with $T,Q$ replaced by $T_0,Q_0$.  Then $\JW{0},\JW{1},\JW{2},S_0,P_0,R_0,M_{1_0},M_{2_0}$ are minimal projections and the dual principal graph is ``2221''.  The proof is analogous to the proof for the principal graph-- just replace every $T,Q$ with $T_0,Q_0$ and replace $S,P,R,M_1,M_2$ with $S_0,P_0,R_0,M_{1_0},M_{2_0}$.
\end{proof}
\begin{lem}The Jones-Wenzl idempotents $\JW{k}$, for $k=0,1,2$ are minimal.
\end{lem}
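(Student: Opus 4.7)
The plan is to unpack the definition of minimality ($\JW{k}\cdot\mathcal{P}_{k,\pm}\cdot\JW{k}$ is one-dimensional) and feed it the dimension results already proved via the jellyfish algorithm. Since the three cases sit in spaces whose full dimension was computed in Chapter 7, no new planar algebra calculation is required; everything reduces to quoting known facts plus a single observation about how $\JW{2}$ interacts with the Jones projection $e_1$.

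First I would handle $k=0$ and $k=1$ together. By Theorem 7.0.8 we have $\dim\mathcal{P}_{0,\pm}=1$, and by Theorem 7.0.10 we have $\dim\mathcal{P}_{1,\pm}=1$. Since $\JW{0}$ is the empty diagram and $\JW{1}$ is a single strand (both nonzero), the inclusions
\[
\JW{0}\cdot\mathcal{P}_{0,\pm}\cdot\JW{0}\subseteq\mathcal{P}_{0,\pm},\qquad \JW{1}\cdot\mathcal{P}_{1,\pm}\cdot\JW{1}\subseteq\mathcal{P}_{1,\pm}
\]
land in one-dimensional spaces, and each contains the nonzero element $\JW{k}$ itself, so each is exactly one-dimensional. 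This gives minimality in these two cases.

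Next, for $k=2$, I would invoke Theorem 7.0.11, which identifies $\mathcal{P}_2$ with $TL_2$. A basis for $TL_2$ is $\{\id,e_1\}$, so every element of $TL_2$ is a linear combination $a\id+be_1$, and hence
\[
\JW{2}\cdot(a\id+be_1)\cdot\JW{2}=a\JW{2}+b\,\JW{2}e_1\JW{2}.
\]
The defining property of the Jones--Wenzl projection (recalled in Section 2.1) is $e_i\JW{n}=\JW{n}e_i=0$ for $1\le i\le n-1$. In particular $\JW{2}e_1=0$, so the above reduces to $a\JW{2}$. Hence $\JW{2}\cdot\mathcal{P}_2\cdot\JW{2}=\C\cdot\JW{2}$, which is one-dimensional (and nonzero since $\JW{2}\ne 0$: indeed $\tr(\JW{2})=[3]\ne 0$).

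There is no substantive obstacle here; the work has been done upstream. The only care needed is to verify that $\JW{k}\ne 0$ in each case (so that the sandwich space is nontrivial), which follows from $\tr(\JW{k})=[k+1]>0$ for the values of $\delta$ in question, and to cite the correct dimension theorems from Chapter 7 together with the standard vanishing $\JW{2}e_1=0$.
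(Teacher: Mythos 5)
Your proof is correct and takes essentially the same route as the paper: both reduce the problem to the dimension counts $\dim\mathcal{P}_0=\dim\mathcal{P}_1=1$ and $\mathcal{P}_2=TL_2$ established via the jellyfish algorithm in Chapter~7, then for $k=2$ use the vanishing $\JW{2}e_1=0$ to see that only the identity term survives in the sandwich $\JW{2}\cdot B\cdot\JW{2}$. The only cosmetic difference is that the paper phrases the $k=1$ case in the same cap-killing language as $k=2$, while you dispose of it directly from $\dim\mathcal{P}_1=1$; the two are equivalent.
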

\begin{proof}$\JW{0}$ is minimal because $\Hom{\JW{0},\JW{0}}$ is one-dimensional.  That is, every closed diagram $x\in \mathcal{PA}(T,Q)_0$ is a multiple of the empty diagram.\\
\indent $\Hom{\JW{i},\JW{i}}$ consists of all diagrams of the form 

$$\begin{tikzpicture}
   \clip (-1,-1.5) rectangle (.8,1.5);
   \node at (0,.9) (top) [rectangle,draw] { $\hspace{.3 cm} \JW{i} \hspace{.3cm} $};
   \node at (0,0) (mid) [rectangle,draw] {$\hspace{.3 cm} B \hspace{.3cm} $};
   \node at (0,-.9) (bottom) [rectangle,draw]{$ \hspace{.3cm} \JW{i}
\hspace{.3cm} $};
	 \draw (top.-60)--(mid.60);
   \draw (top.-145)--(mid.145);
   \draw (top.-35)--(mid.35);
   \node at (-.1,.4) {...};
   \node at (-.1,-.4) {...};
   \draw (mid.-60)--(bottom.60);
   \draw (mid.-145)--(bottom.145);
   \draw (mid.-35)--(bottom.35);
   \draw (top.35) -- ++(90:1cm);
   \draw (top.60) -- ++(90:1cm);
   \node at (-.1,1.4) {...};
   \draw (top.145) -- ++(90:1cm);

   \draw (bottom.-35) -- ++(-90:1cm);
   \draw (bottom.-60) -- ++(-90:1cm);
   \node at (-.1,-1.4) {...};
   \draw (bottom.-145) -- ++(-90:1cm);
\end{tikzpicture},$$
for some diagram $B$ in $\mathcal{P}(T,Q)_i$.  We will see what happens as we plug in various diagrams for $B$.
\begin{claim}Any diagram of the above form which is non-zero is equal to a multiple of the diagram obtained by inserting the identity in for $B$.
\end{claim}
\begin{proof}$\mathcal{P}_1=TL_1$ and $\mathcal{P}_2=TL_2$ by Lemmas 7.0.10 and 7.0.11.  If we plug in a TL diagram, then the diagram must not have any caps/cups because the $\JW{i}$'s will give 0.  Hence, any TL diagram we plug in must be a multiple of the identity.
\end{proof}
Hence, each Hom space $\Hom{\JW{k},\JW{k}}$ is one-dimensional, for $k=0,1,2$.
\end{proof}
\begin{lem}The projections $S,P,R$ are minimal.
\end{lem}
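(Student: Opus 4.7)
The plan is to prove that $\Hom{S,S} = S\cdot\mathcal{P}_{3,+}\cdot S$ is one-dimensional, with identical arguments for $P$ and $R$, by reducing the question to the three-dimensional subalgebra
\[
A \;:=\; \C\cdot\JW{3} + \C\cdot T + \C\cdot Q \;\subset\; \mathcal{P}_{3,+}
\]
and identifying $S$, $P$, $R$ as its minimal idempotents.

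The first step is to show that $S\cdot\mathcal{P}_{3,+}\cdot S = SAS$. By Theorem~7.0.12, $\mathcal{P}_{3,+}$ is spanned by $TL_3$ together with $T$ and $Q$, so the only basis vectors outside $A$ are the four non-identity Temperley--Lieb diagrams, each of which has either a cup on its top boundary or a cap on its bottom boundary. Since $S = \alpha'\JW{3} + \beta' T + \gamma' Q$ is a linear combination of uncappable elements (the condition $\epsilon_i(T) = \epsilon_i(Q) = 0$ is imposed, and $\JW{3}$ satisfies $e_i \JW{3} = 0$), $S$ is itself uncappable; sandwiching any non-identity Temperley--Lieb diagram between two copies of $S$ therefore produces a cap landing on $S$ and vanishes.

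Next I would show that $A$ is a commutative semisimple $*$-subalgebra. Closure under multiplication follows from the 3-box relations (R3(T)), (R3(Q)), (R3(TQ)), (R3(QT)) of Theorem~5.0.16 combined with $\JW{3}T = T$ and $\JW{3}Q = Q$. Cyclic invariance of the trace forces $Z(T^2Q) = Z(QT^2)$ and $Z(TQ^2) = Z(Q^2T)$, so $TQ = QT$, and self-adjointness of $\JW{3}, T, Q$ makes $A$ a $*$-subalgebra. Because $\mathcal{P}_{3,+}$ carries the faithful positive trace $Z$, every finite-dimensional $*$-subalgebra is semisimple, so $A \cong \C^3$.

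Finally I would identify $S, P, R$ as the three minimal idempotents of $A$. Each is an idempotent by construction. A direct check on the explicit coefficients shows that the $\JW{3}$-components sum to $1$ (for instance, $\tfrac{6-\sqrt{21}}{5} + 2\cdot\tfrac{-1+\sqrt{21}}{10} = 1$) while the $T$- and $Q$-components each sum to $0$, so $S + P + R = \JW{3}$, the identity of $A$. In $\C^3$ the identity has a unique expression as a sum of non-zero orthogonal idempotents, which picks out the three minimal ones; hence $S, P, R$ are mutually orthogonal minimal idempotents of $A$. Commutativity and orthogonality then yield $SAS = S^2 A = SA = \C S$, and combining with the first step gives $\Hom{S,S} = \C S$. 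The main obstacle is the arithmetic verification that $S + P + R = \JW{3}$ for the explicit irrational coefficients (the $Q$-components involving nested radicals are the most delicate); once that is in hand the conclusion is purely structural.
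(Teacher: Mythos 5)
Your proof is correct and takes a genuinely different, more structural route than the paper's. The paper argues by direct computation: it derives the eigenvalue relations $PT = \tfrac{\beta}{\alpha}P$ and $PQ = \tfrac{\gamma}{\alpha}P$ from the 3-box relations by matching coefficients, then concludes $PBP \in \C P$ for each $B$, with any leftover Temperley--Lieb piece killed by uncappability, and handles $S,R$ analogously. You instead study the $3$-dimensional algebra $A = \C\JW{3} + \C T + \C Q$ abstractly: cyclicity of the trace forces $TQ = QT$, so $A$ is a commutative $*$-subalgebra carrying a faithful trace, hence $A \cong \C^3$; the partition-of-unity identity $S+P+R = \JW{3}$ then identifies $S,P,R$ as its minimal idempotents, from which $SAS = \C S$ follows. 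Both routes reduce $\Hom{S,S}$ to $SAS$ via the same uncappability argument, and both ultimately rest on coefficient arithmetic --- the paper verifies two scalar relations per projection, while you verify that the $T$- and $Q$-components of $S+P+R$ vanish, the $Q$-component being the one non-obvious nested-radical cancellation. Your version is more conceptual and packages the three families of scalar relations into a single structural fact. One small phrasing nit: what makes the final step go through is not ``uniqueness of the decomposition of $1$'' per se, but the observation that in $\C^3$ any three idempotents summing to $1$ are automatically orthogonal (coordinatewise, three $\{0,1\}$-values summing to $1$ force exactly one $1$), and if all three are non-zero then each is supported on exactly one coordinate and hence minimal; with that rephrasing the argument is airtight.
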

\begin{proof}Consider $P$ first.  Consider $\Hom{P,P}$, which consists of all diagrams of the form 
$$\begin{tikzpicture}
   \clip (-1,-1.5) rectangle (.8,1.5);
   \node at (0,.9) (top) [rectangle,draw] { $\hspace{.3 cm} P \hspace{.3cm} $};
   \node at (0,0) (mid) [rectangle,draw] {$\hspace{.3 cm} B \hspace{.3cm} $};
   \node at (0,-.9) (bottom) [rectangle,draw]{$ \hspace{.3cm} P
\hspace{.3cm} $};
	 \draw (top.-60)--(mid.60);
   \draw (top.-145)--(mid.145);
   \draw (top.-35)--(mid.35);
   \draw (mid.-60)--(bottom.60);
   \draw (mid.-145)--(bottom.145);
   \draw (mid.-35)--(bottom.35);
   \draw (top.35) -- ++(90:1cm);
   \draw (top.60) -- ++(90:1cm);
   \draw (top.145) -- ++(90:1cm);

   \draw (bottom.-35) -- ++(-90:1cm);
   \draw (bottom.-60) -- ++(-90:1cm);
   \draw (bottom.-145) -- ++(-90:1cm);
\end{tikzpicture},$$
for some diagram $B$ in $\mathcal{P}(T,Q)_3$.
\begin{claim}Any such diagram (which is non-zero) is equal to a multiple of the diagram with the identity inserted in for $B$.  
\end{claim}
\begin{proof}First, consider inserting a Temperley-Lieb diagram.  The TL diagram must not have any caps/cups on it because $P$ with a cap/cup would give 0.  Hence, the TL diagram must be a multiple of the identity.  Now, consider any diagram with exactly one $T$ or $Q$ (since any diagram in $\mathcal{P}_3$ is in $TL_3\oplus ATL_3(T)\oplus ATL_3(Q)$).  Suppose we had a diagram with exactly one $T$.  Then we have (up to rotation)  
$$\begin{tikzpicture}
   \clip (-1,-1.5) rectangle (.8,1.5);
   \node at (0,.9) (top) [rectangle,draw] { $\hspace{.3 cm} P \hspace{.3cm} $};
   \node at (0,0) (mid) [rectangle,draw] {$\hspace{.3 cm} T \hspace{.3cm} $};
   \node at (0,-.9) (bottom) [rectangle,draw]{$ \hspace{.3cm} P
\hspace{.3cm} $};
	 \draw (top.-60)--(mid.60);
   \draw (top.-145)--(mid.145);
   \draw (top.-35)--(mid.35);
   \draw (mid.-60)--(bottom.60);
   \draw (mid.-145)--(bottom.145);
   \draw (mid.-35)--(bottom.35);
   \draw (top.35) -- ++(90:1cm);
   \draw (top.60) -- ++(90:1cm);
   \draw (top.145) -- ++(90:1cm);

   \draw (bottom.-35) -- ++(-90:1cm);
   \draw (bottom.-60) -- ++(-90:1cm);
   \draw (bottom.-145) -- ++(-90:1cm);
\end{tikzpicture}$$
But $PT=\frac{\beta}{\alpha}P$, where $P=\alpha \JW{3}+\beta T+\gamma Q$.  To see this, we have that $$PT=\Big(\alpha +\beta\frac{Z(T^3)}{[4]}+\gamma\frac{Z(T^2Q)}{[4]}\Big)T+\Big(\beta\frac{Z(QT^2)}{[4]}+\gamma\frac{Z(Q^2T)}{[4]}\Big)Q+\beta\JW{3}$$ and $$\beta=\frac{\alpha}{\beta}\Big( \alpha+\beta\frac{Z(T^3)}{[4]}+\gamma\frac{Z(T^2Q)}{[4]}\Big),$$ $$\gamma=\frac{\alpha}{\beta}\Big(\beta\frac{Z(QT^2)}{[4]}+\gamma\frac{Z(Q^2T)}{[4]}\Big).$$

So we get $PTP=\frac{\beta}{\alpha}P P= \frac{\beta}{\alpha}P$, which is a multiple of the diagram with identity inserted. \\
Similarly, we have that $PQ=\frac{\gamma}{\alpha}P$. To see this we check that $$\beta=\frac{\alpha}{\gamma}\Big(\beta\frac{Z(T^2Q)}{[4]}+\gamma\frac{Z(TQ^2)}{[4]}\Big),$$
and $$\gamma=\frac{\alpha}{\gamma}\Big(\alpha+\beta\frac{Z(TQ^2)}{[4]}+\gamma\frac{Z(Q^3)}{[4]}\Big).$$ So we get $PQP=\frac{\gamma}{\alpha}PP=\frac{\gamma}{\alpha}P$, which is a multiple of the diagram with identity inserted.
\end{proof}
Similarly, for $S$ and $R$, we have that $$ST=\frac{\beta'}{\alpha'}S, \hspace{2mm}SQ=\frac{\gamma'}{\alpha'}S,$$
 and $$RT=\frac{\beta_0}{\alpha_0}R, \hspace{2mm}RQ=\frac{\gamma_0}{\alpha_0}R.$$  The proofs for $S$ and $R$ are analogous.
\end{proof}
\begin{lem}$M_1$ and $M_2$ are minimal.
\end{lem}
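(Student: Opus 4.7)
The plan is to follow the strategy of Lemma 8.0.24, adapted to the level-4 setting; the argument for $M_2$ is entirely parallel with $R,\alpha_0,\beta_0,\gamma_0$ in place of $P,\alpha,\beta,\gamma$. The first step is to verify from the definition that $M_1$ and $e_1:=\frac{1}{\alpha[4]/[3]}\cdot(\text{the closure diagram appearing in the definition of }M_1)$ are orthogonal idempotents with $M_1+e_1=P\otimes \JW{1}$. The absorption identities $PT=(\beta/\alpha)P$, $PQ=(\gamma/\alpha)P$ from Lemma 8.0.24, together with $P^{2}=P$, are exactly what is needed to compute $e_1^{2}$ and $M_1e_1$, and they pin down the normalizing constant $1/(\alpha[4]/[3])$. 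Consequently $M_1(P\otimes\JW{1})=M_1$, so $\Hom{M_1,M_1}=M_1\,\mathcal{P}_4\,M_1$ is contained in $(P\otimes\JW{1})\,\mathcal{P}_4\,(P\otimes\JW{1})=\operatorname{End}(P\otimes\JW{1})$.

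I would then exhibit explicit adjoint morphisms $f:\JW{2}\to P\otimes\JW{1}$ and $f^{*}:P\otimes\JW{1}\to\JW{2}$ with $f^{*}f=\JW{2}$ and $ff^{*}=e_1$, read off directly from the cap-cup structure of the closure diagram. This realizes $e_1\cong \JW{2}$ in $\Mat{\cC_\mathcal{P}}$, so $\operatorname{End}(e_1)\cong\operatorname{End}(\JW{2})\cong\mathbb{C}$ by Lemma 8.0.22. Since $M_1$ and $e_1$ are orthogonal and not isomorphic (they are already distinguished by their traces), Lemma 8.0.17 gives $P\otimes\JW{1}\cong e_1\oplus M_1$ in $\Mat{\cC_\mathcal{P}}$, hence
\[
\operatorname{End}(P\otimes\JW{1})\;\cong\;\operatorname{End}(e_1)\oplus\operatorname{End}(M_1),
\]
so $\dim\operatorname{End}(M_1)=\dim\operatorname{End}(P\otimes\JW{1})-1$.

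The only remaining task is to show $\dim\operatorname{End}(P\otimes\JW{1})=2$. I would do this by Frobenius reciprocity (valid in our spherical planar algebra, with $\JW{1}$ self-dual) combined with the Temperley--Lieb splitting $\JW{1}\otimes\JW{1}\cong\JW{2}\oplus\JW{0}$:
\[
\operatorname{End}(P\otimes\JW{1})\;\cong\;\Hom{P,\,P\otimes\JW{1}\otimes\JW{1}}\;\cong\;\Hom{P,\,P\otimes\JW{2}}\;\oplus\;\Hom{P,P}.
\]
The second summand is one-dimensional by Lemma 8.0.24. For the first, $P$ is a linear combination of $\JW{3},T,Q$ and hence uncappable, so the Temperley--Lieb part of $\mathcal{P}_3$ contributes only through the identity; using Theorem 7.0.12 together with the absorption identities $PT=(\beta/\alpha)P$ and $PQ=(\gamma/\alpha)P$, every element of $\Hom{P,P\otimes\JW{2}}$ collapses to a scalar multiple of the canonical morphism obtained by attaching $\JW{2}$ on the right of $P$, giving $\dim\Hom{P,P\otimes\JW{2}}=1$. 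The main obstacle will be exactly this last dimension count: the reduction from an arbitrary $3\to5$ box to a single canonical morphism must be executed carefully, using the uncappability of $P$ on both sides together with the absorption of $T,Q$ by $P$ to eliminate every non-identity contribution from $TL_3\oplus ATL_3(T)\oplus ATL_3(Q)$.
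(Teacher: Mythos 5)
Your plan is structurally different from the paper's: the paper attacks $\Hom{M_1,M_1}$ directly, puts the $4\to4$ diagram in jellyfish form via Lemma 7.0.7, locates a copy of a generator joined to an adjacent neighbor by at least three strands, reduces with the quadratic relations, and cleans up using $TP=(\beta/\alpha)P$, $QP=(\gamma/\alpha)P$ and the partial trace of $P$. You instead propose the decomposition $P\otimes\JW{1}=e_1+M_1$ with $e_1\cong\JW{2}$ (which is in fact what the paper does \emph{later}, in Lemma 8.0.31) together with Frobenius reciprocity, so that minimality of $M_1$ reduces to the single count $\dim\operatorname{End}(P\otimes\JW{1})=2$.

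That framing is attractive, but the load-bearing step — showing $\dim\Hom{P,\,P\otimes\JW{2}}=1$ — has a gap. The space $\Hom{P,\,P\otimes\JW{2}}=(P\otimes\JW{2})\,\mathcal{P}_{3\to5}\,P$ is a subspace of $\mathcal{P}_4$, not $\mathcal{P}_3$, so Theorem 7.0.12 (which decomposes $\mathcal{P}_3$ as $TL_3\oplus ATL_3(T)\oplus ATL_3(Q)$) does not apply to it. In $\mathcal{P}_4$ there is a genuinely new irreducible $V^{4,\beta}$ contributing a level-4 generator $M_0$, and the absorption identities $TP=(\beta/\alpha)P$, $QP=(\gamma/\alpha)P$ say nothing about annular pictures of $M_0$ sandwiched between two copies of $P$. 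So ``every non-identity contribution from $TL_3\oplus ATL_3(T)\oplus ATL_3(Q)$ is eliminated'' is not the right statement: you must rule out contributions from $\mathcal{P}_4$, including those carrying a level-4 generator. To close that gap one has to put the middle $3\to5$ box in jellyfish form via Lemma 7.0.7 and run a case analysis on how many strands reach the boundary, reducing with the quadratic relations exactly as the paper does for $\Hom{M_1,M_1}$ — at which point the Frobenius/decomposition scaffolding saves little. (The verification that $e_1$ is idempotent, that $M_1e_1=0$, and that $e_1\cong\JW{2}$, is fine and matches what the paper does in Lemma 8.0.31; Frobenius reciprocity in a spherical planar algebra is also unobjectionable.)
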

\begin{proof}Consider $\Hom{M_1,M_1}$.  Let $D$ be a morphism from $M_1$ to $M_1$.  $D$ consists of a diagram $D_0$ in $\Hom{id_8,\id}$ standing on $M_1\otimes\overline{M_1}$.  Assume $D_0$ is in jellyfish form.  Assume no cups/caps attached to a copy of a generator and no closed diagrams.  If $D_0$ is a TL element, view it as an element in $TL_4$.  Any non-identity element caps off $M_1$ and hence gives 0.  The identity element gives $M_1^2=M_1$.\\
\indent Suppose there is a strand that connects two distinct non-adjacent copies of generators.  Then we can find a vertex $S_0$ which is connected solely to its two adjacent neighbors.  $S_0$ is connected to one of its neighbors by at least 3 strands.  Apply the relevant quadratic relation to get a linear combination of diagrams that have fewer copies of generators.  By induction, each of these is a multiple of $M_1$ and hence so is $D$.\\
\indent Now, suppose there is no strand that connects two distinct non-adjacent copies of generators.  Then every strand attached to a copy of a generator connects to the bottom of $D_0$ or connects two adjacent copies of generators.\\
\indent If all 8 endpoints at the bottom of $D_0$ are connected solely to the bottom of $D_0$, then $D_0$ is in TL and we're done by our analysis of the case where $D_0$ lies in TL.  \\
\indent If there are exactly 3 caps on the bottom of $D_0$, the only way that this can happen so that $M_1$ doesn't immediately get capped off is if the caps occur at positions (2,7),(3,6), and (4,5).  In this case, the remaining part of $D_0$ lies in $\mathcal{P}_1$ and hence $D_0$ is in TL and we're done.\\
\indent If there are exactly two caps on the bottom of $D_0$, the only way that this can happen without immediately capping off $M_1$ is if the caps occur at positions (3,6) and (4,5).  In this case, the remaining part of $D_0$ lies in $\mathcal{P}_2=TL_2$.  Hence, $D_0$ lies in TL and we're done.\\
\indent If there is exactly one cap on the bottom of $D_0$, the only way this can happen without immediately capping off $M_1$ is if the cap occurs at position (4,5).  In this case, the remaining part $R_0$ of $D_0$ lies in $\mathcal{P}_3=TL_3\oplus ATL_3(T)\oplus ATL_3(Q)$.  The component of $R_0$ in $TL_3$ gives $D_0$ as an element in TL and we're done.  The component of $R_0$ in $ATL_3(T)$ gives $D$ itself as a multiple of $M_1$.  To see this, expand $M_1$ and $\overline{M_1}$ and use the fact that $TP=\frac{\beta}{\alpha}P$ and the fact that $P$ is a projection to get $\frac{\beta}{\alpha}M_1$.  Similarly, for the component of $R_0$ in $ATL_3(Q)$, use the fact that $QP=\frac{\gamma}{\alpha}P$ to get $D$ as a multiple of $\frac{\gamma}{\alpha}M_1$.\\
\indent If there are no caps at the bottom of $D_0$, then each endpoint at the bottom of $D_0$ connects to a copy of a generator.  Consider the first endpoint on the bottom of $D_0$.  Let $S_1$ be the copy of a generator to which it is connected.  There cannot be a strand from $S_1$ connecting to any copies of generators to the left of $S_1$ since then there would be a leftmost vertex which forms a closed diagram with its adjacent neighbor.  If $S_1$ connects to an adjacent neighbor on its right by at least 3 parallel strands, then we can apply a quadratic relation and induction to get $D$ as a multiple of $M_1$.  If not, then either all strands of $S_1$ connect to the bottom of $D_0$ or $S_1$ connects to an adjacent neighbor $S_2$ to the right of $S_1$ by 1 or 2 strands.  \\
\indent If all strands of $S_1$ connect to the bottom of $D_0$, then there are two remaining endpoints at the bottom of $D_0$.  Consider the very last endpoint on the bottom of $D_0$, and suppose it connects to $S_3$.  There are no strands from $S_3$ connecting to any copies of generators to the right of $S_3$.  $S_3$ must have an adjacent neighbor on its left to which it is connected by at least 4 strands.  Apply a quadratic relation and induction to get $D$ as a multiple of $M_1$.  \\
\indent If $S_1$ connects to an adjacent neighbor $S_2$ to the right of $S_1$ by 1 strand, then $S_1$ connects to the bottom of $D_0$ by 5 strands.  There are 3 remaining endpoints on the bottom of $D_0$.  Consider the very last endpoint on the bottom of $D_0$, and suppose it connects to $S_3$.  $S_3$ must have an adjacent neighbor on its left to which it is connected by at least 3 parallel strands.  Apply a quadratic relation and induction to get $D$ as a multiple of $M_1$.  \\
\indent If $S_1$ connects to an adjacent neighbor $S_2$ to the right of $S_1$ by 2 strands, then $S_1$ connects to the bottom of $D_0$ by 4 strands.  Consider the very last endpoint on the bottom of $D_0$ and suppose it connects to $S_3$.  $S_3$ connects to the bottom of $D_0$ by either, 1,2,3, or 4 strands.  If $S_3$ connects to the bottom by 1,2, or 3 strands, then $S_3$ must have an adjacent neighbor on its left to which it is connected by at least 3 parallel strands.  Apply a quadratic relation and induction to get $D$ as a multiple of $M_1$.  If $S_3$ connects to the bottom of $D_0$ by 4 strands, then $S_3$ has an adjacent neighbor $S_4$ on its left to which it is connected by 2 strands.  $S_4$ is either equal to $S_1$ or is connected to an adjacent neighbor by 4 strands.  If the latter, we're done by applying a quadratic relation and induction.  If $S_4$ is equal to $S_1$, then $D$ evaluates to 0.  To see this, expand out $M_1$ and use the fact that $TP=\frac{\beta}{\alpha}P$ (or $QP=\frac{\gamma}{\alpha}P$) and the partial trace relation.\\
\indent The proof for $M_2$ is analogous.  Use the fact that $TR=\frac{\beta_0}{\alpha_0}R$ (or $QR=\frac{\gamma_0}{\alpha_0}R$).
\end{proof}
\begin{lem}If $A$ and $B$ are two distinct projections from the set  $$\{\JW{0},\JW{1},\JW{2},S,P,R,M_1,M_2\},$$  then $\Hom{A,B}=0$.
\end{lem}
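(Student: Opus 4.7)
The plan is to verify $\Hom{A,B}=0$ for each unordered pair in $\mathcal{M}=\{\JW{0},\JW{1},\JW{2},S,P,R,M_1,M_2\}$, organized by whether $A$ and $B$ lie in the same box space. The key algebraic inputs are already in hand: for any $A\in\{S,P,R\}$ written as $A=\alpha_A\JW{3}+\beta_AT+\gamma_AQ$, the proof of Lemma 8.0.24 gives the annihilation identities $A\JW{3}=A$, $AT=(\beta_A/\alpha_A)A$, $AQ=(\gamma_A/\alpha_A)A$, and symmetric identities on the other side.

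First I would handle the same-level pairs in $\mathcal{P}_3$, namely $(S,P)$, $(S,R)$, $(P,R)$. By Theorem 7.0.12, $\mathcal{P}_3=TL_3\oplus ATL_3(T)\oplus ATL_3(Q)$, and every generator of the annular modules is uncappable; since $A$ is a linear combination of $\JW{3}$, $T$, $Q$, it annihilates any TL element containing a cap on either side. Consequently every element of $A\cdot\mathcal{P}_3\cdot B$ collapses to a scalar multiple of $AB$. A direct expansion $AB=\alpha_B A+\beta_B(AT)+\gamma_B(AQ)=\bigl(\alpha_B+\beta_B\beta_A/\alpha_A+\gamma_B\gamma_A/\alpha_A\bigr)A$ shows $AB$ is proportional to $A$; the symmetric argument shows $AB$ is proportional to $B$. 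Since the coefficient vectors of $A$ and $B$ in the $\{\JW{3},T,Q\}$ basis differ, $AB=0$, and hence $\Hom{A,B}=0$. I expect the explicit coefficient values given before Lemma 8.0.10 to verify this numerically and to serve as a consistency check.

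Next I would handle the same-level pair $(M_1,M_2)\subset\mathcal{P}_4$ by the jellyfish-style case analysis used in Lemma 8.0.26. An element $D\in M_1\mathcal{P}_4 M_2$ can be put in jellyfish form; after the case-splits of Lemma 8.0.26, every surviving case either reduces $D$ to an element of $\iota(\mathcal{P}_3)$ sandwiched between the $P$-part of $M_1$ and the $R$-part of $M_2$, or exposes a $P\cdots R$ pattern that vanishes by the previous paragraph. The corrective terms $-\tfrac{1}{\alpha[4]/[3]}(\cdots)$ and $-\tfrac{1}{\alpha_0[4]/[3]}(\cdots)$ in the definitions of $M_1,M_2$ are precisely what is needed so that the partial-trace identities align and these reductions produce zero rather than a surviving scalar. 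This is the most delicate step but is essentially parallel to the argument already given for minimality.

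Finally, for cross-level pairs with $A\in\mathcal{P}_n$, $B\in\mathcal{P}_m$, $n<m$, I would appeal to Frobenius reciprocity in $\mathrm{Mat}(\mathcal{C}_P)$ to identify $\Hom{A,B}$ with $\Hom{\iota^{m-n}(A),B}\subset\mathcal{P}_m$, reducing everything to the same-level setting. It then suffices to decompose $\iota^{m-n}(A)$ into a sum of elements of $\mathcal{M}$ and observe that $B$ is not among the summands. These decompositions are precisely the content of the forthcoming Lemmas 8.0.28--8.0.32: for instance $\iota(\JW{2})=\JW{3}\oplus S\oplus P\oplus R$ (directly from Wenzl's relation together with the defining property $\JW{3}=S+P+R$), and iterating gives $\iota^2(\JW{1})$ through $\iota^3(\JW{0})$ in the ranges needed. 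The one subtlety is to avoid circularity with those lemmas: the inclusion decompositions should be proved using only minimality (Lemmas 8.0.22, 8.0.24, 8.0.26) and the explicit coefficient data, after which all cross-level $\Hom$-vanishing statements follow as immediate corollaries. The main obstacle is therefore not any single case, but bookkeeping: ensuring the inclusion decompositions are established before invoking them here, and verifying that the orthogonality of $P,R$ inside $\iota(\JW{2})$ used in the $(M_1,M_2)$ argument is itself a consequence of the Case 1 same-level analysis and not of the very lemma being proved.
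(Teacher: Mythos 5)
Your treatment of the same-level pairs is essentially the paper's: for $(S,P),(S,R),(P,R)$ you reduce to $AB=0$ via the eigenvector identities $AT=(\beta_A/\alpha_A)A$, $AQ=(\gamma_A/\alpha_A)A$ and the decomposition $\mathcal{P}_3=TL_3\oplus ATL_3(T)\oplus ATL_3(Q)$, exactly as in the paper; and your sketch for $(M_1,M_2)$ mirrors the paper's jellyfish case-split. (One small point: arguing ``$AB$ is a multiple of $A$ and of $B$, hence zero'' is fine, but in fact $AB=0$ follows directly from orthogonality of distinct minimal idempotents once you've shown it's a scalar times both.)

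Where you diverge is the cross-level pairs, and this is where the proposal has a real gap. You propose to route everything through Frobenius reciprocity and the inclusion decompositions of Lemmas 8.0.28--8.0.32, but those lemmas come \emph{after} the present one, and more importantly the paper's proof of Theorem 8.0.21 uses Lemma 8.0.27 and the inclusion lemmas as independent inputs. You do flag the circularity danger, but ``the inclusion decompositions should be proved using only minimality and the explicit coefficient data'' is not a proof; you'd have to actually carry that out and check the ordering, which is more work than the direct argument. The paper dispatches the cross-level cases with two far more elementary observations: first, when the box-sizes have opposite parity (e.g.\ $\JW{1}$ vs.\ $\JW{2}$, or any of $S,P,R$ vs.\ $M_1,M_2$), a morphism would live in $\Hom{id_n,\id}$ with $n$ odd, which is $0$ by convention; second, when they have the same parity, the intervening piece lives in a low box space ($\mathcal{P}_1=TL_1$, $\mathcal{P}_2=TL_2$, or $\mathcal{P}_3=TL_3\oplus ATL_3(T)\oplus ATL_3(Q)$, already proved in Chapter~7), and every TL element caps off a $\JW{k}$ or a $P/R$ factor, while any $T$- or $Q$-piece dies by the eigenvector identities and the partial-trace relation. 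Your approach would also need the unstated fact that $S+P+R=\JW{3}$ (which does hold, but requires checking the $\alpha,\beta,\gamma$ sums), and your stated decomposition $\iota(\JW{2})\cong\JW{3}\oplus S\oplus P\oplus R$ is wrong --- it should be $\JW{1}\oplus S\oplus P\oplus R$ by Wenzl. The parity-plus-capping argument both sidesteps the circularity and avoids needing the inclusion lemmas at all, so I'd recommend adopting it.
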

\begin{proof}Suppose $A$ and $B$ are distinct Jones-Wenzl projections.  Then $\Hom{\JW{0},\JW{1}}=0$ and $\Hom{\JW{1},\JW{2}}=0$ because any morphism in $\Hom{\JW{0},\JW{1}}$ can be seen as lying in $\Hom{id_1,\id}$, which is 0 by convention and any morphism in $\Hom{\JW{1},\JW{2}}=0$ can be seen as lying in $\Hom{id_3,\id}$, which is 0 by convention.  In fact, $\Hom{id_n,\id}$ is 0 for any n odd.  It remains to show that $\Hom{\JW{0},\JW{2}}$ is 0.  Let $D$ be a morphism from $\JW{0}$ to $\JW{2}$.  Then $D$ can be seen as a diagram $D_0\in \Hom{id_2,\id}$ standing on top of $\JW{2}$.  $D_0$ is in $\mathcal{P}_1$ and hence $D_0$ is a multiple of the single strand.  This puts a cap on $\JW{2}$ and so we get 0.\\
\indent Suppose $A$ is a Jones-Wenzl projection, while $B$ is $S, P,\mbox{ or } R$.   If $A=\JW{0}$, then any morphism $D$ from $A$ to $B$ lies in $\Hom{id_3,\id}$, and hence is 0.  If $A=\JW{2}$, then any morphism $D$ from $A$ to $B$ lies in $\Hom{id_5,\id}$, and hence is 0.  If $A=\JW{1}$ and $B\in \{S,P,R\}$, then let $D$ be a morphism from $A$ to $B$.  $D$ consists of $D_0\in\Hom{id_4,\id}$ standing on top of $B\otimes id_1$. Assume $D_0$ is in jellyfish form. $D_0$ lies in $\mathcal{P}_2=TL_2$, and hence $D_0$ puts a cap on $B$.  So we get 0.\\
\indent Suppose $A=\JW{0}$ and $B\in\{M_1,M_2\}$.  Assume $D$ is a morphism from $A$ to $B$.  $D$ consists of a diagram $D_0\in\Hom{id_4,\id}$ standing on $B$.  Assume $D_0$ is in jellyfish form.  $D_0$ lies in $\mathcal{P}_2=TL_2$, and hence $D_0$ puts a cap on $P$ in $M_1$ (or a cap on $R$ in $M_2$).  So we get 0.  \\
\indent Suppose $A=\JW{1}$ and $B\in\{M_1,M_2\}$.  Then any morphism $D$ from $A$ to $B$ lies in $\Hom{id_5,\id}$ and hence is 0.\\
\indent Suppose $A=\JW{2}$ and $B\in\Hom{M_1,M_2}$.  Say $B=M_1$.  Let $D$ be a morphism from $A$ to $B$.  Then $D$ consists of a diagram $D_0\in\Hom{id_6,\id}$ standing on top of $B\otimes\JW{2}$.  Assume $D_0$ is in jellyfish form.  $D_0$ lies in $\mathcal{P}_3=TL_3\oplus ATL_3(T)\oplus ATL_3(Q)$.  Any TL element caps off $M_1$ and hence gives 0.  Any $ATL_3(T)$ element gives 0, by a simple calculation using the fact that $TP=\frac{\beta}{\alpha}P$ and the partial trace relation.  Similarly, any $ATL_3(Q)$ element gives 0, using the fact that $QP=\frac{\gamma}{\alpha}P$ and the partial trace relation.  The case $B=M_2$ is similar; use the fact that $TR=\frac{\beta_0}{\alpha_0}R$ and $QR=\frac{\gamma_0}{\alpha_0}R$.  Thus, $D$ is 0.\\
\indent Suppose $A\in\{S,P,R\}$ and $B\in\{S,P,R\}$ and $A\neq B$.  Let $D$ be a morphism from $A$ to $B$.  Then $D$ consists of $A$, $B$, and a diagram $D_0\in\mathcal{P}_3$ between them.  Since $D_0\in\mathcal{P}_3=TL_3\oplus ATL_3(T)\oplus ATL_3(Q)$, consider what happens to any diagram in $TL_3$, $ATL_3(T)$, and $ATL_3(Q)$.  Any $TL_3$ diagram either caps off $A$ and $B$ or is the identity, in which case we get $AB=0$ ( since $AB=0$ for any $A,B\in\{S,P,R\}$ with $A\neq B$).  Any diagram in $ATL_3(T)$ gives a multiple of $ATB$, which is a multiple of $AB=0$ since $PT=\frac{\beta}{\alpha}P$ (and similarly for $S,R$).  Any diagram in $ATL_3(Q)$ gives a multiple of $AQB$, which is a multiple of $AB=0$ since $PQ=\frac{\gamma}{\alpha}P$ (and similarly for $S,R$).\\
\indent Suppose $A\in\{S,P,R\}$ and $B\in\{M_1,M_2\}$.  Any morphism $D$ from $A$ to $B$ lies in $\Hom{id_7,\id}$ and hence is 0.\\
\indent Suppose $A=M_1$ and $B=M_2$.  Consider a morphism $D$ from $A$ to $B$.  $D$ consists of a diagram $D_0$ in $\Hom{id_8,\id}$ standing on $M_1\otimes\overline{M_2}$.  Assume $D_0$ is in jellyfish form.  Assume no caps/cups are attached to any copy of a generator and no closed diagrams.  \\
\indent If $D_0$ lies in TL, view it as an element in $TL_4$.  Any non-identity element caps off $M_1$ and hence gives 0.  The identity element gives $M_2M_1=0$.  \\
\indent Supose there is a strand that connects two distinct non-adjacent copies of generators.  Then we can find a vertex $S_0$ which is connected solely to its two adjacent neighbors.  $S_0$ is connected to one of its neighbors by at least 3 parallel strands.  Apply the relevant quadratic relation to get a linear combination of diagrams that have fewer copies of generators.  By induction, each of these is 0.  Hence, so is $D$.\\
\indent Now, suppose there is no strand that connects two distinct non-adjacent vertices.  Then every strand attached to a copy of a generator connects to the bottom of $D_0$ or connects two adjacent copies of generators. \\
\indent If all 8 endpoints at the bottom of $D_0$ are connected solely to the bottom of $D_0$, then $D_0$ is in TL, and we're done by our analysis of the case where $D_0$ is in TL.\\
\indent If there are exactly 3 caps on the bottom of $D_0$, the only way that this can happen so that $M_1$ and $\overline{M_2}$ don't immediately get capped off is if the caps occur at positions (2,7),(3,6),(4,5).  In this case, the remaining part of $D_0$ lies in $\mathcal{P}_1$ and hence $D_0$ is in TL, and we're done.\\
\indent If there are exactly 2 caps on the bottom of $D_0$, the only way that this can happen without immediately capping off $M_1$ or $\overline{M_2}$ is if the caps occur at positions (3,6) and (4,5).  In this case, the remaining part of $D_0$ lies in $\mathcal{P}_2=TL_2$.  Hence, $D_0$ lies in TL and we're done.\\
\indent If there is exactly one cap, the only way this can happen without immediately capping off $M_1$ or $\overline{M_2}$ is if the cap occurs at position (4,5).  In this case, the remaining part $R_0$ of $D_0$ lies in $\mathcal{P}_3=TL_3\oplus ATL_3(T)\oplus ATL_3(Q)$.  The component of $R_0$ in $TL_3$ gives $D_0$ as an element in TL and we're done.  The component of $R_0$ in $ATL_3(T)$ gives $D$ itself as 0.  To see this, expand $M_1$ and $\overline{M_2}$ and use the fact that $TP=\frac{\beta}{\alpha}P$ and the fact that $RP=0$.  Similarly, for the component of $R_0$ in $ATL_3(Q)$, use the fact that $QP=\frac{\gamma}{\alpha}P$ to get $D$ as 0.\\
\indent If there are no caps at the bottom of $D_0$, then each endpoint at the bottom of $D_0$ connects to a copy of a generator.  Consider the first endpoint at the bottom of $D_0$.  Let $S_1$ be the copy of a generator to which it is connected.  There cannot be a strand from $S_1$ connecting to any copies of generators to the left of $S_1$ since then there would be a leftmost vertex which forms a closed diagram with its adjacent neighbor.  If $S_1$ connects to an adjacent neighbor on its right by at least 3 parallel strands, then we can apply a quadratic relation and induction to get $D$ as 0.  If not, then either all strands of $S_1$ connect to the bottom of $D_0$ or $S_1$ connects to an adjacent neighbor $S_2$ to the right of $S_1$ by 1 or 2 strands.  \\
\indent If all strands of $S_1$ connect to the bottom of $D_0$, then there are two remaining endpoints at the bottom of $D_0$.  Consider the very last endpoint on the bottom of $D_0$, and suppose it connects to $S_3$.  There are no strands from $S_3$ connecting to any copies of generators to the right of $S_3$.  $S_3$ must have an adjacent neighbor on its left to which it is connected by at least 4 strands.  Apply a quadratic relation and induction to get $D$ as 0. \\
\indent If $S_1$ connects to an adjacent neighbor $S_2$ to the right of $S_1$ by 1 strand, then $S_1$ connects to the bottom of $D_0$ by 5 strands.  There are 3 remaining endpoints at the bottom of $D_0$.  Consider the very last endpoint on the bottom of $D_0$, and suppose it connects to $S_3$.  $S_3$ must have an adjacent neighbor on its left to which it is connected by at least 3 parallel strands.  Apply a quadratic relation and induction to get $D$ as 0.\\
\indent If $S_1$ connects to an adjacent neighbor $S_2$ to the right of $S_1$ by 2 strands, then $S_1$ connects to the bottom of $D_0$ by 4 strands.  Consider the very last endpoint on the bottom of $D_0$, and suppose it connects to $S_3$.  $S_3$ connects to the bottom of $D_0$ by either 1,2,3 or 4 strands.  If $S_3$ connects to the bottom by 1,2, or 3 strands, then $S_3$ must have an adjacent neighbor on its left to which it is connected by at least 3 parallel strands.  Apply a quadratic relation and induction to get $D$ as 0.  If $S_3$ connects to the bottom of $D_0$ by 4 strands, then $S_3$ has an adjacent neighbor $S_4$ on its left to which it is connected by 2 strands.  $S_4$ is either equal to $S_1$ or is connected to an adjacent neighbor by 4 strands.  If the latter, we're done by applying a quadratic relation and induction.  If $S_4$ is equal to $S_1$, then $D$ evaluates to 0.  To see this, expand out $M_1$ and use the fact that $TP=\frac{\beta}{\alpha}P$ ( or $QP=\frac{\gamma}{\alpha}P$) and the partial trace relation.
\end{proof}
We have the following well-known fact, which we state without proof.  The proof can be found in \cite{D2n}.
\begin{lem}$\JW{k}\otimes \JW{1}$ is isomorphic to $\JW{k-1}\oplus\JW{k+1}$ for $k=1$.
\end{lem}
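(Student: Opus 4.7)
The plan is to decompose the identity on two strands using Wenzl's recursion and then recognize the two summands categorically in $\Mat{\cC_P}$. Specializing Wenzl's relation (stated in Section 2.1) to $n=1$, and using $\JW{1} = \id_1$ together with $\delta = [2]$, one obtains
\[
    \id_2 \;=\; \JW{2} \;+\; e_1,
\]
where $e_1$ is the normalized Jones projection in $V_2$. Both summands are projections: $\JW{2}$ by definition, and $e_1$ by direct computation (a loop contributes $\delta$, cancelling the $1/\delta$ in its definition). The defining property $e_i \JW{n} = \JW{n} e_i = 0$ of the Jones–Wenzl idempotents immediately gives $\JW{2}\,e_1 = e_1\,\JW{2} = 0$, so the two projections are mutually orthogonal. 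Applying Lemma 8.0.17 then yields
\[
    \JW{1}\otimes\JW{1} \;=\; \id_2 \;\cong\; \JW{2} \,\oplus\, e_1
\]
in $\Mat{\cC_P}$.

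It remains to identify $e_1$ with $\JW{0}$ as objects of $\Mat{\cC_P}$. The plan is to exhibit explicit cup and cap intertwiners. Set $f = \tfrac{1}{\sqrt{\delta}}\,\cup$ and $g = \tfrac{1}{\sqrt{\delta}}\,\cap$, both lying in the planar algebra space with one strand going up and one going down. I first check they land in the correct Hom spaces: $e_1\cdot f = f$ because capping the two output strands of $\cup$ against the cap in $e_1$ produces a closed loop of value $\delta$ which cancels the $1/\delta$ normalization in $e_1$; a symmetric calculation gives $g \cdot e_1 = g$. Composing, $g\circ f = \tfrac{1}{\delta}(\cap \circ \cup) = \tfrac{1}{\delta}\cdot\delta = \JW{0}$, while $f\circ g = \tfrac{1}{\delta}(\cup\circ\cap) = e_1$ by the very definition of $e_1$. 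Hence $f$ and $g$ are mutually inverse isomorphisms $\JW{0} \cong e_1$.

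Combining the two steps gives the desired $\JW{1}\otimes \JW{1} \cong \JW{0}\oplus \JW{2}$. The entire argument is a repackaging of three ingredients already in hand: Wenzl's identity, the absorption rule $e_i \JW{n} = 0$, and the circle-evaluation $\bigcirc = \delta$. There is no genuine obstacle; the only care required is bookkeeping with the conventions for $\Mat{\cC_P}$ (Definitions 8.0.13–8.0.15) to make sure that "direct sum of orthogonal projections" and "composition of morphisms" really coincide with the diagrammatic addition and stacking, which is the content of Lemma 8.0.17.
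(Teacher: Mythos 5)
Your argument is correct. The paper states this lemma without proof, attributing it to \cite{D2n}; your self-contained verification --- observing that $\id_2 = \JW{2} + e_1$ is a sum of orthogonal projections (so Lemma 8.0.17 gives $\id_2\cong\JW{2}\oplus e_1$), and then identifying $e_1\cong\JW{0}$ via the normalized cup and cap intertwiners $f=\delta^{-1/2}\cup$, $g=\delta^{-1/2}\cap$ --- is the standard argument, and it is in the same explicit-intertwiner style the paper itself uses for the companion Lemmas 8.0.29--8.0.32.
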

\noindent Of course, $\JW{0}\otimes id_1\cong \JW{1}$.
\begin{lem}$\JW{2}\otimes id_1\cong\JW{1}\oplus S\oplus P\oplus R$.
\end{lem}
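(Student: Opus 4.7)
The plan is to decompose $\JW{2}\otimes id_1$ as a sum of four mutually orthogonal subprojections, each equivalent in $\Mat{\cC_P}$ to one of $\JW{1}, S, P, R$, and then invoke Lemma 8.0.17 to upgrade this orthogonal-sum decomposition to a direct-sum isomorphism. The two ingredients to combine are (a) the classical Wenzl recursion $\JW{2}\otimes id_1 = \JW{3} + p$, where $p$ is a Jones-type subprojection equivalent to $\JW{1}$, and (b) an internal decomposition $\JW{3} = S+P+R$ inside $\mathcal{P}_3$.

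First I would apply the Wenzl recursion inherited from $TL\hookrightarrow \mathcal{P}$ via Fact 2.1.5. Setting $p := \frac{[2]}{[3]}(\JW{2}\otimes id_1)\,e_2\,(\JW{2}\otimes id_1)$, the standard Temperley-Lieb identity gives $\JW{2}\otimes id_1 = \JW{3} + p$, with $p$ a projection satisfying $\JW{3}\cdot p = p\cdot \JW{3} = 0$ (since $\JW{3}e_2 = e_2\JW{3} = 0$). The isomorphism $p\cong \JW{1}$ in $\Mat{\cC_P}$ is implemented by the Temperley-Lieb morphisms $f = \tfrac{\sqrt{[2]}}{\sqrt{[3]}}(\JW{2}\otimes id_1)\cdot(\text{cup at position }2)$ and $f^*$, which live in $\mathcal{P}$ and satisfy $f^*f = \JW{1}$, $ff^* = p$.

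Next I would establish $\JW{3} = S+P+R$. Since $T$ and $Q$ are uncappable (property~(2) of Theorem~4.0.7), we have $\JW{3}T = T$ and $\JW{3}Q = Q$, so $\JW{3}\cdot S = S$ and similarly for $P, R$; hence $S, P, R \le \JW{3}$ as subprojections. Mutual orthogonality $SP = PR = SR = 0$ follows from Lemma 8.0.27 and self-adjointness: $\Hom{P,S} = S\mathcal{P}_3 P = 0$ implies $SP = 0$, and taking adjoints gives $PS = 0$. Therefore $S + P + R$ is a projection below $\JW{3}$. Computing traces using $Z(T) = Z(Q) = 0$ (Lemma 5.0.14) and $\alpha' + \alpha + \alpha_0 = 1$ (a short check from the stated coefficients),
\begin{equation*}
\mathrm{tr}(S+P+R) = (\alpha' + \alpha + \alpha_0)[4] = [4] = \mathrm{tr}(\JW{3}).
\end{equation*}
Thus $\JW{3} - (S+P+R)$ is a projection of trace zero; by positive definiteness of the trace (subfactor axiom~4), it vanishes, giving $\JW{3} = S+P+R$.

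Combining the two decompositions yields
\begin{equation*}
\JW{2}\otimes id_1 = S + P + R + p,
\end{equation*}
a sum of four pairwise orthogonal projections. Applying Lemma 8.0.17 inductively, this sum is isomorphic in $\Mat{\cC_P}$ to the direct sum $S\oplus P\oplus R\oplus p$, and since $p\cong \JW{1}$, the claimed isomorphism follows. The main potential obstacle is showing that the Temperley-Lieb decomposition $\JW{2}\otimes id_1 = \JW{3} + p$ and the isomorphism $p\cong \JW{1}$ really carry over into our planar algebra without issue — but this is immediate from the fact that $TL(\delta)$ embeds into $\mathcal{P}$ (Fact 2.1.5) since $\delta > 2$, so every Temperley-Lieb identity and every Temperley-Lieb morphism used in the classical proof is available verbatim.
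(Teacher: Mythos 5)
Your proof is correct, but it takes a genuinely different route from the paper's. The paper writes down an explicit isomorphism pair $f : \JW{2}\otimes id_1 \to \JW{1}\oplus S\oplus P\oplus R$ and $g$ in the other direction as column/row matrices of planar diagrams (four tangles, one per summand), then verifies $gf = \JW{2}\otimes id_1$ directly from Wenzl's relation and $fg = id$ from the pairwise orthogonality $SP=PS=SR=RS=PR=RP=0$. You instead prove the internal identity $\JW{3} = S+P+R$ by a trace argument (subprojections of $\JW{3}$, equal traces, positive definiteness), splice it into the Temperley--Lieb decomposition $\JW{2}\otimes id_1 = \JW{3}+p$ with $p\cong\JW{1}$, and appeal to Lemma~8.0.17 to turn the resulting orthogonal sum of four projections into a direct sum. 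Your route buys a cleaner conceptual picture and avoids checking a $4\times 4$ matrix of tangle compositions; it also produces the explicit identity $\JW{3}=S+P+R$, which the paper never states but which is an honest byproduct. The paper's route, in exchange, is more constructive: it exhibits the isomorphism concretely, which is what one usually wants when computing further morphisms downstream.

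One point worth tightening: you deduce $SP=PS=\cdots=0$ from Lemma~8.0.27. This is logically legitimate (the lemma is stated before the one at hand), but it is circuitous, because the paper's own proof of Lemma~8.0.27 already presupposes $AB=0$ for distinct $A,B\in\{S,P,R\}$ as a known fact. The underlying orthogonality is established by a short direct computation in $span\{\JW{3},T,Q\}$ using the 3-box relations (R3) of Theorem~5.0.16 together with the identities $PT=\tfrac{\beta}{\alpha}P$, $PQ=\tfrac{\gamma}{\alpha}P$, etc.\ established in Lemma~8.0.24, and it is cleaner to cite that computation directly rather than routing through Lemma~8.0.27. The rest of the argument is sound: $\JW{3}T=T$ and $\JW{3}Q=Q$ follow from uncappability so $S,P,R\le\JW{3}$, the coefficient check $\alpha'+\alpha+\alpha_0 = \tfrac{6-\sqrt{21}}{5}+2\cdot\tfrac{-1+\sqrt{21}}{10}=1$ is correct, $Z(T)=Z(Q)=0$ kills the $T,Q$ contributions to the trace, and positive definiteness of the trace form finishes the internal decomposition. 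The Wenzl step and the isomorphism $p\cong\JW{1}$ transfer verbatim from $TL(\delta)\hookrightarrow\mathcal{P}$ via Fact~2.1.5, exactly as you say.
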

\begin{proof} Let $f:\JW{2}\otimes id_1\rightarrow \JW{1}\oplus S\oplus P\oplus R$ be the morphism given by 
\[\left(\begin{array}{c}
\vspace{2mm}
\frac{[2]}{[3]}\cdot \begin{tikzpicture}[baseline]
    \node at (0,.5) (top) [rectangle,draw] {$ \hspace{.3cm}\JW{1} \hspace{.3cm}$};
    \node at (.2,-.5) (mid) [rectangle, draw] {$ \hspace{.7cm} \JW{2} \hspace{.7cm} $};
    \draw (top.-145)--(mid.153);
    \draw (mid.20) arc (180:0:.3cm) -- ++(-90:.8cm);
    \draw (mid.-35) -- ++(-90:1mm);
    \draw (mid.-145) -- ++(-90:1mm);
    \draw (top.145) -- ++(90:1mm);
\end{tikzpicture}\\
\vspace{2mm}
\begin{tikzpicture}[baseline]
    \node at (.2,.5) (mid) [rectangle, draw] {$ \hspace{.7cm} S \hspace{.7cm} $};
    \node at (0,-.5) (bottom) [rectangle,draw]{$ \hspace{.3cm}\JW{2} \hspace{.3cm}$};
    \draw (mid.-40)--(bottom.35);
    \draw (mid.40) -- ++(90:1mm);
    \draw (mid.160) -- ++(90:1mm);
    \draw (mid.20) -- ++(90:1mm);
    \draw (mid.-20) -- ++(-90:1.1cm);
    \draw (mid.-160)--(bottom.145);
    \draw (bottom.-35) -- ++(-90:1mm);
    \draw (bottom.-145) -- ++(-90:1mm);
\end{tikzpicture}\\
\vspace{2mm}
\begin{tikzpicture}[baseline]
    \node at (.2,.5) (mid) [rectangle, draw] {$ \hspace{.7cm} P \hspace{.7cm} $};
    \node at (0,-.5) (bottom) [rectangle,draw]{$ \hspace{.3cm} \JW{2} \hspace{.3cm} $};
    \draw (mid.-40)--(bottom.35);
    \draw (mid.40) -- ++(90:1mm);
    \draw (mid.160) -- ++(90:1mm);
    \draw (mid.20) -- ++(90:1mm);
    \draw (mid.-20) -- ++(-90:1.1cm);
    \draw (mid.-160)--(bottom.145);
    \draw (bottom.-35) -- ++(-90:1mm);
    \draw (bottom.-145) -- ++(-90:1mm);
\end{tikzpicture}\\
\vspace{2mm}
\begin{tikzpicture}[baseline]
    \node at (.2,.5) (mid) [rectangle, draw] {$ \hspace{.7cm} R \hspace{.7cm} $};
    \node at (0,-.5) (bottom) [rectangle,draw]{$ \hspace{.3cm} \JW{2} \hspace{.3cm}$};
    \draw (mid.-40)--(bottom.35);
    \draw (mid.40) -- ++(90:1mm);
    \draw (mid.160) -- ++(90:1mm);
    \draw (mid.20) -- ++(90:1mm);
    \draw (mid.-20) -- ++(-90:1.1cm);
    \draw (mid.-160)--(bottom.145);
    \draw (bottom.-35) -- ++(-90:1mm);
    \draw (bottom.-145) -- ++(-90:1mm);
\end{tikzpicture}\end{array}\right)\]
Let $g:\JW{1}\oplus S\oplus P\oplus R\rightarrow \JW{2}\otimes id_1$ be given by
\[\left(\begin{array}{cccc}
\vspace{2mm}
\begin{tikzpicture}[baseline]
    \node at (.2,.5) (mid) [rectangle, draw] {$ \hspace{.7cm} \JW{2} \hspace{.7cm} $};
    \node at (0,-.5) (bottom) [rectangle,draw]{$ \hspace{.3cm}\JW{1} \hspace{.3cm}$};
    \draw (mid.-20) arc (-180:0:.3cm) -- ++(90:.8cm);
    \draw (mid.-153)--(bottom.145);
    \draw (mid.35) -- ++(90:1mm);
    \draw (mid.145) -- ++(90:1mm);
    \draw (bottom.-145) -- ++(-90:1mm);
\end{tikzpicture} & 
\begin{tikzpicture}[baseline]
    \node at (0,.5) (top) [rectangle,draw] {$ \hspace{.3cm}\JW{2}\hspace{.3cm} $};
    \node at (.2,-.5) (mid) [rectangle, draw] {$ \hspace{.7cm} S \hspace{.7cm} $};
    \draw (top.-35)--(mid.40);
    \draw (top.-145)--(mid.160);
    \draw (mid.-40) -- ++(-90:1mm);
    \draw (mid.-160) -- ++(-90:1mm);
    \draw (mid.-20) -- ++(-90:1mm);
    \draw (top.-35)--(mid.40);
     \draw (top.-145)--(mid.160);
      \draw (top.35) -- ++(90:1mm);
    \draw (top.145) -- ++(90:1mm);
    \draw (mid.20) -- ++(90:1.1cm);
\end{tikzpicture} & 
\begin{tikzpicture}[baseline]
    \node at (0,.5) (top) [rectangle,draw] {$ \hspace{.3cm}\JW{2}\hspace{.3cm} $};
    \node at (.2,-.5) (mid) [rectangle, draw] {$ \hspace{.7cm} P \hspace{.7cm} $};
    \draw (top.-35)--(mid.40);
    \draw (top.-145)--(mid.160);
    \draw (mid.-40) -- ++(-90:1mm);
    \draw (mid.-160) -- ++(-90:1mm);
    \draw (mid.-20) -- ++(-90:1mm);
    \draw (top.-35)--(mid.40);
     \draw (top.-145)--(mid.160);
      \draw (top.35) -- ++(90:1mm);
    \draw (top.145) -- ++(90:1mm);
    \draw (mid.20) -- ++(90:1.1cm);
\end{tikzpicture} &
\begin{tikzpicture}[baseline]
    \node at (0,.5) (top) [rectangle,draw] {$ \hspace{.3cm}\JW{2}\hspace{.3cm} $};
    \node at (.2,-.5) (mid) [rectangle, draw] {$ \hspace{.7cm} R \hspace{.7cm} $};
    \draw (top.-35)--(mid.40);
    \draw (top.-145)--(mid.160);
    \draw (mid.-40) -- ++(-90:1mm);
    \draw (mid.-160) -- ++(-90:1mm);
    \draw (mid.-20) -- ++(-90:1mm);
    \draw (top.-35)--(mid.40);
     \draw (top.-145)--(mid.160);
      \draw (top.35) -- ++(90:1mm);
    \draw (top.145) -- ++(90:1mm);
    \draw (mid.20) -- ++(90:1.1cm);
\end{tikzpicture}\end{array}\right)\]
Then $gf=\JW{2}\otimes id_1=id_{\JW{2}\otimes id_1}$, by Wenzl's relation.  $fg=id_{\JW{1}\oplus S\oplus P\oplus R}$  follows from the fact that $SP=PS=SR=RS=PR=RP=0$.  Hence, $f$ and $g$ are inverses of each other.
\end{proof}
\begin{lem}$S\otimes id_1\cong\JW{2}$.
\end{lem}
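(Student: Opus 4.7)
The plan is to produce explicit inverse morphisms between $S \otimes id_1$ and $\JW{2}$ in $\Mat{\cC_P}$, following the blueprint of Lemma~8.0.29. I take $f \colon S \otimes id_1 \to \JW{2}$ to be the $3$-box consisting of $\JW{2}$ stacked above $S$, in which two of $S$'s three top strands meet the two bottom strands of $\JW{2}$ while the third top strand of $S$ caps off with the $id_1$-strand on the right, all multiplied by a normalizing scalar $c$ to be chosen below. I take $g \colon \JW{2} \to S \otimes id_1$ to be the reflection of $f$ (without the factor $c$, which we absorb into $f$). The task reduces to verifying $fg = \JW{2}$ and $gf = S \otimes id_1$.

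The first identity is routine: stacking $f$ above $g$ produces two vertically adjacent copies of $S$ which, since $S$ is a projection, collapse into a single $S$, while the two $id_1$-caps merge to close off $S$'s third leg. What remains is a partial trace of $S$ sandwiched between $\JW{2}$ above and $\JW{2}$ below. By minimality of $\JW{2}$ (Lemma~8.0.22) this partial trace is a scalar multiple of $\JW{2}$, and I pick $c$ so that $fg = \JW{2}$.

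For the second identity, observe first that $gf$ is automatically a projection: $(gf)(gf) = g(fg)f = g\,\JW{2}\,f = gf$, using $\JW{2} f = f$. Moreover $gf$ is dominated by $S \otimes id_1$, meaning $(S \otimes id_1)(gf)(S \otimes id_1) = gf$, since $g = (S \otimes id_1)\,g$ and $f = f\,(S \otimes id_1)$. Hence $(S \otimes id_1) - gf$ is itself a projection, orthogonal to $gf$. The crux of the lemma—and what I expect to be the main obstacle—is showing that this complementary projection vanishes. I dispatch it by a trace comparison. On the one hand, by cyclicity of trace in the spherical category, $\operatorname{tr}(gf) = \operatorname{tr}(fg) = \operatorname{tr}(\JW{2}) = [3]$. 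On the other hand, since $S = \alpha'\JW{3} + \beta' T + \gamma' Q$ and $T,Q$ are uncappable (so $Z(T) = Z(Q) = 0$), we have $\operatorname{tr}(S) = \alpha'[4]$, whence
\begin{equation*}
\operatorname{tr}(S \otimes id_1) \;=\; \operatorname{tr}(S)\cdot[2] \;=\; \frac{(6-\sqrt{21})(5+\sqrt{21})(1+\sqrt{21})}{20} \;=\; \frac{3+\sqrt{21}}{2} \;=\; [3].
\end{equation*}
Thus $\operatorname{tr}\bigl((S \otimes id_1) - gf\bigr) = 0$, and positive definiteness of the inner product then forces $(S \otimes id_1) - gf = 0$, i.e., $gf = S \otimes id_1$. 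Together with $fg = \JW{2}$, this exhibits $f$ and $g$ as mutually inverse morphisms, so $S \otimes id_1 \cong \JW{2}$.
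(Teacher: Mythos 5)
Your proof is correct, and it takes a genuinely different route from the paper's. The paper establishes $gf = S \otimes id_1$ by applying the Cauchy–Schwarz equality criterion: it sets $A$ equal to the stacked-$S$ diagram (two copies of $S$ joined by a cap and cup) and $B = S \otimes id_1$, computes $\ip{A,A}$, $\ip{B,B}$, $\ip{A,B}$ explicitly, verifies $\ip{A,A}\ip{B,B} = |\ip{A,B}|^2$, and deduces $A = \tfrac{1}{\delta} B$; the normalization constant is then pinned down by the identity $\alpha' \tfrac{[4]}{[3]} \delta = 1$. You avoid those inner-product computations entirely. Instead you observe that $gf$ is a subprojection of $S \otimes id_1$ (since $fg = \JW{2}$ forces $gf$ to be idempotent, self-adjoint because $c$ is real, and dominated by $S\otimes id_1$), so the difference $p = (S\otimes id_1) - gf$ is again a projection. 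Cyclicity of the categorical trace gives $\operatorname{tr}(gf) = \operatorname{tr}(fg) = \operatorname{tr}(\JW{2}) = [3]$, and your direct evaluation shows $\operatorname{tr}(S\otimes id_1) = \alpha'[4]\delta = [3]$ as well, hence $\operatorname{tr}(p) = \ip{p,p} = 0$ and $p = 0$ by positive definiteness. This is tighter and more conceptual: it trades the three inner-product evaluations for a single scalar identity (which is the same identity $\alpha'\tfrac{[4]}{[3]}\delta=1$ the paper needs anyway, repackaged as a statement about quantum dimensions), and it makes clear why the normalization constant exists rather than just verifying that it works. The one point worth making explicit, which you pass over lightly, is that $gf$ being self-adjoint requires the normalizing scalar $c = \tfrac{1}{\alpha'[4]/[3]}$ to be real; since $\alpha'$ and the quantum integers are real this is fine, but the argument leans on it.
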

\begin{proof}Let $f:S\otimes id_1\rightarrow\JW{2}$ be given by

$$\frac{1}{\alpha'\frac{[4]}{[3]}}\cdot \begin{tikzpicture}[baseline]
    \node at (0,.5) (top) [rectangle,draw] {$ \hspace{.3cm}\JW{2}\hspace{.3cm} $};
    \node at (.2,-.5) (mid) [rectangle, draw] {$ \hspace{.7cm} S \hspace{.7cm} $};
    \draw (top.-35)--(mid.40);
    \draw (top.-145)--(mid.160);
    \draw (mid.-40) -- ++(-90:1mm);
    \draw (mid.-160) -- ++(-90:1mm);
    \draw (mid.-20) -- ++(-90:1mm);
    \draw (top.-35)--(mid.40);
     \draw (top.-145)--(mid.160);
      \draw (top.35) -- ++(90:1mm);
    \draw (top.145) -- ++(90:1mm);
    \draw (mid.20) arc (180:0:.2cm) -- ++(-90:1cm);

\end{tikzpicture}$$
Let $g:\JW{2}\rightarrow S\otimes id_1$ be given by
$$\begin{tikzpicture}[baseline]
    \node at (.2,.5) (mid) [rectangle, draw] {$ \hspace{.7cm} S \hspace{.7cm} $};
    \node at (0,-.5) (bottom) [rectangle,draw]{$ \hspace{.3cm}\JW{2} \hspace{.3cm}$};
    \draw (mid.-40)--(bottom.35);
    \draw (mid.40) -- ++(90:1mm);
    \draw (mid.160) -- ++(90:1mm);
    \draw (mid.20) -- ++(90:1mm);
    \draw (mid.-20) arc (-180:0:.2cm) -- ++(90:1cm);
    \draw (mid.-160)--(bottom.145);
    \draw (bottom.-35) -- ++(-90:1mm);
    \draw (bottom.-145) -- ++(-90:1mm);
\end{tikzpicture}$$
Then $fg=\JW{2}$ by the fact that $S$ is a projection and the partial trace relation.  For $gf$, we have $$gf=\frac{1}{\alpha'\frac{[4]}{[3]}}\cdot \begin{tikzpicture}[baseline]
   \clip (-.6,-1.3) rectangle (.8,1.3);
   \node at (0,.7) (top) [rectangle,draw] {$ \hspace{.3 cm} S \hspace{.3cm} $};
   \node at (0,-.7) (bottom) [rectangle,draw]{$ \hspace{.3cm} S
\hspace{.3cm} $};

   \draw (top.-60)--(bottom.60);
   \draw (top.-145)--(bottom.145);
   \draw (top.-35) arc (-180:0:.2cm) -- ++(90:1cm);
   \draw (bottom.35) arc (180:0:.2cm) -- ++(-90:1cm);

   \draw (top.35) -- ++(90:1cm);
   \draw (top.60) -- ++(90:1cm);
   \draw (top.145) -- ++(90:1cm);

   \draw (bottom.-35) -- ++(-90:1cm);
   \draw (bottom.-60) -- ++(-90:1cm);
   \draw (bottom.-145) -- ++(-90:1cm);
\end{tikzpicture}$$
However, $\begin{tikzpicture}[baseline]
   \clip (-.6,-1.3) rectangle (.8,1.3);
   \node at (0,.7) (top) [rectangle,draw] {$ \hspace{.3 cm} S \hspace{.3cm} $};
   \node at (0,-.7) (bottom) [rectangle,draw]{$ \hspace{.3cm} S
\hspace{.3cm} $};

   \draw (top.-60)--(bottom.60);
   \draw (top.-145)--(bottom.145);
   \draw (top.-35) arc (-180:0:.2cm) -- ++(90:1cm);
   \draw (bottom.35) arc (180:0:.2cm) -- ++(-90:1cm);

   \draw (top.35) -- ++(90:1cm);
   \draw (top.60) -- ++(90:1cm);
   \draw (top.145) -- ++(90:1cm);

   \draw (bottom.-35) -- ++(-90:1cm);
   \draw (bottom.-60) -- ++(-90:1cm);
   \draw (bottom.-145) -- ++(-90:1cm);
\end{tikzpicture}$ is a scalar multiple of $S\otimes id_1$.  To see this, we let $A=\begin{tikzpicture}[baseline]
   \clip (-.6,-1.3) rectangle (.8,1.3);
   \node at (0,.7) (top) [rectangle,draw] {$ \hspace{.3 cm} S \hspace{.3cm} $};
   \node at (0,-.7) (bottom) [rectangle,draw]{$ \hspace{.3cm} S
\hspace{.3cm} $};

   \draw (top.-60)--(bottom.60);
   \draw (top.-145)--(bottom.145);
   \draw (top.-35) arc (-180:0:.2cm) -- ++(90:1cm);
   \draw (bottom.35) arc (180:0:.2cm) -- ++(-90:1cm);

   \draw (top.35) -- ++(90:1cm);
   \draw (top.60) -- ++(90:1cm);
   \draw (top.145) -- ++(90:1cm);

   \draw (bottom.-35) -- ++(-90:1cm);
   \draw (bottom.-60) -- ++(-90:1cm);
   \draw (bottom.-145) -- ++(-90:1cm);
\end{tikzpicture}$ and $B=S\otimes id_1$, and check to see that $\ip{A,A}\ip{B,B}=\ip{A,B}\overline{\ip{A,B}}$.  If so, we have that $A=\frac{\ip{A,B}}{\ip{B,B}}B$.  We calculate that $\ip{A,A}=\alpha'\frac{[4]}{[3]}Z(S), \ip{B,B}=\delta Z(S)$, and $\ip{A,B}=Z(S)$.  Since $\alpha'\frac{[4]}{[3]}\delta=1$, the equation $\ip{A,A}\ip{B,B}=\ip{A,B}\overline{\ip{A,B}}$ holds and $A=\frac{1}{\delta}B$.  Hence, $$gf=\delta \cdot \begin{tikzpicture}[baseline]
   \clip (-.6,-1.3) rectangle (.8,1.3);
   \node at (0,.7) (top) [rectangle,draw] {$ \hspace{.3 cm} S \hspace{.3cm} $};
   \node at (0,-.7) (bottom) [rectangle,draw]{$ \hspace{.3cm} S
\hspace{.3cm} $};

   \draw (top.-60)--(bottom.60);
   \draw (top.-145)--(bottom.145);
   \draw (top.-35) arc (-180:0:.2cm) -- ++(90:1cm);
   \draw (bottom.35) arc (180:0:.2cm) -- ++(-90:1cm);

   \draw (top.35) -- ++(90:1cm);
   \draw (top.60) -- ++(90:1cm);
   \draw (top.145) -- ++(90:1cm);

   \draw (bottom.-35) -- ++(-90:1cm);
   \draw (bottom.-60) -- ++(-90:1cm);
   \draw (bottom.-145) -- ++(-90:1cm);
\end{tikzpicture}=S\otimes id_1$$
It follows that $g$ and $f$ are inverses of each other.  
\end{proof}
\begin{lem}$P\otimes id_1 \cong\JW{2}\oplus M_1$ and $R\otimes id_1 \cong\JW{2}\oplus M_2$.
\end{lem}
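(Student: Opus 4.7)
The plan is to follow the template of Lemma 8.0.28 (for $\JW{2}\otimes id_1$) and Lemma 8.0.29 (for $S\otimes id_1$): I will exhibit explicit block-matrix morphisms
$$f = \begin{pmatrix} f_1 \\ f_2 \end{pmatrix}: P\otimes id_1 \to \JW{2}\oplus M_1, \qquad g = \begin{pmatrix} g_1 & g_2 \end{pmatrix}: \JW{2}\oplus M_1 \to P\otimes id_1,$$
and verify $fg = \mathrm{id}_{\JW{2}\oplus M_1}$ and $gf = \mathrm{id}_{P\otimes id_1}$. For the entries I will take $f_1 = \frac{1}{\alpha \frac{[4]}{[3]}}$ times the diagram used in Lemma 8.0.29 but with $P$ in place of $S$ (namely $\JW{2}$ on top, joined to $P$ by two strands, with $P$'s rightmost top strand arced over to become the fourth bottom boundary point), $g_1$ its vertical reflection, and $f_2 = g_2 = M_1$ viewed as morphisms. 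The case $R\otimes id_1 \cong \JW{2}\oplus M_2$ is entirely parallel with $(P,\alpha,M_1)$ replaced by $(R,\alpha_0,M_2)$.

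For $fg = \mathrm{id}$, the diagonal entry $f_1 g_1 = \JW{2}$ reduces by stacking the two defining diagrams, merging the two copies of $P$ via $P\cdot P = P$ on the three middle strands, and recognizing what remains as $\JW{2}\cdot \mathrm{tr}_R(P)\cdot \JW{2}$ where $\mathrm{tr}_R$ is the right partial trace. Since $T$ and $Q$ are uncappable by condition (2) of Theorem 4.0.7, the only surviving contribution is $\mathrm{tr}_R(P) = \alpha\,\mathrm{tr}_R(\JW{3}) = \alpha\frac{[4]}{[3]}\JW{2}$, so after the normalizing factor this gives $\JW{2}$. The diagonal entry $f_2 g_2 = M_1^2 = M_1$ because $M_1$ is a projection.

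The off-diagonal entries vanish essentially by construction. The computation of $f_1 g_1$ in the previous paragraph identifies $g_1 f_1$ diagrammatically with $\frac{1}{\alpha\frac{[4]}{[3]}}$ times the two-$P$ partial-trace diagram $X$ used to define $M_1$, so $M_1 = P\otimes id_1 - g_1 f_1$. Then
\begin{align*}
f_1 g_2 &= f_1 M_1 = f_1 - (f_1 g_1)f_1 = f_1 - \JW{2}\cdot f_1 = 0, \\
f_2 g_1 &= M_1 g_1 = g_1 - g_1(f_1 g_1) = g_1 - g_1\cdot \JW{2} = 0.
\end{align*}
Finally $gf = g_1 f_1 + g_2 f_2 = g_1 f_1 + M_1 = P\otimes id_1$ by the defining relation of $M_1$.

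The main obstacle will be the diagrammatic identification of $g_1 f_1$ with the partial-trace term in the definition of $M_1$: I must track which boundary points get glued when stacking the two arcs on the right, and verify that the resulting diagram is exactly the $X$ appearing in the definition of $M_1$ (up to the scalar $\frac{1}{\alpha\frac{[4]}{[3]}}$). Once that bookkeeping is done and the uncappability of $T,Q$ is invoked to justify $\mathrm{tr}_R(P) = \alpha\frac{[4]}{[3]}\JW{2}$, everything else is formal manipulation with projections.
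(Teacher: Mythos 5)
Your proof is correct, and it takes a genuinely different route from the paper's. The paper proceeds in two stages: it first constructs explicit morphisms $f\colon a\to\JW{2}$ and $g\colon\JW{2}\to a$ (each a three-box diagram containing two copies of $P$ and one $\JW{2}$, verified to be mutually inverse via "$P$ is a projection plus the partial trace relation used three times" and "$\JW{2}$ is a projection plus the partial trace relation used twice"), and then invokes Lemma 8.0.17 on the orthogonal decomposition $P\otimes id_1 = M_1 + a$. You instead exhibit the block-matrix isomorphism $P\otimes id_1 \cong \JW{2}\oplus M_1$ in $\Mat{\cC_P}$ directly, with $\JW{2}$-component morphisms that are two-box diagrams (one $P$, one $\JW{2}$, normalized by $1/(\alpha\tfrac{[4]}{[3]})$ on the $f$ side) borrowed from the $S$ case of Lemma 8.0.29, and $M_1$-component morphisms equal to $M_1$ itself. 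This makes the diagonal entry $f_1 g_1 = \JW{2}$ a single application of $P^2=P$ followed by one partial trace $\mathrm{tr}_R(P)=\alpha\tfrac{[4]}{[3]}\JW{2}$ (valid because $T,Q$ are uncappable), and the identity $g_1 f_1 = \tfrac{1}{\alpha[4]/[3]}X$ reproduces the defining formula $M_1 = P\otimes id_1 - g_1 f_1$, so the off-diagonal entries and $gf = P\otimes id_1$ follow formally. The tradeoff is roughly: the paper's route isolates the reusable sub-statement $a\cong\JW{2}$ and then appeals to the general orthogonal-sum lemma, at the cost of more cumbersome three-box morphisms; your route uses smaller morphism diagrams (one fewer $P$ box, and only one partial trace to compute) and makes the $M_1$-block essentially tautological, at the cost of doing the block-matrix bookkeeping by hand. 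One small point worth making explicit if you write this up: in the identification of $g_1 f_1$ with the partial-trace term, the intermediate $\JW{2}$ that sits on the two left strands between the two $P$ boxes must be absorbed; this uses $(\JW{2}\otimes 1)P = P$, which again follows from uncappability of $T,Q$ since $\JW{2}\otimes 1 = 1 - \tfrac{1}{\delta}e_1$ and $e_1 P = 0$. The parallel argument for $R\otimes id_1 \cong \JW{2}\oplus M_2$ goes through exactly as you indicate.
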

\begin{proof}  First, we establish that $$\frac{1}{\alpha\frac{[4]}{[3]}}\cdot \begin{tikzpicture}[baseline]
   \clip (-.6,-1.3) rectangle (.8,1.3);
   \node at (0,.7) (top) [rectangle,draw] {$ \hspace{.3 cm} P \hspace{.3cm} $};
   \node at (0,-.7) (bottom) [rectangle,draw]{$ \hspace{.3cm} P
\hspace{.3cm} $};

   \draw (top.-60)--(bottom.60);
   \draw (top.-145)--(bottom.145);
   \draw (top.-35) arc (-180:0:.2cm) -- ++(90:1cm);
   \draw (bottom.35) arc (180:0:.2cm) -- ++(-90:1cm);

   \draw (top.35) -- ++(90:1cm);
   \draw (top.60) -- ++(90:1cm);
   \draw (top.145) -- ++(90:1cm);

   \draw (bottom.-35) -- ++(-90:1cm);
   \draw (bottom.-60) -- ++(-90:1cm);
   \draw (bottom.-145) -- ++(-90:1cm);
\end{tikzpicture}\cong \JW{2}$$
Let $f:\frac{1}{\alpha\frac{[4]}{[3]}}\cdot \begin{tikzpicture}[baseline]
   \clip (-.6,-1.3) rectangle (.8,1.3);
   \node at (0,.7) (top) [rectangle,draw] {$ \hspace{.3 cm} P \hspace{.3cm} $};
   \node at (0,-.7) (bottom) [rectangle,draw]{$ \hspace{.3cm} P
\hspace{.3cm} $};

   \draw (top.-60)--(bottom.60);
   \draw (top.-145)--(bottom.145);
   \draw (top.-35) arc (-180:0:.2cm) -- ++(90:1cm);
   \draw (bottom.35) arc (180:0:.2cm) -- ++(-90:1cm);

   \draw (top.35) -- ++(90:1cm);
   \draw (top.60) -- ++(90:1cm);
   \draw (top.145) -- ++(90:1cm);

   \draw (bottom.-35) -- ++(-90:1cm);
   \draw (bottom.-60) -- ++(-90:1cm);
   \draw (bottom.-145) -- ++(-90:1cm);
\end{tikzpicture}\longrightarrow \JW{2}$ be given by 
$$\begin{tikzpicture}[baseline]
    \node at (0,1) (top) [rectangle,draw] {$ \hspace{.3cm}\JW{2} \hspace{.3cm}$};
    \node at (.2,0) (mid) [rectangle, draw] {$ \hspace{.7cm} P \hspace{.7cm} $};
    \node at (0,-1) (bottom) [rectangle,draw]{$ \hspace{.4cm}P \hspace{.4cm}$};
    \draw (top.-35)--(mid.40);
    \draw (top.-145)--(mid.160);
    \draw (mid.-40)--(bottom.35);
    \draw (mid.20) arc (180:0:.3cm) -- ++(-90:.5cm);
    \draw (mid.-20) arc (-180:0:.3cm) -- ++(90:.5cm);
    \draw (bottom.25) arc (180:0:.2cm) --++(-90:.7cm);
    \draw (mid.-160)--(bottom.145);
      \draw (top.35) -- ++(90:.1cm);
   \draw (top.145) -- ++(90:.1cm);
    \draw (bottom.-35) -- ++(-90:.1cm);
   \draw (bottom.-25) -- ++(-90:.1cm);
   \draw (bottom.-145) -- ++(-90:.1cm);
\end{tikzpicture}$$
and let $g:\JW{2}\longrightarrow \frac{1}{\alpha\frac{[4]}{[3]}}\cdot \begin{tikzpicture}[baseline]
   \clip (-.6,-1.3) rectangle (.8,1.3);
   \node at (0,.7) (top) [rectangle,draw] {$ \hspace{.3 cm} P \hspace{.3cm} $};
   \node at (0,-.7) (bottom) [rectangle,draw]{$ \hspace{.3cm} P
\hspace{.3cm} $};

   \draw (top.-60)--(bottom.60);
   \draw (top.-145)--(bottom.145);
   \draw (top.-35) arc (-180:0:.2cm) -- ++(90:1cm);
   \draw (bottom.35) arc (180:0:.2cm) -- ++(-90:1cm);

   \draw (top.35) -- ++(90:1cm);
   \draw (top.60) -- ++(90:1cm);
   \draw (top.145) -- ++(90:1cm);

   \draw (bottom.-35) -- ++(-90:1cm);
   \draw (bottom.-60) -- ++(-90:1cm);
   \draw (bottom.-145) -- ++(-90:1cm);
\end{tikzpicture}$ be given by 
$$\frac{1}{\Big(\alpha\frac{[4]}{[3]}\Big)^2}\cdot\begin{tikzpicture}[baseline]
    \node at (0,1) (top) [rectangle,draw] {$ \hspace{.4cm}P \hspace{.4cm}$};
    \node at (.2,0) (mid) [rectangle, draw] {$ \hspace{.7cm} P \hspace{.7cm} $};
    \node at (0,-1) (bottom) [rectangle,draw]{$ \hspace{.3cm}\JW{2} \hspace{.3cm}$};
    \draw (top.-35)--(mid.40);
    \draw (top.-145)--(mid.160);
    \draw (mid.-40)--(bottom.35);
    \draw (mid.20) arc (180:0:.3cm) -- ++(-90:.5cm);
    \draw (mid.-20) arc (-180:0:.3cm) -- ++(90:.5cm);
    \draw (top.-25) arc (-180:0:.2cm) --++(90:.7cm);
    \draw (mid.-160)--(bottom.145);
      \draw (top.35) -- ++(90:.1cm);
   \draw (top.145) -- ++(90:.1cm);
    \draw (bottom.-35) -- ++(-90:.1cm);
   \draw (top.25) -- ++(90:.1cm);
   \draw (bottom.-145) -- ++(-90:.1cm);
\end{tikzpicture}$$
We can check that $fg=id_{\JW{2}}$ and $gf=id_a$, where $a=\frac{1}{\alpha\frac{[4]}{[3]}}\cdot \begin{tikzpicture}[baseline]
   \clip (-.6,-1.3) rectangle (.8,1.3);
   \node at (0,.7) (top) [rectangle,draw] {$ \hspace{.3 cm} P \hspace{.3cm} $};
   \node at (0,-.7) (bottom) [rectangle,draw]{$ \hspace{.3cm} P
\hspace{.3cm} $};

   \draw (top.-60)--(bottom.60);
   \draw (top.-145)--(bottom.145);
   \draw (top.-35) arc (-180:0:.2cm) -- ++(90:1cm);
   \draw (bottom.35) arc (180:0:.2cm) -- ++(-90:1cm);

   \draw (top.35) -- ++(90:1cm);
   \draw (top.60) -- ++(90:1cm);
   \draw (top.145) -- ++(90:1cm);

   \draw (bottom.-35) -- ++(-90:1cm);
   \draw (bottom.-60) -- ++(-90:1cm);
   \draw (bottom.-145) -- ++(-90:1cm);
\end{tikzpicture}$.  The claim that $fg=id_{\JW{2}}$ follows from the fact that $P$ is a projection, along with the partial trace relation used three times.  The claim that $gf=id_a$ follows from the fact that $\JW{2}$ is a projection, along with the partial trace relation used twice.  We have thus established that $a\cong \JW{2}$.  We now prove the lemma.\\
Recall that $$M_1=P\otimes id_1 - \frac{1}{\alpha\frac{[4]}{[3]}} \cdot \begin{tikzpicture}[baseline]
   \clip (-.6,-1.3) rectangle (.8,1.3);
   \node at (0,.7) (top) [rectangle,draw] {$ \hspace{.3 cm} P \hspace{.3cm} $};
   \node at (0,-.7) (bottom) [rectangle,draw]{$ \hspace{.3cm} P
\hspace{.3cm} $};

   \draw (top.-60)--(bottom.60);
   \draw (top.-145)--(bottom.145);
   \draw (top.-35) arc (-180:0:.2cm) -- ++(90:1cm);
   \draw (bottom.35) arc (180:0:.2cm) -- ++(-90:1cm);

   \draw (top.35) -- ++(90:1cm);
   \draw (top.60) -- ++(90:1cm);
   \draw (top.145) -- ++(90:1cm);

   \draw (bottom.-35) -- ++(-90:1cm);
   \draw (bottom.-60) -- ++(-90:1cm);
   \draw (bottom.-145) -- ++(-90:1cm);
\end{tikzpicture}$$
So 
\begin{align*}
P\otimes id_1 &=M_1+a\\
&\cong M_1\oplus a \mbox{\indent  because $M_1$ and $a$ are orthogonal and Lemma 8.0.17}\\
&\cong M_1\oplus \JW{2} \mbox{\indent  because $a\cong \JW{2}$}\\
\end{align*}
Thus, $P\otimes id_1 \cong M_1\oplus \JW{2}\cong \JW{2}\oplus M_1$.  The proof of the claim that $R\otimes id_1\cong\JW{2}\oplus M_2$ is analogous to the above proof.  We use the fact that 
$$\frac{1}{\alpha_0\frac{[4]}{[3]}}\cdot \begin{tikzpicture}[baseline]
   \clip (-.6,-1.3) rectangle (.8,1.3);
   \node at (0,.7) (top) [rectangle,draw] {$ \hspace{.3 cm} R \hspace{.3cm} $};
   \node at (0,-.7) (bottom) [rectangle,draw]{$ \hspace{.3cm} R
\hspace{.3cm} $};

   \draw (top.-60)--(bottom.60);
   \draw (top.-145)--(bottom.145);
   \draw (top.-35) arc (-180:0:.2cm) -- ++(90:1cm);
   \draw (bottom.35) arc (180:0:.2cm) -- ++(-90:1cm);

   \draw (top.35) -- ++(90:1cm);
   \draw (top.60) -- ++(90:1cm);
   \draw (top.145) -- ++(90:1cm);

   \draw (bottom.-35) -- ++(-90:1cm);
   \draw (bottom.-60) -- ++(-90:1cm);
   \draw (bottom.-145) -- ++(-90:1cm);
\end{tikzpicture}\cong \JW{2}.$$
\end{proof}
\begin{lem}$M_1\otimes id_1\cong P$ and $M_2\otimes id_1\cong R$.
\end{lem}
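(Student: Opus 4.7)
The plan is to construct explicit morphisms $f\colon M_1 \otimes id_1 \to P$ and $g\colon P \to M_1 \otimes id_1$ with $g = f^*$, mimicking the template of Lemmas 8.0.29, 8.0.30, and 8.0.31, and to verify both $fg = \id_P$ and $gf = \id_{M_1 \otimes id_1}$. The second isomorphism $M_2 \otimes id_1 \cong R$ follows by the entirely parallel argument with $R, M_2, \alpha_0, \beta_0, \gamma_0$ replacing $P, M_1, \alpha, \beta, \gamma$, so I only outline the first.

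I would define $g$ as the diagram in which $P$ sits at the bottom, $M_1$ sits at the top, three strands connect $P$'s top boundary to the three leftmost strands of $M_1$'s bottom boundary, and a single cup joins $M_1$'s rightmost bottom strand to the lower end of the external $id_1$ strand of $M_1 \otimes id_1$. Let $f$ be the vertical reflection of $g$, scaled by a normalization constant to be fixed by the identity $fg = \id_P$.

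The verification $fg = \id_P$ is the easier direction: the composition reduces to $P \cdot \operatorname{ptr}(M_1) \cdot P$, where $\operatorname{ptr}$ closes off the rightmost strand of a $4$-box. Using the defining relation $M_1 = P \otimes id_1 - \frac{1}{\alpha[4]/[3]}\, C$ with $C$ the cup-cap term, one computes $\operatorname{ptr}(P \otimes id_1) = \delta P$ and $\operatorname{ptr}(C) = P$, the latter because the partial trace of $C$ contracts via the partial trace relation to $P \cdot P = P$. Hence $\operatorname{ptr}(M_1)$ is a specific nonzero scalar multiple of $P$, and the reciprocal of that scalar fixes the normalization in $f$ so that $fg = \id_P$.

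The main obstacle is the reverse identity $gf = \id_{M_1 \otimes id_1}$. Once $fg = \id_P$ is in hand, $gf$ satisfies $(gf)^2 = g(fg)f = gf$ and $(gf)^* = f^* g^* = gf$, so $gf$ is a self-adjoint idempotent in $\operatorname{End}(M_1 \otimes id_1)$ isomorphic to $P$ via the restrictions of $f$ and $g$. Setting $C := \id_{M_1 \otimes id_1} - gf$, Lemma 8.0.17 yields $M_1 \otimes id_1 \cong P \oplus C$. To conclude $C = 0$, I run a quantum-dimension computation on the principal graph $H$: at the leaf $M_1$ (adjacent only to $P$), $\dim_q(M_1) \cdot \delta = \dim_q(P)$, and at $P$ (adjacent to $\JW{2}$ and $M_1$), $\dim_q(P) \cdot \delta = [3] + \dim_q(M_1)$; solving gives $\dim_q(M_1) = 1$ and $\dim_q(P) = \delta$. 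Therefore $\dim_q(M_1 \otimes id_1) = \dim_q(M_1) \cdot \delta = \delta = \dim_q(P)$, so $\dim_q(C) = 0$. Since $\mathcal{P}$ is a subfactor planar algebra, its trace is positive definite, so every nonzero projection has strictly positive quantum dimension; hence $C = 0$ and $M_1 \otimes id_1 \cong P$.
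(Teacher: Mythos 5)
Your construction of $f$ and $g$ matches the paper's, and your verification of $fg = \id_P$ is essentially the argument the paper gives. The genuine divergence is in the harder direction $gf = \id_{M_1 \otimes id_1}$, where you and the paper take quite different routes. The paper computes $gf = \delta A$ where $A$ is the double-$M_1$ diagram, then checks directly that $\ip{A,A}\ip{B,B} = \ip{A,B}\overline{\ip{A,B}}$ with $B = M_1 \otimes id_1$, so that the Cauchy--Schwarz equality case forces $A$ to be a scalar multiple of $B$. You instead observe that $gf$ is a self-adjoint idempotent whose image is isomorphic to $P$, write $M_1\otimes id_1 \cong P \oplus C$, and kill $C$ by a quantum-dimension count together with positive definiteness of the trace. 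Your route is cleaner conceptually and avoids computing three inner products, at the cost of invoking positive definiteness at the level of projections; the paper's is more self-contained at the level of diagram arithmetic.

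There is one real flaw in the way you set up the dimension count. You write down two equations, one of which is ``at the leaf $M_1$ (adjacent only to $P$), $\dim_q(M_1)\cdot\delta = \dim_q(P)$'' --- but the statement that $M_1$ is a leaf adjacent only to $P$ \emph{is} the lemma you are trying to prove (it is precisely $M_1\otimes id_1 \cong P$), so you cannot feed it into your own argument. The fix is cheap: drop that equation entirely. Compute $\dim_q(P) = Z(P) = \alpha Z(\JW{3}) + \beta Z(T) + \gamma Z(Q) = \alpha[4]$ directly (since $T$ and $Q$ are traceless), and with $\alpha = \tfrac{-1+\sqrt{21}}{10}$ and $[4] = \delta\cdot\tfrac{1+\sqrt{21}}{2}$ this gives $\dim_q(P) = \delta$. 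Then Lemma~8.0.31 (already proved) gives $\dim_q(P)\cdot\delta = [3] + \dim_q(M_1)$, so $\dim_q(M_1) = \delta^2 - [3] = 1$, and the rest of your argument ($\dim_q(C) = \delta - \delta = 0$, hence $C=0$) goes through. You should also state that your normalization constant is real (it is $\delta$, as in the paper) so that $gf = c\,gg^*$ is genuinely self-adjoint; as written you say both $g = f^*$ and $f$ is a \emph{scaled} reflection of $g$, which are incompatible unless the scale is $1$. Neither of these affects the $M_2\otimes id_1 \cong R$ case, which as you say runs in parallel.
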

\begin{proof}Let $f:M_1\otimes id_1 \rightarrow P$ be given by
$$\delta \cdot \begin{tikzpicture}[baseline]
    \node at (0,.5) (top) [rectangle,draw] {$ \hspace{.5cm}P\hspace{.5cm} $};
    \node at (.2,-.5) (mid) [rectangle, draw] {$ \hspace{.7cm} M_1 \hspace{.7cm} $};
    \draw (top.-35)--(mid.40);
    \draw (top.-145)--(mid.160);
    \draw (top.-100)--(mid.135);
    \draw (mid.-40) -- ++(-90:1mm);
    \draw (mid.-160) -- ++(-90:1mm);
    \draw (mid.-20) -- ++(-90:1mm);
    \draw (mid.-135) -- ++(-90:1mm);
    \draw (top.-35)--(mid.40);
     \draw (top.-145)--(mid.160);
      \draw (top.35) -- ++(90:1mm);
    \draw (top.145) -- ++(90:1mm);
    \draw (top.100)-- ++(90:1mm);
    \draw (mid.20) arc (180:0:.2cm) -- ++(-90:1cm);

\end{tikzpicture}$$
and let $g:P\rightarrow M_1\otimes id_1$ be given by
$$\begin{tikzpicture}[baseline]
    \node at (.2,.5) (mid) [rectangle, draw] {$ \hspace{.7cm} M_1 \hspace{.7cm} $};
    \node at (0,-.5) (bottom) [rectangle,draw]{$ \hspace{.5cm}P \hspace{.5cm}$};
    \draw (mid.-40)--(bottom.35);
     \draw (mid.-135)--(bottom.100);
    \draw (mid.40) -- ++(90:1mm);
    \draw (mid.160) -- ++(90:1mm);
    \draw (mid.20) -- ++(90:1mm);
    \draw (mid.135) -- ++(90:1mm);
    \draw (mid.-20) arc (-180:0:.2cm) -- ++(90:1cm);
    \draw (mid.-160)--(bottom.145);
    \draw (bottom.-35) -- ++(-90:1mm);
    \draw (bottom.-145) -- ++(-90:1mm);
    \draw (bottom.-100)-- ++(-90:1mm);
\end{tikzpicture}$$
We have that $fg=P$.  To see this, use the fact that $M_1$ is a projection, then expand $M_1$ while taking the partial trace of $M_1$.  Use the fact that $\delta\Big(\delta - \frac{1}{\alpha\frac{[4]}{[3]}}\Big)=1$.
Now, $$gf= \delta \cdot \begin{tikzpicture}[baseline]
   \clip (-.8,-1.3) rectangle (1,1.3);
   \node at (0,.7) (top) [rectangle,draw] {$ \hspace{.3 cm} M_1 \hspace{.3cm} $};
   \node at (0,-.7) (bottom) [rectangle,draw]{$ \hspace{.3cm} M_1
\hspace{.3cm} $};

   \draw (top.-60)--(bottom.60);
   \draw (top.-145)--(bottom.145);
   \draw (top.-100)--(bottom.100);
   \draw (top.-35) arc (-180:0:.2cm) -- ++(90:1cm);
   \draw (bottom.35) arc (180:0:.2cm) -- ++(-90:1cm);

   \draw (top.35) -- ++(90:1cm);
   \draw (top.60) -- ++(90:1cm);
   \draw (top.145) -- ++(90:1cm);
   \draw (top.100)-- ++(90:1cm);
   \draw (bottom.-100)-- ++(-90:1cm);

   \draw (bottom.-35) -- ++(-90:1cm);
   \draw (bottom.-60) -- ++(-90:1cm);
   \draw (bottom.-145) -- ++(-90:1cm);
\end{tikzpicture}=\delta \cdot \frac{1}{\delta}M_1\otimes id_1=M_1\otimes id_1,$$
if $\begin{tikzpicture}[baseline]
   \clip (-.8,-1.3) rectangle (1,1.3);
   \node at (0,.7) (top) [rectangle,draw] {$ \hspace{.3 cm} M_1 \hspace{.3cm} $};
   \node at (0,-.7) (bottom) [rectangle,draw]{$ \hspace{.3cm} M_1
\hspace{.3cm} $};

   \draw (top.-60)--(bottom.60);
   \draw (top.-145)--(bottom.145);
   \draw (top.-100)--(bottom.100);
   \draw (top.-35) arc (-180:0:.2cm) -- ++(90:1cm);
   \draw (bottom.35) arc (180:0:.2cm) -- ++(-90:1cm);

   \draw (top.35) -- ++(90:1cm);
   \draw (top.60) -- ++(90:1cm);
   \draw (top.145) -- ++(90:1cm);
   \draw (top.100)-- ++(90:1cm);
   \draw (bottom.-100)-- ++(-90:1cm);

   \draw (bottom.-35) -- ++(-90:1cm);
   \draw (bottom.-60) -- ++(-90:1cm);
   \draw (bottom.-145) -- ++(-90:1cm);
\end{tikzpicture}=\frac{1}{\delta}\cdot M_1\otimes id_1$.\\

\indent It suffices to show $$\begin{tikzpicture}[baseline]
   \clip (-.8,-1.3) rectangle (1,1.3);
   \node at (0,.7) (top) [rectangle,draw] {$ \hspace{.3 cm} M_1 \hspace{.3cm} $};
   \node at (0,-.7) (bottom) [rectangle,draw]{$ \hspace{.3cm} M_1
\hspace{.3cm} $};

   \draw (top.-60)--(bottom.60);
   \draw (top.-145)--(bottom.145);
   \draw (top.-100)--(bottom.100);
   \draw (top.-35) arc (-180:0:.2cm) -- ++(90:1cm);
   \draw (bottom.35) arc (180:0:.2cm) -- ++(-90:1cm);

   \draw (top.35) -- ++(90:1cm);
   \draw (top.60) -- ++(90:1cm);
   \draw (top.145) -- ++(90:1cm);
   \draw (top.100)-- ++(90:1cm);
   \draw (bottom.-100)-- ++(-90:1cm);

   \draw (bottom.-35) -- ++(-90:1cm);
   \draw (bottom.-60) -- ++(-90:1cm);
   \draw (bottom.-145) -- ++(-90:1cm);
\end{tikzpicture}=\frac{1}{\delta}\cdot M_1\otimes id_1$$
Let $A=\begin{tikzpicture}[baseline]
   \clip (-.8,-1.3) rectangle (1,1.3);
   \node at (0,.7) (top) [rectangle,draw] {$ \hspace{.3 cm} M_1 \hspace{.3cm} $};
   \node at (0,-.7) (bottom) [rectangle,draw]{$ \hspace{.3cm} M_1
\hspace{.3cm} $};

   \draw (top.-60)--(bottom.60);
   \draw (top.-145)--(bottom.145);
   \draw (top.-100)--(bottom.100);
   \draw (top.-35) arc (-180:0:.2cm) -- ++(90:1cm);
   \draw (bottom.35) arc (180:0:.2cm) -- ++(-90:1cm);

   \draw (top.35) -- ++(90:1cm);
   \draw (top.60) -- ++(90:1cm);
   \draw (top.145) -- ++(90:1cm);
   \draw (top.100)-- ++(90:1cm);
   \draw (bottom.-100)-- ++(-90:1cm);

   \draw (bottom.-35) -- ++(-90:1cm);
   \draw (bottom.-60) -- ++(-90:1cm);
   \draw (bottom.-145) -- ++(-90:1cm);
\end{tikzpicture}$ and $B=M_1\otimes id_1$.  We need to check that $\ip{A,A}\ip{B,B}=\ip{A,B}\overline{\ip{A,B}}$.  \\

If so, $A=\frac{\ip{A,B}}{\ip{B,B}}B$.  
We calculate
\begin{align*}
\ip{A,A}&=\left(\delta-\frac{1}{\alpha\frac{[4]}{[3]}}\right)Z(M_1)\\
&=\left(\delta-\frac{1}{\alpha\frac{[4]}{[3]}}\right)^2Z(P)\\
&=\left(\frac{1}{\delta}\right)^2Z(P)
\end{align*}
We calculate 
\begin{align*}
\ip{B,B}&=\delta Z(M_1)\\
&= \delta \frac{1}{\delta}Z(P)\\
&= Z(P)
\end{align*}
We calculate
$\ip{A,B}= Z(M_1)=\frac{1}{\delta}Z(P)$.  On the one hand, we have
$\ip{A,A}\ip{B,B}= \frac{\left(Z(P)\right)^2}{\delta^2}$.  On the other hand, $\ip{A,B}\overline{\ip{A,B}}=\frac{\left(Z(P)\right)^2}{\delta^2}$.  So equality holds, and $A=\frac{1}{\delta}B$.
The proof that $M_2\otimes id_1\cong R$ is analogous.
\end{proof}
\chapter{Uniqueness}
In this chapter, we explore the other possible pair of generators which generates a ``2221'' subfactor planar algebra.  All of the equations used to find the generator in $X^{3,\omega^2}$ are the same as those used to find $S$ and $Q$, except with every $\eta$ and $\omega$ conjugated.  Let $U$ be the generator coming from $X^{3,\omega^2}$ which corresponds to $S$, and let $V$ be the generator corresponding to $Q$. Note that $V$ is just $Q$, with $\eta$ and $\omega$ conjugated.  Hence, the matrices used to calculate traces for $V$ are the transposes of the matrices used to calculate traces for $Q$.  That is, if $L_i$ $(i=1,\ldots, 8)$ are the matrices in the decomposition of the matrix representation for $V$, then $L_i=N_i^T$ $(i=1,\ldots, 8)$, where $N_i$ are the matrices in the decomposition of the matrix representation for $Q$.  (Similarly, for $U$.)  Denote $V$ by $\overline{Q}$. \\
\indent Recall that when we identified the generator $T$ in Theorem 4.0.12, we had the freedom to choose the imaginary part of $s$ to be positive or negative.  Let $\overline{T}$ denote the generator with $Im(s)$ negative.  Then $\overline{T}$ is just $T$ with its coefficients conjugated.  Hence, the matrices in the decomposition of its matrix representation are the transposes of those for $T$.  The trace values for $\overline{T}$ and $\overline{Q}$ (along with the the trace values of their mixtures) are the same as those for $T$ and $Q$.  \\
\indent For the half-rotations of $\overline{T}$ and $\overline{Q}$, note that the matrices for $\rho^{1/2}(\overline{T})$ are the transposes of those for $\rho^{1/2}(T)$.  The matrices for $\rho^{1/2}(\overline{Q})$ are the conjugates of the matrices for $\rho^{1/2}(Q)$.  Hence, the trace values for the pair ($\rho^{1/2}(\overline{T})$ , $\rho^{1/2}(\overline{Q})$) are conjugates of the trace values for the pair ($\rho^{1/2}(T)$ , $\rho^{1/2}(Q)$).\\
\indent  The 3-box relations 1,2,5,6 of Theorem 5.0.16 hold for $\overline{T}$ , $\overline{Q}$ since the trace values are exactly the same as those for $T$ , $Q$.  The twisted 3-box relations 3,4,7,8 of Theorem 5.0.16 also hold for $\overline{T}$ , $\overline{Q}$ (with $\omega$ replaced by $\omega^2$ everywhere) since the trace values for half-rotations of $\overline{T}$ , $\overline{Q}$ are the conjugates of those for half-rotations of $T$ , $Q$.\\
\indent Lemma 5.2.2 holds for $\overline{T}$ , $\overline{Q}$.  Lemmas 5.2.4 and 5.2.6 hold ( with conjugation everywhere) for $\overline{T}$ , $\overline{Q}$.  Lemmas 5.2.7 and 5.2.8 hold for $\overline{T}$ , $\overline{Q}$.  Since all the inner product values for $\overline{T}$ , $\overline{Q}$ are the conjugates of those for $T$ , $Q$, the unshaded 4-box relations hold for $\overline{T}$ , $\overline{Q}$ ( where $u$, $v$, and $M$ are defined accordingly).  Similarly, the shaded 4-box relations hold for $\overline{T}$ , $\overline{Q}$.\\
\indent The one-strand braiding substitutes also hold for $\overline{T}$ , $\overline{Q}$.  Eqn.1 of Lemma 6.1.2 holds where $\eta, a_0, b_0$ are conjugated.  Eqn. 2 of Lemma 6.1.4 holds where $\tau, a', b'$ are conjugated.  We have the unshaded one-strand braiding substitute for $\overline{T}$ , $\overline{Q}$ because the determinant of the 2 by 2 system of equations for $\overline{T}$ , $\overline{Q}$ is just the conjugate of the determinant of the system in Proposition 6.1.5, and hence non-zero.  Similarly, we have the shaded one-strand braiding substitute.  \\
\indent All the theorems of Chapter 7 hold for $\overline{T}$ , $\overline{Q}$ and we can show that $\mathcal{P}(\overline{T},\overline{Q})$ is a subfactor planar algebra.  Since the trace values for $\overline{T}, \overline{Q}$ are the same as for $T, Q$ , all the calculations of Chapter 8 remain unchanged and $\mathcal{P}(\overline{T},\overline{Q})$ has principal graph ``2221''.  It also has dual principal graph ``2221''; the proof is analogous to the proof for the dual principal graph of $\mathcal{P}(T,Q)$, except we define $Q_0=\omega^2\rho^{1/2}(\overline{Q})$. Thus, there are actually at least two subfactor planar algebras having principal graphs ``2221'', namely $\mathcal{P}(T,Q)$ and $\mathcal{P}(\overline{T},\overline{Q})$.\\
\indent Recall from Theorem 4.0.12 that there is another possible generator $P$ of rotational eigenvalue 1.  It happens to be the case that the pair $(P,-S)$ has exactly the same trace values as those for the pair $(T,Q)$.  That is, Lemma 5.0.14 holds with $T$ replaced by $P$ and $Q$ replaced by $-S$.  Hence, $\mathcal{P}(P,-S)$ is a subfactor planar algebra having principal graphs ``2221''.  Analogous to the pair $(\overline{T},\overline{Q})$, we have that $\mathcal{P}(\overline{P},\overline{-S})$ is also a subfactor planar algebra having principal graphs ``2221''.  Thus, $\mathcal{P}(P,-S)$ is isomorphic to $\mathcal{P}(T,Q)$, and $\mathcal{P}(\overline{P},\overline{-S})$ is isomorphic to $\mathcal{P}(\overline{T},\overline{Q})$.

Now, we want to prove that the pairs $(1,\omega)$ and $(1,\omega^2)$ are the only two possible pairs for the rotational eigenvalues $\omega_{T}, \omega_{Q}$.  That is, we want to prove uniqueness up to conjugation of ``2221''.  In order to show this, we will rule out each pair $(\omega_T,\omega_Q)$ except for $(1,\omega)$ and $(1,\omega^2)$.  We will first prove that $\omega_T$ and $\omega_Q$ cannot be equal.  Then, it suffices to rule out the pair $(\omega, \omega^2)$; this is done by looking at the two unique(up to conjugation) elements $Q$ and $S$ in the graph planar algebra, as given in Theorem 4.0.13, and checking to see that neither of them can pair up with their conjugates $V=\overline{Q}$ and $U=\overline{S}$.  That is, we rule out the pairs of generators $(Q,\overline{Q}),(Q,\overline{S}),(S,\overline{Q})$, and $(S,\overline{S})$.\\
\indent The following claim proves that the rotational eigenvalue of the new element $M$ at level $4$ is a $2$-th root of unity.
\begin{claim}$\omega_M^2=1$
\end{claim}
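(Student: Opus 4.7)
The plan is to combine a geometric identity for the half-rotation $\rho^{2}$ with the 4-box decomposition from Theorem 5.2.9 to extract a scalar equation for $\omega_{M}^{2}$. First I would establish the key geometric identity
\[
\rho^{2}\bigl((T,Q)\bigr) \;=\; (Q,T) \;=\; (T,Q)^{*}.
\]
The operation $\rho^{2}$ on a 4-box is a $180^{\circ}$ rotation of its outer boundary; bringing the resulting diagram back to standard form rotates the interior by $180^{\circ}$, which swaps the positions of the two 3-box generators $T$ and $Q$ and applies $\rho^{3}$ individually to each one. Since any low-weight-3 rotational eigenvalue must be a cube root of unity, $\omega_{T}^{3}=\omega_{Q}^{3}=1$, so these internal rotations act trivially and only the positional swap survives.

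Second, I would invoke the 4-box relation (R4(T,Q)) to write $(T,Q) = a u + b v + M$ with $M$ the component in $V_{4}^{4,\omega_{M}}$. Taking adjoints, using $u^{*}=u$ and $v^{*}=v$, gives $(Q,T) = \overline{a} u + \overline{b} v + M^{*}$. Since $\rho^{2}$ preserves each annular Temperley--Lieb summand of $\mathcal{P}_{4}$, matching the $V_{4}^{4,\omega_{M}}$-components on both sides of $\rho^{2}((T,Q))=(Q,T)$ produces the scalar identity
\[
\omega_{M}^{2}\,M \;=\; M^{*}.
\]

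Third, observing that $V_{4}^{4,\omega_{M}}$ is a low-weight-4 annular module, $\omega_{M}^{4}=1$, so $\omega_{M}^{2}\in\{+1,-1\}$. If $\omega_{M}^{2}=-1$, then $M^{*}=-M$, so $M+M^{*}=0$, which means the self-adjoint combination $(T,Q)+(Q,T)$ has zero projection onto $V_{4}^{4,\omega_{M}}$. Ruling out this case leaves $\omega_{M}^{2}=1$. The final step, and the main obstacle, is verifying that $(T,Q)+(Q,T)$ has nonzero $V_{4}^{4,\omega_{M}}$-component, equivalently $\langle (T,Q)+(Q,T), M_{0}\rangle \neq 0$ for the self-adjoint generator $M_{0}$. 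Applying the rotation-invariance of the closed trace $Z$ together with the planar-algebra identity $\rho^{2}(xy)=\rho^{2}(y)\rho^{2}(x)$ for 4-boxes, one obtains $w:=Z(M_{0}(T,Q))=\omega_{M}^{2}\,\overline{w}$, so the dichotomy $\omega_{M}^{2}=\pm 1$ matches the dichotomy of $w$ being real versus purely imaginary; concluding $\omega_{M}^{2}=1$ reduces to a direct trace computation in the graph planar algebra of $H$ (or an appeal to the reality structure coming from the $z_{1}\leftrightarrow z_{2}$, $b_{1}\leftrightarrow b_{2}$ symmetry of $H$) to confirm $w$ is not purely imaginary.
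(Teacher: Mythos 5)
Your step 1 is false, and this is fatal to the argument. The claim $\rho^2\bigl((T,Q)\bigr)=(Q,T)$ cannot hold: from Lemma~6.1.2 (Eqn.~1) we have $\bar\eta\,(T,Q)-\eta\,(Q,T)\in\operatorname{span}\{u,v\}$ with $\eta=1+i\sqrt3$, and since $u\in ATL_4(T)$ and $v\in ATL_4(Q)$ carry no $V_4^{4,\omega_M}$-component, projecting onto $V_4^{4,\omega_M}$ shows that the $M$-component of $(Q,T)$ is $(\bar\eta/\eta)=\omega^2$ times the $M$-component of $(T,Q)$. If your step 1 held, then matching $V_4^{4,\omega_M}$-components of $\rho^2\bigl((T,Q)\bigr)$ and $(Q,T)$ would force $\omega_M^2=\omega^2$, a primitive cube root of unity, which is incompatible with $\omega_M^4=1$. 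The geometric heuristic ``only the positional swap survives'' is therefore wrong: when one re-standardizes $\rho^2\bigl((T,Q)\bigr)$, the internal boxes do not simply pick up trivial $\rho^3$ rotations, and a nontrivial factor involving $\omega_Q$ remains. The paper's own Claim~9.0.34 makes exactly this point for the closely related box $T'\circ Q'$: the equation $\rho^2(T'\circ Q')=Q'\circ T'$ holds \emph{only if} $\omega_{T'}=\omega_{Q'}$, which is false for the ``2221'' generators. In addition to this, even under a corrected version of step~1 your final step is openly incomplete --- you defer the verification that the $V_4^{4,\omega_M}$-component of $(T,Q)+(Q,T)$ is nonzero to an unperformed trace computation.

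The paper proves the claim by an entirely different and self-contained route. It forms a closed ``triangle'' diagram $K$ out of three copies of $M$, each pair joined by four parallel strands. Rotating the star of each copy of $M$ and then turning the resulting closed picture inside out via sphericality yields $K=\omega_M^6\,K$, so (since $K\neq 0$) $\omega_M^6=1$; combined with the a priori constraint $\omega_M^4=1$ (because $M$ has low weight $4$), this gives $\omega_M^{\gcd(4,6)}=\omega_M^2=1$. This argument avoids both the delicate re-standardization bookkeeping and the trace computation your approach would require.
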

\begin{proof} Let $K=\begin{tikzpicture}[baseline=0,scale=1.5]
       {%
	\draw (-1,1) -- (0,0);
	\draw (1,1) -- (0,0);
	\node[anchor=west] at (-0.5,0.6) {\footnotesize$4$};
	\node[anchor=west] at (0.3,0.6) {\footnotesize$4$};
}
 {%
	\node[anchor=south] at (0,1) {\footnotesize$4$};
	\draw (-1,1) -- (1, 1);
	\foreach \x in {-1,1} {
		{%
	\filldraw[fill=white,thick] (\x,1) ellipse (3mm and 3mm);
	\node at (\x,1) {\Large $M$};
	\path(\x,1) ++(90:0.37) node {$\star$};
}
	}
}
       \upsidedown{{
	{%
}
}}
      {%
	\filldraw[fill=white,thick] (0,0) ellipse (3mm and 3mm);
	\node at (0,0) {\Large $M$};
	\path(0,0) ++(-90:0.37) node {$\star$};
}

\end{tikzpicture}$.  \\
Then $K=\omega_M^6\cdot \begin{tikzpicture}[baseline=0,scale=1.5]
       {%
	\draw (-1,1) -- (0,0);
	\draw (1,1) -- (0,0);
	\node[anchor=west] at (-0.5,0.6) {\footnotesize$4$};
	\node[anchor=west] at (0.3,0.6) {\footnotesize$4$};
}
 {%
	\node[anchor=south] at (0,1) {\footnotesize$4$};
	\draw (-1,1) -- (1, 1);
	\foreach \x in {-1,1} {
		{%
	\filldraw[fill=white,thick] (\x,1) ellipse (3mm and 3mm);
	\node at (\x,1) {\Large $M$};
	\path(-1,1) ++(-30:0.37) node {$\star$};
	\path(1,1) ++(210:0.37) node {$\star$};
}
	}
}
       \upsidedown{{
	{%
}
}}
      {%
	\filldraw[fill=white,thick] (0,0) ellipse (3mm and 3mm);
	\node at (0,0) {\Large $M$};
	\path(0,0) ++(90:0.37) node {$\star$};
}

\end{tikzpicture}$\\
We can turn the above diagram inside out to get just $K$ by using sphericality and isotopy.  So the above expression is equal to 
$\omega_M^6\cdot \begin{tikzpicture}[baseline=0,scale=1.5]
       {%
	\draw (-1,1) -- (0,0);
	\draw (1,1) -- (0,0);
	\node[anchor=west] at (-0.5,0.6) {\footnotesize$4$};
	\node[anchor=west] at (0.3,0.6) {\footnotesize$4$};
}
 {%
	\node[anchor=south] at (0,1) {\footnotesize$4$};
	\draw (-1,1) -- (1, 1);
	\foreach \x in {-1,1} {
		{%
	\filldraw[fill=white,thick] (\x,1) ellipse (3mm and 3mm);
	\node at (\x,1) {\Large $M$};
	\path(\x,1) ++(90:0.37) node {$\star$};
}
	}
}
       \upsidedown{{
	{%
}
}}
      {%
	\filldraw[fill=white,thick] (0,0) ellipse (3mm and 3mm);
	\node at (0,0) {\Large $M$};
	\path(0,0) ++(-90:0.37) node {$\star$};
}

\end{tikzpicture}$\\
That is, $K=\omega_M^6\cdot K$.  Since $K \neq 0$, $\omega_M^6=1$.  Furthermore, since $\omega_M$ is a $4$-th root of unity, we have $\omega_M^2=1$. 
\end{proof}
The next claim is proven by Jones in Theorem 5.2.3 of \cite{VJ4} in a general setting.  I will prove it for our specific situation using the same idea.
\begin{claim}Suppose $T'$ and $Q'$ are the two generators of a ``2221'' subfactor planar algebra.  Then $\omega_{T'}\neq \omega_{Q'}$
\end{claim}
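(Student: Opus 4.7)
The approach follows Jones' strategy from Theorem 5.2.3 of \cite{VJ4}, adapted to our concrete setting. I argue by contradiction: suppose $\omega_{T'} = \omega_{Q'} = \omega_0$ for some cube root of unity $\omega_0 \in \{1, \omega, \omega^2\}$; then both generators lie in the same rotation eigenspace of the low-weight 3 subspace of $\mathcal{P}_{3,+}$.

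The first step is to realize $T'$ and $Q'$ inside the graph planar algebra $PABG(H)$, using the embedding of any subfactor planar algebra with principal graph $H$ into $PABG(H)$ (the same embedding exploited throughout Chapter 4 of this paper). Both generators then lie in the subspace $X^{3,\omega_0} \subset PABG(H)_{3,+}$ defined at the start of Chapter 4, whose dimension is $5$ if $\omega_0 = 1$ and $6$ if $\omega_0 \in \{\omega, \omega^2\}$, by Theorem 4.0.5.

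The second step is to invoke the explicit parametrizations of Theorems 4.0.8 and 4.0.11 together with the classification results of Theorems 4.0.12 and 4.0.13: up to scalar multiples and complex conjugation, the only self-adjoint, uncappable elements of $X^{3,\omega_0}$ normalized by $Z(x^2) = [4]$ and compatible with the quadratic structure of a ``2221'' planar algebra fall into a short list. Specifically, $X^{3,1}$ admits only the pair $\{T, P\}$, $X^{3,\omega}$ admits only $\{Q, S\}$, and $X^{3,\omega^2}$ admits only the conjugates $\{\overline{Q}, \overline{S}\}$. Hence $T'$ and $Q'$ would have to be two linearly independent elements drawn from one of these three lists.

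The third step is to verify, for each such pairing of two admissible elements drawn from the same $X^{3,\omega_0}$, that the 3-box quadratic relations of Theorem 5.0.16---in particular the expansions of $T'Q'$ and $Q'T'$ as linear combinations of $T'$ and $Q'$---cannot all hold simultaneously. This reduces to a finite computation of the trace invariants $Z((T')^n (Q')^m)$ for small $n,m$, using the matrix representations of the candidate generators in $PABG(H)$ (as in the Appendix) and comparing them against the values forced by the ``2221'' structure via the Bessel-type equalities used in Section 5.1. In every case the required equality $\Norm{T' Q'}^2 = \Norm{\proj{\operatorname{span}(T',Q')}{T'Q'}}^2$ fails, contradicting the assumption that $(T', Q')$ generates a ``2221'' planar algebra.

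The main obstacle is the case-by-case verification in the third step; each pair requires trace calculations analogous to Lemma 5.0.14 but for the alternate candidates. Conceptually, the impossibility reflects the fact that the pair of rotational eigenvalues serves as a complete scalar invariant distinguishing the two annular Temperley--Lieb submodules $V^{3,\zeta}$ appearing in the decomposition of $\mathcal{P}_3$, so that the two distinct summands must carry distinct rotation eigenvalues.
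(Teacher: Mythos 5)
Your approach is correct in outline but genuinely different from the paper's, and the comparison is instructive. The paper proves Claim 9.0.34 by a purely abstract, skein-theoretic argument that never leaves the abstract planar algebra: it defines the diagram $T' \circ Q'$, observes that its double rotation $\rho^2(T' \circ Q')$ can be computed two ways (once by direct isotopy, giving $Q' \circ T'$; once by expanding into the annular basis and the level-4 generator $M$ and using $\omega_M^2 = 1$, giving $T' \circ Q'$ back), concludes $T' \circ Q' = Q' \circ T'$ when $\omega_{T'} = \omega_{Q'}$, and then derives a contradiction by showing that these two 4-boxes are simultaneously orthogonal and equal to a vector of norm $[4]^2/[3] \neq 0$. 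This requires no embedding theorem, no classification of candidates in the graph planar algebra, and no trace computations for alternative pairs. Your argument instead pushes everything into $PABG(H)$ via the embedding theorem, invokes the explicit classification of admissible elements in each $X^{3,\omega_0}$ from Theorems 4.0.12 and 4.0.13, and proposes to rule out each of the finitely many candidate pairs with matching eigenvalue by direct trace computation. That route is sound in principle, and it parallels exactly what the paper does in Theorem 9.0.35 to dispose of the $(\omega, \omega^2)$ case; the paper evidently found the abstract rotation argument preferable for the equal-eigenvalue case because it sidesteps an additional round of matrix calculations and is self-contained within the skein theory. What your write-up does not supply is the actual finite verification in your third step -- you assert that the Bessel equality fails for each candidate pair but do not compute a single trace invariant to show it. Since this verification is the entire substance of your step three, that is a genuine omission; an acceptance of your proof would hinge on whether those computations (analogous to the $Z(Q^2V^2)$-versus-$Z(Q^2V)$ checks the paper does carry out in Theorem 9.0.35 for mixed eigenvalues) in fact all produce the claimed contradictions. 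If you carry them out, your argument would be a valid, if more computational, alternative to the paper's rotation trick.
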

\begin{proof}
Let $T'\circ Q'$ be defined as
$\begin{tikzpicture}[baseline]
   \clip (-.8,-1.3) rectangle (.8,1.3);
   \node at (0,.7) (top) [rectangle,draw] {$ \hspace{.3 cm} T' \hspace{.3cm} $};
   \node at (0,-.7) (bottom) [rectangle,draw]{$ \hspace{.3cm} Q'
\hspace{.3cm} $};

   \draw (top.-60)--(bottom.60);
   \draw (top.-120)--(bottom.120);
   \draw (top.120) -- ++(90:1cm);

   \draw (top.35) -- ++(90:1cm);
   \draw (top.60) -- ++(90:1cm);
   \draw (top.145) -- ++(90:1cm);

   \draw (bottom.-35) -- ++(-90:1cm);
   \draw (bottom.-60) -- ++(-90:1cm);
   \draw (bottom.-120) -- ++(-90:1cm);
   \draw (bottom.-145) -- ++(-90:1cm);
\end{tikzpicture}$.  \\
Then $T'\circ Q'\in span\{id_1\otimes \rho^{1/2}(T'),T'\otimes id_1, id_1\otimes \rho^{1/2}(Q'), Q'\otimes id_1, M\}$ and $$Coeff_{T'\circ Q'}(id_1\otimes\rho^{1/2}(T'))=\omega_{T'}^2\omega_{Q'}^2\cdot Coeff_{T'\circ Q'}(T'\otimes id_1),$$
$$Coeff_{T'\circ Q'}(id_1\otimes\rho^{1/2}(Q'))=\omega_{T'}\cdot Coeff_{T'\circ Q'}(Q'\otimes id_1)$$\\
On one hand, $\rho^2(T'\circ Q')=Q'\circ T'$ if $\omega_{T'}=\omega_{Q'}$.  One can see this by double rotating the diagram $T'\circ Q'$.\\
On the other hand, $\rho^2(T'\circ Q') = T'\circ Q'$ if $\omega_{T'}=\omega_{Q'}$.  To see this, start by writing out $T'\circ Q'$ as a linear combination of ATL elements and $M$.  Then take the double rotation of both sides.  Use the fact that $\omega_{T'}=\omega_{Q'}$ is a $3$-th root of unity and the fact, by Claim 9.0.33, that $\omega_M^2=1$.\\
Hence, $T'\circ Q' = Q'\circ T'$ if $\omega_{T'}=\omega_{Q'}$.  On one hand, $T'\circ Q'$ and $Q'\circ T'$ are orthogonal because $T'Q'=Q'T'\in span\{T',Q'\}$.  It follows that $\ip{T'\circ Q',T'\circ Q'}=0$.  On the other hand, $\ip{T'\circ Q', T'\circ Q'}=\frac{[4]^2}{[3]}\neq 0$.  Contradiction.  Thus, $\omega_{T'}\neq \omega_{Q'}$.
\end{proof}
Now, we are ready to prove uniqueness up to conjugation.
\begin{theorem}The ``2221'' subfactor planar algebra is unique up to conjugation.
\end{theorem}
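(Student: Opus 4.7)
The plan is to reduce the possibilities for the rotational eigenvalues $(\omega_{T'},\omega_{Q'})$ of any generating pair down to the two cases $(1,\omega)$ and $(1,\omega^2)$, and then to use the classification of potential generators in $X^{3,1}$, $X^{3,\omega}$, and $X^{3,\omega^2}$ from Theorems 4.0.12 and 4.0.13 to finish the argument. Concretely, suppose $\mathcal{P}'$ is a ``2221'' subfactor planar algebra. By the decomposition analysis in Chapter 3, its two low-weight generators $T',Q'\in\mathcal{P}'_{3,+}$ must have rotational eigenvalues that are cube roots of unity, so $\omega_{T'},\omega_{Q'}\in\{1,\omega,\omega^2\}$. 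Claim 9.0.34 shows that $\omega_{T'}\neq\omega_{Q'}$, so up to relabeling the ordered pair is one of $(1,\omega),(1,\omega^2),(\omega,\omega^2)$.

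For the cases $(1,\omega)$ and $(1,\omega^2)$, I would argue as follows. By Theorem 4.0.13, any potential generator in $X^{3,\omega}$ (resp.\ $X^{3,\omega^2}$) that can generate a ``2221'' planar algebra is a scalar multiple of $Q$ or $S$ (resp.\ $\overline{Q}$ or $\overline{S}$), and by Theorem 4.0.12 any potential generator in $X^{3,1}$ is a scalar multiple of $T$ or $P$ (or their conjugates). Since, as noted in the introductory paragraphs of the chapter, $\mathcal{P}(P,-S)\cong\mathcal{P}(T,Q)$ and $\mathcal{P}(\overline{P},-\overline{S})\cong\mathcal{P}(\overline{T},\overline{Q})$ — because the pairs $(P,-S)$ and $(T,Q)$ have identical trace data, so all the quadratic relations, one-strand substitutes and jellyfish arguments of Chapters 5--8 go through verbatim — the eigenvalue pair $(1,\omega)$ yields the single planar algebra $\mathcal{P}(T,Q)$, and $(1,\omega^2)$ yields its conjugate $\mathcal{P}(\overline{T},\overline{Q})$.

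It remains to exclude the pair $(\omega,\omega^2)$. For this, the candidates for the $\omega$-generator are (scalar multiples of) $Q$ or $S$, and the candidates for the $\omega^2$-generator are $\overline{Q}$ or $\overline{S}$, giving the four pairs $(Q,\overline{Q})$, $(Q,\overline{S})$, $(S,\overline{Q})$, $(S,\overline{S})$. For each pair I would compute the trace values $Z(X^2), Z(Y^2), Z(XY), Z(X^2Y), Z(XY^2), Z(X^2Y^2)$ (and their twisted analogues) using the matrix presentations from the appendix, and then test whether the three-box relations of the form $X^2, Y^2, XY\in\operatorname{span}\{f^{(3)},X,Y\}$ (Theorem 5.0.17) can hold: equivalently, by Bessel's inequality, whether $\|X^2\|^2 = \|\mathrm{Proj}_{\mathrm{span}\{TL_3,X,Y\}}(X^2)\|^2$, and similarly for the other combinations. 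In each of the four cases one of these identities must fail — either the required scalar is not real/positive, or the computed right-hand side strictly exceeds the left-hand side — because the original uniqueness of the pair $(T,Q)$ at level $3$ came precisely from the single algebraic identity that fixed the coefficient $t$ in Theorem 4.0.12, and that identity is not satisfied by the mismatched rotational data of $(\omega,\omega^2)$.

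The main obstacle will be the last step: the case analysis for $(\omega,\omega^2)$. It is computational rather than conceptual, but the four sub-cases each demand a careful trace calculation (the diagonal entries of $X^nY^m$ on the four base vertices $z_0,z_1,z_2,c$ of $H$) to locate the specific obstruction. Once one identifies which inner product identity or which projection-norm equality fails, the exclusion is immediate; finding that failing identity uniformly for all four pairs is where the bookkeeping must be done carefully. With that in hand, the four pairs are ruled out, leaving $\mathcal{P}(T,Q)$ and $\mathcal{P}(\overline{T},\overline{Q})$ as the only ``2221'' subfactor planar algebras, and they are conjugate to each other, completing the proof.
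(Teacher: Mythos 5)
Your overall strategy matches the paper's: use Claim~9.0.34 to reduce to the three eigenvalue pairs $(1,\omega)$, $(1,\omega^2)$, $(\omega,\omega^2)$, and rule out $(\omega,\omega^2)$ by checking the four candidate pairs $(Q,\overline{Q})$, $(Q,\overline{S})$, $(S,\overline{Q})$, $(S,\overline{S})$ against the quadratic relations. However, there are two gaps.

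The more serious one is logical: you pass directly from ``$\mathcal{P}'$ is a ``2221'' subfactor planar algebra with generators of rotational eigenvalues $\omega,\omega^2$'' to ``the candidates are scalar multiples of $Q,S,\overline{Q},\overline{S}\in PABG(H)_{3,+}$'' and then apply Theorems~4.0.12/4.0.13. But those theorems classify low-weight elements \emph{inside} the graph planar algebra $PABG(H)$; a priori your hypothetical $\mathcal{P}'$ is an abstract planar algebra with no connection to $PABG(H)$. The paper bridges this gap by invoking the embedding theorem for finite-depth subfactor planar algebras, which injects $\mathcal{P}'$ into $PABG(H)$ and thereby forces the low-weight generators of $\mathcal{P}'$ to be elements of $X^{3,\omega}\subset PABG(H)_{3,+}$ and $X^{3,\omega^2}$. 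Without that step you have no license to quote Theorem~4.0.13 about a hypothetical $\mathcal{P}'$, so the reduction to four concrete candidate pairs is unjustified.

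The second gap is that your closing argument is a heuristic, not a proof. You write that ``one of these identities must fail \ldots because the original uniqueness of the pair $(T,Q)$ at level $3$ came precisely from the single algebraic identity that fixed the coefficient $t$,'' but that is an expectation, not a demonstration. The paper actually computes the relevant traces and exhibits a clean obstruction: for each of the four pairs $(X,Y)$ it finds $Z(X^2Y^2)=\tfrac{91\sqrt{3}+43\sqrt{7}}{42}\neq 0$ while $Z(X^2Y)=Z(XY^2)=0$, so $\|XY\|^2>0$ but $\|\mathrm{Proj}_{\mathrm{span}\{TL_3,X,Y\}}(XY)\|^2=0$, contradicting the required relation $XY\in\mathrm{span}\{f^{(3)},X,Y\}$; the same computation scales correctly to handle arbitrary nonzero scalar multiples of $X$ and $Y$. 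You correctly identify where the calculation must be done, but you should carry it out (or at least cite the specific obstruction) rather than appeal to a non-rigorous analogy with the level-$3$ uniqueness computation.
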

\begin{proof}According to Claim 9.0.34, the rotational eigenvalues $\omega_{T'}$ and $\omega_{Q'}$ of the two generators $T',Q'$ of our planar algebra cannot be equal.  Hence, the only possible pairs $(\omega_{T'},\omega_{Q'})$ are $(1,\omega),(1,\omega^2),$ and $(\omega,\omega^2)$.  It suffices to rule out $(\omega,\omega^2)$.  By the embedding theorem for finite depth subfactor planar algebras, which states that any finite depth subfactor planar algebra $\mathcal{P}$ with principal graph $\Gamma$ can be injectively mapped into the graph planar algebra of $\Gamma$, it suffices to rule out $(\omega,\omega^2)$ in the graph planar algebra of ``2221''.(see \cite{VJ6} for more details on the embedding theorem.)  According to Theorem 4.0.13, the only possible generators at level 3 of a ``2221'' graph planar algebra that have rotational eigenvalue $\omega$ are (multiples of) $Q$ and $S$.  Their conjugates $V=\overline{Q}$ and $U=\overline{S}$ (or multiples thereof) are the only possible generators at level 3 that have rotational eigenvalue $\omega^2$, as explained in the remark immediately following Theorem 4.0.13.  Suppose our pair of generators has the pair of rotational eigenvalues ($\omega$, $\omega^2$).  Then our pair must be (up to pairs of multiples) among the pairs $(Q,V),(Q,U),(S,V),(S,U)$.  Suppose $Q$ and $V$ are our potential generators.  Then we expect various quadratic relations to hold.  In particular, we expect $QV=\frac{Z(Q^2V)}{[4]}Q+\frac{Z(QV^2)}{[4]}V$.  In order for this to hold, we must have $Z(Q^2V^2)=\frac{\norm{Z(Q^2V)}^2}{[4]}+\frac{\norm{Z(QV^2)}^2}{[4]}$.  However, $Z(Q^2V^2)=\frac{91\sqrt{3}+43\sqrt{7}}{42}$ while $Z(Q^2V)=0$ and $Z(QV^2)=0$.  Thus, $(Q,V)$ cannot be a pair of generators.  Similarly,  $Z(Q^2U^2)=\frac{91\sqrt{3}+43\sqrt{7}}{42}$ while $Z(Q^2U)=0$ and $Z(QU^2)=0$.  So $(Q,U)$ cannot be a pair of generators.  Analogously, for the pairs $(S,V)$ and $(S,U)$, we find that $Z(S^2U^2)=Z(S^2V^2)=\frac{91\sqrt{3}+43\sqrt{7}}{42}$ while $Z(S^2U)=Z(SU^2)=Z(S^2V)=Z(SV^2)=0$.  Hence, neither of $(S,V),(S,U)$ can be pairs of generators.  Pairs of the form $(\alpha X, \beta Y$) (where $X\in\{Q,S\},Y\in\{V,U\}$ and $\alpha,\beta\in\C$ are nonzero) also do not satisfy the requisite equation since $Z((\alpha X)^2(\beta Y)^2)=\alpha^2\beta^2Z(X^2Y^2)$ is nonzero while $Z((\alpha X)^2\beta Y)=\alpha^2\beta Z(X^2Y)=0$ and $Z(\alpha X (\beta Y)^2)=\alpha\beta^2 Z(XY^2)=0$.  Since none of the possible pairs satisfies the quadratic relation, we cannot have $(\omega,\omega^2)$ as a possible pair of rotational eigenvalues for our generators.
\end{proof}
\begin{cor}The ``2221'' subfactor is unique up to conjugation.  
\end{cor}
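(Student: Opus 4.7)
The plan is to deduce the corollary from Theorem 9.0.35 together with Popa's reconstruction theorem, which sets up a bijective correspondence between subfactor planar algebras and (amenable/finite-depth) extremal subfactors of the hyperfinite $II_1$ factor. Concretely, by Jones's theorem, the standard invariant of any finite-index extremal $II_1$-subfactor $N \subset M$ is a subfactor planar algebra; and conversely, by Popa's reconstruction theorem (see Chapter 4 of \cite{VJ1}), every subfactor planar algebra arises in this way, and a subfactor with given principal graph is recovered uniquely up to isomorphism from its standard invariant. Since the principal graph $H=$``2221'' has finite depth, any subfactor having $H$ as its principal graph is finite-depth and hyperfinite (by Popa's classification) up to the usual caveats, so the planar-algebra classification suffices.

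First I would state the correspondence precisely: two subfactors with principal graph $H$ are conjugate (i.e., isomorphic as inclusions up to an automorphism of the ambient factor) if and only if their subfactor planar algebras are isomorphic. Then I would invoke Theorem 9.0.35 which says that the ``2221'' subfactor planar algebra $\mathcal{P}(T,Q)$ is unique up to conjugation (i.e., up to replacing the pair $(T,Q)$ by $(\overline{T},\overline{Q})$, which corresponds to complex-conjugating the planar algebra). Applying the correspondence, the two conjugate planar algebras yield two subfactors that are conjugates of each other, and any subfactor with principal graph ``2221'' must be conjugate to one of them. This gives the corollary.

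The only subtlety, and the step that deserves the most care, is matching the notion of ``conjugation'' on the planar-algebra side with the notion of ``conjugation'' on the subfactor side. On the planar-algebra side, conjugation replaces every generator and every trace value by its complex conjugate (so $(T,Q) \leftrightarrow (\overline{T},\overline{Q})$ and $\omega \leftrightarrow \omega^2$), and this corresponds to passing from a subfactor $N\subset M$ to its opposite or, equivalently, composing the standard invariant with the canonical $*$-anti-isomorphism. Thus the two pairs identified in the proof of Theorem 9.0.35 realize two subfactors which are complex conjugates of each other in the operator-algebraic sense; any other ``2221'' subfactor must be conjugate to one of them.

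The main obstacle is therefore not a new calculation but rather making the reconstruction/correspondence statement precise in our setting; in particular, ensuring that the subfactor whose existence is guaranteed by Chapter 7 (the subfactor property) and Chapter 8 (the identification of the principal graph as $H$) is the one for which Popa's theorem gives uniqueness from the standard invariant. Once this bookkeeping is in place, the corollary follows from Theorem 9.0.35 with no further work.
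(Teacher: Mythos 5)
Your argument is essentially the same as the paper's: both invoke Popa's theorem that finite depth implies strong amenability, hence the standard invariant (equivalently, the planar algebra) is a complete invariant, and then apply Theorem 9.0.35. Your extra paragraph on matching the notion of conjugation across the planar-algebra and subfactor sides is a reasonable elaboration, but it is not a different route.
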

\begin{proof}According to a theorem by Popa(\cite{Pop4}), if a subfactor satisfies strong amenability, then its standard invariant is a complete invariant. Any subfactor of finite depth is strongly amenable, and hence its standard invariant is a complete invariant.  The planar algebra of a subfactor is an equivalent formulation of the standard invariant of a subfactor.  Hence, uniqueness of the planar algebra having principal graph ``2221'' implies uniqueness of the ``2221'' subfactor. It follows from Theorem 9.0.35 that the ``2221'' subfactor is unique up to conjugation.
\end{proof}
\chapter{Conclusion}
In conclusion, we have proven the main theorem of the paper, Theorem 4.0.7, which gives a presentation of the ``2221'' planar algebra consisting of two generators and quadratic relations.  To do this, we have used the jellyfish algorithm to prove the subfactor property as well as several facts at higher-level n-box spaces.  We have also identified the minimal projections in the tensor category associated with the planar algebra and have shown that the planar algebra we have constructed has ``2221'' as its principal graph.  It follows as a corollary that there exists a subfactor with principal graph ``2221'', since the existence of a planar algebra with principal graph ``2221'' implies the existence of a subfactor with principal graph ``2221''.  Lastly, we have shown uniqueness up to conjugation of ``2221''. 
\nocite{*}
\bibliographystyle{aip}
\bibliography{uctest}


\appendix
\chapter{Calculating Traces}

$PABG_{k,\pm}$ is a finite-dimensional algebra with a positive definite inner product and is therefore isomorphic, by the Artin-Wedderburn theorem, to a direct sum of matrix algebras.  The minimal central idempotents are projections onto the space of loops starting at $v$ and having midpoint $w$.  The matrix corresponding to loops based at $v$ with midpoint $w$ has rows labelled by length-k paths from $v$ to $w$, and columns labelled by the same paths traversed backwards.
$$PABG_{k,\pm} \simeq \bigoplus_{( v, w) \in U_\pm \times U_\pm} M_{i_{ v, w} }(\mathbb{C})$$

  Given $m \in PABG_{k,+}$, let $\tilde{m}=\oplus \tilde{m}_{ v, w }$ be its image in $\bigoplus M_{i_{ v, w} }(\mathbb{C})$.  Then 
$$Z(m)=\sum_{( v, w) } \lambda(v) \lambda(w)Tr( \tilde{m}_{v, w }),$$ where $Tr$ is the usual matrix trace.  For this reason, working in $\bigoplus M_{i_{ v, w} }(\mathbb{C})$ is sometimes computationally convenient.

\section{Calculating Traces for $T$ and $Q$}

For the purposes of constructing the ``2221'' subfactor, we work in $PABG(H)_{3,+}$. 
We order the set of pairs of vertices lexicographically, with the ``alphabet'' ordered 
$b_0 < b_1 <  b_2 < d$ and $z_0 < z_1 < z_2 < c$.
The rows of $M_{i_{z_0,v}}(\mathbb{C})$ are labelled

\[\left(
\begin{array}{c}
z_0 b_0 c b_0  \\
z_0 b_0 z_0 b_0  \\
\end{array} \right),
\left(
\begin{array}{c}
z_0 b_0 c b_1  \\
\end{array} \right),
\left(
\begin{array}{c}
z_0 b_0 c  b_2 \\
\end{array} \right),
\left(
\begin{array}{c}
z_0 b_0 c  d \\
\end{array} \right).\]
Similarly for $z_1$ and $z_2$ ; the labels of the rows of $M_{i_{z_1,v}}(\mathbb{C})$ are as above, but with the permutation $(012)$ applied to the indices of $z$; the labels of the rows of $M_{i_{z_2,v}}(\mathbb{C})$ are also as above, but with the permutation $(021)$ applied to the indices.

The rows of $M_{i_{c,v}}(\mathbb{C})$ are labelled
\[\left(
\begin{array}{c}
c b_0 c b_0 \\
c b_1 c b_0 \\
c b_2 c b_0 \\
c d c b_0 \\
c b_0 z_0 b_0\\
\end{array} \right),
\left(
\begin{array}{c}
c b_0 c b_1 \\
c b_1 c b_1 \\
c b_2 c b_1 \\
c d c b_1 \\
c b_1 z_1 b_1\\
\end{array} \right),
\left(
\begin{array}{c}
c b_0 c b_2 \\
c b_1 c b_2 \\
c b_2 c b_2 \\
c d c b_2 \\
c b_2 z_2 b_2\\
\end{array} \right),
\left(
\begin{array}{c}
c b_0 c d \\
c b_1 c d \\
c b_2 c d\\
c d c d \\
\end{array} \right).\]

Let $\theta=\sqrt{\frac{\lambda(c)}{\lambda(z)}}=\sqrt{\frac{3+\sqrt{21}}{2}}$.  Then $\frac{1}{R}=\sqrt{\frac{\lambda(b)}{\lambda(d)}}=\frac{\theta}{\sqrt{3}}$.  We want to find the matrix representation for T.
We have the following matrices for $M_{i_{z_0,v}}(\mathbb{C})$:

\[\left(
\begin{array}{cc}
 0 & 0  \\
 0 & 0  \\
\end{array}
\right),
\left(
\begin{array}{c}
-\theta^2t_0 \\
\end{array}
\right),
\left(
\begin{array}{c}
 -\theta^2t_0' \\
\end{array}
\right),
\left(
\begin{array}{c}
 \frac{\theta^2}{R^2}(t_0+t_0')\\
\end{array}
\right).\]

We have the following matrices for $M_{i_{z_1,v}}(\mathbb{C})$:

\[\left(
\begin{array}{cc}
 0 & 0  \\
 0 & 0  \\
\end{array}
\right),
\left(
\begin{array}{c}
-\theta^2t_1 \\
\end{array}
\right),
\left(
\begin{array}{c}
 -\theta^2(t_0'+t_2-t_1) \\
\end{array}
\right),
\left(
\begin{array}{c}
 \frac{\theta^2}{R^2}(t_2+t_0')\\
\end{array}
\right).\]

We have the following matrices for $M_{i_{z_2,v}}(\mathbb{C})$:

\[\left(
\begin{array}{cc}
 0 & 0  \\
 0 & 0  \\
\end{array}
\right),
\left(
\begin{array}{c}
-\theta^2t_2 \\
\end{array}
\right),
\left(
\begin{array}{c}
 -\theta^2(t_0'+t_0-t_1) \\
\end{array}
\right),
\left(
\begin{array}{c}
 \frac{\theta^2}{R^2}(t_0-t_1+t_2+t_0')\\
\end{array}
\right).\]
It is easy to check that \[Coeff_{tr(T)}(z_0)=Coeff_{tr(T)}(z_1)= Coeff_{tr(T)}(z_2)=0.\]
Consider $Coeff_{tr(T^n)}(z_0)$, for $n\geq 1.$  We calculate \[Coeff_{tr(T^n)}(z_0)=\Big((-\theta^2t_0)^n+(-\theta^2t_0')^n\Big)\frac{\theta^2}{\sqrt{3}}+\Big(\frac{\theta^2}{R^2}(t_0+t_0')\Big)^n\sqrt{3}.\]
Similarly, \[Coeff_{tr(T^n)}(z_1)=\Big((-\theta^2t_1)^n+(-\theta^2(t_0'+t_2-t_1))^n\Big)\frac{\theta^2}{\sqrt{3}}+\Big(\frac{\theta^2}{R^2}(t_2+t_0')\Big)^n\sqrt{3},\] 
and \[Coeff_{tr(T^n)}(z_2)=\Big((-\theta^2t_2)^n+(-\theta^2(t_0'+t_0-t_1))^n\Big)\frac{\theta^2}{\sqrt{3}}+\Big(\frac{\theta^2}{R^2}(t_0-t_1+t_2+t_0')\Big)^n\sqrt{3}.\]  We want $Coeff_{tr(T^n)}(z_0)=Coeff_{tr(T^n)}(z_1)=Coeff_{tr(T^n)}(z_2)$ to hold for all $n \geq 1$.  Equivalently, we want $Coeff_{tr(T^n)}(z_0)=Coeff_{tr(T^n)}(z_1)$, $Coeff_{tr(T^n)}(z_0)=Coeff_{tr(T^n)}(z_2)$, and $Coeff_{tr(T^n)}(z_1)=Coeff_{tr(T^n)}(z_2)$ to hold for all $n \geq 1$.  Denote each equation by $E(z_0,z_1)$, $E(z_0,z_2)$, and $E(z_1,z_2)$, respectively.  By inspection, it seems intuitively obvious that $t_0,t_1,t_2$ must all be equal.  It is our goal to show that this must be the case.  First, notice that each equation $E(z_i,z_j)$ (where $(i,j)\in \{(0,1),(0,2),(1,2)\}$) is of the form $(A^n+B^n)\frac{\theta^2}{\sqrt{3}}+(-\frac{\theta^2}{3}(A+B))^n\sqrt{3}=(C^n+D^n)\frac{\theta^2}{\sqrt{3}}+(-\frac{\theta^2}{3}(C+D))^n\sqrt{3}$ for some $A,B,C,D\in\R$.  For instace, for $E(z_0,z_1)$, $A=-\theta^2 t_0, B=-\theta^2 t_0', C=-\theta^2 t_1, D= -\theta^2(t_0'+t_2-t_1)$.  We will show that the only possibilities, for each $E(z_i,z_j)$, are that either ($A=D$ and $B=C$) or ($A=C$ or $B=D$).
\begin{claim}If $$ (A^n+B^n)\frac{\theta^2}{\sqrt{3}}+(-\frac{\theta^2}{3}(A+B))^n\sqrt{3}=(C^n+D^n)\frac{\theta^2}{\sqrt{3}}+(-\frac{\theta^2}{3}(C+D))^n\sqrt{3} \mbox{\hspace{3mm} (Eqn.*)  }$$ holds for all $n\geq 1$, where $A,B,C,D\in\R$, then either ($A=D$ and $B=C$) or ($A=C$ or $B=D$).
\end{claim}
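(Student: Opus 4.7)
I will translate (Eqn.*) into a vanishing linear combination of exponential sequences $(\mu^{n})_{n\ge 1}$ and then exploit the irrationality of the coefficient $\gamma=3/\theta^{2}$. Divide (Eqn.*) by $\theta^{2}/\sqrt{3}$ and set $r:=-\theta^{2}/3$ and $\gamma:=3/\theta^{2}=(\sqrt{21}-3)/2$, so that
$$A^{n}+B^{n}+\gamma\bigl(r(A+B)\bigr)^{n} \;=\; C^{n}+D^{n}+\gamma\bigl(r(C+D)\bigr)^{n}\qquad(n\ge 1).$$
By the classical linear independence of exponential sequences -- if $\sum_{i}c_{i}\mu_{i}^{n}=0$ for every $n\ge 1$ and the $\mu_{i}$ are distinct and nonzero, then every $c_{i}=0$ -- after collecting the six tokens
$$(A,1),\;(B,1),\;\bigl(r(A+B),\gamma\bigr),\;(C,-1),\;(D,-1),\;\bigl(r(C+D),-\gamma\bigr)$$
into blocks according to which ones share a common $\mu$-value, the signed coefficients in each block must sum to zero (tokens with $\mu=0$ can simply be discarded, since $0^{n}=0$ for $n\ge 1$).

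A short calculation shows $\gamma=(\sqrt{21}-3)/2\notin\Q$, whereas every other coefficient is $\pm 1\in\Z$. Any subsum of $\{1,1,\gamma,-1,-1,-\gamma\}$ that vanishes must therefore separately balance its integer part and its $\gamma$-part, so each block contains equal numbers of $+1$s and $-1$s and equal numbers of $+\gamma$s and $-\gamma$s. Up to the LHS/RHS symmetry this leaves only four partition shapes: (i) $\{1,-1\}\sqcup\{1,-1\}\sqcup\{\gamma,-\gamma\}$, (ii) $\{1,1,-1,-1\}\sqcup\{\gamma,-\gamma\}$, (iii) $\{1,-1\}\sqcup\{1,\gamma,-1,-\gamma\}$, or (iv) the single block $\{1,1,\gamma,-1,-1,-\gamma\}$.

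I then verify case by case that each shape forces $\{A,B\}=\{C,D\}$ as multisets. In case (i) matching tokens pairs $\{A,B\}$ bijectively with $\{C,D\}$ and identifies $r(A+B)=r(C+D)$; in case (ii) all four of $A,B,C,D$ collapse to a single value; in case (iii) -- the subtle one -- we get (up to relabelling) $A=C$ together with $B=D=r(A+B)=r(C+D)$, which is consistent with $\{A,B\}=\{C,D\}$ and merely imposes the extra algebraic relation $(3+\theta^{2})B=-\theta^{2}A$; in case (iv) all six values coincide, so $A=B=r\cdot(2A)$, i.e. $(1-2r)A=0$, and since $r=-(3+\sqrt{21})/6\ne 1/2$ this forces $A=0$ and hence all four constants vanish.

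The main obstacle is case (iii): one must resist declaring a contradiction there and instead check that the permitted matching of a $(1)$-token to a $(\gamma)$-token still delivers the required multiset equality rather than obstructing it. Beyond that, the argument is a routine application of linear independence of exponential sequences together with the irrationality of $\gamma$, and the conclusion of the claim follows.
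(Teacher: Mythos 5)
Your proposal is correct, and it takes a genuinely different route from the paper. The paper's proof of this claim proceeds by brute-force casework on the relative orderings of $A,B,C,D$ (some thirty-odd subcases organized by the signs of $A-C$, $A-D$, $B-C$, $B-D$), ruling out most by substituting $n=2,3$ into (Eqn.*) and handing the resulting polynomial equations to Mathematica, with a handful of stubborn subcases requiring $n$ up to $5$. Your argument replaces all of that with a single structural observation: divide by $\theta^2/\sqrt{3}$, view (Eqn.*) as a vanishing combination of geometric sequences $\mu^n$, invoke the linear independence of $(\mu^n)_{n\ge 1}$ for distinct nonzero $\mu$ (Vandermonde), and then use the irrationality of $\gamma = 3/\theta^2 = (\sqrt{21}-3)/2$ to force each $\mu$-class of tokens to balance its integer coefficients and its $\gamma$-coefficients separately. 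The four resulting partition shapes are exhaustive, and each one delivers the conclusion (indeed the stronger statement $\{A,B\}=\{C,D\}$ as multisets). The fact that $r=-\theta^2/3 \neq 1/2$, needed to collapse shape (iv), is immediate since $r<0$. This is cleaner, does not rely on machine verification, and makes the role of the irrational coefficient transparent.

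One point you should make explicit rather than leave implicit: when some of the six quantities $A,B,r(A+B),C,D,r(C+D)$ vanish, those tokens land in a "zero block" to which the linear-independence constraint does not directly apply. Your parenthetical about discarding $\mu=0$ tokens gestures at this, but the enumeration of the four shapes tacitly presumes all six coefficients are distributed among zero-sum blocks. The fix is a one-line remark: the grand total of all six coefficients is $1+1+\gamma-1-1-\gamma=0$, and every nonzero-$\mu$ block has zero sum by linear independence, so the zero block (if any) must also have zero sum; hence the full $\mu$-partition of all six tokens really is one of the four shapes, with the zero block possibly being one of the listed blocks. With that observation spelled out, each shape's case analysis covers the degenerate instances as well (e.g., if a $\{1,-1\}$ block is the zero block in shape (i), its two tokens are both $0$, which still gives the required equality), and your proof is complete.
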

\begin{proof}Suppose $A\neq C$ and $B\neq D$.  We will show that $A=D$ and $B=C$. Since $A\neq C$, either $A > C$ or $A < C$.  Suppose $A>C$.  Since $B\neq D$, either $B>D$ or $B<D$.\\
Case a:$A>C$ and $B>D$.\\
\indent	Case a(i):$ A=D$\\
	\indent \hspace{3mm}	Case a(i)I:$B=C$\\
	\indent	\hspace{3mm}  Case a(i)II: $B<C$\\
	\indent \hspace{3mm}  Case a(i)III:$B>C$\\
\indent	Case a(ii): $A>D$\\
	\indent \hspace{3mm}  Case a(ii)I: $B=C$\\
	\indent	\hspace{3mm}  Case a(ii)II:$ B<C$\\
	\indent \hspace{3mm}  Case a(ii)III: $B>C$\\
\indent	Case a(iii):$A<D$\\
	\indent \hspace{3mm}	Case a(iii)I: $B=C$\\
	\indent \hspace{3mm}	Case a(iii)II: $B<C$\\
	\indent \hspace{3mm}	Case a(iii)III:$ B>C$\\
We use mathematica to perform the necessary calculations.  Each of the cases in Case a, except for Case a(ii)II and Case a(iii)III, are ruled out immediately by applying Eqn.* simultaneously for $n=2,3$.  We will deal with the exceptional cases last.\\
Case b:$A>C$ and $B<D$.\\
\indent	Case b(i):$ A=D$\\
	\indent \hspace{3mm}	Case b(i)I:$B=C$\\
	\indent	\hspace{3mm}  Case b(i)II: $B<C$\\
	\indent \hspace{3mm}  Case b(i)III:$B>C$\\
\indent	Case b(ii): $A>D$\\
	\indent \hspace{3mm}  Case b(ii)I: $B=C$\\
	\indent	\hspace{3mm}  Case b(ii)II:$ B<C$\\
	\indent \hspace{3mm}  Case b(ii)III: $B>C$\\
\indent	Case b(iii):$A<D$\\
	\indent \hspace{3mm}	Case b(iii)I: $B=C$\\
	\indent \hspace{3mm}	Case b(iii)II: $B<C$\\
	\indent \hspace{3mm}	Case b(iii)III:$ B>C$\\
Each of the cases in Case b, except for Case b(i)I, Case b(ii)III, and Case b(iii)II, are ruled out immediately by applying Eqn.* simultaneously for $n=2,3$.  Since Case b(i)I is what we wanted, the only exceptional cases under Case b are Case b(ii)III and Case b(iii)II.\\
Now, suppose $A<C$.  We have the following cases.\\
Case c:$A<C$ and $B>D$.\\
\indent	Case c(i):$ A=D$\\
	\indent \hspace{3mm}	Case c(i)I:$B=C$\\
	\indent	\hspace{3mm}  Case c(i)II: $B<C$\\
	\indent \hspace{3mm}  Case c(i)III:$B>C$\\
\indent	Case c(ii): $A>D$\\
	\indent \hspace{3mm}  Case c(ii)I: $B=C$\\
	\indent	\hspace{3mm}  Case c(ii)II:$ B<C$\\
	\indent \hspace{3mm}  Case c(ii)III: $B>C$\\
\indent	Case c(iii):$A<D$\\
	\indent \hspace{3mm}	Case c(iii)I: $B=C$\\
	\indent \hspace{3mm}	Case c(iii)II: $B<C$\\
	\indent \hspace{3mm}	Case c(iii)III:$ B>C$\\
Each of the cases in Case c, except for Case c(i)I, Case c(ii)III, and Case c(iii)II, are ruled out immediately by applying Eqn.* simultaneously for $n=2,3$.  Since Case c(i)I is what we wanted, the only exceptional cases under Case c are Case c(ii)III and Case c(iii)II.\\
Case d:$A<C$ and $B<D$.\\
\indent	Case d(i):$ A=D$\\
	\indent \hspace{3mm}	Case d(i)I:$B=C$\\
	\indent	\hspace{3mm}  Case d(i)II: $B<C$\\
	\indent \hspace{3mm}  Case d(i)III:$B>C$\\
\indent	Case d(ii): $A>D$\\
	\indent \hspace{3mm}  Case d(ii)I: $B=C$\\
	\indent	\hspace{3mm}  Case d(ii)II:$ B<C$\\
	\indent \hspace{3mm}  Case d(ii)III: $B>C$\\
\indent	Case d(iii):$A<D$\\
	\indent \hspace{3mm}	Case d(iii)I: $B=C$\\
	\indent \hspace{3mm}	Case d(iii)II: $B<C$\\
	\indent \hspace{3mm}	Case d(iii)III:$ B>C$\\
Each of the cases in Case d, except for Case d(ii)II and Case d(iii)III, are ruled out immediately by applying Eqn.* simultaneously for $n=2,3$. \\
\indent We now want to rule out the exceptional cases.  For Case a(ii)II and Case a(iii)III, applying Eqn.* simultaneously for $n=2,3,4$ gives us that $B=-C$ and $A=-D$.  Furthermore, $B$ is a multiple of $A$.  Applying Eqn.* for $n=5$ implies that $A=0$. It follows that $A,B,C,D$ are all $0$, which is a contradiction to our assumption that $A\neq C$ and $B\neq D$.  For Cases b(ii)III and b(iii)II, applying Eqn.* for $n=2,3,4$ gives us that $C=-A$ and $B=-D$.  Furthermore, $B$ is a multiple of $A$.  Applying Eqn.* for $n=5$ implies that $A=0$, from which it follows that $A,B,C,D$ are all $0$.  This contradicts our assumption that $A\neq C$ and $B\neq D$.  For Cases c(ii)III and c(iii)II, applying Eqn* for $n=2,3,4$ implies that $B=-D$ and $C=-A$.  Furthermore, $B$ is a multiple of $A$.  Applying Eqn* for $n=5$ implies $A=0$, which gives a contradiction.  For Cases d(ii)II and d(iii)III, applying Eqn.* for $n=2,3,4$ implies that $B=-C$ and $D=-A$.  Furthermore, $B$ is a multiple of $A$.  Applying Eqn.* for $n=5$ implies $A=0$.  This gives a contradiction.  This exhausts all the possibilities.  Hence, $A=D$ and $B=C$.
\end{proof}
\indent So for each equation $E(z_0,z_1),E(z_0,z_2),E(z_1,z_2)$, we either have ($A=D$ and $B=C$) or ($A=C$ or $B=D$).  We will use this fact to prove that $t_0=t_1=t_2$.  First, we need to be able to calculate the coefficient of $c$ in $tr(T^n)$ so that we can utilize the equation $Coeff_{tr(T^n)}(c)=Coeff_{tr(T^n)}(z_0)$.  In order to do this, we will need the following matrices for $M_{i_{c,v}}(\mathbb{C})$:
\footnotesize
\[\left(
\begin{array}{ccccc}
 0 & t_0 & t_0' & -\frac{(t_0+t_0')}{R} & 0 \\
 t_0 & t_0'+t_2-t_1 & -(2t_0+3t_0'+2t_2-t_1+s) & \frac{(2t_0'+t_2+t_0+s)}{R} & -\theta t_0 \\
 t_0' & -(2t_0+3t_0'+2t_2-t_1+\bar{s}) & t_2 & -\frac{(t_2+t_0'+s)}{R} & -\theta t_0' \\
 -\frac{(t_0+t_0')}{R} & \frac{(2t_0'+t_2+t_0+\bar{s})}{R} & -\frac{(t_2+t_0'+\bar{s})}{R} & 0 & \frac{\theta(t_0+t_0')}{R} \\
 0 & -\theta t_0 & -\theta t_0' & \frac{\theta(t_0+t_0')}{R} & 0 \\
\end{array}
\right),\]

\[\left(
\begin{array}{ccccc}
 t_0 & t_0'+t_2-t_1 & s & -\frac{(t_0-t_1+t_2+t_0'+s)}{R} & -\theta (t_0'+t_2-t_1) \\
 t_0'+t_2-t_1 & 0 & t_1 & -\frac{(t_2+t_0')}{R} & 0 \\
 \bar{s} & t_1 & t_0'+t_0-t_1 & \frac{2(t_2+t_0')-t_1+t_0+s}{R} & -\theta t_1 \\
 -\frac{(t_0-t_1+t_2+t_0'+\bar{s})}{R} & -\frac{(t_2+t_0')}{R} & \frac{2(t_2+t_0')-t_1+t_0+\bar{s}}{R} & 0 & \frac{\theta(t_2+t_0')}{R} \\
 -\theta (t_0'+t_2-t_1) & 0 & -\theta t_1 & \frac{\theta(t_2+t_0')}{R} & 0 \\
\end{array}
\right),\]
\tiny
\[\left(
\begin{array}{ccccc}
 t_0' & -(2t_0+3t_0'+2t_2-t_1+s) & t_2 & \frac{2(t_0+t_0')-t_1+t_2+s}{R} & -\theta t_2 \\
 -(2t_0+3t_0'+2t_2-t_1+\bar{s}) & t_1 & t_0'+t_0-t_1 & -\frac{(t_0+t_0'+s)}{R} & -\theta (t_0'+t_0-t_1) \\
 t_2 & t_0'+t_0-t_1 & 0 & -\frac{(t_0-t_1+t_2+t_0')}{R} & 0 \\
 \frac{2(t_0+t_0')-t_1+t_2+\bar{s}}{R} & -\frac{(t_0+t_0'+\bar{s})}{R} & -\frac{(t_0-t_1+t_2+t_0')}{R} & 0 & \frac{\theta(t_0-t_1+t_2+t_0')}{R} \\
 -\theta t_2 & -\theta (t_0'+t_0-t_1) & 0 & \frac{\theta(t_0-t_1+t_2+t_0')}{R} & 0 \\
\end{array}
\right),\]
\normalsize
\[\left(
\begin{array}{cccc}
 -\frac{(t_0+t_0')}{R^2} & \frac{2t_0'+t_2+t_0+s}{R^2} & -\frac{(t_2+t_0'+s)}{R^2} & 0 \\
 \frac{2t_0'+t_2+t_0+\bar{s}}{R^2} & -\frac{(t_2+t_0')}{R^2} & \frac{2(t_2+t_0')-t_1+t_0+s}{R^2} & 0 \\
 -\frac{(t_2+t_0'+\bar{s})}{R^2} & \frac{2(t_2+t_0')-t_1+t_0+\bar{s}}{R^2} & -\frac{(t_0-t_1+t_2+t_0')}{R^2} & 0 \\
 0 & 0 & 0 & 0 \\
\end{array}
\right).\]

If we call the above four matrices $M_1,M_2,M_3,M_4$, then $$Coeff_{tr(T^n)}(c)=(Tr(M_1^n)+Tr(M_2^n)+Tr(M_3^n))\cdot\frac{\lambda(b)}{\lambda(c)} +Tr(M_4^n)\cdot\frac{\lambda(d)}{\lambda(c)} .$$
That is, $$Coeff_{tr(T^n)}(c)=(Tr(M_1^n)+Tr(M_2^n)+Tr(M_3^n))\cdot\frac{1}{\sqrt{3}} +Tr(M_4^n)\cdot\frac{\sqrt{3}}{\theta^2}. $$

\noindent Now, we want to show that $t_0=t_1=t_2$.  Recall that for each equation $E(z_0,z_1)$, $E(z_0,z_2)$, $E(z_1,z_2)$, we have that either ($A=D$ and $B=C$) or ($A=C$ or $B=D$).\\
Suppose $A=D$ and $B=C$ for each pair $E(z_0,z_1),E(z_0,z_2),E(z_1,z_2)$.  Then $$t_0=t_0'+t_2-t_1 \mbox{ and } t_0'=t_1,$$ and $$t_0=t_0'+t_0-t_1 \mbox{ and } t_0'=t_2,$$ and $$t_1=t_0'+t_0-t_1 \mbox{ and } t_0'+t_2-t_1=t_2.$$  That is, $$t_0=t_0'+t_2-t_1 \mbox{ and } t_0'=t_1,   \hspace{3mm} \mbox{  (Eqn.1) }$$ and $$t_0'=t_1 \mbox{ and } t_0'=t_2, \mbox{ \hspace{3mm} (Eqn.2) }$$
and $$t_0'+t_0=2t_1 \mbox{ and } t_0'=t_1 \hspace{3mm} \mbox{  (Eqn.3) }.$$

$\Longrightarrow$ $t_0'=t_0=t_1=t_2$.
Recall from the remark following Theorem 4.0.10 that $\bar{s}=-(2t_0+3t_0'+2t_2-t_1+s)$.  It follows that $t_0'=-\frac{s+\bar{s}}{6}$.  Setting $Coeff_{tr(T^n)}(c)=Coeff_{tr(T^n)}(z_0)$ for $n=2,3$ simultaneously, we get that $s=0$. Hence, $t_0'=0$ and we have the null solution.\\
Now, suppose only two equations $E(z_i,z_j)$ $((i,j)\in\{(0,1),(0,2),(1,2)\})$ satisfy $A=D$ and $B=C$.  First, suppose $E(z_0,z_1)$ and $E(z_0,z_2)$ satisfy $A=D$ and $B=C$.  Then Eqn.1 and Eqn.2 are satisfied.  It follows that $t_0'=t_0=t_1=t_2$.  As in the previous case, we get the null solution.  Next, suppose $E(z_0,z_1)$ and $E(z_1,z_2)$ satisfy $A=D$ and $B=C$.  Then Eqn.1 and Eqn.3 are satisfied.  It follows that $t_0'=t_0=t_1=t_2$.  Hence, we get the null solution.  Suppose $E(z_0,z_2)$ and $E(z_1,z_2)$ satisfy $A=D$ and $B=C$.  Then Eqn.2 and Eqn.3 are satisifed.  It follows that $t_0'=t_0=t_1=t_2$.  Hence, we get the null solution.\\
Now, suppose only one equation $E(z_i,z_j)$ $((i,j)\in\{(0,1),(0,2),(1,2)\})$ satisfies $A=D$ and $B=C$.  Suppose only $E(z_0,z_1)$ satisfies  $A=D$ and $B=C$.  Then Eqn.1 holds.  Since $A=C$ or $B=D$ for $E(z_0,z_2)$ and $E(z_1,z_2)$, we have that $$(t_0=t_2 \mbox{ or } t_0'=t_0'+t_0-t_1),$$ and $$t_1=t_2 \mbox{ or } t_0'+t_2-t_1=t_0'+t_0-t_1.$$  That is, $$(t_0=t_2 \mbox{ or } t_0=t_1),$$ and $$(t_1=t_2 
\mbox{ or } t_2=t_0).$$
Suppose $t_0=t_2$ and $t_1=t_2$.  Then $t_0'=t_0=t_1=t_2$, and we get the null solution.  Suppose $t_0=t_2$ and $t_2=t_0$.  Then $t_0'=t_1$ and $t_0=t_2$ and $\bar{s}=-(4t_0+2t_0'+s)$
$\Longrightarrow t_0'=-(2t_0+Re(s))$.  Applying $E(z_0,z_2)$ for $n=2,3$ simultaneously,\\
$\Longrightarrow Re(s)=-3t_0$\\
$\Longrightarrow t_0'=t_0$\\
$\Longrightarrow t_0'=t_0=t_1=t_2$\\
$\Longrightarrow$ null solution.\\
Suppose $t_0=t_1$ and $t_1=t_2$.  Then $t_0'=t_0=t_1=t_2.$\\
$\Longrightarrow$ null solution.\\
Suppose $t_0=t_1$ and $t_2=t_0$.\\
$\Longrightarrow t_0'=t_0=t_1=t_2$\\
$\Longrightarrow$ null solution.\\
Suppose only $E(z_0,z_2)$ satisfies $A=D$ and $B=C$.  Then, $t_0'=t_1$ and $t_0'=t_2$.  Furthermore, $$(t_0=t_1 \mbox{ or } t_0'=t_0'+t_2-t_1),$$ and $$(t_1=t_2 \mbox{ or } t_2=t_0).$$  That is, $$t_0=t_1 \mbox{ or } t_2=t_1,$$ and $$t_1=t_2 \mbox{ or } t_2=t_0.$$ \\
If $t_0=t_1$ and $t_1=t_2$, then $t_0'=t_0=t_1=t_2$.\\
$\Longrightarrow$ null solution.\\
If $t_0=t_1$ and $t_2=t_0$, then $t_0'=t_0=t_1=t_2$.\\
$\Longrightarrow$ null solution.\\
If $t_2=t_1$ and $t_1=t_2$, then $t_0'=t_1=t_2$ and $\bar{s}=-(2t_0+4t_1+s)$.\\
$\Longrightarrow$ $t_0=-(2t_1+Re(s))$.\\
Applying $E(z_0,z_1)$ for $n=2,3$ simultaneously\\
$\Longrightarrow Re(s)=-3t_1$\\
$\Longrightarrow t_0=t_1$\\
$\Longrightarrow t_0'=t_0=t_1=t_2$\\
$\Longrightarrow$ null solution.\\
If $t_2=t_1$ and $t_2=t_0$, then $t_0'=t_0=t_1=t_2$.\\
$\Longrightarrow$ null solution.\\
Finally, suppose only $E(z_1,z_2)$ satisfies $A=D$ and $B=C$.  Then $t_0'+t_0=2t_1$  and $t_0'=t_1$, and $$t_0=t_1 \mbox{ or } t_2=t_1,$$ and $$t_0=t_2 \mbox{ or } t_0=t_1.$$\\
If $t_0=t_1$ and $t_0=t_2$, then $t_0'=t_1=t_0=t_2$.\\
$\Longrightarrow$ null solution\\
If $t_0=t_1$ and $t_0=t_1$, then $t_0'=t_1=t_0$ and $\bar{s}=-(4t_0+2t_2+s).$\\
$\Longrightarrow$ $t_2=-(2t_0+Re(s)).$\\
Applying $E(z_0,z_1)$ for $n=2,3$ simultaneously\\
$\Longrightarrow$ $Re(s)=-3t_0$\\
$\Longrightarrow$ $t_2=t_0$\\
$\Longrightarrow$ $t_0'=t_1=t_0=t_2$\\
$\Longrightarrow$ null solution\\
If $t_2=t_1$ and $t_0=t_2$, then $t_0'=t_1=t_2=t_0.$\\
$\Longrightarrow$ null solution.\\
If $t_2=t_1$ and $t_0=t_1$, then $t_0'=t_1=t_0=t_2.$\\
$\Longrightarrow$ null solution.\\
\indent Now suppose ($A=D$ and $B=C$) holds for no $E(z_i,z_j)$ $((i,j)\in\{(0,1),(0,2),(1,2)\})$.  Then, for each $E(z_i,z_j)$, we have ($A=C$ or $B=D$).  That is, $$t_0=t_1 \mbox{ or }t_2=t_1,$$ and $$t_0=t_2 \mbox{ or }t_0=t_1,$$ and $$t_1=t_2 \mbox{ or }t_2=t_0.$$
Any of the eight possibilities gives $t_0=t_1=t_2$, which is what we wanted.\\
Let $t_0=t_1=t_2=t$.  Then our matrices $M_1,M_2,M_3,M_4$ simplify as follows:

\[\left(
\begin{array}{ccccc}
 0 & t & t_0' & -\frac{(t+t_0')}{R} & 0 \\
 t & t_0' & -(3t+3t_0'+s) & \frac{(2t_0'+2t+s)}{R} & -\theta t \\
 t_0' & -(3t+3t_0'+\bar{s}) & t & -\frac{(t+t_0'+s)}{R} & -\theta t_0' \\
 -\frac{(t+t_0')}{R} & \frac{(2t_0'+2t+\bar{s})}{R} & -\frac{(t+t_0'+\bar{s})}{R} & 0 & \frac{\theta(t+t_0')}{R} \\
 0 & -\theta t & -\theta t_0' & \frac{\theta(t+t_0')}{R} & 0 \\
\end{array}
\right),\]

\[\left(
\begin{array}{ccccc}
 t & t_0' & s & -\frac{(t+t_0'+s)}{R} & -\theta t_0' \\
 t_0' & 0 & t & -\frac{(t+t_0')}{R} & 0 \\
 \bar{s} & t & t_0' & \frac{2(t+t_0)+s}{R} & -\theta t \\
 -\frac{(t+t_0'+\bar{s})}{R} & -\frac{(t+t_0')}{R} & \frac{2(t+t_0)+\bar{s}}{R} & 0 & \frac{\theta(t+t_0')}{R} \\
 -\theta t_0' & 0 & -\theta t & \frac{\theta(t+t_0')}{R} & 0 \\
\end{array}
\right),\]

\[\left(
\begin{array}{ccccc}
 t_0' & -(3t+3t_0'+s) & t & \frac{2(t+t_0)+s}{R} & -\theta t \\
 -(3t+3t_0'+\bar{s}) & t & t_0' & -\frac{(t+t_0'+s)}{R} & -\theta t_0' \\
 t & t_0' & 0 & -\frac{(t+t_0')}{R} & 0 \\
 \frac{2(t+t_0)+\bar{s}}{R} & -\frac{(t+t_0'+\bar{s})}{R} & -\frac{(t+t_0')}{R} & 0 & \frac{\theta(t+t_0')}{R} \\
 -\theta t & -\theta t_0' & 0 & \frac{\theta(t+t_0')}{R} & 0 \\
\end{array}
\right),\]

\[\left(
\begin{array}{cccc}
 -\frac{(t+t_0')}{R^2} & \frac{2(t+t_0')+s}{R^2} & -\frac{(t+t_0'+s)}{R^2} & 0 \\
 \frac{2(t+t_0')+\bar{s}}{R^2} & -\frac{(t+t_0')}{R^2} & \frac{2(t+t_0')+s}{R^2} & 0 \\
 -\frac{(t+t_0'+\bar{s})}{R^2} & \frac{2(t+t_0')+\bar{s}}{R^2} & -\frac{(t+t_0')}{R^2} & 0 \\
 0 & 0 & 0 & 0 \\
\end{array}
\right).\]

One can calculate $Tr(M_1^2)= 3(t^2+(t_0')^2)+2\theta^2(t^2+(t_0')^2)+\frac{2}{R^2}(t+t_0')^2(1+\theta^2)+2\vert s \vert^2\\
+\frac{4}{R^2}(s+t+t_0')(\bar{s}+t+t_0')$.  Furthermore, $Tr(M_2^2)=Tr(M_3^2)=Tr(M_1^2)$.  We calculate $Tr(M_4^2)=\frac{3(t+t_0')^2}{R^4}+\frac{6(t+t_0'+s)(t+t_0'+\bar{s})}{R^4}$.  Thus, \[Coeff_{tr(T^2)}(c)=3Tr(M_1^2)\frac{\lambda(b)}{\lambda(c)}+Tr(M_4^2)\frac{\lambda(d)}{\lambda(c)}\]
\[=3Tr(M_1^2)\frac{1}{\sqrt{3}}+Tr(M_4^2)\frac{\sqrt{3}}{\theta^2}\]
\[=\Big((9+6\theta^2)(t^2+(t_0')^2)+\theta^2(3+2\theta^2)(t+t_0')^2+6\vert s \vert^2 +6\theta^2(s+t+t_0')(\bar{s}+t+t_0')\Big)\frac{1}{\sqrt{3}}\] and \[Coeff_{tr(T^2)}(z_0)=(\theta^4 t^2 +\theta^4 (t_0')^2)\frac{\lambda(b)}{\lambda(z)}+\frac{\theta^4}{R^4}(t+t_0')^2\frac{\lambda(d)}{\lambda(z)}\]
\[=(t^2+(t_0')^2)\frac{\theta^6}{\sqrt{3}}+\theta^8(t+t_0')^2\frac{\sqrt{3}}{9}.\]
Setting $Coeff_{tr(T^2)}(z_0)=Coeff_{tr(T^2)}(c)$, we get 
\[(t^2+(t_0')^2)\frac{\theta^6}{\sqrt{3}}+\theta^8(t+t_0')^2\frac{\sqrt{3}}{9}\] \[=\Big((9+6\theta^2)(t^2+(t_0')^2)+\theta^2(3+2\theta^2)(t+t_0')^2+6\vert s \vert^2 +6\theta^2(s+t+t_0')(\bar{s}+t+t_0')\Big)\frac{1}{\sqrt{3}}\]
\[\Longrightarrow (9+3\sqrt{21})(t^2+(t_0')^2)\frac{1}{\sqrt{3}}+(15+3\sqrt{21})(t+t_0')^2\frac{1}{\sqrt{3}}\]
\[=(6\vert s\vert^2 +6\theta^2(s+t+t_0')(\bar{s}+t+t_0'))\frac{1}{\sqrt{3}}=(6\vert s\vert^2 +6\theta^2\vert s \vert^2 -12\theta^2(t+t_0')^2)\frac{1}{\sqrt{3}}.\]
\[\Longrightarrow (9+3\sqrt{21})(t^2+(t_0')^2)\frac{1}{\sqrt{3}} +(33+9\sqrt{21})(t+t_0')^2\frac{1}{\sqrt{3}}\]
\[=(6+6\theta^2)\vert s\vert^2 \frac{1}{\sqrt{3}}\]
\[\Longrightarrow (9+3\sqrt{21})(t^2+(t_0')^2)+(33+9\sqrt{21})(t+t_0')^2=(15+3\sqrt{21})\vert s \vert^2\]
\[\Longrightarrow (3+\sqrt{21})(t^2+(t_0')^2)+(11+3\sqrt{21})(t+t_0')^2=(5+\sqrt{21})\vert s\vert^2\]
\[\Longrightarrow \Big(\frac{-3+\sqrt{21}}{2}\Big)(t^2+(t_0')^2)+(-2+\sqrt{21})(t+t_0')^2=\vert s\vert^2\]
\[\Longrightarrow Im(s)=\pm\sqrt{\frac{-3+\sqrt{21}}{2}(t^2+(t_0')^2)+(-\frac{17}{4}+\sqrt{21})(t+t_0')^2},\] since $Re(s)=-\frac{3}{2}(t+t_0')$.  Hence, $s=Re(s)\pm i\cdot Im(s)$, where $Re(s)$ and $Im(s)$ are given as above.   
By setting $Coeff_{tr(T^3)}(c)=Coeff_{tr(T^3)}(z_0),$
we obtain a quadratic equation in the variables $t$ and $t_0'$:\[-\frac{3}{2}(t+t_0')\big((21\sqrt{2}+13\sqrt{7})t^2+7(17\sqrt{3}+11\sqrt{7})t\cdot t_0'+(21\sqrt{3}+13\sqrt{7})(t_0')^2\big)=0\]
\[\Longrightarrow t_0'=-t \mbox { or } t_0'=\frac{1}{4}\big(-7-\sqrt{21} \pm \sqrt{54+14\sqrt{21}}\big)\cdot t\]

In sum, we have the matrix representation for $T$:
\begin{align*}
\widetilde{T}= &
\left(
\begin{array}{ccccc}
 0 & t & t_0' & -\frac{(t+t_0')}{R} & 0 \\
 t & t_0' & -(3t+3t_0'+s) & \frac{(2t_0'+2t+s)}{R} & -\theta t \\
 t_0' & -(3t+3t_0'+\bar{s}) & t & -\frac{(t+t_0'+s)}{R} & -\theta t_0' \\
 -\frac{(t+t_0')}{R} & \frac{(2t_0'+2t+\bar{s})}{R} & -\frac{(t+t_0'+\bar{s})}{R} & 0 & \frac{\theta(t+t_0')}{R} \\
 0 & -\theta t & -\theta t_0' & \frac{\theta(t+t_0')}{R} & 0 \\
\end{array}
\right)
\\
& \oplus
\left(
\begin{array}{ccccc}
 t & t_0' & s & -\frac{(t+t_0'+s)}{R} & -\theta t_0' \\
 t_0' & 0 & t & -\frac{(t+t_0')}{R} & 0 \\
 \bar{s} & t & t_0' & \frac{2(t+t_0)+s}{R} & -\theta t \\
 -\frac{(t+t_0'+\bar{s})}{R} & -\frac{(t+t_0')}{R} & \frac{2(t+t_0)+\bar{s}}{R} & 0 & \frac{\theta(t+t_0')}{R} \\
 -\theta t_0' & 0 & -\theta t & \frac{\theta(t+t_0')}{R} & 0 \\
\end{array}\right)
\\
& \oplus 
\left(
\begin{array}{ccccc}
 t_0' & -(3t+3t_0'+s) & t & \frac{2(t+t_0)+s}{R} & -\theta t \\
 -(3t+3t_0'+\bar{s}) & t & t_0' & -\frac{(t+t_0'+s)}{R} & -\theta t_0' \\
 t & t_0' & 0 & -\frac{(t+t_0')}{R} & 0 \\
 \frac{2(t+t_0)+\bar{s}}{R} & -\frac{(t+t_0'+\bar{s})}{R} & -\frac{(t+t_0')}{R} & 0 & \frac{\theta(t+t_0')}{R} \\
 -\theta t_0' & -\theta t_0' & 0 & \frac{\theta(t+t_0')}{R} & 0 \\
\end{array}\right) 
\\
& \oplus 
\left(
\begin{array}{cccc}
 -\frac{(t+t_0')}{R^2} & \frac{2(t+t_0')+s}{R^2} & -\frac{(t+t_0'+s)}{R^2} & 0 \\
 \frac{2(t+t_0')+\bar{s}}{R^2} & -\frac{(t+t_0')}{R^2} & \frac{2(t+t_0')+s}{R^2} & 0 \\
 -\frac{(t+t_0'+\bar{s})}{R^2} & \frac{2(t+t_0')+\bar{s}}{R^2} & -\frac{(t+t_0')}{R^2} & 0 \\
 0 & 0 & 0 & 0 \\
\end{array}\right)
\\
& \oplus 
\biggl(
\left(
\begin{array}{cc}
 0 & 0  \\
 0 & 0  \\
\end{array}\right) 
 \oplus 
 \left(
\begin{array}{c}
-\theta^2t \\
\end{array}\right)
\oplus 
\left(
\begin{array}{c}
 -\theta^2t_0' \\
\end{array}\right)
\oplus
\left(
\begin{array}{c}
 \frac{\theta^2}{R^2}(t+t_0')\\
\end{array}
\right) 
\biggr)^{\otimes 3}
\end{align*}
We want to find the matrix representation for $Q$.  
We have the following matrices for $M_{i_{z_0,v}}(\mathbb{C})$:

\[\left(
\begin{array}{cc}
 0 & 0  \\
 0 & 0  \\
\end{array}
\right),
\left(
\begin{array}{c}
2\theta^2t_0 \\
\end{array}
\right),
\left(
\begin{array}{c}
 2\theta^2 t_0' \\
\end{array}
\right),
\left(
\begin{array}{c}
 -\frac{2}{3}\theta^4(t_0+t_0')\\
\end{array}
\right).\]

We have the following matrices for $M_{i_{z_1,v}}(\mathbb{C})$:

\[\left(
\footnotesize\begin{array}{cc}
 0 & 0  \\
 0 & 0  \\
\end{array}
\right),
\left(
\footnotesize\begin{array}{c}
2\theta^2t_1 \\
\end{array}
\right),
\left(
\footnotesize\begin{array}{c}
 2\theta^2(t_0+t_0'-t_2)+\omega(\bar{s}+\omega s)\theta^2 \\
\end{array}
\right),
\left(
\scriptsize\begin{array}{c}
 \frac{\theta^4}{3}(-2(t_1+t_0+t_0'-t_2)-\omega(\bar{s}+\omega s))\\
\end{array}
\right).\]

We have the following matrices for $M_{i_{z_2,v}}(\mathbb{C})$:

\[\left(
\footnotesize\begin{array}{cc}
 0 & 0  \\
 0 & 0  \\
\end{array}
\right),
\left(
\footnotesize\begin{array}{c}
2\theta^2t_2 \\
\end{array}
\right),
\left(
\footnotesize\begin{array}{c}
 2\theta^2(t_1+t_0'-t_2)-\omega(\omega \bar{s}+s)\theta^2) \\
\end{array}
\right),
\left(
\footnotesize\begin{array}{c}
 \frac{\theta^4}{3}(-2(t_1+t_0')+\omega(\omega \bar{s}+s))\\
\end{array}
\right).\]
It is easy to check that \[Coeff_{tr(Q)}(z_0)=Coeff_{tr(Q)}(z_1)= Coeff_{tr(Q)}(z_2)=0.\]

Consider $Coeff_{tr(Q^n)}(z_0)$, for $n\geq 1.$  We calculate \[Coeff_{tr(Q^n)}(z_0)=\Big((2\theta^2t_0)^n+(2\theta^2t_0')^n\Big)\frac{\theta^2}{\sqrt{3}}+\Big(\frac{-2}{3}\theta^4(t_0+t_0')\Big)^n\sqrt{3}.\]
Similarly, \[Coeff_{tr(Q^n)}(z_1)=\Big((2\theta^2t_1)^n+(2\theta^2(t_0+t_0'-t_2)+\omega(\bar{s}+\omega s)\theta^2)^n\Big)\frac{\theta^2}{\sqrt{3}}\]
\[+\Big(\frac{\theta^4}{3}(-2(t_1+t_0+t_0'-t_2)-\omega(\bar{s}+\omega s))\Big)^n\sqrt{3},\] 
and \footnotesize\[Coeff_{tr(Q^n)}(z_2)=\Big((2\theta^2t_2)^n+(2\theta^2(t_1+t_0'-t_2)-\omega(\omega\bar{s}+s)\theta^2)^n\Big)\frac{\theta^2}{\sqrt{3}}+\Big(\frac{\theta^4}{3}(-2(t_1+t_0')+\omega(\omega\bar{s}+s))\Big)^n\sqrt{3}.\]  
\normalsize 
We must have $Coeff_{tr(Q^n)}(z_0)=Coeff_{tr(Q^n)}(z_1)=Coeff_{tr(Q^n)}(z_2)$.  By inspection, it seems intuitively clear that $t_0=t_1=t_2$ and $s=0$.  It is our goal to show that this must be the case.\\
\indent By Claim A.1.1, for each equation $E(z_0,z_1),E(z_0,z_2),E(z_1,z_2)$, we either have ($A=D$ and $B=C$) or ($A=C$ or $B=D$).  We will use this fact to prove that $t_0=t_1=t_2$ and $s=0$.  First, we need to be able to calculate the coefficient of $c$ in $tr(Q^n)$ so that we can utilize the equation $Coeff_{tr(Q^n)}(c)=Coeff_{tr(Q^n)}(z_0)$.  In order to do this, we will need the matrices for $M_{i_{c,v}}(\mathbb{C})$.
Let $\eta = 1+i\sqrt{3}$.
We have the following matrices for $M_{i_{c,v}}(\mathbb{C})$:
\tiny
\[\left(
\begin{array}{ccccc}
 0 & \eta t_0 & \eta t_0' & -\frac{\eta \theta }{\sqrt{3}}(t_0+t_0') & 0 \\
 \bar{\eta}t_0 & -2(t_0+t_0'-t_2)-\omega(\bar{s}+\omega s) & \omega\bar{s} & \frac{2\theta}{\sqrt{3}}(-\omega^2 t_0 +t_0'-t_2)+\frac{\omega^2\theta}{\sqrt{3}}s & -\bar{\eta}\theta t_0 \\
 \bar{\eta}t_0' & \omega^2 s & -2t_2 & \frac{2\theta}{\sqrt{3}}(t_2+\omega t_0')-\frac{\omega^2\theta}{\sqrt{3}}s & -\bar{\eta} \theta t_0' \\
 -\frac{\bar{\eta} \theta }{\sqrt{3}}(t_0+t_0') & \frac{2\theta}{\sqrt{3}}(-\omega t_0 +t_0'-t_2)+\frac{\omega\theta}{\sqrt{3}}\bar{s} & \frac{2\theta}{\sqrt{3}}(t_2+\omega^2 t_0')-\frac{\omega\theta}{\sqrt{3}}\bar{s} & 0 & \frac{\bar{\eta}\theta^2(t_0+t_0')}{\sqrt{3}} \\
 0 & -\eta \theta t_0 & -\eta \theta t_0' & \frac{\eta \theta^2(t_0+t_0')}{\sqrt{3}} & 0 \\
\end{array}
\right),\]
\tiny
\[\left(
\begin{array}{ccccc}
 -2t_0 & \bar{\eta}(t_0+t_0'-t_2)-\omega^2(\bar{s}+\omega s) & 0 & M_{2_{(1,4)}} & M_{2_{(1,5)}} \\
 \eta (t_0+t_0'-t_2)-\omega(s+\omega^2 \bar{s}) & 0 & \eta t_1 & M_{2_{(2,4)}} & 0 \\
 0 & \bar{\eta}t_1 & -2(t_1+t_0'-t_2)+\omega(\omega\bar{s}+s) & M_{2_{(3,4)}} & -\bar{\eta}\theta t_1 \\
 \overline{M_{2_{(1,4)}}} & \overline{M_{2_{(2,4)}}} & \overline{M_{2_{(3,4)}}} & 0 & M_{2_{(4,5)}} \\
 \overline{M_{2_{(1,5)}}} & 0 & -\eta \theta t_1 & \overline{M_{2_{(4,5)}}} & 0 \\
\end{array}
\right),\]
\scriptsize where $M_{2_{(1,4)}}=\frac{2\theta}{\sqrt{3}}(-\omega^2 t_0+\omega t_0'-\omega t_2)+\frac{\omega^2\theta}{\sqrt{3}}\bar{s}$, $M_{2_{(1,5)}}=-\bar{\eta}\theta(t_0+t_0'-t_2)+\omega^2\theta\bar{s}+\theta s$, $M_{2_{(2,4)}}=-\frac{\eta\theta}{\sqrt{3}}(t_1+t_0+t_0'-t_2)+\frac{\theta}{\sqrt{3}}(\bar{s}+\omega s)$, $M_{2_{(3,4)}}=\frac{2\theta}{\sqrt{3}}(t_0'-\omega^2 t_1-t_2)-\frac{\omega\theta}{\sqrt{3}}(s-\bar{s})$, $M_{2_{(4,5)}}=\frac{\bar{\eta} \theta^2 }{\sqrt{3}}(t_1+t_0+t_0'-t_2)-\frac{\theta^2\omega^2}{\sqrt{3}}(\bar{s}+\omega s)$
\scriptsize
\[\left(
\begin{array}{ccccc}
 -2t_0' & \omega^2\bar{s} & \bar{\eta} t_2 & M_{3_{(1,4)}} & -\bar{\eta}\theta t_2 \\
 \omega s & -2t_1 & \bar{\eta}(t_1+t_0'-t_2)+\omega^2(\omega\bar{s}+s) & M_{3_{(2,4)}} & M_{3_{(2,5)}} \\
 \eta t_2 & \eta (t_1+t_0'-t_2)+\omega(\omega^2 s+\bar{s}) & 0 & M_{3_{(3,4)}} & 0 \\
 \overline{M_{3_{(1,4)}}} & \overline{M_{3_{(2,4)}}} & \overline{M_{3_{(3,4)}}} & 0 & M_{3_{(4,5)}}\\
 -\eta \theta t_2 & \overline{M_{3_{(2,5)}}} & 0 & \overline{M_{3_{(4,5)}}} & 0 \\
\end{array}
\right),\]
where
$M_{3_{(1,4)}}=\frac{2\theta}{\sqrt{3}}(t_0'+\omega t_2)-\frac{\omega^2\theta}{\sqrt{3}}\bar{s}$,
$M_{3_{(2,4)}}=\frac{2\theta}{\sqrt{3}}(-\omega^2 t_1+\omega t_0'-\omega t_2)-\frac{\theta}{\sqrt{3}}(\bar{s}-s)$,
$M_{3_{(2,5)}}=-\theta\bar{\eta}(t_1+t_0'-t_2)-\theta\omega^2(\omega\bar{s}+s)$,
$M_{3_{(3,4)}}=-\frac{\eta \theta }{\sqrt{3}}(t_1+t_0')-\frac{\theta}{\sqrt{3}}(\omega\bar{s}+s)$, $M_{3_{(4,5)}}=\frac{\theta^2\bar{\eta}}{\sqrt{3}}(t_1+t_0')+\frac{\omega^2\theta^2}{\sqrt{3}}(\omega\bar{s}+s)$

\tiny
\[\left(
\begin{array}{cccc}
 \frac{2\theta^2}{3}(t_0+t_0') & -\frac{\bar{\eta}\theta^2}{3}(-\omega^2 t_0+t_0'- t_2)+\frac{\theta^2}{3}s & -\frac{\bar{\eta}\theta^2}{3}(t_2+\omega t_0')-\frac{\theta^2}{3}s & 0 \\
 -\frac{\eta \theta^2}{3}(-\omega t_0+t_0'-t_2)+\frac{\theta^2}{3}\bar{s} & \frac{2\theta^2}{3}(t_1+t_0+t_0'-t_2)+\frac{\omega\theta^2}{3}(\bar{s}+\omega s) & -\frac{\bar{\eta}\theta^2}{3}(t_0'-\omega^2t_1-t_2)-\frac{\omega^2\theta^2}{3}(s-\bar{s}) & 0 \\
 -\frac{\eta \theta^2}{3}(t_2+\omega^2 t_0')-\frac{\theta^2}{3}\bar{s} & -\frac{\eta\theta^2}{3}(t_0'-\omega t_1- t_2)-\frac{\omega\theta^2}{3}(\bar{s}-s) &  \frac{2\theta^2}{3}(t_1+t_0')-\frac{\omega\theta^2}{3}(\omega\bar{s}+s) & 0 \\
 0 & 0 & 0 & 0 \\
\end{array}
\right).\]
\normalsize

If we call the above four matrices $N_1,N_2,N_3,N_4$, then $$Coeff_{tr(Q^n)}(c)=(Tr(N_1^n)+Tr(N_2^n)+Tr(N_3^n))\cdot\frac{\lambda(b)}{\lambda(c)} +Tr(N_4^n)\cdot\frac{\lambda(d)}{\lambda(c)} .$$
That is, $$Coeff_{tr(Q^n)}(c)=(Tr(N_1^n)+Tr(N_2^n)+Tr(N_3^n))\cdot\frac{1}{\sqrt{3}} +Tr(N_4^n)\cdot\frac{\sqrt{3}}{\theta^2}. $$

We now prove that $t_0=t_1=t_2$ and $s=0$.
Suppose $A=D$ and $B=C$ for each equation $E(z_0,z_1),E(z_0,z_2),E(z_1,z_2)$.  Then $$0=t_0'-t_2+\frac{\omega(\bar{s}+\omega s)}{2} \mbox{ and } t_0'=t_1$$ and $$t_0=t_1+t_0'-t_2-\frac{\omega(\omega\bar{s}+s)}{2} \mbox{ and } t_0'=t_2$$ and $$0=t_0'-t_2-\frac{\omega(\omega\bar{s}+s)}{2} \mbox{ and } t_0+t_0'-t_2+\frac{\omega(\bar{s}+\omega s)}{2}=t_2$$
So $t_0'=t_1=t_2$ and $\omega(\bar{s}+\omega s)=0$ and $\omega(\omega\bar{s}+s)=0$\\
$\Longrightarrow s=0$\\
So $t_0'=t_0=t_1=t_2$ and $s=0$.  Setting $Coeff_{tr(Q^n)}(c)=Coeff_{tr(Q^n)}(z_0)$ for $n=2$, implies $t_0=0$.\\
$\Longrightarrow$ we have the null solution.\\
Now, suppose only two equations $E(z_i,z_j)$ (where $(i,j)\in\{(0,1),(0,2),(1,2)\}$) satisfy $A=D$ and $B=C$.\\
First, suppose only $E(z_0,z_1)$ and $E(z_0,z_2)$ satisfy $A=D$ and $B=C$.\\
Then $$0=t_0'-t_2+\frac{\omega(\bar{s}+\omega s)}{2} \mbox{ and } t_0'=t_1,$$\\
 and $$t_0=t_1+t_0'-t_2-\frac{\omega(\omega\bar{s}+s)}{2} \mbox{ and } t_0'=t_2.$$\\
$\Longrightarrow t_0'=t_1=t_2$ and $\bar{s}+\omega s=0$ and $t_0=t_1-\frac{\omega(\omega\bar{s}+s)}{2}$\\
$\Longrightarrow t_0=t_1-\frac{\omega(-\omega^2+1)}{2}s$\\
$\Longrightarrow t_1-t_0=\frac{\omega(1-\omega^2)}{2}s$\\
$\Longrightarrow s=\frac{2}{\omega-1}(t_1-t_0)$\\
Setting $Coeff_{tr(Q^n)}(c)=Coeff_{tr(Q^n)}(z_0)$ for $n=2$,\\
$\Longrightarrow t_0=t_1=0$\\
$\Longrightarrow$ null solution.\\
Now, suppose only $E(z_0,z_1)$ and $E(z_1,z_2)$ satisfy $A=D$ and $B=C$.\\
Then $$0=t_0'-t_2+\frac{\omega(\bar{s}+\omega s)}{2} \mbox{ and } t_0'=t_1,$$
and $$0=t_0'-t_2-\frac{\omega(\omega\bar{s}+s)}{2} \mbox{ and } t_0+t_0'-t_2+\frac{\omega(\bar{s}+\omega s)}{2}=t_2$$\\
$\Longrightarrow t_0=t_2$ and $\omega(\bar{s}+\omega s) +\omega(\omega\bar{s}+s)=0$\\
$\Longrightarrow (\omega+\omega^2)(\bar{s}+s)=0$\\
$\Longrightarrow \bar{s}+s=0$\\
$\Longrightarrow \bar{s}=-s$\\
and $t_0+t_0'-t_2+\frac{\omega(-1+\omega)}{2}s=t_2$\\
$\Longrightarrow t_0+t_1-t_2+\frac{\omega(\omega-1)}{2}s=t_2$\\
$\Longrightarrow s=\frac{2}{\omega(\omega-1)}(2t_2-t_0-t_1)=\frac{2}{\omega(\omega-1)}(t_0-t_1)$\\
Setting $Coeff_{tr(Q^n)}(c)=Coeff_{tr(Q^n)}(z_0)$ for $n=2$ implies $t_0=t_1=0$\\
$\Longrightarrow$ null solution.\\
Suppose only $E(z_0,z_2)$ and $E(z_1,z_2)$ satisfy $A=D$ and $B=C$.\\
Then $$t_0=t_1+t_0'-t_2-\frac{\omega(\omega\bar{s}+s)}{2} \mbox{ and } t_0'=t_2$$\\
and $$0=t_0'-t_2-\frac{\omega(\omega\bar{s}+s)}{2} \mbox{ and } t_0+t_0'-t_2+\frac{\omega(\bar{s}+\omega s)}{2}=t_2$$\\
$\Longrightarrow \frac{\omega(\omega\bar{s}+s)}{2}=0$ and $t_0=t_1$\\
It follows that $\bar{s}=-\omega^2 s$.  Hence $t_0+\frac{\omega(\bar{s}+\omega s)}{2}=t_2$\\
$\Longrightarrow t_0+\frac{\omega(-\omega^2+\omega)}{2}s=t_2$\\
$\Longrightarrow t_0+\frac{(-1+\omega^2)}{2}s=t_2$\\
$\Longrightarrow s=\frac{2}{(\omega^2-1)}(t_2-t_0)$\\
Setting $Coeff_{tr(Q^n)}(c)=Coeff_{tr(Q^n)}(z_0)$ for $n=2$ implies $t_0=t_2=0$\\
$\Longrightarrow$ null solution\\
Now suppose only one equation $E(z_i,z_j)$ (where $(i,j)\in\{(0,1),(0,2),(1,2)\}$) satisfies $A=D$ and $B=C$.\\
First, suppose $E(z_0,z_1)$ satisfies $A=D$ and $B=C$. Then $$0=t_0'-t_2+\frac{\omega(\bar{s}+\omega s)}{2} \mbox{ and } t_0'=t_1,$$\\
and since ($A=C$ or $B=D$) for $E(z_0,z_2)$ and $E(z_1,z_2)$, we have $$\mbox{ either } t_0=t_2 \mbox{ or } t_0'=t_1+t_0'-t_2-\frac{\omega(\omega\bar{s}+s)}{2}$$\\
and $$\mbox{ either } t_1=t_2 \mbox{ or } t_0+t_0'-t_2+\frac{\omega(\bar{s}+\omega s)}{2}=t_1+t_0'-t_2-\frac{\omega(\omega\bar{s}+s)}{2}$$\\
That is, $$\mbox{ either } t_0=t_2 \mbox { or } 0=t_1-t_2-\frac{\omega(\omega\bar{s}+s)}{2}$$\\
and $$\mbox{ either } t_1=t_2 \mbox{ or } t_0-t_1-\frac{(\bar{s}+s)}{2}=0$$\\
If $t_0=t_2$ and $t_1=t_2$, then $t_0'=t_1=t_2=t_0$ and $\bar{s}+\omega s=0$.\\
Setting $Coeff_{tr(Q^n)}(z_0)=Coeff_{tr(Q^n)}(z_2)$ for $n=2,3$ simultaneously \\
$\Longrightarrow s=0$\\
$\Longrightarrow$ null solution.\\
If $t_0=t_2 \mbox{ and } t_0-t_1-\frac{(\bar{s}+s)}{2}=0,$ then since $0=t_0'-t_2+\frac{\omega(\bar{s}+\omega s)}{2},$\\
we have 
$0=t_1-t_2+\frac{\omega(\bar{s}+\omega s)}{2}$ and $0=t_2-t_1-\frac{(\bar{s}+s)}{2}$\\
Adding the two together, we get \\
$0=(\omega-1)\bar{s}+(\omega^2-1)s$\\
$\Longrightarrow \bar{s}=\frac{(1-\omega^2)}{\omega-1}s$\\
$\Longrightarrow \bar{s}=-(\omega+1)s$\\
$\Longrightarrow t_0-t_1-\frac{(-(\omega+1)s+s)}{2}=0$\\
$\Longrightarrow t_0-t_1-\frac{(-\omega)}{2}s=0$\\
$\Longrightarrow s=\frac{2}{\omega}(t_1-t_0)$\\
So $t_0'=t_1$, $t_0=t_2$, and $s=\frac{2}{\omega}(t_1-t_0)$.\\
Setting $Coeff_{tr(Q^n)}(z_0)=Coeff_{tr(Q^n)}(z_2)$ for $n=2,3$ simultaneously \\
$\Longrightarrow t_0=t_1$\\
$\Longrightarrow t_0'=t_1=t_0=t_2$ and $s=0$\\
$\Longrightarrow$ null solution.\\
If $0=t_1-t_2-\frac{\omega(\omega\bar{s}+s)}{2}$ and $t_1=t_2$,\\
$\Longrightarrow \omega\bar{s}+s=0$\\
But $0=t_0'-t_2+\frac{\omega(\bar{s}+\omega s)}{2}$\\
$=t_1-t_2+\frac{\omega(\bar{s}+\omega s)}{2}$\\
$=0+\frac{\omega(\bar{s}+\omega s)}{2}$\\
$\Longrightarrow \bar{s}+\omega s=0$\\
$\Longrightarrow \omega\bar{s}+s=0$ and $\omega\bar{s}+\omega^2 s=0$\\
$\Longrightarrow (1-\omega^2)s=0$\\
$\Longrightarrow s=0$\\
So $t_0'=t_1=t_2$ and $s=0$.\\
Setting $Coeff_{tr(Q^n)}(z_0)=Coeff_{tr(Q^n)}(z_2)$ for $n=2,3$ simultaneously \\
$\Longrightarrow t_0=t_1$\\
$\Longrightarrow t_0'=t_1=t_2=t_0$ and $s=0$\\
$\Longrightarrow$ null solution.\\
If $$0=t_1-t_2-\frac{\omega(\omega\bar{s}+s)}{2} \mbox{ and } 0=t_0-t_1-\frac{(\bar{s}+s)}{2},$$
then since $0=t_1-t_2+\frac{\omega(\bar{s}+\omega s)}{2}$ and $0=t_1-t_2-\frac{\omega(\omega\bar{s}+s)}{2}$,\\
subtracting the second equation from the first gives $0=\frac{\omega+\omega^2}{2}(\bar{s}+s)$\\
$\Longrightarrow \bar{s}+s=0$\\
$\Longrightarrow t_0=t_1$\\
Also, $0=t_1-t_2-\frac{\omega(\omega(-s)+s)}{2}$\\
$\Longrightarrow s=\frac{2}{\omega(1-\omega)}(t_1-t_2)$\\
So $t_0'=t_1$ and $t_0=t_1$ and $s=\frac{2}{\omega(1-\omega)}(t_1-t_2)$\\
Setting $Coeff_{tr(Q^n)}(z_0)=Coeff_{tr(Q^n)}(z_2)$ for $n=2,3$ simultaneously \\
$\Longrightarrow t_0=t_2$\\
So $t_0'=t_1=t_0=t_2$ and $s=0$\\
$\Longrightarrow$ null solution.\\
Next, suppose only $E(z_0,z_2)$ satisfies $A=D$ and $B=$C.  Then $t_0=t_1+t_0'-t_2-\frac{\omega(\omega\bar{s}+s)}{2}$ and $t_0'=t_2$, \\
and either $t_0=t_1$ or $t_0'=t_0+t_0'-t_2+\frac{\omega(\bar{s}+\omega s)}{2}$,\\
and either $t_1=t_2$ or $t_0-t_1-\frac{(\bar{s}+s)}{2}=0$.\\
That is, $t_0=t_1$ or $0=t_0-t_2+\frac{\omega(\bar{s}+\omega s)}{2}$,\\
and $t_1=t_2$ or $t_0-t_1-\frac{\bar{s}+s}{2}=0$.\\
If $t_0=t_1$ and $t_1=t_2$, then $t_0'=t_2=t_1=t_0$ and $\omega\bar{s}+s=0$.\\
Setting $Coeff_{tr(Q^n)}(z_0)=Coeff_{tr(Q^n)}(z_1)$ for $n=2,3$ simultaneously \\
$\Longrightarrow s=0$\\
$\Longrightarrow$ null solution.\\
If $t_0=t_1$ and $t_0-t_1-\frac{\bar{s}+s}{2}=0$, then $\bar{s}+s=0$ and $\omega\bar{s}+s=0$\\
$\Longrightarrow s=0$\\
So $t_0=t_1$ and $t_0'=t_2$ and $s=0$.\\
Setting $Coeff_{tr(Q^n)}(z_0)=Coeff_{tr(Q^n)}(z_1)$ for $n=2,3$ simultaneously \\
$\Longrightarrow t_0=t_2$\\
$\Longrightarrow t_0'=t_2=t_0=t_1$ and $s=0$\\
$\Longrightarrow$ null solution.\\
If $0=t_0-t_2+\frac{\omega(\bar{s}+\omega s)}{2}$ and $t_1=t_2$,\\
then $t_0=t_1-\frac{\omega(\omega\bar{s}+s)}{2}$ and $0=t_0-t_2+\frac{\omega(\bar{s}+\omega s)}{2}$\\
$\Longrightarrow 0=t_2-t_0-\frac{\omega(\omega\bar{s}+s)}{2}$ and $0=t_0-t_2+\frac{\omega(\bar{s}+\omega s)}{2}$\\
Adding the above two equations gives $(\omega-\omega^2)\bar{s}+(\omega^2-\omega)s=0$\\
$\Longrightarrow \bar{s}=s$\\
$\Longrightarrow 0=t_0-t_2+\frac{\omega(1+\omega)}{2}s$\\
$\Longrightarrow s=\frac{2}{\omega(1+\omega)}(t_2-t_0)$.\\
Setting $Coeff_{tr(Q^n)}(z_0)=Coeff_{tr(Q^n)}(z_1)$ for $n=2,3$ simultaneously \\
$\Longrightarrow t_0=t_1$\\
$\Longrightarrow t_0'=t_0=t_1=t_2$ and $s=0$\\
$\Longrightarrow$ null solution.\\
If $0=t_0-t_2+\frac{\omega(\bar{s}+\omega s)}{2}$ and $0=t_0-t_1-\frac{\bar{s}+s}{2}$, \\
then $0=t_1-t_0-\frac{\omega(\omega\bar{s}+s)}{2}$ and $0=t_0-t_1-\frac{\bar{s}+s}{2}$\\
$\Longrightarrow 0=-(\omega^2+1)\bar{s}+(-\omega-1)s$\\
$\Longrightarrow 0=-(\omega^2+1)\bar{s}-(\omega+1)s$\\
$\Longrightarrow \bar{s}=-\frac{\omega+1}{\omega^2+1}s$\\
$=-\frac{\omega+1}{-\omega}s$\\
$=-\frac{-\omega^2}{-\omega}s$\\
$=-\omega s$\\
$\Longrightarrow \bar{s}+\omega s=0$\\
$\Longrightarrow t_0=t_2$ and $0=t_0-t_1-\frac{-\omega+1}{2}s$\\
$\Longrightarrow s=\frac{2}{1-\omega}(t_0-t_1)$\\
So $t_0'=t_2=t_0$ and $s=\frac{2}{1-\omega}(t_0-t_1)$\\
Setting $Coeff_{tr(Q^n)}(z_0)=Coeff_{tr(Q^n)}(z_1)$ for $n=2,3$ simultaneously \\
$\Longrightarrow t_0=t_1$\\
$\Longrightarrow t_0'=t_0=t_1=t_2$ and $s=0$\\
$\Longrightarrow$ null solution.\\
Finally, suppose only $E(z_1,z_2)$ satisfies $A=D$ and $B=C$.\\
Then $0=t_0'-t_2-\frac{\omega(\omega\bar{s}+s)}{2}$ and $t_0+t_0'-2t_2+\frac{\omega(\bar{s}+\omega s)}{2}=0$,\\
and either $t_0=t_1$ or $0=t_0-t_2+\frac{\omega(\bar{s}+\omega s)}{2}$\\
and either $t_0=t_2$ or $0=t_1-t_2-\frac{\omega(\omega\bar{s}+s)}{2}$.\\
If $t_0=t_1$ and $t_0=t_2$, then $0=t_0'-t_2-\frac{\omega(\omega\bar{s}+s)}{2}$ and $0=t_0'-t_2+\frac{\omega(\bar{s}+\omega s)}{2}$\\
and subtracting the first equation from the second equation gives $0=(\omega +\omega^2)\bar{s}+(\omega^2+\omega)s$\\
$\Longrightarrow \bar{s}+s=0$\\
$\Longrightarrow 0=t_0'-t_2-\frac{\omega(-\omega+1)}{2}s$\\
$\Longrightarrow s=\frac{2}{\omega(1-\omega)}(t_0'-t_2)$\\
Setting $Coeff_{tr(Q^n)}(z_0)=Coeff_{tr(Q^n)}(z_1)$ for $n=2,3$ simultaneously \\
$\Longrightarrow t_0'=t_0$\\
$\Longrightarrow t_0'=t_0=t_1=t_2$ and $s=0$\\
$\Longrightarrow$ null solution.\\
If $t_0=t_1$ and $0=t_1-t_2-\frac{\omega(\omega\bar{s}+s)}{2}$,\\
then $t_0'=t_1$.\\
Furthermore, $0=t_0'-t_2-\frac{\omega(\omega\bar{s}+s)}{2}$ and $0=t_1+t_0'-2t_2+\frac{\omega(\bar{s}+\omega s)}{2}$\\
Subtracting the first equation from the second gives $0=t_1-t_2+\frac{(\omega+\omega^2)}{2}(\bar{s}+s)$\\
$\Longrightarrow 0=t_1-t_2-\frac{\bar{s}+s}{2}$\\
But $0=t_1-t_2-\frac{\omega(\omega\bar{s}+s)}{2}$.\\
Subtracting the first equation from the second gives $\frac{(-\omega^2+1)\bar{s}+(-\omega+1)s}{2}$\\
$\Longrightarrow 0=(1-\omega^2)\bar{s}+(1-\omega)s$\\
$\Longrightarrow 0=(1+\omega)\bar{s}+s$\\
$\Longrightarrow 0=-\omega^2\bar{s}+s$\\
$\Longrightarrow \bar{s}=\omega s$\\
$\Longrightarrow 0=t_1-t_2-\frac{\omega(\omega^2+1)}{2}s$\\
$\Longrightarrow 0=t_1-t_2-\frac{\omega(-\omega)}{2}s$\\
$\Longrightarrow s=\frac{2}{\omega^2}(t_2-t_1)$\\
So $t_0'=t_1=t_0$ and $s=2\omega(t_2-t_1)$\\
Setting $Coeff_{tr(Q^n)}(z_0)=Coeff_{tr(Q^n)}(z_1)$ for $n=2,3$ simultaneously \\
$\Longrightarrow t_0=t_2$\\
$\Longrightarrow t_0'=t_1=t_0=t_2$ and $s=0$\\
$\Longrightarrow$ null solution.\\
If $t_0-t_2+\frac{\omega(\bar{s}+\omega s)}{2}=0$ and $t_0=t_2$,\\
then $\bar{s}+\omega s=0$\\
$\Longrightarrow 0=t_0+t_0'-2t_2+0$\\
$\Longrightarrow 0=t_0'-t_2$\\
$\Longrightarrow t_0'=t_2$\\
$\Longrightarrow \omega\bar{s}+s=0$\\
But $\bar{s}+\omega s=0$\\
$\Longrightarrow s=0$\\
So $t_0'=t_2=t_0$ and $s=0$\\
Setting $Coeff_{tr(Q^n)}(z_0)=Coeff_{tr(Q^n)}(z_1)$ for $n=2,3$ simultaneously \\
$\Longrightarrow t_1=t_0$\\
$\Longrightarrow t_0'=t_0=t_1=t_2$ and $s=0$\\
$\Longrightarrow$ null solution.\\
If $0=t_0-t_2+\frac{\omega(\bar{s}+\omega s)}{2}$ and $0=t_1-t_2-\frac{\omega(\omega\bar{s}+s)}{2}$:
Recall that $0=t_0'-t_2-\frac{\omega(\omega\bar{s}+s)}{2}$ and $0=t_0+t_0'-2t_2+\frac{\omega(\bar{s}+\omega s)}{2}$.\\
Subtracting the first equation from the second gives $0=t_0-t_2+\frac{\omega^2+\omega}{2}(\bar{s}+s)$\\
$\Longrightarrow$ $0=t_0-t_2-\frac{\bar{s}+s}{2}$\\
But $0=t_0-t_2+\frac{\omega(\bar{s}+\omega s)}{2}$\\
Subtracting the first equation from the second gives $0=\frac{(\omega+1)\bar{s}+(\omega^2+1)s}{2}$\\
$\Longrightarrow 0=(\omega+1)\bar{s}-\omega s$\\
$\Longrightarrow 0=-\omega^2\bar{s}-\omega s$\\
$\Longrightarrow 0=\omega\bar{s}+s$\\
$\Longrightarrow \bar{s}=-\omega^2 s$\\
Plugging this into $0=t_0-t_2+\frac{\omega(\bar{s}+\omega s)}{2}$, we get $0=t_0-t_2+\frac{\omega(-\omega^2+\omega)}{2}s$\\
$\Longrightarrow 0=t_0-t_2+\frac{\omega^2-1}{2}s$\\
$\Longrightarrow s=\frac{2}{\omega^2-1}(t_2-t_0)$\\
Also, using the fact that $\omega\bar{s}+s=0$ and substituting into $0=t_1-t_2-\frac{\omega(\omega\bar{s}+s)}{2}$, we get $0=t_1-t_2+0$\\
$\Longrightarrow t_1=t_2$\\
Similarly, $0=t_0'-t_2-\frac{\omega(\omega\bar{s}+s)}{2}$ becomes $0=t_0'-t_2-0$\\
$\Longrightarrow t_0'=t_2$\\
Setting $Coeff_{tr(Q^n)}(z_0)=Coeff_{tr(Q^n)}(z_1)$ for $n=2,3$ simultaneously \\
$\Longrightarrow t_0=t_2$\\
$\Longrightarrow s=0$\\
$\Longrightarrow t_0=t_2=t_1=t_0'$ and $s=0$\\
$\Longrightarrow$ null solution.\\
Now, suppose no equation $E(z_i,z_j)$ (where $(i,j)\in\{(0,1),(0,2),(1,2)\}$) satisfies $A=D$ and $B=C$.  Then, for each $E(z_i,z_j)$, we have that either $A=C$ or $B=D$.\\
That is, either $t_0=t_1$ or $0=t_0-t_2+\frac{\omega(\bar{s}+\omega s)}{2}$, \\
and either $t_0=t_2$ or $0=t_1-t_2-\frac{\omega(\omega\bar{s}+s)}{2}$,\\
and either $t_1=t_2$ or $0=t_0-t_1-\frac{\bar{s}+s}{2}$.\\
Suppose $t_0=t_1$.  If $t_0=t_2$, then $t_0=t_1=t_2$.\\
Setting $Coeff_{tr(Q^n)}(z_1)=Coeff_{tr(Q^n)}(z_2)$ for $n=2,3$ simultaneously \\
$\Longrightarrow Re(s)=0$.\\
Setting $Coeff_{tr(Q^n)}(z_0)=Coeff_{tr(Q^n)}(z_1)$ for $n=2,3$ simultaneously \\
$\Longrightarrow$ either $s=0$ or ($s=0$ and $t_0=-\frac{1}{2}(-1+\sqrt{21})t_0'$)\\
If the latter case, then setting $Coeff_{tr(Q^n)}(c)=Coeff_{tr(Q^n)}(z_0)$ for $n=2$\\
$\Longrightarrow t_0'=0$\\
$\Longrightarrow$ null solution\\
So $s=0$ and $t_0=t_1=t_2$.\\
$\Longrightarrow$ done.\\
If $0=t_1-t_2-\frac{\omega(\omega\bar{s}+s)}{2}$, then consider the two possibilities for $E(z_1,z_2)$: either $t_1=t_2$ or $0=t_0-t_1-\frac{\bar{s}+s}{2}$.\\
If $t_1=t_2$, then $t_0=t_1=t_2$ and we're done by the previous case.\\
If $0=t_0-t_1-\frac{\bar{s}+s}{2}$, then $Re(s)=0$ because $t_0=t_1$.\\
Furthermore $t_2=t_1-\frac{\omega(\omega\bar{s}+s)}{2}$\\
$=t_0-\frac{\omega(\omega\bar{s}+s)}{2}$.\\
Setting $Coeff_{tr(Q^n)}(z_0)=Coeff_{tr(Q^n)}(z_2)$ for $n=2,3$ simultaneously \\
$\Longrightarrow s=0$ or ($t_0=-\frac{1}{10}(1+\sqrt{21})t_0'$ and $s=0$).\\
If the latter case, setting $Coeff_{tr(Q^n)}(c)=Coeff_{tr(Q^n)}(z_0)$ for $n=2$\\
$\Longrightarrow t_0'=0$\\
$\Longrightarrow$ null solution.\\
So $s=0$ and $t_0=t_1=t_2$.\\
$\Longrightarrow$ done.\\
Now suppose $0=t_0-t_2+\frac{\omega(\bar{s}+\omega s)}{2}$.\\
If $t_0=t_2$, then $\bar{s}+\omega s=0$.\\
We have either $t_1=t_2$ or $0=t_0-t_1-\frac{\bar{s}+s}{2}$.\\
If $t_1=t_2$, then $t_0=t_1=t_2$, and we're done.\\
If $0=t_0-t_1-\frac{\bar{s}+s}{2}$, then $0=t_0-t_1-\frac{(-\omega+1)}{2}s$\\
$\Longrightarrow s=\frac{2}{1-\omega}(t_0-t_1)$\\
So $t_0=t_2$ and $s=\frac{2}{1-\omega}(t_0-t_1)$.\\
Setting $Coeff_{tr(Q^n)}(z_0)=Coeff_{tr(Q^n)}(z_1)$ for $n=2,3$ simultaneously \\
$\Longrightarrow t_0=t_1$\\
$\Longrightarrow t_0=t_1=t_2$ and $s=0$\\
$\Longrightarrow$ done.\\
If $0=t_1-t_2-\frac{\omega(\omega\bar{s}+s)}{2}$, then consider the two possibilities for $E(z_1,z_2)$: Either $t_1=t_2$ or $0=t_0-t_1-\frac{\bar{s}+s}{2}$.\\
If $t_1=t_2$, then $\omega\bar{s}+s=0$\\
$\Longrightarrow \bar{s}=-\omega^2 s$\\
But $0=t_0-t_2+\frac{\omega(\bar{s}+\omega s)}{2}$\\
$\Longrightarrow 0=t_0-t_2+\frac{\omega(-\omega^2+\omega)}{2}s$\\
$\Longrightarrow 0=t_0-t_2+\frac{(\omega^2-1)}{2}s$\\
$\Longrightarrow s=\frac{2}{\omega^2-1}(t_2-t_0)$\\
Setting $Coeff_{tr(Q^n)}(z_0)=Coeff_{tr(Q^n)}(z_1)$ for $n=2,3$ simultaneously \\
$\Longrightarrow t_0=t_1$\\
$\Longrightarrow t_0=t_1=t_2$ and $s=0$\\
$\Longrightarrow$ done.\\
If $0=t_0-t_1-\frac{\bar{s}+s}{2}$, then $t_1=t_0-Re(s)$ and $t_2=t_1-\frac{\omega(\omega\bar{s}+s)}{2}$.\\
Setting $Coeff_{tr(Q^n)}(z_0)=Coeff_{tr(Q^n)}(z_1)$ for $n=2,3$ simultaneously \\
$\Longrightarrow Re(s)=0$\\
$\Longrightarrow t_0=t_1$\\
Setting $Coeff_{tr(Q^n)}(z_0)=Coeff_{tr(Q^n)}(z_2)$ for $n=2,3$ simultaneously \\
$\Longrightarrow s=0$ or ($s=0$ and $t_0=-\frac{1}{10}(1+\sqrt{21})t_0'$)\\
If the latter, then setting $Coeff_{tr(Q^n)}(c)=Coeff_{tr(Q^n)}(z_0)$ for $n=2$ \\
$\Longrightarrow t_0'=0$\\
$\Longrightarrow$ null solution.\\
So $s=0$ and $t_0=t_1=t_2$\\
$\Longrightarrow$ done.\\
This concludes our proof that $t_0=t_1=t_2$ and $s=0$.\\

The matrices $N_1,N_2,N_3,N_4$ simplify as follows:

\[\left(
\begin{array}{ccccc}
 0 & \eta t & \eta t_0' & -\frac{\eta \theta }{\sqrt{3}}(t+t_0') & 0 \\
 \bar{\eta}t & -2t_0' & 0 & \frac{2\theta}{\sqrt{3}}(\omega t +t_0') & -\bar{\eta}\theta t \\
 \bar{\eta}t_0' & 0 & -2t & \frac{2\theta}{\sqrt{3}}(t+\omega t_0') & -\bar{\eta} \theta t_0' \\
 -\frac{\bar{\eta} \theta }{\sqrt{3}}(t+t_0') & \frac{2\theta}{\sqrt{3}}(\bar{\omega} t +t_0') & \frac{2\theta}{\sqrt{3}}(t+\bar{\omega} t_0') & 0 & \frac{\bar{\eta}\theta^2(t+t_0')}{\sqrt{3}} \\
 0 & -\eta \theta t & -\eta \theta t_0' & \frac{\eta \theta^2(t+t_0')}{\sqrt{3}} & 0 \\
\end{array}
\right),\]

\[\left(
\begin{array}{ccccc}
 -2t & \bar{\eta}t_0' & 0 & \frac{2\theta}{\sqrt{3}}(t+\omega t_0') & -\bar{\eta} \theta t_0' \\
 \eta t_0' & 0 & \eta t & -\frac{\eta \theta }{\sqrt{3}}(t+t_0') & 0 \\
 0 & \bar{\eta}t & -2t_0' & \frac{2\theta}{\sqrt{3}}(t_0'+\omega t) & -\bar{\eta}\theta t \\
 \frac{2\theta}{\sqrt{3}}(t+\bar{\omega} t_0') & -\frac{\bar{\eta} \theta }{\sqrt{3}}(t+t_0') & \frac{2\theta}{\sqrt{3}}(t_0'+\bar{\omega} t) & 0 & \frac{\bar{\eta} \theta^2 }{\sqrt{3}}(t+t_0') \\
 -\eta \theta t_0' & 0 & -\eta \theta t & \frac{\eta \theta^2 }{\sqrt{3}}(t+t_0') & 0 \\
\end{array}
\right),\]

\[\left(
\begin{array}{ccccc}
 -2t_0' & 0 & \bar{\eta} t & \frac{2\theta}{\sqrt{3}}(t_0'+\omega t) & -\bar{\eta}\theta t \\
 0 & -2t & \bar{\eta}t_0' & \frac{2\theta}{\sqrt{3}}(\omega t_0'+t) & -\bar{\eta}\theta t_0' \\
 \eta t & \eta t_0' & 0 & -\frac{\eta \theta }{\sqrt{3}}(t+t_0') & 0 \\
 \frac{2\theta}{\sqrt{3}}(t_0'+\bar{\omega} t) & \frac{2\theta}{\sqrt{3}}(\bar{\omega} t_0'+t) & -\frac{\bar{\eta} \theta }{\sqrt{3}}(t+t_0') & 0 & \frac{\bar{\eta}\theta^2(t+t_0')}{\sqrt{3}} \\
 -\eta \theta t & -\eta \theta t_0' & 0 & \frac{\eta \theta^2(t+t_0')}{\sqrt{3}} & 0 \\
\end{array}
\right),\]

\[\left(
\begin{array}{cccc}
 \frac{2\theta^2}{3}(t+t_0') & -\frac{\bar{\eta}\theta^2}{3}(t_0'+\omega t) & -\frac{\bar{\eta}\theta^2}{3}(t+\omega t_0') & 0 \\
 -\frac{\eta \theta^2}{3}(t_0'+\bar{\omega} t) & \frac{2\theta^2}{3}(t+t_0') & -\frac{\bar{\eta}\theta^2}{3}(t_0'+\omega t) & 0 \\
 -\frac{\eta \theta^2}{3}(t+\bar{\omega} t_0') & -\frac{\eta\theta^2}{3}(t_0'+\bar{\omega} t) &  \frac{2\theta^2}{3}(t+t_0') & 0 \\
 0 & 0 & 0 & 0 \\
\end{array}
\right).\]

One can calculate $Tr(N_1^2)= 12t^2+12(t_0')^2+16\theta^2t^2+16\theta^2 (t_0')^2+\frac{8}{3}\theta^4(t+t_0')^2$. One finds that $Tr(N_2^2)=Tr(N_3^2)=Tr(N_1^2)$, and $Tr(N_4^2)=4\theta^4(t^2+(t_0')^2)$.  Hence, $Coeff_{tr(Q^2)}(c)=\big(12t^2+12(t_0')^2+20\theta^2t^2+20\theta^2(t_0')^2+\frac{8}{3}\theta^4(t+t_0')^2\big)\sqrt{3}$ and $Coeff_{tr(Q^2)}(z_0)=\big(\frac{4}{3}\theta^6(t^2+(t_0')^2)+\frac{4}{9}\theta^8(t+t_0')^2\big)\sqrt{3}$.
Setting $Coeff_{tr(Q^2)}(c)=Coeff_{tr(Q^2)}(z_0),$
we get a quadratic equation in the variables $t$ and $t_0'$:
\[t^2+(t_0')^2+\Big(\frac{1+\sqrt{21}}{2}\Big)t\cdot t_0' =0.\]  By the quadratic formula, we get $t_0'=\Big(\frac{1-\theta^2\pm \theta}{2}\Big)t$.

In sum, we have the matrix representation for $Q$:
\begin{align*}
\widetilde{Q}= &
\left(
\begin{array}{ccccc}
 0 & \eta t & \eta t_0' & -\frac{\eta \theta }{\sqrt{3}}(t+t_0') & 0 \\
 \bar{\eta}t & -2t_0' & 0 & \frac{2\theta}{\sqrt{3}}(\omega t +t_0') & -\bar{\eta}\theta t \\
 \bar{\eta}t_0' & 0 & -2t & \frac{2\theta}{\sqrt{3}}(t+\omega t_0') & -\bar{\eta} \theta t_0' \\
 -\frac{\bar{\eta} \theta }{\sqrt{3}}(t+t_0') & \frac{2\theta}{\sqrt{3}}(\bar{\omega} t +t_0') & \frac{2\theta}{\sqrt{3}}(t+\bar{\omega} t_0') & 0 & \frac{\bar{\eta}\theta^2(t+t_0')}{\sqrt{3}} \\
 0 & -\eta \theta t & -\eta \theta t_0' & \frac{\eta \theta^2(t+t_0')}{\sqrt{3}} & 0 \\
\end{array}\right)
\\
& \oplus
\left(
\begin{array}{ccccc}
 -2t & \bar{\eta}t_0' & 0 & \frac{2\theta}{\sqrt{3}}(t+\omega t_0') & -\bar{\eta} \theta t_0' \\
 \eta t_0' & 0 & \eta t & -\frac{\eta \theta }{\sqrt{3}}(t+t_0') & 0 \\
 0 & \bar{\eta}t & -2t_0' & \frac{2\theta}{\sqrt{3}}(t_0'+\omega t) & -\bar{\eta}\theta t \\
 \frac{2\theta}{\sqrt{3}}(t+\bar{\omega} t_0') & -\frac{\bar{\eta} \theta }{\sqrt{3}}(t+t_0') & \frac{2\theta}{\sqrt{3}}(t_0'+\bar{\omega} t) & 0 & \frac{\bar{\eta} \theta^2 }{\sqrt{3}}(t+t_0') \\
 -\eta \theta t_0' & 0 & -\eta \theta t & \frac{\eta \theta^2 }{\sqrt{3}}(t+t_0') & 0 \\
\end{array}\right)
\\
& \oplus 
\left(
\begin{array}{ccccc}
 -2t_0' & 0 & \bar{\eta} t & \frac{2\theta}{\sqrt{3}}(t_0'+\omega t) & -\bar{\eta}\theta t \\
 0 & -2t & \bar{\eta}t_0' & \frac{2\theta}{\sqrt{3}}(\omega t_0'+t) & -\bar{\eta}\theta t_0' \\
 \eta t & \eta t_0' & 0 & -\frac{\eta \theta }{\sqrt{3}}(t+t_0') & 0 \\
 \frac{2\theta}{\sqrt{3}}(t_0'+\bar{\omega} t) & \frac{2\theta}{\sqrt{3}}(\bar{\omega} t_0'+t) & -\frac{\bar{\eta} \theta }{\sqrt{3}}(t+t_0') & 0 & \frac{\bar{\eta}\theta^2(t+t_0')}{\sqrt{3}} \\
 -\eta \theta t & -\eta \theta t_0' & 0 & \frac{\eta \theta^2(t+t_0')}{\sqrt{3}} & 0 \\
\end{array}\right)
\\
& \oplus 
\left(
\begin{array}{cccc}
 \frac{2\theta^2}{3}(t+t_0') & -\frac{\bar{\eta}\theta^2}{3}(t_0'+\omega t) & -\frac{\bar{\eta}\theta^2}{3}(t+\omega t_0') & 0 \\
 -\frac{\eta \theta^2}{3}(t_0'+\bar{\omega} t) & \frac{2\theta^2}{3}(t+t_0') & -\frac{\bar{\eta}\theta^2}{3}(t_0'+\omega t) & 0 \\
 -\frac{\eta \theta^2}{3}(t+\bar{\omega} t_0') & -\frac{\eta\theta^2}{3}(t_0'+\bar{\omega} t) &  \frac{2\theta^2}{3}(t+t_0') & 0 \\
 0 & 0 & 0 & 0 \\
\end{array}\right)
\\
& \oplus 
\biggl(
\left(
\begin{array}{cc}
 0 & 0  \\
 0 & 0  \\
\end{array}\right) 
 \oplus 
 \left(
\begin{array}{c}
2\theta^2t \\
\end{array}\right)
\oplus 
\left(
\begin{array}{c}
 2\theta^2 t_0' \\
\end{array}\right)
\oplus
\left(
\begin{array}{c}
 -\frac{2}{3}\theta^4(t+t_0')\\
\end{array}\right) 
\biggr)^{\otimes 3}
\end{align*}
From the above representations for $T$ and $Q$, we compute the following trace values for $T$:
\begin{itemize}
\item $Z(T)=0$
\item $Z(T^2)=[4]$
\item $Z(T^3)=\frac{-3\sqrt{3}+\sqrt{7}}{6}$
\item $Z(T^4)=\frac{5}{3}\sqrt{\frac{2}{3}(23+5\sqrt{21})},$
\end{itemize}
and the following trace values for $Q$:
\begin{itemize}
\item $Z(Q)=0$
\item $Z(Q^2)=[4]$
\item $Z(Q^3)=-\frac{2}{3}\sqrt{8+3\sqrt{21}}$
\item $Z(Q^4)=\frac{2}{3}\sqrt{\frac{253}{3}+16\sqrt{21}},$
\end{itemize}
and the following trace values for mixtures of $T$ and $Q$:
\begin{itemize}
\item $Z(TQ)=0$
\item $Z(T^2Q)=\sqrt{\frac{151}{18}+\frac{41}{6}\sqrt{\frac{7}{3}}}$
\item $Z(TQ^2)=\frac{1}{3}(2\sqrt{3}+\sqrt{7})$
\item $Z(T^2Q^2)=\frac{7}{3}\sqrt{\frac{1}{6}(11+\sqrt{21})}.$
\end{itemize}

\section{Calculating Traces for $\rho^{\frac{1}{2}}(T)$ and $\rho^{\frac{1}{2}}(Q)$}

Now, we want to calculate the traces for $\rho^{\frac{1}{2}}(T)$ and $\rho^{\frac{1}{2}}(Q)$
We order the set of pairs of vertices lexicographically, with the ``alphabet'' ordered 
$b_0 < b_1 <  b_2 < d$ for the odd vertices $b_0,b_1,b_2,d$.
The rows of $M_{i_{b_0,v}}(\mathbb{C})$ are labelled

\[\left(
\begin{array}{cccc}
b_0 c b_0 c  \\
b_0 c b_1 c  \\
b_0 c b_2 c \\
b_0 c d c \\
b_0 z_0 b_0 c \\
\end{array} \right),
\left(
\begin{array}{cccc}
b_0 c b_0 z_0  \\
b_0 z_0 b_0 z_0  \\
\end{array} \right),
\left(
\begin{array}{cccc}
b_0 c b_1 z_1  \\
\end{array} \right),
\left(
\begin{array}{cccc}
b_0 c b_2 z_2  \\
\end{array} \right). \]

Similarly for $z_1$ and $z_2$ ; the labels of the rows of $M_{i_{b_1,v}}(\mathbb{C})$ are as above, but with the permutation $(012)$ applied to the indices of $z$; the labels of the rows of $M_{i_{b_2,v}}(\mathbb{C})$ are also as above, but with the permutation $(021)$ applied to the indices. 

The rows of $M_{i_{d,v}}(\mathbb{C})$ are labelled

\[\left(
\begin{array}{cccc}
d c d c  \\
d c b_0 c  \\
d c b_1 c \\
d c b_2 c \\
\end{array} \right),
\left(
\begin{array}{cccc}
d c b_0 z_0  \\
\end{array} \right),
\left(
\begin{array}{cccc}
d c b_1 z_1  \\
\end{array} \right),
\left(
\begin{array}{cccc}
d c b_2 z_2  \\
\end{array} \right). \]

To find the entries in the matrices for $\rho^{\frac{1}{2}}(T)$ and $\rho^{\frac{1}{2}}(Q)$, we will take advantage of our knowledge of the entries in the matrices for $T$ and $Q$.  First, we observe that for a length-6 path $p_1p_2p_3p_4p_5p_6$,  \[p_1p_2p_3p_4p_5p_6 = \rho^{\frac{1}{2}}\left (\left(\frac{1}{\sqrt{\frac{\lambda(p_2)}{\lambda(p_1}\frac{\lambda(p_5)}{\lambda(p_4)}}}\right)(p_2p_3p_4p_5p_6p_1)\right)\]
\[\Longrightarrow Coeff_{\rho^{\frac{1}{2}}(T)}(p_1p_2p_3p_4p_5p_6)= \left(\frac{1}{\sqrt{\frac{\lambda(p_1)}{\lambda(p_2}\frac{\lambda(p_4)}{\lambda(p_5)}}}\right)\cdot Coeff_{T}((p_2p_3p_4p_5p_6p_1).\]
Thus, the entry (for $\rho^{\frac{1}{2}}(T)$) with row labelled by $p_1p_2p_3p_4$ and column labelled by $p_4p_5p_6p_1$ is given by $\frac{1}{\sqrt{\frac{\lambda(p_1)}{\lambda(p_2}\frac{\lambda(p_4)}{\lambda(p_5)}}}$ times the entry (for $T$) with row labelled by $p_2p_3p_4p_5$ and column labelled by $p_5p_6p_1p_2$.   
We have the following matrix, which is the same for $M_{i_{b_0,c}}(\mathbb{C})$, $M_{i_{b_1,c}}(\mathbb{C})$, and $M_{i_{b_2,c}}(\mathbb{C})$ :

\[\left(
\begin{array}{ccccc}
 0 & t & t_0' & -\frac{(t+t_0')}{R^2}\cdot \frac{\sqrt{3}}{\theta} & 0 \\
 t & t_0' & s & -\frac{(t+t_0'+s)}{R^2}\cdot \frac{\sqrt{3}}{\theta} & -\theta t \\
 t_0' & \bar{s} & t & \frac{2(t+t_0')+s}{R^2}\cdot \frac{\sqrt{3}}{\theta} & -\theta t_0' \\
 -\frac{(t+t_0')}{R} & \frac{2(t+t_0')+s}{R} & -\frac{(t+t_0'+s)}{R} & 0 & \frac{\theta(t+t_0')}{R} \\
 0 & -\theta^2 t\cdot \frac{1}{\theta} & -\theta^2 t_0'\cdot \frac{1}{\theta} & \frac{\theta^2}{R^2}(t+t_0')\cdot \frac{\sqrt{3}}{\theta^2} & 0 \\
\end{array}
\right).\]
We have the following matrix, which is the same for $M_{i_{b_0,z_0}}(\mathbb{C})$, $M_{i_{b_1,z_1}}(\mathbb{C})$, and $M_{i_{b_2,z_2}}(\mathbb{C})$:
\[\left(
\begin{array}{cc}
	0 & 0 \\
	0 & 0 \\
\end{array}
\right).\]

\noindent We have the following matrix, which is the same for $M_{i_{b_0,z_1}}(\mathbb{C})$, $M_{i_{b_1,z_2}}(\mathbb{C})$, and $M_{i_{b_2,z_0}}(\mathbb{C})$:
\[\left(
\begin{array}{c}
	-t_0'\cdot \theta^2 \\
\end{array}
\right).\]

\noindent We have the following matrix, which is the same for $M_{i_{b_0,z_2}}(\mathbb{C})$, $M_{i_{b_1,z_0}}(\mathbb{C})$, and $M_{i_{b_2,z_1}}(\mathbb{C})$:
\[\left(
\begin{array}{c}
	-t\cdot \theta^2 \\
\end{array}
\right).\]
Now, for paths beginning at $d$, we have the following matrix for $M_{i_{d,c}}(\mathbb{C})$:
\[\left(
\begin{array}{cccc}
	0 & 0 & 0 & 0\\
	0 & -\frac{(t+t_0')}{R}\cdot \frac{1}{R} & -\frac{(t+t_0'+s)}{R}\cdot \frac{1}{R} & \frac{2(t+t_0')+s}{R}\cdot \frac{1}{R} \\
	
	0 & \frac{2(t+t_0')+s}{R}\cdot \frac{1}{R} & -\frac{(t+t_0')}{R}\cdot \frac{1}{R} & -\frac{(t+t_0'+s)}{R}\cdot \frac{1}{R} \\
	0 & -\frac{(t+t_0'+s)}{R}\cdot \frac{1}{R} & \frac{2(t+t_0')+s}{R}\cdot \frac{1}{R} & -\frac{(t+t_0')}{R}\cdot \frac{1}{R}\\
\end{array}
\right).\]
We have the following matrix, which is the same for $M_{i_{d,z_0}}(\mathbb{C})$, $M_{i_{d,z_1}}(\mathbb{C})$, and $M_{i_{d,z_2}}(\mathbb{C})$ :
\[\left(
\begin{array}{c}
	\frac{\theta (t+t_0')}{R}\cdot \frac{\theta}{R} \\
\end{array}
\right).\]

In sum, we have the matrix representation for $\rho^{\frac{1}{2}}(T)$:
\begin{align*}
\widetilde{\rho^{\frac{1}{2}}(T)}= &
\left(
\begin{array}{cccc}
 	0 & 0 & 0 & 0\\
	0 & -\frac{(t+t_0')}{R}\cdot \frac{1}{R} & -\frac{(t+t_0'+s)}{R}\cdot \frac{1}{R} & \frac{2(t+t_0')+s}{R}\cdot \frac{1}{R} \\
	0 & \frac{2(t+t_0')+s}{R}\cdot \frac{1}{R} & -\frac{(t+t_0')}{R}\cdot \frac{1}{R} & -\frac{(t+t_0'+s)}{R}\cdot \frac{1}{R} \\
	0 & -\frac{(t+t_0'+s)}{R}\cdot \frac{1}{R} & \frac{2(t+t_0')+s}{R}\cdot \frac{1}{R} & -\frac{(t+t_0')}{R}\cdot \frac{1}{R}\\
\end{array}\right)
\\
& \oplus
\biggl(
\left(
\begin{array}{c}
	\frac{\theta (t+t_0')}{R}\cdot \frac{\theta}{R} \\
\end{array}\right) 
\biggr)^{\otimes 3}
\\
& \oplus 
\left(
\begin{array}{ccccc}
 0 & t & t_0' & -\frac{(t+t_0')}{R^2}\cdot \frac{\sqrt{3}}{\theta} & 0 \\
 t & t_0' & s & -\frac{(t+t_0'+s)}{R^2}\cdot \frac{\sqrt{3}}{\theta} & -\theta t \\
 t_0' & \bar{s} & t & \frac{2(t+t_0')+s}{R^2}\cdot \frac{\sqrt{3}}{\theta} & -\theta t_0' \\
 -\frac{(t+t_0')}{R} & \frac{2(t+t_0')+s}{R} & -\frac{(t+t_0'+s)}{R} & 0 & \frac{\theta(t+t_0')}{R} \\
 0 & -\theta^2 t\cdot \frac{1}{\theta} & -\theta^2 t_0'\cdot \frac{1}{\theta} & \frac{\theta^2}{R^2}(t+t_0')\cdot \frac{\sqrt{3}}{\theta^2} & 0 \\
\end{array}\right)^{\otimes 3} 
\\
& \oplus 
\biggl( \left(
\begin{array}{cc}
	0 & 0 \\
	0 & 0 \\
\end{array}\right)
\oplus 
\left(
\begin{array}{c}
	-t_0'\cdot \theta^2 \\
\end{array}\right)
\oplus
\left(
\begin{array}{c}
	-t\cdot \theta^2 \\
\end{array}\right) 
\biggr)^{\otimes 3},
\end{align*}
where $$t=\frac{1}{12}\big(-3+\sqrt{21}+\sqrt{-114+26\sqrt{21}}\big),$$ $$t_0'=\frac{1}{4}\big(-7-\sqrt{21}+\sqrt{54+14\sqrt{21}}\big)t,$$ and $$s=\frac{1}{4}\big(3-\sqrt{21}\big)+\frac{1}{2}\sqrt{\frac{-57+13\sqrt{21}}{6}}\cdot i.$$
We find the matrix representation for $\rho^{\frac{1}{2}}(Q)$ in a similar manner.
We have the following matrix, which is the same for $M_{i_{b_0,c}}(\mathbb{C})$, $M_{i_{b_1,c}}(\mathbb{C})$, and $M_{i_{b_2,c}}(\mathbb{C})$:

\[\left(
\begin{array}{ccccc}
 0 & -2t & -2t_0' & \frac{2\theta^2(t+t_0')}{3}\cdot \frac{\sqrt{3}}{\theta} & 0 \\
 \omega^2\eta t & \eta t_0' & 0 & -\frac{\eta \theta^2(t_0'+\omega^2 t)}{3}\cdot \frac{\sqrt{3}}{\theta} & -\omega^2 \eta \theta t \\
 \omega^2 \eta t_0' & 0 & \eta t & -\frac{\eta \theta^2(t+\omega^2t_0')}{3}\cdot \frac{\sqrt{3}}{\theta} & -\omega^2 \eta \theta t_0' \\
 -\frac{\omega^2 \eta \theta(t+t_0')}{\sqrt{3}} & \frac{2\theta(t+\omega^2 t_0')}{\sqrt{3}} & \frac{2\theta (t_0'+\omega^2 t)}{\sqrt{3}} & 0 & \frac{\omega^2 \eta \theta^2(t+t_0')}{\sqrt{3}} \\
 0 & 2\theta^2 t\cdot \frac{1}{\theta} & 2\theta^2 t_0'\cdot \frac{1}{\theta} & -\frac{2 \theta^4(t+t_0')}{3}\cdot \frac{\sqrt{3}}{\theta^2} & 0 \\
\end{array}
\right).\]
\noindent We have the following matrix, which is the same for $M_{i_{b_0,z_0}}(\mathbb{C})$, $M_{i_{b_1,z_1}}(\mathbb{C})$, and $M_{i_{b_2,z_2}}(\mathbb{C})$:
\[\left(
\begin{array}{cc}
	0 & 0 \\
	0 & 0 \\
\end{array}
\right).\]

\noindent We have the following matrix, which is the same for $M_{i_{b_0,z_1}}(\mathbb{C})$, $M_{i_{b_1,z_2}}(\mathbb{C})$, and $M_{i_{b_2,z_0}}(\mathbb{C})$:
\[\left(
\begin{array}{c}
	-\eta t_0'\cdot \theta^2 \\
\end{array}
\right).\]
We have the following matrix, which is the same for $M_{i_{b_0,z_2}}(\mathbb{C})$, $M_{i_{b_1,z_0}}(\mathbb{C})$, and $M_{i_{b_2,z_1}}(\mathbb{C})$:
\[\left(
\begin{array}{c}
	-\eta t\cdot \theta^2 \\
\end{array}
\right).\]

Now, for paths beginning at $d$, we have the following matrix for $M_{i_{d,c}}(\mathbb{C})$:
\[\left(
\begin{array}{cccc}
	0 & 0 & 0 & 0\\
	0 & -\frac{\eta \theta(t+t_0')}{\sqrt{3}}\cdot \frac{1}{R} & \frac{2\theta(t+\omega t_0')}{\sqrt{3}}\cdot \frac{1}{R} & \frac{2\theta(t_0'+\omega t)}{\sqrt{3}}\cdot \frac{1}{R} \\
	
	0 & \frac{2\theta(t_0'+\omega t)}{\sqrt{3}}\cdot \frac{1}{R} & -\frac{\eta \theta(t+t_0')}{\sqrt{3}}\cdot \frac{1}{R} & \frac{2\theta(t+\omega t_0')}{\sqrt{3}}\cdot \frac{1}{R} \\
	0 & \frac{2\theta(t+\omega t_0')}{\sqrt{3}}\cdot \frac{1}{R} & \frac{2\theta(t_0'+\omega t)}{\sqrt{3}}\cdot \frac{1}{R} & -\frac{\eta \theta(t+t_0')}{\sqrt{3}}\cdot \frac{1}{R}\\
\end{array}
\right).\]
We have the following matrix, which is the same for $M_{i_{d,z_0}}(\mathbb{C})$, $M_{i_{d,z_1}}(\mathbb{C})$, and $M_{i_{d,z_2}}(\mathbb{C})$ :
\[\left(
\begin{array}{c}
	\frac{\eta \theta^2 (t+t_0')}{\sqrt{3}}\cdot \frac{\theta}{R} \\
\end{array}
\right).\]

In sum, we have the matrix representation for $\rho^{\frac{1}{2}}(Q)$:
\begin{align*}
\widetilde{\rho^{\frac{1}{2}}(Q)}= &
\left(
\begin{array}{cccc}
	0 & 0 & 0 & 0\\
	0 & -\frac{\eta \theta(t+t_0')}{\sqrt{3}}\frac{1}{R} & \frac{2\theta(t+\omega t_0')}{\sqrt{3}} \frac{1}{R} & \frac{2\theta(t_0'+\omega t)}{\sqrt{3}} \frac{1}{R} \\
	0 & \frac{2\theta(t_0'+\omega t)}{\sqrt{3}} \frac{1}{R} & -\frac{\eta \theta(t+t_0')}{\sqrt{3}}\frac{1}{R} & \frac{2\theta(t+\omega t_0')}{\sqrt{3}} \frac{1}{R} \\
	0 & \frac{2\theta(t+\omega t_0')}{\sqrt{3}} \frac{1}{R} & \frac{2\theta(t_0'+\omega t)}{\sqrt{3}} \frac{1}{R} & -\frac{\eta \theta(t+t_0')}{\sqrt{3}}\frac{1}{R}\\
\end{array}\right)\\
& \oplus
\biggl(
\left(
\begin{array}{c}
	\frac{\eta \theta^2 (t+t_0')}{\sqrt{3}}\cdot \frac{\theta}{R} \\
\end{array}\right) 
\biggr)^{\otimes 3}
\\
& \oplus 
\left(
\begin{array}{ccccc}
 0 & -2t & -2t_0' & \frac{2\theta^2(t+t_0')}{3}\cdot \frac{\sqrt{3}}{\theta} & 0\\
 \omega^2\eta t & \eta t_0' & 0 & -\frac{\eta \theta^2(t_0'+\omega^2 t)}{3}\cdot \frac{\sqrt{3}}{\theta} & -\omega^2 \eta \theta t\\
 \omega^2 \eta t_0' & 0 & \eta t & -\frac{\eta \theta^2(t+\omega^2t_0')}{3}\cdot \frac{\sqrt{3}}{\theta} & -\omega^2 \eta \theta t_0'\\
 -\frac{\omega^2 \eta \theta(t+t_0')}{\sqrt{3}} & \frac{2\theta(t+\omega^2 t_0')}{\sqrt{3}} & \frac{2\theta (t_0'+\omega^2 t)}{\sqrt{3}} & 0 & \frac{\omega^2 \eta \theta^2(t+t_0')}{\sqrt{3}}\\
 0 & 2\theta^2 t\cdot \frac{1}{\theta} & 2\theta^2 t_0'\cdot \frac{1}{\theta} & -\frac{2 \theta^4(t+t_0')}{3}\cdot \frac{\sqrt{3}}{\theta^2} & 0\\
\end{array}\right)^{\otimes 3}\\
& \oplus 
\biggl(
 \left(
\begin{array}{cc}
	0 & 0 \\
	0 & 0 \\
\end{array}\right)
\oplus 
\left(
\begin{array}{c}
	-\eta t_0'\cdot \theta^2 \\
\end{array}\right)
\oplus
\left(
\begin{array}{c}
	-\eta t\cdot \theta^2 \\
\end{array}\right) 
\biggr)^{\otimes 3},
\end{align*}
where $$t=\sqrt{\frac{[4]}{1+\Psi^2}\frac{5}{(246+54\sqrt{21})\sqrt{3}}},$$
 $$\Psi=\frac{1-\theta^2-\theta}{2},$$ and $$t_0'=\Psi t.$$

From the above representations for $\rho ^{\frac{1}{2}}(T)$ and $\rho^{\frac{1}{2}}(Q)$, we compute the following trace values for  $\rho ^{\frac{1}{2}}(T)$:
\begin{itemize}
\item $Z(\rho ^{\frac{1}{2}}(T))=0$
\item $Z((\rho ^{\frac{1}{2}}(T))^2)=[4]$
\item $Z((\rho ^{\frac{1}{2}}(T))^3)=\frac{1}{6}(-3\sqrt{3}+\sqrt{7})=Z(T^3)$
\item $Z((\rho ^{\frac{1}{2}}(T))^4)=\frac{5}{3}\sqrt{\frac{2}{3}(23+5\sqrt{21})}=Z(T^4),$
\end{itemize}
and the following trace values for $\rho ^{\frac{1}{2}}(Q)$:
\begin{itemize}
\item $Z(\rho ^{\frac{1}{2}}(Q))=0$
\item $Z((\rho ^{\frac{1}{2}}(Q))^2)=\omega [4]$
\item $Z((\rho ^{\frac{1}{2}}(Q))^3)=-\frac{2}{3}\sqrt{8+3\sqrt{21}}=Z(Q^3)$
\item $Z((\rho ^{\frac{1}{2}}(Q))^4)=-\frac{i(-i+\sqrt{3})(8+3\sqrt{21})}{3\sqrt{3}}=\omega^2 Z(Q^4),$
\end{itemize}
and the following trace values for mixtures of $\rho ^{\frac{1}{2}}(T)$ and $\rho^{\frac{1}{2}}(Q)$:
\begin{itemize}
\item $Z(\rho ^{\frac{1}{2}}(T)\rho^{\frac{1}{2}}(Q))=0$
\item $Z((\rho ^{\frac{1}{2}}(T)(\rho^{\frac{1}{2}}(Q))^2)=\frac{i(i+\sqrt{3})(697+152\sqrt{21})^{\frac{1}{4}}}{6}=\omega Z(TQ^2)$
\item $Z((\rho ^{\frac{1}{2}}(Q)(\rho^{\frac{1}{2}}(T))^2)=-\frac{1}{12}\sqrt{302+82\sqrt{21}}-\frac{1}{12}\sqrt{906+246\sqrt{21}}\cdot i= \omega^2 Z(QT^2)$
\item $Z((\rho ^{\frac{1}{2}}(T))^2(\rho^{\frac{1}{2}}(Q))^2)=\frac{7i(i+\sqrt{3})(1+\sqrt{21})}{12\sqrt{3}}=\omega Z(T^2Q^2).$
\end{itemize}


\end{document}